\theoremstyle{plain}
\newtheorem{theorem}{Theorem}[section]
\newtheorem{claim}[theorem]{Claim}
\newtheorem{lemma}[theorem]{Lemma}
\newtheorem{conjecture}[theorem]{Conjecture}
\newtheorem{corollary}[theorem]{Corollary}
\newtheorem{proposition}[theorem]{Proposition}
\theoremstyle{remark}
\newtheorem{definition}[theorem]{Definition}
\newtheorem{remark}[theorem]{Remark}
\newtheorem{assumption}[theorem]{Assumption}
\newcommand{\sfa}{{\mathsf a}}
\newcommand{\sfb}{{\mathsf b}}
\newcommand{\sfc}{{\mathsf c}}
\newcommand{\sfn}{{\mathsf n}}
\newcommand{\sfu}{{\mathsf u}}
\newcommand{\sfw}{{\mathsf w}}
\newcommand{\sft}{{\mathsf t}}
\newcommand{\sfs}{{\mathsf s}}
\newcommand{\sfx}{{\mathsf x}}
\newcommand{\sfh}{{\mathsf h}}
\newcommand{\sfq}{{\mathsf q}}
\newcommand{\sfL}{{\mathsf L}}
\newcommand{\sfT}{{\mathsf T}}
\newcommand{\sfN}{{\mathsf N}}
\newcommand{\sfP}{{\mathsf P}}
\renewcommand{\cal}{\mathcal}
\newcommand\cA{{\mathcal A}}
\newcommand\cB{{\mathcal B}}
\newcommand{\cC}{{\cal C}}
\newcommand{\cE}{{\cal E}}
\newcommand{\cG}{{\cal G}}
\newcommand{\cL}{{\cal L}}
\newcommand{\cM}{{\cal M}}
\newcommand{\cS}{{\mathcal S}}
\newcommand{\cT}{{\mathcal T}}
\newcommand{\fC}{{\frak C}}
\newcommand{\bme}{{\bm{e}}}
\newcommand{\bmv}{{\bm{v}}}
\newcommand{\bmx}{{\bm{x}}}
\newcommand{\bmy}{{\bm{y}}}
\newcommand{\bms}{\bm s}
\newcommand{\bmmu}{{\bm \mu}}
\newcommand{\bmla}{{\bm \la}}
\newcommand{\rd}{{\rm d}}
\newcommand{\ri}{\mathbf{i}}
\newcommand{\bC}{{\mathbb C}}
\newcommand{\bD}{{\mathbb D}}
\newcommand{\bE}{\mathbb{E}}
\newcommand{\bH}{\mathbb{H}}
\newcommand{\bP}{\mathbb{P}}
\newcommand{\bQ}{\mathbb{Q}}
\newcommand{\bR}{{\mathbb R}}
\newcommand{\bY}{\mathbb Y}
\newcommand{\bZ}{\mathbb{Z}}
\newcommand{\bW}{\mathbb{W}}
\newcommand{\bL}{\mathbb{L}}
\newcommand{\br}{\mathbf r}
\newcommand{\bl}{\mathbf l}
\newcommand{\al}{\alpha}
\newcommand{\la}{\lambda}
\newcommand{\eps}{\varepsilon}
\newcommand{\ii}{\mathbf i}
\newcommand{\uu}{\mathbf u}
\newcommand{\GFF}{\mathbf{GFF}}
\DeclareMathOperator{\supp}{supp}
\DeclareMathOperator{\dist}{dist}
\DeclareMathOperator{\OO}{O}
\DeclareMathOperator{\oo}{o}
\renewcommand{\Re}{\mathop{\mathrm{Re}}}
\renewcommand{\Im}{\mathop{\mathrm{Im}}}
\newcommand{\deq}{\mathrel{\mathop:}=} 
\renewcommand{\leq}{\leqslant}
\renewcommand{\geq}{\geqslant}
\newcommand{\floor}[1] {\lfloor {#1} \rfloor}
\newcommand{\del}{\partial}
\newcommand{\beq}{\begin{equation}}
\newcommand{\eeq}{\end{equation}}
\newcommand{\GT}{\mathbb{GT}}
\def\La{{\scalebox{0.15}{\includegraphics{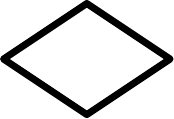}}}}
\def\Lb{{\scalebox{0.15}{\,\includegraphics{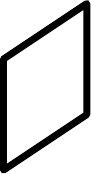}}}}
\def\Lc{{\scalebox{0.15}{\,\includegraphics{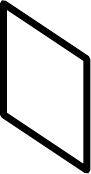}}}}
\newcommand{\cout}{{\omega_+}}
\newcommand{\cin}{{\omega_-}}
\begin{document}

\begin{frontmatter}
\title{Dynamical Loop Equation}
\runtitle{Dynamical Loop Equation}

\begin{aug}
\author[A]{\fnms{Vadim}~\snm{Gorin}\ead[label=e1]{vadicgor@gmail.com}}
\author[B]{\fnms{Jiaoyang}~\snm{Huang}\ead[label=e2]{huangjy@wharton.upenn.edu}},

\address[A]{University of California, Berkeley\printead[presep={,\ }]{e1}}
\address[B]{University of Pennsylvania \printead[presep={,\ }]{e2}}

\end{aug}

\begin{abstract}
We introduce dynamical versions of loop (or Dyson-Schwinger) equations for large families of two--dimensional interacting particle systems, including Dyson Brownian motion, Nonintersecting Bernoulli/Poisson random walks, $\beta$--corners processes,  uniform and Jack-deformed measures on Gelfand-Tsetlin patterns, Macdonald processes, and $(q,\kappa)$-distributions on lozenge tilings.
Under technical assumptions, we show that the dynamical loop equations lead to Gaussian field type fluctuations.

As an application, we compute the limit shape for $(q,\kappa)$--distributions on lozenge tilings and prove that their height fluctuations converge to the Gaussian Free Field in an appropriate complex structure.
\end{abstract}

\begin{keyword}[class=MSC]
\kwd[Primary ]{60F05}
\kwd{60K35}
\kwd[; secondary ]{82B44}
\end{keyword}

\begin{keyword}
\kwd{Interacting Particle Systems}
\kwd{Random Tilings}
\kwd{Dyson-Schwinger Equations}
\kwd{Gaussian Fluctuations}
\end{keyword}

\end{frontmatter}
\setcounter{tocdepth}{2}
\tableofcontents

\section{Introduction}

\subsection{Overview}

Interacting particle systems in one time and one space dimension with non-local interactions appear in various probability models, such as random matrices, random tilings, and random growth models.

In the continuous setting, the most famous example is  \cite{MR0148397}, where Dyson observed that the eigenvalues of a matrix-valued Hermitian Brownian motion form an interacting particle system with a logarithmic Coulomb interaction and quadratic potential. In many important cases  one dimensional marginal densities of those $(1+1)$-dimensional interacting particle systems can be computed explicitly, and identified with log-gases.
For such one dimensional log-gas type systems, loop (or Dyson-Schwinger) equation is an important tool to study their macroscopic fluctuations.
They were first used in the theoretical physics literature (e.g., in the work of Migdal \cite{migdal1983}) and were later introduced to the mathematical community by Johansson \cite{MR1487983} to derive macroscopic central limit theorems for general $\beta$-ensembles of eigenvalues of random matrices, see also \cite{MR3010191, borot-guionnet2, KrSh}. One advantage of the asymptotic arguments based on the loop equations is that they tend to be very universal and extend to wide families of stochastic systems.

On the discrete side, marginals given by appropriate versions of log--gases were used in the analysis of random tilings starting from \cite{MR1641839,MR1900323,borodin2010q}  and in the study of random interface growth models starting from \cite{MR1737991}. Generalizing these distibutions, Borodin, Guionnet and Gorin \cite{MR3668648}  introduced discrete $\beta$-ensembles, and studied their Gaussian macroscopic fluctuations using discrete loop equations, which  originated in the work of Nekrasov and his collaborators \cite{Nekrasov, Nek_PS,Nek_Pes}; see also \cite{dimitrov2019log}.

Central limit theorems for macroscopic fluctuations of log-gases (in investigations of random matrices, random tilings, and random partitions) reveal that the limiting fields in these theorems can be viewed as one--dimensional slices of a unique two--dimensional limiting object, the \emph{Gaussian Free Field}, see \cite{borodin2015general, borodin2014clt,dimitrov2019log,duits2018global,bufetov2019fourier} for the results of such type. This fits into a general belief that the Gaussian Free Field should appear as a universal limit of the macroscopic fluctuations for random two-dimensional surfaces of various origins.

The aim of this paper is to lift the asymptotic analysis from one-dimensional sections to full two-dimensional fields by developing very general dynamical versions of loop equations, the \emph{dynamical loop equations}. They provide a new toolbox to study macroscopic fluctuations of two dimensional interacting particle systems, directly applicable to many well-known stochastic dynamics both in discrete and continuous settings, such as: Dyson Brownian motion \cite{MR0148397}, nonintersecting Bernoulli/Poisson random walks \cite{gorin2019universality,konig2002non, huang2017beta}, $\beta$--corner processes \cite{MR3418747, borodin2015general}, measures on Gelfand-Tsetlin patterns \cite{bufetov2018asymptotics,petrov2015asymptotics}, Macdonald processes \cite{borodin2014macdonald}, and $(q,\kappa)$-distributions on lozenge tilings \cite{borodin2010q,dimitrov2019log}. We prove that under mild technical assumptions, the dynamical loop equations lead to fluctuations described by Gaussian fields with accessible covariance.

There were hints in the literature, suggesting that the framework of the loop equations might be adaptable to multi-time two-dimensional setting,
 as in \cite{AGZ,huang2017beta,dimitrov2019asymptotics,huang2020height,dimitrov_knizel2021multi}. However, these results were mostly isolated and
relied on specifics of the stochastic systems they dealt with. In contrast, we work directly with transition probabilities of the stochastic evolutions of interest and allow for them a very general form, parameterized by three analytic functions and an arbitrary real parameter, playing the same role as $\beta$ in the random matrix theory. Our approach covers many of the old examples and simultaneously allows asymptotic analysis of new stochastic systems, which were not accessible by previous tools.

As our main application, we study the height fluctuations of $(q,\kappa)$-distributions on lozenge tilings introduced in \cite{borodin2010q}. In the special case  $\kappa=0$, the $(q,\kappa)$-distributions on lozenge tilings become the $q^{\text{volume}}$--weighted lozenge tilings (or plane partitions), whose particular cases were investigated in \cite{vershik1996statistical,MR1969205,cerf2001low,ahn2020global}. We compute the limit shapes for the $(q,\kappa)$--random tilings of a special class of polygonal domains (``trapezoids'') with arbitrary many sides  and discover that the shapes can be parameterized by algebraic curves in an appropriate coordinate system, thus giving the positive answer to the question raised in \cite[end of Section 2]{borodin2010q}. We further
show that the centered height fluctuations of $(q,\kappa)$-random tilings of these domains converge to the Gaussian free field. In particular, this verifies the  conjecture in \cite[Conjecture 8.4.1]{dimitrov2019log} for $(q,\kappa)$-distributions on lozenge tilings of hexagonal domains\footnote{For another proof of this conjecture see the forthcoming paper \cite{DuitsLiu}.},
and the prediction in \cite{KO_Burgers} for $q^{\text{volume}}$--weighted lozenge tilings.

In the next two subsections we, first, state the dynamical loop equation in its most general form, and then present in details our results on $(q,\kappa)$--random lozenge tilings.

\subsection{Dynamical loop equations}

In the most general discrete setting of our interest, we deal with discrete-time Markov chains with deterministic initial conditions. The state space of the chains, denoted $\GT_n$, consists of $n$--tuples of integers $\bmla: \lambda_1\geq \dots\geq\lambda_n$ which we call signatures of rank $n$ or (in the case $\lambda_n\geq 0$) Young diagrams with at most $n$ rows. \footnote{The notation $\GT$ comes from the names Gelfand and Tsetlin, who used combinatorics of signatures in the study of representation theory of classical Lie groups.} In this text we allow transitions of two types for the Markov chains:
\begin{enumerate}
  \item \emph{Ascending transition}: $n$ is fixed and at each step coordinates $\lambda_i$ either stay the same or grow by one. I.e., non-zero transition probabilities are from $\bm\lambda\in\GT_n$ to $\bm\mu\in \GT_n$ under restriction
      $$\mu_i-\lambda_i\in \{0,1\}\qquad \text{ for }i=1,2,\dots,n.$$
  \item \emph{Descending transition}: $n$ decreases by $1$ at each step and we transition to \emph{interlacing} signatures. I.e., non-zero transition probabilities are from $\bm\lambda\in \GT_{n+1}$ to $\bm\mu\in \GT_{n}$ under the restriction
      \begin{equation}
      \label{eq_interlacing}
        \lambda_1\geq \mu_1\geq \lambda_2\geq\dots\geq\mu_{n}\geq\lambda_{n+1}.
      \end{equation}
\end{enumerate}
Although requiring all transitions to be of one of these two types might seem restrictive, we present in Section \ref{s:examples} a large number of natural examples fitting into our formalism. In fact, the two types of transitions are closely related to each other: as we show in Section \ref{s:PHD} one can be obtained from another by the particle-hole duality. Hence, in this section it suffices to present our results only in the ascending setting.

\begin{figure}[t]
\center
\includegraphics[scale=0.5]{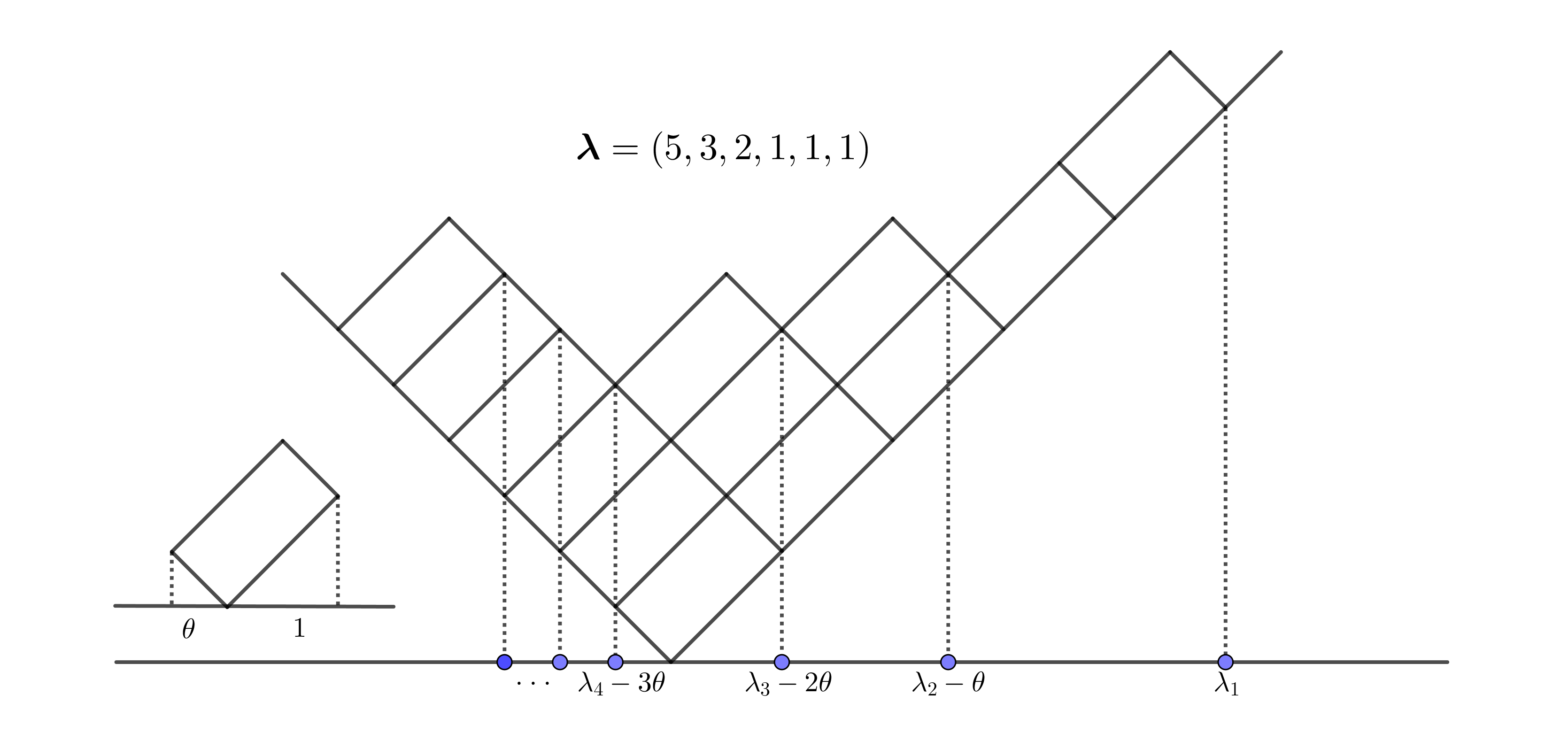}
\caption{We identify $(\la_1,\la_2,\dots,\la_n)\in \GT_n$ with $n$-particles $(\la_1,\la_2-\theta,\la_3-2\theta,\dots,\la_n-(n-1)\theta)$.}
\label{f:ytop}
\end{figure}

Fix $\theta>0$ and let $n\in \bZ_{>0}$. We identify each signature $\bmla=(\la_1\geq \la_2\geq \dots\geq \la_n)\in \GT_n$, with an $n$-particle system $(\la_1,\la_2-\theta,\la_3-2\theta,\dots,\la_n-(n-1)\theta)$. In other words, we use a bijection between $\GT_n$ and the $\theta$-shifted lattice
\begin{align}\label{e:defWtheta}
\bW_\theta^n\deq \bigl\{(x_1,\dots,x_n)\in\mathbb R^n\mid x_1\in \mathbb Z,\quad x_{i}-x_{i+1}\in \theta+\mathbb Z_{\geq 0}, \quad i=1,2,\dots,n-1\bigr\},
\end{align}
given by
\begin{align}\label{e:defyi}
\GT_n \ni \bmla\leftrightarrow \bmx=(x_1, x_2,\dots, x_n)\in \bW_\theta^n ,\qquad x_i=\la_i-(i-1)\theta,\quad i=1,2,\dots, n.
\end{align}
Graphically this procedure is illustrated in Figure \ref{f:ytop}. In the special case $\theta=1$, $\bW_1^n$ is the set of strictly ordered $n$-tuples of integers and we simply denote it by $\bW^n$.

Fix $\theta>0$, a particle configuration $\bmx=(x_1, x_2,\dots,x_n)\in \bW_\theta^n$, an interaction function $b(z)$, and weight functions $\phi^+(z),\phi^-(z)$. Our central object is the following transition probability
\begin{align}\label{e:m1}
\bP(\bmx+\bme|\bmx)=\frac{1}{Z(\bmx)} \prod_{1\leq i<j\leq n}\frac{b(x_i+\theta e_i)-b(x_j+\theta e_j)}{b(x_i)-b(x_j)}\prod_{i=1}^n \phi^+(x_i)^{e_i} \phi^-(x_i)^{1-e_i},
\end{align}
where $\bmx,\bmx+\bme\in \bW_\theta^n$, $\bme=(e_1,e_2,\dots, e_n)\in\{0,1\}^n$ and $Z(\bmx)$ is a normalization constant\footnote{Here and below we silently assume that such a constant exists.} guaranteeing that
\begin{equation}
\label{eq_transition_normalization}
 \sum_{\bme\in\{0,1\}^n} \bP(\bmx+\bme|\bmx)=1.
\end{equation}
We remark that, as in \cite{MR3668648}, $\theta$ can be linked to the parameter $\beta$ of the random matrix theory (taking values $\beta=1,2,4$ for real/complex/quaternionic matrices) through $\theta=\beta/2$. Thus, outside $\theta=1$ we do not expect Markov chains with transitions \eqref{e:m1} to be free fermionic or analysable through the techniques of determinantal point processes.

Some examples of Markov chains with transitions in the form \eqref{e:m1} and their limits are explained in Section \ref{s:examples}. They include nonintersecting Bernoulli/Poisson random walks, $\beta$--corners processes, Dyson Brownian motion, uniform and Jack-deformed measures on Gelfand-Tsetlin patterns, Macdonald processes, and $(q,\kappa)$-distributions on lozenge tilings.  The following dynamical version of the loop (or Schwinger-Dyson) equation provides a unified approach to study the macroscopic fluctuations of dynamics with transitions \eqref{e:m1}.

\begin{theorem}[Dynamical Loop Equation]\label{t:loopeq}
Choose an open set $U\subset \mathbb C$, a particle configuration ${\bmx=(x_1>x_2>\dots>x_n)\in \bW_\theta^n}$ such that the interval $[x_n,x_1]\subset U$, a parameter $\theta>0$, two holomorphic functions $\phi^+(z)$, $\phi^-(z)$ on $U$ and a conformal (i.e., holomorphic and injective) function $b(z)$ on $U$. Assume that the random $n$--tuple $\bme=(e_1,e_2,\dots, e_n)\in\{0,1\}^n$  is distributed according to the transition probability \eqref{e:m1}. Then the following observable is a holomorphic function of $z\in U$:
\begin{align}\label{e:sum1}
\bE\left[\phi^+(z)\prod_{j=1}^n\frac{b(z+\theta)-b(x_j+\theta e_j)}{b(z)-b(x_j)}+\phi^-(z)\prod_{j=1}^n\frac{b(z)-b(x_j+\theta e_j)}{b(z)-b(x_j)}\right].
\end{align}
\end{theorem}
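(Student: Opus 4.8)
The plan is to show that the apparent singularities of the expression \eqref{e:sum1} at the points $z = x_k$ are removable, which (together with the obvious holomorphy away from those points) gives the claim. Fix $k \in \{1,\dots,n\}$ and examine the residue of \eqref{e:sum1} at $z = x_k$. Since $b$ is conformal, $b(z) - b(x_k)$ has a simple zero at $z = x_k$, so each of the two products in \eqref{e:sum1} has at worst a simple pole there, coming from the single factor with $j = k$ in the denominator. The residue is therefore a finite sum over $\bme \in \{0,1\}^n$, weighted by $\bP(\bmx + \bme \mid \bmx)$, of the residues of the two bracketed terms. I would compute these two residues explicitly: at $z = x_k$ the factor $1/(b(z) - b(x_k))$ contributes $1/b'(x_k)$, and the numerators become, respectively, $b(x_k + \theta) - b(x_k + \theta e_k)$ for the $\phi^+$ term and $b(x_k) - b(x_k + \theta e_k)$ for the $\phi^-$ term. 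Crucially, the $\phi^+$-term residue vanishes on the event $e_k = 1$ (numerator is $b(x_k+\theta)-b(x_k+\theta) = 0$) and the $\phi^-$-term residue vanishes on the event $e_k = 0$ (numerator is $b(x_k) - b(x_k) = 0$). So only configurations with $e_k = 0$ contribute to the first residue and only those with $e_k = 1$ to the second.

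Next I would exploit the structure of the transition probability \eqref{e:m1} to match these two contributions. The key observation is a reflection/pairing between a configuration $\bme$ with $e_k = 0$ and the configuration $\bme' = \bme + \delta_k$ (flip the $k$-th coordinate to $1$), which is admissible in $\bW_\theta^n$ provided the flip does not violate the ordering; the ordering constraint is exactly what controls which flips are allowed, and I would need to handle the boundary cases (where $x_k + \theta$ collides with $x_{k-1} + \theta e_{k-1}$, forcing $e_{k-1}$) carefully — this bookkeeping is where the interlacing combinatorics enters. For a valid pair, the ratio $\bP(\bmx + \bme' \mid \bmx)/\bP(\bmx + \bme \mid \bmx)$ is, from \eqref{e:m1}, equal to $\phi^+(x_k)/\phi^-(x_k)$ times a product of factors $\bigl(b(x_i + \theta e_i) - b(x_k + \theta)\bigr)/\bigl(b(x_i + \theta e_i) - b(x_k)\bigr)$ over $i \neq k$. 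After substituting into the residue sum, the contributions from the $\bme$ (via the $\phi^-$-term at $z=x_k$, with $e_k$ played by $\bme'_k = 1$) and from $\bme'$ itself should telescope/cancel against each other term-by-term over the pairing; equivalently, summing the residue of the full bracket over $\bme$ against $\bP(\bmx+\bme\mid\bmx)$ gives zero. I would carry out this cancellation carefully, writing the total residue as a single sum over unordered pairs and checking the two terms in each pair are negatives of one another.

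The main obstacle I anticipate is precisely the boundary/admissibility bookkeeping: when $x_k$ is "close" to a neighbor on the $\theta$-shifted lattice, flipping $e_k$ may be forbidden or may force a neighboring coordinate, and one must verify that in exactly those cases the corresponding residue terms already vanish (because a numerator factor like $b(x_i + \theta e_i) - b(x_k + \theta e_k)$ degenerates, or because the configuration simply has zero probability). I would organize this by noting that the numerator $\prod_{i<j}\bigl(b(x_i+\theta e_i) - b(x_j+\theta e_j)\bigr)$ in \eqref{e:m1} vanishes whenever two shifted coordinates coincide, so any "illegal" flip is automatically assigned weight zero and need not be tracked separately — this is the standard trick that makes discrete loop equations work, and it should streamline the pairing argument into a clean algebraic identity once set up correctly. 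A secondary, purely technical point is to confirm that \eqref{e:sum1} is genuinely single-valued and meromorphic on all of $U$ (not just near $[x_n,x_1]$): this follows since $b$ is conformal on $U$ and $\phi^\pm$ are holomorphic on $U$, so the only possible singularities are the simple poles at $z = x_k$ just analyzed, and away from $\{x_1,\dots,x_n\}$ each summand is manifestly holomorphic.
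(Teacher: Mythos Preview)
Your proposal is correct and matches the paper's first proof essentially line for line: compute the residue at each $z=x_k$, observe that the $\phi^+$-term contributes only on $\{e_k=0\}$ and the $\phi^-$-term only on $\{e_k=1\}$, and cancel the two in pairs $(\bme^0,\bme^1)$ using the ratio $\bP(\bmx+\bme^1\mid\bmx)/\bP(\bmx+\bme^0\mid\bmx)$ read off from \eqref{e:m1}. Your anticipated ``boundary obstacle'' is a non-issue for exactly the reason you identify at the end --- the Vandermonde-type numerator in \eqref{e:m1} kills any colliding configuration, so the cancellation is a pure algebraic identity and the paper does not even raise the point; the paper also records a second proof, noting that \eqref{e:sum1} equals $Z_{n+1}(x_1,\dots,x_n,z)/Z_n(x_1,\dots,x_n)$, which is manifestly holomorphic once one checks the partition functions are.
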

\begin{remark}
 The condition $\bmx\in \bW_\theta^n$ is not crucial for the validity of Theorem \ref{t:loopeq} and one can replace $x_i$ by other complex numbers in $U$; yet, all the examples considered in our paper have this condition or its limiting forms.
\end{remark}

We give two proofs of Theorem \ref{t:loopeq} in Section \ref{Section_proof_of_le}. The first proof is a direct verification that each possible (simple) pole of \eqref{e:sum1} has a zero residue. The second proof identifies \eqref{e:sum1} with a ratio of two partition functions $Z(\bmx)$ of the ensemble \eqref{e:m1} --- with $n+1$ particles and with $n$ particles --- and notices that these partition functions are holomorphic functions of $x_i$. The second proof suggests possible further generalizations of Theorem \ref{t:loopeq}, see Remark \ref{Remark_extension}.

Special or limiting cases of the above dynamical loop equations were known before. For Dyson Brownian motion equivalent expressions were implicitly used in \cite{AGZ,MR2418256,MR1819483} to study the limiting empirical particle density and its fluctuations.
For $\beta$-nonintersecting Poisson random walks and uniformly random lozenge tiling, they were developed by the second named author in \cite{huang2017beta, huang2020height} to study the macroscopic fluctuations. Certain formulas, resembling our dynamical loop equations, can be also found in the work of Nekrasov \cite{Nekrasov4,Nekrasov5}\footnote{There is no Markovian dynamics in these papers, however, certain holomorphic observables on random ensembles of $n$ Young diagrams are being studied as one of the examples. The probabilities in these ensembles depend on a degree $2n$ polynomial and by choosing the polynomial appropriately, one can force each diagram to be either empty or one box --- resembling our $\bme\in\{0,1\}^n$. In order to closer match our weights \eqref{e:m1} with certain specific $b(\cdot)$ and $\phi^\pm(\cdot)$ functions, one needs to deal with (colored) theories with defects in the terminology of \cite{Nekrasov4,Nekrasov5}.}. The first main contribution of this paper is to introduce the general form \eqref{e:m1} (see also \eqref{e:m2}) of transition probability and analyze it by dynamical loop equations.

It is also instructive to compare Theorem \ref{t:loopeq} with two-level discrete loop equations of \cite{dimitrov2019asymptotics, dimitrov_knizel2021multi}. An important distinction is that in our setup $\bmx$ is deterministic, while in \cite{dimitrov2019asymptotics,dimitrov_knizel2021multi} it is random (of log-gas type). We are going to significantly rely on this feature in the applications of Theorem \ref{t:loopeq}: it allows us to iterate the equations and reach joint distributions at arbitrary times, rather than only at adjacent ones.

\smallskip

Proceeding to a broader context, we refer to \cite{guionnet2019asymptotics} for an introduction to the loop equations and historic overview. A general phenomena in the study of many particle systems with strong mean field interactions is that the moments of a large class of test functions admit an infinite system of equations that are called the loop equations, Dyson-Schwinger
equations,  Master equations, or Ward identities depending on the context. These equations are often asymptotically closed in the limit of growing dimensions and can be used to get delicate asymptotic information on the particle systems. This strategy has been developed at the formal level (i.e.\ assuming  that the asymptotic expansions should exist) in physics starting from \cite{ambjorn1990properties}, see  \cite{Borot:2013lpa, chekhov2006matrix, chekhov2011topological, eynard2005topological,eynard2016counting, Eynard:2007kz} for more recent and much more detailed studies. The approach was put on the rigorous grounds starting from \cite{MR1487983} and we refer to \cite{guionnet2019asymptotics} for an outline, as well as many references on the related developments in the mathematical literature. The theoretical physics literature made a curious observation that the asymptotic expansions in these particle systems (developed through the loop equations or by other means) satisfy the so-called topological recursion, which then connects them to enumerative geometry and generating functions for maps, cf.\  \cite{t1974magnetic}, \cite{brezin1978planar}, and \cite{tutte1963census}. It is yet to be seen, whether connections to enumerative geometry can be also found in the study of our dynamical loop equations.

\bigskip

In Section \ref{s:analysis} we prove that under technical assumptions the dynamical loop equations lead to Gaussian fluctuations with explicitly computable mean and covariance. More precisely, take a small scaling parameter $\varepsilon$ and assume that $n$ is inverse proportional to $\eps$, i.e.\ $n=\lfloor \sfn\eps^{-1}\rfloor$ for some $\sfn>0$. We encode the particle configurations $\bmx$ and $\bmx+\bme$ by a smoothed version of the empirical density:
\begin{align*}
\rho(s;\bmx)=\frac{1}{\theta}\sum_{i=1}^n \bm1(\varepsilon x_i \leq s\leq \varepsilon x_i+\varepsilon \theta ), \quad \rho(s;\bmx+\bme)=\frac{1}{\theta}\sum_{i=1}^n \bm1(\varepsilon (x_i+e_i) \leq s\leq \varepsilon (x_i+e_i)+\varepsilon \theta ).
\end{align*}
Then under the transition probability \eqref{e:m1}, $\rho(s;\bmx+\bme)$ is a random probability density. Its difference with $\rho(s;\bmx)$ has the following decomposition:
\begin{equation}
\label{eq_decomposition_field}
\rho(s;\bmx+\bme)-\rho(s;\bmx)=\text{``stochastic term"}+\text{``deterministic term"}.
\end{equation}
In Theorem \ref{t:loopstudy}, under technical assumptions, by analyzing the loop equation \eqref{e:sum1} we prove that as $\eps\to 0$ the stochastic term is asymptotically a Gaussian field and obtain asymptotic expansions for the deterministic term and the covariance of the stochastic term. As a consequence, for any two dimensional interacting particle system, if we can view it as a Markov process with transition probabilities \eqref{e:m1} (with potentially time-dependent functions $b$ and $\phi^\pm$), then its asymptotic limit shape can be found from deterministic terms in \eqref{eq_decomposition_field} and macroscopic fluctuations converge to a two-dimensional Gaussian field, which is found from the stochastic terms in \eqref{eq_decomposition_field}. Note that technical assumptions of Theorem \ref{t:loopstudy} still remain to be checked on case-by-case basis; we demonstrate in Section \ref{Section_assumptions} how this is done for $(q;\kappa)$--distributions on lozenge tilings.

\smallskip

Our analysis of the decomposition \eqref{eq_decomposition_field} based on Theorem \ref{t:loopeq} has one important difference from the numerous previous one-dimensional results based on the loop equations for log-gas type systems, as in \cite{MR3010191, borot-guionnet2, KrSh,MR1487983,MR3668648}. Those texts relied on a two-step procedure, in which one needs to establish a priori estimates on the concentration of the empirical density in the first step before using the loop equations in the second step. Such estimates rely on a different set of tools, such as variational problems or large deviations techniques. In contrast, our approach  does not require any a priori estimates and extracts all asymptotic information directly from Theorem \ref{t:loopeq}: since we deal with elementary one-step transitions, we can rely on the determinstic fact that $\bmx+\bme$ is close to $\bmx$.

\smallskip

A reader might be wondering why we chose to deal with the smoothed empirical density $\rho(s;\bmx)$ rather than with a more standard empirical measure $\eps\sum_{i=1}^n \delta_{\eps x_i}$. While we could have dealt with the latter as well (at the expense of modifications of some of the formulas), the former leads to ``better'' functions (polynomials, rational functions, etc) before taking limits, which is convenient when we aim at algebraic answers, as in the following Theorems \ref{t:arctic} and \ref{t:limitshape}. For instance, if $x_i=-i\theta$, $1\le i \le n$, then
$$
 \exp\left(\theta \int_{\mathbb R} \frac{\rho(s;\bmx)}{z-s} \rd s \right)= \exp\left( \int^{0}_{-\eps \theta n} \frac{\rd s}{z-s} \right)=
 \frac{z+\eps\theta n}{z}.
$$
 On the other hand, if we had used the standard empirical measure, then a similar expression would have become a much more singular function of $z$.

\subsection{Height fluctuations of $(q,\kappa)$-distributions on lozenge tilings}
\label{s:heightf}

We illustrate the strength of Theorem \ref{t:loopeq} by applying it to the study of the fluctuations of height functions of $(q,\kappa)$-distributions on lozenge tilings introduced in \cite{borodin2010q}, which were not accessible by previous methods.

We use the triangular grid with coordinate system $(t,x)$ based on two lattice directions inclined to each other under $120$ degrees, as in the left panel of Figure \ref{Fig_trapezoid}. $x$ increases in the vertical direction and $t$ increases in the down--right direction. The lozenges are defined as unions of adjacent elementary triangles of the grid and they have three types:
\begin{center}
\includegraphics[scale=0.5]{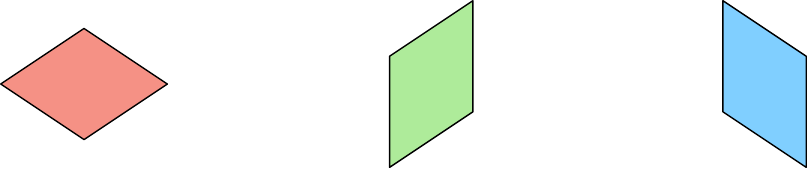}
\end{center}
We are going to refer to the first type as \emph{horizontal} lozenges; they are shown in red in our drawings.

\begin{figure}[t]
\begin{center}
 \includegraphics[scale=0.30]{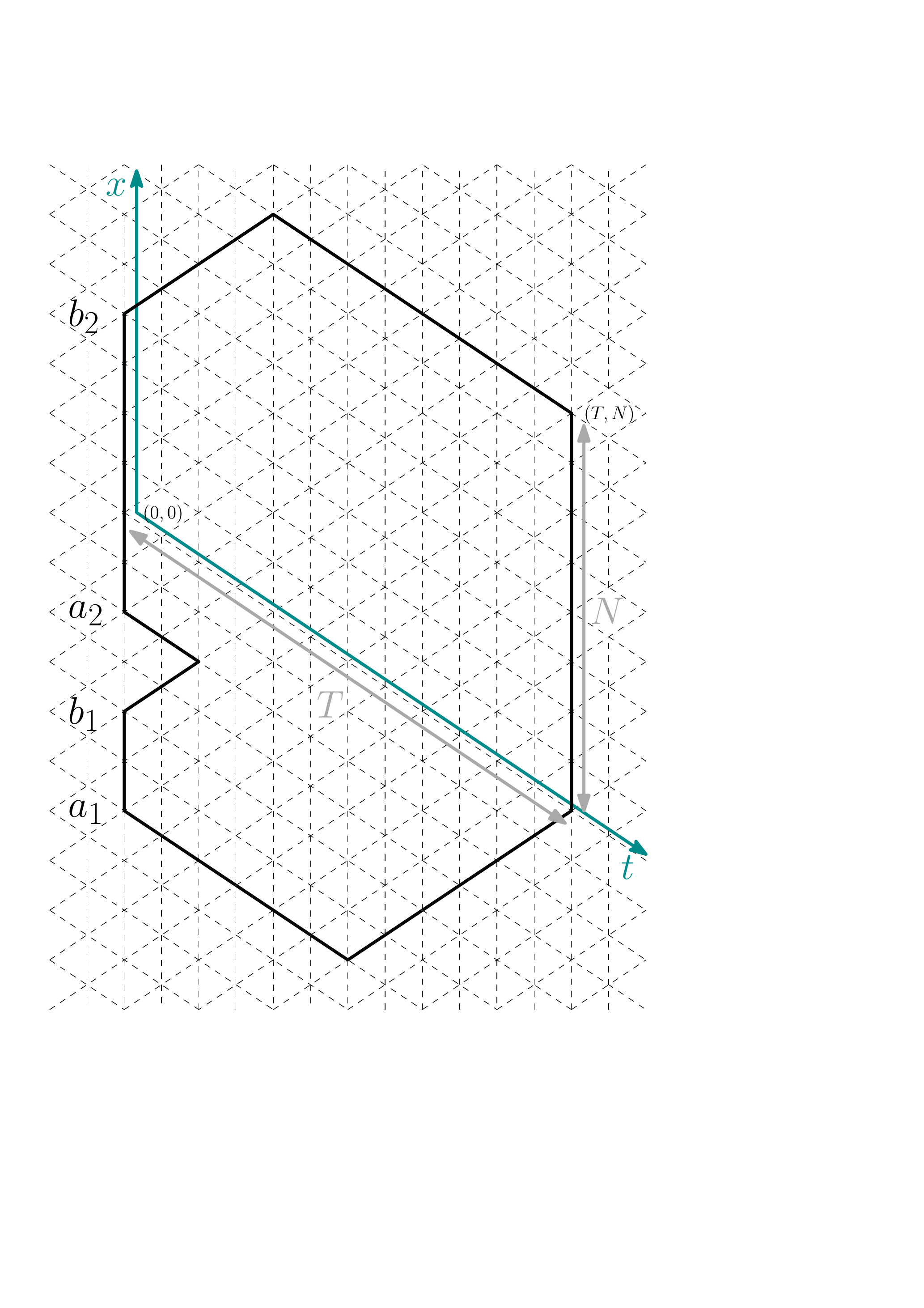}\qquad \,
 \includegraphics[scale=0.30]{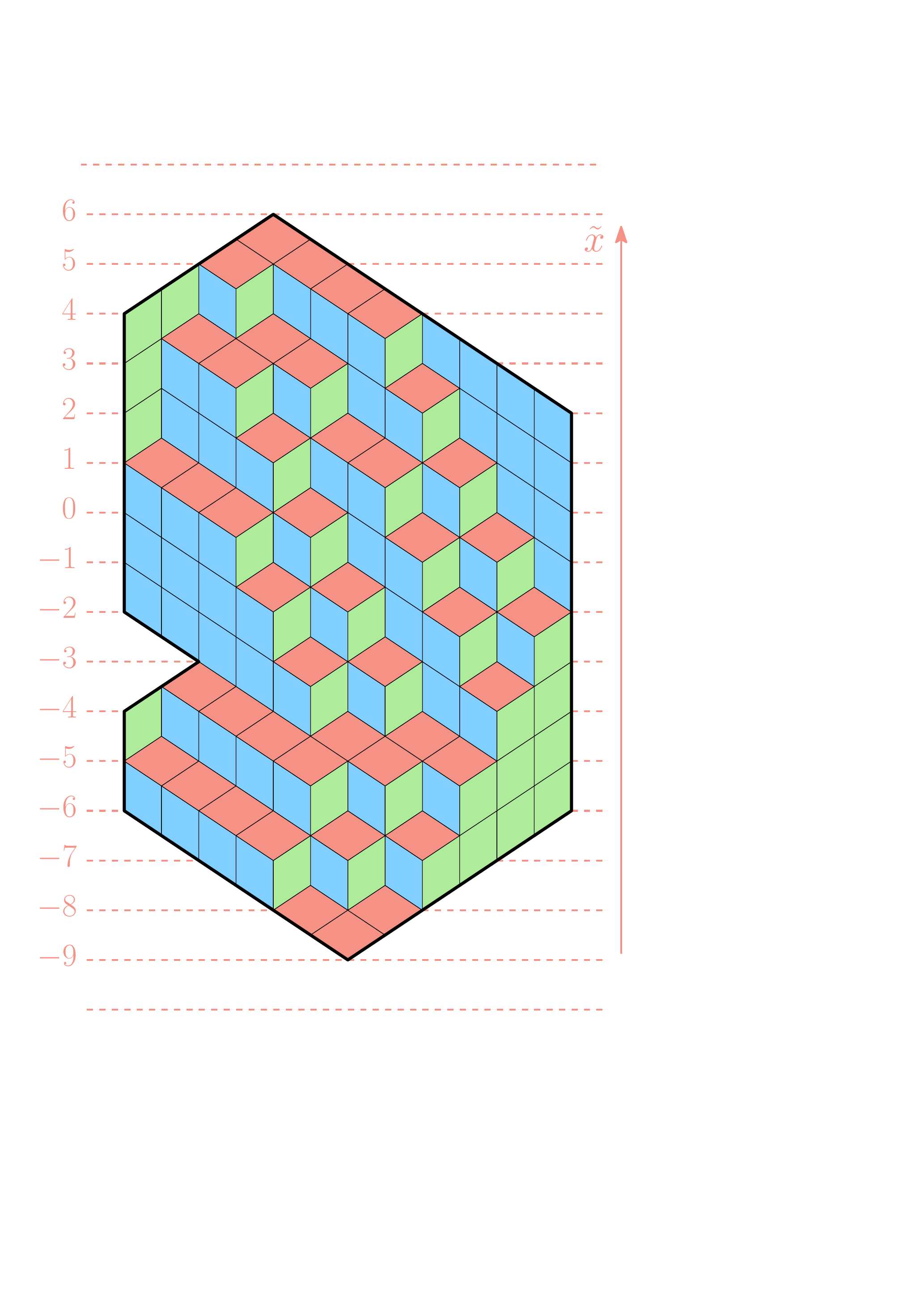} \qquad \,
 \includegraphics[scale=0.30]{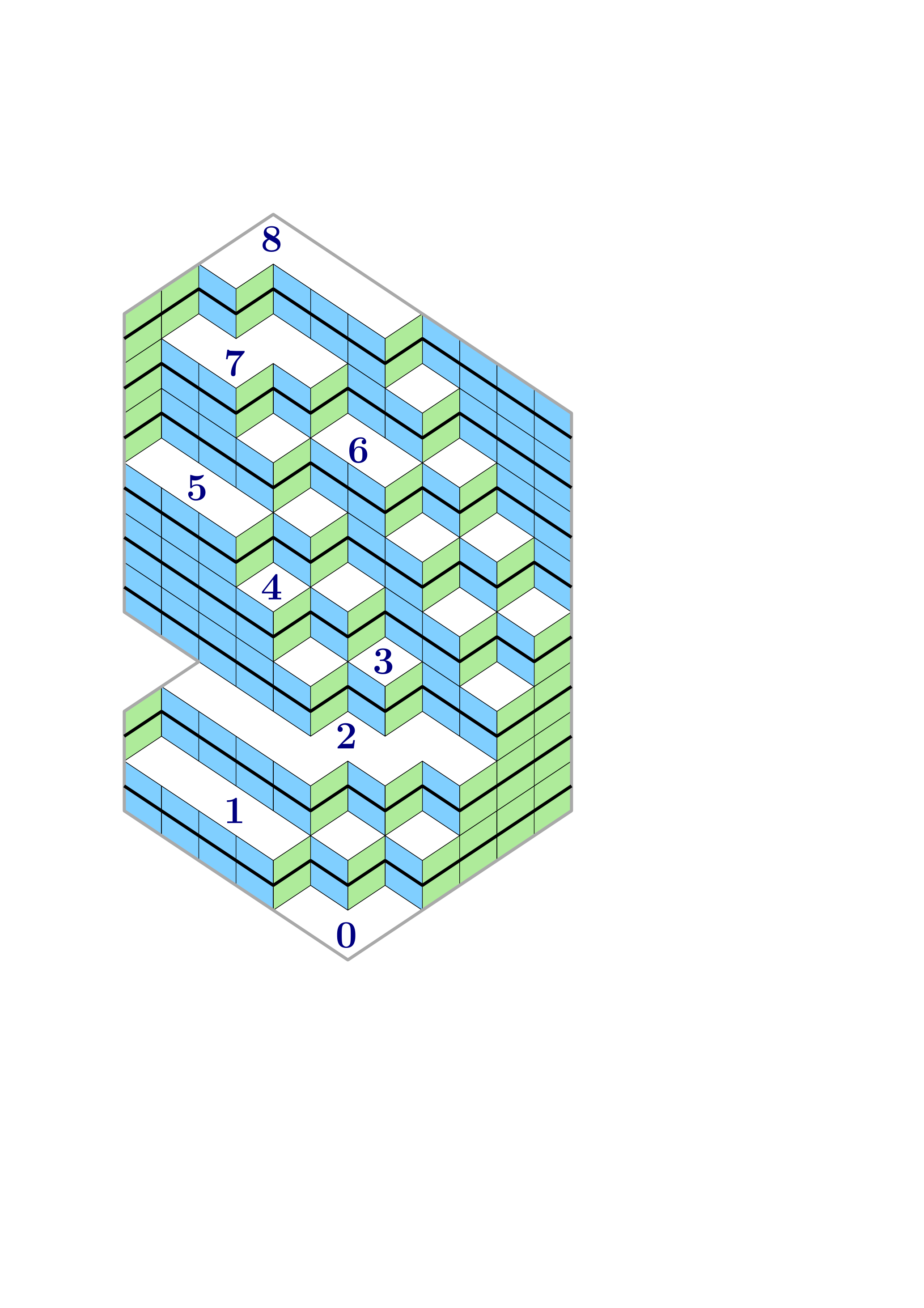}
 \caption{Left panel: trapezoid domain with $N=8$, $T=12$, $a_1=-6$, $b_1=-4$, $a_2=-2$, $b_2=4$; and $(t,x)$--coordinate system. Middle panel: one possible lozenge tiling and the values for $\tilde x$ in \eqref{eq_lozenge_weight}. Right panel: non-intersecting paths and values of the height function.  }
 \label{Fig_trapezoid}
 \end{center}
 \end{figure}

For any integers $N>0$, $T>0$, and $-T\leq a_1<b_1<a_2<b_2<\dots<a_r<b_r\leq N$, with $\sum_{i=1}^r (b_i-a_i)=N$, we consider a trapezoidal domain $P$ with right vertical side $[0,N]$ at horizontal coordinate $T$ and left vertical sides $[a_i,b_i]$ at horizontal coordinate $0$. Other sides of the domain are segments going in down-right direction from points $(0,a_i)$, $i=1,\dots,r$; segment in up-left direction from the point $(T,N)$; segments in up-right direction from points  $(0,b_i)$, $i=1,\dots,r$; and segment in down-left direction from $(T,0)$. As a result, $P$ is a polygonal domain with $3r+3$ sides. We refer to  the left panel of Figure  \ref{Fig_trapezoid} for an example with $r=2$.

We let $\Omega(P)$ denote  the set of all lozenge tilings of the domain $P$; one such tiling for a particular choice of the parameters is shown in the middle panel of Figure \ref{Fig_trapezoid}. Uniformly random lozenge tilings of $P$ are now well understood, see \cite{VG2020} for general overview and \cite{MR3278913, petrov2015asymptotics,GorinPanova,DuseMetcalfeI,DuseMetcalfeII, DuseJohanssonMetcalfe,bufetov2018fluctuations} for more specific results, and our task is to go beyond these results and consider more advanced measures on tilings. The $(q,\kappa)$-distribution on lozenge tilings of $P$ assigns to any $\cT\in \Omega(P)$ the probability
 \begin{align}\label{e:qtiling}
 \bP(\cT)=\frac{w(\cT)}{\sum_{\cT'\in \Omega(P)}w(\cT')},
 \end{align}
where the weights $w(\cT)$ of a tiling is the product of weights of its horizontal lozenges:
 \begin{align} \label{eq_lozenge_weight}
 w(\cT)=\prod_{\La\in \cT}w(\La),\qquad w(\La)=\kappa q^{x-t/2}- \kappa^{-1}q^{-x+t/2}=\kappa q^{\tilde x}-\kappa^{-1} q^{-\tilde x},
 \end{align}
and $(t,x)$ is the coordinate of the left corner of a horizontal lozenge $\La\in \cT$. The number $\tilde x=x-t/2$ appearing \eqref{eq_lozenge_weight} has a simple meaning: this is the vertical coordinate counted so that its fixed level lines are horizontal, i.e.\ by using the Cartesian coordinate system (rather than non-orthogonal $(t,x)$ coordinate system that we stick to), see the middle panel of Figure \ref{Fig_trapezoid}.

We need to make sure that the weights \eqref{e:qtiling} are positive. There are three possible restrictions on the parameters which guarantee positivity:
\begin{enumerate}
\item imaginary case: $q$ is a positive real number, $\kappa$ is an arbitrary pure imaginary complex number.
\item real case: $q$ is a positive real number, $\kappa$ is a real number with additional restrictions depending on the trapezoidal domain; $\kappa^2$ cannot lie inside the interval $[q^{-(N+b_r-1)}, q^{T+a_1-1}]$ or $[q^{T+a_1-1}, q^{-(N+b_r-1)}]$ depending on whether $q>1$ or $q<1$.
\item trigonometric  case: $q$ and $\kappa$ are complex numbers of modulus $1$, $q=e^{i\al}, \kappa=e^{\ri \beta}$, with additional restrictions depending on the trapezoidal domain;
$2\beta-\al(T+a_1-1)$ and $2\beta+\al(N+b_r-1)$ must lie in the same interval $(2k\pi, 2(k+1)\pi)$, for some $k\in \bZ$.
\end{enumerate}
Figure \ref{Figure_simulations} shows samples  from $(q,\kappa)$--distribution on lozenge tilings of hexagons for various values of the parameters. For special values of $q$ and $\kappa$ the formulas \eqref{e:qtiling},\eqref{eq_lozenge_weight} can be simplified: $\kappa=\infty$ leads to the weight $q^{\text{volume}}$ on tilings, where by volume we mean the total volume enclosed under the stepped surface corresponding to the tiling (the surface becomes visible by treating three types of lozenges as projections of three sides of a unit cube, cf.\ the middle panel of Figure \ref{Fig_trapezoid}); $\kappa=0$ leads to the weight $q^{-\text{volume}}$; $q=1$ leads to the uniform measure on tilings; in the limit $q\to 1$, $\kappa\to 1$ and after division by $q-1$, the weight \eqref{eq_lozenge_weight} turns into a linear function of $\tilde x$.

\begin{figure}
\begin{center}
 \includegraphics[width=0.3\linewidth]{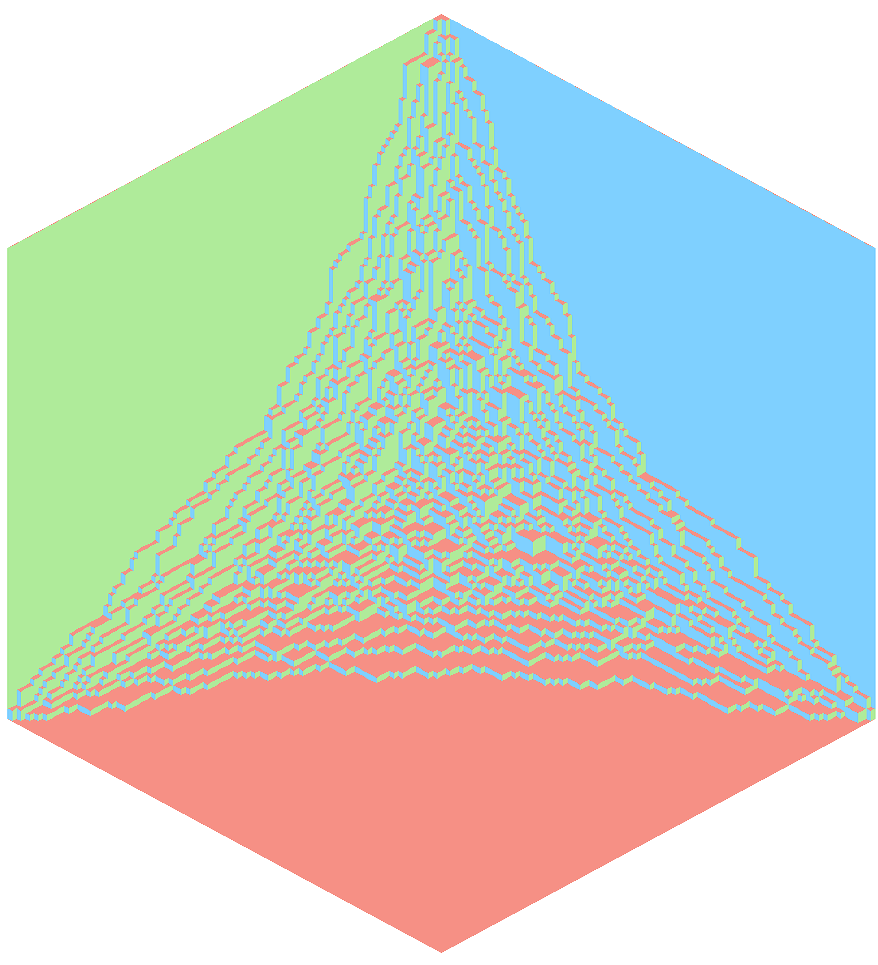}\hfill
 \includegraphics[width=0.3\linewidth]{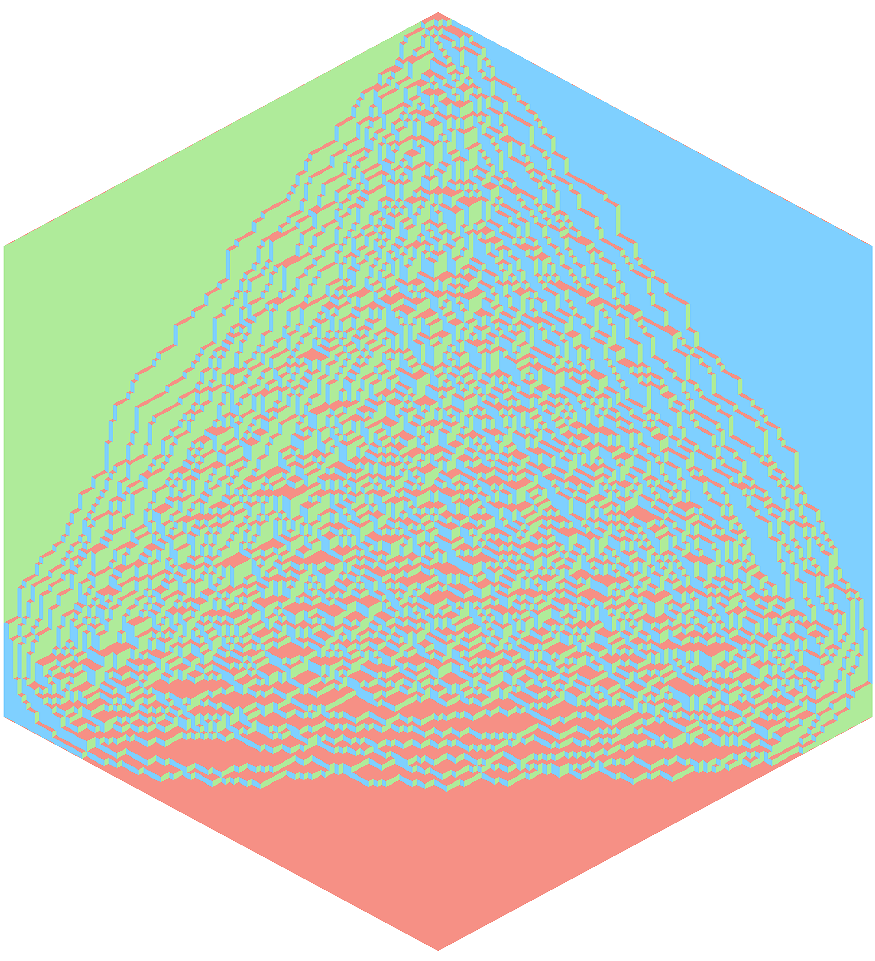}\hfill
 \includegraphics[width=0.3\linewidth]{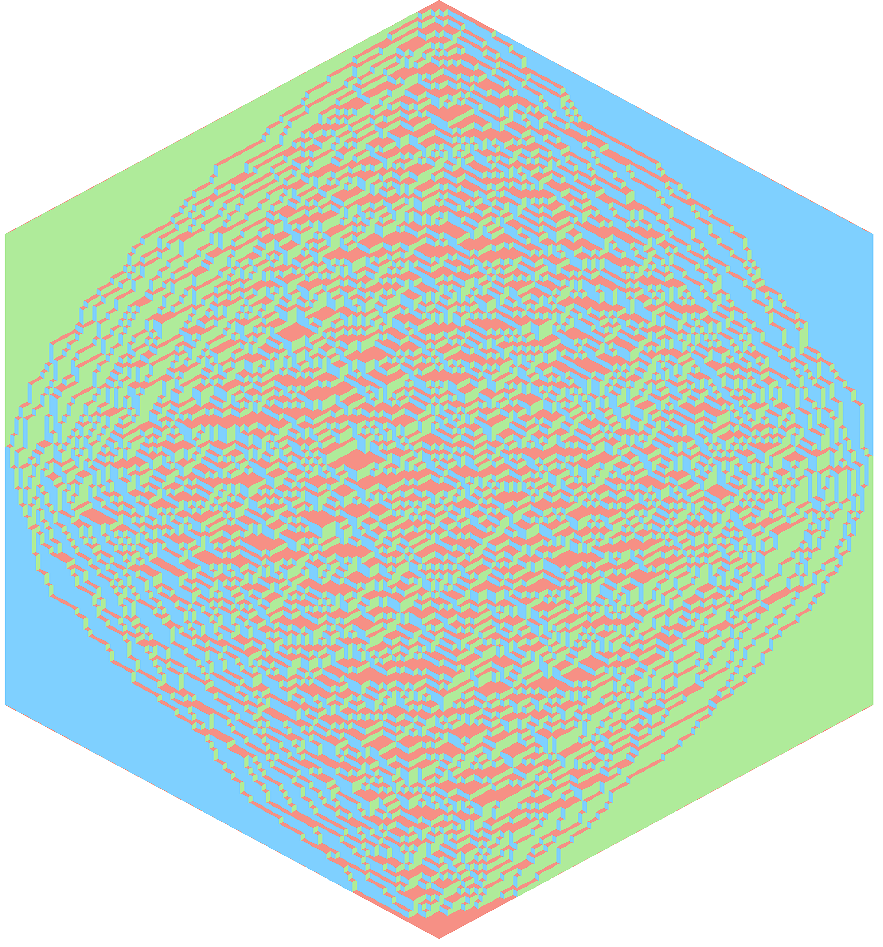}
 \caption{Samples of tilings of $100\times 100 \times 100$ hexagon. Left panel: imaginary case with $\kappa=\infty$, $q=0.97$. Center panel: real case with $q=0.99$, $\kappa=q^{-110}$. Right panel: trigonometric case with $\alpha=0.015$, $\beta=1.57$. }
 \label{Figure_simulations}
 \end{center}
 \end{figure}

The weights \eqref{eq_lozenge_weight} were first introduced in \cite{borodin2010q} for lozenge tilings of hexagons. The same paper showed that tilings with such weights lead to rich algebraic structures. In other words, in contrast to generic weights, \eqref{eq_lozenge_weight} should be treated as an exactly-solvable or integrable case: the analysis of \cite{borodin2010q} revealed connections to $q$--Racah orthogonal polynomials; in addition, the limit shape of the tilings was found to be encoded in an appropriate coordinate system by a quadratic polynomial in two variables. These results have led naturally to a question: is such integrability a specific feature of the hexagons, or does it extend to tilings with weights \eqref{eq_lozenge_weight} for more general domains? Our results show that the latter is true.\footnote{One conceptual explanation is as follows: the weights \eqref{e:qtiling},\eqref{eq_lozenge_weight} are not ad hoc, instead they are deeply related to the combinatorics of the Koornwinder symmetric polynomials, as we detail in Section \ref{Section_quasi_branching}.} We now present three theorems explaining how the macroscopic behavior of $(q,\kappa)$--random lozenge tilings of $P$ is encoded in certain algebraic equations.

We introduce a small real parameter $\varepsilon\ll 1$ and assume that $r$ and $\kappa$ stay fixed, while $q$, $N$, $T$, $a_i, b_i$, $i=1,\dots,r$, depend on $\eps$ in such a way that as $\eps$ goes to $0$ we have:
\begin{align} \label{eq_parameter_scaling}
\eps N \to \sfN, \quad \eps T\to \sfT, \quad \eps \ln(q)\to \ln(\sfq),\quad \eps a_i\to \sfa_i, \quad \eps b_i\to \sfb_i, \quad 1\leq i\leq r,
\end{align}
with asymptotic parameters $\sfq$, $\sfN$, $\sfT$, $\sfa_i, \sfb_i$, $i=1,\dots,r$ satisfying $-\sfT\leq \sfa_1<\sfb_1<\sfa_2<\sfb_2<\dots<\sfa_r<\sfb_r\leq \sfN$, and
 $\sum_{i=1}^r (\sfb_i-\sfa_i)=\sfN$. We also keep positivity constraints, so that in the real case $\sfq,\kappa\in\mathbb R$ and $\kappa^2$ cannot lie inside the interval $[\sfq^{-(\sfN+\sfb_r-1)}, \sfq^{\sfT+\sfa_1-1}]$ or $[\sfq^{\sfT+\sfa_1-1}, \sfq^{-(\sfN+\sfb_r-1)}]$ depending on whether $\sfq>1$ or $\sfq<1$.
  The conditions imply that the rescaled trapezoidal domain $\varepsilon P$ converges to a limiting domain $\sfP$.

It is convenient to state the asymptotic results on tilings in terms of their \emph{height functions.} We are going to use the following version of the height function $h$ defined on the vertices of the triangular lattice inside $P$. Let us ignore \La\, lozenges in the tiling and trace only the lozenges of the remaining two types \Lb\, and \Lc. Then these lozenges form $N$ \emph{non-intersecting paths} traversing $P$ from the left boundary to the right boundary, as shown in the right panel of Figure \ref{Fig_trapezoid}. We define $h(t,x)$ as the total number of paths below a given point $(t,x)$; this is originally defined for integer values of $t$ and $x$ and then extended to all real values by linear interpolation. Because tilings are random, so is the function $h(t,x)$; however, its values on the boundary of $P$ are deterministic: for instance, it equals $0$ on the two bottom-most segments of the boundary of $P$ and it equals $N$ on the two upper-most segments of the boundary of $P$.

A very general \emph{variational principle for tilings} of  \cite{MR1815214}  implies that in the regime \eqref{eq_parameter_scaling} the height functions $h(\varepsilon^{-1}\sft,\varepsilon^{-1}\sfx)$ for random lozenge tilings of $P$ converge in probability to a deterministic limit:
\begin{align}\label{e:hlimit0}
\varepsilon h(\varepsilon^{-1}\sft , \varepsilon^{-1}\sfx )\rightarrow  \sfh(\sft ,  \sfx ),\quad ( \sft, \sfx )\in {\sfP}.
\end{align}
The limiting height function $\sfh(\sft ,  \sfx )$ is the minimizer of a certain surface tension integral, as noticed in \cite[Section 2.4]{borodin2010q}. In general such minimization problems are hard to solve. However, in our situation $\sfh(\sft ,  \sfx )$ is explicit and we compute it for all trapezoidal domains $P$ in Theorems  \ref{t:arctic} and \ref{t:limitshape}.

For a point $ (\sft,\sfx)$ in the interior
of ${\sfP}$, we identify the gradient of the limiting height function $\nabla  \sfh$ with the local densities of three types of lozenges:
\begin{equation}\label{eq_proportions_through_derivatives}
p_{\La}(\sft,\sfx)=1-\del_{\sfx} \sfh(\sft,\sfx), \quad p_{\Lb}(\sft,\sfx)=-\del_\sft  \sfh(\sft,\sfx), \quad  p_{\Lc}(\sft,\sfx)=\del_{\sfx} \sfh(\sfx,\sft)+\del_\sft  \sfh(\sft,\sfx).
\end{equation}
A central feature of random lozenge tilings is that they exhibit boundary-induced phase transitions. Depending on the shape of the domain, they can admit frozen regions, where the associated height function is flat almost deterministically, and liquid regions, where the height function appears more rough and random and all three types
of lozenges are asymptotically present. More precisely, the liquid region is defined as
\begin{equation}\label{e:defLP}
\sfL(\sfP):=\{(\sft, \sfx)\in \sfP: 0< p_{\La}(\sft,\sfx), p_{\Lb}(\sft,\sfx), p_{\Lc}(\sft,\sfx)<1\},
\end{equation}
and the rest of the domain is called the frozen region. In particular, in Figure \ref{Figure_simulations} we clearly see frozen regions near the boundaries of the hexagon: in each such region only one type of lozenges is present. Our first result is an explicit rational parameterization of the \emph{arctic curve} separating frozen and liquid regions.

\begin{figure}
\begin{center}
 \includegraphics[height=5cm]{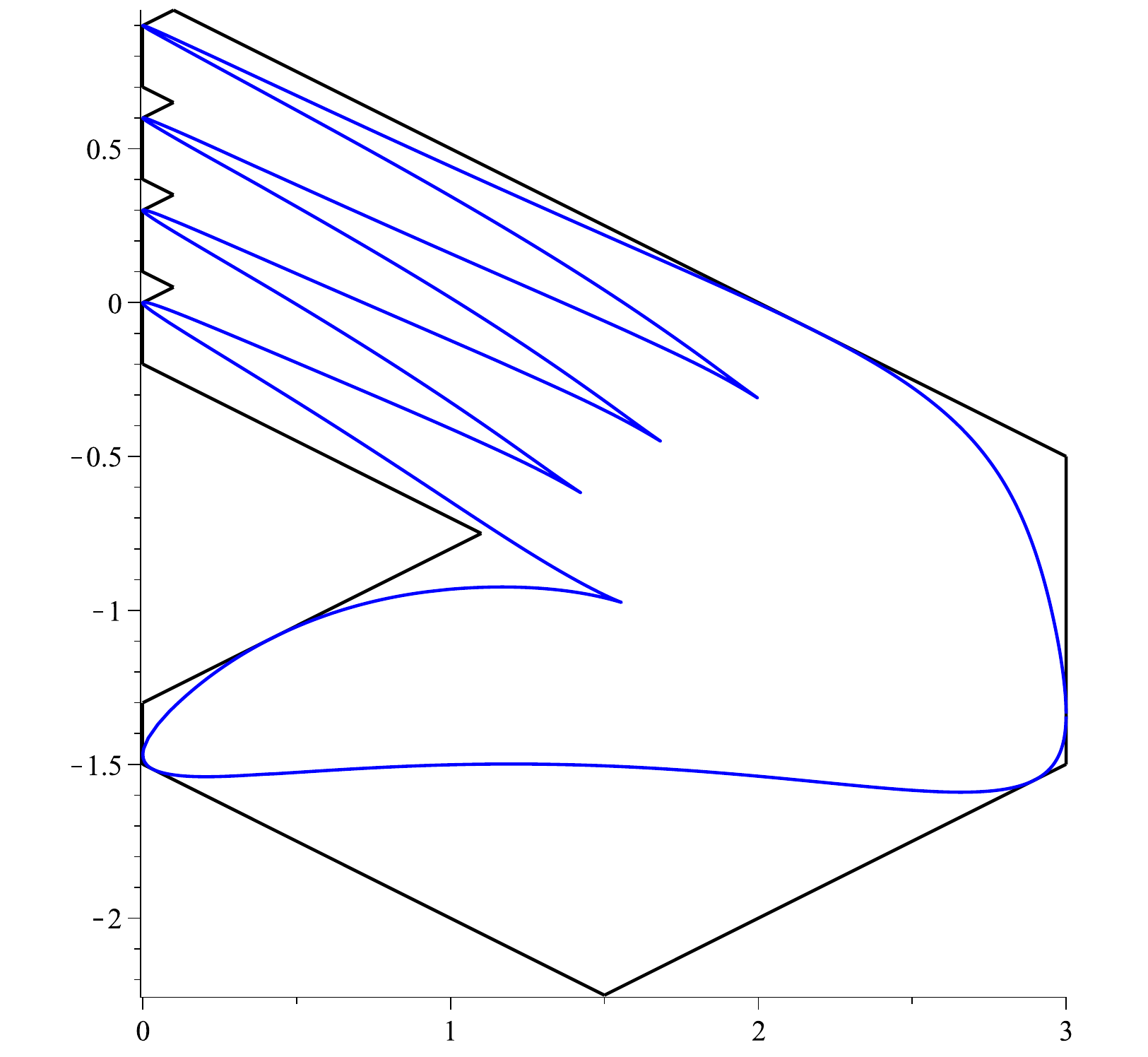}\qquad \qquad\qquad \qquad
\includegraphics[height=5cm]{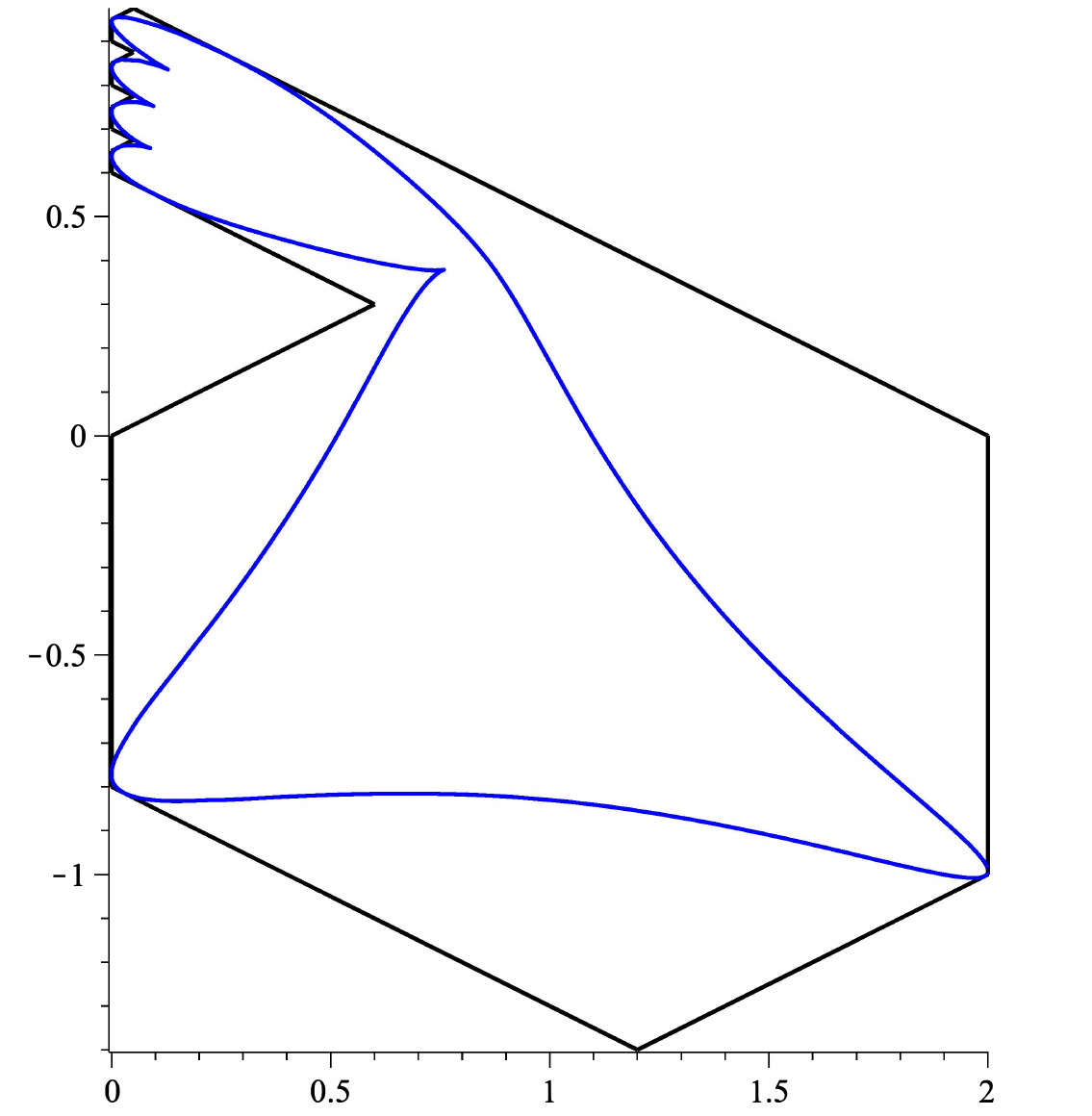}
 \caption{Arctic curves via Theorem \ref{t:arctic}. Left panel: $\sfq=\tfrac{1}{10}$, $\kappa=\frac{\ii}{10}$. Right panel: $\sfq=20$,  $\kappa= \frac{\ii}{10}$. \label{Figure_Frozen}}
 \end{center}
 \end{figure}

\begin{theorem} \label{t:arctic}
Consider the limit regime \eqref{eq_parameter_scaling} and assume that we are in the real or imaginary case, so that $\sfq$ is real and $\kappa$ is either real or purely imaginary. For any $u\in \bC$, let
\begin{equation} \label{eq_algebraic_equation}
f_0(u)
=\frac{(\sfq^{\sfN}-\sfq^{u})(\kappa^2\sfq^{-\sfT}-\sfq^{-u})}{
(\kappa^2 \sfq^{\sfN}-\sfq^{-u})(\sfq^{-\sfT}-\sfq^{u})}\prod_{i=1}^r\frac{(\sfq^{\sfa_i}-\sfq^{u})(\kappa^2 \sfq^{\sfb_i}-\sfq^{-u})}{(\kappa^2 \sfq^{\sfa_i}-\sfq^{-u})
(\sfq^{\sfb_i}-\sfq^{u})}.
\end{equation}
and
\begin{align}\label{e:defUV}
U(u)=\frac{f_0(u)\sfq^{-u}-\kappa^2 \sfq^{u}}{1-f_0(u)},\qquad
V(u)=\frac{\sfq^{-u}-f_0(u)\kappa^2 \sfq^{u}}{1-f_0(u)}.
\end{align}
The arctic  curve separating frozen and liquid regions can be parameterized as $(\sft(u), \sfx(u))$ found from:
\begin{equation}
\label{eq_parameterization_intro}
\sfq^\sft=\frac{V'(u)}{U'(u)}, \qquad
 \sfq^\sfx=\frac{\sfw\pm\sqrt{\sfw^2-4\kappa^2\sfq^{-\sft}}}{2\kappa^2\sfq^{-\sft}},\quad \text{ where }\quad
 \sfw=V(u)-\frac{U(u)}{\sfq^\sft},
\end{equation}
and the parameter $u$ is chosen in such a way that $\sfq^{-u}+\kappa^2 \sfq^u$ belongs to the real line $\bR$.
\end{theorem}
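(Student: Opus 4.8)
The plan is to obtain Theorem~\ref{t:arctic} as the degeneration of the full limit shape description (our Theorem~\ref{t:limitshape}), which itself comes from running the asymptotic analysis of Theorem~\ref{t:loopstudy} on $(q,\kappa)$--distributions. The first step is to present the $(q,\kappa)$--distribution \eqref{e:qtiling}--\eqref{eq_lozenge_weight} on tilings of the trapezoid $P$ as a Markov chain assembled from one--step transitions of the form \eqref{e:m1}. Scanning a tiling column by column in the $t$--direction, the $x$--coordinates of horizontal lozenges in neighbouring columns interlace, and the product form \eqref{eq_lozenge_weight} of the weight factorizes over columns; this exhibits the measure as an alternating composition of ascending and descending transitions \eqref{e:m1} with $\theta=1$ and $b(z)=q^{z}$, the weights $\phi^{\pm}$ encoding the factor $\kappa q^{\tilde x}-\kappa^{-1}q^{-\tilde x}$ together with the left--boundary data $a_i,b_i$. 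This reduction, and the conversion between the two transition types through the particle--hole duality of Section~\ref{s:PHD}, is carried out in Section~\ref{s:examples}; the technical hypotheses of Theorem~\ref{t:loopstudy} are verified for this family in Section~\ref{Section_assumptions}.

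Granting this, Theorem~\ref{t:loopstudy} supplies the limit shape via the deterministic term in \eqref{eq_decomposition_field}. The input is the $\varepsilon\to0$ limit of the holomorphic observable \eqref{e:sum1} provided by Theorem~\ref{t:loopeq}: since $b(z+\theta)/b(z)=q^{\theta}\to 1$ in the scaling \eqref{eq_parameter_scaling}, the two products in \eqref{e:sum1} differ only at order $\varepsilon$, and holomorphicity of their weighted sum forces an algebraic equation for the limiting exponential transform $\cG_{\sft}$ of the rescaled column density at time $\sft$, which, combined with the deterministic boundary values of the height function, determines $\cG_{\sft}$ on all of $\sfP$. Encoding the solution through the function $f_0$ of \eqref{eq_algebraic_equation} --- whose product over $i=1,\dots,r$ records the left--boundary intervals $[\sfa_i,\sfb_i]$ and whose prefactor records the right side $[0,\sfN]$ and the depth $\sfT$ --- the local statistics at an interior point $(\sft,\sfx)$ are governed by a single complex parameter, the \emph{complex slope} $u=u(\sft,\sfx)$, which arises as the saddle point in the steepest--descent analysis of the limiting transform. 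The saddle equation is precisely $\tfrac{d}{du}\bigl[V(u)-\sfq^{-\sft}U(u)\bigr]=0$, that is $\sfq^{\sft}=V'(u)/U'(u)$, with $U,V$ as in \eqref{e:defUV}, and the critical value $\sfw=V(u)-\sfq^{-\sft}U(u)$ recovers the spatial coordinate through $\sfw=\sfq^{-\sfx}+\kappa^{2}\sfq^{\sfx-\sft}$ --- the quadratic solved by $\sfq^{\sfx}=\tfrac{\sfw\pm\sqrt{\sfw^{2}-4\kappa^{2}\sfq^{-\sft}}}{2\kappa^{2}\sfq^{-\sft}}$. Through \eqref{eq_proportions_through_derivatives} this yields the limit shape, hence the liquid region \eqref{e:defLP}, in the stated parametric form.

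It remains to locate the arctic curve. In the liquid region the complex slope $u$ is non--real; it ranges over a fixed ``complex slope'' domain $\cD$ cut out by the real locus of $\sfq^{-u}+\kappa^{2}\sfq^{u}$ (note the identity $V(u)-U(u)=\sfq^{-u}+\kappa^{2}\sfq^{u}$, so $\cD$ is one side of $\{V-U\in\bR\}$), and on $\cD$ all three proportions $p_{\La},p_{\Lb},p_{\Lc}$ lie strictly in $(0,1)$. As $(\sft,\sfx)$ approaches $\partial\sfL(\sfP)$ some proportion degenerates to $0$ or $1$, which forces $u$ onto $\partial\cD$, i.e.\ onto the real locus $\sfq^{-u}+\kappa^{2}\sfq^{u}\in\bR$; there the discriminant $\sfw^{2}-4\kappa^{2}\sfq^{-\sft}$ changes sign and the two branches of $\sfq^{\sfx}$ merge, so $(\sft(u),\sfx(u))$ in \eqref{eq_parameterization_intro} sweeps out a curve. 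One then verifies the converse --- that the entire image of the real locus lies on $\partial\sfL(\sfP)$, by checking directly that the limiting densities degenerate along it --- and that this image is covered exactly once, up to the branch choice $\pm$, using the positivity constraints of the real and imaginary cases to keep the relevant quantities real and $\cD$ non--degenerate.

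The automatic part is writing down the loop equation \eqref{e:sum1} and its $\varepsilon\to0$ limit; the genuinely delicate steps, which I expect to be the main obstacle, are two. First, tracking the boundary data $f_0$ through the $\sft$--propagation of the limiting transform and recognizing the resulting saddle structure in the exact form $\sfq^{\sft}=V'(u)/U'(u)$, $\sfw=\sfq^{-\sfx}+\kappa^{2}\sfq^{\sfx-\sft}$. Second, pinning down the complex slope domain $\cD$ and proving that its boundary --- the real locus of $\sfq^{-u}+\kappa^{2}\sfq^{u}$ --- maps onto the topological boundary of the liquid region, with the two square--root branches correctly matched to the two sides of the arctic curve, separately in the imaginary case and in the real case (where the sub--cases $\sfq>1$ and $\sfq<1$ must be treated).
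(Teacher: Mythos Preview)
Your proposal misidentifies the role of the two equations in \eqref{eq_parameterization_intro}. You call $\tfrac{d}{du}\bigl[V(u)-\sfq^{-\sft}U(u)\bigr]=0$ the ``saddle equation'' that produces the complex slope $u=u(\sft,\sfx)$ at a general interior point, with $\sfx$ then recovered from the ``critical value'' $\sfw$. But given $\sft$ alone, that derivative condition already fixes $u$ (up to finitely many choices), and then $\sfx$ is forced --- so your recipe traces out a \emph{curve}, not the two--dimensional liquid region, and cannot yield the limit shape via \eqref{eq_proportions_through_derivatives} as you claim. In the paper the logic is reversed: at every $(\sft,\sfx)$ the complex slope is obtained by solving only \eqref{e:solveu}, i.e.\ $\sfq^{-\sfx}+\kappa^{2}\sfq^{\sfx-\sft}=V(u)-\sfq^{-\sft}U(u)$, and this is not a saddle equation but the statement that the characteristic of the complex Burgers equation \eqref{e:Burger} launched from $u$ at time $0$ passes through $\sfx$ at time $\sft$ (Proposition~\ref{Proposition_LLN_equation}; no steepest descent appears anywhere --- Theorem~\ref{t:loopstudy} outputs an evolution equation for the Stieltjes transform, which is then recognized as \eqref{e:Burger} and solved by characteristics, with $U,V$ as first integrals). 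The liquid region is exactly where \eqref{e:solveu} has a root with $\sfq^{-u}+\kappa^{2}\sfq^{u}\notin\bR$ (Propositions~\ref{Proposition_number_of_roots} and \ref{Proposition_liquid}), and the arctic curve is where the complex--conjugate roots of \eqref{e:solveu} collide into a real \emph{double} root; only there does the \emph{extra} condition $V'(u)-\sfq^{-\sft}U'(u)=0$ enter. So the derivative condition is a double--root criterion on \eqref{e:solveu}, not the equation defining $u$ in the bulk.

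Two smaller issues follow from this. Your claim that on the arctic curve ``the discriminant $\sfw^{2}-4\kappa^{2}\sfq^{-\sft}$ changes sign and the two branches of $\sfq^{\sfx}$ merge'' is incorrect: that discriminant vanishes only on the single line $\sfx=\sft/2-\log_{\sfq}\kappa$, not along the arctic curve. The $\pm$ in \eqref{eq_parameterization_intro} merely selects between the arctic curve and its image under the involution $\sfx\mapsto -\sfx+\sft-2\log_{\sfq}\kappa$, one of which lies outside $\sfP$ (see the remark following the theorem). And for the parameter range, the paper does not argue via an abstract ``complex slope domain $\cD$'': it uses the explicit characteristic flow $w^{\sft}=w^{0}-(\sfq^{-\sft}-1)\cS_{0}(w^{0})$ and the conformal bijectivity of $u\mapsto z(\sft,u)$ (Lemma~\ref{Lemma_bijectivity}, Remark~\ref{Remark_extension_to_the_boundary}) to show both that non--real $\sfq^{-u}+\kappa^{2}\sfq^{u}$ always lands inside the liquid region and that every real value is reached by the boundary of the flow domain for some $\sft\in(0,\sfT)$, so that the parametrization \eqref{eq_parameterization_intro} with $u$ on the real locus sweeps out the entire arctic curve.
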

\begin{remark}
 The choice of the sign in \eqref{eq_parameterization_intro} arises from the solution of the quadratic equation in $\sfq^\sfx$:
 \begin{equation*}
  \kappa^2\sfq^{-\sft}  \sfq^\sfx  + \sfq^{-\sfx} =\sfw.
 \end{equation*}
 When $\kappa$ is purely imaginary, we should choose $+$ in \eqref{eq_parameterization_intro} in order to guarantee the positivity of $\sfq^\sfx$. When $\kappa$ is real, the two choices lead to two curves, which are images of each other under the map $(\sfx, \sft)\mapsto (-\sfx+\sft-2\log_\sfq \kappa,\sft)$. One of them is the desired arctic curve, while the second one is outside the polygon $\sfP$.
\end{remark}
\begin{remark}
We are not going to detail the trigonometric case here and in the following theorems. Our approach still applies and, algebraically, all the formulas should remain the same; however, various small features (e.g., the choice of the curve on the complex plane where the parameter $u$ belongs in Theorem \ref{t:arctic}) need to be changed.
\end{remark}

 Examples of the arctic curves drawn using Theorem \ref{t:arctic} are shown in Figure \ref{Figure_Frozen}.

Inside the liquid region, following the general philosophy of \cite{KO_Burgers}, \cite[Lectures 9,10]{VG2020} we encode the triplet of local densities by the complex slope $f$, which is a complex number with positive imaginary part and such that the triangle $(0,1,f)$ has angles $(\pi p_{\La}, \pi p_{\Lb},\pi p_{\Lc})$, as in Figure \ref{Figure_triangle}:
\begin{align}\label{e:localdd}
 \arg f=\pi p_{\La},\quad \arg(f-1)=\pi( p_{\La}+p_{\Lc}).
\end{align}
\begin{figure}
\begin{center}
 \includegraphics[width=0.5\linewidth]{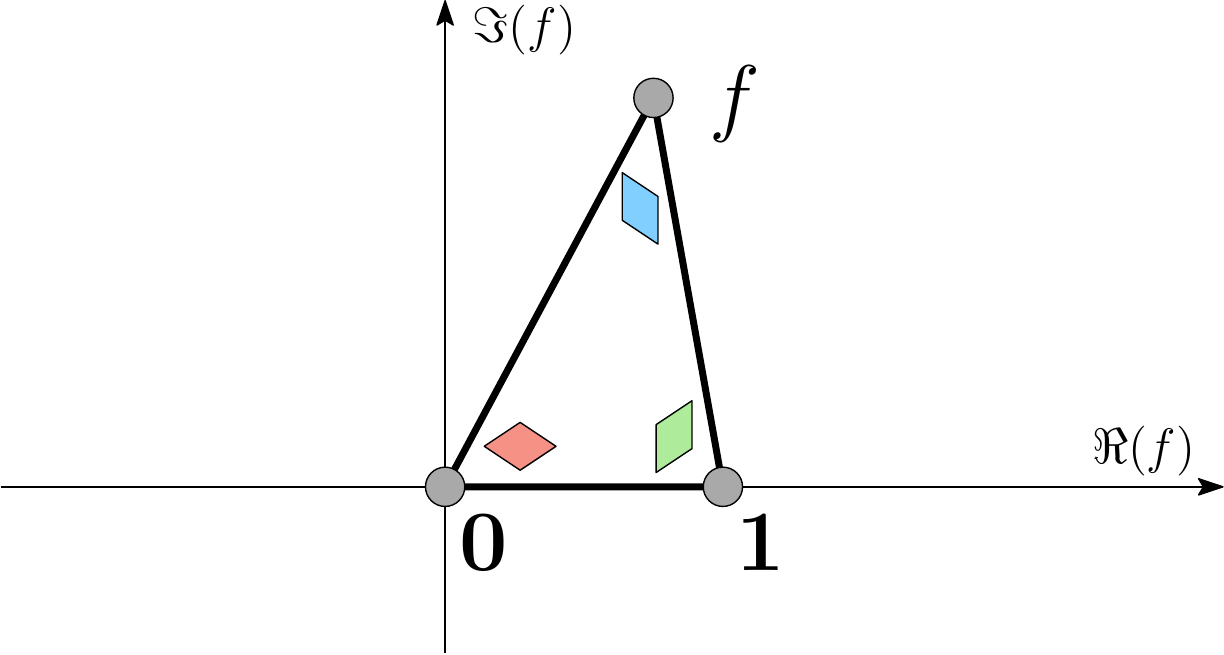}
 \caption{Triangle $(0,1,f)$ has angles $(\pi p_{\protect \La}, \pi p_{\protect \Lb},\pi p_{\protect \Lc})$. \label{Figure_triangle}}
 \end{center}
 \end{figure}

In the next theorem, we state the convergence of the height function, and provide an explicit characterization of the complex slope for the $(q,\kappa)$-distributions on lozenge tilings of trapezoid domains.

\begin{theorem}\label{t:limitshape}
Consider the limit regime \eqref{eq_parameter_scaling} and assume that we are in the real or imaginary case.
Then the height function $h(x,t)$ of random tilings
converges in probability,
\begin{align}\label{e:hlimit2}
\sup_{( \sft, \sfx )\in {\sfP}} |\varepsilon h(\varepsilon^{-1}\sft , \varepsilon^{-1}\sfx )-  \sfh(\sft ,  \sfx )| \to 0.
\end{align}
Further, using  $U(u)$ and  $V(u)$ from \eqref{e:defUV}, consider an equation on an unknown function $u=u(\sft, \sfx)$
\begin{align}\label{e:solveu}
\sfq^{-\sfx}+\kappa^2 \sfq^{\sfx-\sft}=V(u)-\frac{U(u)}{\sfq^\sft}.
\end{align}
Then $(\sft,\sfx)\in \sfP$ belongs to the liquid region  $\sfL(\sfP)$ defined by \eqref{eq_proportions_through_derivatives} and \eqref{e:defLP} if and only if \eqref{e:solveu} has a solution such that $\sfq^{-u}+\kappa^2 \sfq^u$ is non-real. In the latter case the complex slope \eqref{e:localdd} at $( \sft, \sfx )$ is given by
\begin{align}\label{e:ftx}
f_\sft(\sfx)
=\frac{(\sfq^{-u}+\kappa^2 \sfq^u)-(\sfq^{-\sfx}+\kappa^2 \sfq^{\sfx})}{(\sfq^{-u}+\kappa^2 \sfq^u)-(\sfq^{-\sfx+\sft}+\kappa^2\sfq^{\sfx-\sft})}.
\end{align}
\end{theorem}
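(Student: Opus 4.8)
The plan is to realize the $(q,\kappa)$-distribution on lozenge tilings of the trapezoid $P$ as a Markov chain with transition probabilities of the form \eqref{e:m1}, and then to apply Theorem \ref{t:loopstudy} (the Gaussian-fluctuation consequence of the dynamical loop equation) to extract both the limit shape and, implicitly, the structure needed for \eqref{e:hlimit2}. Concretely, I would read the horizontal slices of the tiling at fixed $t$ as signatures $\bmla(t)\in\GT_{N(t)}$ (where $N(t)$ is the number of $\Lb,\Lc$-paths crossing the vertical line at coordinate $t$), so that moving $t\mapsto t+1$ corresponds either to an ascending or a descending transition depending on the slope of the boundary of $P$ at that coordinate. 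The first step is therefore a careful combinatorial identification: express the conditional law of slice $t+1$ given slice $t$ as a ratio of (skew) Koornwinder/$q$-Racah branching weights, and match it against \eqref{e:m1} with the explicit choices $b(z)=\sfq^{-z}+\kappa^2\sfq^{z}$ (or the relevant $t$-shifted version $\sfq^{-z}+\kappa^2\sfq^{z-\sft}$), $\theta=1$, and $\phi^\pm$ read off from \eqref{eq_lozenge_weight}; the combinatorial identity underlying this match is exactly the quasi-branching rule for Koornwinder polynomials alluded to in Section \ref{Section_quasi_branching}. The deterministic initial condition is the degenerate slice at $t=0$ encoding the left boundary data $\{[\sfa_i,\sfb_i]\}$.

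The second step is to feed this into the asymptotic analysis. By Theorem \ref{t:loopstudy}, the deterministic term in the decomposition \eqref{eq_decomposition_field} gives a closed evolution equation for the density $\rho(s;\bmx)$ — equivalently, for the Stieltjes-type transform that, after the change of variables built from $b(z)$, becomes the generating function $f_0$ of \eqref{eq_algebraic_equation}. I would introduce the generating function $G_\sft(z)=\exp\!\bigl(\int \rho(s;\bmx_\sft)/(z-s)\,ds\bigr)$ at rescaled time $\sft$ and show, using the holomorphy of the observable \eqref{e:sum1} together with the one-step closeness of $\bmx+\bme$ to $\bmx$, that $G_\sft$ satisfies a first-order quasilinear PDE in $(\sft,z)$ (a complex Burgers-type equation). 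Solving this PDE by characteristics, with the initial data at $\sft=0$ fixed by the left boundary, produces the functions $U(u),V(u)$ of \eqref{e:defUV} as the conserved quantities along characteristics, and the implicit equation \eqref{e:solveu} for $u=u(\sft,\sfx)$ as the characteristic equation; the formula \eqref{e:ftx} for the complex slope then follows by translating the logarithmic derivative of $G_\sft$ at the relevant branch point back into the angles \eqref{e:localdd}, using the standard dictionary between the complex slope and the argument of $G$ (as in \cite{KO_Burgers}, \cite[Lectures 9,10]{VG2020}). The characterization of $\sfL(\sfP)$ via non-reality of $\sfq^{-u}+\kappa^2\sfq^u$ is then the statement that the characteristic equation has a genuinely complex solution precisely where all three lozenge densities are strictly positive.

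The third step is to upgrade the convergence of the density/transform to the uniform convergence of the height function \eqref{e:hlimit2}. Here I would combine the consistency of the one-step estimates of Theorem \ref{t:loopstudy} across all $O(\eps^{-1})$ time steps (a Gronwall-type propagation of the deterministic term, controlling the accumulated error) with the a priori Lipschitz bound on height functions of tilings; integrating the limiting densities \eqref{eq_proportions_through_derivatives} over the domain reconstructs $\sfh$, and the variational characterization of \cite{MR1815214} (already invoked for \eqref{e:hlimit0}) identifies this integrated object with the genuine limit shape, so that \eqref{e:hlimit2} holds.

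The main obstacle I expect is the first step: verifying that the $(q,\kappa)$-weights on tilings of a general trapezoid — not just a hexagon — genuinely produce transition probabilities of the exact multiplicative form \eqref{e:m1} with a \emph{conformal} $b$ and holomorphic $\phi^\pm$. For the hexagon this is essentially in \cite{borodin2010q} via $q$-Racah polynomials, but for arbitrary $\{[\sfa_i,\sfb_i]\}$ one must control the skew/branching structure at each of the $3r+3$ sides and check that the descending transitions (where $n$ drops) and ascending transitions are correctly interleaved and still fit the template — this is where the Koornwinder-polynomial combinatorics of Section \ref{Section_quasi_branching} does the real work, and where the positivity restrictions (imaginary/real/trigonometric cases) enter to keep $Z(\bmx)$ and the weights well-defined. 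Once the model is correctly set up, the PDE analysis and the characteristics computation are, while lengthy, essentially forced.
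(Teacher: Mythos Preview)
Your overall architecture — Markov chain $\to$ dynamical loop equation $\to$ complex Burgers equation $\to$ solution by characteristics with first integrals $U(u),V(u)$ — is the paper's route, and Steps~2 and~3 of your plan track Sections~\ref{Section_massaging}--\ref{Section_proof_of_LLN_intro} closely (including the identification of $\arg f_\sft$ with the lozenge densities and the Gronwall/martingale propagation across $O(\eps^{-1})$ steps). However, your Step~1 contains a genuine misconception that would send you into unnecessary complications.

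You propose to read the slice at time $t$ as a signature in $\GT_{N(t)}$ with \emph{varying} $N(t)$, alternating ascending and descending transitions according to the boundary slope of $P$, and to handle the $3r+3$ sides via skew Koornwinder branching. This is not how the paper sets up the chain. As in Theorem~\ref{Theorem_transition_qk}, the $N$ nonintersecting paths run from the \emph{fixed} initial configuration \eqref{e:initial_condition} (encoding all the left intervals $[\sfa_i,\sfb_i]$ at once) to the packed configuration $(N-1,\dots,0)$ at $t=T$; the number of particles is constantly $N$ and every transition is of the \emph{ascending} type \eqref{e:m1} with $\theta=1$, $b_t(x)=q^{-x}+\kappa^2 q^{x-t}$. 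The entire polygonal boundary --- all $3r+3$ sides --- is absorbed into the deterministic initial condition plus the zeros of the explicit weights $\phi_t^\pm$ (which freeze particles against the upper and lower boundaries). So the ``main obstacle'' you anticipate does not arise: there is no interleaving to manage, and the Koornwinder quasi-branching of Section~\ref{Section_quasi_branching} is conceptual background rather than a proof ingredient here.

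A second gap: you do not mention verifying Assumption~\ref{a:stable} (that $\cB(z)$ is nonvanishing and has zero winding), which is what Section~\ref{Section_assumptions} is devoted to. This is not automatic --- it requires locating all zeros of $\tilde f_\sft(z)-1$ and uses the symmetry $z\mapsto -z+\sft-2\log_\sfq\kappa$ of $\sfb_\sft$ together with a monotonicity argument in the real variable --- and without it Theorem~\ref{t:loopstudy} cannot be invoked.
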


\begin{remark} \label{Remark_4_solutions}
For any $(\sft,\sfx)\in \sfL(\sfP)$ we show in Propositions \ref{Proposition_number_of_roots} and \ref{Proposition_liquid} that \eqref{e:solveu} has four solutions with non-real $\sfq^u$: $u(\sft,\sfx)$;  $-u(\sft,\sfx)-2\log_\sfq \kappa$; and their complex conjugates.
When we plug them into \eqref{e:ftx}, we get two values for $f_\sft(\sfx)$, which are complex conjugate of each other. This uniquely determines $f_\sft(\sfx)$, since by our definitions the complex slope $f$ is in the upper-half plane.
\end{remark}
\begin{remark}
 As explained in \cite[Section 2.4]{borodin2010q}, the variational problem for the limit shape implies that the complex slope $f$ satisfies a version of the complex Burgers equation inside the liquid region:
\begin{equation}\label{e:Burger}
\frac{\del_\sft  f}{f} -\frac{\del_\sfx f}{1-f}=\ln(\sfq)\frac{\kappa^2 \sfq^{\sfx-\sft}+\sfq^{-\sfx}}{\kappa^2\sfq^{\sfx-\sft}-\sfq^{-\sfx}},\qquad (\sft,\sfx)\in \sfL(\sfP).
\end{equation}
On the other hand, \eqref{e:ftx}, \eqref{e:solveu}, and \eqref{e:defUV} imply\footnote{Expressing $f_0(u)$ through $U(u)$ and $V(u)$ via \eqref{e:defUV} and equating the results, one gets a linear relation between $U(u)$ and $V(u)$. Combining with \eqref{e:solveu}, one gets a system of two linear equations on $U(u)$ and $V(u)$, whose solution gives \eqref{eq_first_integrals}.}
\begin{equation}\label{eq_first_integrals}
U(u)=\sfq^\sft \frac{f\sfq^{-\sfx}-\kappa^2 \sfq^{\sfx-\sft}}{1-f}, \qquad  \qquad
V(u)=\frac{\sfq^{-\sfx}-f\kappa^2 \sfq^{\sfx-\sft}}{1-f}.
\end{equation}
 The right-hand sides of \eqref{eq_first_integrals} are the first integrals of the Burgers equation \eqref{e:Burger}. Definition of $U(u)$ and $V(u)$ of  \eqref{eq_algebraic_equation} \eqref{e:defUV} then implies that there is an algebraic relation between these two first integrals. This generalizes the algebraicity observation of \cite{KO_Burgers} in the $\kappa=0$ case.
\end{remark}

The existence of the non-random limit in \eqref{e:hlimit2} can be alternatively deduced from the variational principle for random tilings of \cite{MR1815214}, see discussion in \cite[Section 2.4]{borodin2010q}. The novel part of Theorem \ref{t:limitshape} is the explicit description of the limit $\sfh(\sft ,  \sfx )$ through \eqref{e:ftx}. In the particular case when $r=1$ and the tiled domain is a hexagon, Theorems \ref{t:arctic} and \ref{t:limitshape} coincide with \cite[Theorem 2.1]{borodin2010q}, but for more general polygons with $r>1$ the formulas are new.
 If we set $q=1$, then these results degenerate into the formulas of \cite{MR3278913} for uniformly random tilings of trapezoids $P$.

\medskip

Our next theorem describes the fluctuations of the height function $h(\varepsilon^{-1}\sft , \varepsilon^{-1}\sfx )$ around its expectation. Let $\bH^+=\{z\in \mathbb C \mid  \Im(z)>0\}$ be the upper half-plane.
The Gaussian free field on $\bH^+$ with zero boundary conditions is a random generalized Gaussian field $\GFF(z)$, $z\in \bH^+$, which has mean $0$ and covariance:
\begin{equation}
\label{eq_GFF_cov}
 \bE\GFF(z) \GFF(w)=K(z,w)=-\frac{1}{2\pi}\ln \left|\frac{z-w}{z-\bar w}\right|.
\end{equation}
Due to the singularity of \eqref{eq_GFF_cov} at $z=w$, the values of $\GFF(z)$ at points are not-well defined, but we can make sense of the integrals of $\GFF(z)$ against (sufficiently smooth) test--measures, see \cite{Sheffield_GFF}, \cite{Werner_Powell_GFF}, \cite{Be_Powell_GFF}, \cite[Lecture 11]{VG2020}  for more details.

We also need a bijection between $\bH^+$ and the liquid region $\sfL(\sfP)$.
\begin{lemma} \label{Lemma_bijection_intro}
 For each $(\sft ,  \sfx )\in\sfL(\sfP)$, define $\bar \Omega(\sft ,  \sfx)=\sfq^{-u}+\kappa^2 \sfq^u$, where $u$ is a non-real solution to \eqref{e:solveu}, chosen so that $\Im(\bar \Omega(\sft ,  \sfx ))>0$. Then $\bar \Omega$ is a bijection between $\sfL(\sfP)$ and $\bH^+$.
\end{lemma}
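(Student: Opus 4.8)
\textbf{Proof proposal for Lemma \ref{Lemma_bijection_intro}.}

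The plan is to treat $\bar\Omega$ as the composition of two maps. Writing $\omega=\sfq^{-u}$, observe that $\sfq^{-u}+\kappa^2\sfq^u=\omega+\kappa^2/\omega$, which is the classical Joukowski-type map. So I would first analyze the right-hand side of \eqref{e:solveu}. Set
\[
 G(\omega)=V(u)-\frac{U(u)}{\sfq^\sft},
\]
expressed as a function of $\omega=\sfq^{-u}$ after substituting the explicit rational forms \eqref{eq_algebraic_equation}, \eqref{e:defUV}; this is a rational function of $\omega$ whose degree is controlled by $r$. Equation \eqref{e:solveu} then reads $G(\omega)=\sfq^{-\sfx}+\kappa^2\sfq^{\sfx-\sft}$, i.e.\ it asks for the preimages under $G$ of the real number on the left-hand side (real by the positivity constraint on $\sfq,\kappa$). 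By Remark \ref{Remark_4_solutions} and Propositions \ref{Proposition_number_of_roots}, \ref{Proposition_liquid} (which we may assume), for $(\sft,\sfx)\in\sfL(\sfP)$ there are exactly four non-real roots $\sfq^u$, organized into two complex-conjugate pairs related by $u\mapsto -u-2\log_\sfq\kappa$, i.e.\ by $\omega\mapsto \kappa^2/\omega$. The latter involution fixes the value $\omega+\kappa^2/\omega=\bar\Omega$, so the four roots produce exactly two values of $\bar\Omega$, complex conjugate to each other; the normalization $\Im\bar\Omega>0$ picks one. Thus $\bar\Omega$ is a \emph{well-defined} map on $\sfL(\sfP)$.

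Next I would establish that $\bar\Omega$ lands in $\bH^+$ and is injective, and then surjective. Injectivity: suppose $\bar\Omega(\sft,\sfx)=\bar\Omega(\sft',\sfx')=:\Omega_0$. Then the same $u$ (up to the symmetry $u\mapsto-u-2\log_\sfq\kappa$ and conjugation, all of which leave $\bar\Omega$ fixed and, crucially, leave $U(u),V(u)$ determined) satisfies both instances of \eqref{e:solveu}. From the pair of equations
\[
 U(u)=\sfq^\sft\,\frac{f\sfq^{-\sfx}-\kappa^2\sfq^{\sfx-\sft}}{1-f},\qquad V(u)=\frac{\sfq^{-\sfx}-f\kappa^2\sfq^{\sfx-\sft}}{1-f}
\]
of \eqref{eq_first_integrals} --- which hold on the liquid region by Theorem \ref{t:limitshape} --- one solves linearly for $\sfq^{-\sfx}$, $f\sfq^{\sfx-\sft}$, hence for $\sfq^\sfx$ and $f$ in terms of $U(u),V(u),\sfq^\sft$; and \eqref{e:solveu} itself then pins down $\sfq^\sft$ given $U(u),V(u)$ and the value $\Omega_0=\sfq^{-\sfx}+\kappa^2\sfq^{\sfx-\sft}\cdot(\text{?})$. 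More cleanly: $U(u),V(u)$ are functions of $u$ only, the parameterization \eqref{eq_parameterization_intro}-style relations $\sfq^\sft=V'(u)/U'(u)$ hold on the boundary and an analogous system holds inside, so given $\bar\Omega$ (equivalently $u$ up to the stated symmetry) one recovers $(\sft,\sfx)$ uniquely by back-substitution. I would phrase this as: the map $(\sft,\sfx)\mapsto u$ (well-defined up to symmetry) followed by $u\mapsto(\sft,\sfx)$ via \eqref{eq_first_integrals} and \eqref{e:solveu} are mutually inverse, which gives injectivity. For the image being in $\bH^+$: the two values of $\bar\Omega$ coming from the four roots are genuinely complex conjugate (not real) precisely because $\sfq^u$ is non-real and $\omega\mapsto\omega+\kappa^2/\omega$ separates an $\omega$ from $\kappa^2/\bar\omega$ unless $|\omega|^2=\pm\kappa^2$, a degenerate locus excluded on the open liquid region; so exactly one lies in $\bH^+$.

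For surjectivity and to package everything, I would use a degree/continuity argument: $\bar\Omega$ is continuous (indeed real-analytic) on the open connected set $\sfL(\sfP)$, and one checks the boundary behavior --- as $(\sft,\sfx)$ approaches $\partial\sfL(\sfP)$ (the arctic curve of Theorem \ref{t:arctic}), the discriminant $\sfw^2-4\kappa^2\sfq^{-\sft}$ in \eqref{eq_parameterization_intro} vanishes, which forces the two conjugate roots to collide on the real axis, i.e.\ $\Im\bar\Omega\to 0$. Hence $\bar\Omega$ is a proper map $\sfL(\sfP)\to\bH^+$ extending continuously to $\partial\sfL(\sfP)\to\partial\bH^+=\bR\cup\{\infty\}$; a proper local homeomorphism between these (with injectivity already in hand) is a homeomorphism onto $\bH^+$. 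Alternatively, since both $\sfL(\sfP)$ and $\bH^+$ are simply connected and $\bar\Omega$ is holomorphic in a suitable complex structure (it is the composition of the algebraic correspondence $u\leftrightarrow(\sft,\sfx)$ with the Joukowski map), the argument principle / Rouché on the boundary gives that every value in $\bH^+$ is attained exactly once.

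The main obstacle I expect is the boundary analysis: making rigorous that as $(\sft,\sfx)\to\partial\sfL(\sfP)$ the imaginary part of $\bar\Omega$ tends to $0$ (and that no mass escapes to an interior point), so that the proper-map argument goes through. This requires relating the definition \eqref{e:defLP} of the liquid region via the densities $p_\bullet$ to the algebraic condition "\,$\sfq^{-u}+\kappa^2\sfq^u$ non-real\,'' from Theorem \ref{t:limitshape}, and then matching the degeneration of that condition with the vanishing of the discriminant in the arctic-curve parameterization \eqref{eq_parameterization_intro} --- i.e.\ cross-referencing the three descriptions of the boundary. The rest (well-definedness, the conjugate-pair structure, injectivity via \eqref{eq_first_integrals}) is essentially linear algebra on the first integrals together with bookkeeping of the two involutions $u\mapsto\bar u$ and $u\mapsto-u-2\log_\sfq\kappa$.
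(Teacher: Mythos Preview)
Your approach is genuinely different from the paper's, and while the overall strategy is sound, there are a concrete error and a significant gap worth flagging.

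\textbf{The paper's approach.} The paper does not use a proper-map/boundary argument at all. Instead it invokes the characteristic flow $w^\sft=w^0-(\sfq^{-\sft}-1)\,\cS_0(w^0)$, where $\cS_0$ is the function of Proposition~\ref{Proposition_LLN_invariant}. The key fact, proven earlier via a Nevanlinna representation of $\cS_0$, is that $\Im\cS_0(w)<0$ whenever $\Im w<0$; hence $\sft\mapsto\Im w^\sft$ is strictly increasing. Starting from any $w^0\in\bH^-$ one therefore has a \emph{unique} time $\hat\sft\in(0,\sfT)$ at which $w^{\hat\sft}$ becomes real (it is checked that $\Im w^\sfT>0$), and this hitting time and hitting location recover $(\hat\sft,\sfx)$. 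This gives existence and uniqueness simultaneously, with no boundary analysis of $\sfL(\sfP)$ needed. For injectivity the paper also records the shortcut you grope toward: taking the imaginary part of \eqref{e:solveu} and using that the left-hand side is real pins down $\sfq^\sft=\Im U(u)/\Im V(u)$, after which $\sfx$ follows.

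\textbf{The gap in your surjectivity argument.} Your claimed mechanism for $\Im\bar\Omega\to 0$ at the arctic curve is wrong: the discriminant $\sfw^2-4\kappa^2\sfq^{-\sft}$ in \eqref{eq_parameterization_intro} governs the quadratic in $\sfq^\sfx$, not the collision of the non-real $u$--roots of \eqref{e:solveu}. What actually happens at the arctic boundary (this is the content of Theorem~\ref{t:arctic} and Proposition~\ref{Proposition_number_of_roots}) is that \eqref{e:solveu} acquires a \emph{double root in $u$}, i.e.\ the complex-conjugate pair of $u$'s merges on the curve $\sfq^{-u}+\kappa^2\sfq^u\in\bR$, forcing $\bar\Omega$ real. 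With this correction your proper-map argument can be made to work, but you would also need to control behaviour toward $\sft\to 0^+$ and $\sft\to\sfT^-$ (where the liquid region pinches off) and verify no escape to $\infty$; the flow argument handles these uniformly via the monotonicity of $\Im w^\sft$ and the explicit sign change between $\sft=0$ and $\sft=\sfT$.

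\textbf{Your injectivity argument is muddled.} You conflate $\Omega_0=\sfq^{-u}+\kappa^2\sfq^u$ with the left-hand side $\sfq^{-\sfx}+\kappa^2\sfq^{\sfx-\sft}$ of \eqref{e:solveu}, and the relation $\sfq^\sft=V'(u)/U'(u)$ holds only on the arctic curve, not inside $\sfL(\sfP)$. The clean route is: $\bar\Omega$ determines $u$ up to the two involutions, hence determines $(U(u),V(u))$ (which you correctly note are invariant under $u\mapsto -u-2\log_\sfq\kappa$); then the reality of the left side of \eqref{e:solveu} forces $\sfq^\sft$ as above, and $\sfx$ follows. This is exactly the paper's alternative injectivity argument.
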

\begin{remark}

 Continuing Remark \ref{Remark_4_solutions}, we note that by Propositions \ref{Proposition_number_of_roots} and \ref{Proposition_liquid}, $\bar \Omega$ is uniquely determined by \eqref{e:solveu} and $\Im(\bar \Omega(\sft ,  \sfx ))>0$. The choice of $u$ guaranteeing $\Im(f_\sft(\sfx))>0$ might differ by conjugation from the choice of $u$ guaranteeing ${\Im(\bar \Omega(\sft,\sfx))>0}$.
\end{remark}

\begin{theorem}\label{t:GFF}
 Consider the limit regime \eqref{eq_parameter_scaling} and assume that we are in the real or imaginary case. Then inside the liquid region,  $(\sft ,  \sfx )\in \sfL(\sfP)$,
 we have
 $$
  \lim_{\eps\to 0} \sqrt{\pi}\bigl(h(\varepsilon^{-1} \sft ,\varepsilon^{-1} \sfx )-\bE[h(\varepsilon^{-1} \sft, \varepsilon^{-1} \sfx )]\bigr) = \bar \Omega \text{-pullback of GFF  in } \bH^+,
 $$
 in the sense of convergence of the joint moments for pairings with appropriate test-measures.
\end{theorem}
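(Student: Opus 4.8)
The plan is to combine Theorem \ref{t:loopeq} (the dynamical loop equation) with the general asymptotic machinery of Theorem \ref{t:loopstudy}, after verifying its technical hypotheses for $(q,\kappa)$--tilings. First I would set up the tiling as a Markov chain in the $(q,\kappa)$--framework: slicing the trapezoid $P$ along vertical lines $t=0,1,\dots,T$, the horizontal-lozenge positions on slice $t$ form a signature, and the conditional law of slice $t+1$ given slice $t$ is of the ascending type \eqref{e:m1} for appropriate $b(z)$ and $\phi^{\pm}(z)$ depending on $t$, $q$, $\kappa$ (this identification is the content of the Koornwinder/quasi-branching structure referenced in Section \ref{Section_quasi_branching}). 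Here $\theta=1$, so we are in the free-fermionic slice-by-slice case, but the point of the dynamical loop equation is precisely that it does \emph{not} use determinantal structure and instead propagates information from one time to the next; crucially, because the \emph{initial} slice is deterministic and each transition is an elementary one-step move, we can iterate across all $T$ time steps.

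Next I would invoke Theorem \ref{t:loopstudy}: having checked its assumptions (analyticity of $b,\phi^{\pm}$ in a neighborhood of the relevant intervals, the scaling \eqref{eq_parameter_scaling}, and the regularity of the limit profile coming from Theorem \ref{t:limitshape}), it yields that the rescaled empirical density $\rho(s;\bmx(t))$ on each slice has a deterministic limit $-\partial_{\sfx}\sfh(\sft,\sfx)$ (consistent with the limit shape from Theorem \ref{t:limitshape}) and that the recentered fluctuation field, i.e.\ the stochastic term in \eqref{eq_decomposition_field}, is asymptotically a Gaussian field whose covariance is computed from the linearization of the loop equation. The height function $h(\varepsilon^{-1}\sft,\varepsilon^{-1}\sfx)$ is (up to sign and the linear interpolation) the integral of the particle density on slice $\varepsilon^{-1}\sft$ up to level $\varepsilon^{-1}\sfx$, so its fluctuations are a linear functional of the slice fluctuation fields, and the joint law of all slices is obtained by iterating Theorem \ref{t:loopstudy} across the $T$ transitions. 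This produces a Gaussian field on $\sfL(\sfP)$ with an explicit covariance kernel expressed through the functions $U,V$ of \eqref{e:defUV} and the solution $u(\sft,\sfx)$ of \eqref{e:solveu}.

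The final step is to recognize this covariance as the pullback of the GFF covariance \eqref{eq_GFF_cov} under the map $\bar\Omega$ of Lemma \ref{Lemma_bijection_intro}. Concretely, the covariance produced by the loop-equation analysis should take the form of a double contour integral whose kernel, after the change of variables $z\mapsto \bar\Omega^{-1}(z)$, collapses to $-\tfrac{1}{2\pi}\ln\bigl|\tfrac{z-w}{z-\bar w}\bigr|$; this is where the algebraic identities relating $f_0$, $U$, $V$ (used already in the derivation of \eqref{eq_first_integrals} and the complex Burgers equation \eqref{e:Burger}) do the heavy lifting. Since both sides are Gaussian and we establish convergence of joint moments against smooth test-measures, matching mean (zero, after recentering) and covariance suffices. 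I expect the main obstacle to be twofold: (i) carefully verifying the technical assumptions of Theorem \ref{t:loopstudy} uniformly across all $T=\OO(\varepsilon^{-1})$ time steps and controlling the accumulation of error terms through the iteration, especially near the arctic curve where the density profile degenerates; and (ii) the algebraic identification of the iterated covariance with the GFF kernel in the $\bar\Omega$ coordinate --- tracking the contours and branch choices through the change of variables, and confirming that the complex structure coming out of the dynamics is exactly the one defined by $\bar\Omega$ rather than by the complex slope $f$ (the two differ by a conjugation, as flagged in the remark after Lemma \ref{Lemma_bijection_intro}).
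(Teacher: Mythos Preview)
Your broad outline is correct and matches the paper's strategy: identify the transitions as \eqref{e:m1} via Theorem \ref{Theorem_transition_qk}, verify the hypotheses of Theorem \ref{t:loopstudy} (this is Proposition \ref{Proposition_off_criticality}), and then iterate across the $T$ time steps. However, you are glossing over the single most important technical idea in the proof when you write ``the joint law of all slices is obtained by iterating Theorem \ref{t:loopstudy} across the $T$ transitions. This produces a Gaussian field \dots with an explicit covariance kernel.'' This sentence hides a genuine difficulty.

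Iterating Theorem \ref{t:loopstudy} does \emph{not} directly produce a Gaussian field with a tractable covariance. What it produces (after the massaging of Proposition \ref{Proposition_tilings_increment} and linearization) is a stochastic evolution equation of the form
\[
\frac{1}{\varepsilon}\bigl[G_{t+1}(z)-G_t(z)\bigr]=\partial_z\Bigl[G_t(z)\,\frac{\bE\tilde f_\sft(z)}{1-\bE\tilde f_\sft(z)}\Bigr]+\Delta\cM_t(z)+\mathcal R,
\]
see Proposition \ref{Proposition_stochastic_evolution}. The right-hand side contains $G_t(z)$ itself through a $z$-dependent drift, so you cannot simply sum the martingale increments $\Delta\cM_t$: the Gaussian noise at each step gets transported and deformed by the subsequent drift. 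Solving this coupled system over $\OO(\varepsilon^{-1})$ steps while keeping the covariance explicit is the heart of the argument, and it is \emph{not} addressed by your item (i) about error accumulation.

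The paper's mechanism is to pass to the \emph{complex characteristic flow} $z^\sft=z(\sft,u)$ of \eqref{e:x41}, which satisfies $\partial_\sft z^\sft=f_\sft(z^\sft)/(f_\sft(z^\sft)-1)$. Along this flow the drift term is absorbed into the total derivative: the quantity $G_t(z^\sft)\,\partial_u z^\sft$ becomes a pure martingale in $t$ (Corollary \ref{Corollary_CLT_increment}). Summing now yields Gaussianity via the martingale CLT, and the covariance computation (Lemma \ref{Lemma_use_mclt}) collapses, after a residue calculation, to
\[
\partial_{u_i}\partial_{u_j}\ln\frac{\sfb_0(u_i)-\sfb_0(u_j)}{\sfb_\tau(z(\tau,u_i))-\sfb_\tau(z(\tau,u_j))}.
\]
The crucial point for your item (ii) is that $\sfb_0(u)=\sfq^{-u}+\kappa^2\sfq^{u}$ is exactly the map $\Omega$, so the GFF coordinate is not an after-the-fact algebraic recognition but falls out of the characteristic flow itself: the initial points $u$ of the characteristics \emph{are} the $\Omega^{-1}$ coordinates. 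Without the flow you would be left trying to identify an iterated covariance that you cannot compute in closed form.

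Two smaller remarks. First, the Markov chain tracks the paths of \Lb\ and \Lc\ lozenges, not the horizontal ones. Second, the paper needs additional analytic work (Lemmas \ref{Lemma_use_mclt_moments}, \ref{Lemma_G_at_eps_CLT} with a Burkholder--Rosenthal and Gr{\"o}nwall argument) to upgrade distributional convergence to moment convergence and to show the remainder $\sum\mathcal R_m$ is negligible; your item (i) correctly anticipates this, but the control comes from the moment bounds on the martingale, not from any a priori concentration estimate.
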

 We refer to Theorem \ref{theorem_GFF_main} for a more precise statement of Theorem \ref{t:GFF}.

\smallskip

 If $q=1$, then  $(q,\kappa)$-distributions turn into the uniform distribution on lozenge tilings of $P$ and Theorem \ref{t:GFF} degenerates into the results of \cite{petrov2015asymptotics, bufetov2018fluctuations}. For general $(q,\kappa)$ and $r=1$, i.e.\ for the hexagons, Theorem \ref{t:GFF} was conjectured in  \cite[Conjecture 8.4.1]{dimitrov2019log}. The same article also contains a partial result establishing Gaussian fluctuations along a single vertical section by using discrete loop equations for log-gases; see also \cite{MR3556288} for another approach. For $\kappa=0$ and $q\ne 1$, i.e.\ for the $q^{\text{volume}}$--weighted lozenge tilings Theorem \ref{t:GFF} verifies for the trapezoidal domains $P$ a general conjecture of Kenyon and Okounkov, cf.\ \cite{KO_Burgers} and \cite[Lectures 11-12]{VG2020}. The only previous Gaussian free field type result for $q^{\text{volume}}$--weighted tilings is \cite{ahn2020global} dealing with plane partitions, which can be obtained from tilings of the hexagons by sending side lengths to infinity while keeping $q$ fixed. For general values of the $(q,\kappa)$ parameters, we are led to a conjecture:

 \begin{conjecture}
 \label{Conjecture}
 The macroscopic fluctuations of the height function in the liquid region for $(q,\kappa)$--distributed random lozenge tilings of arbitrary domains are asymptotically described by the Gaussian Free Field in the complex structure given by either of the first integrals \eqref{eq_first_integrals} of the complex Burgers equation \eqref{e:Burger}.
 \end{conjecture}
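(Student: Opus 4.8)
The plan is to prove Conjecture~\ref{Conjecture} along the same route that yields Theorems~\ref{t:limitshape} and~\ref{t:GFF} for trapezoids, pushing the dynamical-loop-equation machinery to an arbitrary polygonal domain $D$ whose sides lie along the three lozenge directions (and which carries an admissible, positivity-preserving choice of $(q,\kappa)$). The first step is to realize the $(q,\kappa)$-distribution on $\Omega(D)$ as the law of a Markov chain. Slicing $D$ by the vertical lines $t=0,1,\dots,T$, the horizontal lozenges on slice $t$ encode a particle configuration $\bmx^{(t)}\in\bW^{n(t)}$, and — because the weight~\eqref{eq_lozenge_weight} is a product of single-lozenge factors — the total weight factorizes across the transitions $\bmx^{(t)}\to\bmx^{(t+1)}$. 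When $D$ is $t$-monotone (every boundary segment is monotone in $t$), each transition is of the ascending/descending type~\eqref{e:m1}, with the new feature that, unlike for a trapezoid, the domain shape now genuinely forbids some values of $\bme$ at intermediate times; one encodes this by letting $b,\phi^+,\phi^-$ depend on $t$ and acquire zeros and poles at the boundary-pinned sites. A general $D$ is first cut along the vertical lines through its extreme vertices into $t$-monotone pieces, each analyzed separately and then glued using the Gibbs property of the $(q,\kappa)$-measure (or, equivalently, by realizing $D$ as a union of trapezoidal strips).

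The second step is to feed this family of time-dependent (and mildly singular) transitions into Theorem~\ref{t:loopstudy} and verify its technical assumptions. This splits into (i) an a~priori handle on the limit shape $\sfh_D$, which enters the ``deterministic term'' of~\eqref{eq_decomposition_field}, and (ii) the regularity and non-degeneracy hypotheses on the limiting density and on $b,\phi^\pm$. For~(i) one cannot invoke the explicit rational parameterization of Theorems~\ref{t:arctic}--\ref{t:limitshape} — for general $D$ the limit shape is not known in closed form — but one does have the variational principle of~\cite{MR1815214}, which characterizes $\sfh_D$ as the unique minimizer of the $(q,\kappa)$-surface tension and forces $\nabla\sfh_D$ to solve the complex Burgers equation~\eqref{e:Burger} inside the liquid region, with first integrals~\eqref{eq_first_integrals}. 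The strategy is therefore to run the loop-equation analysis in a form that \emph{takes} the complex-slope field $f$ (equivalently $U,V$) as an input and extracts only the fluctuation statement. Iterating the one-step equation through all $t$ upgrades the single-time CLT to the joint law of the height function throughout the liquid region of $D$, and bookkeeping the covariance identifies it with the pullback of~\eqref{eq_GFF_cov} under the conformal map built from $U$ and $V$ — the GFF in the complex structure of~\eqref{eq_first_integrals}, the two choices of first integral matching the two conjugate determinations of the slope as in Remark~\ref{Remark_4_solutions}.

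The hard part will be step~(ii) in the absence of explicit formulas. For trapezoids the assumptions of Theorem~\ref{t:loopstudy} are checked in Section~\ref{Section_assumptions} using the parameterization; for general $D$ one must instead prove, directly from the variational characterization, that the limiting density stays bounded away from $0$ and $1$ in the interior and has the correct square-root vanishing at the arctic boundary, that the complex slope extends analytically, and that the effective $b,\phi^\pm$ are conformal/holomorphic with controlled singularities on the relevant contours — in effect a regularity theory for the $(q,\kappa)$ Euler--Lagrange equation on an arbitrary polygon, including behavior near the polygon's boundary, near the arctic curve, and — genuinely new for more than one liquid component — near tangency points and cusps of the arctic curve, where the density degenerates and the loop equations are only borderline closable. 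A secondary obstacle is the gluing across the vertical cuts through extreme vertices: the conditional boundary data there is random and sits in the liquid region, so to avoid circularity one needs a self-improving estimate — a crude concentration bound for that slice, then the loop equations on each side, then an improved bound. Once these two issues are settled — regularity of the variational minimizer and the bootstrap gluing — the remainder is a substantial but essentially routine transcription of the trapezoid argument, and would establish Conjecture~\ref{Conjecture}.

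A possible alternative, should the regularity theory prove too delicate in full generality, is a universality/comparison route: prove the GFF convergence first for a dense enough family of domains accessible to Theorem~\ref{t:loopstudy} (the trapezoids $P$, unions of trapezoidal strips, and their limits), and then transfer to an arbitrary $D$ by a coupling that matches the macroscopic complex-slope field, using the Gibbs property to localize the comparison to small liquid sub-blocks. This reduces the problem to a local limit theorem for height fluctuations — a GFF-level analogue of the known universality of local tiling statistics — which is itself open but perhaps more tractable than the global regularity analysis above.
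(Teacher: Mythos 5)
You have been asked to prove Conjecture~\ref{Conjecture}, but the paper does not prove this statement: it is left open. The paper verifies it only for trapezoidal domains $P$ (this is the content of Theorem~\ref{t:GFF}, made precise in Theorem~\ref{theorem_GFF_main} and proved in Sections~\ref{Section_tilings_LLN}--\ref{Section_GFF}), and the authors state immediately after the conjecture that Theorem~\ref{t:GFF} verifies it for trapezoids and that the conjecture is the expectation that this extends to more general domains. There is therefore no ``paper's own proof'' to compare against.

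Your proposal is a research plan, not a proof, and you are candid about that. The obstacles you list are the right ones and they are genuine gaps. (i) Without the explicit algebraic parameterization of Theorems~\ref{t:arctic}--\ref{t:limitshape}, one needs a regularity theory for the $(q,\kappa)$ variational minimizer on a general polygon — density bounded away from $0$ and $1$ in the liquid interior, analytic extension of the complex slope, and controlled behavior near the arctic boundary and its cusps and tangency points — and none of this is established. (ii) Assumption~\ref{a:stable} (non-vanishing of $\cB$ and single-valuedness of $\ln\cB$ along contours) is verified in Section~\ref{Section_assumptions} by symmetry arguments and explicit zero/pole counting that are specific to the trapezoid transition weights; a general-domain analogue is not obvious and is exactly where the non-criticality of the loop equations would have to be proved. (iii) The gluing across vertical cuts through extreme vertices conditions on a random slice lying in the liquid region, which is precisely what the paper's method is designed to avoid by starting the Markov chain from a deterministic $\bmx(0)$; your proposed bootstrap would need an a~priori concentration estimate that the dynamical loop equation alone does not supply, undoing one of the method's main advantages as described in the introduction. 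Your alternative ``universality/comparison'' route rests on a GFF-level local-to-global transfer that you correctly flag as itself open. In short: the program is sensible and in the spirit of the paper, but every load-bearing step you defer is precisely where the difficulty lies, and the statement remains a conjecture.
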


 We refer to \cite[Lectures 11-12]{VG2020} for the detailed explanations and heuristics underlying Conjecture \ref{Conjecture} in the special case of the uniform measure, $q=1$.
 The formulas \eqref{eq_first_integrals} imply that the first integrals are holomorphic functions of $\Omega(\sft, \sfx)$ in Theorem \ref{t:GFF}. Hence, these first integrals define the same complex structure in the liquid region as $\Omega$ (or its complex conjugate $\bar \Omega$, depending on the choice of $u$; conjugation of the complex structure does not change the covariance of GFF) and Theorem \ref{t:GFF} verifies Conjecture \ref{Conjecture} for trapezoidal domains $P$. What Conjecture \ref{Conjecture} says is that our result should extend from trapezoids to more general domains.

\bigskip

As we will see in Section \ref{Section_tilings_as_Markov_chain}, the $(q,\kappa)$-distribution \eqref{e:qtiling} on lozenge tilings can be viewed as a Markov process of nonintersecting paths, and the transition probability is given in the form \eqref{e:m1}. Our analysis of $(q,\kappa)$--distributions relies on Theorem \ref{t:loopstudy}, which uses the dynamic loop equations of Theorem \ref{t:loopeq} to obtain an asymptotic expansion for the one-step transition probabilities. We remark that checking the assumptions of Theorem \ref{t:loopstudy} takes efforts and we devote Section \ref{Section_assumptions} to it. Eventually, the deterministic part of the expansion \eqref{eq_decomposition_field} leads to Theorems \ref{t:arctic} and \ref{t:limitshape} after careful analysis and massaging of the formulas in Section \ref{Section_tilings_LLN}. The form of the stochastic part implies that the height fluctuations converge to a Gaussian field, described by a stochastic evolution equation; we analyze this equation and prove that it outputs the Gaussian Free Field in Section \ref{Section_GFF}.

In addition to the dynamical loop equations, the second key ingredient of the proofs is the use of the \emph{complex characteristic flow} which we introduce. The flow is related to the characteristic curves of the first order PDE \eqref{e:Burger}, but has an important difference. Namely, in \eqref{e:Burger} (and its relatives for uniform and $q^{\text{volume}}$-weighted tilings in \cite{KO_Burgers}), the variables $(\sft,\sfx)$ in the equation are real and belong to the liquid region $\sfL(\sfP)$; this complicates the analysis, because the shape of the liquid region is not known a priory. In our approach, we observe that there is a way to extend the definition of the complex slope \eqref{e:localdd} from real $\sfx$ to complex $z$, see \eqref{e:double_complex_slope}, thus arriving at the doubly complex slope. The advantage of the latter is that it  decouples from the notion of the liquid region, but the connection to the first order PDE \eqref{e:Burger} is preserved (with $\sfx$ replaced by complex $z$). We further investigate all the main observables of the random tilings along the complex curves $z^\sft$, $\sft\geq 0$, which arise from the characteristics of the doubly complex version of \eqref{e:Burger}, see Section \ref{Section_Limit_shape_proofs}. We discover that all the formulas are dramatically simplified along these complex curves, which eventually allows us to arrive at the explicit answers of Theorems \ref{t:arctic}, \ref{t:limitshape}, and \ref{t:GFF}. In random matrix setting, the complex characteristic flow has been previously used in \cite{MR1819483, MR2418256,  MR4009708, huang2020edge} to analyze the complex Burgers equation, which describes the evolution of the Stieltjes transform of the empirical eigenvalue density. Comparing with those analyses, our complex Burgers equation \eqref{e:Burger} has extra symmetries \eqref{e:symmetries} and the characteristic flow has singular behavior, which makes the analysis more involved.

\section{Dynamical Loop Equation}\label{s:DLE}

In this section we first give the proof of Theorem \ref{t:loopeq}.  Then we apply the particle-hole duality to produce a version of the transition probability \eqref{e:m1} for descending transitions and derive the corresponding dynamical loop equations.

\subsection{Proof of Theorem \ref{t:loopeq}}

\label{Section_proof_of_le}

\begin{proof}[First proof] The only possible singularities of \eqref{e:sum1} are simple poles at $z=x_i$, $i=1,\dots,n$. Thus, our plan is to fix any $1\leq i\leq n$ and check that the residue of \eqref{e:sum1} at $z=x_i$ vanishes. The first term in \eqref{e:sum1} has a residue at $z=x_i$ if and only if $e_i=0$. The residue is given by
\begin{align}\label{e:r1}
\sum_{\bme: e_i=0}\bP(\bmx+\bme|\bmx) \phi^+(x_i) \frac{b(x_i+\theta)-b(x_i)}{b'(x_i)}\prod_{j:j\neq i}\frac{b(x_i+\theta)-b(x_j+\theta e_j)}{b(x_i)-b(x_j)}.
\end{align}
The second term has a residue at $z=x_i$ if and only if $e_i=1$. The residue is given by
\begin{align}\label{e:r2}
\sum_{\bme: e_i=1}\bP(\bmx+\bme|\bmx) \phi^-(x_i) \frac{b(x_i)-b(x_i+\theta)}{b'(x_i)}\prod_{j:j\neq i}\frac{b(x_i)-b(x_j+\theta e_j)}{b(x_i)-b(x_j)}.
\end{align}

For any pair $\bme^0=(e_1, e_2, \dots, e_{i-1}, 0, e_{i+1}, \dots, e_{n})$ and $\bme^1=(e_1, e_2, \dots, e_{i-1}, 1, e_{i+1}, \dots, e_{n})$,
the ratio of the transition probabilities $\bP(\bmx+\bme^1|\bmx)$ and $\bP(\bmx+\bme^0|\bmx)$ is given by
\begin{align}
\label{eq_ratio_transitions}
\frac{\bP(\bmx+\bme^1|\bmx)}{\bP(\bmx+\bme^0|\bmx)}
=\frac{\phi^+(x_i)}{\phi^-(x_i)}\prod_{j:j\neq i}\frac{b(x_i+ \theta)-b(x_j+\theta e_j)}{b(x_i)-b(x_j+\theta e_j)}.
\end{align}
Plugging \eqref{eq_ratio_transitions} into \eqref{e:r1} and \eqref{e:r2}, we see that the residues at $\bme^0$ and $\bme^1$ cancel out:
\begin{multline*}
\bP(\bmx+\bme^0|\bmx) \phi^+(x_i) \frac{b(x_i+\theta)-b(x_i)}{b'(x_i)} \prod_{j\neq i}\frac{b(x_i+\theta)-b(x_j+\theta e_j)}{b(x_i)-b(x_j)}
\\+
\bP(\bmx+\bme^1|\bmx) \phi^-(x_i)  \frac{b(x_i)-b(x_i+\theta)}{b'(x_i)} \prod_{j:j\neq i}\frac{b(x_i)-b(x_j+\theta e_j)}{b(x_i)-b(x_j)}=0.
\end{multline*}
Summing over $(e_1, e_2, \dots, e_{i-1}, e_{i+1}, \dots, e_{n})\in \{0,1\}^{n-1}$, we conclude that
the residue of \eqref{e:sum1} at $z=x_i$ vanishes.
\end{proof}

\begin{proof}[Second proof]
For any complex numbers $x_1,\dots,x_n\in U$, we denote the partition function
$$
 Z_n(x_1,\dots,x_n)=\sum_{\bme\in\{0,1\}^n}  \prod_{1\leq i<j\leq n}\frac{b(x_i+\theta e_i)-b(x_j+\theta e_j)}{b(x_i)-b(x_j)}\prod_{i=1}^n \phi^+(x_i)^{e_i} \phi^-(x_i)^{1-e_i}.
$$
We claim that $Z_n(x_1,\dots,x_n)$ is a holomorphic function on $U^n$. Indeed, the only possible singularities of $Z_n$ are on the lines $x_i=x_j$. On the other hand, directly from the definition, the function
$$F(x_1,\dots,x_n):=Z_n(x_1,\dots,x_n)  \prod_{1\leq i<j\leq n} (b(x_i)-b(x_j))$$
 is skew-symmetric in $x_1,\dots,x_n$; hence, it vanishes on the lines $x_i-x_j$ and
$$
 \frac{F(x_1,\dots,x_n)}{\prod_{i<j} (x_i-x_j)}
$$
has no singularities. Therefore,
$$
 Z_n(x_1,\dots,x_n)= \frac{F(x_1,\dots,x_n)}{\prod_{i<j} (x_i-x_j)}  \cdot  \prod_{1\leq i<j\leq n}\frac{x_i-x_j}{b(x_i)-b(x_j)}
$$
also has no singularities.

\smallskip

The expectation \eqref{e:sum1} can be identified with a ratio of two partition functions $\frac{Z_{n+1}(x_1,\dots,x_n,z)}{Z_n(x_1,\dots,x_n)}$ and its holomorphicity in $z$ follows from the holomorphicity of the partition functions.
\end{proof}

\begin{remark} \label{Remark_extension}
 One can introduce more advanced weights, which would lead to holomorphic partition functions: in Theorem \ref{t:loopeq} we are allowing the increments $e_i$ of $x_i$ to be in $\{0,1\}$, but more generally one can allow $m$ complex increments: $e_i\in\{v_1,\dots,v_m\}$. Given $m$ holomorphic functions $\phi_1,\dots,\phi_m$, this would lead to the factor $\phi_i(x_i)$ in $\prod_{i=1}^n$ part of the distribution generalizing \eqref{e:m1} whenever $e_i=v_i$ and to $m^n$ possible transitions. The argument for the holomorphicity of the partition function and of the corresponding observable $\frac{Z_{n+1}(x_1,\dots,x_n,z)}{Z_n(x_1,\dots,x_n)}$ remains the same, leading to an extension of Theorem \ref{t:loopeq} with \eqref{e:sum1} being replaced by expectation of the sum of $m$ (rather than two) terms.
\end{remark}

\subsection{Particle-Hole Duality}\label{s:PHD}
In this section, we study descending transitions. Take two signatures $\bmla\in \GT_{n+1}$ and  $\bmmu\in \GT_n$, with interlacing condition $\bmmu\prec \bmla$, as in \eqref{eq_interlacing}. Similarly to \eqref{e:defyi}, we encode them by particle systems
\begin{align*}
x_i=\la_i-(i-1)\theta, \quad 1\leq i\leq n+1, \qquad y_i=\mu_i-(i-1)\theta, \quad 1\leq i\leq n.
\end{align*}
Then $\bmx=(x_1, x_2,\dots, x_{n+1})\in \bW_\theta^{(n+1)}$ and $\bmy=(y_1, y_2,\dots, y_{n})\in \bW_\theta^{(n)}$, and the interlacing condition $\bmmu\prec \bmla$ is equivalent to  $y_i\in\{x_{i+1}+\theta, x_{i+1}+1+\theta, x_{i+1}+2+\theta,\dots, x_i-1, x_i\}$ for $1\leq i\leq n$.

Given $\bmx=(x_1, x_2,\dots, x_{n+1})\in \bW_\theta^{(n+1)}$, we denote the lattice
\begin{align*}
\bL(\bmx)=\bigcup_{i=1}^{n}\{x_{i+1}+\theta, x_{i+1}+1+\theta, x_{i+1}+2+\theta,\dots, x_i-1, x_i\}.
\end{align*}
We fix two functions $b(z)$, $w(z)$ and consider the following transition probability:
\begin{align}\label{e:m2}
\bP(\bmy|\bmx)
=\frac{1}{Z(\bmx)}  \prod_{1\leq i<j\leq n}\bigl(b(y_i)-b(y_j)\bigr)\, \prod_{i=1}^n \left[
w(y_i) \prod\limits_{\begin{smallmatrix}\ell \in \bL(\bmx) \\ \ell>y_i\end{smallmatrix}} \frac{1}{b(\ell)-b(y_i)}\prod\limits_{\begin{smallmatrix}\ell\in \bL(\bmx)\\ \ell<y_i\end{smallmatrix}} \frac{1}{b(y_i)-b(\ell)}\right],
\end{align}
where $\bmy=(y_1, y_2,\dots, y_n)\in \bW_\theta^n$ with $y_i\in\{x_{i+1}+\theta, x_{i+1}+1+\theta, x_{i+1}+2+\theta,\dots, x_i-1, x_i\}$ for $1\leq i\leq n$.

We remark that the transition probability \eqref{e:m2} is obtained from the ascending transition \eqref{e:m1} by the particle-hole duality or, equivalently, by transposition of the involved Young diagrams, i.e.\ by mirroring Figure \ref{f:ytop} with respect to the vertical axis. More precisely, starting from \eqref{e:m2} we  do the following three steps:
\begin{itemize}
 \item We replace the particle configurations $\bmx$ and $\bmy$, by the holes, i.e.\ by the complementary configurations $\bL(\bmx)\setminus\{x_1, x_2, \dots, x_n\}$ and $\bL(\bmx)\setminus\{y_1, y_2, \dots, y_n\}$;
 \item We divide all coordinates by $\theta$;
 \item We replace $\theta$ by $\theta^{-1}$.
\end{itemize}
Applying these operations to \eqref{e:m2} we get \eqref{e:m1}, as can be seen through the identities
 (where we denote the configurations of holes as $\bmx'=\bL(\bmx)\setminus \bmx$ and $\bmy'=\bL(\bmx)\setminus \bmy$):
\begin{align}\begin{split}\label{e:pheq}
\prod_{i<j}\bigl( b(x'_i)-b(x'_j)\bigr)&=\frac{\prod_{i<j}\bigl(b(x_i)-b(x_j)\bigr)\prod_{\ell> \ell'\in \bL(\bmx)}\bigl( b(\ell)-b(\ell')\bigr)}{\prod_{i}(\prod_{ \bL(\bmx)\ni \ell>x_i}\bigl(b(\ell)-b(x_i)\bigr) \prod_{\bL(\bmx)\ni \ell<x_i}\bigl(b(x_i)-b(\ell)\bigr)}
\\
\prod_{i<j}\bigl( b(y'_i)-b(y'_j)\bigr)&=\frac{\prod_{i<j}\bigl(b(y_i)-b(y_j)\bigr)\prod_{\ell> \ell'\in \bL(\bmx)} \bigl(b(\ell)-b(\ell')\bigr)}{\prod_{i}(\prod_{ \bL(\bmx)\ni \ell>y_i}\bigl(b(\ell)-b(y_i)\bigr) \prod_{\bL(\bmx)(\bmx)\ni \ell<x_i}\bigl(b(y_i)-b(\ell)\bigr)}
.
\end{split}\end{align}

Because of this connection, Theorem \ref{t:loopeq} can be recast for the ascending process. We record the result in the following theorem, supplied with a self-contained proof.

%
%
%

\begin{theorem}\label{t:loopeq2}
Choose an open set $U\subset \mathbb C$, a parameter $\theta>0$, a particle configuration ${\bmx=(x_1,x_2,\dots,x_{n+1})\in \bW_\theta^{n+1}}$ such that $[x_{n+1},x_1]\subset U$,  two holomorphic functions $\phi^+(z)$, $\phi^-(z)$ on $U$ and a conformal (i.e., holomorphic and injective) function $b(z)$ on $U$. Assume that the random $n$--tuple $\bmy$ is distributed according to the transition probability \eqref{e:m2} with weight function $w(z)$ satisfying
\begin{align}\label{e:wratio}
\frac{w(z+1)}{w(z)}=\frac{\phi^+(z)}{\phi^-(z)}.
\end{align}
 Then the following observable is a holomorphic function of $z\in U$:
\begin{align}\label{e:sum2}
\bE\left[\phi^+(z)\dfrac{\prod\limits_{i=1}^{n+1} \bigl( b(z)-b(x_i) \bigr)}{\prod\limits_{i=1}^{n}\bigl( b(z)-b(y_i)\bigr)}+\phi^-(z)\prod\limits_{\ell\in \bL(\bmx)}\dfrac{\bigl(b(z+1)-b(\ell)\bigr)}{\bigl(b(z)-b(\ell)\bigr)} \cdot \frac{\prod\limits_{i=1}^{n+1}\bigl( b(z)-b(x_i)\bigr)}{\prod\limits_{i=1}^{n}\bigl( b(z+1)-b(y_i)\bigr)}\right].
\end{align}
\end{theorem}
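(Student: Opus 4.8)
\textbf{Proof proposal for Theorem \ref{t:loopeq2}.}

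The plan is to give a self-contained argument parallel to the first proof of Theorem \ref{t:loopeq}: show that the expression inside the expectation in \eqref{e:sum2}, viewed as a function of $z$, has no poles after averaging, i.e.\ every candidate pole has vanishing residue. First I would locate the possible singularities. The first term has simple poles where $b(z)=b(y_i)$, i.e.\ (since $b$ is conformal) at $z=y_i$ for each $i$; since $\bmy$ is random, these are random locations, but each $y_i$ lies in $\bL(\bmx)$. The second term has simple poles at $z=\ell$ for $\ell\in\bL(\bmx)$ coming from the product $\prod_{\ell\in\bL(\bmx)} 1/(b(z)-b(\ell))$, and apparent poles where $b(z+1)=b(y_i)$; the latter are killed because $w(z+1)/w(z)$ is built to make things match, or more directly because those are not in $U$ in the relevant range — I will treat them by the residue bookkeeping below. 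So the set of candidate poles, after taking expectation, is exactly $\{\ell : \ell\in\bL(\bmx)\}$, a deterministic finite set. Fix one such $\ell$.

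Next I would split the event space according to whether $\ell$ is occupied by some $y_i$ or not. The residue of the first term of \eqref{e:sum2} at $z=\ell$ is nonzero only on the event $\{\exists i:\ y_i=\ell\}$ (note that by the interlacing/strict-ordering structure at most one $y_i$ equals $\ell$), and the residue of the second term at $z=\ell$ is nonzero only on the complementary event $\{\ell\notin\{y_1,\dots,y_n\}\}$, because on the event $y_i=\ell$ the factor $(b(z)-b(y_i))$ in the denominator of the second term cancels the pole from $1/(b(z)-b(\ell))$. Thus the total residue at $z=\ell$ is a sum over configurations $\bmy$: configurations with $y_i=\ell$ contribute via the first term, configurations with $\ell$ unoccupied contribute via the second term. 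I would then pair up configurations: given $\bmy$ with $\ell$ unoccupied (and $\ell$ an allowed position for the $i$-th coordinate, where $i$ is determined by the interlacing constraints and $\ell$'s position), let $\bmy^{(i\to\ell)}$ be the configuration obtained by moving $y_i$ to $\ell$. The key computation is then the analogue of \eqref{eq_ratio_transitions}: the ratio $\bP(\bmy^{(i\to\ell)}|\bmx)/\bP(\bmy|\bmx)$ from \eqref{e:m2}, times the ratio of the two residue prefactors, should be exactly $-1$. Here the relation \eqref{e:wratio}, $w(z+1)/w(z)=\phi^+(z)/\phi^-(z)$, enters: it converts the $w(y_i)$-factors in the transition weight into the $\phi^\pm$ factors appearing in the residues of \eqref{e:sum2}, and the products over $\bL(\bmx)$ in \eqref{e:m2} reorganize against the product $\prod_{\ell\in\bL(\bmx)}(b(z+1)-b(\ell))/(b(z)-b(\ell))$ in the second term of \eqref{e:sum2}. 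Summing the cancellation over all unoccupied-vs-occupied pairs gives that the residue at each $\ell\in\bL(\bmx)$ vanishes, so \eqref{e:sum2} is holomorphic on $U$.

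Alternatively — and this might be cleaner to write — I would deduce Theorem \ref{t:loopeq2} directly from Theorem \ref{t:loopeq} via the particle-hole duality already set up in \eqref{e:pheq}: under the substitution $\bmx\mapsto\bmx'=\bL(\bmx)\setminus\bmx$, $\bmy\mapsto\bmy'=\bL(\bmx)\setminus\bmy$, division of coordinates by $\theta$, and replacement $\theta\mapsto\theta^{-1}$, the weight \eqref{e:m2} becomes the ascending weight \eqref{e:m1}, and one checks that the holomorphic observable \eqref{e:sum1} of Theorem \ref{t:loopeq} transforms into \eqref{e:sum2} — the first term of \eqref{e:sum1} matching the second term of \eqref{e:sum2} and vice versa, with the product $\prod_{\ell\in\bL(\bmx)}(b(z+1)-b(\ell))/(b(z)-b(\ell))$ emerging from rewriting a product over occupied sites as (total product)/(product over holes) exactly as in \eqref{e:pheq}. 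The main obstacle in either route is purely bookkeeping: correctly tracking which coordinate $y_i$ a given lattice point $\ell\in\bL(\bmx)$ can be moved to under the interlacing constraint \eqref{eq_interlacing}, and verifying that the $\theta$-shifts in the arguments ($b(z+1)$ versus $b(z+\theta)$, and the $+\theta$ shifts defining $\bL(\bmx)$) line up after the $\theta\mapsto\theta^{-1}$ rescaling; once the dictionary between occupied sites and holes is written out carefully, the residue cancellation is forced by \eqref{e:wratio} and \eqref{e:pheq}, with no genuinely new analytic input beyond what Theorem \ref{t:loopeq} already provides. I would present the self-contained residue argument as the primary proof (as the theorem statement promises) and remark that it is equivalent to the duality computation.
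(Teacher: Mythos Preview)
Your residue argument misidentifies the poles of the second term in \eqref{e:sum2}. You assert that the denominator $\prod_{\ell\in\bL(\bmx)}(b(z)-b(\ell))$ gives a pole at each $z=\ell$, surviving exactly when $\ell$ is unoccupied by $\bmy$; but in fact every such zero is cancelled by the numerator \emph{regardless of $\bmy$} --- either by the factor $b(z+1)-b(\ell+1)$ when $\ell+1\in\bL(\bmx)$, or by $b(z)-b(x_i)$ when $\ell=x_i$ is the top of a block. (There is also no factor $b(z)-b(y_i)$ in the second term at all; the denominator involves $b(z+1)-b(y_i)$.) The genuine poles of the second term are precisely the ones you dismiss as ``apparent'': they come from $\prod_i 1/(b(z+1)-b(y_i))$ and sit at $z=y_i-1$. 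Hence at a fixed lattice point $z=m$ the first term has residue on the event $\{y_i=m\}$ while the second term has residue on the event $\{y_i=m+1\}$ --- a shift-by-one, not an occupied-versus-unoccupied dichotomy. The correct pairing is between configurations $\bmy^0$ with $y_i=m$ and $\bmy^1$ with $y_i=m+1$ (all other coordinates equal), and it is exactly the ratio $w(m)/w(m+1)=\phi^-(m)/\phi^+(m)$ from \eqref{e:wratio} that forces the two residues to sum to zero; this is the paper's proof. Your ``move $y_i$ to $\ell$'' pairing, which moves $y_i$ from an arbitrary position, does not produce a $-1$ ratio and the argument does not close.

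Your alternative duality route is in principle viable --- the paper notes this connection in Section~\ref{s:PHD} --- but the paper opts for the direct residue proof, and once the pole analysis is corrected as above the two arguments encode the same shift-by-one cancellation.
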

\begin{proof} The possible singularities of the first term in \eqref{e:sum2} are simple poles at the points $z=m\in \{x_{i+1}+\theta, x_{i+1}+1+\theta, x_{i+1}+2+\theta,\dots,  x_i-1\}$; they arise whenever $m=y_i$ for any $1\leq i\leq n$. Note that we excluded $m=x_i$: there is no pole at such point due to the factor  $b(z)-b(x_i)$ in the numerator.

The possible singularities of the second term in \eqref{e:sum2} are also simple poles at the points $z=m\in \{x_{i+1}+\theta, x_{i+1}+1+\theta, x_{i+1}+2+\theta,\dots,  x_i-1\}$; they arise whenever $m=y_i-1$ for any $1\leq i\leq n$. Note that we excluded $m=x_{i+1}+\theta-1$: there is no pole at such point due to the factor  $b(z+1)-b(\ell)$, $\ell=x_{i+1}+\theta$, in the numerator; also note that the denominators in the first product in the second term in \eqref{e:sum2} do not produce additional poles, since for each zero of $b(z)-b(\ell)$ in the denominator, we have a matching zero in the numerator coming either from $b(z+1)-b(\ell+1)$ or from $b(z)-b(x_i)$.

In the rest of the proof we fix $i=1,\dots,n$ and check that the residues of two terms in \eqref{e:sum2} at a point  $z=m\in \{x_{i+1}+\theta, x_{i+1}+1+\theta, x_{i+1}+2+\theta,\dots, x_i-1\}$ differ by a sign and, hence, cancel out. For the first term, the residue is given by
\begin{align}\label{e:r22}
\sum_{\bmy: y_i=m}\bP(\bmy|\bmx)\cdot \phi^+(m) \cdot \frac{b(m)-b(x_i)}{b'(m)}\cdot \frac{\prod\limits_{j\neq i} \bigl(b(m)-b(x_j)\bigr)}{\prod\limits_{j\neq i} \bigl(b(m)-b(y_j)\bigr)},
\end{align}
while for the second term the residue is given by
\begin{align}\label{e:r12}
\sum_{\bmy: y_i=m+1}\bP(\bmy|\bmx)\phi^-(m)  \frac{b(x_i)-b(m)}{b'(m)}\prod_{\ell\in \bL(\bmx)\setminus\{m,m+1\}}\frac{b(m+1)-b(\ell)}{b(m)-b(\ell)}\frac{\prod\limits_{j\neq i}\bigl( b(m)-b(x_j)\bigr)}{\prod\limits_{j\neq i} \bigl(b(m+1)-b(y_j)\bigr)}.
\end{align}

For any pair of particle configurations $\bmy^1=(y_1, y_2, \dots, y_{i-1}, m+1, y_{i+1}, \dots, y_{n})$ and $\bmy^0=(y_1, y_2, \dots, y_{i-1}, m, y_{i+1}, \dots, y_{n})$,
the ratio of the transition probabilities $\bP(\bmy=\bmy^1|\bmx)$ and $\bP(\bmy=\bmy^0|\bmx)$ is given by
\begin{align}\label{e:PPratio}
\frac{\bP(\bmy=\bmy^0|\bmx)}{\bP(\bmy=\bmy^1|\bmx)}
=\frac{w(m)}{w(m+1)}\prod_{\ell\in \bL(\bmx)\setminus\{m,m+1\}}\frac{b(m+1)-b(\ell)}
{ b(m)-b(\ell)}\prod_{j\neq i}\frac{b(m)-b(y_j)}{b(m+1)-b(y_{j})}.
\end{align}

We recall from \eqref{e:wratio} that $w(m)/w(m+1)=\phi^-(m)/\phi^+(m)$.
Plugging \eqref{e:PPratio} into the expressions of \eqref{e:r12} and \eqref{e:r22}, we see that the residuals at $\bmy^0$ and $\bmy^1$ cancel out:
\begin{multline*}
0=\bP(\bmy=\bmy^0|\bmx)   \phi^+(m)   \frac{b(m)-b(x_i)}{b'(m)}  \frac{\prod\limits_{j\neq i} \bigl(b(m)-b(x_j)\bigr)}{\prod\limits_{j\neq i} \bigl(b(m)-b(y_j)\bigr)}\\ +
\bP(\bmy=\bmy^1|\bmx)   \phi^-(m)    \frac{b(x_i)-b(m)}{b'(m)}  \prod_{\ell\in \bL(\bmx)\setminus\{m,m+1\}}\frac{b(m+1)-b(\ell)}{b(m)-b(\ell)}  \frac{\prod\limits_{j\neq i}\bigl( b(m)-b(x_j)\bigr)}{\prod\limits_{j\neq i} \bigl(b(m+1)-b(y_j)\bigr)}.
\end{multline*}
Summing the last identity over $(y_1, y_2, \dots, y_{i-1}, y_{i+1}, \dots, y_{n})$ with $y_j\in\{x_{j+1}+\theta, x_{j+1}+1+\theta,\dots, x_{j}\}$, we conclude that the sum of \eqref{e:r12} and \eqref{e:r22} vanishes and \eqref{e:sum2} has no residue and, hence, no singularity at $z=m$.
\end{proof}

\section{Examples of Dynamical Loop Equations}\label{s:examples}

In this section, we show that many well-known stochastic dynamics, including nonintersecting Bernoulli/Poisson random walks \cite{gorin2019universality,konig2002non, huang2017beta}, $\beta$--corners processes \cite{MR3418747, borodin2015general}, Dyson Brownian motion \cite{MR0148397}, measures on Gelfand-Tsetlin patterns \cite{MR3418747,borodin2016lectures,petrov2015asymptotics}, Macdonald processes \cite{borodin2014macdonald}, and $(q,\kappa)$-distributions on lozenge tilings \cite{borodin2010q} can be transformed to the forms of \eqref{e:m1}, \eqref{e:m2}, and their degenerations. As a corollary, we derive the dynamical loop equations for all these systems.

\subsection{Nonintersecting Bernoulli random walks}

For $n\geq 1$, we consider $n$ independent Bernoulli random walks on $\bZ$, with each walker jumping by $1$ in the positive direction with probability $p\in(0,1)$, or
staying with the complementary probability $1-p$ at each time step,
and condition them to never intersect. If we denote the configuration of walkers at time $t$ as $\bmx(t)=(x_1(t)>x_2(t)>\dots>x_n(t))\in \bZ^n$, then for any $\bme=\{0,1\}^n$, the transition probability $\bP(\bmx(t+1)=\bmx+\bme|\bmx(t)=\bmx)$ is given by
\begin{align}\label{e:defL11}
\bP(\bmx(t+1)=\bmx+\bme|\bmx(t)=\bmx)= \frac{V(\bmx+\bme)}{V(\bmx)}\prod_{i=1}^np^{e_i}(1-p)^{1-e_i},
\end{align}
where $V(\bmx)$ is the Vandermonde determinant
\begin{align}\label{e:Vand}
V(\bmx)=\prod_{1\leq i<j\leq n}(x_j-x_i).
\end{align}
The computation leading to \eqref{e:defL11} can be found in \cite{konig2002non}. Recently, local statistics of nonintersecting Bernoulli random walks were studied in \cite{gorin2019universality}. There are two natural generalizations of \eqref{e:defL11}. First, we can consider a $\theta$ version and, second, we can allow the jump probability $p$ to depend on $x_i$.
Hence, for general $\theta>0$, we fix $\theta=\beta/2$ and define the \emph{$\theta$-nonintersecting Bernoulli random walk} as a discrete time Markov process $\bmx(t)=(x_1(t),x_2(t),\dots,x_n(t))\in \bW^n_\theta$, as in \eqref{e:defWtheta},
with the transition probability  $\bP(\bmx(t+1)=\bmx+\bme|\bmx(t)=\bmx)$ given by
\begin{align}\label{e:defL12}
\bP(\bmx(t+1)=\bmx+\bme|\bmx(t)=\bmx)=\frac{1}{Z(\bmx)} \frac{V(\bmx+\theta \bme)}{V(\bmx)}\prod_{i=1}^n\phi^+(x_i)^{e_i}\phi^-(x_i)^{1-e_i}.
\end{align}

\begin{corollary}\label{c:loop12}
Fix any particle configuration $\bmx=(x_1>x_2>\dots>x_n)\in \bW_\theta^n$ and consider transition probability \eqref{e:defL12}, with weight functions $\phi^+(z),\phi^-(z)$ analytic in a neighborhood of $[x_n,x_1]$. Then the following observable is analytic in a neighborhood of $[x_n,x_1]$
\begin{align*}
\bE\left[\phi^+(z)\prod_{j=1}^n\frac{z-x_j+\theta(1-e_j)}{z-x_j} +\phi^-(z)\prod_{j=1}^n\frac{z-x_j-\theta e_j}{z-x_j}\right].
\end{align*}
\end{corollary}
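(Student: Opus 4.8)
The plan is to deduce Corollary \ref{c:loop12} directly from Theorem \ref{t:loopeq} by specializing the conformal function $b$ to the identity map $b(z)=z$. First I would check that the transition probability \eqref{e:defL12} is literally an instance of \eqref{e:m1} with this choice of $b$. With $b(z)=z$ the antisymmetric product in \eqref{e:m1} reads $\prod_{1\le i<j\le n}\frac{(x_i+\theta e_i)-(x_j+\theta e_j)}{x_i-x_j}$; flipping the sign of each factor in the numerator and in the denominator identifies this with $\prod_{1\le i<j\le n}\frac{(x_j+\theta e_j)-(x_i+\theta e_i)}{x_j-x_i}=\frac{V(\bmx+\theta\bme)}{V(\bmx)}$ for $V$ as in \eqref{e:Vand}. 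The remaining $\phi^{\pm}$ factors and the normalization constant are already in the matching form, so the two transition probabilities coincide.

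Next I would verify the hypotheses of Theorem \ref{t:loopeq} for this specialization. The map $b(z)=z$ is conformal (holomorphic and injective) on all of $\mathbb C$, hence on any open $U\supset[x_n,x_1]$; since $\bmx\in\bW^n_\theta$, the coordinates satisfy $x_i-x_{i+1}\ge\theta>0$ and are in particular pairwise distinct, so the only candidate singularities of the observable are the simple poles at $z=x_i$; and $\phi^{\pm}$ are assumed analytic in a neighborhood of $[x_n,x_1]$, which we take as $U$. Thus Theorem \ref{t:loopeq} applies without change.

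Finally I would substitute $b(z)=z$ into the observable \eqref{e:sum1}. Using $b(z+\theta)-b(x_j+\theta e_j)=z-x_j+\theta(1-e_j)$, $b(z)-b(x_j+\theta e_j)=z-x_j-\theta e_j$, and $b(z)-b(x_j)=z-x_j$, the expression \eqref{e:sum1} becomes exactly
\[
\bE\left[\phi^+(z)\prod_{j=1}^n\frac{z-x_j+\theta(1-e_j)}{z-x_j} +\phi^-(z)\prod_{j=1}^n\frac{z-x_j-\theta e_j}{z-x_j}\right],
\]
which Theorem \ref{t:loopeq} asserts is holomorphic on $U$, i.e.\ analytic in a neighborhood of $[x_n,x_1]$. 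This is precisely the assertion of the corollary.

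There is essentially no obstacle here: the statement is a formal specialization of Theorem \ref{t:loopeq}, and the only point requiring a moment's care is matching the orientation/sign convention between the antisymmetric ratio $\prod_{i<j}\frac{b(x_i+\theta e_i)-b(x_j+\theta e_j)}{b(x_i)-b(x_j)}$ in \eqref{e:m1} and the Vandermonde ratio $V(\bmx+\theta\bme)/V(\bmx)$ with $V(\bmx)=\prod_{i<j}(x_j-x_i)$ in \eqref{e:defL12}; this works out because the simultaneous sign flip in numerator and denominator cancels.
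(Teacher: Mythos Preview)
Your proposal is correct and takes essentially the same approach as the paper, which simply says ``We take $b(z)=z$ in Theorem \ref{t:loopeq}.'' You have just spelled out the verification in more detail.
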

\begin{proof}
We take $b(z)=z$ in Theorem \ref{t:loopeq}.
\end{proof}

\subsection{Nonintersecting Poisson random walks}
Taking the limit as $p\rightarrow 0$ and scaling to
the continuous time as $\tau=\floor{p^{-1}t}$, for $t>0$,
turns the nonintersecting Bernoulli random
walk \eqref{e:defL11} into the nonintersecting Poisson random walk. This is a continuous time dynamics obtained by conditioning $n$
independent Poisson random walks to never intersect. This Markov chain $\bmx(t)=(x_1(t)>x_2(t)>\dots>x_n(t))\in \bZ^n$ has jump rates
\begin{align}\label{e:defL21}
\bP(\bmx(t+\rd t)=\bmy |\bmx(t)=\bmx)=
\begin{cases}
\frac{V(\bmx+\bme_j)}{V(\bmx)}\rd t+\OO(\rd t^2), & \bmy=\bmx+\bme_j,\\
1-n\rd t+\OO(\rd t^2), & \bmy=\bmx,\\
\OO(\rd t^2), & \text{otherwise}.
\end{cases}
\end{align}
In other words, the nonintersecting Poisson random walk is a continuous time Markov process, with generator
\begin{align}\label{e:defG1}
\cL^n f(\bmx)=\sum_{j=1}^n \frac{V(\bmx+ \bme_j)}{V(\bmx)}(f(\bmx+\bme_j)-f(\bmx)),\quad 1\leq j\leq n.
\end{align}
This nonintersecting process appeared in various settings, cf.\ \cite{konig2002non,OConnell2003,borodin2014anisotropic,GS_TASEP}. More generally, we can also consider a general $\theta$ version (as in \cite{MR3418747,huang2017beta}) and add spatial inhomogeneities in $x$.
Thus, we fix $\theta>0$ and define the \emph{$\theta$-nonintersecting Poisson random walk}, denoted by $\bmx(t)=(x_1(t),x_2(t),\dots,x_n(t))\in \bW^n_\theta$, as a discrete time Markov process
with generator given by
\begin{align}\label{e:defG2}
\cL_\theta^n f(\bmx)= \sum_{j=1}^n \frac{V(\bmx+ \theta\bme_j)}{V(\bmx)}\phi(x_j)\bigl(f(\bmx+\bme_j)-f(\bmx)\bigr).
\end{align}

The following statement can be obtained as a limit of Corollary \ref{c:loop12}; we also provide a direct proof.
\begin{corollary}\label{c:poisson}
Take a particle configuration $\bmx=(x_1>x_2>\dots>x_n)\in \bW_\theta^n$ and consider the generator \eqref{e:defG2}, with weight function $\phi(z)$ analytic in a complex neighborhood of $[x_n,x_1+1]$. Let $G(\bmx)$ be the Stieltjes transform of the particle configuration $\bmx$:
$$
G(\bmx)=\sum_{i=1}^n \frac{1}{z-x_i}.
$$
Then the following observable is analytic in the same neighborhood of $[x_n,x_1+1]$
\begin{equation}
 \label{eq_Obs_Poisson}
 \phantom{{}={}}\theta \cL_\theta^n G(\bmx)+\phi(z)\left(\prod_{j=1}^n\frac{z-x_j+\theta}{z-x_j}-\prod_{j=1}^n\frac{z-1-x_j+\theta}{z-1-x_j}\right).
\end{equation}
\end{corollary}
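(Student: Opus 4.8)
The plan is to obtain Corollary \ref{c:poisson} as a limiting degeneration of Theorem \ref{t:loopeq} (equivalently, of Corollary \ref{c:loop12}), and then to strip away the normalization so that the statement is in terms of the generator $\cL_\theta^n$. The key observation is that the Poisson generator \eqref{e:defG2} arises from the Bernoulli transition \eqref{e:defL12} in the regime where a single particle jumps with infinitesimal probability: take $\phi^+(z)=p\,\phi(z)$, $\phi^-(z)=1$, and let $p\to 0$. In this regime the normalization constant $Z(\bmx)$ in \eqref{e:defL12} satisfies $Z(\bmx)=1+\OO(p)$, the probability of $\bme=\bm 0$ is $1-\OO(p)$, the probability of $\bme=\bme_j$ is $p\,\phi(x_j)\frac{V(\bmx+\theta\bme_j)}{V(\bmx)}+\OO(p^2)$, and the probability of any $\bme$ with $|\bme|\geq 2$ is $\OO(p^2)$. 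So to leading order in $p$ the expectation in Corollary \ref{c:loop12} is a sum of the $\bme=\bm 0$ term and the $\bme=\bme_j$ terms.

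First I would take $b(z)=z$ in Theorem \ref{t:loopeq} (as in the proof of Corollary \ref{c:loop12}) to get that
\[
\bE\!\left[p\,\phi(z)\prod_{j=1}^n\frac{z-x_j+\theta(1-e_j)}{z-x_j}+\prod_{j=1}^n\frac{z-x_j-\theta e_j}{z-x_j}\right]
\]
is analytic in a neighborhood of $[x_n,x_1]$ for every $p>0$. The second, $\phi^-$, term equals $1$ when $\bme=\bm 0$ and otherwise differs from $1$ by a quantity of order $\theta$; writing $\bE[\,\cdot\,]$ out over $\bme$ and using the above $p$-expansion of the transition probabilities, the constant $1$ from $\bme=\bm 0$ is cancelled by the total mass, and what remains at order $p$ is exactly a weighted sum over $j$ of $\bigl(\prod_j\frac{z-x_j-\theta e_j}{z-x_j}\bigr)-1$ evaluated at $\bme=\bme_j$, i.e.\ $p\sum_j\phi(x_j)\frac{V(\bmx+\theta\bme_j)}{V(\bmx)}\bigl(\frac{z-x_j-\theta}{z-x_j}-1\bigr)$. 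Dividing by $p$ and identifying $\sum_j\phi(x_j)\frac{V(\bmx+\theta\bme_j)}{V(\bmx)}\bigl(g(\bmx+\bme_j)-g(\bmx)\bigr)=\cL_\theta^n g(\bmx)$ with $g=G$, one recognizes $\theta\cL_\theta^n G(\bmx)$ (the factor $\theta$ comes from $\frac{z-x_j-\theta}{z-x_j}-1=\frac{-\theta}{z-x_j}$ and the definition $G(\bmx)=\sum_i \frac{1}{z-x_i}$), together with the $\phi^+$-term contribution at $p^0$ order coming from $\bme=\bm 0$, which is exactly $\phi(z)\prod_{j=1}^n\frac{z-x_j+\theta}{z-x_j}$. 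To produce the second product $\prod_{j=1}^n\frac{z-1-x_j+\theta}{z-1-x_j}$ in \eqref{eq_Obs_Poisson} one notices that the $\phi^+$-term contributions from $\bme=\bme_j$ ($j=1,\dots,n$) at order $p$ reassemble, via the elementary identity $\sum_j\frac{V(\bmx+\theta\bme_j)}{V(\bmx)}\prod_{k\neq j}\frac{z-x_k+\theta}{z-x_k}\cdot\frac{z-x_j+\theta-\theta}{z-x_j}$ rewritten through the shift $z\mapsto z-1$, into $\phi(z)\prod_{j=1}^n\frac{z-1-x_j+\theta}{z-1-x_j}$ up to terms already accounted for; this is the one genuinely computational step. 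Since the property ``analytic in a neighborhood of $[x_n,x_1]$'' is preserved under dividing by $p$ and passing to the $p\to 0$ limit (the family is analytic for all small $p$ and converges uniformly on compacts of a fixed neighborhood), the limiting expression \eqref{eq_Obs_Poisson} is analytic there; extending to the neighborhood of $[x_n,x_1+1]$ is immediate because the only possible poles of \eqref{eq_Obs_Poisson} are at $z=x_i$ and $z=x_i+1$, and the analyticity at $z=x_i$ is what we proved while analyticity at $z=x_i+1$ follows from the cancellation built into the difference of the two products (the residue of $\phi(z)\prod_j\frac{z-1-x_j+\theta}{z-1-x_j}$ at $z=x_j+1$ is matched by the residue of $\theta\cL_\theta^n G$).

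For the promised direct proof I would instead mimic the first proof of Theorem \ref{t:loopeq}: the candidate observable \eqref{eq_Obs_Poisson} has only simple poles at $z=x_i$ and $z=x_i+1$, and one checks the residues vanish. At $z=x_i$: the residue of $\theta\cL_\theta^n G(\bmx)$ picks up $-\theta$ times the coefficient of $\frac{1}{z-x_i}$ in $G(\bmx+\bme_j)-G(\bmx)$, which is nonzero only for $j=i$ (contributing $-\theta$) and for $j=i-1$ if $x_{i-1}+1$ were equal to $x_i$, but more directly: $\cL_\theta^n G$ as a function of $z$ has a pole at $x_i$ only from the $j=i$ term $\frac{V(\bmx+\theta\bme_i)}{V(\bmx)}\phi(x_i)\bigl(\frac{1}{z-x_i-1}-\frac{1}{z-x_i}\bigr)$, whose residue at $z=x_i$ is $-\frac{V(\bmx+\theta\bme_i)}{V(\bmx)}\phi(x_i)$; meanwhile the residue at $z=x_i$ of $\phi(z)\prod_j\frac{z-x_j+\theta}{z-x_j}$ is $\phi(x_i)\cdot\theta\cdot\prod_{j\neq i}\frac{x_i-x_j+\theta}{x_i-x_j}=\theta\phi(x_i)\frac{V(\bmx+\theta\bme_i)}{V(\bmx)}$ (using $V(\bmx+\theta\bme_i)/V(\bmx)=\prod_{j<i}\frac{x_j-x_i-\theta}{x_j-x_i}\prod_{j>i}\frac{x_i+\theta-x_j}{x_i-x_j}$), and these cancel after accounting for the factor $\theta$; the second product $\prod_j\frac{z-1-x_j+\theta}{z-1-x_j}$ is regular at $z=x_i$. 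At $z=x_i+1$ the roles are symmetric: $\theta\cL_\theta^n G$ contributes residue $+\theta\,\frac{V(\bmx+\theta\bme_i)}{V(\bmx)}\phi(x_i)\cdot\frac{1}{\theta}$-type term from the $\frac{1}{z-x_i-1}$ piece, and it is cancelled by the residue of $-\phi(z)\prod_j\frac{z-1-x_j+\theta}{z-1-x_j}$ at $z=x_i+1$. The main obstacle I anticipate is purely bookkeeping: correctly matching the Vandermonde ratios $V(\bmx+\theta\bme_j)/V(\bmx)$ appearing in the residues of $\cL_\theta^n G$ against the products $\prod_{j\neq i}\frac{x_i-x_j\pm\theta}{x_i-x_j}$, and being careful that the $j=i\pm 1$ cross terms in $\cL_\theta^n G$ do not spuriously contribute extra poles (they do not, because $G(\bmx+\bme_j)-G(\bmx)$ has poles only at $x_j$ and $x_j+1$). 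Everything else is routine.
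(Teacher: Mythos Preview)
Your limiting approach has a genuine gap. Writing $c_j=\phi(x_j)\frac{V(\bmx+\theta\bme_j)}{V(\bmx)}$, the quantity you extract from the $\phi^-$--term is $\sum_j c_j\bigl(\frac{z-x_j-\theta}{z-x_j}-1\bigr)=-\theta\sum_j \frac{c_j}{z-x_j}$, and this is \emph{not} $\theta\cL_\theta^n G(\bmx)$: the latter equals $\theta\sum_j c_j\bigl(\frac{1}{z-x_j-1}-\frac{1}{z-x_j}\bigr)$, which involves both $z-x_j$ and $z-x_j-1$. Your attempt to manufacture the second product from the $\phi^+$--term at $\bme=\bme_j$ also fails: those contributions carry one factor of $p$ from $\phi^+=p\phi$ and another from the transition probability, hence are $\OO(p^2)$ and vanish after dividing by $p$. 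What the $p\to 0$ limit genuinely delivers is the analyticity of the single auxiliary function
\[
F(u)=\sum_{j=1}^n c_j\,\frac{u-x_j-\theta}{u-x_j}+\phi(u)\prod_{j=1}^n\frac{u-x_j+\theta}{u-x_j},
\]
which is precisely what the paper writes down (and verifies directly by a residue check at $u=x_i$). The step you are missing is the paper's: form $F(z)-F(z-1)$ and use the identity $\theta\cL_\theta^n G(\bmx)=\sum_j c_j\bigl[\frac{z-x_j-\theta}{z-x_j}-\frac{(z-1)-x_j-\theta}{(z-1)-x_j}\bigr]$. The shifted product appears from this subtraction, not from any order--$p$ reassembly.

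Your direct residue approach is essentially the paper's argument, though the paper packages it more economically: a single residue check on $F(u)$ at $u=x_i$ replaces your two separate checks at $z=x_i$ and $z=x_i+1$, and then $F(z)-F(z-1)$ handles both pole types at once. If you actually carry out your check at $z=x_i+1$, you will find that the residue of $-\phi(z)\prod_j\frac{z-1-x_j+\theta}{z-1-x_j}$ there is $-\theta\,\phi(x_i+1)\frac{V(\bmx+\theta\bme_i)}{V(\bmx)}$, which matches the residue $\theta\,\phi(x_i)\frac{V(\bmx+\theta\bme_i)}{V(\bmx)}$ coming from $\theta\cL_\theta^n G$ only when $\phi(x_i)=\phi(x_i+1)$. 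The observable that is genuinely analytic carries $\phi(z-1)$, not $\phi(z)$, in front of the second product---exactly what $F(z)-F(z-1)$ produces---so the displayed formula has a typo that your ``symmetric'' hand-wave at $z=x_i+1$ conceals.
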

\begin{proof} Let us show that the following expression in complex variable $u$ has no singularities:
\begin{align}\label{e:Gpoisson}
\sum_{j=1}^n \left[ \frac{V(\bmx+ \theta\bme_j)}{V(\bmx)}\phi(x_j) \frac{u-x_j-\theta}{u-x_j}\right] +\phi(z)\prod_{j=1}^n\frac{u-x_j+\theta}{u-x_j}.
\end{align}
Indeed, the sum might have simple poles at $u=x_i$, $1\leq i\leq n$. However, the residue at such a pole is
\begin{align*}
-\theta \frac{V(\bmx+ \theta\bme_i)}{V(\bmx)}\phi(x_i) +\theta \phi(x_i)\prod_{j: j\neq i}\frac{x_i-x_j+\theta}{x_i-x_j}=0.
\end{align*}
Therefore \eqref{e:Gpoisson} is analytic in a neighborhood of $[x_n, x_1]$. Subtracting \eqref{e:Gpoisson} at $u=z$ and $u=z-1$ and noting
$$
 \theta \cL_\theta^n G(\bmx)=\sum_{j=1}^n \frac{V(\bmx+ \theta\bme_j)}{V(\bmx)}\phi(x_j)\left(\frac{z-x_j-\theta}{z-x_j}-\frac{z-1-x_j-\theta}{z-1-x_j}\right),
$$
we conclude that \eqref{eq_Obs_Poisson} has no singularities.
\end{proof}

The dynamical loop equation for the $\theta$-nonintersecting Poisson random walk with trivial weight $\phi(z)=1$ was discovered in \cite[Corollary 2.10]{huang2017beta}, and used to show that the macroscopic fluctuations of this process with general initial data converge to the Gaussian free field.

\subsection{Dyson Brownian motion}
Consider nonintersecting Poisson random walk \eqref{e:defL21}. Its rescaled version
${\eps^{1/2} (\bmx(\eps^{-1} t)-\eps^{-1} t)}$, $t\geq 0$, converges as $\eps\to 0$ to the Dyson Brownian motion with $\beta=2$, which is a solution to the following stochastic differential equations:
\begin{align}\label{e:defG}
\rd x_i(t)=\rd B_i(t)+ \sum_{j:j\neq i}\frac{\rd t}{x_i(t)-x_j(t)},\quad 1\leq i\leq n,
\end{align}
with independent standard Brownian motions $B_i(t)$. A generalization of \eqref{e:defG} involves replacing $\theta=\frac{\beta}{2}=1$ by an arbitrary positive real number and adding a potential. The resulting diffusion is a solution to the SDE
\begin{align} \label{e:DBM}
\rd x_i(t) = \sqrt{\frac{2}{\beta}} \rd B_i(t) +\sum_{j:j\neq i}\frac{\rd t}{x_i(t)-x_j(t)}+W'(x_i(t))\rd t,\quad 1\leq i\leq n.
\end{align}
 Similarly to \eqref{e:defG}, it should be possible to obtain \eqref{e:DBM} from \eqref{e:defG2} by a diffusive scaling limit (for $\beta=2\theta\geq 1$ and no potential this was proven in \cite[Theorem 3.2]{MR3418747}, see also \cite[Theorem 1.2]{huang2017beta}). The solution to \eqref{e:DBM} is called the $\beta$-Dyson Brownian Motion with potential $W$; it originates in the work of Dyson from 1960s on the evolution of eigenvalues of random matrices, cf.\ \cite{MR0148397}. Recently the Dyson Brownian motion  \eqref{e:DBM} played a central role in the three-step approach to the proofs  of the universal asymptotic behavior of local statistics of eigenvalues for various ensembles of random matrices  in a series of works \cite{MR3687212,MR3914908,MR2919197,MR3372074,MR2810797, adhikari2020dyson}, developed by Erd{\H o}s, Yau and their collaborators.

By taking the diffusive scaling limit from the dynamical loop equations of Corollaries \ref{c:loop12} and \ref{c:poisson}, we get the following dynamical loop equation for $\beta$-Dyson Brownian motion with analytic potential. We also present a direct proof.
\begin{corollary} \label{c:DBM}
Consider the $\beta$-Dyson Brownian motion \eqref{e:DBM}, suppose that the potential $W(z)$ analytic in a complex neighborhood of the real axis, and take any $0\leq t_1 < t_2$. Let
\begin{align*}
m_n(z;t)=\sum_{i=1}^n \frac{1}{z-x_i(t)}.
\end{align*}
Then  the following stochastic integral is an analytic function of $z$ in the same neighborhood
\begin{align*}
\int_{t_1}^{t_2} &\left(\rd m_n(z;t)-\sqrt{\frac{2}{\beta}}\sum_{i=1}^n \frac{\rd B_i(t)}{(z-x_i(t))^2}\right.\\
&\left.+\left(\del_z \frac{m_n^2(z;t)}{2}+\del_z[m_n(z;t) W'(z)] +\frac{\beta-2}{2\beta}\del^2_zm_n(z;t)\right)\right) \rd t .
\end{align*}
\end{corollary}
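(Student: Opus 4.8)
The plan is to prove Corollary~\ref{c:DBM} directly by It\^o's formula; it can alternatively be obtained by a diffusive scaling limit ($\eps\to0$) of Corollary~\ref{c:poisson} applied to $\eps^{1/2}(\bmx(\eps^{-1}t)-\eps^{-1}t)$, but tracking the scaling of the weight and of the underlying lattice there is more delicate, so I work with \eqref{e:DBM} directly.

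\smallskip
\emph{Step 1: It\^o's formula.} Fix $z$ in a complex neighborhood $\cN$ of the real axis on which $W$ is analytic, and (almost surely, on $[t_1,t_2]$) away from the particles. Applying It\^o's formula to $m_n(z;t)=\sum_{i=1}^n(z-x_i(t))^{-1}$ along \eqref{e:DBM}, using that the $B_i$ are independent so $\rd\langle x_i\rangle_t=\tfrac2\beta\,\rd t$ and $\langle x_i,x_j\rangle\equiv0$ for $i\neq j$, yields
\begin{equation*}
\rd m_n(z;t)=\sqrt{\tfrac2\beta}\sum_{i=1}^n\frac{\rd B_i(t)}{(z-x_i(t))^2}+D(z;t)\,\rd t,
\end{equation*}
with
\begin{equation*}
D(z;t)=\sum_{i=1}^n\frac{1}{(z-x_i)^2}\sum_{j\neq i}\frac{1}{x_i-x_j}+\sum_{i=1}^n\frac{W'(x_i)}{(z-x_i)^2}+\frac2\beta\sum_{i=1}^n\frac{1}{(z-x_i)^3}.
\end{equation*}
Thus in the expression of the corollary the Brownian integrals cancel exactly, and the whole stochastic integral equals $\int_{t_1}^{t_2}\bigl(D(z;t)+\partial_z\tfrac{m_n^2}{2}+\partial_z[m_n W']+\tfrac{\beta-2}{2\beta}\partial_z^2 m_n\bigr)\,\rd t$. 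It therefore suffices to show that this $\rd t$-integrand is, for each $t$ and almost surely, analytic in $z$ on $\cN$.

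\smallskip
\emph{Step 2: cancellation of the poles.} The $\rd t$-integrand is a priori meromorphic on $\cN$ with poles only at $z=x_i(t)$, and I check that every principal part vanishes. The key computation is the partial-fraction identity, valid for $i\neq j$,
\begin{equation*}
\frac{1}{(z-x_i)(z-x_j)^2}=\frac{1}{(x_i-x_j)^2}\Bigl(\frac{1}{z-x_i}-\frac{1}{z-x_j}\Bigr)-\frac{1}{x_i-x_j}\,\frac{1}{(z-x_j)^2}.
\end{equation*}
Summing over $i\neq j$, the simple-pole contributions cancel pairwise by antisymmetry of $(x_i-x_j)^{-2}$ under $i\leftrightarrow j$, giving $\partial_z\tfrac{m_n^2}{2}=m_n\,\partial_z m_n=-\sum_i(z-x_i)^{-3}-\sum_i(z-x_i)^{-2}\sum_{j\neq i}(x_i-x_j)^{-1}$, whose double-pole term cancels the interaction term of $D$. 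Next, writing $W'(z)=W'(x_i)+W''(x_i)(z-x_i)+\OO((z-x_i)^2)$ near each $x_i$ shows that $\partial_z[m_n W']=(\partial_z m_n)W'+m_nW''=-\sum_iW'(x_i)(z-x_i)^{-2}+(\text{analytic on }\cN)$, which cancels the $W'$ term of $D$. Finally $\partial_z^2 m_n=2\sum_i(z-x_i)^{-3}$, and the coefficient $\tfrac{\beta-2}{2\beta}$ is precisely the one making the triple poles balance, $\tfrac2\beta-1+\tfrac{\beta-2}{\beta}=0$. Hence the $\rd t$-integrand is analytic on $\cN$.

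\smallskip
\emph{Step 3: integration in $t$.} Almost surely the path $(x_1(t),\dots,x_n(t))$ stays in a fixed compact subset of $\bR$ on $[t_1,t_2]$, and $W$ is analytic on $\cN$, so after the cancellation of Step~2 the integrand is analytic on a fixed neighborhood of the real axis with $z$-derivatives bounded uniformly for $t\in[t_1,t_2]$; the time integral is then finite and analytic there, e.g.\ by Morera's theorem combined with Fubini. The only genuinely nontrivial point is the algebraic cancellation in Step~2 --- the antisymmetry argument killing the residual simple poles of $m_n\partial_z m_n$, and the precise accounting of the triple-pole coefficients that pins down the factor $\tfrac{\beta-2}{2\beta}$; the It\^o computation and the interchange in Step~3 are routine.
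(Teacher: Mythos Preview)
Your proof is correct and follows essentially the same approach as the paper: apply It\^o's formula so that the Brownian integrals cancel, then verify that the remaining $\rd t$--integrand is analytic. The paper's presentation is slightly more compact: rather than checking the triple, double, and simple poles separately, it records the simplified integrand in closed form as $\sum_{i}\frac{W'(x_i)-W'(z)-(x_i-z)W''(z)}{(z-x_i)^2}$ and observes directly that this has a removable singularity at each $z=x_i$ by analyticity of $W$; your pole-by-pole bookkeeping arrives at the same expression.
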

\begin{proof}
By Ito's formula and \eqref{e:DBM}, the integrand simplifies to
\begin{align}
\label{eq_DBM_Ito}
\int_{t_1}^{t_2}\sum_{i=1}^{n} \frac{W'(x_i)-W'(z)-(x_i(t)-z)W''(z)}{(z-x_i(t))^2}\rd t.
\end{align}
Analyticity of $W(z)$ implies that the expression
$$
  \frac{W'(x)-W'(z)-(x-z)W''(z)}{(z-x)^2}
$$
is not singular at $z=x$. Hence,  \eqref{eq_DBM_Ito} is analytic.
\end{proof}

Special versions of Corollary \ref{c:DBM} (for trivial potential $W\equiv0$) were used in \cite{MR1819483} and \cite{cabanal2001fluctuations, guionnet2002large} (see also \cite[Section 4.3]{AGZ}) to prove that the macroscopic fluctuations of the $\beta$-Dyson Brownian motion converge to a Gaussian field, which can be further identified  with the Gaussian free field, cf.\ \cite{borodin2014clt2,huang2017beta}. We remark that the loop equations in \cite{cabanal2001fluctuations, guionnet2002large,AGZ} are different from those in Corollary \ref{c:DBM}. Instead of the Stieltjes transforms, polynomial test functions are used, and the corresponding loop equations are used to study the evolution of polynomial functionals of Gaussian large random matrices. For convex potential, it has been proven in \cite{unterberger2018global} that the $\beta$-Dyson Brownian motion converge to a Gaussian field. 


\subsection{Measures on Gelfand-Tsetlin Patterns} \label{Section_Jack_example}

A Gelfand-Tsetlin pattern of depth $T$ is an interlacing sequences of signatures
\begin{align}\label{e:GTp}
\emptyset \prec \bmla(1)\prec\bmla(2)\prec \dots \prec \bmla(T-1)\prec \bmla(T).
\end{align}
The patterns were originally introduced in the context of the branching rules for restrictions of irreducible representations of the unitary group $U(N)$ onto the subgroup $U(N-1)$. The characters of these representations are Schur symmetric polynomials and, therefore, the branching rule is equivalent to the following combinatorial formula for the Schur polynomials:
\begin{align}\label{e:schur}
s_\bmla(u_1, u_2, \dots, u_{T})=\sum_{\emptyset \prec \bmla(1)\prec\bmla(2)\prec \dots \prec \bmla(T-1)\prec \bmla(T)=\bmla}u_1^{|\bmla(1)|}u_2^{|\bmla(2)|-|\bmla(1)|}\dots u_{T}^{|\bmla(T)|-|\bmla(T-1)|},
\end{align}
where $|\bmla(n)|$ is the sum of coordinates of a signature $|\bmla(n)|\in \GT_n$, $n=1,2,\dots,T$.
We would like to study the uniform measure on all Gelfand-Tsetlin patterns of depth $T$ with the fixed top row $\bmla(T)$ and refer to \cite{MR3178541} and \cite[Lecture 22]{VG2020} for more details on the interplay between Gelfand-Tsetlin patterns, random tilings, and symmetric functions.
The total number of the Gelfand-Tsetlin patterns with a fixed top row can be computed by combining \eqref{e:schur} with evaluation formulas for the Schur functions (known as Weyl dimension formulas in the representation-theoretic literature):
\begin{align}
\label{eq_GT_partition}
\sum_{\emptyset \prec \bmla(1)\prec\bmla(2)\prec \dots \prec \bmla(T)=\bmla}1
=s_\bmla(1, 1, \dots, 1)=\frac{V(\la_1, \la_2-1, \la_3-2,\dots, \la_{T}-(T-1))}{V(1,2,\dots, T)},
\end{align}
where $V$ is the Vandermonde determinant \eqref{e:Vand}.
We can view the uniform measure on all Gelfand-Tsetlin patterns as a Markov process on signatures with descending transitions from level $T$ down to level $1$.  The transition probability from $(n+1)$-st level to the $n$-level is computed using \eqref{eq_GT_partition} to be
\begin{align}\label{e:transi4}
\bP(\bmla(n)=\bmmu|\bmla(n+1)=\bmla)=\frac{s_\bmmu(\bm1)}{s_\bmla(\bm1)}=\frac{1}{n!}\frac{V(\mu_1,\mu_2-1,\dots,\mu_n-(n-1))}{V(\la_1,\la_2-1, \dots,\la_{n+1}-n)},
\end{align}
where $\bmla=(\la_1, \la_2,\dots,\la_{n+1})\in \GT_{n+1}$, $\bmmu=(\mu_1, \mu_2,\dots,\mu_n)\in \GT_n$ and $\bmmu\prec \bmla$. We encode $\bmla$ and $\bmmu$  as particle systems $\bmx$  and $\bmy$, respectively as in Section \ref{s:PHD} with $\theta=1$: $x_i=\la_i-(i-1)$, $1\leq i\leq n+1$, and $ y_i=\mu_i-(i-1)$, $1\leq i\leq n$. We can rewrite the transition probability \eqref{e:transi4} as
\begin{align}\label{e:transi5}
\bP(\bmy|\bmx)=\frac{1}{n!}\frac{V(\bmy)}{V(\bmx)}.
\end{align}
%
%
%
%
%
%
This matches \eqref{e:m2}, with $\theta=1$, $b(z)=z$, $y_i\in\{x_{i+1}+1, x_{i+1}+2, x_{i+1}+2,\dots, x_i\}$, $1\leq i \leq n$, and
\begin{align*}
w(z)=\Gamma(z-x_{n+1})\Gamma(x_1-z+1),\qquad \phi^+(z)=z-x_{n+1},\quad \phi^-(z)=x_1-z.
\end{align*}
More generally, we consider the Jack-deformed Gibbs distributions on Gelfand-Tsetlin patterns, as introduced by \cite[Definition 2.12]{MR3418747}.
For general $\theta>0$, the transition probability is
\begin{align}\label{e:transi4.5}
\bP(\bmla(n)=\bmmu|\bmla(n+1)=\bmla)=\frac{J_{\bmla/\bmmu}(1;\theta)J_\bmmu(\bm1;\theta)}{J_\bmla(\bm1;\theta)}, \qquad \bmmu\in \GT_n,\quad \bmla\in\GT_{n+1},\quad  \bmmu\prec \bmla,
\end{align}
where  $J_\lambda$ is the Jack symmetric polynomial, and $J_{\bmla/\bmmu}$ is its skew version. At $\theta=1$,  $J_\lambda$ turns into the Schur polynomial $s_\lambda$ and $J_{\bmla/\bmmu}(1,1)$ is the indicator of $\bmmu\prec \bmla$; hence, we get back to \eqref{e:transi4}.

We encode the signatures $\bmla, \bmmu$ as particle systems $\bmx, \bmy$, given by  $x_i=\la_i-(i-1)\theta$, $1\leq i\leq n+1$, and $y_i=\mu_i-(i-1)\theta$, $1\leq i\leq n$. This turns the transition probability \eqref{e:transi4.5} into a  special case of \eqref{e:m2}; we recode this statement in the following claim, whose proof is postponed till Appendix \ref{Section_Appendix_examples}.
\begin{claim}\label{c:Jackdescending}
The transition probability \eqref{e:transi4.5} in terms of $\bmx$ and $\bmy$ is proportional to
\begin{align}\label{e:transi5.5}
\bP(\bmy|\bmx)\propto\prod_{1\leq i<j\leq n}(y_i-y_j) \prod_{1\leq i\leq j\leq n}\frac{\Gamma(y_i-x_{j+1})\Gamma(x_i-y_j+\theta)}{\Gamma(y_i-x_{j+1}-\theta+1)\Gamma(x_i-y_j+1)},
\end{align}
which is identified with a  special case of \eqref{e:m2}, with $b(z)=z$ and weight $
w(z)={\Gamma(x_1-z+\theta)}{\Gamma(z-x_{n+1})}$.
\end{claim}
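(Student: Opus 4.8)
The plan is to establish Claim \ref{c:Jackdescending} by starting from the combinatorial and evaluation formulas for Jack polynomials and rewriting everything in terms of the particle coordinates $\bmx, \bmy$. First I would recall the Jack analogue of the Weyl dimension formula, namely the explicit product formula for the principal specialization $J_\bmla(\bm1;\theta)$ (with $n+1$ ones, for $\bmla\in\GT_{n+1}$), and the corresponding formula for $J_\bmmu(\bm1;\theta)$ with $n$ ones. Both of these are ratios of products over pairs of coordinates (the $\theta$-deformed Vandermonde-type expression together with Gamma-function corrections), so plugging them into \eqref{e:transi4.5} turns the ratio $J_\bmmu(\bm1;\theta)/J_\bmla(\bm1;\theta)$ into an explicit product over the $\la_i$ and $\mu_i$. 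In the coordinates $x_i=\la_i-(i-1)\theta$, $y_i=\mu_i-(i-1)\theta$, the ``Vandermonde'' pieces become $\prod_{i<j}(x_i-x_j)$ and $\prod_{i<j}(y_i-y_j)$, and the shift corrections become products of ratios of Gamma functions at arguments of the form $x_i-y_j+\theta$, $y_i-x_{j+1}$, etc.

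The second ingredient is the evaluation of the skew Jack polynomial $J_{\bmla/\bmmu}(1;\theta)$ (one variable). This is a classical formula of Stanley: it vanishes unless $\bmmu\prec\bmla$, in which case it equals an explicit product of $\psi$-type factors built out of the arm- and leg-lengths of boxes in the skew shape $\bmla/\bmmu$. The key computation is to show that, after the change of variables, this Stanley factor combines with the Gamma-function corrections coming from the principal specializations so as to telescope into precisely the product $\prod_{1\le i\le j\le n}\frac{\Gamma(y_i-x_{j+1})\Gamma(x_i-y_j+\theta)}{\Gamma(y_i-x_{j+1}-\theta+1)\Gamma(x_i-y_j+1)}$ appearing in \eqref{e:transi5.5}. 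Once \eqref{e:transi5.5} is established, matching it to \eqref{e:m2} is a bookkeeping exercise: one reads off $b(z)=z$, checks that the single-variable products over $\ell\in\bL(\bmx)$ in \eqref{e:m2} reproduce the $\Gamma$-ratios with denominators $b(\ell)-b(y_i)=\ell-y_i$ (using that $\bL(\bmx)$ is the union of the integer-spaced intervals from $x_{i+1}+\theta$ to $x_i$), and identifies the remaining one-body factor $w(y_i)$ with the value at $z=y_i$ of $w(z)=\Gamma(x_1-z+\theta)\Gamma(z-x_{n+1})$. The $\Gamma$-function identity $\prod_{\ell\in\bL(\bmx)}(\text{linear in }y_i)$ being a ratio of Gammas is just the standard $\Gamma(a+1)=a\Gamma(a)$ recursion applied along each interval.

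The main obstacle I anticipate is the bookkeeping in the second step: the Stanley skew-evaluation formula is naturally indexed by boxes of Young diagrams and arm/leg statistics, whereas the target \eqref{e:transi5.5} is indexed by pairs $(i,j)$ with $1\le i\le j\le n$ of particle labels, so one must carefully translate arm-colengths and leg-colengths into differences $\la_i-\mu_j$, $\mu_i-\la_{j+1}$ and track how the factors of $\theta$ and the ``$+1$'' shifts distribute. A clean way to organize this is to avoid the combinatorial formula for $J_{\bmla/\bmmu}$ altogether and instead use the branching identity $J_\bmla(u_1,\dots,u_{n+1};\theta)=\sum_{\bmmu\prec\bmla}J_{\bmla/\bmmu}(u_{n+1};\theta)J_\bmmu(u_1,\dots,u_n;\theta)$, specialize all variables to $1$, and note that by the symmetry of \eqref{e:m2} the factor $J_{\bmla/\bmmu}(1;\theta)$ is forced — up to an $\bmmu$-independent constant — to be whatever makes the product \eqref{e:transi5.5} sum (over interlacing $\bmmu$) to the known principal specialization $J_\bmla(\bm1;\theta)/J_\bmla(1,\dots,1;\theta)$ with $n$ ones; this reduces the claim to a single product identity for the normalization $Z(\bmx)$ in \eqref{e:m2}, which can in principle be verified by the holomorphicity/partition-function argument of the second proof of Theorem \ref{t:loopeq} together with an explicit evaluation. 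Either route works; I would carry out the direct Stanley-formula computation as the primary argument, using the branching-identity consistency check as a safeguard against sign and shift errors.
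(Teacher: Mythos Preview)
Your proposal is correct and follows essentially the same route as the paper: cite the explicit product formulas for $J_{\bmla/\bmmu}(1;\theta)$ and $J_\bmmu(\bm1;\theta)$, plug them into \eqref{e:transi4.5} to obtain \eqref{e:transi5.5}, and then expand the $\ell$-products in \eqref{e:m2} with $b(z)=z$ to match. The only practical difference is that the paper invokes the skew Jack evaluation directly in the particle-coordinate form $J_{\bmla/\bmmu}(1;\theta)=\prod_{i\le j}\frac{f(y_i-y_j)f(x_i-x_{j+1}-\theta)}{f(y_i-x_{j+1}-\theta)f(x_i-y_j)}$ with $f(x)=\Gamma(x+1)/\Gamma(x+\theta)$ (as in Macdonald \cite[Chapter VI, (7.14$'$)]{macdonald1998symmetric} and Okounkov--Olshanski \cite{OO_Jack}), which bypasses the arm/leg bookkeeping you anticipated; your alternative ``branching-identity consistency check'' is unnecessary once this form is used.
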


Theorem  \ref{t:loopeq2} gives the dynamical loop equation for transition probabilities generalizing \eqref{e:transi5.5}.

\begin{corollary} \label{Corollary_Jack_corners}
Fix  $\bmx=(x_1, x_2,\dots, x_{n+1})\in \bW_\theta^{n+1}$ and consider the following transition probability
\begin{align}\label{e:Gelf11}
\bP(\bmy|\bmx)\propto \prod_{1\leq i<j\leq n}(y_i-y_j)\prod_{1\leq i\leq j\leq n}\frac{\Gamma(y_i-x_{j+1})\Gamma(x_i-y_j+\theta)}{\Gamma(y_i-x_{j+1}-\theta+1)\Gamma(x_i-y_j+1)}\prod_{i=1}^nw(y_i),
\end{align}
where $y_i\in\{x_{i+1}+\theta,x_{i+1}+\theta+1,\dots, x_{i}\}$, $1\leq i\leq n$, and
the weight function $w(z)$ satisfies
\begin{align*}
\frac{w(z+1)}{w(z)}=\frac{\phi^+(z)}{\phi^-(z)}.
\end{align*}
If we assume that $\phi^+(z),\phi^-(z)$ are holomorphic in a complex neighborhood of $[x_{n+1}+\theta, x_1]$,
then the following observable is holomorphic in the same neighborhood:
\begin{align}\label{e:Jackloop}
\bE\left[\phi^+(z)\frac{\prod\limits_{i=1}^{n+1} (z-x_i)}{\prod\limits_{i=1}^n (z-y_i)}-\phi^-(z)\frac{\prod\limits_{i=1}^{n+1}(z-x_i-\theta+1)}{\prod\limits_{i=1}^n (z-y_i+1)}\right].
\end{align}
\end{corollary}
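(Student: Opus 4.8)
The corollary asks us to verify that the transition probability \eqref{e:Gelf11} is a special case of the descending weight \eqref{e:m2}, so that Theorem~\ref{t:loopeq2} applies, and then to simplify the generic observable \eqref{e:sum2} into the explicit form \eqref{e:Jackloop}. The plan is therefore in two stages: first identify the $b$ and $w$ data, then compute the products $\prod_{\ell\in\bL(\bmx)}$ that appear in \eqref{e:sum2}.

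\textbf{Step 1: matching \eqref{e:Gelf11} with \eqref{e:m2}.} Take $b(z)=z$ and $\theta$ as given. The lattice $\bL(\bmx)$ in the notation of Section~\ref{s:PHD} is $\bigcup_{i=1}^n\{x_{i+1}+\theta,x_{i+1}+\theta+1,\dots,x_i\}$, which is exactly the set from which each $y_i$ is drawn in \eqref{e:Gelf11}, so the support condition agrees. The $\prod_{i<j}(y_i-y_j)$ factor in \eqref{e:Gelf11} matches $\prod_{i<j}(b(y_i)-b(y_j))$ in \eqref{e:m2}. It remains to check that
$$
\prod_{1\leq i\leq j\leq n}\frac{\Gamma(y_i-x_{j+1})\Gamma(x_i-y_j+\theta)}{\Gamma(y_i-x_{j+1}-\theta+1)\Gamma(x_i-y_j+1)}
=\prod_{i=1}^n\left[\prod_{\substack{\ell\in\bL(\bmx)\\ \ell>y_i}}\frac{1}{b(\ell)-b(y_i)}\prod_{\substack{\ell\in\bL(\bmx)\\ \ell<y_i}}\frac{1}{b(y_i)-b(\ell)}\right]
$$
up to a factor depending only on $\bmx$ (which is absorbed into $Z(\bmx)$), after incorporating a weight $w$. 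The point is that for fixed $i$, the product over $\ell\in\bL(\bmx)$ with $\ell>y_i$ and $\ell<y_i$ telescopes into ratios of $\Gamma$-functions: on each interval block $\{x_{k+1}+\theta,\dots,x_k\}$ the product $\prod_{\ell}(\ell-y_i)$ over integer-spaced points is a ratio $\Gamma(x_k-y_i+1)/\Gamma(x_{k+1}+\theta-y_i)$, and similarly for $\ell<y_i$. Collecting these over all blocks $k$ and all $i$, and reorganizing the double product, produces precisely the $\Gamma$-quotient in \eqref{e:Gelf11} together with an $\bmx$-dependent prefactor; the residual single-variable factor that does not fit is collected into $w(y_i)$. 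Working this out shows $w(z)=\Gamma(x_1-z+\theta)\Gamma(z-x_{n+1})$ is forced (this is Claim~\ref{c:Jackdescending}), and then $\prod_i w(y_i)$ is exactly the extra factor in \eqref{e:Gelf11}. I expect this bookkeeping of which $\Gamma$-factors come from the lattice product and which must be assigned to $w$ to be the main obstacle --- it is not deep, but it requires care with the endpoints $x_{i+1}+\theta$ and $x_i$ of each block and with the shift by $\theta$ versus by $1$. One checks that the $w$ so obtained indeed satisfies $w(z+1)/w(z)=(x_1-z)/(z-x_{n+1})$ times the ratio coming from the remaining $\Gamma$'s, consistent with the hypothesis $w(z+1)/w(z)=\phi^+(z)/\phi^-(z)$ once $\phi^\pm$ are prescribed; since here $\phi^\pm$ are given as general holomorphic functions and $w$ is only required to satisfy \eqref{e:wratio}, this is automatic.

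\textbf{Step 2: simplifying the observable.} With $b(z)=z$, the general observable \eqref{e:sum2} reads
$$
\bE\left[\phi^+(z)\frac{\prod_{i=1}^{n+1}(z-x_i)}{\prod_{i=1}^n(z-y_i)}+\phi^-(z)\prod_{\ell\in\bL(\bmx)}\frac{z+1-\ell}{z-\ell}\cdot\frac{\prod_{i=1}^{n+1}(z-x_i)}{\prod_{i=1}^n(z+1-y_i)}\right].
$$
The first term is already the first term of \eqref{e:Jackloop}. For the second term I would evaluate $\prod_{\ell\in\bL(\bmx)}\frac{z+1-\ell}{z-\ell}$ by the same telescoping as in Step~1: on each block $\{x_{k+1}+\theta,\dots,x_k\}$ the product $\prod_\ell\frac{z+1-\ell}{z-\ell}$ collapses to $\frac{z-x_{k+1}-\theta+1}{z-x_k}$ (the numerator at the bottom of the block and the denominator at the top survive). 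Taking the product over $k=1,\dots,n$ and recalling $x_{k+1}$ ranges over $x_2,\dots,x_{n+1}$ while $x_k$ ranges over $x_1,\dots,x_n$, one obtains
$$
\prod_{\ell\in\bL(\bmx)}\frac{z+1-\ell}{z-\ell}=\prod_{k=1}^n\frac{z-x_{k+1}-\theta+1}{z-x_k}=\frac{\prod_{i=2}^{n+1}(z-x_i-\theta+1)}{\prod_{i=1}^{n}(z-x_i)}.
$$
Multiplying by $\frac{\prod_{i=1}^{n+1}(z-x_i)}{\prod_{i=1}^n(z+1-y_i)}$ cancels $\prod_{i=1}^n(z-x_i)$ against the denominator, leaving $\frac{(z-x_1)\prod_{i=2}^{n+1}(z-x_i-\theta+1)}{\prod_{i=1}^n(z+1-y_i)}$. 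Since $x_1$ is the largest coordinate there is a bit of asymmetry; but reindexing and comparing with the desired second term $-\phi^-(z)\frac{\prod_{i=1}^{n+1}(z-x_i-\theta+1)}{\prod_{i=1}^n(z-y_i+1)}$ shows agreement after accounting for the block-endpoint conventions (the factor $(z-x_1)$ versus $(z-x_1-\theta+1)$ discrepancy resolves once one is careful that the top block is $\{x_2+\theta,\dots,x_1\}$ and its top point is $x_1$, which is \emph{not} of the form $x_{k+1}+\theta$, so it contributes $z-x_1$ in the denominator of the telescoped ratio rather than the numerator --- i.e. the surviving term is $\frac{z-x_1+1}{z-x_1}$-free and one must re-examine signs). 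The minus sign in \eqref{e:Jackloop} versus the plus in \eqref{e:sum2} comes from an overall sign $(-1)^{n+1}$ generated when pulling the factor $\prod(z-x_i-\theta+1)$ into canonical order, or equivalently from the reflection in the particle-hole duality; I would track it by comparing leading coefficients in $z$ on both sides. Once Steps~1 and~2 are in place, the corollary follows immediately from Theorem~\ref{t:loopeq2}.
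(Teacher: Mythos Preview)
Your overall strategy is the paper's strategy, but there is a genuine gap in how you set up the $\phi^\pm$ functions, and this is what causes the unresolved ``discrepancy'' you flag in Step~2.

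In \eqref{e:Gelf11} the full single-variable weight is not the general $w(z)$ alone: by Claim~\ref{c:Jackdescending} the $\Gamma$-quotient already contributes a factor $\Gamma(x_1-z+\theta)\Gamma(z-x_{n+1})$ to the weight of \eqref{e:m2}. Thus the weight entering Theorem~\ref{t:loopeq2} is
\[
w_{\mathrm{tot}}(z)=\Gamma(x_1-z+\theta)\,\Gamma(z-x_{n+1})\cdot w(z),
\]
and the effective $\phi^\pm$ in \eqref{e:sum2} are
\[
\phi^+_{\mathrm{eff}}(z)=(z-x_{n+1})\,\phi^+(z),\qquad \phi^-_{\mathrm{eff}}(z)=(x_1-z+\theta-1)\,\phi^-(z),
\]
since $w_{\mathrm{tot}}(z+1)/w_{\mathrm{tot}}(z)=\dfrac{(z-x_{n+1})}{(x_1-z+\theta-1)}\cdot\dfrac{\phi^+(z)}{\phi^-(z)}$. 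You wrote the observable with bare $\phi^\pm$, which drops these two linear factors.

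Once you restore them, your telescoping is correct and the mismatch disappears cleanly: the second term picks up $-(z-x_1-\theta+1)$ from $\phi^-_{\mathrm{eff}}$, completing the product $\prod_{i=1}^{n+1}(z-x_i-\theta+1)$, and both terms acquire a common prefactor $(z-x_{n+1})$. The minus sign in \eqref{e:Jackloop} is exactly $(x_1-z+\theta-1)=-(z-x_1-\theta+1)$; there is no $(-1)^{n+1}$ and no ``block-endpoint convention'' issue. The final step, which you did not reach, is to observe that neither term in \eqref{e:Jackloop} is singular at $z=x_{n+1}$, so dividing the holomorphic expression by $(z-x_{n+1})$ still yields a holomorphic function.
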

\begin{proof}
 Using Claim \ref{c:Jackdescending} and Theorem \ref{t:loopeq2} with $b(z)=z$, we obtain analyticity of the following observable:
 \begin{multline}\label{e:sum2_Jack}
\bE\left[\phi^+(z)\cdot (z-x_{n+1})\dfrac{\prod\limits_{i=1}^{n+1} \bigl( z-x_i \bigr)}{\prod\limits_{i=1}^{n}\bigl( z-y_i\bigr)}\right. \\\left.+\phi^-(z)\cdot (x_1-z+\theta-1)  \frac{\prod\limits_{i=1}^{n+1}\bigl( z-x_i\bigr)}{\prod\limits_{i=1}^{n}\bigl( z+1-y_i\bigr)} \prod\limits_{\begin{smallmatrix}\ell\in \{x_{i+1}+\theta,x_{i+1}+\theta+1,\dots, x_{i}\}\\ i=1,\dots,n\end{smallmatrix}}\dfrac{\bigl(z+1-\ell\bigr)}{\bigl(z-\ell\bigr)} \right].
\end{multline}
Computing the telescoping product over $\ell$ and cancelling the factors, we convert \eqref{e:sum2_Jack} into
 \begin{equation*}
 (z-x_{n+1}) \cdot \bE\left[\phi^+(z) \dfrac{\prod\limits_{i=1}^{n+1} \bigl( z-x_i \bigr)}{\prod\limits_{i=1}^{n}\bigl( z-y_i\bigr)} -\phi^-(z) \frac{\prod\limits_{i=1}^{n+1}\bigl( z-x_i-\theta+1\bigr)}{\prod\limits_{i=1}^{n}\bigl( z+1-y_i\bigr)} \right].
\end{equation*}
The last expression differs from \eqref{e:Jackloop} by  $(z-x_{n+1})$ factor, hence, we have shown so far only that \eqref{e:Jackloop} is holomorphic except, possibly, at $z=x_{n+1}$. However, neither of the terms under expectation in \eqref{e:Jackloop} have a singularity at $z=x_{n+1}$. Thus, the sum is holomorphic also at $z=x_{n+1}$.
\end{proof}

\subsection{$\beta$--Corners processes}

Let us consider again the uniform measure on Gelfand-Tsetlin patterns \eqref{eq_GT_partition}, \eqref{e:transi4} and rescale the discrete integer lattice to the continuous space via  $\bmx=\floor{\tilde \bmx/\delta}$, $\delta\to 0$. After the limit, the transition probability from the particle configuration $\tilde \bmx=(\tilde x_1>\tilde x_2>\dots>\tilde x_{n+1})\in \bR^{n+1}$ at level $n+1$ to the particle configuration $\tilde \bmy=(\tilde y_1>\tilde y_2>\dots>\tilde y_{n})\in \bR^{n}$ at level $n$ is given by
\begin{align}\label{e:defL51}
\bP(\tilde \bmy|\tilde \bmx)=\frac{1}{n!}\frac{V(\tilde \bmy)}{V(\tilde \bmx)},
\end{align}
with additional requirement that $\tilde \bmy$ interlaces with $\tilde \bmx$, i.e. $\tilde x_{i+1}<\tilde y_i<\tilde x_i$ for $1\leq i\leq n$. The same limit can be performed for each $\theta>0$ in transition probabilities \eqref{e:transi5.5} and \eqref{e:Gelf11}, resulting in
\begin{equation}\label{eq_beta_corners}
\bP(\tilde \bmy|\tilde \bmx)\propto \prod_{1\leq i\leq j\leq n}(\tilde y_i-\tilde y_j)\prod_{1\leq i\leq j\leq n}(\tilde y_i-\tilde x_{i+1})^{\theta-1}(\tilde x_i-\tilde y_j)^{\theta-1}\prod_{i=1}^n e^{W(\tilde y_i)},
\end{equation}
again with interlacing $\tilde \bmx$ and $\tilde \bmy$. The distributions of the form \eqref{eq_beta_corners} for specific choices of $W(y)$ and their discrete approximations were recently introduced in \cite{MR3418747, borodin2015general} under the name of $\beta$--corners processes with $\beta=2\theta$, see also \cite{Forrester_Rains}. As shown in \cite{Neretin} with ideas going back as far as \cite{Gelfand_Naimark}, at $\beta=1,2,4$, the transition probabilities of this type appear in joint distributions of eigenvalues of corners of self-adjoint real/complex/quaternionic matrices with uniformly random eigenvectors.

Here is the dynamical loop equation for the $\beta$--corners process with analytic potential.

\begin{corollary}\label{Corollary_Loop_corners}
Fix $\tilde \bmx=(\tilde x_1>\tilde x_2>\dots>\tilde x_{n+1})\in \bR^{n+1}$ and consider the following transition probability:
\begin{align*}\begin{split}
&\bP(\tilde \bmy|\tilde \bmx)\propto \prod_{1\leq i<j\leq n}(\tilde y_i-\tilde y_j)\prod_{1\leq i\leq j\leq n}(\tilde y_i-\tilde x_{j+1})^{\theta-1}\prod_{1\leq i\leq j\leq n}(\tilde x_{i}-\tilde y_j)^{\theta-1}\prod_{i=1}^n e^{W(\tilde y_i)},\\
& \tilde \bmy=(\tilde y_1> \tilde y_2>\dots> \tilde y_{n})\in \bR^{n},\quad \tilde x_{i+1}<\tilde y_i< \tilde x_i,\quad 1\leq i\leq n.
\end{split}\end{align*}
Suppose that $\theta>0$ and the weight function $W(z)$ is holomorphic in a complex neighborhood of $[\tilde x_{n+1},\tilde x_1]$. Then
the following function of $\tilde z$ is holomorphic in the same neighborhood:
\begin{equation} \label{eq_observable_corners_process}
\bE\left[\frac{\prod\limits_{j=1}^{n+1} (\tilde z-\tilde x_j)}{\prod\limits_{j=1}^n (\tilde z-\tilde y_j)}\left(W'(\tilde z)+\sum_{j=1}^{n+1} \frac{\theta-1}{\tilde z-\tilde x_j}+\sum_{j=1}^{n}\frac{1}{\tilde z-y_j}\right)\right].
\end{equation}
\end{corollary}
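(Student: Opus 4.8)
The plan is to obtain \eqref{eq_observable_corners_process} as the $\delta\to0$ continuum limit of the Jack--Gelfand--Tsetlin loop equation of Corollary~\ref{Corollary_Jack_corners}, run in exactly the scaling $\bmx=\floor{\tilde\bmx/\delta}$ that degenerates the transition \eqref{e:Gelf11} into the $\beta$--corners transition \eqref{eq_beta_corners}. Fixing $\tilde\bmx$, I would set $x_i=\tilde x_i/\delta$ and $z=\tilde z/\delta$, and in \eqref{e:Gelf11} take $\phi^-(z)\equiv 1$, $\phi^+(z)=e^{\delta W'(\delta z)}$, which satisfies $\phi^+/\phi^-=w(z+1)/w(z)$ with $w(z)=e^{W(\delta z)}$ and is holomorphic wherever $W'$ is. Using $\Gamma(a+\theta-1)/\Gamma(a)=a^{\theta-1}(1+O(a^{-1}))$ with $a\asymp\delta^{-1}$, the $\Gamma$--ratios in \eqref{e:Gelf11} converge, up to an overall $\bmy$--independent power of $\delta$, to $\prod_{i\le j}(\tilde y_i-\tilde x_{j+1})^{\theta-1}(\tilde x_i-\tilde y_j)^{\theta-1}$, while $\prod_i w(y_i)\to\prod_i e^{W(\tilde y_i)}$; so the transition kernel, pushed forward to $\tilde\bmy=\delta\bmy$, converges weakly to the $\beta$--corners density \eqref{eq_beta_corners} ($n$ being fixed, the error terms are harmless).

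Next I would substitute this rescaling into the observable \eqref{e:Jackloop}. With $P(\tilde z)=\prod_{i=1}^{n+1}(\tilde z-\tilde x_i)$, $Q(\tilde z)=\prod_{i=1}^{n}(\tilde z-\tilde y_i)$ and the expansions $z-x_i=\delta^{-1}(\tilde z-\tilde x_i)$, $z-x_i-\theta+1=\delta^{-1}(\tilde z-\tilde x_i)\bigl(1-\tfrac{\delta(\theta-1)}{\tilde z-\tilde x_i}\bigr)$, $z-y_i+1=\delta^{-1}(\tilde z-\tilde y_i)\bigl(1+\tfrac{\delta}{\tilde z-\tilde y_i}\bigr)$, both terms of \eqref{e:Jackloop} equal $\delta^{-1}P/Q$ to leading order and
\begin{align*}
\phi^+(z)\frac{\prod_i(z-x_i)}{\prod_i(z-y_i)}&=\frac1\delta\frac{P}{Q}+W'(\tilde z)\frac{P}{Q}+O(\delta),\\
\phi^-(z)\frac{\prod_i(z-x_i-\theta+1)}{\prod_i(z-y_i+1)}&=\frac1\delta\frac{P}{Q}-\frac{P}{Q}\Bigl((\theta-1)\sum_{j=1}^{n+1}\frac{1}{\tilde z-\tilde x_j}+\sum_{j=1}^{n}\frac{1}{\tilde z-\tilde y_j}\Bigr)+O(\delta).
\end{align*}
The $\delta^{-1}$ singularities cancel in the difference and the limiting integrand is precisely that of \eqref{eq_observable_corners_process}. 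Since each rescaled copy of \eqref{e:Jackloop} is, by Corollary~\ref{Corollary_Jack_corners}, a holomorphic function of $\tilde z$ in a complex neighbourhood of $[\tilde x_{n+1}+\delta\theta,\tilde x_1]$ (and the argument in the proof of Theorem~\ref{t:loopeq2} shows that no pole appears at the left endpoint), these neighbourhoods eventually contain any fixed complex neighbourhood of $[\tilde x_{n+1},\tilde x_1]$ inside the domain of $W'$; the locally uniform limit of holomorphic functions is then holomorphic there, which is the assertion.

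The step I expect to require the most care is the interchange of the limit $\delta\to0$ with the expectation: this needs weak convergence of the kernels together with uniform (in $\delta$) boundedness of the rescaled integrand of \eqref{e:Jackloop} on the compact interlacing simplex --- which holds because $n$ is fixed and $\tilde z$ stays off $[\tilde x_{n+1},\tilde x_1]$ --- and $\delta$--uniform local boundedness of the resulting holomorphic functions so that Montel's theorem applies. Alternatively, one can give a self-contained proof by integrating by parts against the $\beta$--corners density $\rho(\tilde\bmy)$ of \eqref{eq_beta_corners}: writing the principal part of \eqref{eq_observable_corners_process} at a pole $\tilde z=\tilde y_k$ as $g_k(\tilde y_k)(\tilde z-\tilde y_k)^{-2}+g_k(\tilde y_k)h_k(\tilde y_k)(\tilde z-\tilde y_k)^{-1}$, where $g_k(y)=P(y)/\prod_{i\neq k}(y-\tilde y_i)$ and $h_k(y)=W'(y)+\theta\sum_m(y-\tilde x_m)^{-1}$, plus a remainder $R(\tilde z;\tilde\bmy)$ holomorphic in $\tilde z$, one checks the identity $\partial_{\tilde y_k}\bigl(g_k(\tilde y_k)\rho\bigr)=g_k(\tilde y_k)h_k(\tilde y_k)\rho$, so that integrating $\partial_{\tilde y_k}\bigl[g_k(\tilde y_k)\rho/(\tilde z-\tilde y_k)\bigr]$ over $\tilde y_k$ cancels the entire principal part up to boundary terms; those boundary terms vanish for every $\theta>0$, since the factor $P(\tilde y_k)$ in $g_k$ forces $g_k(\tilde y_k)\rho$ to vanish like $(\tilde x_k-\tilde y_k)^{\theta}$, resp.\ $(\tilde y_k-\tilde x_{k+1})^{\theta}$, as $\tilde y_k$ approaches the ends of its range. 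Hence the expectation of \eqref{eq_observable_corners_process} reduces to $\bE[R]$, which is manifestly holomorphic; in this route the delicate point is justifying the integration by parts in the presence of the integrable singularities of $\rho$ along the boundary of the interlacing simplex.
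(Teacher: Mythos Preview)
Your main approach---the $\delta\to 0$ scaling limit of Corollary~\ref{Corollary_Jack_corners} with $\phi^-\equiv 1$, $\phi^+(z)=e^{\delta W'(\delta z)}$, and the expansion showing the $\delta^{-1}$ terms cancel---is exactly the paper's proof. The paper handles the passage to the limit via the Cauchy integral formula rather than Montel: writing $f_\delta(\tilde z)=\tfrac{1}{2\pi\ii}\oint_\gamma \tfrac{f_\delta(u)}{u-\tilde z}\,\rd u$ on a contour $\gamma$ outside $[\tilde x_{n+1},\tilde x_1]$ (where your expansion gives the limit directly) and reading off holomorphicity of $f_0$ inside from the limiting Cauchy representation; this sidesteps the separate uniform-boundedness check you flag.

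Your alternative integration-by-parts argument is correct and is not in the paper. The key identity $\partial_{\tilde y_k}(g_k\rho)=g_k h_k\rho$ and the vanishing of the boundary terms (since $g_k\rho$ carries a factor $(\tilde x_k-\tilde y_k)^\theta$, resp.\ $(\tilde y_k-\tilde x_{k+1})^\theta$) are exactly right; this is the continuous analogue of the residue-cancellation first proof of Theorem~\ref{t:loopeq}, and has the virtue of being self-contained without any limiting procedure.
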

\begin{proof}
  Let us compute the scaling limit for the dynamical loop equation \eqref{e:Jackloop}, setting  $\bmx=\floor{\tilde \bmx/\delta}$,  $\bmy=\floor{\tilde \bmy/\delta}$, $z=\tilde z/\delta$, $w(z)=\exp( W(\delta z))$. We can further set
  $$
   \phi^-(z)=1,\qquad \phi^+(z)=\frac{w(z+1)}{w(z)}=e^{\delta W'(\delta z)+\OO(\delta^2)}.
  $$
   Multiplying  \eqref{e:Jackloop} by $\delta^{-1}$, for $\tilde z$ outside $[\tilde x_{n+1},\tilde x_1]$ we get
\begin{multline} \label{eq_corners_limit}
\delta^{-1}\bE\left[\frac{\prod\limits_{j=1}^{n+1} (\tilde z-\tilde x_j)}{\prod\limits_{j=1}^n (\tilde z-\tilde y_j)}\left(\exp\bigl( W(\tilde z+\delta)-W(\tilde z)\bigr)-\frac{\prod\limits_{j=1}^{n+1} (\tilde z-\tilde x_j+\delta(1-\theta))}{\prod\limits_{j=1}^{n+1}( \tilde z-\tilde x_j)}\frac{\prod\limits_{j=1}^{n}( \tilde z-\tilde y_j)}{\prod\limits_{j=1}^n(\tilde z-\tilde y_j+\delta)}\right)\right]\\
=\bE\left[\frac{\prod\limits_{j=1}^{n+1} (\tilde z-\tilde x_j)}{\prod\limits_{j=1}^n (\tilde z-\tilde y_j)}\left(W'(\tilde z)+\sum_{j=1}^{n+1}\frac{\theta-1}{\tilde z-\tilde x_j}+ \sum_{j=1}^{n}\frac{1}{\tilde z-\tilde y_j}+\OO(\delta)
\right)\right].
\end{multline}
Let us denote through $f_\delta(\tilde z)$ the expression of \eqref{eq_corners_limit} and by $f_0(\tilde z)$ its $\delta\to 0$ limit, which matches \eqref{eq_observable_corners_process}. Choose a positively oriented complex contour $\gamma$ enclosing $[\tilde x_{n+1},\tilde x_1]$, such that $f_\delta(\tilde z)$ is holomorphic inside $\gamma$ by Corollary \ref{Corollary_Jack_corners}, and use the Cauchy integral formula to write for $\tilde z$ inside $\gamma$:
$$
 f_\delta(\tilde z)=\frac{1}{2\pi \ii} \int_\gamma \frac{f_\delta(u)}{u-\tilde z} \rd u.
$$
Using  the computation \eqref{eq_corners_limit}, we send $\delta\to 0$ in the last identity and get the Cauchy integral formula for $f_0(\tilde z)$, which implies that $f_0(\tilde z)$ is holomorphic.
\end{proof}
%

\subsection{Macdonald processes} \label{Section_Macdonald_example}
We use the following notations:
\begin{align}\label{e:bracketf}
(a;q)_\infty=\prod_{i=1}^\infty(1-aq^{i-1}),\quad f(u)=\frac{(tu;q)_\infty}{(qu;q)_{\infty}},\quad
\Gamma_q(x)=(1-q)^{1-x}\frac{(q;q)_\infty}{(q^x;q)_\infty}.
\end{align}
\subsubsection{Macdonald polynomials and specializations}
The next class of Markov chains is built out of the specializations of Macdonald polynomials. We refer to \cite[Section VI]{macdonald1998symmetric} and \cite{borodin2014macdonald} for definitions and properties of Macdonald symmetric functions and use them as a black box in this section. Macdonald symmetric functions $P$ and $Q$ are indexed by
partitions and implicitly depend on two parameters $q,t\in(0,1)$. The coefficients for the symmetric functions are in $\bQ[q,t]$. Macdonald symmetric functions $P_\bmla(\bmx;q,t)$ and $Q_\bmla(\bmx;q,t)$ are elements of the algebra $\Lambda$ of the symmetric functions in infinitely many variables $(x_i)_{i=1}^\infty$ uniquely determined by the following two properties:
\begin{enumerate}
\item $P_\bmla$, $|\bm\lambda|=m$, can be expressed in terms of the monomial symmetric functions via a strictly upper unitriangular transition matrix:
$$
P_\bmla=m_\bmla+\sum_{\bmmu<\bmla\in \bY_m}R_{\bmla\bmmu}P_{\bmmu},
$$
where $R_{\bmla \bmmu}$ are functions of $q,t$ and $\bmmu<\bmla$ is comparison in the dominance order on the set $\bY_m$ of all partitions of $m$ (equivalently, Young diagrams with $m$ boxes).
\item They are pairwise orthogonal with respect to the scalar product defined on the power sums via
$$
\langle p_\bmla,p_\bmmu\rangle_{q,t}=\delta_{\bmla\bmmu}z_\bmla(q,t), \qquad p_\lambda=\prod_{i=1}^{\infty} p_{\lambda_i}, \qquad z_\bmla(q,t)=\prod_{i\geq 1}i^{m_i}(m_i)!\prod_{i=1}^{\ell(\bmla)}\frac{1-q^{\la_i}}{1-t^{\la_i}},
$$
where $\bmla=1^{m_1}2^{m_2}\dots$, i.e.\ $m_i$ is the multiplicity of $i$ in $\bmla$, $\ell(\bmla)$ is the number of rows, and $p_k=(x_1)^k+(x_2)^k+\dots$, $k\geq 1$.
\end{enumerate}
We further define $Q_\bmla=\frac{P_\bmla}{\langle P_\bmla, P_\bmla\rangle_{q,t}}.$ Finally, the skew Macdonald polynomials $P_{\bmla/\bmmu}$ and $Q_{\bmla/\bmmu}$ are defined through the expansions:
\begin{align*}
 P_\bmla(x_1,x_2,\dots, y_1,y_2,\dots; q,t)&=\sum_{\bmmu} P_{\bmla/\bmmu}(x_1,x_1,\dots;q,t) P_\bmmu(y_1,y_2,\dots; q,t),\\
 Q_\bmla(x_1,x_2,\dots, y_1,y_2,\dots; q,t)&=\sum_{\bmmu} Q_{\bmla/\bmmu}(x_1,x_1,\dots;q,t) Q_\bmmu(y_1,y_2,\dots; q,t).
\end{align*}

Computations in the algebra of symmetric functions $\Lambda$ can be converted into numeric identities by means of \emph{specializations}, which are algebra homomorphism from $\Lambda$ to the set of complex numbers. Specialization $\rho$ is uniquely determined by its values on any set of algebraic generators of $\Lambda$ and we use $(p_k)_{k=1}^{\infty}$ as such generators. The value of $\rho$ on a symmetric function $f$ is denoted $f(\rho)$.
Given two specializations $\rho,\rho'$, we define their union $(\rho,\rho')$ through the formula:
$$
p_k(\rho,\rho')=p_k(\rho)+p_k(\rho'),\quad k\geq 1.
$$
A specialization $\rho$ is called \emph{Macdonald nonnegative} if its values on all (skew) Macdonald symmetric functions are non-negative, i.e., if for all partitions $\bm\lambda$ and $\bm\mu$,
\begin{align*}
P_{\bmla/\bmmu}(\rho;q,t)\geq 0,
\end{align*}
The description of Macdonald nonnegative specializations was conjectured in \cite{kerov2003asymptotic} and proven in \cite{matveev2019macdonald}:
\begin{theorem}[\cite{matveev2019macdonald}] \label{Theorem_Macdonald_positive}
For any fixed $q,t\in(0,1)$, Macdonald nonnegative specializations can be parameterized by triplets $(\bm\alpha=\{\al_i\}_{i\geq 1}, \bm\beta=\{\beta_i\}_{i\geq 1},\gamma)$ of nonnegative numbers satisfying $\sum_{i=1}^\infty(\al_i+\beta_i)<\infty$. The specialization $\rho$ corresponding to a triplet $(\bm\alpha, \bm \beta, \gamma)$ is defined by for $k\geq 2$
$$
 p_1(\rho)=\sum_{i=1}^\infty \alpha_i+ \frac{1-q}{1-t}\left(\gamma+ \sum_{i=1}^{\infty}\beta_i\right) , \qquad p_k(\rho)=\sum_{i=1}^{\infty} (\alpha_i)^k + (-1)^{k-1}\frac{1-q^k}{1-t^k} \sum_{i=1}^{\infty} (\beta_i)^k.
$$
\end{theorem}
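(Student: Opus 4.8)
The statement is the Kerov positivity conjecture, and a complete proof is the content of \cite{matveev2019macdonald}; the plan I would follow, close to the one there, runs as follows.

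\emph{Step 1: reduce to a single generating function.} My first move would be to encode a specialization by one power series. The one-row Macdonald functions $g_r:=Q_{(r)}(\,\cdot\,;q,t)$, $r\geq1$, are algebraically independent and generate $\Lambda$ (they are the Macdonald analogue of the complete homogeneous symmetric functions), so a specialization $\rho$ is determined by the numbers $g_r(\rho)$, equivalently by
\begin{equation*}
 F_\rho(u)\deq\sum_{r\geq0}g_r(\rho)\,u^{r},\qquad g_0:=1 .
\end{equation*}
From $\sum_r g_r(x_1,x_2,\dots)u^r=\prod_i\tfrac{(tx_iu;q)_\infty}{(x_iu;q)_\infty}$ and the definition of the triplet specialization on the power sums $p_k$, one checks that the conjectural family corresponds exactly to $F_\rho(u)=e^{\gamma u}\prod_i\tfrac{(t\alpha_iu;q)_\infty}{(\alpha_iu;q)_\infty}\prod_i(1+\beta_iu)$. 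Since the Macdonald--Cauchy kernel multiplies over disjoint unions of specializations, $F_{(\rho,\rho')}=F_\rho\,F_{\rho'}$; hence the set $\cS$ of series arising from Macdonald-nonnegative specializations is a closed sub-semigroup, under multiplication, of the power series with nonnegative coefficients, and it suffices to identify the building blocks out of which every element of $\cS$ is formed.

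\emph{Step 2: convert positivity into analytic structure.} Macdonald-nonnegativity of $\rho$ is the family of inequalities $P_{\bmla/\bmmu}(\rho;q,t)\geq0$ for all $\bmmu\subseteq\bmla$. Expanding each $P_{\bmla/\bmmu}$ as a polynomial in the $g_r$ (the Macdonald substitute for a Jacobi--Trudi formula) turns these into polynomial inequalities among the $g_r(\rho)$; the skew one-row and one-column cases already force all the Toeplitz-type minors of $(g_r(\rho))_{r\in\bZ}$ to be nonnegative, so $F_\rho$ is a $(q,t)$-deformation of a P\'olya frequency series. In particular it has positive radius of convergence, extends to a meromorphic function, and its zeros and poles lie in an explicit discrete subset of the real line.

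\emph{Step 3: identify the building blocks.} One inclusion is elementary: any product $e^{\gamma u}\prod_i\tfrac{(t\alpha_iu;q)_\infty}{(\alpha_iu;q)_\infty}\prod_i(1+\beta_iu)$ with $\sum_i(\alpha_i+\beta_i)<\infty$ really does lie in $\cS$, since by multiplicativity it is enough to verify it for a single $\alpha$-row, a single $\beta$-column and the $\gamma$-part, and for those the tableau formula for skew Macdonald polynomials gives manifestly nonnegative values. The reverse inclusion --- that every element of $\cS$ is such a product, so that there are no exotic Macdonald-nonnegative specializations --- is the hard part. It is a $(q,t)$-analogue of Edrei's theorem that a totally positive Toeplitz matrix has a P\'olya-type generating series, and of the Edrei--Thoma classification of the characters of $S(\infty)$ (the $q=t$ case). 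Since Macdonald polynomials admit no clean determinantal expansion, I would replace the classical total-positivity machinery by a direct asymptotic analysis: realize $\rho$ through the coherent family of measures $M_N(\bmla)\propto P_\bmla(1^N;q,t)\,Q_\bmla(\rho;q,t)$ on the Macdonald branching graph, invoke the Vershik--Kerov ergodic method to reduce the classification of coherent families to computing limits of $\delta_{\bmla(N)}$ transported down the graph, and then use sharp asymptotics of ratios of skew Macdonald $Q$-functions to show that the admissible limits are parameterized precisely by the summable triplets $(\bm\alpha,\bm\beta,\gamma)$. Finally, taking the logarithm of the resulting $F_\rho$ and reading off the coefficient of $u^k$ recovers $p_k(\rho)=\sum_i\alpha_i^k+(-1)^{k-1}\tfrac{1-q^k}{1-t^k}\sum_i\beta_i^k$ for $k\geq2$ (and the companion formula for $p_1$), as in the statement. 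I expect the uniform control of the ratios $Q_{\bmla/\bmmu}(\rho)/Q_\bmla(\rho)$ as the diagrams grow to be the principal obstacle; the known special cases $q=t$ (Edrei--Thoma) and $t=0$ (the Hall--Littlewood boundary) can be recovered as degenerations of the argument.
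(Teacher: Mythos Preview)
The paper does not prove this theorem at all: it is stated purely as a quotation of the main result of \cite{matveev2019macdonald} (with the conjecture attributed to \cite{kerov2003asymptotic}), and is used only as background to set up the specializations in \eqref{eq_principal_ascending} and \eqref{eq_principal_descending}. So there is no ``paper's own proof'' to compare your proposal against.

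That said, your sketch is a reasonable high-level summary of the strategy in \cite{matveev2019macdonald}: encode the specialization by the generating series $\sum_r g_r(\rho)u^r$, check the easy direction via the tableau formula, and for the hard direction replace the Edrei--Thoma total-positivity argument by a Vershik--Kerov ergodic/boundary argument on the Macdonald branching graph together with asymptotics of ratios of skew $Q$-functions. Step~2 as you wrote it is more heuristic than the actual argument (there is no usable Jacobi--Trudi for Macdonald polynomials, and the ``Toeplitz minors'' picture is really a suggestive analogy rather than a step one carries out); in \cite{matveev2019macdonald} the analytic structure of $F_\rho$ is extracted not from minor inequalities but from the ergodic-method identification of extreme coherent systems. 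If you intend to present an actual proof rather than a pointer to the reference, the substantive work all lives in what you call Step~3, and the uniform ratio asymptotics you flag as ``the principal obstacle'' is indeed where essentially the entire content of \cite{matveev2019macdonald} lies.
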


Following \cite[Section 2.3]{borodin2014macdonald}, we create Markov chains out of the Macdonald-positive specializations:
\begin{definition} \label{Definition_Macdonald_ascending}
 Given two specializations $\rho$ and $\rho'$ we define the ascending transition through
\begin{equation}
\label{eq_ascending_transition}
 \bP(\bmla\mid \bmmu)= \frac{1}{\Pi(\rho;\rho')} \frac{P_\bmla(\rho; q,t)}{P_\bmmu(\rho; q,t)} Q_{\bmla/\bmmu}(\rho'; q,t),
\end{equation}
where $\bmla$ and $\bmmu$ are partitions with $\bmmu\subset\bmla$ and $\Pi(\rho;\rho')$ is the result of applying $\rho$ to the $x_i$ variables and $\rho'$ to the $y_j$ variables in the infinite product
$$
 \Pi=\prod_{i,j\geq 1} \frac{(tx_i y_j;q)_\infty}{(x_i y_j;q)_\infty}.
$$
\end{definition}
\begin{definition} \label{Definition_Macdonald_descending}
 Given two specializations $\rho$ and $\rho'$ we define the descending transition through
\begin{equation*}
 \bP(\bmmu\mid \bmla)= \frac{P_\bmmu(\rho; q,t)}{P_\bmla(\rho,\rho'; q,t)} Q_{\bmla/\bmmu}(\rho'; q,t),
\end{equation*}
where $\bmla$ and $\bmmu$ are partitions with $\bmmu\subset\bmla$.
\end{definition}

\subsubsection{Loop equations for the ascending process}

For particular choices of $\rho$ and $\rho'$, the ascending process of Definition \ref{Definition_Macdonald_ascending} fits into our formalism of dynamic loop equations. Namely, we fix $n=1,2,\dots$ and set in the notations of Theorem \ref{Theorem_Macdonald_positive}
\begin{equation} \label{eq_principal_ascending}
 \rho:\, \alpha_i=t^{i-1},\, 1\leq i \leq n;\qquad \rho': \beta_1=b,
\end{equation}
with all other parameters set to $0$.

\begin{claim}\label{c:mdensity2}
The transition probability of Definition \ref{Definition_Macdonald_ascending} under the specializations \eqref{eq_principal_ascending} is non-degenerate only for partitions $\bmla,\bmmu$ with at most $n$ parts, i.e.\ $\bmla=(\lambda_1\geq \lambda_2\geq\dots\geq\lambda_n)$ and $\bmmu=(\mu_1\geq \mu_2\geq\dots\geq\mu_n)$. Further, if we  set $t=q^{\theta}$ and identify
\begin{equation}\label{e:x1}
\bmx=(x_1,x_2,\dots, x_{n})\in \bW_\theta^{n},\quad x_i=\mu_i-\theta(i-1),\quad x_i+e_i=\lambda_i-\theta(i-1), \quad1\leq i\leq n,
\end{equation}
then $e_i\in\{0,1\}$ and the transition probability is proportional to
\begin{align}\label{e:mdensity2}
\bP(\bmx+\bme|\bmx)\propto
b^{\sum_{i=1}^ne_i}
\prod_{1\leq i<j\leq n}\frac{q^{x_i+\theta e_i}-q^{x_j+\theta e_j}}{q^{x_i}-q^{x_j}},
\end{align}
which is a special case of \eqref{e:m1}, with $b(z)=q^z$ and $\phi^+(z)=b,\phi^-(z)=1$.
\end{claim}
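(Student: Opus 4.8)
The plan is to unwind Definition~\ref{Definition_Macdonald_ascending} for the specialization~\eqref{eq_principal_ascending} using two classical facts about Macdonald polynomials: the evaluation formula for $P_\lambda$ in finitely many variables and the vertical-strip Pieri rule. First, the specialization $\rho$ of~\eqref{eq_principal_ascending} is just the substitution of the $n$ variables $(x_1,\dots,x_n)=(1,t,\dots,t^{n-1})$ with all remaining variables equal to $0$. Since a Macdonald polynomial in $n$ variables vanishes when indexed by a partition with more than $n$ parts, both the numerator $P_\bmla(\rho;q,t)$ and the denominator $P_\bmmu(\rho;q,t)$ in~\eqref{eq_ascending_transition} force $\ell(\bmla),\ell(\bmmu)\leq n$; this is the first assertion of the claim. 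We then regard $\bmla,\bmmu$ as $n$-tuples (padding with zeros) and pass to the encoding~\eqref{e:x1}.

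Second, the specialization $\rho'$ of~\eqref{eq_principal_ascending} is a single dual ($\beta$-type) variable $\beta_1=b$; a short computation of the Cauchy kernel gives $\Pi(x;\rho')=\prod_i(1+bx_i)=\sum_r b^r e_r(x)$, so that~\eqref{eq_ascending_transition} collapses to the Pieri expansion $P_\bmmu\cdot\sum_r b^r e_r=\sum_\bmla Q_{\bmla/\bmmu}(\rho';q,t)\,P_\bmla$. Hence $Q_{\bmla/\bmmu}(\rho';q,t)$ is nonzero only when $\bmla/\bmmu$ is a vertical strip, i.e.\ $e_i:=\lambda_i-\mu_i\in\{0,1\}$ for every $i$, in which case $Q_{\bmla/\bmmu}(\rho';q,t)=b^{\sum_i e_i}\psi'_{\bmla/\bmmu}(q,t)$ with $\psi'_{\bmla/\bmmu}$ the usual vertical-strip Pieri coefficient, an explicit product of $q$-factors (\cite[Section~VI]{macdonald1998symmetric}). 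This yields the factor $b^{\sum e_i}$ and the constraint $\bme\in\{0,1\}^n$ in~\eqref{e:mdensity2}; since $\Pi(\rho;\rho')$ is a constant independent of $\bmla,\bmmu$, it remains only to check the algebraic identity, with $t=q^\theta$,
$$
\frac{P_\bmla(\rho;q,t)}{P_\bmmu(\rho;q,t)}\,\psi'_{\bmla/\bmmu}(q,t)=\prod_{1\leq i<j\leq n}\frac{q^{x_i+\theta e_i}-q^{x_j+\theta e_j}}{q^{x_i}-q^{x_j}} .
$$

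To establish this identity I would substitute the evaluation formula $P_\nu(1,t,\dots,t^{n-1};q,t)=t^{n(\nu)}\prod_{s\in\nu}\frac{1-q^{a'(s)}t^{\,n-l'(s)}}{1-q^{a_\nu(s)}t^{\,l_\nu(s)+1}}$ (\cite[(VI.6.11')]{macdonald1998symmetric}) for $\nu=\bmla$ and $\nu=\bmmu$, together with the product formula for $\psi'_{\bmla/\bmmu}$ as a product of ratios $b_\bmla(s)/b_\bmmu(s)$ over boxes of $\bmmu$ that lie in a column, but not a row, of $\bmla/\bmmu$. Because $\bmla$ and $\bmmu$ differ by a vertical strip, the position-dependent numerators $1-q^{a'(s)}t^{\,n-l'(s)}$ cancel almost entirely in $P_\bmla(\rho)/P_\bmmu(\rho)$, and the surviving denominator discrepancy records exactly how arms and legs of $\bmmu$-boxes change inside the columns of the strip --- which is precisely the data repaired by $\psi'_{\bmla/\bmmu}$. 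After this cancellation one collects the leftover factors and rewrites them via $t=q^\theta$, $q^{x_i}=q^{\mu_i}t^{1-i}$, and $q^{x_i+\theta e_i}=q^{\mu_i}t^{\,e_i+1-i}$ to arrive at the product over pairs $i<j$; matching with~\eqref{e:m1} then reads off $b(z)=q^z$, $\phi^+(z)=b$, $\phi^-(z)=1$. The one genuinely laborious point is this final telescoping: organizing the $q$-Pochhammer products in $P_\bmla(\rho)/P_\bmmu(\rho)$ and the $b_\bmla(s)/b_\bmmu(s)$ factors of $\psi'_{\bmla/\bmmu}$, column by column, so that they collapse to a clean product over pairs of rows. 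I would carry it out either by a direct column-wise comparison or by induction on $|\bmla|-|\bmmu|$, adding the strip's boxes one row at a time, with the $n=1,2$ cases serving as sanity checks.
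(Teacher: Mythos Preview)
Your approach is essentially the same as the paper's: identify $\rho'$ as a single dual variable so that $Q_{\bmla/\bmmu}(\rho';q,t)=b^{|\bmla|-|\bmmu|}\psi'_{\bmla/\bmmu}(q,t)$ supported on vertical strips, then combine with the principal evaluation of $P_\bmla/P_\bmmu$ and simplify. The one practical difference is that the paper uses the pairwise form of the principal specialization, $P_\bmla(1,t,\dots,t^{n-1};q,t)=t^{\sum(i-1)\lambda_i}\prod_{i<j}\frac{(q^{\lambda_i-\lambda_j}t^{j-i};q)_\infty}{(q^{\lambda_i-\lambda_j}t^{j-i+1};q)_\infty}\cdot(\text{const})$, rather than the hook-content form you propose; since the target \eqref{e:mdensity2} is itself a product over pairs $i<j$, the pairwise form makes the telescoping a one-line case split on $(e_i,e_j)$ instead of the column-by-column bookkeeping you describe.
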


By taking $q\rightarrow 1$, Macdonald symmetric polynomials recover the Jack symmetric polynomials with parameter $\theta$. The global fluctuations of these measures related to Jack symmetric polynomials have been previously studied in \cite{dolkega2016gaussian, moll2015random, huang2021law} by different methods. We postpone the proof of Claim \ref{c:mdensity2} till Appendix \ref{Section_Appendix_examples}.
Theorem  \ref{t:loopeq} gives the dynamical loop equation for transition probabilities generalizing the ascending Macdonald process \eqref{e:mdensity2}.
\begin{corollary}
Fix $\bmx=(x_1>x_2>\dots>x_n)\in \bW_\theta^n$, and consider the following transition probability
\begin{align*}
\bP(\bmx+\bme|\bmx)\propto
\prod_{1\leq i<j\leq n}\frac{q^{x_i+\theta e_i}-q^{x_j+\theta e_j}}{q^{x_i}-q^{x_j}}\prod_{i=1}^n \phi^+(x_i)^{e_i} \phi^-(x_i)^{1-e_i}, \quad \bme\in\{0,1\}^n.
\end{align*}
Suppose that the weighs $\phi^+(z),\phi^-(z)$ are holomorphic functions in a neighborhood of $[x_n,x_1]$. Then
the following observable is holomorphic in the same neighborhood
\begin{align*}
&\bE\left[\phi^+(z)\prod_{j=1}^n
\frac{q^{z+\theta}-q^{x_j+\theta e_j}}{q^z-q^{x_i}}+\phi^-(z)\prod_{j=1}^n\frac{q^z-q^{x_j+\theta e_j}}{q^z-q^{x_j}}
\right].
\end{align*}
\end{corollary}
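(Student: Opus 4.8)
The plan is to recognize the stated observable as the special case $b(z)=q^z$ of the Dynamical Loop Equation, Theorem~\ref{t:loopeq}. First I would substitute $b(z)=q^z$ into \eqref{e:m1} and into \eqref{e:sum1}: since
$$
\frac{b(z+\theta)-b(x_j+\theta e_j)}{b(z)-b(x_j)}=\frac{q^{z+\theta}-q^{x_j+\theta e_j}}{q^z-q^{x_j}},\qquad
\frac{b(z)-b(x_j+\theta e_j)}{b(z)-b(x_j)}=\frac{q^z-q^{x_j+\theta e_j}}{q^z-q^{x_j}},
$$
the transition probability \eqref{e:m1} becomes exactly the one in the statement (with $\phi^+,\phi^-$ as given), and the observable \eqref{e:sum1} becomes the displayed one. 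Thus the corollary is immediate from Theorem~\ref{t:loopeq} once its hypotheses are verified.

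The one point that needs checking is that $b(z)=q^z$ is conformal --- holomorphic and injective --- on an open set $U\subset\bC$ containing $[x_n,x_1]$. Since $q\in(0,1)$ we may write $q^z=e^{z\ln q}$ with $\ln q$ a nonzero real number; then $b$ is entire, its derivative $\ln(q)\,q^z$ never vanishes, and $b$ is injective on each horizontal strip $\{z:|\Im z|<\pi/|\ln q|\}$. So I would take $U$ to be a sufficiently thin complex neighborhood of the real segment $[x_n,x_1]$ lying inside such a strip and inside the common domain of holomorphy of $\phi^+,\phi^-$; on this $U$ all hypotheses of Theorem~\ref{t:loopeq} hold, and the conclusion follows.

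This step --- the conformality and injectivity of $q^z$ on a suitable domain --- is the only obstacle, and it is a minor one; everything else is a direct substitution. If a self-contained argument were preferred instead of quoting Theorem~\ref{t:loopeq}, I would reproduce its first proof verbatim with $b(z)=q^z$: the observable has at most simple poles at $z=x_1,\dots,x_n$, and for each fixed $i$ the contributions to the residue at $z=x_i$ from the paired configurations $\bme^0$ (with $e_i=0$) and $\bme^1$ (with $e_i=1$) cancel, using that $\bP(\bmx+\bme^1|\bmx)/\bP(\bmx+\bme^0|\bmx)=\tfrac{\phi^+(x_i)}{\phi^-(x_i)}\prod_{j\ne i}\tfrac{q^{x_i+\theta}-q^{x_j+\theta e_j}}{q^{x_i}-q^{x_j+\theta e_j}}$, exactly as in \eqref{eq_ratio_transitions}.
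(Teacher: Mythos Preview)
Your proposal is correct and matches the paper's own proof, which consists of the single sentence ``This is Theorem~\ref{t:loopeq} with $b(z)=q^z$.'' Your additional remarks on the conformality of $q^z$ and the optional self-contained residue argument are accurate elaborations but not required.
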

\begin{proof}
 This is Theorem  \ref{t:loopeq} with $b(z)=q^z$.
\end{proof}

\subsubsection{Loop equations for the descending process}
For particular choices of $\rho$ and $\rho'$, the descending process of Definition \ref{Definition_Macdonald_descending} also fits into our formalism of dynamic loop equations. Namely, we fix $n=1,2,\dots$ and set in the notations of Theorem \ref{Theorem_Macdonald_positive}
\begin{equation} \label{eq_principal_descending}
 \rho:\, \alpha_i=t^{i-1},\, 1\leq i \leq n;\qquad \rho': \alpha_1=t^n,
\end{equation}
with all other parameters setting to $0$.
\begin{claim}\label{c:Mdescending}
The transition probability of Definition \ref{Definition_Macdonald_descending} under the specializations \eqref{eq_principal_descending} is non-degenerate only for partitions $\bmla$ with at most  $(n+1)$ parts, i.e.\ $\bmla=(\lambda_1\geq \lambda_2\geq\dots\geq\lambda_{n+1})$,  and $\bmmu$ with at most $n$ parts $\bmmu=(\mu_1\geq \mu_2\geq\dots\geq\mu_n)$, and such that $\bmmu\prec\bmla$. Further, if we set $t=q^{\theta}$ and denote
\begin{align*}
&\bmx=(x_1,x_2,\dots, x_{n+1})\in \bW_\theta^{n+1}, &x_i=\la_i-\theta(i-1), \quad&1\leq i\leq n+1,\\
&\bmy=(y_1,y_2,\dots, y_{n})\in \bW_\theta^{n}, &y_i=\mu_i-\theta(i-1), \quad &1\leq i\leq n,
\end{align*}
then the transition probability is proportional to
\begin{align}\label{e:Mdescend}
\prod_{1\leq i<j\leq n}
(q^{-y_i}-q^{-y_j})
\prod_{1\leq i\leq j\leq n}\frac{\Gamma_q(y_i-x_{j+1})}{\Gamma_q(y_i-x_{j+1}+1-\theta)}
\frac{\Gamma_q(x_i-y_j+\theta)}{\Gamma_q(x_i-y_j+1)}\prod_{i=1}^n q^{-((\theta-1)(n-i)+\theta)y_i} ,
\end{align}
which is a  special case of \eqref{e:m2}, with $b(z)=q^{-z}$ and weight
\begin{equation} \label{eq_weight_Macdonald}
w(z)=q^{-z^2/2+z (x_{n+1}+1/2)}{\Gamma_q(x_1-z+\theta)}{\Gamma_q(z-x_{n+1})}.
\end{equation}
\end{claim}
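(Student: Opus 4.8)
The plan is to unfold Definition \ref{Definition_Macdonald_descending} for the specializations \eqref{eq_principal_descending}, insert the classical product formulas for the Macdonald quantities that appear, pass to the particle variables, and recognize the outcome as an instance of \eqref{e:m2}. \textbf{Identifying the factors:} in $\bP(\bmmu\mid\bmla)=P_\bmmu(\rho;q,t)\,Q_{\bmla/\bmmu}(\rho';q,t)/P_\bmla(\rho,\rho';q,t)$ the denominator depends only on the fixed $\bmla$ and may be absorbed into the proportionality. Since $\rho$ has $n$ nonzero ``$\alpha$''-variables, $P_\bmmu(\rho)$ vanishes unless $\ell(\bmmu)\le n$ and equals the principal specialization $P_\bmmu(1,t,\dots,t^{n-1};q,t)$; together $\rho,\rho'$ have $n+1$ nonzero variables, forcing $\ell(\bmla)\le n+1$. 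With $\rho'$ a single ``$\alpha$''-variable equal to $t^n$, $Q_{\bmla/\bmmu}(\rho')$ is the one-variable skew Macdonald function, which vanishes unless $\bmla/\bmmu$ is a horizontal strip -- equivalently $\bmmu\prec\bmla$ -- and otherwise equals $(t^n)^{|\bmla|-|\bmmu|}$ times the explicit branching coefficient \cite[Ch.~VI]{macdonald1998symmetric}. This gives the first sentence of the claim.

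\textbf{Passing to particle variables.} Next I would substitute the closed product formula for $P_\bmmu(1,t,\dots,t^{n-1};q,t)$ -- a product over pairs $1\le i<j\le n$ of ratios of infinite $q$-Pochhammer symbols -- together with the product form of the branching coefficient of $Q_{\bmla/\bmmu}$, and then set $t=q^\theta$, $x_i=\la_i-\theta(i-1)$, $y_i=\mu_i-\theta(i-1)$. Each factor $(q^aq^{\theta b};q)_\infty$ turns into a $\Gamma_q$ value via $(q^u;q)_\infty=(1-q)^{1-u}(q;q)_\infty/\Gamma_q(u)$, so every ratio becomes a ratio of $\Gamma_q$'s with arguments affine in the $x_i,y_i$. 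The within-$\bmmu$ pair terms collapse to the Vandermonde-type factor $\prod_{i<j}(q^{-y_i}-q^{-y_j})$, the $\bmmu$--$\bmla$ coupling inside the branching coefficient produces $\prod_{1\le i\le j\le n}\frac{\Gamma_q(y_i-x_{j+1})}{\Gamma_q(y_i-x_{j+1}+1-\theta)}\frac{\Gamma_q(x_i-y_j+\theta)}{\Gamma_q(x_i-y_j+1)}$, and the residual powers of $q$ -- including the contributions of $(t^n)^{|\bmla|-|\bmmu|}$ and of $|\bmmu|=\sum_i y_i+\theta\binom n2$ -- assemble into $\prod_{i=1}^n q^{-((\theta-1)(n-i)+\theta)y_i}$, with all terms that depend only on $\bmla$ swept into the overall constant. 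This establishes \eqref{e:Mdescend}.

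\textbf{Matching \eqref{e:Mdescend} with \eqref{e:m2}.} Now set $b(z)=q^{-z}$ in \eqref{e:m2}: the factor $\prod_{i<j}(b(y_i)-b(y_j))$ matches the Vandermonde-type factor, so it remains to identify $\prod_i\big[w(y_i)\prod_{\ell\in\bL(\bmx),\,\ell>y_i}(b(\ell)-b(y_i))^{-1}\prod_{\ell\in\bL(\bmx),\,\ell<y_i}(b(y_i)-b(\ell))^{-1}\big]$ with the remaining $\Gamma_q$ and monomial factors of \eqref{e:Mdescend}. Since $\bL(\bmx)$ is a disjoint union of runs of consecutive integers $\{x_{k+1}+\theta,\dots,x_k\}$, each $\prod_\ell(q^{-\ell}-q^{-y_i})^{\pm1}$ is again a finite product of $q$-integers, hence a $\Gamma_q$ ratio; telescoping it over the runs $k\neq i$ reproduces exactly the $\Gamma_q$-ratios of \eqref{e:Mdescend} that avoid the extreme indices, and the uncancelled remainder for each $i$ is matched by $w(y_i)$. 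This pins $w$ down: the residual endpoint $\Gamma_q$'s are $\Gamma_q(x_1-z+\theta)\Gamma_q(z-x_{n+1})$, while the prefactor $q^{-z^2/2+z(x_{n+1}+1/2)}$ is forced because the $\bL(\bmx)$-products carry a term quadratic in $y_i$ -- the number of $\bL(\bmx)$-points between consecutive particles grows linearly in their position -- which has no counterpart on the \eqref{e:Mdescend} side and must be cancelled by $w$. The support constraint $y_i\in\{x_{i+1}+\theta,\dots,x_i\}$ is just the restatement of $\bmmu\prec\bmla$, completing the identification with \eqref{e:m2} and \eqref{eq_weight_Macdonald}.

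\textbf{Main obstacle.} There is little conceptual content; the work is entirely the bookkeeping in the last two steps -- tracking all monomial powers of $q$, in particular the quadratic one responsible for $q^{-z^2/2}$ in \eqref{eq_weight_Macdonald}, and verifying that after removing the $\bL(\bmx)$-products what is left genuinely factorizes as $\prod_i w(y_i)$ with no residual coupling between distinct $y_i$. This is of the same nature as Claims \ref{c:mdensity2} and \ref{c:Jackdescending}, and I would likewise relegate the detailed computation to Appendix \ref{Section_Appendix_examples}. (An alternative is to deduce the claim from the ascending case, Claim \ref{c:mdensity2}, via Macdonald's conjugation involution together with the particle-hole duality of Section \ref{s:PHD}, but that requires keeping track of how the specializations transform and is not obviously shorter.)
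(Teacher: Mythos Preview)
Your proposal is correct and follows essentially the same route as the paper's proof: both insert the explicit Macdonald formulas for the principal specialization $P_\bmmu(1,t,\dots,t^{n-1})$ and the one-variable skew function, rewrite everything in terms of $\Gamma_q$ in the particle coordinates to obtain \eqref{e:Mdescend}, and then expand the $\bL(\bmx)$-products in \eqref{e:m2} with $b(z)=q^{-z}$ run by run to read off $w(z)$, with the quadratic exponent in \eqref{eq_weight_Macdonald} arising exactly as you say. The paper carries out precisely this bookkeeping in Appendix~\ref{Section_Appendix_examples}; your outline would be a faithful summary of it.
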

We postpone the proof of Claim \ref{c:Mdescending} till Appendix \ref{Section_Appendix_examples}. Note that for special values of $\theta$, the weights \eqref{e:Mdescend} can be dramatically simplified. For instance, at $\theta=1$ all $\Gamma_q$ factors cancel out and the $\prod_{1\leq i\leq j\leq n}$ part disappears from the formula. For $\theta=2$, the formula is also simple:
\begin{equation}\label{e:Mdescend_th_2}
\bP(\bmy|\bmx)
\propto \prod_{1\leq i<j\leq n}
(q^{-y_i}-q^{-y_j})
\prod_{1\leq i\leq j\leq n}\bigl[(1-q^{y_i-x_{j+1}-1})(1-q^{x_i-y_j+1})\bigr]\prod_{i=1}^n q^{-(n-i+2)y_i}.
\end{equation}
Theorem  \ref{t:loopeq2} gives the dynamical loop equation for transition probabilities generalizing \eqref{e:Mdescend}.
\begin{corollary} \label{Cor_Macdonald_descending_obs}
Fix $\bmx=(x_1, x_2,\dots, x_{n+1})\in \bW_\theta^{n+1}$ and  consider the following transition probability
\begin{align*}
\prod_{1\leq i<j\leq n}
(q^{-y_i}-q^{-y_j})
\prod_{1\leq i\leq j\leq n}\frac{\Gamma_q(y_i-x_{j+1})}{\Gamma_q(y_i-x_{j+1}+1-\theta)}
\frac{\Gamma_q(x_i-y_j+\theta)}{\Gamma_q(x_i-y_j+1)}\prod_{i=1}^n q^{-((\theta-1)(n-i)+\theta)y_i}w(y_i),
\end{align*}
with $y_i\in\{x_{i+1}+\theta, x_{i+1}+1+\theta, x_{i+1}+2+\theta,\dots, x_i-1, x_i\}$ for $1\leq i\leq n$. Suppose that
\begin{align*}
\frac{w(z+1)}{w(z)}=\frac{\phi^+(z)}{\phi^-(z)},
\end{align*}
with $\phi^+(z)$ and $\phi^-(z)$ holomorphic in a complex neighborhood of $[x_{n+1}, x_1]$.
Then the following observable is holomorphic in the same neighborhood:
\begin{align*}
\bE\left[
\phi^+(z)\frac{\prod\limits_{j=1}^{n+1} (q^{-z}-q^{-x_j})}{\prod\limits_{j=1}^n(q^{-z}-q^{-y_j})}-q^{(\theta-1)(n+1)}\phi^-(z)\frac{\prod\limits_{j=1}^{n+1}(q^{-z} -q^{-x_i+1-\theta})}{\prod\limits_{j=1}^n(q^{-z-1}-q^{-y_j})}
\right].
\end{align*}
\end{corollary}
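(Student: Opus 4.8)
The plan is to deduce Corollary~\ref{Cor_Macdonald_descending_obs} from Theorem~\ref{t:loopeq2} exactly in the spirit of the proof of Corollary~\ref{Corollary_Jack_corners}. By Claim~\ref{c:Mdescending}, after putting $t=q^\theta$ and encoding the partitions by the particle systems $\bmx$ and $\bmy$ as there, the transition probability displayed in the corollary is precisely the descending weight \eqref{e:m2} with $b(z)=q^{-z}$ and weight function $\widetilde w(z)=w_0(z)\,w(z)$, where $w_0(z)=q^{-z^2/2+z(x_{n+1}+1/2)}\Gamma_q(x_1-z+\theta)\Gamma_q(z-x_{n+1})$ is the weight \eqref{eq_weight_Macdonald} attached to \eqref{e:Mdescend} and $w$ is the extra weight from the hypothesis. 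So two things remain: (i) check the compatibility condition \eqref{e:wratio} for $\widetilde w$ and exhibit holomorphic functions $\Phi^\pm$ to feed into Theorem~\ref{t:loopeq2}; and (ii) massage the resulting observable \eqref{e:sum2} into the form displayed in the statement.

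For step (i), the only inputs are the $q$-Gamma functional equation $\Gamma_q(a+1)=\frac{1-q^a}{1-q}\Gamma_q(a)$ and the elementary Gaussian-in-$z$ identity for the $q^{-z^2/2+\cdots}$ prefactor; together they give $\frac{w_0(z+1)}{w_0(z)}=-\,q^{\,x_{n+1}-x_1-\theta+1}\,\frac{q^{-z}-q^{-x_{n+1}}}{q^{-z}-q^{1-x_1-\theta}}$. Combining this with $\frac{w(z+1)}{w(z)}=\frac{\phi^+(z)}{\phi^-(z)}$, one checks that $\frac{\widetilde w(z+1)}{\widetilde w(z)}=\frac{\Phi^+(z)}{\Phi^-(z)}$ with $\Phi^+(z)=(q^{-z}-q^{-x_{n+1}})\phi^+(z)$ and $\Phi^-(z)=-\,q^{\,x_1+\theta-1-x_{n+1}}(q^{-z}-q^{1-x_1-\theta})\phi^-(z)$; crucially, both $\Phi^\pm$ are holomorphic on the neighborhood $U$ of $[x_{n+1},x_1]$ where $\phi^\pm$ are, since the extra factors are entire (and, for $U$ thin enough, $b(z)=q^{-z}$ is conformal there).

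For step (ii), Theorem~\ref{t:loopeq2} applied with $b(z)=q^{-z}$ and $\phi^\pm=\Phi^\pm$ gives holomorphicity of \eqref{e:sum2}. In the second summand the product $\prod_{\ell\in\bL(\bmx)}\frac{q^{-z-1}-q^{-\ell}}{q^{-z}-q^{-\ell}}$ splits over the consecutive-integer blocks $\{x_{i+1}+\theta,\dots,x_i\}$ making up $\bL(\bmx)$; multiplying numerator and denominator of each factor by $q^{\ell}$ turns each block into a telescoping product, and the whole thing collapses to $q^{\,x_{n+1}-x_1+n(\theta-1)}\,\frac{\prod_{j=2}^{n+1}(q^{-z}-q^{1-x_j-\theta})}{\prod_{j=1}^{n}(q^{-z}-q^{-x_j})}$. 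Plugging in $\Phi^\pm$, cancelling $\prod_{j=1}^{n}(q^{-z}-q^{-x_j})$ against the numerator $\prod_{i=1}^{n+1}(q^{-z}-q^{-x_i})$, and absorbing the $j=1$ factor $(q^{-z}-q^{1-x_1-\theta})$ coming from $\Phi^-$, one finds that \eqref{e:sum2} equals $(q^{-z}-q^{-x_{n+1}})$ times exactly the bracketed expression in the statement, the numerical prefactor $-q^{(\theta-1)(n+1)}$ arising from $-q^{\,x_1+\theta-1-x_{n+1}}\cdot q^{\,x_{n+1}-x_1+n(\theta-1)}$. Finally, both summands of \eqref{e:sum2} carry the factor $\prod_{i=1}^{n+1}(q^{-z}-q^{-x_i})$, which has a simple zero at $z=x_{n+1}$, whereas the remaining denominators $\prod_i(q^{-z}-q^{-y_i})$ and $\prod_i(q^{-z-1}-q^{-y_i})$ do not vanish there, since $y_i-x_{n+1}\in\theta+\mathbb Z_{\ge 0}$ (the knife-edge case $\theta=1$, where $q^{-z-1}-q^{-y_n}$ may vanish at $z=x_{n+1}$, is handled by the same residue-cancellation argument as in Corollary~\ref{Corollary_Jack_corners}). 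Hence the quotient $\text{\eqref{e:sum2}}/(q^{-z}-q^{-x_{n+1}})$, which is the claimed observable, extends holomorphically across $z=x_{n+1}$ and is therefore holomorphic on all of $U$.

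The step I expect to be the main obstacle is the bookkeeping in (ii): one must track every $q$-power carefully so that the three separate constants --- $-q^{\,x_{n+1}-x_1-\theta+1}$ from $w_0(z+1)/w_0(z)$, $q^{\,x_{n+1}-x_1+n(\theta-1)}$ from the telescoping of $\prod_{\ell\in\bL(\bmx)}$, and $q^{\,x_1+\theta-1-x_{n+1}}$ built into $\Phi^-$ --- recombine into the single clean factor $-q^{(\theta-1)(n+1)}$, and one must be sure that $q^{-z}-q^{-x_{n+1}}$, and nothing more, is the spurious factor separating \eqref{e:sum2} from the target observable.
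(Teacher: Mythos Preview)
Your proposal is correct and follows essentially the same approach as the paper: apply Theorem~\ref{t:loopeq2} with $b(z)=q^{-z}$ via Claim~\ref{c:Mdescending}, telescope the lattice product, and remove the spurious factor at $z=x_{n+1}$. Your normalization of $\Phi^\pm$ differs from the paper's by a common holomorphic nonvanishing factor $q^{-z}$ (the paper takes $\Phi^+=(1-q^{z-x_{n+1}})\phi^+$ and $\Phi^-=(1-q^{x_1-z+\theta-1})q^{z-x_{n+1}}\phi^-$), so you extract $(q^{-z}-q^{-x_{n+1}})$ where the paper extracts $(1-q^{z-x_{n+1}})$; this is immaterial. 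One minor inaccuracy: the final step in Corollary~\ref{Corollary_Jack_corners} is not a ``residue-cancellation'' argument but simply the direct observation that neither term of the \emph{target} observable is singular at $z=x_{n+1}$, and the paper uses exactly that observation here as well --- it works uniformly in $\theta$, since at $\theta=1$ the potential pole of the second term from $q^{-z-1}-q^{-y_n}$ is matched by the zero of $q^{-z}-q^{-x_{n+1}+1-\theta}=q^{-z}-q^{-x_{n+1}}$ in its numerator.
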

\begin{proof}
 Using Claim \ref{c:Mdescending} and Theorem \ref{t:loopeq2} with $b(z)=q^{-z}$, we obtain the holomorphicity of
\begin{multline}
\label{eq_x1}
\bE\left[(1-q^{z-x_{n+1}})\phi^+(z)\dfrac{\prod\limits_{i=1}^{n+1} \bigl(q^{-z}-q^{-x_i} \bigr)}{\prod\limits_{i=1}^{n}\bigl( q^{-z}-q^{-y_i}\bigr)}\right.\\+\left.(1-q^{x_1-z+\theta-1}) q^{z-x_{n+1}} \phi^-(z)  \prod\limits_{\ell\in \bL(\bmx)}\dfrac{\bigl(q^{-z-1}-q^{-\ell}\bigr)}{\bigl(q^{-z}-q^{-\ell}\bigr)} \cdot \frac{\prod\limits_{i=1}^{n+1}\bigl( q^{-z}-q^{-x_i}\bigr)}{\prod\limits_{i=1}^{n}\bigl( q^{-z-1}-q^{-y_i}\bigr)}\right].
\end{multline}
Let us simplify the product over $\ell$ in the second term:
\begin{equation*}
 \prod_{i=1}^n\, \prod_{\ell\in \{x_{i+1}+\theta, x_{i+1}+1+\theta, \dots, x_i-1, x_i\}} \dfrac{q^{-1}\bigl(q^{-z}-q^{1-\ell}\bigr)}{\bigl(q^{-z}-q^{-\ell}\bigr)}= \prod_{i=1}^n\left[ q^{-(x_i-x_{i+1}+1-\theta)} \frac{q^{-z}-q^{-x_{i+1}+1-\theta}}{q^{-z}-q^{-x_i}}\right].
\end{equation*}
Therefore, \eqref{eq_x1} is
\begin{equation*}
(1-q^{z-x_{n+1}})\bE\left[\phi^+(z)\dfrac{\prod\limits_{i=1}^{n+1} \bigl(q^{-z}-q^{-x_i} \bigr)}{\prod\limits_{i=1}^{n}\bigl( q^{-z}-q^{-y_i}\bigr)}\right.\\-\left.q^{(\theta-1)(n+1)}  \phi^-(z)   \frac{ \prod\limits_{i=1}^{n+1}  (q^{-z}-q^{-x_{i}+1-\theta}) }{\prod\limits_{i=1}^{n}\bigl( q^{-z-1}-q^{-y_i}\bigr)}\right].
\end{equation*}
It remains to note that neither of the terms under expectation has a singularity at $z=x_{n+1}$ and, hence, we can divide by $(1-q^{z-x_{n+1}})$ and the result is still holomorphic.
\end{proof}

\subsection{$(q,\kappa)$-distributions on tilings and Koornwinder polynomials}

\label{Section_tilings_as_Markov_chain}

In this section we first explain how $(q,\kappa)$--distributions on tilings of Section \ref{s:heightf} fit into the formalism of dynamic loop equations. Then we show that these distributions are particular cases of more general Markov chains which are obtained from quasi-branching rules for Koornwinder symmetric polynomials.

\subsubsection{Weighted lozenge tilings}

\label{Section_weighted_tilings_loop}

We deal with random lozenge tilings of a trapezoid, as in Figures \ref{Fig_trapezoid} and \ref{Fig_trapezoid_MC}. The probability of each tiling $\cT$ is proportional to the weight given in terms of \La\, lozenges, as in \eqref{e:qtiling}, \eqref{eq_lozenge_weight}:
 \begin{align} \label{eq_lozenge_weight_copy}
 w(\cT)=\prod_{\La\in \cT}w(\La),\qquad w(\La)=\kappa q^{x-t/2}- \kappa^{-1}q^{-x+t/2}=\kappa q^{\tilde x}-\kappa^{-1} q^{-\tilde x}.
 \end{align}
We further identify a tiling with a collection of $N$ non-intersecting random walks $\bmx(t)=(x_1(t)>\dots>x_n(t))$, $0\leq t \leq T$, by tracing the trajectories of \Lb\, and \Lc\, lozenges, as in Figure \ref{Fig_trapezoid_MC}. The trapezoid is uniquely determined by specifying $N$, $T$, arbitrary initial configuration $\bmx(0)$, and prefixed ending configuration $\bmx(T)=(N-1,N-2,\dots,0)$. The measure \eqref{eq_lozenge_weight_copy} leads to a Markov chain structures on $\bmx(t)$ with explicit transition probabilities.

 \begin{figure}[t]
\begin{center}
 \includegraphics[scale=0.4]{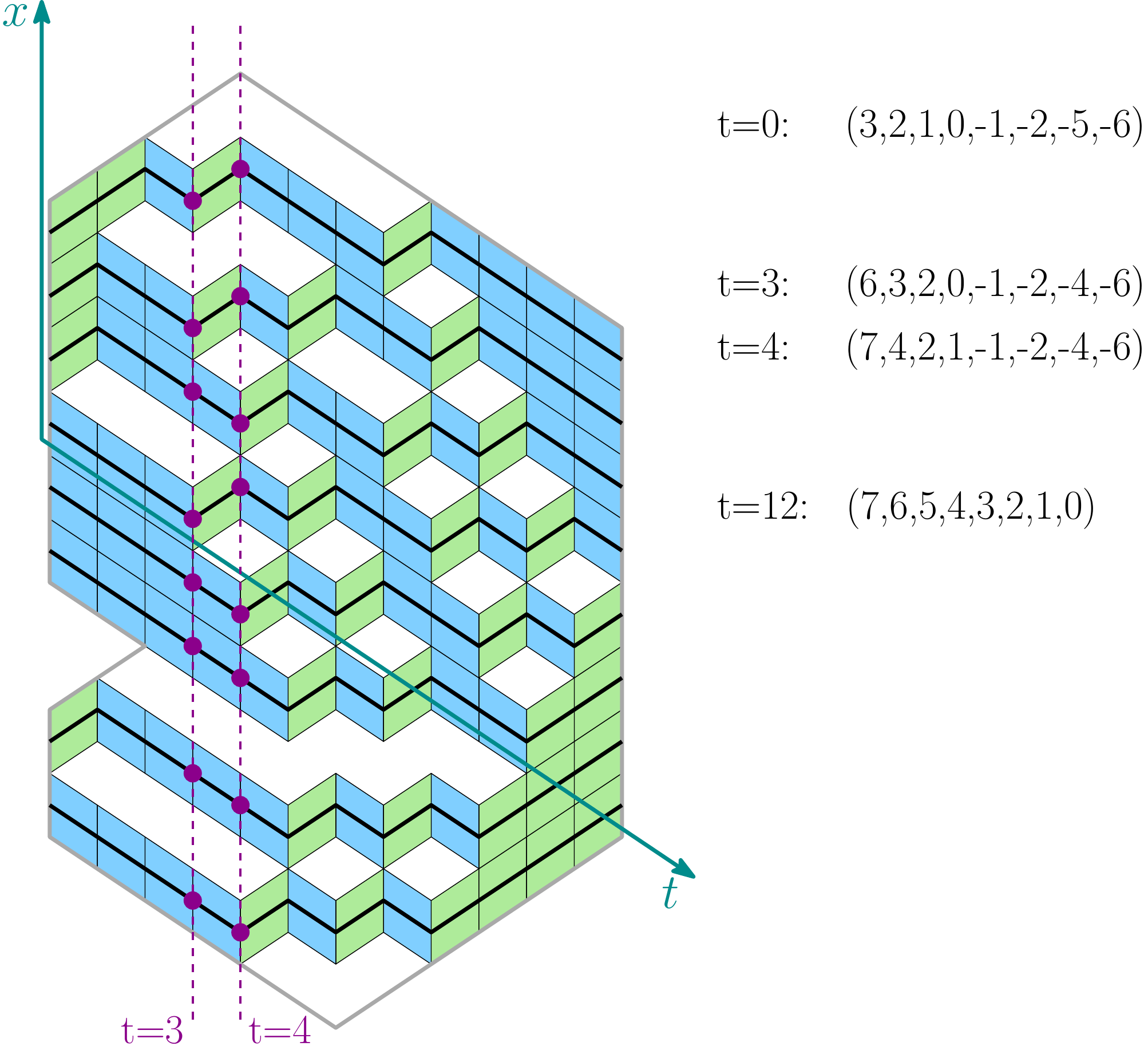}
 \caption{Tracking positions of particles in non-intersecting paths, tilings turn into a Markov chain in  $\bW^{N}$.   }
 \label{Fig_trapezoid_MC}
 \end{center}
 \end{figure}

\begin{theorem} \label{Theorem_transition_qk}
$\{\bmx(t)\}_{0\leq t\leq T}$ corresponding to the $(q,\kappa)$-distributions on lozenge tilings  with weights \eqref{eq_lozenge_weight_copy} is a Markov chain with transition probability proportional to
\begin{align}\label{e:qtransit}
\bP(\bmx(t+1)=\bmx+\bme| \bmx(t)=\bmx)
\propto \prod_{1\leq i<j\leq N} \frac{b_{t}(x_i+e_i)-b_{t}(x_j+e_j)}{b_t(x_i)-b_t(x_j)}\prod_{i=1}^N\phi_t^+(x_i)^{e_i} \phi_t^-(x_i)^{1-e_i},
\end{align}
where $\bme\in\{0,1\}^N$ and
$$b_t(x)=q^{-x}+\kappa^2 q^{x-t},$$
$$
\phi_t^+(x)=q^{T+N-1-t}(1-q^{x-N+1})(1-\kappa^2 q^{x-T+1}), \qquad \phi_t^-(x)=-(1-q^{x+T-t})(1-\kappa^2 q^{x+N-t}).
$$
\end{theorem}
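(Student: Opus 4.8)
The plan is to expand the $(q,\kappa)$ weight of a tiling as a product over the unit-width vertical strips of the trapezoid $P$ and then to evaluate the weight of a single strip; both the Markov property and the precise form \eqref{e:qtransit} will come out of this factorisation. First I would recall the standard bijection between lozenge tilings of $P$ and trajectories $\{\bmx(t)\}_{0\le t\le T}$ of $N$ non-intersecting paths in $\bW^N$: one discards the \La\ lozenges, follows the \Lb\ and \Lc\ lozenges as the trajectories of $N$ particles, and records the positions $\bmx(t)=(x_1(t)>\dots>x_N(t))$ at integer times $t$; the shape of $P$ fixes $\bmx(0)$ (determined by the left sides $[a_i,b_i]$) and $\bmx(T)=(N-1,N-2,\dots,0)$, the increments satisfy $x_i(t+1)-x_i(t)=e_i\in\{0,1\}$, and $\bmx(t+1)\in\bW^N$. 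The point I would use is that the tiling of a strip of width one is completely determined by the particle positions on its two vertical boundaries: each particle makes exactly one elementary step (a \Lb\ if $e_i=1$, a \Lc\ if $e_i=0$), and the complement of these steps inside the strip is forced to consist of horizontal lozenges. Grouping horizontal lozenges according to the $t$-coordinate of their left corner then gives $w(\cT)=\prod_{t=0}^{T-1}\Psi_t(\bmx(t+1)\mid\bmx(t))$, where $\Psi_t(\bmx+\bme\mid\bmx)$ is the product of $w(\La)$ over the horizontal lozenges in strip $t$. Since the trajectory determines the tiling, $\bP(\cT)=w(\cT)/Z$ is then literally the law of a Markov chain whose one-step transition probability is proportional, as a function of $\bme$ with $\bmx=\bmx(t)$ fixed, to $\Psi_t(\bmx+\bme\mid\bmx)$, and the theorem reduces to evaluating $\Psi_t$.

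For the evaluation I would first work out the geometry: using the bottom sides of $P$ (through $(0,a_i)$ and $(T,0)$) and the top sides (through $(0,b_i)$ and $(T,N)$), the vertical extent of strip $t$ is an explicit interval (for small $t$ a disjoint union of intervals, treated identically), and the multiset $H_t(\bmx,\bme)$ of heights of left corners of horizontal lozenges in strip $t$ consists precisely of those heights of this range that are not covered by one of the $N$ particle steps — a set one can write down explicitly in terms of $\bmx$, $\bme$, and the boundary heights. Then I would substitute $\xi=\xi_t(x):=\kappa q^{x-t/2}$, under which the single-lozenge weight becomes $w(\La)_{(t,h)}=\xi_t(h)-\xi_t(h)^{-1}$, and record the factorisations $b_t(x)=q^{-x}\bigl(1+\xi_t(x)^2\bigr)$ and $b_t(x)-b_t(x')=(q^{-x}-q^{-x'})\bigl(1-\xi_t(x)\xi_t(x')\bigr)$. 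Plugging the description of $H_t(\bmx,\bme)$ into $\Psi_t(\bmx+\bme\mid\bmx)=\prod_{h\in H_t(\bmx,\bme)}\bigl(\xi_t(h)-\xi_t(h)^{-1}\bigr)$ and re-grouping the product, one should obtain an identity of the form
$$
 \Psi_t(\bmx+\bme\mid\bmx)=C_t(\bmx)\prod_{1\le i<j\le N}\frac{b_t(x_i+e_i)-b_t(x_j+e_j)}{b_t(x_i)-b_t(x_j)}\ \prod_{i=1}^N\phi_t^+(x_i)^{e_i}\phi_t^-(x_i)^{1-e_i},
$$
with $\phi_t^\pm$ as in the statement and $C_t(\bmx)$ independent of $\bme$: the pairwise ratio records how the vacant heights between particles $i$ and $j$ get shifted by $(e_i,e_j)$, while $\phi_t^+$ and $\phi_t^-$ collect the heights adjacent to $x_i$ together with the contributions of the boundary sides of the strip (the four factors $1-q^{x-N+1}$, $1-q^{x+T-t}$, $1-\kappa^2 q^{x-T+1}$, $1-\kappa^2 q^{x+N-t}$ reflecting the right side $[0,N]$ and the three families of slanted sides of $P$). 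Absorbing $C_t(\bmx)$ into $Z(\bmx)$ yields \eqref{e:qtransit}. As a built-in consistency check, if $\bmx+\bme\notin\bW^N$ then $x_i+e_i=x_{i+1}+e_{i+1}$ for some $i$, so $b_t(x_i+e_i)-b_t(x_{i+1}+e_{i+1})=0$ and the right-hand side of \eqref{e:qtransit} vanishes, as it must.

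The hard part will be the second step: pinning down exactly which heights carry horizontal lozenges in each strip — including the boundary heights dictated by the non-trivial shape of $P$ when $r>1$ — and then proving that the two-term product $\prod_{h\in H_t}(\xi_t(h)-\xi_t(h)^{-1})$ collapses to the stated Vandermonde-type ratio in $b_t$ times the $\phi_t^\pm$ factors. This is the $(q,\kappa)$-analogue of the Cauchy/branching evaluation, and it is intrinsically of $BC$ type — the factors $(\xi-1)(\xi+1)$ and $(\xi-\xi')(1-\xi\xi')$ that appear are precisely those attached to the root system $BC$ — which is also the structural reason the interaction kernel is $b_t(x)=q^{-x}+\kappa^2 q^{x-t}$ rather than the monomial $q^{-x}$ of the $\kappa=0$ case. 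An independent route to \eqref{e:qtransit}, which I would mention, is to specialise the quasi-branching rule for Koornwinder symmetric polynomials from Section \ref{Section_quasi_branching}: the transition \eqref{e:qtransit} is its rank-preserving instance, and this also makes the appearance of Koornwinder combinatorics conceptually transparent.
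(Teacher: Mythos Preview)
Your decomposition $w(\cT)=\prod_t\Psi_t(\bmx(t+1)\mid\bmx(t))$ is fine, but the inference that the one-step transition is proportional in $\bme$ to $\Psi_t(\bmx+\bme\mid\bmx)$ is not: the trajectory is constrained to terminate at the fixed configuration $\bmx(T)=(N-1,\dots,0)$, so the conditional law of $\bmx(t+1)$ given $\bmx(t)$ carries a Doob $h$-transform by the forward partition function,
\[
\bP(\bmx(t+1)=\bmx+\bme\mid\bmx(t)=\bmx)=\frac{\Psi_t(\bmx+\bme\mid\bmx)\,Z(\bmx+\bme;T-t-1)}{Z(\bmx;T-t)},
\]
exactly as in \eqref{eq_transition_partition}. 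The missing factor $Z(\bmx+\bme;T-t-1)$ is where the Vandermonde comes from: the paper's proof plugs in the explicit product evaluation of $Z(\bmy;s)$ (Theorem~\ref{Theorem_qRacah_partition} or \cite[Appendix]{borodin2010q}), which is a constant times $\prod_{i<j}\bigl(b_\cdot(y_i)-b_\cdot(y_j)\bigr)$, and this is what produces the numerator of the pairwise ratio in \eqref{e:qtransit}.

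Your $\Psi_t$ alone cannot do this. The heights carrying a horizontal lozenge in a single strip are the complement of $\{x_i+e_i\}$ in a range that does not depend on $\bme$, so as a function of $\bme$ one has $\Psi_t(\bmx+\bme\mid\bmx)\propto\prod_{i=1}^N w_t(x_i+e_i)^{-1}$, a product of one-body factors; no such product can equal a constant times the genuinely two-body ratio $\prod_{i<j}\frac{b_t(x_i+e_i)-b_t(x_j+e_j)}{b_t(x_i)-b_t(x_j)}$. The heuristic that ``the pairwise ratio records how the vacant heights between particles $i$ and $j$ get shifted by $(e_i,e_j)$'' therefore cannot be made into an identity. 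Your remark about Koornwinder quasi-branching is on target as an alternative source of the partition function formula, but note that the branching in Section~\ref{Section_quasi_branching} is \emph{descending}; the link to the ascending transition \eqref{e:qtransit} is via particle--hole duality rather than a rank-preserving specialisation.
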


\smallskip

We remark that the weights $\phi_t^\pm(x)$ encode a part of the boundary of the tiled trapezoid. More precisely, $\phi_t^+(x)$ has a zero at $x=N-1$ and $\phi_t^-(x)$ has a zero at $t-x=T$. Therefore, if at time $t$ a particle is at $N-1$, it will stay; if a particle is at $t-T$, it will jump one unit to the right.
\begin{proof}[Proof of Theorem \ref{Theorem_transition_qk}] Up to shift $x\mapsto x+S$, this is \cite[Proposition 4.2]{borodin2010q}. Let us, however, present a sketch of the proof. The Markov property that ``given present, future and past are independent'' immediately follows from the definition of the system. Further, let $Z(x_1,\dots,x_N; T)$ denote the partition function, i.e.\ this is the total sum of weights \eqref{eq_lozenge_weight_copy} over the tilings corresponding to non-intersecting paths $\bmx(t)=(x_1(t)>\dots>x_n(t))$, $0\leq t \leq T$, with $\bmx(0)=(x_1,\dots,x_N)$ and $\bmx(T)=(N-1,N-2,\dots,0)$. We can express
\begin{equation}
\label{eq_transition_partition}
 \bP(\bmx(t+1)=\bmx+\bme| \bmx(t)=\bmx)=\frac{Z(\bmx+\bme; T-t-1)}{Z(\bmx; T-t)} \prod_{\La} \left(\kappa q^{x(\La)-t/2}- \kappa^{-1}q^{-x(\La)+t/2}\right),
\end{equation}
where the product is taken over all horizontal lozenges on the vertical line with ordinate $t+1$; the positions of these lozenges complement the configuration $\bmx+\bme$. The key observation is that $Z(x_1,\dots,x_N; T)$ has an explicit product formula: this computation relying on explicit evaluation of certain determinants is present in \cite[Appendix]{borodin2010q}; another way to get the same answer is through explicit evaluation of principally-specialized Koornwinder polynomials, as we explain in Theorem \ref{Theorem_qRacah_partition} below. Plugging the result into \eqref{eq_transition_partition}, we arrive at \eqref{e:qtransit}.
\end{proof}

Theorem  \ref{t:loopeq} gives the dynamical loop equation for transition probabilities \eqref{e:qtransit}.
\begin{corollary}
Fix $\bmx=(x_1>x_2>\dots>x_n)\in \bW^n$ and $0\leq t<T$. Consider transition probability \eqref{e:qtransit}.
Then the following observable is holomorphic in a complex neighborhood of $[x_n,x_1]$,
\begin{multline*}
\bE\left[q^{T+N-1-t}(1-q^{z-N+1})(1-\kappa^2 q^{z-T+1})\prod_{j=1}^n\frac{b_t(z+\theta)-b_t(x_j+\theta e_j)}{b_t(z)-b_t(x_j)}\right.\\-\left.(1-q^{z+T-t})(1-\kappa^2 q^{z+N-t})\prod_{j=1}^n\frac{b_t(z)-b_t(x_j+\theta e_j)}{b_t(z)-b_t(x_j)}\right],
\end{multline*}
where $b_t(z)=q^{-z}+\kappa^2 q^{z-t}$.
\end{corollary}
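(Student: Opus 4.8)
The plan is to recognize the transition probability \eqref{e:qtransit} as a specialization of the general form \eqref{e:m1} and then quote Theorem~\ref{t:loopeq} essentially verbatim. Concretely, in \eqref{e:m1} I would take $\theta=1$ (since $\bmx\in\bW^n=\bW_1^n$), $b(z)=b_t(z)=q^{-z}+\kappa^2 q^{z-t}$, and the weight functions $\phi^+(z)=\phi_t^+(z)=q^{T+N-1-t}(1-q^{z-N+1})(1-\kappa^2 q^{z-T+1})$ and $\phi^-(z)=\phi_t^-(z)=-(1-q^{z+T-t})(1-\kappa^2 q^{z+N-t})$. With these choices the product in \eqref{e:m1} is exactly \eqref{e:qtransit} (the normalization $Z(\bmx)$ is common to both and plays no role, as \eqref{e:sum1} is an expectation under the normalized kernel), and the observable \eqref{e:sum1} becomes precisely the displayed expression once $b_t$, $\phi_t^+$, $\phi_t^-$ are substituted. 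So the entire content of the corollary is "this is Theorem~\ref{t:loopeq} for the above data", in the same spirit as Corollaries~\ref{c:loop12} and the Macdonald corollaries.

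Before invoking Theorem~\ref{t:loopeq} I would check its hypotheses on a suitable open $U\supset[x_n,x_1]$. That \eqref{e:qtransit} is a genuine (normalizable, nonnegative) Markov kernel is exactly the content of Theorem~\ref{Theorem_transition_qk} and follows from the positivity constraints on $(q,\kappa)$. The weights $\phi_t^\pm(z)$ are polynomials in $q^{z}$, hence entire, so holomorphic on any $U$. The one point needing genuine care is that $b_t$ be conformal, i.e.\ injective, on $U$. Since $b_t$ is entire with $b_t'(z)=\ln(q)\bigl(\kappa^2 q^{z-t}-q^{-z}\bigr)$, it is locally injective except where $\kappa^2 q^{2z-t}=1$, and a one-line computation shows $b_t(u)=b_t(v)$ with $u\neq v$ forces $\kappa^2=q^{\,t-u-v}$. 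In the imaginary case $\kappa^2<0$, so neither can occur for $z$ (resp.\ $u,v$) in a thin complex neighborhood of the real line, and $b_t$ is conformal on such a $U$ automatically; in particular this also guarantees $b_t(x_i)\neq b_t(x_j)$ for $i\neq j$, so \eqref{e:qtransit} is well defined. In the real case $\kappa^2\in\bR$, and the positivity restriction keeping $\kappa^2$ out of the interval with endpoints $q^{-(N+b_r-1)}$ and $q^{T+a_1-1}$ is precisely what keeps $\kappa^2$ away from the values $q^{\,t-u-v}$ with $u,v$ ranging over positions accessible to the chain, so $b_t$ is again conformal on a suitable $U\supset[x_n,x_1]$.

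With the hypotheses verified, Theorem~\ref{t:loopeq} applied to the kernel \eqref{e:qtransit}$\,=\,$\eqref{e:m1} gives that \eqref{e:sum1} is holomorphic on $U$, and substituting the chosen $b_t$, $\phi_t^+$, $\phi_t^-$ (with $\theta=1$) yields exactly the asserted observable. I do not expect any deep obstacle: the corollary is an immediate specialization of Theorem~\ref{t:loopeq}, and the only step beyond routine matching of notation is the verification that $b_t$ is conformal near $[x_n,x_1]$, which is precisely where the $(q,\kappa)$ positivity constraints are used.
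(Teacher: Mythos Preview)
Your proposal is correct and matches the paper's approach exactly: the paper does not even write out a proof for this corollary, simply presenting it as an immediate application of Theorem~\ref{t:loopeq} with $b(z)=b_t(z)$ and $\phi^\pm=\phi_t^\pm$ (and implicitly $\theta=1$). Your verification that $b_t$ is conformal near $[x_n,x_1]$ under the $(q,\kappa)$ positivity constraints is more care than the paper takes at this point, but it is the right thing to check.
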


\subsubsection{Quasi-branching of Koornwinder symmetric polynomials}

\label{Section_quasi_branching}

Following the notations in \cite[Definition 2]{rains2005bc}, we denote $K_{\bmla}^{(n)}(x_1,\dots,x_n; q,t;t_0,t_1,t_2,t_3)$ the Koorwinder symmetric (Laurent) polynomial in $n$ variables $x_1,\dots,x_n$. We explained in Sections \ref{Section_Jack_example} and \ref{Section_Macdonald_example} how branching rules for Schur, Jack, and Macdonald polynomials lead to measures on Gelfand-Tsetlin patterns, which fit into our formalism of dynamical loop equations. Generally speaking, the branching rules for Koornwinder polynomials are much more complicated, and do not lead to any simple measures on Gelfand--Tsetlin patterns. However, when we deal with a special principal specialization, there is a dramatic simplification known as \emph{quasi-branching rule}. The following result is \cite[Theorem 5.21]{rains2005bc}.
\begin{theorem} \label{Theorem_Koornwinder_branching}
For any partition $\bmla\in \GT_{n+1}$ with $\lambda_{n+1}\geq 0$,  we have
\begin{multline}\label{e:branchrule}
\frac{K_{\bmla}^{(n+1)}(x_1, \dots, x_{n},t_0;\, q,t;\, t_0,t_1,t_2,t_3)}{K_{\bmla}^{(n+1)}(q^{n}t_0, q^{n-1}t_0, \dots, t_0;\, q,t;\, t_0,t_1,t_2,t_3)}\\
=\sum_{\bmmu\prec \bmla}\psi_{\bmla\setminus\bmmu}^{(i)}(t^{n+1};\, q,t,t^{n}\sqrt{t_0t_1t_2t_3/qt})\frac{K_\bmmu^{(n)}(x_1,\dots, x_{n};\, q,t;\, t_0t,t_1,t_2,t_3)}{K_\bmmu^{(n)}(q^{n}t_0, q^{n-1}t_0, \dots,qt_0;\, q,t;\, t_0t,t_1,t_2,t_3)},
\end{multline}
where the coefficients are given by
\begin{align}\begin{split}\label{e:phii}
\psi_{\bmla\setminus\bmmu}^{(i)}(u; q,t,s)
&=(u/t)^{|\bmla|-|\bmmu|}t^{n(\bmmu)-n(\bmla)}
\frac{C_\bmla^0(s^2qt/u;q,t)C_\bmmu^0(u/t;q,t)}{C_\bmla^0(u;q,t)C_\bmmu^0(s^2qt/u;q,t)}\\
&\prod_{(i,j)\in \bmla\atop \lambda_j'=\mu_j'}\frac{1-q^{\la_i+j-1}t^{-\la_j'-i+3}s^2}{1-q^{\mu_i-j+1}t^{\mu_j'-i}}
\prod_{(i,j)\in \bmla\atop \lambda_j'\neq\mu_j'}\frac{1-q^{\la_i-j}t^{\la_j'-i+1}}{1-q^{\mu_i+j}t^{-\mu_j'-i+1}s^2}\\
&\prod_{(i,j)\in \bmmu\atop \lambda_j'=\mu_j'}\frac{1-q^{\la_i-j+1}t^{\la_j'-i}}{1-q^{\mu_i+j-1}t^{2-\mu_j'-i}s^2}
\prod_{(i,j)\in \bmmu\atop \lambda_j'\neq\mu_j'}\frac{1-q^{\la_i+j}t^{2-\la_j'-i}s^2}{1-q^{\mu_i-j}t^{\mu_j'-i+1}},
\end{split}
\end{align}
with $|\bmla|=\sum_{i=1}^{n+1} \lambda_i$, $n(\bm\lambda)=\sum_{i=1}^{n+1} \lambda_i(\lambda_i-1)/2$, and similarly for $\bmmu$. Also
$$
C_\bmmu^0(x;q,t)=\prod_{(i,j)\in \bmmu}(1-q^{j-1}t^{1-i}x)=\prod_{i\geq 1}(t^{1-i}x;q)_{\mu_i}.
$$
\end{theorem}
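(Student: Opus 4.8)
The statement is \cite[Theorem 5.21]{rains2005bc}, and the plan is to follow the route of that reference, which passes through the $BC_n$ \emph{interpolation} (shifted) Koornwinder polynomials $\bar K^{*(m)}_\mu(x_1,\dots,x_m;q,t;t_0,t_1,t_2,t_3)$. These are characterized, up to a scalar, by a combinatorial \emph{vanishing property}: $\bar K^{*(m)}_\mu$ has the prescribed degree and vanishes at all spectral points attached to partitions $\nu\neq\mu$ with $|\nu|\le|\mu|$. The full branching rule for Koornwinder polynomials has no closed form, but the interpolation polynomials branch \emph{simply}, and the Koornwinder family expands in the interpolation family with explicit connection coefficients; composing these ingredients is what produces the quasi-branching rule for the special principal specialization in \eqref{e:branchrule}.

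First I would recall the \emph{binomial formula} $K^{(m)}_\lambda=\sum_{\mu\subseteq\lambda}\binom{\lambda}{\mu}\bar K^{*(m)}_\mu$, where the generalized binomial coefficients $\binom{\lambda}{\mu}$ are ratios of principal specializations of interpolation polynomials, together with its inverse $\bar K^{*(m)}_\mu=\sum_{\nu}\overline{\binom{\mu}{\nu}}\,K^{(m)}_\nu$ (with the Koornwinder parameters suitably shifted, since the interpolation polynomials appearing here carry a shifted $t_0$). These facts, along with the evaluation of $K^{(m)}_\lambda$ at a geometric progression $q^{m-1}t_0,\dots,t_0$, I would import from the work of Okounkov, van Diejen, and Rains rather than reprove.

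The crucial step is the \emph{branching of the interpolation polynomials}: specializing the last variable to the base point, $\bar K^{*(n+1)}_\mu(x_1,\dots,x_n,t_0;q,t;t_0,t_1,t_2,t_3)$ equals $\bar K^{*(n)}_\mu(x_1,\dots,x_n;q,t;t_0t,t_1,t_2,t_3)$ times an explicit product of $1-q^\bullet t^\bullet$-factors over the boxes of $\mu$. This is exactly where the vanishing characterization does the work: the substitution $x_{n+1}=t_0$ forces $\mu_{n+1}=0$ and degenerates the $(n+1)$-variable vanishing conditions into those defining the $n$-variable interpolation polynomial. I would then substitute the binomial formula for $K^{(n+1)}_\lambda$ into the numerator of \eqref{e:branchrule}, apply this interpolation-branching to each $\bar K^{*(n+1)}_\mu$, re-expand via the inverse binomial formula into $n$-variable $K^{(n)}_\nu$'s, and finally divide both sides by the relevant principal specializations. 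Collecting terms, the coefficient of $K^{(n)}_\nu$ becomes a sum over intermediate $\mu$ with $\nu\subseteq\mu\subseteq\lambda$, $\mu\prec\lambda$, of the product (binomial coefficient)$\,\times\,$(interpolation-specialization factor)$\,\times\,$(inverse binomial coefficient).

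The main obstacle is evaluating and simplifying that intermediate sum into the closed product \eqref{e:phii}. All three factors are products over boxes of Young diagrams of ratios of $1-q^a t^b$ or $1-q^a t^b s^2$ terms, and the vanishing/support properties of the binomial coefficients collapse the sum essentially to a single term (with $\nu$ interlacing $\lambda$); what remains is an intricate hook-length-type manipulation — splitting the products according to whether a column of $\lambda$ and $\mu$ agree ($\lambda'_j=\mu'_j$) or not, repeatedly applying the $q$-Cauchy identity, and tracking the arm/leg/co-arm/co-leg exponents — to land precisely on the numerator/denominator pattern of \eqref{e:phii}, including the prefactor $(u/t)^{|\lambda|-|\mu|}t^{n(\mu)-n(\lambda)}$ and the $C^0_\bullet$-ratios. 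Getting the powers of $q$, $t$, $s$ exactly right across all four box-products is the genuine technical heart of the proof; everything else is either standard or formal once the interpolation-polynomial toolbox is in place.
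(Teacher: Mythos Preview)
The paper does not prove this theorem at all: it is quoted verbatim as \cite[Theorem 5.21]{rains2005bc}, with no argument supplied. So there is no ``paper's own proof'' to compare against; your proposal is an attempt to reconstruct Rains' argument, which is a reasonable thing to do but goes beyond what the paper itself does.

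Your high-level outline --- pass through Okounkov's $BC_n$ interpolation polynomials, use the binomial expansion $K_\lambda=\sum_\mu\binom{\lambda}{\mu}\bar K^*_\mu$, apply the clean one-variable branching of $\bar K^*_\mu$ at $x_{n+1}=t_0$, then re-expand --- is indeed the standard toolbox of \cite{rains2005bc} and is the right general strategy. One point deserves more care: your assertion that ``the vanishing/support properties of the binomial coefficients collapse the sum essentially to a single term'' is too quick. After the interpolation-branching step you are left with a double sum $\sum_{\nu\subseteq\mu\subseteq\lambda}$, and the support constraints on $\binom{\lambda}{\mu}$ and $\overline{\binom{\mu}{\nu}}$ alone do not force $\mu$ to be unique; rather, the inner sum over $\mu$ must be evaluated by a genuine $q$-hypergeometric summation (a Jackson-type identity in Rains' framework), and it is this summation, together with the interlacing support of the interpolation-branching factor, that produces both the restriction $\nu\prec\lambda$ and the closed product \eqref{e:phii}. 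Once you insert that summation step explicitly, the rest of your plan is sound.
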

By taking principle specialization on both sides of \eqref{e:branchrule}, i.e. $x_i=q^{n+1-i}t_0$ with $1\leq i\leq n$, we get a probability measure over Gelfand-Tsetlin patterns, with the transition probability given by
\begin{equation}\label{e:MKdensity}
\bP(\bmmu|\bmla)=\psi_{\bmla\setminus\bmmu}^{(i)}(t^{n+1}; q,t,t^{n}\sqrt{t_0t_1t_2t_3/qt}), \qquad \bmla\in \GT_{n+1},\, \lambda_{n+1}\geq 0,\, \bmmu\in \GT_n,\, \bmmu\prec\bmla.
\end{equation}

We denote $t=q^\theta$ for some $\theta>0$, and encode $\bmla$ and $\bmmu$ as particle systems $\bmx$ and $\bmy$, with
\begin{align*}
&\bmx=(x_1,x_2,\dots, x_{n+1})\in \bW_\theta^{n+1}, &x_i=\la_i-\theta(i-1), \quad &1\leq i\leq n+1,\\
&\bmy=(y_1,y_2,\dots, y_{n+1})\in \bW_\theta^{n}, &y_i=\mu_i-\theta(i-1), \quad &1\leq i\leq n.
\end{align*}
The interlacing condition $\bmmu\prec \bmla$ becomes $x_{i+1}+\theta \leq y_i\leq x_i$ for $1\leq i\leq n$. The transition probability \eqref{e:MKdensity} becomes a special case of \eqref{e:m2}, as Claim \ref{c:MKdescending} (whose proof is postponed till Appendix \ref{Section_Appendix_examples}) shows:

\begin{claim}\label{c:MKdescending}
Let $q^{v/2}=t^{n}\sqrt{t_0t_1t_2t_3/qt}$. We can rewrite the transition probability \eqref{e:MKdensity} as
\begin{align}\begin{split}\label{e:KMdensityhi}
\bP(\bmy|\bmx)
&\propto\prod_{1\leq i<k\leq n}(q^{-y_i}-q^{-y_k})\prod_{1\leq i\leq k\leq n}\frac{\Gamma_q(y_i-x_{k+1})\Gamma_{q}(x_i-y_k+\theta)}{\Gamma_q(x_i-y_k+1)\Gamma_q(y_i-x_{k+1}-\theta+1)}\\
&\times \prod_{1\leq i\leq k\leq n}(1-q^{y_i+y_k+v})\left[\prod_{i=1}^n \prod_{k=1}^{n+1} \frac{\Gamma_q(y_i+x_k+v+\theta)}{\Gamma_q(y_i+x_k+1+v)}\right]\prod_{i=1}^nq^{-((\theta-1)(n-i)+\theta)y_i},
\end{split}\end{align}
which is a  special case of \eqref{e:m2}, with $b(z)=q^{-z}+q^{z+v}$ and weight
\begin{align*}
w(z)=q^{-z^2/2+z (x_{n+1}+1/2)}{\Gamma_q(x_1-z+\theta)}{\Gamma_q(z-x_{n+1})}\frac{\Gamma_q(z+x_1+v+\theta)}{\Gamma_q(z+x_{n+1}+v+1)}.
\end{align*}

\end{claim}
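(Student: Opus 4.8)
The plan is to transform the explicit product formula \eqref{e:phii} for $\psi^{(i)}_{\bmla\setminus\bmmu}$ directly. First I would substitute the relevant arguments: by the hypothesis $q^{v/2}=t^{n}\sqrt{t_0t_1t_2t_3/qt}$, the parameter occupying the last slot of $\psi^{(i)}$ in \eqref{e:MKdensity} is $s=q^{v/2}$, so $s^{2}=q^{v}$, while $u=t^{n+1}$; hence $u/t$, $s^{2}$ and $s^{2}qt/u$ are all explicit powers of $q$. Setting $t=q^{\theta}$ and $\lambda_i=x_i+(i-1)\theta$, $\mu_i=y_i+(i-1)\theta$ then turns every monomial $q^{a}t^{b}$ and $q^{a}t^{b}s^{2}$ occurring in \eqref{e:phii} into a power of $q$ that is affine in the $x_k$'s and $y_k$'s. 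Since $\bmx$ (equivalently $\bmla$) is fixed, every factor that does not involve $\bmmu$ — in particular $C^0_\bmla$ and the $|\bmla|$, $n(\bmla)$ parts of the prefactor — is absorbed into the normalization $Z(\bmx)$; only the $\bmy$-dependence needs to be tracked.

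Next I would reorganize the four cell-products in \eqref{e:phii} into products over particle coordinates. Because $\bmmu\prec\bmla$, the skew shape $\bmla/\bmmu$ is a horizontal strip, so $\lambda'_j-\mu'_j\in\{0,1\}$ for every column $j$, and the columns with $\lambda'_j\ne\mu'_j$ are exactly those $j$ lying in one of the intervals $(\mu_i,\lambda_i]$. Reading each of the four products column-by-column, the arm/leg contribution of a full column collapses, after $t=q^{\theta}$ and $s^{2}=q^{v}$, into a telescoping product of factors $(1-q^{\,\cdot\,})$; summing the column index over the ranges dictated by $\lambda'_j=\#\{i:\lambda_i\ge j\}$, $\mu'_j=\#\{i:\mu_i\ge j\}$ and using $\Gamma_q(x)=(1-q)^{1-x}(q;q)_\infty/(q^x;q)_\infty$, every such product becomes a ratio of $q$-Gamma functions with arguments affine in $x_k,y_k$. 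The factors not involving $s$ rearrange, by the same manipulations used in the proof of Claim \ref{c:Mdescending}, into the Vandermonde $\prod_{i<k}(q^{-y_i}-q^{-y_k})$, the block $\prod_{i\le k}\tfrac{\Gamma_q(y_i-x_{k+1})\Gamma_q(x_i-y_k+\theta)}{\Gamma_q(y_i-x_{k+1}-\theta+1)\Gamma_q(x_i-y_k+1)}$, and the monomial $\prod_i q^{-((\theta-1)(n-i)+\theta)y_i}$; the $s^{2}$-dependent factors, together with the $C^0$-ratios evaluated at $s^{2}qt/u$, produce their "reflected" counterparts $\prod_{i\le k}(1-q^{y_i+y_k+v})$ and $\prod_{i,k}\tfrac{\Gamma_q(y_i+x_k+v+\theta)}{\Gamma_q(y_i+x_k+1+v)}$. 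This is the manifestation of the $\bZ_2$-symmetry $q^{y}\leftrightarrow q^{-y-v}$ intrinsic to type $BC$, and it establishes \eqref{e:KMdensityhi}.

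It then remains to recognize \eqref{e:KMdensityhi} as an instance of \eqref{e:m2} with $b(z)=q^{-z}+q^{z+v}$. The key identities are the factorizations
\begin{align*}
b(y_i)-b(y_j)=(q^{-y_i}-q^{-y_j})(1-q^{y_i+y_j+v}),\qquad b(\ell)-b(y_i)=(q^{-\ell}-q^{-y_i})(1-q^{\ell+y_i+v}),
\end{align*}
valid for all $\ell$. The first shows that $\prod_{i<j}(b(y_i)-b(y_j))$ accounts for the Vandermonde block together with the off-diagonal part of $\prod_{i\le k}(1-q^{y_i+y_k+v})$. The second, applied with $\ell$ ranging over $\bL(\bmx)=\bigcup_i\{x_{i+1}+\theta,\dots,x_i\}$, shows that the hole products $\prod_{\ell>y_i}(b(\ell)-b(y_i))^{-1}$ and $\prod_{\ell<y_i}(b(y_i)-b(\ell))^{-1}$ telescope — in the factor $q^{-\ell}-q^{-y_i}$ exactly as in the Macdonald computation, and in the factor $1-q^{\ell+y_i+v}$ in the reflected way — into the two $\Gamma_q$-blocks of \eqref{e:KMdensityhi} up to factors depending only on $y_i$ and on the fixed endpoints $x_1,x_{n+1}$. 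The residual contributions — the diagonal $\prod_i(1-q^{2y_i+v})$, the linear exponential, and the quadratic Gaussian $q^{-y_i^2/2}$, which is precisely what is needed to compensate the $y_i$-dependent count of points of $\bL(\bmx)$ lying above or below $y_i$ in the hole products — assemble into exactly the asserted weight $w(z)=q^{-z^2/2+z(x_{n+1}+1/2)}\Gamma_q(x_1-z+\theta)\Gamma_q(z-x_{n+1})\tfrac{\Gamma_q(z+x_1+v+\theta)}{\Gamma_q(z+x_{n+1}+v+1)}$.

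I expect the main obstacle to be the bookkeeping in the second step: correctly sorting cells into the four products according to $\lambda'_j=\mu'_j$ versus $\lambda'_j\ne\mu'_j$, tracking the precise $j$- and $i$-ranges through the substitutions $t=q^{\theta}$, $s^{2}=q^{v}$, and matching the resulting arm/leg exponents to the affine arguments of the target $q$-Gamma functions without an off-by-one error in $\theta$ or $v$. A practical way to pin down all the constants is to check two degenerations: at $\theta=1$ every $\Gamma_q$ in the non-reflected block cancels and \eqref{e:KMdensityhi} becomes an explicit rational function, which is easy to match term by term; and formally sending $v\to+\infty$ drives each reflected factor $(1-q^{y_i+y_k+v})\to1$ and each reflected $\Gamma_q$-ratio to a $y$-independent constant, so that the identity collapses onto the already-recorded Macdonald formula \eqref{e:Mdescend}. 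Matching both degenerations leaves only $\bmmu$-independent normalizations, which are irrelevant.
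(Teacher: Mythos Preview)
Your proposal is correct and follows essentially the same route as the paper: the paper also substitutes $u=t^{n+1}$, $s^2=q^v$, groups each of the four cell-products in \eqref{e:phii} by the common value $\lambda'_j=k$ (your ``column-by-column''), collapses each group into a ratio of $\Gamma_q$'s, and then identifies the result with \eqref{e:m2} via the factorization $b(z_1)-b(z_2)=(q^{-z_1}-q^{-z_2})(1-q^{z_1+z_2+v})$, recycling the hole-product computations from the proof of Claim \ref{c:Mdescending} for the $q^{-\ell}-q^{-y_i}$ half and handling the reflected $(1-q^{\ell+y_i+v})$ half separately. The paper does not invoke your $\theta=1$ and $v\to\infty$ degenerations --- it tracks every constant explicitly --- but those checks are a sensible safeguard against the off-by-one risks you flag.
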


\begin{corollary}
Fix $\bmx=(x_1, x_2,\dots, x_{n+1})\in \bW_\theta^{n+1}$ and  consider the following transition probability
\begin{align}\begin{split}\label{e:KMdensityhi_gen}
\bP(\bmy|\bmx)
&\propto\prod_{1\leq i<k\leq n}(q^{-y_i}-q^{-y_k})\prod_{1\leq i\leq k\leq n}\frac{\Gamma_q(y_i-x_{k+1})\Gamma_{q}(x_i-y_k+\theta)}{\Gamma_q(x_i-y_k+1)\Gamma_q(y_i-x_{k+1}-\theta+1)}\\
&\times \prod_{1\leq i\leq k\leq n}(1-q^{y_i+y_k+v})\left[\prod_{i=1}^n \prod_{k=1}^{n+1} \frac{\Gamma_q(y_i+x_k+v+\theta)}{\Gamma_q(y_i+x_k+1+v)}\right]\prod_{i=1}^nq^{-((\theta-1)(n-i)+\theta)y_i} w(y_i),
\end{split}\end{align}
with $y_i\in\{x_{i+1}+\theta, x_{i+1}+1+\theta, x_{i+1}+2+\theta,\dots, x_i-1, x_i\}$ for $1\leq i\leq n$. Suppose that
\begin{align*}
\frac{w(z+1)}{w(z)}=\frac{\phi^+(z)}{\phi^-(z)},
\end{align*}
with $\phi^+(z)$ and $\phi^-(z)$ holomorphic in a complex neighborhood of $[x_{n+1}, x_1]$.
Then the following observable is holomorphic in the same neighborhood:
\begin{align*}
&\bE\left[
\phi^+(z)\frac{\prod\limits_{j=1}^{n+1} (q^{-z}+q^{z+v}-q^{-x_j}-q^{x_j+v})}{\prod\limits_{j=1}^n(q^{-z}+q^{z+v}-q^{-y_j}-q^{y_i+v})}\right.\\
&\left.-q^{(\theta-1)(n+1)}\phi^-(z)\frac{\prod\limits_{j=1}^{n+1}(q^{-z}+q^{z+v} -q^{-x_i+1-\theta}-q^{x_i+\theta-1+v})}{\prod\limits_{j=1}^n(q^{-z-1}+q^{z+1+v}-q^{-y_j}-q^{y_j+v})}
\right].
\end{align*}
\end{corollary}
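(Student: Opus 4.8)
The plan is to follow the template of the proof of Corollary \ref{Cor_Macdonald_descending_obs}: first reduce the transition \eqref{e:KMdensityhi_gen} to the form \eqref{e:m2}, then apply Theorem \ref{t:loopeq2}, and finally simplify the resulting observable down to the one in the statement.

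By Claim \ref{c:MKdescending}, the transition \eqref{e:KMdensityhi_gen} is a special case of \eqref{e:m2} with $b(z)=q^{-z}+q^{z+v}$ and total weight $\tilde w(z)=w_0(z)\,w(z)$, where $w_0$ is the weight furnished by Claim \ref{c:MKdescending} and $w$ is the extra weight appearing in \eqref{e:KMdensityhi_gen}. Using the $q$-Gamma functional equation $\Gamma_q(a+1)=\tfrac{1-q^a}{1-q}\Gamma_q(a)$ I will compute $w_0(z+1)/w_0(z)$ explicitly; combined with the hypothesis $w(z+1)/w(z)=\phi^+(z)/\phi^-(z)$ this expresses $\tilde w(z+1)/\tilde w(z)$ as a rational function of $q^z$ times $\phi^+(z)/\phi^-(z)$. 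Splitting it as a ratio $\Phi^+(z)/\Phi^-(z)$ of holomorphic functions (each being $\phi^\pm(z)$ times an explicit polynomial in $q^z$ and $q^v$), Theorem \ref{t:loopeq2} then applies with these $\Phi^\pm$ and gives holomorphicity of the corresponding observable \eqref{e:sum2}.

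The heart of the proof is the simplification of that observable, and the key algebraic tool is the factorization
\[
 b(z)-b(w)=(q^z-q^w)(q^v-q^{-z-w}),\qquad b(z)=q^{-z}+q^{z+v},
\]
which in particular yields $b(z+1)-b(\ell)=q\,(q^z-q^{\ell-1})(q^v-q^{-z-\ell-1})$, so that along each interlacing chain $\{x_{i+1}+\theta,\dots,x_i\}$ the two ``halves'' of the product $\prod_\ell \tfrac{b(z+1)-b(\ell)}{b(z)-b(\ell)}$ telescope separately. Carrying out these telescopings over all chains constituting $\bL(\bmx)$, multiplying by $\prod_{i=1}^{n+1}\bigl(b(z)-b(x_i)\bigr)$ written in factored form, and absorbing the explicit factors of $\Phi^\pm$, one finds after cancellation that \eqref{e:sum2} equals an explicit holomorphic prefactor --- a product of factors of the shape $q^z-q^{\,\mathrm{const}}$ and $q^v-q^{\,\mathrm{affine}(z)}$ --- times precisely the observable in the statement; when $\theta=1$ the factorization is superfluous and the whole computation collapses to the Macdonald one in Corollary \ref{Cor_Macdonald_descending_obs}. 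To finish I will divide by this prefactor: it has only finitely many zeros in the neighborhood of $[x_{n+1},x_1]$, and at each of them I will check directly that the two terms of the target observable are individually nonsingular --- at $z=x_{n+1}$ exactly as in Corollary \ref{Cor_Macdonald_descending_obs}, and at the ``reflected'' zeros by invoking the symmetry $b(z)=b(-z-v)$ of $b$ to cancel potential poles of the denominators against zeros of the numerators --- so the quotient, i.e.\ the claimed observable, is holomorphic.

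The main obstacle is the bookkeeping in this simplification step. Because $b$ is quadratic (not linear) in $q^z$ and is symmetric under $z\mapsto-z-v$, the telescoped products do not close up into single factors $b(z)-b(\cdot)$ the way they do in the Macdonald case $b(z)=q^{-z}$; one must instead follow the two factors $(q^z-q^w)$ and $(q^v-q^{-z-w})$ of each difference $b(z)-b(w)$ separately through all the cancellations, and keep careful track of the scalar $q$-powers produced by the telescoping. This is a lengthy but entirely mechanical computation once the factorization identity is in hand, and the same identity is what makes the final holomorphicity check at the reflected points work.
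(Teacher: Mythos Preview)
Your proposal is correct and is exactly the approach the paper intends: the paper omits the proof, saying it ``is very similar to Corollaries \ref{Corollary_Jack_corners} and \ref{Cor_Macdonald_descending_obs},'' and you have correctly identified the one new ingredient needed beyond the Macdonald case --- the factorization $b(z)-b(w)=(q^{-z}-q^{-w})(1-q^{z+w+v})$ (equivalently your $(q^z-q^w)(q^v-q^{-z-w})$), which makes the product $\prod_{\ell\in\bL(\bmx)}\frac{b(z+1)-b(\ell)}{b(z)-b(\ell)}$ split into two independently telescoping pieces. The residual-pole check at the ``reflected'' zeros via the symmetry $b(z)=b(-z-v)$ is also the right way to finish.
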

 We omit the proof, as it is very similar to Corollaries \ref{Corollary_Jack_corners} and \ref{Cor_Macdonald_descending_obs}.


\subsubsection{Koornwinder polynomials at $q=t$ and tilings}

In this section we explain that $q=t$ case of the measures of Section \ref{Section_quasi_branching} matches the measures on tilings of Section \ref{Section_weighted_tilings_loop} after the particle-hole involution. We start from a statement of independent interest.

\begin{theorem} \label{Theorem_qRacah_partition}
 For any  $\bmla=(\lambda_1\geq\lambda_2\geq\dots\geq\lambda_n)\in\GT_n$, the following identity of Laurent polynomials in $\sigma$ holds:
 \begin{equation}
 \label{eq_qRacah_sum}
  \sum_{\emptyset=\bmla^{(0)}\prec\dots\prec \bmla^{(n)}=\bmla} \prod_{k=1}^{n-1} \prod_{i=1}^k \left( \sigma q^{\lambda^{(k)}_i+\frac{k+1}{2}-i}-\frac{1}{\sigma  q^{\lambda^{(k)}_i+\frac{k+1}{2}-i}}\right)=
  Z_n\prod_{ i<j}\bigl[x(\lambda_i-i)-x(\lambda_j-j)\bigr],
 \end{equation}
 where
 $$
  x(\lambda_i-i)=\sigma q^{\lambda_i-i+\frac{n+1}{2}}+\frac{1}{\sigma q^{\lambda_i-i+\frac{n+1}{2}}},
 $$
 $$
  Z_n= (-1)^{-n(n-1)/2} \prod_{i=1}^n \frac{q^{(i-\frac{n+1}{2})i}}{(q;q)_{i-1}}.
 $$
\end{theorem}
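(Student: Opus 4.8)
The plan is to evaluate the left-hand side by iteratively applying the known branching (Pieri-type) rule for the principally specialized Koornwinder polynomials, recognizing each summation over an intermediate level $\bmla^{(k)}\prec\bmla^{(k+1)}$ as an instance of the quasi-branching identity in Theorem~\ref{Theorem_Koornwinder_branching} specialized to $q=t$. Concretely, I would first observe that at $q=t$ the Koornwinder polynomial $K^{(n)}_{\bmla}$ with the specific choice of parameters $t_0,t_1,t_2,t_3$ that makes $q^{v/2}=t^n\sqrt{t_0t_1t_2t_3/qt}$ collapse (so that the ``$\kappa$''-type factors $1-q^{y_i+y_k+v}$ degenerate appropriately) reduces to a determinant of the form $\det\bigl[x(\lambda_i-i+c)^{\,j-1}\ldots\bigr]$ divided by a Vandermonde in the $x$-variables; equivalently, one writes $K^{(n)}_\bmla$ as $\frac{\det[\,p_{\lambda_i-i+n-1}(u_j)\,]}{\det[\,p_{n-j}(u_j)\,]}$ for a suitable family of ``monomials'' $p_m$ adapted to the symmetry $z\mapsto -z$ (this is the classical fact that $q=t$ Koornwinder polynomials are characters of type $BC_n$, i.e.\ essentially the $q$-Racah / Askey–Wilson polynomials become ordinary products). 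The single-step branching at $q=t$ then becomes the statement that $\psi^{(i)}_{\bmla\setminus\bmmu}$ is, up to a factor independent of $\bmmu$, equal to a product $\prod_{i=1}^k\bigl(\sigma q^{\mu_i+\frac{k+1}{2}-i}-\sigma^{-1}q^{-\mu_i-\frac{k+1}{2}+i}\bigr)$ together with the indicator of $\bmmu\prec\bmla$.

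The second step is the telescoping. Starting from $K^{(n)}_\bmla$ at the principal specialization point and repeatedly expanding via \eqref{e:branchrule} down from level $n$ to level $0$, the ratio $K^{(n)}_\bmla/K^{(n)}_\bmla(\text{princ.\ spec.})$ equals the sum over chains $\emptyset=\bmla^{(0)}\prec\cdots\prec\bmla^{(n)}=\bmla$ of the product $\prod_{k=1}^{n-1}\psi^{(i)}_{\bmla^{(k+1)}\setminus\bmla^{(k)}}(\cdots)$, with all the $K^{(0)}$ factors equal to $1$. By the $q=t$ identification of the $\psi$-coefficients from the first step, the product over the chain is exactly $\prod_{k=1}^{n-1}\prod_{i=1}^k\bigl(\sigma q^{\lambda^{(k)}_i+\frac{k+1}{2}-i}-\sigma^{-1}q^{-\lambda^{(k)}_i-\frac{k+1}{2}+i}\bigr)$ up to a $\bmla^{(k)}$-independent normalization absorbed into $Z_n$. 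Hence the left side of \eqref{eq_qRacah_sum} equals $\bigl(\text{const}\bigr)\cdot K^{(n)}_\bmla$ evaluated at $u_i=\sigma q^{-i+\frac{n+1}{2}}$, i.e.\ at $x(\lambda_i-i)=x(-i)$, divided by $K^{(n)}_\bmla$ at the same point — but more usefully, the determinantal formula for $K^{(n)}_\bmla$ at $q=t$ gives directly that $K^{(n)}_\bmla(u_1,\dots,u_n)$ evaluated at the principal point factors as $\bigl(\text{const}\bigr)\prod_{i<j}\bigl[x(\lambda_i-i)-x(\lambda_j-j)\bigr]$, because the numerator determinant is a Vandermonde-type determinant in the variables $x(\lambda_i-i)$ while the denominator is a Vandermonde in $x(-i)$ (a pure constant). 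Matching the two expressions yields \eqref{eq_qRacah_sum}, and tracking the prefactors — powers of $q$, signs $(-1)^{-n(n-1)/2}$, and $q$-Pochhammer denominators $(q;q)_{i-1}$ coming from the normalizations $K^{(k)}_{\bmla^{(k)}}(\text{princ.})$ in \eqref{e:branchrule} — produces the stated value of $Z_n$.

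An alternative, more self-contained route that avoids quoting \eqref{e:branchrule} is to prove \eqref{eq_qRacah_sum} by induction on $n$: fix the bottom chain and sum over $\bmla^{(n-1)}\prec\bmla$, recognizing the inner sum as the $n-1$ case times the Pieri coefficient $\prod_{i=1}^{n-1}\bigl(\sigma q^{\lambda^{(n-1)}_i+\frac{n}{2}-i}-\sigma^{-1}q^{-\lambda^{(n-1)}_i-\frac{n}{2}+i}\bigr)$, and then evaluate $\sum_{\bmla^{(n-1)}\prec\bmla}\prod_{i<j}[x(\lambda^{(n-1)}_i-i)-x(\lambda^{(n-1)}_j-j)]\prod_i(\ldots)$ as a single $q$-Racah Pieri identity; this is exactly a (skew) Weyl-character-type evaluation, provable by a determinant expansion and residue/telescoping computation in each summation variable $\lambda^{(n-1)}_i\in\{\lambda_{i+1},\dots,\lambda_i\}$ (shifted). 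Either way, the main obstacle is the bookkeeping of the constant $Z_n$: the $\psi$-coefficients in \eqref{e:phii} carry intricate products of the form $C^0_\bmla(\cdots)/C^0_\bmmu(\cdots)$ with $q$-Pochhammer symbols, and one must verify that at $q=t$, after the principal specialization and after telescoping over all $n$ levels, everything except the clean $\sigma q^{\cdots}-\sigma^{-1}q^{-\cdots}$ factors and the claimed $Z_n$ cancels. I expect the cleanest way to handle this is to first check the identity at a single convenient value (e.g.\ $\bmla=\emptyset$, or $\bmla$ a single column) to pin down $Z_n$ and the overall structure, then argue that both sides are Laurent polynomials in $\sigma$ of the same degree whose leading and trailing coefficients (and the location of all zeros, which on the right are forced by the Vandermonde) agree — turning the constant-chasing into a finite verification.
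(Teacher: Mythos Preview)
Your overall strategy---iterate the quasi-branching rule of Theorem~\ref{Theorem_Koornwinder_branching} at $q=t$ and then evaluate the resulting principally-specialized Koornwinder polynomial---is exactly what the paper does. The paper packages the $q=t$ simplification of $\psi^{(i)}_{\bmla\setminus\bmmu}$ into an auxiliary Lemma (their Lemma~\ref{Lemma_qt_branching}), which shows that after dividing $K^{(n)}_\bmmu$ by an explicit product $g_n(\bmmu)$, the branching coefficient becomes precisely the desired $\prod_i(\sigma q^{\cdots}-\sigma^{-1}q^{-\cdots})$ factor; the telescoping of the $g_k$'s then yields the sum as $K^{(n)}_\bmla(\text{princ.\ spec.})/g_n(\bmla)$.

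Two points where your write-up diverges from the paper or is slightly off. First, the single-step branching weight depends on the \emph{larger} partition $\bmla$, not on $\bmmu$ as you wrote; this matters because it is what makes the product in \eqref{eq_qRacah_sum} come out indexed by $\bmla^{(k)}$ with $k$ factors at level $k$ (the paper actually computes the sum with $k$ running to $n$, then divides out the top-level factor). Second, for the final evaluation the paper does not invoke a determinantal/$BC_n$-character formula; it quotes Rains' closed-form principal-specialization formula for $K^{(n)}_\bmla$ and then spends the real effort simplifying the ratio $K^{(n)}_\bmla(\text{princ.})/g_n(\bmla)$ into the Vandermonde $\prod_{i<j}[x(\lambda_i-i)-x(\lambda_j-j)]$ together with the explicit $Z_n$. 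Your proposed shortcut---pin down $Z_n$ at a special $\bmla$ and then match zeros as Laurent polynomials in $\sigma$---would not obviously work, since the left side is a sum and its zero structure in $\sigma$ is not a priori visible; the paper's direct simplification is the honest route.
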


\begin{remark} An equivalent form of the same statement is
 \begin{multline}
 \label{eq_qRacah_sum_2}
  \sum_{\emptyset=\bmla^{(0)}\prec\dots\prec \bmla^{(n)}=\bmla} \prod_{k=1}^{n-1} \prod_{i=1}^k \frac{\sigma q^{\lambda^{(k)}_i+\frac{k+1}{2}-i}-\frac{1}{\sigma  q^{\lambda^{(k)}_i+\frac{k+1}{2}-i}}}{\sigma q^{\frac{k+1}{2}-i}-\frac{1}{\sigma  q^{\frac{k+1}{2}-i}}}=
 \prod_{1\leq i<j\leq n}\frac{x(\lambda_i-i)-x(\lambda_j-j)}{x(-i)-x(-j)}.
 \end{multline}
 \end{remark}
 On one hand, the sum in Theorem \ref{Theorem_qRacah_partition} is readily identified with a sum over lozenge tilings, where $\lambda_i^{(k)}-i$ encode the positions of horizontal lozenges and the weights match \eqref{eq_lozenge_weight_copy} after a change of notations; by particle-hole involution it also gives rise to \eqref{e:qtransit}, as we explained in the proof of Theorem \ref{Theorem_transition_qk}. On the other hand, the proof\footnote{An alternative proof is hidden in \cite[Appendix]{borodin2010q}. A possibility of our present proof can be hinted  by \cite[Theorem 7.5.1]{Be}.} of Theorem \ref{Theorem_qRacah_partition} can be obtained by combining the $q=t$ version of Theorem \ref{Theorem_Koornwinder_branching} with known explicit evaluations of Koornwinder polynomials at principal specialization; we detail this in Appendix \ref{Section_Appendix_examples}.

Theorem \ref{Theorem_qRacah_partition} has an interesting corollary of representation-theoretic flavor:

\begin{corollary} \label{Corollary_orthogonal_partition}
 Take a signature $\bmla=(\lambda_1\geq\lambda_2\geq\dots\geq\lambda_n\geq 0)$. We have\footnote{$\mathrm{Dim}_{SO(2n)}(\lambda_1,\dots,\lambda_n)$ is the dimension of the irreducible representation of group $SO(2n)$ spelled out in the same notation at \url{https://en.wikipedia.org/wiki/Representations_of_classical_Lie_groups} or in \cite[Section 2.1]{Borodin_Kuan_O}.}
 \begin{multline}
 \label{eq_orthogonal_dimension}
  \sum_{\emptyset=\bmla^{(0)}\prec\bmla^{(1)}\prec\dots\prec \bmla^{(n)}=\bmla} \prod_{k=1}^{n-1} \prod_{i=1}^k \frac{\lambda^{(k)}_i+\frac{n+k}{2}-i}{\frac{n+k}{2}-i} \\=
 \prod_{1\leq i<j\leq n}\frac{\lambda_i-i-(\lambda_j-j)}{j-i}\cdot \prod_{1\leq i<j\leq n}\frac{2n+ \lambda_i-i+\lambda_j-j}{2n-j-i}=\mathrm{Dim}_{SO(2n)}(\lambda_1,\dots,\lambda_n).
 \end{multline}
\end{corollary}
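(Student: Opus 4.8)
The plan is to deduce Corollary~\ref{Corollary_orthogonal_partition} as the $q\to 1$ degeneration of the normalized identity \eqref{eq_qRacah_sum_2} in Theorem~\ref{Theorem_qRacah_partition}. The one choice that makes everything fall into place is the value of the free parameter $\sigma$: I would set $\sigma=q^{(n-1)/2}$. Since \eqref{eq_qRacah_sum_2} holds identically in $\sigma$ (the relevant denominators being nonzero for $q\neq 1$), it remains valid for this substitution, and the effect is that $\sigma q^{m}=q^{m+(n-1)/2}$, so the shift $\tfrac{k+1}{2}$ appearing in \eqref{eq_qRacah_sum_2} is upgraded to $\tfrac{n+k}{2}$, matching the left-hand side of \eqref{eq_orthogonal_dimension}.

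Concretely, with $\sigma=q^{(n-1)/2}$ I would write $q=e^{h}$ and let $h\to 0$. Each elementary factor on the left of \eqref{eq_qRacah_sum_2} has the form $\dfrac{q^{a}-q^{-a}}{q^{b}-q^{-b}}$ with $a=\lambda^{(k)}_i+\tfrac{n+k}{2}-i$ and $b=\tfrac{n+k}{2}-i$. Here $b=\tfrac{n+k}{2}-i\ge \tfrac{n-k}{2}>0$ because $1\le i\le k\le n-1$, so the denominator is nonzero for $q\neq 1$, and from $q^{c}-q^{-c}=2ch+O(h^{3})$ this factor converges to $a/b=\dfrac{\lambda^{(k)}_i+\tfrac{n+k}{2}-i}{\tfrac{n+k}{2}-i}$. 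As there are only finitely many interlacing chains $\emptyset=\bmla^{(0)}\prec\cdots\prec\bmla^{(n)}=\bmla$ for a fixed top row $\bmla$ with $\lambda_n\ge 0$, the left side of \eqref{eq_qRacah_sum_2} tends term-by-term to the left side of \eqref{eq_orthogonal_dimension}.

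On the right of \eqref{eq_qRacah_sum_2}, the same substitution gives $x(\lambda_i-i)=q^{\ell_i}+q^{-\ell_i}$ and $x(-i)=q^{m_i}+q^{-m_i}$, where $\ell_i=\lambda_i+n-i$ and $m_i=n-i$. Hence each factor $\dfrac{x(\lambda_i-i)-x(\lambda_j-j)}{x(-i)-x(-j)}$ equals $\dfrac{\cosh(\ell_i h)-\cosh(\ell_j h)}{\cosh(m_i h)-\cosh(m_j h)}$; for $i\neq j$ one has $m_i\neq m_j$, so this is well defined for $q\neq 1$, and from $2\cosh(ch)=2+c^{2}h^{2}+O(h^{4})$ it tends to $\dfrac{\ell_i^{2}-\ell_j^{2}}{m_i^{2}-m_j^{2}}=\dfrac{(\ell_i-\ell_j)(\ell_i+\ell_j)}{(m_i-m_j)(m_i+m_j)}$. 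Substituting $\ell_i-\ell_j=(\lambda_i-i)-(\lambda_j-j)$, $\ell_i+\ell_j=2n+\lambda_i-i+\lambda_j-j$, $m_i-m_j=j-i$, and $m_i+m_j=2n-i-j$, the product over $i<j$ becomes exactly the middle member of \eqref{eq_orthogonal_dimension}, and that this equals $\mathrm{Dim}_{SO(2n)}(\lambda_1,\dots,\lambda_n)$ is the Weyl dimension formula for type $D_n$ (half-sum of positive roots $\rho=(n-1,n-2,\dots,1,0)$, positive roots $e_i\pm e_j$ for $i<j$).

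This route is essentially a computation once \eqref{eq_qRacah_sum_2} is in hand, so I do not anticipate a serious obstacle; the only points needing a line of care are that all denominators appearing in the limit stay nonzero, which reduces to $\tfrac{n+k}{2}-i>0$ for $1\le i\le k<n$ and $m_i\neq m_j$ for $i\neq j$, both immediate. The genuine input is Theorem~\ref{Theorem_qRacah_partition} itself, established in Appendix~\ref{Section_Appendix_examples} via the $q=t$ Koornwinder quasi-branching rule and principal specialization of Koornwinder polynomials; granting it, Corollary~\ref{Corollary_orthogonal_partition} follows immediately, so I would place it directly after \eqref{eq_qRacah_sum_2}. A direct representation-theoretic derivation, iterating an $SO(2n)$ branching identity, should also be possible, but the degeneration above is the natural one given the machinery already developed.
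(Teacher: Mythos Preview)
Your proof is correct and follows essentially the same approach as the paper: both set $\sigma=q^{(n-1)/2}$ in \eqref{eq_qRacah_sum_2} and send $q\to 1$, with the same identification of the right-hand side with the type $D_n$ Weyl dimension formula. The only cosmetic difference is that the paper first records the general-$\mathfrak s$ degeneration \eqref{eq_Racah_sum} (via $\sigma=q^{\mathfrak s}$) and then specializes to $\mathfrak s=\tfrac{n-1}{2}$, whereas you go directly to that value.
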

\begin{proof}
 Take a signature $\bmla=(\lambda_1\geq\lambda_2\geq\dots\geq\lambda_n)\in\GT_n$. We claim that for any $\mathfrak s$,
 \begin{multline}
 \label{eq_Racah_sum}
  \sum_{\emptyset=\bmla^{(0)}\prec\bmla^{(1)}\prec\dots\prec \bmla^{(n)}=\bmla} \prod_{k=1}^{n-1} \prod_{i=1}^k \frac{\mathfrak s+ \lambda^{(k)}_i+\frac{k+1}{2}-i}{\mathfrak s+ \frac{k+1}{2}-i}\\ =
 \prod_{1\leq i<j\leq n}\frac{\lambda_i-i-(\lambda_j-j)}{(j-i)}\cdot \prod_{1\leq i<j\leq n}\frac{2\mathfrak s +n+1+ \lambda_i-i+\lambda_j-j}{2\mathfrak s +n+1-j-i}.
 \end{multline}
Indeed, notice that
 $$
  x(a)-x(b)=\frac{1}{\sigma q^{a+\frac{n+1}{2}}} \left(\sigma q^{a+\frac{n+1}{2}}-\sigma q^{b+\frac{n+1}{2}}\right) \left(\sigma q^{a+\frac{n+1}{2}}-\frac{1}{\sigma q^{b+\frac{n+1}{2}}}\right).
 $$
Setting $\sigma=q^{\mathfrak s}$ and sending $q\to 1$, \eqref{eq_qRacah_sum_2} turns into \eqref{eq_Racah_sum}. In particular, taking $\mathfrak s=\frac{n-1}{2}$ in \eqref{eq_Racah_sum} and demanding that $\lambda_n\geq 0$, we get \eqref{eq_orthogonal_dimension}.
\end{proof}

\begin{remark} Grigori Olshanski noticed that \eqref{eq_orthogonal_dimension} and \eqref{eq_Racah_sum} can be deduced from the (multiplicity-free) branching rules for restrictions of irreducible representations of $SO(2n)$ onto the subgroup $SO(2n-1)$, thus, avoiding the machinery of Koornwinder polynomials. The plan is as follows. We first notice that \eqref{eq_Racah_sum} for one particular value of $\mathfrak s$ implies the same identity for all $\mathfrak s$: indeed, shifting by integer values of $\mathfrak s$ we are getting the same statement for shifted $\lambda_i$; on the other hand, we are proving a polynomial identity, hence, its validity at infinitely many integer shifts of a single $\mathfrak s$ implies the validity at all $\mathfrak s$.

Next, we are checking \eqref{eq_Racah_sum} by induction in $n$ for a particular value of $\mathfrak s$. This check is reduced to the statement that the dimension of an irreducible representation of $SO(2n)$ is the sum of dimensions of irreducible representations of $SO(2n-1)$ into which it splits.
\end{remark}

It would be interesting to produce a direct representation-theoretic proof of the identity \eqref{eq_orthogonal_dimension}.

\section{Analysis of Dynamical Loop Equation}\label{s:analysis}

\subsection{Statement of the asymptotic theorem}

The aim of present Section \ref{s:analysis} is to analyze the transition probability $\bP(\bmy=\bmx+\bme|\bmx)=a_\bme$ of \eqref{e:m1}:
\begin{align}\label{e:m1ccopy}
a_\bme=\frac{1}{Z(\bmx)} \prod_{1\leq i<j\leq n}\frac{b(x_i+\theta e_i)-b(x_j+\theta e_j)}{b(x_i)-b(x_j)}\prod_{i=1}^n \phi^+(x_i)^{e_i} \phi^-(x_i)^{1-e_i}, \quad \bme\in\{0,1\}^n,
\end{align}
where $\bmx\in \bW_\theta^n$, as defined in \eqref{e:defWtheta}; in particular, $x_i\geq x_{i+1}+\theta$ for $i=1,2,\dots, n-1$. The prefactor $\frac{1}{Z(\bmx)}$ is chosen so that $\sum_{\bme} a_\bme=1$. We encode the particle configurations $\bmx$, $\bmy$ in \eqref{e:m1ccopy} by their (smoothed) empirical densities:
\begin{align}
\label{eq_rho_x_def}
\rho(s;\bmx)=\frac{1}{\theta}\sum_{i=1}^n \bm1(\varepsilon x_i \leq s\leq \varepsilon x_i+\varepsilon \theta ), \qquad \rho(s;\bmy)=\frac{1}{\theta}\sum_{i=1}^n \bm1(\varepsilon y_i \leq s\leq \varepsilon y_i+\varepsilon \theta ).
\end{align}

We impose several conditions on the asymptotic behavior of various ingredients of \eqref{e:m1ccopy} as $n\to\infty$:

\begin{assumption}\label{a:asymp}
Fix constants $\sfN>0$ and $\bl, \br$.
We introduce a small parameter $\varepsilon=\sfN/n\ll 1$, and assume the particle configuration $\bmx=(x_1,x_2,\dots, x_n)\in \bW_\theta^n$ satisfies $\bl\leq \varepsilon x_{n}\leq \varepsilon x_1\leq \br-\varepsilon$. We further assume
\begin{enumerate}
\item There exists an open complex neighborhood $\Lambda$ of $[\bl,\br]$, such that for $z=\varepsilon \xi$,
\begin{align}\label{e:asymp}
b(\xi)=\sfb(z)\quad \text{ and }\quad
\phi^\pm(\xi)={\varphi}^{\pm}(z), \quad z\in \Lambda.
\end{align}
where $\sfb(z)$ and $\varphi^{\pm}(z)$ are holomorphic functions on $\Lambda$. In addition, we require $\sfb(z)$ to be conformal (injective and biholomorphic) and $\sfb(\bar{z})=\overline{\sfb(z)}$.
\item The functions $\sfb(z)$, $[\partial_z \sfb(z)]^{-1}$, and $\varphi^\pm(z)$  are uniformly bounded, namely there exists a universal\footnote{By ``universal'' we mean not depending on $n$, $\eps$, or $(x_1,\dots,x_n)$.} constant $C>0$ such that
\begin{align}\label{e:derbba}
 | \sfb(z)|\leq C,
 \quad
 |\del_z \sfb(z)|\geq 1/C,\
\quad
|\varphi^\pm(z)|\leq C,\quad z\in  \Lambda.
\end{align}
\end{enumerate}
\end{assumption}

\begin{remark}
In the rest of this section, we view $\sfN, \bl, \br, C$ in Assumption \ref{a:asymp} as fixed constants. On the other hand, in our applications the functions $\sfb, \varphi^\pm$ might depend on $\varepsilon$.
\end{remark}
\begin{remark}
 Injectivity of $\sfb$ implies that it is a (real) monotone function on $[\bl, \br]$. Without loss of generality, throughout this section we silently assume that $\sfb(x)$ is an increasing function of $x\in[\bl,\br]$.
\end{remark}
Throughout this section we analyze \eqref{e:m1ccopy} in two separate situations:
\begin{enumerate}
 \item {\bf Probabilistic case.} The weight functions $\varphi^\pm$ in \eqref{e:asymp} are real analytic and positive on $(\bl, \br)$:
   \begin{equation}
    \label{e:probabilistic_case}
    \varphi^\pm(\bar z)=\overline{\varphi^\pm(z)}, \quad z\in \Lambda; \quad \text{ and } \quad \varphi^\pm(x)>0, \quad x\in(\bl,\br).
   \end{equation}
    In this situation \eqref{e:m1ccopy} is a probability measure.
 \item {\bf Complex case.}   The weight functions $\varphi^\pm$ might be complex, but we additionally assume
 \begin{align}\label{e:totalabsmass}
\sum_\bme |a_\bme|\leq C,
\end{align}
for a universal constant $C$.
\end{enumerate}
While eventually we are only interested in the former case, the latter appears in the intermediate steps of our analysis: in the proofs we deform the weights $\phi^\pm$ in \eqref{e:m1ccopy} so that \eqref{e:m1ccopy} becomes a complex-valued measure.

\bigskip

Our last assumption for the asymptotic analysis of \eqref{e:m1ccopy} involves the following functions defined in terms of the empirical density \eqref{eq_rho_x_def} and the functions $\phi^\pm(z)$ of Assumption \ref{a:asymp}:
\begin{equation}
\label{e:B_function}
\cB(z)=\cG(z)\varphi^+(z)+\varphi^-(z), \qquad \qquad  \cG(z)=\exp\left[\theta\int_{\bl}^{\br}\frac{\sfb'(z)\rho(s;\bmx)}{\sfb(z)-\sfb(s)}\rd s\right].
\end{equation}
Note that $\cB(z)$ is a holomorphic function for $z\in \Lambda\setminus  [\bl,\br]$.
\begin{assumption}\label{a:stable}
For any compact set $S\subset \bigl(\Lambda\setminus [\bl,\br]\bigr)$ there  exists a small universal constant $c=c(S)>0$ such that  for $z\in S$ we have $c<|\cB(z)|<c^{-1}$. Moreover, for any closed contour $\omega\subset  \bigl(\Lambda\setminus [\bl,\br]\bigr)$, we have
\begin{align}\label{e:aB0}
\frac{1}{2\pi \ri}\oint_{\omega}\frac{\del_z\cB(z)}{\cB(z)}\rd z=0,
\end{align}
which implies that there exists a well-defined single-valued branch of the function $\ln \cB(z)$ in $\Lambda\setminus [\bl,\br]$.
\end{assumption}
The last assumption plays the same role in our asymptotic analysis as the non-criticality condition in the analysis of the (discrete) $\beta$-ensembles, cf.\  \cite{MR3668648,MR3010191, borot-guionnet2,MR3351052,bourgade2021optimal}. Here is our central asymptotic result:

\begin{theorem}\label{t:loopstudy}
Consider transition probability \eqref{e:m1ccopy} with parameters satisfying Assumptions \ref{a:asymp} and \ref{a:stable} for all small enough $\eps$ and in the probabilistic case \eqref{e:probabilistic_case}. Then for any $z\in \Lambda\setminus [\bl,\br]$ we have as $\eps\to 0$:
 \begin{align}\label{e:dmg}
\notag\frac{1}{\varepsilon}\int_{\bl}^{\br} \frac{\sfb'(z)(\rho(s;\bmy)-\rho(s;\bmx))}{\sfb(z)-\sfb(s)}\rd s
&=\frac{1}{2\pi \ri\theta}\oint_{\cin}\frac{\ln \cB(w)\sfb'(w) \sfb'(z)\rd w}{(\sfb(w)-\sfb(z))^2}\\
&+
\frac{\varepsilon}{2\pi \ri\theta }\oint_{\cin}  \cE^{(2)}(w,z)\frac{\sfb'(w)\sfb'(z)\rd w}{(\sfb(w)-\sfb(z))^2}
\,
+\Delta\cM(z)
\,+\, \OO\left(\varepsilon^2 \right),
\end{align}
where the contour $\cin\subset \Lambda$ encloses $[\bl, \br]$, but not $z$, and
\begin{align*}
\cE^{(2)}(w,z)
&=\frac{\varphi^+(w) \cG(w)}{\cB(w)}\Biggl(\frac{\theta \sfb'(w)}{\sfb(z)-\sfb(w)}+\frac{\theta}{2\pi \ri}\oint_{\omega_-'}\frac{\ln\cB(u)\sfb'(w)\sfb'(u)\rd u}{(\sfb(u)-\sfb(w))^2}\\
&+
\frac{\theta^2}{2}\int_\bl^\br \frac{\sfb''(w)(\sfb(w)-\sfb(s))-(\sfb'(w))^2-\sfb'(w)\sfb'(s)}{(\sfb(w)-\sfb(s))^2} \rho(s;\bmx)\rd s\Biggr)-\frac{\del_{w} \cB(w)}{2\cB(w)},
\end{align*}
where the contour $\omega'_-\subset \Lambda$ encloses  $[\bl,\br]$, but not $w$.

Moreover, $\Delta\cM(z)$ are mean $0$ random variables such that $\{\varepsilon^{-1/2}\Delta \cM(z)\}_{z\in \Lambda\setminus[\bl,\br]}$ are asymptotically Gaussian with covariance given by
\begin{align}
\notag \bE\left[\frac{ \Delta \cM(z_1)}{\varepsilon^{1/2}}\cdot \frac{\Delta \cM(z_2)}{\varepsilon^{1/2} }\right]
=&\frac{1}{2\pi \ri \theta}\oint_{\cin} \frac{\cG(w)\varphi^+(w)}{\cB(w)}\frac{\sfb'(w)\sfb'(z_1)}{(\sfb(w)-\sfb(z_1))^2} \frac{\sfb'(w)\sfb'(z_2)}{(\sfb(w)-\sfb(z_2))^2} \rd w\\ &+o(1) \label{e:covT},
\end{align}
where the contour $\cin\subset \Lambda$ encloses $[\bl, \br]$, but not $z_1, z_2$. The higher order joint moments of $\{\varepsilon^{-1/2}\Delta \cM(z)\}_{z\in \Lambda\setminus[\bl,\br]}$ converge as $\eps\to 0$ to the Gaussian joint moments.

 The implicit constants in the $\OO(\eps^2)$, in $o(1)$, and in higher order joint moments are uniform in all the involved parameters and particle configurations $\bmx$ satisfying Assumption \ref{a:asymp} and \ref{a:stable}, as long as the constants $C$ and $c$ of Assumptions  \ref{a:asymp} and \ref{a:stable} are fixed and $z$ (or $z_1,z_2$) belongs to a compact subset of $\Lambda\setminus [\bl,\br]$.
\end{theorem}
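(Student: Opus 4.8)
The plan is to extract all the asymptotics directly from the dynamical loop equation of Theorem~\ref{t:loopeq} by a perturbative expansion in $\eps$, exploiting that $\bmx$ is \emph{deterministic} and $\bmy=\bmx+\bme$ is one step away from it. After the rescaling $z=\eps\xi$ and Assumption~\ref{a:asymp}, Theorem~\ref{t:loopeq} says that
\begin{align*}
\mathcal O(z):=\bE\Bigl[\varphi^+(z)\,\cQ^+_\bme(z)+\varphi^-(z)\,\cQ^-_\bme(z)\Bigr],\qquad
\cQ^-_\bme(z)=\prod_{j=1}^n\frac{\sfb(z)-\sfb(\eps x_j+\eps\theta e_j)}{\sfb(z)-\sfb(\eps x_j)},\qquad
\cQ^+_\bme(z)=\prod_{j=1}^n\frac{\sfb(z+\eps\theta)-\sfb(\eps x_j+\eps\theta e_j)}{\sfb(z)-\sfb(\eps x_j)},
\end{align*}
is holomorphic in $\Lambda$. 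First I would expand these random products for $z$ in a compact subset of $\Lambda\setminus[\bl,\br]$. Computing the logarithm explicitly gives, \emph{exactly}, $\ln\cQ^-_\bme(z)=-\theta\int_\bl^\br\frac{\sfb'(s)\,\mu_\bme(s)}{\sfb(z)-\sfb(s)}\,\rd s$, where $\mu_\bme=\tfrac1\theta\sum_{j:\,e_j=1}\bm1_{[\eps x_j,\,\eps x_j+\eps\theta]}$ is the (random) smoothed density of the moving particles; a parallel computation, expanding $\ln(1+u)=u-\tfrac{u^2}{2}+\dots$ with $|u|=O(\eps)$ uniformly and using the lattice condition $\bmx\in\bW^n_\theta$ to replace Riemann sums $\eps\sum_j f(\eps x_j)$ by $\int f\,\rd\rho(\cdot;\bmx)$ up to $O(\eps)$ boundary terms (where the \emph{smoothed} density is convenient, the Euler--Maclaurin remainder being controlled by the lattice gap), yields $\cQ^+_\bme(z)=\cG(z)\,\cQ^-_\bme(z)\bigl(1+\eps\,\cE^+_\bme(z)+O(\eps^2)\bigr)$ with $\cG$ as in \eqref{e:B_function} and $\cE^+_\bme$ explicit and affine in $\bme$. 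Hence $\mathcal O(z)=\cB(z)\,\bE\bigl[\cQ^-_\bme(z)\bigr]+O(\eps)$ with an explicit $O(\eps)$ term, while $\rho(\cdot;\bmy)-\rho(\cdot;\bmx)=(\tau_\eps-\mathrm{id})\mu_\bme$ (with $\tau_\eps$ the shift by $\eps$) identifies the left-hand side of \eqref{e:dmg} with $\tfrac1\eps\int_\bl^\br\frac{\sfb'(z)\,[(\tau_\eps-\mathrm{id})\mu_\bme](s)}{\sfb(z)-\sfb(s)}\,\rd s=\eps\,\sfb'(z)\sum_j e_j\tfrac{\sfb'(\eps x_j)}{(\sfb(z)-\sfb(\eps x_j))^2}+O(\eps)$, a fixed linear functional of $\mu_\bme$.

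Next I would extract the deterministic (leading and $O(\eps)$) terms. The crucial point is that $\mathcal O(z)$ is holomorphic across the cut $[\bl,\br]$, whereas $\cB(z)$ and $\cG(z)$ live only on $\Lambda\setminus[\bl,\br]$ with distinct boundary values on $(\bl,\br)$; non-criticality (Assumption~\ref{a:stable}) makes $\ln\cB$ single-valued there. Taking logarithms in $\mathcal O(z)=\cB(z)\,\bE[\cQ^-_\bme(z)](1+O(\eps))$, the jump of $\ln\bE[\cQ^-_\bme(z)]$ across $(\bl,\br)$ must cancel that of $\ln\cB(z)$ up to $O(\eps)$; unwinding the dictionary of the previous paragraph, this says the jump of the left-hand side of \eqref{e:dmg} equals $\tfrac1\theta\del_x\bigl(\ln\cB_-(x)-\ln\cB_+(x)\bigr)$ up to $O(\eps)$. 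Since that left-hand side is a Stieltjes-type transform of the compactly supported, total-mass-zero signed density $\rho(\cdot;\bmy)-\rho(\cdot;\bmx)$, it is the unique function holomorphic off $[\bl,\br]$ with the correct jump and decay, hence equals the Cauchy transform of its jump, which is precisely $\tfrac1{2\pi\ri\theta}\oint_{\cin}\frac{\ln\cB(w)\,\sfb'(w)\sfb'(z)}{(\sfb(w)-\sfb(z))^2}\,\rd w$. Pushing the same argument one order further --- keeping the $\eps$-pieces $\cE^+_\bme$, the gap between $\cG\cdot\cQ^-$ and $\cQ^+$, the quadratic terms of $\ln(1+u)$, and using that $\bE[\cQ^-_\bme\cE^+_\bme]=\bE[\cQ^-_\bme]\bE[\cE^+_\bme]+O(\eps)$ to decouple --- and inverting the (now $\eps$-corrected but, by Assumption~\ref{a:stable}, still well-conditioned) relation produces the correction term $\eps\,\cE^{(2)}(w,z)$.

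Then I would handle the Gaussian fluctuations of $\Delta\cM(z):=(\text{l.h.s.\ of }\eqref{e:dmg})-\bE[\text{l.h.s.}]+O(\eps^2)$, which by the above is, up to $O(\eps^2)$, the centred linear statistic $\eps\,\sfb'(z)\sum_j(e_j-\bE e_j)\tfrac{\sfb'(\eps x_j)}{(\sfb(z)-\sfb(\eps x_j))^2}$. The mechanism is tilting: replacing $\phi^+(x_i)$ by $\phi^+(x_i)e^{t\,c(\eps x_i)}$ keeps \eqref{e:m1} in the same family with $\phi^+$ deformed (and makes \eqref{e:m1ccopy} complex-valued --- which is exactly why the \textbf{complex case} \eqref{e:totalabsmass} and Assumption~\ref{a:stable} are posited for all nearby parameters), so Theorem~\ref{t:loopeq} still applies and the entire deterministic analysis above runs verbatim with $\cB$ replaced by $\cB^{(t)}(z)=\cG(z)\varphi^+(z)e^{t c(z)}+\varphi^-(z)$. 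Choosing $c(w)=\eps\,\sfb'(z_2)\tfrac{\sfb'(w)}{(\sfb(z_2)-\sfb(w))^2}$, so that $\sum_i c(\eps x_i)e_i$ is the statistic defining $\Delta\cM(z_2)$, and differentiating the formula for $\bE_t[\text{l.h.s.\ at }z_1]$ in $t$ at $t=0$, one gets $\bE[\Delta\cM(z_1)\Delta\cM(z_2)]=\partial_t\big|_0\bE_t[\cdots]$; since $\partial_t\big|_0\ln\cB^{(t)}(w)=\tfrac{\cG(w)\varphi^+(w)}{\cB(w)}c(w)$, this reproduces \eqref{e:covT}. Differentiating $k$ times expresses the $k$-th joint cumulant of these statistics recursively through lower cumulants and $\cB$; an induction gives that, for the $\eps^{-1/2}$-normalized statistics, the $k$-th cumulant is $O(\eps^{(k-2)/2})$ for $k\ge3$, so the limit is Gaussian and the joint moments converge. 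Uniformity in $\bmx$ holds because every estimate above depends only on the constants $C$ and $c$ of Assumptions~\ref{a:asymp} and~\ref{a:stable}.

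The hard part is the last two paragraphs: obtaining the \emph{exact} formulas for $\cE^{(2)}$ and for the covariance requires carrying the expansion reliably to second order, keeping track of which $O(\eps)$ and $O(\eps^2)$ remainders are uniform and performing the contour deformations that make the final formulas hold up to (but not on) the cut; and the Gaussianity needs the inductive control of all cumulants. All of this must be done purely from the loop equation and the one-step smallness of $\bme$, with no a priori concentration estimates to fall back on, and --- most delicately --- the tilting argument forces the entire scheme through the non-positive ``complex case'' measure, controlled only via the $\ell^1$-bound $\sum_\bme|a_\bme|\le C$ together with non-criticality; propagating these bounds along the way is the principal technical obstacle.
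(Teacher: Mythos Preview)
Your overall architecture matches the paper's: expand the holomorphic observable as $\cC(z)=\cB(z)\,\cA(z)\bigl(1+\eps\,\cE^{(1)}(z)\bigr)$ with $\cA(z)=\bE[\cQ^-_\bme(z)]$, exploit holomorphicity of $\cC$ to solve for $\cA$, then tilt $\phi^+$ to reach fluctuations. Your cumulant scheme in paragraph three is equivalent to the paper's Laplace-transform computation (Proposition~\ref{p:cov}).

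There is, however, a genuine gap in paragraph two. You obtain (the jump of) $\ln\cA(z)=\ln\bE[\cQ^-_\bme(z)]$, but what \eqref{e:dmg} needs is $\bE[\text{l.h.s.}]$, which is essentially a $z$-derivative of $\bE[\ln\cQ^-_\bme(z)]$. Swapping $\ln$ and $\bE$ is precisely the concentration statement you are trying to prove; the paper flags it as the central difficulty (Section~\ref{s:heur}, Step~2: ``there is no direct connection between $\bE\xi$ and $\bE\exp(\xi)$''). Your phrase ``unwinding the dictionary'' hides this swap. The paper's fix is to apply the tilting trick \emph{already for the mean}: with $\tilde a_\bme\propto a_\bme\prod_i\bigl(\tfrac{\sfb(v)-\sfb(\eps x_i)}{\sfb(v)-\sfb(\eps x_i+\eps\theta)}\bigr)^{e_i}$ one has the exact identity $\partial_z\ln\tilde\cA(z)\big|_{z=v}=\bE[\text{linear statistic at }v]$ under the \emph{original} measure, because at $z=v$ the tilt factor cancels the product defining $\cQ^-_\bme$ (Proposition~\ref{p:second}). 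Running the first-order analysis on the (complex) tilted measure then yields $\bE[\text{l.h.s.}]$ with no concentration input; the $\ell^1$ bound \eqref{e:totalabsmass} for this complex tilt is itself nontrivial and is obtained via a further \emph{positive} tilt by both $v$ and $\bar v$ (Claim~\ref{c:checkC}). Once $\bE_t[\text{l.h.s.}]$ is available uniformly in the tilt, your covariance and cumulant arguments go through.

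A secondary technical point: your Sokhotski--Plemelj step needs boundary values of $\ln\cA$ on the cut, but $\cA$ has genuine poles at $z=\eps x_i$, so those boundary values are singular. The paper works entirely on contours in $\Lambda\setminus[\bl,\br]$ via a Wiener--Hopf factorization $\cB=e^{h_+}e^{-h_-}$ with $h_\pm$ given by contour integrals of $\ln\cB$ over $\omega_\pm$; multiplying $\cC e^{-h_+}$ and integrating over $\omega_-$ reduces directly to $\cA\,e^{-h_-}=1+O(\eps)$, avoiding the cut altogether (Proposition~\ref{p:firstod0}). Your jump argument encodes the same information but would need regularization to be made rigorous.
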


\begin{remark} \label{Remark_uniformity}
In the rest of this paper, when we write a quantity $\cE(z;\bmx)$ is uniformly small, it refers to that for any compact subset $F\subset \Lambda\setminus [\bl,\br]$
\begin{align}
\lim_{\varepsilon\rightarrow 0} \sup_{\begin{smallmatrix} z \in F, \\ \text{$\bmx$ satisfying Assumption \ref{a:asymp} and \ref{a:stable}}\end{smallmatrix}}\cE(z;\bmx)\rightarrow 0,
\end{align}
as long as the constants $C$ and $c$ of Assumptions  \ref{a:asymp} and \ref{a:stable} are fixed.
\end{remark}

\begin{remark}
 When we apply Theorem \ref{t:loopstudy} in our study of random lozenge tilings, the exact expression for $\cE^{(2)}(w,z)$ is never used. However, our method of the proof of Theorem \ref{t:loopstudy} provides this expression and, hence, we decided to include it for potential future applications.
\end{remark}
\begin{remark}
 Dividing \eqref{e:dmg} by $b'(z)$ and changing the variables $u=b(z)$, we can equivalently rewrite it as
  \begin{align}\label{e:dmg_2}
\notag\frac{1}{\varepsilon}\int_{\bl}^{\br} \frac{\rho(s;\bmy)-\rho(s;\bmx)}{u-\sfb(s)}\rd s
=\frac{1}{2\pi \ri\theta }\oint_{\cin}\frac{\ln \cB(w)\sfb'(w) \rd w}{(\sfb(w)-u)^2}\,&+\,
\frac{\varepsilon}{2\pi \ri\theta}\oint_{\cin}  \cE^{(2)}(w,z)\frac{\sfb'(w)\rd w}{(\sfb(w)-u)^2}
\,\\
&+\Delta\cM'(u)
\,+\, \OO\left(\varepsilon^2 \right),
\end{align}
with an appropriately covariance for the Gaussian process $\cM'(u)$. One advantage of the form \eqref{e:dmg_2} is that we can take $u$ in it  to be an arbitrary complex number outside $[\sfb(\bl),\sfb(\br)]$;  see also Remarks \ref{r:cAud} and \ref{r:cAud_second_order}.
\end{remark}

In general words, Theorem \ref{t:loopstudy} shows that, conditional on $\bmx$, the empirical density of the particle configuration $\bmy=\bmx+\bme$ (distributed with weights \eqref{e:m1ccopy}) consists of two parts: a computable deterministic part and a random fluctuation, which is asymptotically a Gaussian field. However, instead of directly studying the difference of the empirical densities, we study the difference of their centered and modified Stieltjes transforms:
\begin{align}\label{e:mt00}
\int_\bl^\br \frac{\sfb'(z) (\rho(s;\bmy)-\rho(s;\bmx))}{\sfb(z)-\sfb(s)}\rd s.
\end{align}
We remark that \eqref{e:mt00} is different from the standard definition of Stieltjes transform. However, for any function $g(z)$ which is analytic in a neighborhood of $[\bl,\br]$, we have
\begin{align}
\label{e:linear_statistic_contour}
\frac{1}{2\pi\ri}\oint_\omega g(z) \left[\int_\bl^\br \frac{\sfb'(z)\rho(s;\bmy)}{\sfb(z)-\sfb(s)}\rd s\right] \rd z
=\int_\bl^\br g(s)\rho(s;\bmy)\rd s,
\end{align}
where $\omega$ is a contour enclosing $[\bl,\br]$. Hence, Theorem \ref{t:loopstudy} can be used to deduce the asymptotic behavior of the pairings of $\rho(s;\bmy)$ with arbitrary analytic test functions.

\bigskip

The rest of this section is devoted to the proof of Theorem \ref{t:loopstudy}. For a reader familiar with the arguments in \cite{MR3668648}, some parts of the proof might look similar, so let us emphasize two important differences. First, as a starting point in \cite{MR3668648} a concentration bound on the empirical measure was used; this bound was produced by a separate method unrelated to the loop equations. In our present approach such a bound is no longer needed and we rely on the dynamical loop equation and nothing else. Second, an asymptotic expansion of the discrete loop equations in \cite{MR3668648} was directly leading to the expectation of the centered Stieltjes transform, which could then be used to identify the first term in an analogue of \eqref{e:dmg}. This is different for our dynamic loop equations: instead the expectation of exponentiated Stieltjes transform appears. There is no direct connection between $\bE \xi$ and $\bE \exp(\xi)$ for a random variable $\xi$, hence, this new feature leads to a significant difference in the argument.

\subsection{Plan of the proof for Theorem \ref{t:loopstudy}}\label{s:heur}
In this subsection we present the general scheme for the proof of Theorem \ref{t:loopstudy} omitting technical details which are going to be reconstructed in the following subsections. We rely on the dynamical loop equation of Theorem \ref{t:loopeq}, which states that the expression
\begin{align}\label{e:C_def}\begin{split}
\cC(z):=\bE\left[\varphi^+(z)\prod_{i=1}^n\frac{\sfb(z+\varepsilon\theta)-\sfb(\varepsilon x_i+ \varepsilon\theta e_i)}{\sfb(z)-\sfb(\varepsilon x_i)}+\varphi^-(z)\prod_{i=1}^n\frac{\sfb(z)-\sfb(\varepsilon x_i+\varepsilon\theta e_i)}{\sfb(z)-\sfb(\varepsilon x_i)}\right]
\end{split}\end{align}
is a holomorphic function of $z\in \Lambda$.

\smallskip

{\bf Step 1.} We use \eqref{e:C_def} to analyze the following quantity (which directly appears in its second term):
\begin{equation}\label{e:A_def}
\cA(z):=\bE\left[\prod_{i=1}^n\frac{\sfb(z)-\sfb(\varepsilon x_i+ \varepsilon\theta  e_i)}{\sfb(z)-\sfb(\varepsilon x_i)}\right].
\end{equation}
We remark that under Assumption \ref{a:asymp}, $\cA(z)$ is analytic on $\Lambda\setminus [\bl,\br]$.
This quantity encodes the information about the measure $\rho(s;\bmy)-\rho(s;\bmx)$, which we want to understand. Our approach is to use holomorphicity of \eqref{e:C_def} to express $\cA(z)$ explicitly in terms of  contour integrals of $\cB(z)$ defined in \eqref{e:B_function}. For that we expand $\cC(z)$ as
\begin{align}\label{e:cCdec}
\cC(z)=\cA(z)\cB(z)+\OO(\varepsilon).
\end{align}

Note that in \eqref{e:cCdec} we know explicitly $\cB(z)$, and in addition we know that $\cC(z)$ is holomorphic (but we do not have any formulas for $\cC(z)$). A priory, this does not give enough information to extract anything about $\cA(z)$, but the situation changes when we impose Assumption \ref{a:stable}. More precisely, adopting Assumption \ref{a:stable}, we can decompose $\cB(z)$ into two parts
\begin{align}
\label{e:B_decomp_1}
\cB(z)=e^{h_+(\sfb(z))} e^{-h_-(\sfb(z))},
\end{align}
where $h_+(\sfb(z))$ is analytic in a neighborhood of $[\bl,\br]$, and $h_-(u)$ is analytic in a neighborhood of $u=\infty$ with $h_-(u)=\OO(1/u)$ as $u$ approaches $\infty$. \eqref{e:B_decomp_1} is a version of the Wiener-Hopf decomposition, adapted to our setting. In more detail, computing the difference of the integrals as the residue at $w=z$, we have
\begin{align}\label{e:B_decomp_2}
\ln \cB(z)=\frac{1}{2\pi\ri}\oint_{\omega_+}\frac{\ln \cB(w)\sfb'(w)\rd w}{\sfb(w)-\sfb(z)}
-\frac{1}{2\pi\ri}\oint_{\omega_-}\frac{\ln \cB(w)\sfb'(w)\rd w}{\sfb(w)-\sfb(z)},
\end{align}
where a positively oriented contour $\omega_+\subset \Lambda$ encloses $[\bl,\br]$ and $z$; a positively oriented contour $\omega_-\subset \Lambda$ encloses $[\bl,\br]$ but not $z$. We can rewrite \eqref{e:B_decomp_2} as an expansion \eqref{e:B_decomp_1} with
\begin{equation}
\label{e:B_decomp_3}
 h_+(u):=\frac{1}{2\pi\ri}\oint_{\omega_+}\frac{\ln \cB(w)\sfb'(w)\rd w}{\sfb(w)-u}, \qquad h_-(u):=\frac{1}{2\pi\ri}\oint_{\omega_-}\frac{\ln \cB(w)\sfb'(w)\rd w}{\sfb(w)-u}.
\end{equation}
Note that the particular choices of $\omega_\pm$ are not important in \eqref{e:B_decomp_2}, as long as $z$ is inside $\omega_+$ and outside $\omega_-$, while $[\bl,\br]$ is inside both contours. However, in order for the definition \eqref{e:B_decomp_3} to make sense, we need to fix these contours: $\omega_-$ should be a tight loop around $[\bl, \br]$, while $\omega_+$ should be a larger contour, enclosing $\omega_-$ and staying inside $\Lambda$, cf.\ Figure \ref{f:B_contours}. We treat \eqref{e:B_decomp_3} as the definition of holomorphic $h_+(u)$ for $u$ inside $b^{-1}(\omega_+)$ and as the definition of holomorphic $h_-(u)$ for $u$ outside $b^{-1}(\omega_-)$. The identity \eqref{e:B_decomp_1} is valid for $z$ in the annulus enclosed by $\omega_\pm$.

\begin{figure}[t]
\begin{center}
 \includegraphics[width=0.5\linewidth]{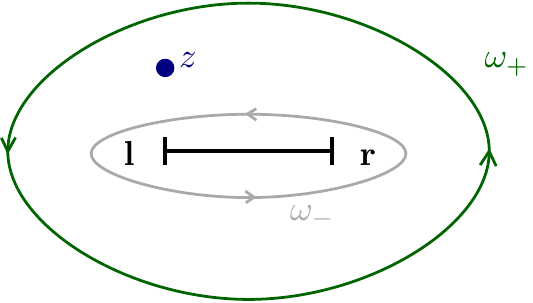}
 \caption{Contours $\omega_\pm$ and $z$ in the annulus between them.
 \label{f:B_contours}}
 \end{center}
 \end{figure}

\medskip

To get the first order asymptotics of $\cA(z)$ we multiply both sides of \eqref{e:cCdec} by $e^{-h_+(\sfb(z))}$, and notice that the left-hand side remains analytic in a complex neighborhood of $[\bl,\br]$. Then we can recover $\cA(z)$ through a contour integral: using the above contours $\omega_\pm$ we have
\begin{align}\begin{split}\label{e:dec}
0&=\frac{1}{2\pi\ri}\oint_{\omega_-}\frac{\cC(w)e^{-h_+(\sfb(w))} \sfb'(w)\rd w}{\sfb(w)-\sfb(z)}=\frac{1}{2\pi\ri}\oint_{\omega_-}\frac{(\cA(w)e^{-h_-(\sfb(w))}+\OO(\varepsilon)) \sfb'(w)\rd w}{\sfb(w)-\sfb(z)}\\
&=-\cA(z)e^{-h_-(\sfb(z))}+\frac{1}{2\pi\ri}\oint_{\omega_+}\frac{\cA(w)e^{-h_-(\sfb(w))} \sfb'(w)\rd w}{\sfb(w)-\sfb(z)} +\OO(\varepsilon).
\end{split}\end{align}
We claim that the last contour integral over $\omega_+$ is equal to $1$. Indeed, to see that we change the variables $u=b(w)$, resulting in
$$
 \frac{1}{2\pi\ri}\oint_{b(\omega_+)}\frac{e^{-h_-(u)} }{u-\sfb(z)} \bE\left[\prod_{i=1}^n\frac{u-\sfb(\varepsilon x_i+ \varepsilon\theta  e_i)}{u-\sfb(\varepsilon x_i)}\right]\,  \rd u.
$$
The last integrand has no singularities outside $b(\omega_+)$ and, therefore, the integral is equal to the residue at $u=\infty$. This residue is $1$, because $h_-(u)=1+\OO(\tfrac{1}{u})$ and the expression under expectation is $1+\OO(\tfrac{1}{u})$ as $u\to\infty$. Hence, \eqref{e:dec} gives the first order asymptotics of $\cA(z)$:
\begin{align}
\label{e:A_first_order}
\ln \cA(z)=h_-(\sfb(z))+\OO(\varepsilon).
\end{align}

The formula \eqref{e:A_first_order} is developed in further detail in Proposition \ref{p:firstod0}. With additional efforts (but using similar ideas) the higher order asymptotics of $\ln \cA(z)$ can be obtained by further expanding the $\OO(\varepsilon)$ error term in \eqref{e:cCdec}; we do this for the first two orders in Proposition \ref{p:second}.

\smallskip

{\bf Step 2.} The next step is to extract the probabilistic information --- the first two (deterministic) terms in the right-hand side of \eqref{e:dmg} --- from the asymptotic expansion of $\cA(z)$. Observe that $\cA(z)$ can be transformed as follows:
\begin{align}
\notag \bE\left[\prod_{i=1}^n\frac{\sfb(z)-\sfb(\varepsilon x_i+ \varepsilon\theta  e_i)}{\sfb(z)-\sfb(\varepsilon x_i)}\right]&=
\bE\left[\prod_{i=1}^n\left(1+\frac{\sfb(\varepsilon x_i)-\sfb(\varepsilon x_i+ \varepsilon\theta  e_i)}{\sfb(z)-\sfb(\varepsilon x_i)}\right)\right]\\&=\bE\left[ \exp\left(-\eps \theta \sum_{i=1}^n \frac{e_i \sfb'(\eps x_i)}{\sfb(z)-\sfb(\varepsilon x_i)}  +\oo(1)\right)\right]. \label{e:A_exp}
\end{align}
On the other hand, in order to get the deterministic terms in the right-hand side of  \eqref{e:dmg}, we need to analyze
\begin{align}
\notag \bE\left[ \frac{1}{\varepsilon}\int_{\bl}^{\br} \frac{\sfb'(z)(\rho(s;\bmy)-\rho(s;\bmx))}{\sfb(z)-\sfb(s)}\rd s\right]&=  \bE\left[\sum_{i=1}^n \left(\frac{\sfb'(z) }{\sfb(z)-\sfb(\varepsilon x_i+\eps e_i)}- \frac{\sfb'(z) }{\sfb(z)-\sfb(\varepsilon x_i)}\right)+\oo(1)\right]\\
 &=  \bE\left[-\eps  \sum_{i=1}^n \frac{ e_i \sfb'(\eps x_i)\sfb'(z)}{(\sfb(z)-\sfb(\varepsilon x_i))^2}+\oo(1)\right] \label{e:empirical_increment} .
\end{align}
There are two differences between the expectation in \eqref{e:A_exp} which we already know and the expectation in \eqref{e:empirical_increment} which we need to find. First, the sum over $i$ in the latter is (up to constant factors) the $z$--derivative of the sum over $i$ in the former. Second, the sum is being exponentiated in \eqref{e:A_exp}. Generally speaking, there is no direct connection between $\bE \xi$ and $\bE \exp(\xi)$ for a random variable $\xi$, which makes the second difference a non-trivial difficulty.

We overcome this technical obstacle in Proposition \ref{p:second}. For that we notice that the logarithmic derivative $\partial_z \ln \cA(z)$ can be interpreted as the expectation of the form \eqref{e:empirical_increment}, but with respect to a deformed measure. Further, we offset the deformation  by inserting  certain explicit additional complex factors in $\phi^\pm$ in \eqref{e:m1ccopy}. Because the analysis of Step 1 did not rely on positivity of the weights, it continues to hold for the resulting new complex measure, eventually leading to the evaluation of the first two terms of \eqref{e:dmg}.

\smallskip

{\bf Step 3.} The last step is to prove asymptotic Gaussianity of the random variable $\Delta\cM(z)$ in \eqref{e:dmg} and compute its covariance. This is done in Proposition \ref{p:cov}. In more detail, we notice that the computation of the Laplace transform of $\Delta\cM(z)$ is equivalent to the computation of the deterministic terms in \eqref{e:dmg} for a slightly deformed measure. The developments of Steps 1 and 2 continue to be valid for this measure. Hence, we find the leading asymptotic behavior of the Laplace transforms of $\Delta \cM(z)$, which matches the Laplace transforms of Gaussian random variables with covariance given by \eqref{e:covT}.

\bigskip

The rest of this section adds necessary details to the three steps argument sketched above.

\subsection{Properties of $\cA(z)$ and $\cC(z)$}\label{s:simplfyA}

We start our detailed proof of Theorem \ref{t:loopstudy} by collecting estimates and asymptotic expansions for $\cA(z)$ and $\cC(z)$ from \eqref{e:A_def} and \eqref{e:C_def}, respectively. We start from  $\cA(z)$.

\begin{proposition}\label{p:simplfyA}
We work under Assumption \ref{a:asymp}, either in probabilistic case with \eqref{e:probabilistic_case} or in the complex case with \eqref{e:totalabsmass}. Fix any  compact subset $S\subset \Lambda\setminus [\bl,\br]$. Then
\begin{align}\label{e:Aupbb}
|\cA(z)|\leq C, \qquad z\in S,
\end{align}
and
\begin{align}
\notag&\phantom{{}={}} \del_z \cA(z)
 =\bE\Biggl[\prod_{i=1}^n  \frac{\sfb(z)-\sfb(\varepsilon x_j+\varepsilon \theta e_i) }{\sfb(z)-\sfb(\varepsilon x_j)}\notag \\
 &
  \times  \left(\frac{\theta}{\varepsilon}\int_\bl^\br \frac{\sfb'(z)(\rho(s;\bmy)-\rho(s;\bmx))}{\sfb(z)-\sfb(s)} \rd s
-\frac{\theta}{2}\int_\bl^\br \frac{ \sfb'(z)\sfb'(s)(\rho(s;\bmy)-\rho(s;\bmx))}{(\sfb(z)-\sfb(s))^2} \rd s +\OO(\varepsilon^2)\right)\Biggr]\notag \\
&= \bE\left[\prod_{i=1}^n  \frac{\sfb(z)-\sfb(\varepsilon x_j+\varepsilon \theta e_i) }{\sfb(z)-\sfb(\varepsilon x_j)} \cdot \left(\frac{\theta}{\varepsilon}\int_\bl^\br \frac{\sfb'(z)(\rho(s;\bmy)-\rho(s;\bmx))\rd s}{\sfb(z)-\sfb(s)}
+\OO(\varepsilon)\right)\right], \label{e:dzAexp}
\end{align}
where the constant $C$ and implicit constants in $\OO(\cdot)$ depend only on $S$.
\end{proposition}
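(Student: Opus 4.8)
The proof rests on three observations: a crude uniform bound on the $\cA$-integrand, exact differentiation under the (finite) expectation, and a careful second-order Taylor expansion in which per-particle errors of size $\OO(\varepsilon^3)$ accumulate to $\OO(\varepsilon^2)$ over the $n=\sfN/\varepsilon$ particles.

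\emph{The bound \eqref{e:Aupbb}.} For $z$ in a compact set $S\subset\Lambda\setminus[\bl,\br]$ and $\varepsilon x_i\in[\bl,\br]$, injectivity of $\sfb$ together with compactness gives $|\sfb(z)-\sfb(\varepsilon x_i)|\geq c(S)>0$, while conformality and \eqref{e:derbba} give $|\sfb(\varepsilon x_i)-\sfb(\varepsilon x_i+\varepsilon\theta e_i)|\leq C\varepsilon\theta$ (and, for small $\varepsilon$, the shifted points stay well inside $\Lambda$). Writing each factor of the $\cA$-integrand as $1+\frac{\sfb(\varepsilon x_i)-\sfb(\varepsilon x_i+\varepsilon\theta e_i)}{\sfb(z)-\sfb(\varepsilon x_i)}$, its modulus is at most $1+C'\varepsilon$ with $C'$ depending only on $S$ and the constants of Assumption~\ref{a:asymp}; hence the product over $i$ is bounded in modulus by $(1+C'\varepsilon)^n\leq e^{C'\sfN}$. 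Now \eqref{e:Aupbb} follows by the triangle inequality, using $\sum_{\bme}|a_{\bme}|=1$ in the probabilistic case and \eqref{e:totalabsmass} in the complex case.

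\emph{Differentiation and the logarithmic derivative.} Since $\cA(z)=\sum_{\bme}a_{\bme}\prod_{i}\frac{\sfb(z)-\sfb(\varepsilon x_i+\varepsilon\theta e_i)}{\sfb(z)-\sfb(\varepsilon x_i)}$ is a finite sum of functions holomorphic on $\Lambda\setminus[\bl,\br]$ (the denominators vanish only at $z=\varepsilon x_i\in[\bl,\br]$), we may differentiate termwise; using $\del_z\log\frac{\sfb(z)-\sfb(a)}{\sfb(z)-\sfb(b)}=\frac{\sfb'(z)}{\sfb(z)-\sfb(a)}-\frac{\sfb'(z)}{\sfb(z)-\sfb(b)}$ we get
\begin{equation*}
\del_z\cA(z)=\bE\left[\prod_{i=1}^n\frac{\sfb(z)-\sfb(\varepsilon x_i+\varepsilon\theta e_i)}{\sfb(z)-\sfb(\varepsilon x_i)}\cdot\sum_{i=1}^n\Bigl(\tfrac{\sfb'(z)}{\sfb(z)-\sfb(\varepsilon x_i+\varepsilon\theta e_i)}-\tfrac{\sfb'(z)}{\sfb(z)-\sfb(\varepsilon x_i)}\Bigr)\right].
\end{equation*}
Set $h(a):=\frac{\sfb'(z)}{\sfb(z)-\sfb(a)}$, so the inner sum is $\sum_i\bigl(h(\varepsilon x_i+\varepsilon\theta e_i)-h(\varepsilon x_i)\bigr)$, and note $h'(a)=\frac{\sfb'(z)\sfb'(a)}{(\sfb(z)-\sfb(a))^2}$; Cauchy estimates make $h,h',h'',h'''$ uniformly bounded on $[\bl,\br]$ for $z\in S$.

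\emph{Comparison with the density differences.} Only indices with $e_i=1$ contribute, and there are at most $n$ of them. For such $i$, Taylor expansion about $\varepsilon x_i$ gives $h(\varepsilon x_i+\varepsilon\theta)-h(\varepsilon x_i)=\varepsilon\theta h'(\varepsilon x_i)+\tfrac{\varepsilon^2\theta^2}{2}h''(\varepsilon x_i)+\OO(\varepsilon^3)$. On the other hand, by \eqref{eq_rho_x_def} and the substitution $s=\varepsilon x_i+t$,
\begin{equation*}
\frac{\theta}{\varepsilon}\int_\bl^\br\frac{\sfb'(z)(\rho(s;\bmy)-\rho(s;\bmx))}{\sfb(z)-\sfb(s)}\,\rd s=\frac{1}{\varepsilon}\sum_{i:\,e_i=1}\int_0^{\varepsilon\theta}\bigl(h(\varepsilon x_i+\varepsilon+t)-h(\varepsilon x_i+t)\bigr)\,\rd t,
\end{equation*}
which, after expanding to second order, equals $\sum_{i:\,e_i=1}\bigl(\varepsilon\theta h'(\varepsilon x_i)+\tfrac{\varepsilon^2\theta(\theta+1)}{2}h''(\varepsilon x_i)+\OO(\varepsilon^3)\bigr)$. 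Using $h'(a)=\frac{\sfb'(z)\sfb'(a)}{(\sfb(z)-\sfb(a))^2}$, the same kind of computation gives $\frac{\theta}{2}\int_\bl^\br\frac{\sfb'(z)\sfb'(s)(\rho(s;\bmy)-\rho(s;\bmx))}{(\sfb(z)-\sfb(s))^2}\,\rd s=\sum_{i:\,e_i=1}\bigl(\tfrac{\varepsilon^2\theta}{2}h''(\varepsilon x_i)+\OO(\varepsilon^3)\bigr)$. Subtracting the last two displays, the $h'$-contributions cancel and the residual $h''$-terms match exactly, so that $\sum_i\bigl(h(\varepsilon x_i+\varepsilon\theta e_i)-h(\varepsilon x_i)\bigr)$ equals $\frac{\theta}{\varepsilon}\int\frac{\sfb'(z)(\rho(s;\bmy)-\rho(s;\bmx))}{\sfb(z)-\sfb(s)}\rd s-\frac{\theta}{2}\int\frac{\sfb'(z)\sfb'(s)(\rho(s;\bmy)-\rho(s;\bmx))}{(\sfb(z)-\sfb(s))^2}\rd s$ up to $\sum_{i:\,e_i=1}\OO(\varepsilon^3)=\OO(\varepsilon^2)$, with the implicit constant depending only on $S$ and the Assumption~\ref{a:asymp} constants. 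Inserting this into the displayed formula for $\del_z\cA(z)$ and using \eqref{e:Aupbb} together with $\sum_\bme|a_\bme|\le C$ to absorb the error yields the middle line of \eqref{e:dzAexp}. Finally, for any fixed function $g$ smooth on a neighborhood of $[\bl,\br]$ one has $\int g(s)(\rho(s;\bmy)-\rho(s;\bmx))\,\rd s=\OO(\varepsilon)$, since each of the at most $n$ contributing particles is displaced by $\varepsilon$ over an interval of length $\varepsilon\theta$; applying this with $g(s)=\frac{\sfb'(z)\sfb'(s)}{(\sfb(z)-\sfb(s))^2}$ shows the correction integral is itself $\OO(\varepsilon)$, which gives the last line of \eqref{e:dzAexp}.

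\emph{Main obstacle.} The only delicate point is the error bookkeeping. A naive first-order estimate produces per-particle errors of order $\varepsilon^2$, which would accumulate to $\OO(\varepsilon)$ rather than $\OO(\varepsilon^2)$ over the $n=\sfN/\varepsilon$ particles; one must carry the expansions to second order and verify the exact cancellation between the first-order Stieltjes-type integral and the second-order correction integral. In particular, the mismatch of shifts — the product defining $\cA$ uses the shift $\varepsilon\theta$, while $\rho(\cdot;\bmy)$ records the shift $\varepsilon$ — is absorbed precisely by the $\frac{\theta}{2}\int\frac{\sfb'(z)\sfb'(s)(\rho(s;\bmy)-\rho(s;\bmx))}{(\sfb(z)-\sfb(s))^2}\rd s$ term; everything else is a routine uniform estimate.
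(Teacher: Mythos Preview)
Your proof is correct and follows essentially the same route as the paper: bound each factor by $1+\OO(\varepsilon)$ and take the $n$-fold product, differentiate termwise to isolate $\sum_i\bigl(h(\varepsilon x_i+\varepsilon\theta e_i)-h(\varepsilon x_i)\bigr)$ with $h(a)=\sfb'(z)/(\sfb(z)-\sfb(a))$, and then match this sum to the two density integrals with $\OO(\varepsilon^3)$ error per particle. The only cosmetic difference is that the paper packages the second-order matching as the identity
\[
f(x+\varepsilon\theta)-f(x)=\frac{1}{\varepsilon}\int_x^{x+\varepsilon\theta}\bigl(f(y+\varepsilon)-f(y)\bigr)\rd y-\frac{1}{2}\int_x^{x+\varepsilon\theta}\bigl(f'(y+\varepsilon)-f'(y)\bigr)\rd y+\OO(\varepsilon^3),
\]
applied with $f=h$, whereas you Taylor-expand both sides separately and verify that the $h''$-coefficients agree; the content is identical.
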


\begin{proof}
Whenever $\dist(z,[\bl,\br])>\delta$, we have $\sfb(z)-\sfb(\varepsilon x_i)\geq \delta/C$ from \eqref{e:derbba}. Hence,
\begin{align}\label{e:uppbb}
\left|\prod_{i=1}^n \frac{\sfb(z)-\sfb(\varepsilon x_i+ \varepsilon \theta e_i)}{\sfb(z)-\sfb(\varepsilon x_i)}\right|
\leq \prod_{i=1}^n \left(1+\frac{|\sfb(\varepsilon x_i+\varepsilon \theta e_i)-\sfb(\varepsilon x_i)|}{\delta/C}\right)\leq \left(1+\frac{\OO(\varepsilon)}{\delta/C}\right)^n\leq C_\delta,
\end{align}
where we used $n=\OO(\eps^{-1})$. Then \eqref{e:Aupbb} follows from \eqref{e:uppbb} and \eqref{e:totalabsmass}.

The derivative of $\cA(z)$ is given by
\begin{align}\begin{split}\label{e:diffmean0}
\del_z \cA(z)&=\del_z\bE\left[\prod_{i=1}^n\frac{\sfb(z)-\sfb(\varepsilon x_i+\varepsilon\theta e_i )}{\sfb(z)-\sfb(\varepsilon x_i)}\right]=\del_z\bE\left[\prod_{i=1}^n\left(\frac{\sfb(z)-\sfb(\varepsilon x_i+\varepsilon\theta )}{\sfb(z)-\sfb(\varepsilon x_i)}\right)^{e_i}\right]\\
&=\bE\left[\prod_{i=1}^n\left(\frac{\sfb(z)-\sfb(\varepsilon x_i+\varepsilon \theta )}{\sfb(z)-\sfb(\varepsilon x_i)}\right)^{e_i}
\cdot \sum_{i=1}^n e_i\left(\frac{\sfb'(z)}{\sfb(z)-\sfb(\varepsilon x_i+\varepsilon \theta)}-\frac{\sfb'(z)}{\sfb(z)-\sfb(\varepsilon x_i)}\right)\right].
\end{split}\end{align}
We use the following identity
\begin{align}\label{e:expaa}
f(x+\varepsilon \theta)-f(x)=\frac{1}{\varepsilon}\int_x^{x+\varepsilon\theta}(f(y+\varepsilon)-f(y)) \rd y
-\frac{1}{2}\int_x^{x+\varepsilon\theta}(f'(y+\varepsilon)-f'(y)) \rd y +\OO(\varepsilon^3),
\end{align}
where the implicit constant in $\OO(\eps^3)$ is uniform over thrice differentiable functions $f(x)$ satisfying a uniform bound on the third derivative of the form $\sup_{y\in [x,x+2\varepsilon]} |f^{(3)}(y)|\leq C$.
Using \eqref{e:expaa} with \mbox{$f(x)=\sfb'(z)/(\sfb(z)-\sfb(x))$}, we can rewrite
\begin{align}\begin{split}
&\sum_{i=1}^n e_i\left(\frac{\sfb'(z)}{\sfb(z)-\sfb(\varepsilon x_i+\varepsilon \theta)}-\frac{\sfb'(z)}{\sfb(z)-\sfb(\varepsilon x_i)}\right)\\
&=\frac{\theta}{\varepsilon}\int_\bl^\br \frac{\sfb'(z)(\rho(s;\bmy)-\rho(s;\bmx))}{\sfb(z)-\sfb(s)}\rd s
-\frac{\theta}{2}\int_\bl^\br \frac{ \sfb'(z)\sfb'(s)(\rho(s;\bmy)-\rho(s;\bmx))}{(\sfb(z)-\sfb(s))^2}\rd s +\OO(\varepsilon^2)\\
&=\frac{\theta}{\varepsilon}\int_\bl^\br \frac{\sfb'(z)(\rho(s;\bmy)-\rho(s;\bmx))}{\sfb(z)-\sfb(s)}\rd s
+\OO(\varepsilon),
\end{split}\end{align}
where we bound the middle term in the second line by $\OO(\varepsilon)$.
\qedhere
\end{proof}

Our next task is to rewrite $\cC(z)$ in \eqref{e:C_def} as a product of $\cA(z)$  and $\cB(z)$ with a small error; in other words, we would like to give a rigorous justification to \eqref{e:cCdec}.
\begin{proposition}\label{p:simplfyC}
Under Assumption \ref{a:asymp} (either in probabilistic case with \eqref{e:probabilistic_case} or in the complex case with \eqref{e:totalabsmass}), $\cC(z)$ satisfies
\begin{align}\label{e:ABCprecise}
\cC(z)=\cA(z)\cB(z)\left(1+\varepsilon \cE^{(1)}(z)\right), \qquad z\in \Lambda\setminus [\bl,\br],
\end{align}
where $\cE^{(1)}(z)$ is given by (here $\cG(z)=\exp\left[\theta\int_{\bl}^{\br}\frac{\sfb'(z)\rho(s;\bmx)}{\sfb(z)-\sfb(s)}\rd s\right]$, as in \eqref{e:B_function}):
\begin{align}\begin{split}\label{e:defcE}
\cE^{(1)}(z)=\frac{1}{\cA(z)\cB(z)}\bE\left[\prod_{i=1}^n \frac{\sfb(z)-\sfb(\varepsilon x_i+\varepsilon\theta e_i)}{\sfb(z)-\sfb(\varepsilon x_i)}\left( \varphi^+(z)\cG(z) \cE^{(0)}(z)+\OO(\varepsilon)\right)\right],
\end{split}\end{align}
\begin{align}\begin{split}\label{e:cE00}
\cE^{(0)}(z)&=\frac{\theta^2}{\eps} \int_\bl^\br \frac{\sfb'(z)(\rho(s;\bmy)-\rho(s;\bmx))}{\sfb(z)-\sfb(s)}\rd s\\
&+\frac{\theta^2}{2}\int_\bl^\br \frac{\sfb''(z)(\sfb(z)-\sfb(s))-(\sfb'(z))^2-\sfb'(z)\sfb'(s)}{(\sfb(z)-\sfb(s))^2} \rho(s;\bmx)\rd s.
\end{split}\end{align}
If we assume that $z$ varies in a compact subset of $\Lambda\subset [\bl,\br]$, then the implicit constant in the $\OO(\eps)$ is uniform in $z$ and  $|\cE^{(0)}(z)|$, $| \cA(z)\cB(z)\cE^{(1)}(z)|$ are uniformly bounded.
\end{proposition}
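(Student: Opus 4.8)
The plan is to isolate in $\cC(z)$ the contribution $\varphi^-(z)\cA(z)$, which is already present verbatim as the second term of \eqref{e:C_def}, and to show that the first term of \eqref{e:C_def} equals $\varphi^+(z)\cG(z)\cA(z)$ up to a multiplicative factor $1+\OO(\varepsilon)$ whose $\OO(\varepsilon)$ part is $\varepsilon$ times the random quantity $\cE^{(0)}(z)$. I would begin by factoring the first product in \eqref{e:C_def} as
\begin{align*}
\prod_{i=1}^n\frac{\sfb(z+\varepsilon\theta)-\sfb(\varepsilon x_i+\varepsilon\theta e_i)}{\sfb(z)-\sfb(\varepsilon x_i)}
&=R(z)\cdot\prod_{i=1}^n\frac{\sfb(z)-\sfb(\varepsilon x_i+\varepsilon\theta e_i)}{\sfb(z)-\sfb(\varepsilon x_i)},\\
R(z)&:=\prod_{i=1}^n\frac{\sfb(z+\varepsilon\theta)-\sfb(\varepsilon x_i+\varepsilon\theta e_i)}{\sfb(z)-\sfb(\varepsilon x_i+\varepsilon\theta e_i)}.
\end{align*}
The expectation of the second product is $\cA(z)$, and since $\cA(z)\cB(z)=\varphi^+(z)\cG(z)\cA(z)+\varphi^-(z)\cA(z)$ with $\cG(z)$ deterministic, the whole proposition reduces to proving
$$
R(z)=\cG(z)\bigl(1+\varepsilon\,\cE^{(0)}(z)+\OO(\varepsilon^2)\bigr),
$$
with $\cE^{(0)}(z)$ the ($\bme$-dependent) expression \eqref{e:cE00}, uniformly bounded on compact subsets of $\Lambda\setminus[\bl,\br]$; after this, one multiplies back by the second product, takes $\bE$, adds $\varphi^-(z)\cA(z)$, and divides by $\varepsilon\,\cA(z)\cB(z)$ to read off \eqref{e:ABCprecise}--\eqref{e:defcE}.

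The core of the argument is an expansion of $\ln R(z)=\sum_{i=1}^n F(\varepsilon x_i+\varepsilon\theta e_i)$, where $F(w)=\ln\bigl(1+\tfrac{\sfb(z+\varepsilon\theta)-\sfb(z)}{\sfb(z)-\sfb(w)}\bigr)$. I would first use $\sfb(z+\varepsilon\theta)-\sfb(z)=\varepsilon\theta\sfb'(z)+\tfrac12\varepsilon^2\theta^2\sfb''(z)+\OO(\varepsilon^3)$ and $\ln(1+u)=u-\tfrac12u^2+\OO(u^3)$ to obtain, uniformly for $z$ in a compact subset of $\Lambda\setminus[\bl,\br]$ and $w$ near $[\bl,\br]$,
$$
F(w)=\frac{\varepsilon\theta\sfb'(z)+\tfrac12\varepsilon^2\theta^2\sfb''(z)}{\sfb(z)-\sfb(w)}-\frac{\varepsilon^2\theta^2(\sfb'(z))^2}{2(\sfb(z)-\sfb(w))^2}+\OO(\varepsilon^3),
$$
where summing the $\OO(\varepsilon^3)$ over the $n=\OO(\varepsilon^{-1})$ indices costs only $\OO(\varepsilon^2)$. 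The remaining discrete sums $\sum_i(\,\cdot\,)$ and $\sum_i(\,\cdot\,)^2$ over the arguments $\varepsilon x_i+\varepsilon\theta e_i$ are then converted to integrals against the densities \eqref{eq_rho_x_def} exactly as in the proof of Proposition~\ref{p:simplfyA}: split each sum over $\{e_i=0\}$ and $\{e_i=1\}$, use the identity $\sum_{i:e_i=1}\int_{\varepsilon x_i}^{\varepsilon x_i+\varepsilon\theta}(g(s+\varepsilon)-g(s))\,\rd s=\theta\int_\bl^\br g(s)\,(\rho(s;\bmy)-\rho(s;\bmx))\,\rd s$, and invoke the third-order expansion \eqref{e:expaa} (with $f(\cdot)=1/(\sfb(z)-\sfb(\cdot))$ and $f(\cdot)=\sfb'(z)/(\sfb(z)-\sfb(\cdot))$) to replace $\sum_i$ by $\tfrac1\varepsilon\int\rho(s;\bmx)\,\rd s$ plus its trapezoidal correction. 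Since $\rho(s;\bmy)-\rho(s;\bmx)$ pairs with $C^1$ functions to size $\OO(\varepsilon)$ (its paired value is $\tfrac1\theta\sum_{i:e_i=1}\int_{\varepsilon x_i}^{\varepsilon x_i+\varepsilon\theta}(g(s+\varepsilon)-g(s))\,\rd s=\OO(\varepsilon^2)\cdot n$), all of its occurrences carry an extra power of $\varepsilon$. Collecting the $\OO(1)$ terms yields exactly $\theta\int_\bl^\br\frac{\sfb'(z)\rho(s;\bmx)}{\sfb(z)-\sfb(s)}\,\rd s=\ln\cG(z)$; the $\OO(\varepsilon)$ terms are $\tfrac{\theta^2}{\varepsilon}$ times the pairing of $\sfb'(z)/(\sfb(z)-\sfb(\cdot))$ with $\rho(s;\bmy)-\rho(s;\bmx)$, together with the $\OO(\varepsilon)$ contributions of the $\sfb''(z)$ and $-(\sfb'(z))^2$ corrections and of the trapezoidal remainder, which combine into precisely the second integral of \eqref{e:cE00}. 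This gives $\ln R(z)=\ln\cG(z)+\varepsilon\cE^{(0)}(z)+\OO(\varepsilon^2)$, and $\cE^{(0)}(z)=\OO(1)$ follows from the bounds \eqref{e:derbba} of Assumption~\ref{a:asymp} (which, via Cauchy estimates, keep $\sfb',\sfb''$ bounded and $\sfb(z)-\sfb(s)$ bounded away from $0$ for $z$ in a compact subset of $\Lambda\setminus[\bl,\br]$ and $s\in[\bl,\br]$) together with $\int_\bl^\br\rho(s;\bmx)\,\rd s=\sfN$.

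Exponentiating (legitimate since $\varepsilon\cE^{(0)}(z)+\OO(\varepsilon^2)=\OO(\varepsilon)$) gives $R(z)=\cG(z)\bigl(1+\varepsilon\cE^{(0)}(z)+\OO(\varepsilon^2)\bigr)$ with a uniform $\OO(\varepsilon^2)$; assembling the pieces as indicated yields
$$
\cC(z)-\cA(z)\cB(z)=\bE\Bigl[\prod_{i=1}^n\tfrac{\sfb(z)-\sfb(\varepsilon x_i+\varepsilon\theta e_i)}{\sfb(z)-\sfb(\varepsilon x_i)}\bigl(\varepsilon\varphi^+(z)\cG(z)\cE^{(0)}(z)+\OO(\varepsilon^2)\bigr)\Bigr],
$$
which is \eqref{e:ABCprecise}--\eqref{e:defcE} after dividing by $\varepsilon\,\cA(z)\cB(z)$. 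For the uniform boundedness claims: $|\cE^{(0)}(z)|$ is bounded as above, and $|\cA(z)\cB(z)\cE^{(1)}(z)|=\varepsilon^{-1}|\cC(z)-\cA(z)\cB(z)|$ is bounded by combining the deterministic estimate \eqref{e:uppbb} for the product, the bound $\sum_\bme|a_\bme|\le C$ (equal to $1$ in the probabilistic case \eqref{e:probabilistic_case}, and \eqref{e:totalabsmass} in the complex case), and the uniform bound on $\varphi^+(z)\cG(z)\cE^{(0)}(z)$.

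The step I expect to be the main obstacle is the bookkeeping in the expansion of $\ln R(z)$: one must keep careful track of the mismatch between the argument $\varepsilon(x_i+\theta e_i)$ appearing in the products and $\varepsilon(x_i+e_i)$ appearing in $\rho(s;\bmy)$, retain the Euler--Maclaurin (trapezoidal) correction when passing from $\sum_i$ to $\int\rho(s;\bmx)\,\rd s$ at order $\varepsilon$, and remember that $F$ itself depends on $\varepsilon$, so that $\sfb''(z)$ and $(\sfb'(z))^2$ enter $\cE^{(0)}(z)$ with exactly the coefficients displayed in \eqref{e:cE00}. Everything else is a routine repackaging of estimates already used in the proof of Proposition~\ref{p:simplfyA}.
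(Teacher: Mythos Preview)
Your proposal is correct and follows essentially the same route as the paper: factor the first term of $\cC(z)$ as $R(z)$ times the product defining $\cA(z)$, Taylor-expand $\ln R(z)$ to extract $\ln\cG(z)+\varepsilon\cE^{(0)}(z)+\OO(\varepsilon^2)$, and convert the discrete sums to integrals against $\rho(\,\cdot\,;\bmx)$ and $\rho(\,\cdot\,;\bmy)-\rho(\,\cdot\,;\bmx)$. The only cosmetic difference is that the paper packages your ``split over $\{e_i=0\}$ and $\{e_i=1\}$ plus trapezoidal correction'' into a single identity \eqref{e:expaa2}, $f(x+\varepsilon\theta e_i)=\int_x^{x+\varepsilon\theta}\bigl(\tfrac{f(y)}{\varepsilon\theta}-\tfrac{f'(y)}{2}\bigr)\rd y+\tfrac{e_i}{\varepsilon}\int_x^{x+\varepsilon\theta}(f(y+\varepsilon)-f(y))\,\rd y+\OO(\varepsilon^2)$, which makes the bookkeeping you flagged as the main obstacle a bit more transparent.
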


\begin{proof}
We rewrite $\cC(z)$ as
\begin{align}\label{e:defcCz}
\cC(z)&=\bE\left[\prod_{i=1}^n\frac{\sfb(z)-b(\varepsilon x_i+\varepsilon\theta e_i)}{\sfb(z)-\sfb(\varepsilon x_i)}\left(\varphi^+(z)\prod_{i=1}^n\frac{\sfb(z+\varepsilon\theta)-\sfb(\varepsilon x_i+\varepsilon \theta e_i)}{\sfb(z)-\sfb(\varepsilon x_i+\varepsilon\theta e_i)}+\varphi^-(z)\right)\right].
\end{align}
We would like to approximate the last product with $\cB(z)$. For that we Taylor expand:
\begin{align}
&\notag\phantom{{}={}}\prod_{i=1}^n\frac{\sfb(z+\varepsilon\theta)-\sfb(\varepsilon x_i+\varepsilon \theta e_i)}{\sfb(z)-\sfb(\varepsilon x_i+\varepsilon\theta e_i)}
=\prod_{i=1}^n\left(1+\frac{\sfb(z+\varepsilon\theta)-\sfb(z)}{\sfb(z)-\sfb(\varepsilon x_i+\varepsilon\theta e_i)}\right)\\
&\notag=\exp\left[\sum_{i=1}^n \log\left(1+\frac{\sfb(z+\varepsilon\theta)-\sfb(z)}{\sfb(z)-\sfb(\varepsilon x_i+\varepsilon\theta e_i)}\right)\right]\\
&\notag=\exp\left[\sum_{i=1}^n \log\left(1+\frac{\varepsilon\theta\sfb'(z)+\tfrac{1}{2}(\varepsilon\theta)^2\sfb''(z)+\OO(\varepsilon^3)}{\sfb(z)-\sfb(\varepsilon x_i+\varepsilon\theta e_i)}\right)\right]\\
&=\exp\left[\sum_{i=1}^n \left(\frac{\varepsilon\theta\sfb'(z)}{\sfb(z)-\sfb(\varepsilon x_i+\varepsilon\theta e_i)}+\frac{1}{2}\frac{(\varepsilon\theta)^2\sfb''(z)}{\sfb(z)-\sfb(\varepsilon x_i)}-\frac{1}{2}\frac{(\varepsilon \theta \sfb'(z))^2}{(\sfb(z)-\sfb(\varepsilon x_i))^2}+\OO(\varepsilon^3)\right)\right].\label{e:tayl1}
\end{align}
For any $f$ with uniformly bounded second derivative, we have the following identity
\begin{align}\label{e:expaa2}
f(x+\varepsilon \theta e_i)=\int_x^{x+\varepsilon\theta}\left(\frac{f(y)}{\varepsilon \theta} -\frac{f'(y)}{2}\right)\rd y
+\frac{e_i}{\varepsilon}\int_x^{x+\varepsilon\theta}\bigl(f(y+\varepsilon)-f(y)\bigr) \rd y +\OO(\varepsilon^2).
\end{align}
Using \eqref{e:expaa2} with $f(x)=\sfb'(z)/(\sfb(z)-\sfb(x))$, we rewrite the first term in the right-hand side of \eqref{e:tayl1} as
\begin{align}\begin{split}\label{e:tayl2}
\sum_{i=1}^n \frac{\varepsilon\theta\sfb'(z)}{\sfb(z)-\sfb(\varepsilon x_i+\varepsilon\theta e_i)}
&=\theta\int_\bl^\br\frac{\sfb'(z)\rho(s;\bmx)}{\sfb(z)-\sfb(s)}-\frac{\varepsilon \theta^2 }{2}\int_\bl^\br\frac{\sfb'(z)\sfb'(s)\rho(s;\bmx)}{(\sfb(z)-\sfb(s))^2}\rd s\\
&+\theta^2 \int_\bl^\br \frac{\sfb'(z)(\rho(s;\bmy)-\rho(s;\bmx))}{\sfb(z)-\sfb(s)}\rd s+\OO(\varepsilon^2).
\end{split}\end{align}
We also transform the last two terms in the right-hand side of \eqref{e:tayl1} into integrals of $\rho(s;\bmx)$:
\begin{align}\begin{split}\label{e:tayl5}
&\phantom{{}={}}\frac{1}{2} \sum_{i=1}^n\left[ \frac{(\varepsilon\theta)^2\sfb''(z)}{\sfb(z)-\sfb(\varepsilon x_i)}-\frac{(\varepsilon \theta \sfb'(z))^2}{(\sfb(z)-\sfb(\varepsilon x_i))^2}\right]\\
&=\frac{\varepsilon}{2}\int_\bl^\br \left(\frac{\theta^2\sfb''(z)}{\sfb(z)-\sfb(s)}-\frac{(\theta\sfb'(z))^2}{(\sfb(z)-\sfb(s))^2}\right)\rho(s;\bmx)\rd s+\OO(\varepsilon^2).
\end{split}\end{align}

By plugging \eqref{e:tayl2}, \eqref{e:tayl5} into  \eqref{e:tayl1}, we get
\begin{align}\label{e:tayl6}
\prod_{i=1}^n\frac{\sfb(z+\varepsilon\theta)-\sfb(\varepsilon x_i+\varepsilon \theta e_i)}{\sfb(z)-\sfb(\varepsilon x_i+\varepsilon\theta e_i)}
=\cG(z)\exp\left[\varepsilon \cE^{(0)}(z)+\OO(\varepsilon^2)\right],
\end{align}
where $\cG(z)=\exp\left[\theta\int_{\bl}^{\br}\frac{\sfb'(z)\rho(s;\bmx)}{\sfb(z)-\sfb(s)}\rd s\right]$, corresponding to the first term on the right-hand side of \eqref{e:tayl2}, and $\cE^{(0)}(z)$ is
\begin{align*}
\theta^2 \int_\bl^\br \frac{\sfb'(z)(\rho(s;\bmy)-\rho(s;\bmx))}{\varepsilon(\sfb(z)-\sfb(s))}\rd s+\frac{\theta^2}{2}\int_\bl^\br \frac{\sfb''(z)(\sfb(z)-\sfb(s))-(\sfb'(z))^2-\sfb'(z)\sfb'(s)}{(\sfb(z)-\sfb(s))^2} \rho(s;\bmx)\rd s.
\end{align*}

By plugging  \eqref{e:tayl6} into \eqref{e:defcCz}, we transform $\cC(z)$ as
\begin{align*}\begin{split}
\cC(z)&=\bE\left[\prod_{i=1}^n\frac{\sfb(z)-b(\varepsilon x_i+\varepsilon\theta e_i)}{\sfb(z)-\sfb(\varepsilon x_i)}\left(\varphi^+(z)\prod_{i=1}^n\frac{\sfb(z+\varepsilon\theta)-\sfb(\varepsilon x_i+\varepsilon \theta e_i)}{\sfb(z)-\sfb(\varepsilon x_i+\varepsilon\theta e_i)}+\varphi^-(z)\right)\right].\\
&=\bE\left[\prod_{i=1}^n \frac{\sfb(z)-b(\varepsilon x_i+\varepsilon\theta e_i)}{\sfb(z)-\sfb(\varepsilon x_i)}\left(\varphi^+(z)\cG(z)e^{\varepsilon \cE^{(0)}(z)}+\varphi^-(z)+\OO(\varepsilon^2)\right)\right]\\
&=\bE\left[\prod_{i=1}^n \frac{\sfb(z)-b(\varepsilon x_i+\varepsilon\theta e_i)}{\sfb(z)-\sfb(\varepsilon x_i)}\left(\cB(z)+\varepsilon \varphi^+(z)\cG(z) \cE^{(0)}(z)+\OO(\varepsilon^2)\right)\right],
\end{split}\end{align*}
which leads to \eqref{e:defcE}. The uniformity bounds for $\OO(\eps^2)$, $|\cE^{(0)}(z)|$, and $| \cA(z)\cB(z)\cE^{(1)}(z)|$ routinely follow from the definitions and we omit their justifications.
\end{proof}

\subsection{Asymptotic expansion of $\cA(z)$}\label{s:first}
In this section we derive the second order asymptotic expansion of the quantity $\cA(z)$ as defined in \eqref{e:A_def} in terms of contour integrals of $\cB(z)$.
\begin{proposition}\label{p:firstod0}
Under Assumptions \ref{a:asymp} and \ref{a:stable} (either in probabilistic case with \eqref{e:probabilistic_case} or in the complex case with \eqref{e:totalabsmass}), there is a branch of $\ln \cA(z)$ such that
\begin{align}\label{e:yao10}
\ln \cA(z)
&=\frac{1}{2\pi \ri}\oint_{\cin}\frac{\ln \cB(w)\sfb'(w)}{\sfb(w)-\sfb(z)}\rd w
+\OO\left(\varepsilon \right), \qquad z\in \Lambda\setminus [\bl,\br],
\end{align}
where $\cin\subset \Lambda$ is an arbitrary positively oriented simple contour enclosing  $[\bl,\br]$, but not $z$.  The error term $\OO(\eps)$ can be chosen to be uniform over $z$ in a compact subset of $\Lambda\setminus [\bl, \br]$.
\end{proposition}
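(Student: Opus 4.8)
The plan is to make rigorous the heuristic of Step 1 in Section \ref{s:heur}. Recall from Proposition \ref{p:simplfyC} that $\cC(z)=\cA(z)\cB(z)(1+\eps\cE^{(1)}(z))$, where $\cC(z)$ is holomorphic in all of $\Lambda$ (by Theorem \ref{t:loopeq}, after rescaling $z=\eps\xi$), while $\cA(z)$ is holomorphic only in $\Lambda\setminus[\bl,\br]$, and $\cB(z)$ is the explicit function from \eqref{e:B_function}. Under Assumption \ref{a:stable} we have the Wiener--Hopf type factorization $\cB(z)=e^{h_+(\sfb(z))}e^{-h_-(\sfb(z))}$ with $h_\pm$ defined by the contour integrals \eqref{e:B_decomp_3}; here $h_+$ is holomorphic inside (the preimage under $\sfb$ of) the outer contour $\omega_+$ and $h_-$ is holomorphic outside the inner contour $\omega_-$, with $h_-(\sfb(z))\to 0$ as $z\to\infty$. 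The key point is that multiplying the loop equation by $e^{-h_+(\sfb(z))}$ yields a function $\cC(z)e^{-h_+(\sfb(z))}=\cA(z)e^{-h_-(\sfb(z))}(1+\eps\cE^{(1)}(z))$ whose left-hand side is holomorphic in a neighborhood of $[\bl,\br]$.

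The main steps, in order, are as follows. First I would fix the contours $\omega_-\subset\omega_+\subset\Lambda$ as in Figure \ref{f:B_contours}, with $[\bl,\br]$ inside both and $z$ strictly between them (this requires $z$ in a compact subset of $\Lambda\setminus[\bl,\br]$, which is exactly the hypothesis; one chooses $\omega_\pm$ depending on that compact set). Second, I would integrate $\cC(w)e^{-h_+(\sfb(w))}\sfb'(w)/(\sfb(w)-\sfb(z))$ over $\omega_-$: since the integrand is holomorphic inside $\omega_-$, this integral is $0$. Third, I would substitute $\cC(w)e^{-h_+(\sfb(w))}=\cA(w)e^{-h_-(\sfb(w))}(1+\eps\cE^{(1)}(w))$ and deform the contour from $\omega_-$ to $\omega_+$; the deformation picks up the residue at $w=z$, which is $-\cA(z)e^{-h_-(\sfb(z))}(1+\eps\cE^{(1)}(z))$. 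Fourth, I would evaluate the remaining integral over $\omega_+$: after the change of variables $u=\sfb(w)$ it becomes $\frac{1}{2\pi\ri}\oint_{\sfb(\omega_+)}\frac{e^{-h_-(u)}}{u-\sfb(z)}\bE\bigl[\prod_i\frac{u-\sfb(\eps x_i+\eps\theta e_i)}{u-\sfb(\eps x_i)}\bigr]\,\rd u$, whose integrand is holomorphic outside $\sfb(\omega_+)$ and $\to 1/u\cdot 1$ as $u\to\infty$, so by computing the residue at infinity it equals $1$. Combining the last two points gives $\cA(z)e^{-h_-(\sfb(z))}(1+\eps\cE^{(1)}(z))=1+\OO(\eps)$, where the $\OO(\eps)$ error from the $\omega_+$ integral is controlled using the uniform bounds on $\cA$, $\cB$, $\cE^{(1)}$ from Propositions \ref{p:simplfyA} and \ref{p:simplfyC} together with the a priori bound $|\cA(z)|\le C$ of \eqref{e:Aupbb}. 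Fifth, taking logarithms — which is legitimate since $\cB$, and hence $e^{-h_-}$, is bounded away from $0$ on compacts of $\Lambda\setminus[\bl,\br]$ by Assumption \ref{a:stable}, and $|\eps\cE^{(1)}|$ is small — yields $\ln\cA(z)=h_-(\sfb(z))+\OO(\eps)$, and inserting the definition of $h_-$ from \eqref{e:B_decomp_3} with $\omega_-$ renamed $\cin$ gives \eqref{e:yao10}. Finally I would note that the contour $\cin$ in \eqref{e:yao10} may be taken to be any positively oriented simple loop enclosing $[\bl,\br]$ but not $z$, since moving it within $\Lambda\setminus(\{z\}\cup[\bl,\br])$ does not change the integral (the integrand $\ln\cB(w)\sfb'(w)/(\sfb(w)-\sfb(z))$ is holomorphic there, using that $\ln\cB$ is single-valued by \eqref{e:aB0}).

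The main obstacle I expect is the bookkeeping around the branch of $\ln\cA(z)$ and the uniformity of the error. The branch issue is handled by Assumption \ref{a:stable}: the vanishing of $\oint\del_z\cB/\cB$ guarantees $\ln\cB$ is single-valued on $\Lambda\setminus[\bl,\br]$, hence so are $h_\pm$ on their respective domains, and one then \emph{defines} the branch of $\ln\cA$ on $\Lambda\setminus[\bl,\br]$ by $h_-(\sfb(z))+\ln(1+\OO(\eps))$, which is consistent because the right-hand side is. The uniformity of $\OO(\eps)$ over compacts is inherited from the already-established uniform bounds in Propositions \ref{p:simplfyA} and \ref{p:simplfyC} and from the fact that for a fixed compact $S\subset\Lambda\setminus[\bl,\br]$ one can choose \emph{fixed} contours $\omega_\pm$ working for all $z\in S$; the only mild care needed is that the bound on the $\omega_+$-integral error involves $\eps\cdot\sup_{w\in\omega_+}|\cA(w)\cB(w)\cE^{(1)}(w)|$, which is $\OO(\eps)$ by Proposition \ref{p:simplfyC}. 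Everything else is routine contour deformation and residue computation.
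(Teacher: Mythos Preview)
Your proposal is correct and follows essentially the same route as the paper's proof: the Wiener--Hopf factorization $\cB=e^{h_+}e^{-h_-}$, the vanishing integral of $\cC e^{-h_+}$ over $\omega_-$, the contour deformation picking up the residue at $z$, and the residue-at-infinity evaluation of the $\omega_+$ integral are exactly the steps the paper carries out. The only cosmetic difference is that the paper passes to the additive form $\cC e^{-h_+}=\cA e^{-h_-}+\OO(\eps)$ \emph{before} integrating over $\omega_-$ (so the error is bounded directly on $\omega_-$), whereas you keep the multiplicative factor $(1+\eps\cE^{(1)})$ through the deformation and bound the resulting $\omega_+$-error afterward; your observation that this error is controlled by $\sup_{\omega_+}|\cA\cB\cE^{(1)}|$ (rather than $|\cE^{(1)}|$ alone) is exactly what makes this variant go through.
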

\begin{proof}  We start by fixing two positively oriented contours $\omega_\pm\subset \Lambda$, such that they both enclose $[\bl, \br]$, are inside $\Lambda$, and $\omega_-$ is inside $\omega_+$. Throughout the proof, we deal with $z$ inside the annulus bounded by these contours,  as in Figure \ref{f:B_contours}. Thanks to Assumption \ref{a:stable}, $\ln \cB(z)$ is well defined on $\Lambda$, and we can introduce two functions:
\begin{equation}
\label{e:B_decomp_4}
 h_+(u):=\frac{1}{2\pi\ri}\oint_{\omega_+}\frac{\ln \cB(w)\sfb'(w)}{\sfb(w)-u} \rd w, \qquad \qquad h_-(u):=\frac{1}{2\pi\ri}\oint_{\omega_-}\frac{\ln \cB(w)\sfb'(w)}{\sfb(w)-u} \rd w.
\end{equation}
Computing the integral as a residue, we have
\begin{align}\label{e:decomp}
\ln \cB(z)=\frac{1}{2\pi\ri}\oint_{\omega_+}\frac{\ln \cB(w)\sfb'(w)}{\sfb(w)-\sfb(z)} \rd w
-\frac{1}{2\pi\ri}\oint_{\omega_-}\frac{\ln \cB(w)\sfb'(w)}{\sfb(w)-\sfb(z)}\rd w= h_+(\sfb(z))-h_-(\sfb(z)).
\end{align}
From the construction, $h_+(\sfb(z))$ is holomorphic for $z$ inside the contour $\omega_+$ and $h_-(\sfb(z))$ is holomorphic for $z$ outside the contour $\omega_-$.

Next, we claim that when $z$ varies in the annulus enclosed by $\omega_\pm$, the functions  $h_+(\sfb(z))$ and $h_-(\sfb(z))$ are uniformly bounded by a constant  $C$ which depends only on $\omega_\pm$. Indeed, to prove the claim it is sufficient to notice that $\omega_-$ can be made slightly smaller and $\omega_+$ can be made slightly larger without changing the values of the integrals in \eqref{e:B_decomp_4}. Once we do this change, the integrand is uniformly bounded and so is the integral.

The above boundeness allows us to multiply both sides of \eqref{e:ABCprecise} by $\exp(-h_+(\sfb(z)))$  and use the decomposition \eqref{e:decomp} to get
\begin{align}\label{e:ABCorg}
\cC(z)e^{-h_+(\sfb(z))}=\cA(z)e^{-h_-(\sfb(z))}+\OO(\varepsilon).
\end{align}
Since the left-hand side of \eqref{e:ABCorg} is holomorphic inside $\omega_+$, we can use a contour integral to get rid of it. Integrating \eqref{e:ABCorg} over $\omega_-$, we get
\begin{align}\begin{split}\label{e:getridC}
0&=\frac{1}{2\pi\ri}\oint_{\omega_-}\frac{\cC(w)e^{-h_+(\sfb(w))} \sfb'(w)}{\sfb(w)-\sfb(z)}\rd w=\frac{1}{2\pi\ri}\oint_{\omega_-}\frac{(\cA(w)e^{-h_-(\sfb(w))}+\OO(\varepsilon)) \sfb'(w)}{\sfb(w)-\sfb(z)}\rd w\\
&=\frac{1}{2\pi\ri}\oint_{\omega_-}\frac{\cA(w)e^{-h_-(\sfb(w))} \sfb'(w)}{\sfb(w)-\sfb(z)}\rd w+\OO(\varepsilon).
\end{split}\end{align}
We recall the expression of $\cA(w)$ from \eqref{e:A_def} and notice that  it is  a meromorphic function of $\sfb(w)$:
\begin{align}\label{e:deftA}
\cA(w)= {\mathbf A}(\sfb(w)),\qquad
{\mathbf A}(u)=\bE\left[\prod_{i=1}^n\frac{u-\sfb(\varepsilon x_i+ \varepsilon\theta  e_i)}{u-\sfb(\varepsilon x_i)}\right].
\end{align}
 In particular ${\mathbf A}(u)$ is analytic outside the contour $\sfb(\omega_-)$. Deforming the integration contour to $\omega_+$, collecting the residue at $w=z$, and then changing the variables $u=b(w)$, we transform  \eqref{e:getridC} into
\begin{equation}\label{e:getridC2}
\OO(\eps)=\frac{1}{2\pi\ri}\oint_{\omega_-}\frac{\cA(w)e^{-h_-(\sfb(w))} \sfb'(w)}{\sfb(w)-\sfb(z)} \rd w
=\frac{1}{2\pi\ri}\oint_{\sfb(\omega_+)}\frac{{\mathbf A}(u)e^{-h_-(u)}}{u-\sfb(z)} \rd u- \cA(z)e^{-h_-(\sfb(z))}.
\end{equation}
The $u$--integrand in the right-hand side of \eqref{e:getridC2} does not have singularities outside the integration contour and, therefore, the integral evaluates as a residue at $u=\infty$. This residue is $1$, because ${\mathbf A}(u)=1+\oo(1)$ and $h_-(u)\to 0$ as $u\to\infty$ directly from the definitions of these functions. Thus, \eqref{e:getridC2} simplifies to
$$
 \cA(z)e^{-h_-(\sfb(z))}= 1+ \OO(\eps).\qedhere
$$
\end{proof}
\begin{remark}\label{r:cAud}
 We can rewrite the result of Proposition \ref{p:firstod0} in the form
 \begin{equation}
 \label{e:first_order_extended}
 {\mathbf A}(u)=e^{h_-(u)}+\OO(\varepsilon).
 \end{equation}
 So far we proved this statement for $u\in \sfb( \Lambda\setminus [\bl,\br])$, however, it can readily be extended to all $u$ outside $[\sfb(\bl),\sfb(\br)]$. Indeed, to see that we notice that both sides of \eqref{e:first_order_extended} are holomorphic outside $\sfb(\omega_-)$, use Cauchy integral formula to rewrite them as contour integrals over $\sfb(\omega_-)$, and send $\eps\to 0$ in this integral form. In particular, \eqref{e:first_order_extended} implies that $\ln {\mathbf A}(u)$ is well-defined outside $[\sfb(\bl), \sfb(\br)]$.
\end{remark}

We proceed to the second order asymptotic expansion of $\cA(z)$.
\begin{proposition}\label{p:firstod}
Under Assumptions \ref{a:asymp} and \ref{a:stable} (either in probabilistic case with \eqref{e:probabilistic_case} or in the complex case with \eqref{e:totalabsmass}), we have
\begin{equation}\label{e:yao1}
\frac{\del_z \cA(z)}{\cA(z)}=\frac{1}{2\pi \ri}\oint_{\cin}\frac{\ln \cB(w) \sfb'(z)\sfb'(w)}{(\sfb(w)-\sfb(z))^2}\rd w+
\frac{\varepsilon}{2\pi \ri}\oint_{\cin}  \cE^{(1)}(w)\frac{\sfb'(w)\sfb'(z)}{(\sfb(w)-\sfb(z))^2}\rd w
+\OO\left(\varepsilon^2 \right),
\end{equation}
for  $z\in \Lambda\setminus[\bl,\br]$; where the contour $\cin\subset \Lambda$ encloses $[\bl, \br]$, but not $z$. $\cE^{(1)}(w)$ is as in \eqref{e:defcE} and has asymptotic behavior
\begin{align}\begin{split}\label{e:defnewc1}
\cE^{(1)}(w)&=\frac{\varphi^+(w)\cG(w)}{\cB(w)}\left(\frac{\theta}{2\pi \ri}\oint_{\omega'_-}\frac{\ln \cB(u)\sfb'(w)\sfb'(u)}{(\sfb(u)-\sfb(w))^2}\rd u\right.\\
&\left.+\frac{\theta^2}{2}\int \frac{\sfb''(z)(\sfb(z)-\sfb(s))-(\sfb'(z))^2-\sfb'(z)\sfb'(s)}{(\sfb(z)-\sfb(s))^2} \rho(s;\bmx)\rd s\right)+\OO(\varepsilon),
\end{split}\end{align}
where the contour $\omega'_-\subset \Lambda$ encloses  $[\bl, \br]$, but not $w$. If we assume that $z$ belongs to a compact subset of $\Lambda\setminus [\bl,\br]$ and fix the contours $\omega_-$, $\omega'_-$, then the implicit constants in $\OO(\cdot)$ errors can be chosen uniformly.
\end{proposition}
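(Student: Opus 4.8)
The plan is to refine the contour--integral argument of Proposition~\ref{p:firstod0} by retaining one further order in the relation \eqref{e:ABCprecise}, $\cC(z)=\cA(z)\cB(z)\bigl(1+\varepsilon\cE^{(1)}(z)\bigr)$, again exploiting that $\cC(z)$ is holomorphic on $\Lambda$ by Theorem~\ref{t:loopeq}. First I would note that $\cA(z)\cB(z)\cE^{(1)}(z)$ is, by \eqref{e:defcE}, an expectation of products of functions holomorphic on $\Lambda\setminus[\bl,\br]$, hence holomorphic there; together with $|\cB(z)|\geq c$ from Assumption~\ref{a:stable} and the two--sided bound on $|\cA(z)|$ from Proposition~\ref{p:firstod0}, this makes $\cE^{(1)}(z)$ a \emph{bounded holomorphic} function on every compact subset of $\Lambda\setminus[\bl,\br]$. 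Keeping the contours $\omega_\pm$ and the Wiener--Hopf splitting $\cB(z)=e^{h_+(\sfb(z))}e^{-h_-(\sfb(z))}$, I would multiply \eqref{e:ABCprecise} by $e^{-h_+(\sfb(z))}$, integrate over $\omega_-$ against $\sfb'(w)\,\rd w/(\sfb(w)-\sfb(z))$, and use holomorphicity of $\cC(w)e^{-h_+(\sfb(w))}$ inside $\omega_+$ to obtain, for $z$ between the two contours,
\[
\frac{1}{2\pi\ri}\oint_{\omega_-}\frac{\cA(w)\,e^{-h_-(\sfb(w))}\bigl(1+\varepsilon\cE^{(1)}(w)\bigr)\,\sfb'(w)}{\sfb(w)-\sfb(z)}\,\rd w=0 .
\]

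Next I would write $\cA(w)e^{-h_-(\sfb(w))}=1+\varepsilon g(w)+\OO(\varepsilon^2)$, where $g$ — like $\mathbf A(u)-1$ and $h_-(u)$ in the proof of Proposition~\ref{p:firstod0}, cf.\ Remark~\ref{r:cAud} — extends holomorphically in $u=\sfb(w)$ outside $\sfb(\omega_-)$ and vanishes as $u\to\infty$. Since $\oint_{\omega_-}\sfb'(w)\,\rd w/(\sfb(w)-\sfb(z))=0$ for $z$ outside $\omega_-$, the leading term drops; dividing by $\varepsilon$ leaves $\oint_{\omega_-}(g(w)+\cE^{(1)}(w))\sfb'(w)\,\rd w/(\sfb(w)-\sfb(z))=\OO(\varepsilon)$, and deforming the $g$--integral out to $\omega_+$ (legitimate by the analyticity and decay just recorded, the integral over $\omega_+$ vanishing as a residue at $u=\infty$) picks up $-2\pi\ri\,g(z)$. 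Hence $g(z)=\tfrac{1}{2\pi\ri}\oint_{\omega_-}\cE^{(1)}(w)\sfb'(w)\,\rd w/(\sfb(w)-\sfb(z))+\OO(\varepsilon)$, so
\[
\ln\cA(z)=h_-(\sfb(z))+\frac{\varepsilon}{2\pi\ri}\oint_{\omega_-}\frac{\cE^{(1)}(w)\,\sfb'(w)}{\sfb(w)-\sfb(z)}\,\rd w+\OO(\varepsilon^2),
\]
and differentiating in $z$ — the remainder staying $\OO(\varepsilon^2)$ by Cauchy's estimate, since it is holomorphic and uniformly $\OO(\varepsilon^2)$ on a slightly shrunk compact — gives \eqref{e:yao1} with $\cin=\omega_-$; contour deformation inside $\Lambda\setminus([\bl,\br]\cup\{z\})$ then frees the choice of $\cin$.

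For the asymptotic form \eqref{e:defnewc1} of $\cE^{(1)}$, I would substitute \eqref{e:cE00} into \eqref{e:defcE}. The deterministic summand of $\cE^{(0)}(w)$ (the integral against $\rho(s;\bmx)$) factors out of the expectation, leaving $\bE[\prod_i\tfrac{\sfb(w)-\sfb(\varepsilon x_i+\varepsilon\theta e_i)}{\sfb(w)-\sfb(\varepsilon x_i)}]=\cA(w)$, which after dividing by $\cA(w)\cB(w)$ reproduces the $\rho(s;\bmx)$--integral in \eqref{e:defnewc1}. The random summand $\tfrac{\theta^2}{\varepsilon}\int\tfrac{\sfb'(w)(\rho(s;\bmy)-\rho(s;\bmx))}{\sfb(w)-\sfb(s)}\,\rd s$ is, by the expansion \eqref{e:dzAexp} of $\del_w\cA(w)$, equal to $\theta\,\del_w\cA(w)$ up to an $\OO(\varepsilon)$ term weighted by the bounded quantity $\bE[\prod_i|\cdots|]$ of Proposition~\ref{p:simplfyA}; dividing by $\cA(w)$ and inserting the already--proved leading order $\del_w\cA(w)/\cA(w)=\del_w h_-(\sfb(w))+\OO(\varepsilon)=\tfrac{1}{2\pi\ri}\oint_{\omega'_-}\tfrac{\ln\cB(u)\sfb'(w)\sfb'(u)}{(\sfb(u)-\sfb(w))^2}\,\rd u+\OO(\varepsilon)$ then produces the contour--integral term of \eqref{e:defnewc1}.

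The step I expect to be the main obstacle is the uniform control of error terms rather than any single clever manipulation. One must verify that the $\OO(\varepsilon)$'s buried inside $\cE^{(1)}$ (and inside the Taylor expansions producing $\cE^{(0)}$) stay uniformly bounded once multiplied by the random product $\prod_i\tfrac{\sfb(w)-\sfb(\varepsilon x_i+\varepsilon\theta e_i)}{\sfb(w)-\sfb(\varepsilon x_i)}$ and averaged — which is precisely where the a priori bound $|\cA(z)|\leq C$, equivalently $\bE[\prod_i|\cdots|]\leq C$, of Proposition~\ref{p:simplfyA}, valid in both the probabilistic and the complex case, is indispensable — and that passing from a pointwise estimate for $\ln\cA(z)$ to its $z$--derivative loses nothing, which is handled by Cauchy's integral formula on a slightly smaller compact subset of $\Lambda\setminus[\bl,\br]$.
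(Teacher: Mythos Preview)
Your argument is correct and takes a genuinely different route from the paper's. The paper does \emph{not} push the Wiener--Hopf decomposition of Proposition~\ref{p:firstod0} to the next order; instead it passes to the logarithmic derivative
\[
\frac{\del_z\cC(z)}{\cC(z)}=\frac{\del_z\cA(z)}{\cA(z)}+\frac{\del_z\cB(z)}{\cB(z)}+\del_z\ln\bigl(1+\varepsilon\cE^{(1)}(z)\bigr),
\]
and then must first prove a separate \textbf{Claim} (via the argument principle and Remark~\ref{r:cAud}) that $\cC(z)$ has no zeros in $\Lambda$, so that $\del_z\cC/\cC$ is holomorphic inside $\omega_-$ and can be killed by a contour integral. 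After that, an integration by parts and the residue--at--infinity trick for $\del_u\ln\mathbf A(u)$ give \eqref{e:yao1} directly at the level of derivatives.

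Your approach bypasses the zero--counting step entirely: by defining $g(w):=\varepsilon^{-1}\bigl(\cA(w)e^{-h_-(\sfb(w))}-1\bigr)$ exactly (your ``$+\OO(\varepsilon^2)$'' is then just zero, which is harmless) and repeating the $\omega_-\to\omega_+$ deformation of Proposition~\ref{p:firstod0} with one more term retained, you obtain the second--order expansion of $\ln\cA(z)$ itself and differentiate at the end. This is economical --- it reuses the first--order machinery verbatim and needs only the already established bounds $|\cA|\asymp 1$, $|\cB|\geq c$, $|\cE^{(1)}|\leq C$ to control the cross term $\varepsilon^2 g\,\cE^{(1)}$. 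The paper's route, by contrast, never writes an expansion of $\ln\cA$ to order $\varepsilon$ and works with derivatives throughout, at the cost of the extra zero--free claim. For the asymptotics \eqref{e:defnewc1} of $\cE^{(1)}$ the two proofs coincide.
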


\begin{proof} We recall the result of Proposition \ref{p:simplfyC}  and its logarithmic derivative:
\begin{align}\begin{split}
\cC(z)&=\cA(z)\cB(z)\left(1+\varepsilon \cE^{(1)}(z)\right), \\
 \frac{\del_z \cC(z)}{\cC(z)}&=\frac{\del_z \cA(z)}{\cA(z)}+\frac{\del_z\cB(z)}{\cB(z)}+\del_z \ln\bigl(1+\varepsilon \cE^{(1)}(z)\bigr).\label{e:dzC}
\end{split}\end{align}

Thanks to Assumption \ref{a:stable}, Proposition \ref{p:simplfyC}, and Proposition \ref{p:firstod0}, whenever $z$ belongs to a compact subset of $\Lambda\setminus[\bl,\br]$, we have
$$c\leq |\cA(z)|\leq 1/c, \qquad c\leq |\cB(z)|\leq 1/c, \qquad c\leq |\cE^{(1)}(z)|\leq 1/c,$$
for a constant $c$ depending on the choice of this compact set, but not on $z$. Hence, all terms in the right-hand side of \eqref{e:dzC} are holomorphic for $z\in \Lambda\setminus[\bl,\br]$.

\smallskip

\begin{claim} If $\eps$ is small enough, then $\del_z \cC(z)/\cC(z)$ is holomorphic for all $z\in \Lambda$ (including $[\bl,\br]$).
\end{claim}

\noindent To prove the claim, we note that $\cC(z)$ is holomorphic by Theorem \ref{t:loopeq}, hence, we only need to show that it has no zeros. Outside $[\bl,\br]$ there are no zeros by \eqref{e:dzC} and we only focus on a neighborhood of $[\bl,\br]$. By the Cauchy's argument principle, the total number of zeros is $\cC(z)$ in a neighborhood of $[\bl,\br]$ can be computed as
\begin{equation}
\label{e:aC0}
 \frac{1}{2\pi\ri}\oint_{\omega}\frac{\del_z \cC(z)}{\cC(z)}\rd z,
\end{equation}
where $\omega$ is a contour enclosing this neighborhood. Hence, we need to prove that \eqref{e:aC0} vanishes. Using \eqref{e:dzC}, Assumption \ref{a:stable}, and definition \eqref{e:deftA} we transform \eqref{e:aC0} as
\begin{align}
 \notag\frac{1}{2\pi \ri}\oint_{\omega}\left(\frac{\del_z \cA(z)}{\cA(z)}+\frac{\del_z \cB(z)}{\cB(z)}+\del_z\ln (1+\varepsilon  \cE^{(1)}(z))\right)\rd z&=\frac{1}{2\pi \ri}\oint_{\omega}\frac{\del_z \cA(z)}{\cA(z)}\rd z+\OO(\varepsilon)\\
&=\frac{1}{2\pi \ri}\oint_{\sfb(\omega)}\frac{\del_u {\mathbf A}(u)}{\mathbf A(u)}\rd u+\OO(\varepsilon). \label{e:zeros_count}
\end{align}
Remark \ref{r:cAud} guarantees that $\del_u {\mathbf A}(u)/\mathbf A(u)=\del_u \ln \mathbf A(u)$ is holomorphic outside $\omega$; therefore, we can compute the contour integral in the right-hand side of \eqref{e:zeros_count} as the residue at the infinity. Since ${\mathbf A}(u)=1+\frac{c}{u}+\dots$ as $u\to\infty$, we have $\del_u \ln {\mathbf A}(u)\sim -c/u^2$ and the residue vanishes. We conclude that \eqref{e:zeros_count} is $\OO(\eps)$ and, hence, so is \eqref{e:aC0}. Because this number is an integer counting the number of zeros, it has to vanish for small $\eps$. The claim is proven.

\bigskip

Because of the claim, we can use a contour integral to get rid of $\del_z\cC(z)/\cC(z)$ and recover $\del_z \cA(z)/\cA(z)$. We take a contour $\cin\subset \Lambda$ enclosing $[\bl, \br]$ and $z$ outside this contour. By performing the same contour integral on both sides of \eqref{e:dzC}, using Assumption \ref{a:stable} and the claim we get
\begin{align}\begin{split}\label{e:expza}
&\phantom{{}={}}-\frac{1}{2\pi \ri}\oint_{\cin}\frac{\del_w \cA(w)}{\cA(w)}\frac{\sfb'(z)\rd w}{\sfb(w)-\sfb(z)}\\
&=
\frac{1}{2\pi \ri}\oint_{\cin}\left(\frac{\del_w \cB(w)}{\cB(w)}+\del_w\ln(1+\varepsilon \cE^{(1)}(w))-\frac{\del_w \cC(w)}{\cC(w)}\right)\frac{\sfb'(z)\rd w}{\sfb(w)-\sfb(z)}\\
&=\frac{1}{2\pi \ri}\oint_{\cin}\frac{\ln \cB(w)\sfb'(w)\sfb'(z)\rd w}{(\sfb(w)-\sfb(z))^2}+
\frac{1}{2\pi \ri}\oint_{\cin}\frac{\ln(1+\varepsilon \cE^{(1)}(w))\sfb'(w)\sfb'(z)\rd w}{(\sfb(w)-\sfb(z))^2}\\
&=\frac{1}{2\pi \ri}\oint_{\cin}\frac{\ln \cB(w) \sfb'(z)\sfb'(w)\rd w}{(\sfb(w)-\sfb(z))^2}+
\frac{\varepsilon}{2\pi \ri}\oint_{\cin}  \cE^{(1)}(w)\frac{\sfb'(w)\sfb'(z)\rd w}{(\sfb(w)-\sfb(z))^2}
+\OO\left(\varepsilon^2 \right).
\end{split}\end{align}
Here, in the second line, we used that  $\del_w \cC(w)/\cC(w)$ is holomorphic inside the contour $\cin$, thus, the corresponding contour integrals vanish. Let us transform the left-hand side of \eqref{e:expza}. For that we change the variable $u=\sfb(w)$ and use ${\mathbf A}(u)=\cA(w)$ from \eqref{e:deftA} to get:
\begin{align}
\begin{split}\label{e:dAdz2}
-\frac{1}{2\pi \ri}\oint_{\cin}\frac{\del_w \cA(w)}{\cA(w)}\frac{\sfb'(z)\rd w}{\sfb(w)-\sfb(z)}
&=\frac{\del_z \cA(z)}{\cA(z)}-\frac{1}{2\pi \ri}\oint_{\cout}\frac{\del_w \cA(w)}{\cA(w)}\frac{\sfb'(z)\rd w}{\sfb(w)-\sfb(z)}\\
&=\frac{\del_z \cA(z)}{\cA(z)}-\frac{1}{2\pi \ri}\oint_{\sfb(\cout)}\frac{\del_u {\mathbf A}(u)}{{\mathbf A}(u)}\frac{\sfb'(z)\rd u}{u-\sfb(z)}
=\frac{\del_z \cA(z)}{\cA(z)},
\end{split}\end{align}
where the contour $\cout\subset \Lambda$ encloses $[\bl, \br]$ and $z$; for the last inequality, we used the same computation of the integral as the residue at $u=\infty$, as for the integral in the right-hand side of \eqref{e:zeros_count}.

The combination of \eqref{e:expza} with \eqref{e:dAdz2} gives \eqref{e:yao1} and it remains to prove the asymptotic expansion \eqref{e:defnewc1} for $\cE^{(1)}(w)$. By plugging \eqref{e:cE00} into \eqref{e:defcE}, we have the following expression for $\cE^{(1)}(w)$ with $w\in \Lambda\setminus[\bl,\br]$
\begin{align}\begin{split}\label{e:cE1est2}
\cE^{(1)}(w)=\frac{\theta \varphi^+(w)\cG(w)}{\cA(w)\cB(w)}\bE\left[\prod_{i=1}^n \frac{\sfb(w)-\sfb(\varepsilon x_i+\varepsilon\theta e_i)}{\sfb(w)-\sfb(\varepsilon x_i)}\left( \frac{\theta}{\eps}\int_\bl^\br \frac{\sfb'(w)(\rho(s;\bmy)-\rho(s;\bmx))}{\sfb(w)-\sfb(s)}\rd s\right)\right]\\
+\frac{\varphi^+(w)\cG(w)}{\cB(w)}
\frac{\theta^2}{2}\int_\bl^\br \frac{\sfb''(z)(\sfb(z)-\sfb(s))-(\sfb'(z))^2-\sfb'(z)\sfb'(s)}{(\sfb(z)-\sfb(s))^2} \rho(s;\bmx)\rd s
+\OO(\varepsilon),
\end{split}\end{align}
where the uniform control over $\OO(\eps)$ term relies on $|\cA(w)|\geq c$ and  $|\cB(w)|\geq c$ from Proposition \ref{p:firstod0} and Assumption \ref{a:stable}, respectively. The second term in the right-hand side of \eqref{e:cE1est2} matches the second line in \eqref{e:defnewc1} and it remains to study the first term on the righthand side of \eqref{e:cE1est2} as $\eps\to 0$.
It follows from combining \eqref{e:dzAexp} with upper bound \eqref{e:Aupbb} that
\begin{align}\label{e:aterm2}
(\del_w \ln \cA(w)) \cA(w)&=  \bE\left[\prod_{i=1}^n  \frac{\sfb(w)-\sfb(\varepsilon x_j+\varepsilon \theta e_i) }{\sfb(w)-\sfb(\varepsilon x_j)}  \left(\frac{\theta}{\varepsilon}\int_\bl^\br \frac{\sfb'(w)(\rho(s;\bmy)-\rho(s;\bmx))}{\sfb(w)-\sfb(s)}\rd s
\right)\right]+\OO(\varepsilon).
\end{align}
On the other hand, differentiating \eqref{e:yao10} (we can differentiate the asymptotic expansion, because we deal with holomorphic functions and can represent derivative through the Cauchy integral formula), we get
\begin{equation}\label{e:aterm3}
\del_w \ln \cA(w)=\frac{1}{2\pi \ri}\oint_{\omega'_-}\frac{\ln \cB(u)\sfb'(u)\sfb'(w)}{(\sfb(u)-\sfb(w))^2}\rd u+\OO(\varepsilon),
\end{equation}
where the contour $\omega'_-\in \Lambda$ encloses $[\bl, \br]$, but not $w$. Combining \eqref{e:aterm2} with \eqref{e:aterm3} gives  the desired expression for the first term in the right-hand side of \eqref{e:cE1est2}, which matches the first line in \eqref{e:defnewc1}.
\end{proof}
\begin{remark} \label{r:cAud_second_order}
 Similarly to Remark \ref{r:cAud}, we can rewrite the statement of Proposition \ref{p:firstod} in terms of the function ${\mathbf A}(u)$ of \eqref{e:deftA}:
 \begin{equation}\label{e:yao1_second_form}
\frac{\del_u {\mathbf A}(u)}{{\mathbf A}(u)}=\frac{1}{2\pi \ri}\oint_{\cin}\frac{\ln \cB(w) \sfb'(w)}{(\sfb(w)-u)^2}\rd w+
\frac{\varepsilon}{2\pi \ri}\oint_{\cin}  \cE^{(1)}(w)\frac{\sfb'(w)}{(\sfb(w)-u)^2}\rd w
+\OO\left(\varepsilon^2 \right).
\end{equation}
The advantage of \eqref{e:yao1_second_form} is that $u$ can be taken in it to be an arbitrary point in $\mathbb C\setminus [\sfb(\bl),\sfb(\br)]$, as long as $\cin$ is chosen so that $u$ is outside $\sfb(\omega_-)$. The proof of this extension is the same as in Remark \ref{r:cAud}.
\end{remark}

\subsection{Mean Estimate}\label{s:mean}
In this subsection we compute the first two terms in the right-hand side of \eqref{e:dmg} in Theorem \ref{t:loopstudy}.
\begin{proposition}\label{p:second}
Under Assumptions \ref{a:asymp}, \ref{a:stable} in the probabilistic case \eqref{e:probabilistic_case} we have for $z\in\Lambda\setminus[\bl,\br]$:
\begin{align*}
\bE\left[\frac{\theta}{\varepsilon}\int_\bl^\br \frac{\sfb'(z)(\rho(s;\bmy)-\rho(s;\bmx))}{\sfb(z)-\sfb(s)} \rd s\right]
&=\frac{1}{2\pi \ri}\oint_{\cin}\frac{\ln \cB(w)\sfb'(z)\sfb'(w)}{(\sfb(w)-\sfb(z))^2}\rd w\\
&+
\frac{\varepsilon}{2\pi \ri}\oint_{\cin}  \cE^{(2)}(w,z)\frac{\sfb'(w)\sfb'(z)}{(\sfb(w)-\sfb(z))^2} \rd w
+\OO\left(\varepsilon^2 \right),
\end{align*}
where the contour $\cin\subset \Lambda$ encloses $[\bl, \br]$, but not $z$, and
\begin{align}\begin{split}\label{e:cE2hi}
\cE^{(2)}(w,z)&=\frac{\varphi^+(w)\cG(w)}{\cB(w)}\left(\frac{\theta}{2\pi \ri}\oint_{\omega'_-}\frac{\ln\cB(u) \sfb'(w)\sfb'(u) }{(\sfb(u)-\sfb(w))^2}\rd u +\frac{\theta \sfb'(w)}{\sfb(z)-\sfb(w)}\right.\\
&\quad \left.+\frac{\theta^2}{2}\int_\bl^\br \frac{\sfb''(z)(\sfb(z)-\sfb(s))-(\sfb'(z))^2-\sfb'(z)\sfb'(s)}{(\sfb(z)-\sfb(s))^2} \rho(s;\bmx)\rd s\right)
-\frac{\del_{w} \cB(w)}{2\cB(w)},
\end{split}\end{align}
the contour $\omega'_-\subset \Lambda$ encloses  $[\bl,\br]$, but not $w$. If we assume that $z$ belongs to a compact subset of $\Lambda\setminus[\bl,\br]$, then the error $\OO(\eps^2)$ is uniformly (in $z$) small.
\end{proposition}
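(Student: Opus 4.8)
\textbf{Proof proposal for Proposition \ref{p:second}.}

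The plan is to reduce the statement about the expectation $\bE\bigl[\frac{\theta}{\eps}\int_\bl^\br \frac{\sfb'(z)(\rho(s;\bmy)-\rho(s;\bmx))}{\sfb(z)-\sfb(s)}\rd s\bigr]$ to the already-established asymptotic expansion of $\del_z\cA(z)/\cA(z)$ from Proposition \ref{p:firstod}. The key obstruction, as emphasized in the discussion around \eqref{e:A_exp}--\eqref{e:empirical_increment}, is that $\cA(z)$ encodes the \emph{exponentiated} linear statistic $\bE[\exp(-\eps\theta\sum_i \frac{e_i\sfb'(\eps x_i)}{\sfb(z)-\sfb(\eps x_i)}+\oo(1))]$ rather than the expectation of the statistic itself, and there is no general relation between $\bE\xi$ and $\bE e^\xi$. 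I would resolve this by a \emph{tilting trick}: introduce a small real (or complex) parameter $\tau$ and a deformed weight $\phi^\pm_\tau$, chosen so that the measure \eqref{e:m1ccopy} with $\phi^\pm$ replaced by $\phi^\pm_\tau$ reproduces, as a ratio of partition functions, exactly the quantity $\del_z\cA(z)/\cA(z)$ at $\tau=0$ while still satisfying Assumptions \ref{a:asymp} and \ref{a:stable} for small $\tau$ and $\eps$. Concretely, differentiating the logarithm of $\cA(z)$ in a suitable direction corresponds to inserting a factor like $\exp\bigl(-\eps\theta\,\tau\,\frac{\sfb'(\eps x_i)}{\sfb(z)-\sfb(\eps x_i)}\,e_i\bigr)$ in each factor of $\prod_{i=1}^n$; this amounts to multiplying $\phi^+(x_i)$ by $1+\OO(\eps)$ and keeping $\phi^-$ unchanged, so the new weight is a legitimate (complex) weight of the form \eqref{e:m1ccopy} to which all of Section \ref{s:simplfyA}, Proposition \ref{p:firstod0}, and Proposition \ref{p:firstod} apply \emph{verbatim} (these were proven in the complex case as well as the probabilistic one).

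The execution has three steps. \textbf{Step 1:} Write $\del_z\ln\cA(z)$ as $(\del_z\cA(z))/\cA(z)$ and, via \eqref{e:dzAexp}, identify $\del_z\cA(z)/\cA(z)$ with $\bE_{\nu}\bigl[\frac{\theta}{\eps}\int_\bl^\br\frac{\sfb'(z)(\rho(s;\bmy)-\rho(s;\bmx))}{\sfb(z)-\sfb(s)}\rd s\bigr]+\OO(\eps)$, where $\bE_\nu$ is expectation against the \emph{tilted} probability measure $\nu$ whose Radon--Nikodym derivative with respect to the original measure is proportional to $\prod_{i=1}^n\frac{\sfb(z)-\sfb(\eps x_i+\eps\theta e_i)}{\sfb(z)-\sfb(\eps x_i)}$ (this is exactly the product appearing inside the expectation in \eqref{e:dzAexp}); the normalization of $\nu$ is $\cA(z)$ itself. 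Since the tilting factor equals $1+\OO(\eps)$ uniformly on compacts of $\Lambda\setminus[\bl,\br]$, one has $\bE_\nu = \bE + \OO(\eps)\cdot(\text{bounded})$, but to get the answer to order $\eps$ one cannot merely drop the tilt; instead one observes that $\nu$ is \emph{itself} of the form \eqref{e:m1ccopy} with modified weights $\tilde\phi^\pm(x)=\phi^\pm(x)\cdot\bigl(\frac{\sfb(z)-\sfb(x+\eps\theta)}{\sfb(z)-\sfb(x)}\bigr)^{[\pm]}$ absorbed appropriately — so $\nu$ has its own $\tilde\cB$, $\tilde\cG$, and the machinery applies to $\nu$ directly. \textbf{Step 2:} Apply Proposition \ref{p:firstod} in the form \eqref{e:yao1}, which gives
$$
\frac{\del_z\cA(z)}{\cA(z)}=\frac{1}{2\pi\ri}\oint_{\cin}\frac{\ln\cB(w)\,\sfb'(z)\sfb'(w)}{(\sfb(w)-\sfb(z))^2}\rd w+\frac{\eps}{2\pi\ri}\oint_{\cin}\cE^{(1)}(w)\frac{\sfb'(w)\sfb'(z)}{(\sfb(w)-\sfb(z))^2}\rd w+\OO(\eps^2),
$$
then substitute the expansion \eqref{e:defnewc1} for $\cE^{(1)}(w)$. \textbf{Step 3:} Compare the two expressions for $\del_z\cA(z)/\cA(z)$. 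The $\OO(1)$ terms match immediately. For the $\OO(\eps)$ terms, the difference between $\bE_\nu$ and $\bE$ (the effect of the tilt) contributes precisely the extra summand $\frac{\varphi^+(w)\cG(w)}{\cB(w)}\cdot\frac{\theta\,\sfb'(w)}{\sfb(z)-\sfb(w)}$ inside $\cE^{(2)}(w,z)$ — this is the ``cross term'' coming from differentiating the tilting exponent once more — while the term $-\frac{\del_w\cB(w)}{2\cB(w)}$ in \eqref{e:cE2hi} arises from the second-order Taylor correction in \eqref{e:expaa} (the $-\frac12\int(f'(y+\eps)-f'(y))$ piece that was bounded by $\OO(\eps)$ in Proposition \ref{p:simplfyA} but must be tracked here) combined with the logarithmic-derivative identity $\del_w\ln(1+\eps\cE^{(1)})=\eps\,\del_w\cE^{(1)}+\OO(\eps^2)$. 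Assembling, one reads off $\cE^{(2)}(w,z)=\cE^{(1)}(w)+\frac{\varphi^+(w)\cG(w)}{\cB(w)}\frac{\theta\sfb'(w)}{\sfb(z)-\sfb(w)}-\frac{\del_w\cB(w)}{2\cB(w)}+\OO(\eps)$, which upon inserting \eqref{e:defnewc1} is exactly \eqref{e:cE2hi}.

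The main obstacle I anticipate is \textbf{Step 1} — making the reduction from $\bE$ to $\del_z\ln\cA(z)$ fully rigorous, i.e.\ verifying that the tilted measure $\nu$ genuinely satisfies Assumptions \ref{a:asymp} and \ref{a:stable} (with the same constants up to $\OO(\eps)$ perturbations) and carefully bookkeeping which contributions to $\cE^{(2)}$ come from the tilt versus from the second-order terms already present in Proposition \ref{p:firstod}. In particular one must check that the new $\tilde\cB_\nu$ still satisfies the non-vanishing and winding-number conditions of Assumption \ref{a:stable}; this follows because $\tilde\cB_\nu(w)=\cB(w)\cdot(1+\OO(\eps))$ uniformly on compacts of $\Lambda\setminus[\bl,\br]$, so for $\eps$ small the bounds $c<|\tilde\cB_\nu|<c^{-1}$ and the vanishing of $\oint\del_w\ln\tilde\cB_\nu$ persist by a standard continuity/degree argument. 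The uniformity of the $\OO(\eps^2)$ error over $z$ in compacts of $\Lambda\setminus[\bl,\br]$ is inherited from the corresponding uniformity statements in Propositions \ref{p:simplfyA}, \ref{p:simplfyC}, \ref{p:firstod0}, and \ref{p:firstod}, since only finitely many contour integrals over fixed contours $\omega_-,\omega'_-,\cin$ are involved.
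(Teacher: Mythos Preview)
Your overall strategy—deform the measure so as to convert the exponential average into a plain expectation—is exactly the paper's idea, and your identification of where each piece of $\cE^{(2)}(w,z)$ comes from is correct. But the execution in Steps 1--3 has the tilt pointing the wrong way, which leaves Step 3 unjustified.

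You (correctly) observe that $\partial_z\ln\cA(z)=\bE_\nu[\Xi(z)]$, where $\Xi(z)$ is the quantity in brackets in \eqref{e:dzAexp} and $\nu$ is the measure tilted \emph{by} the product $\prod_i\frac{\sfb(z)-\sfb(\eps x_i+\eps\theta e_i)}{\sfb(z)-\sfb(\eps x_i)}$. Together with Proposition~\ref{p:firstod} this pins down $\bE_\nu[\Xi(z)]$ to order $\eps$. To recover $\bE[\Xi(z)]$ you then need the correction $\bE[\Xi]-\bE_\nu[\Xi]=-\mathrm{Cov}(\Xi,T)/\cA(z)$, a covariance of two linear statistics in the $e_i$'s. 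You assert this produces the $\frac{\theta\sfb'(w)}{\sfb(z)-\sfb(w)}$ term, which is true, but \emph{proving} it to $\OO(\eps^2)$ is essentially the covariance computation of Proposition~\ref{p:cov}, whose proof in the paper relies on (the first-order part of) Proposition~\ref{p:second} itself. So as written there is a circularity, or at least an unexplained step.

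The paper avoids this by tilting in the \emph{inverse} direction: it fixes a point $v$ and sets $\tilde a_\bme\propto a_\bme\prod_i\bigl(\frac{\sfb(v)-\sfb(\eps x_j)}{\sfb(v)-\sfb(\eps x_j+\eps\theta)}\bigr)^{e_i}$ (this is \eqref{e:tae0}, with $\tilde\varphi^+(z)=\varphi^+(z)\frac{\sfb(v)-\sfb(z)}{\sfb(v)-\sfb(z+\eps\theta)}$). Writing $\partial_z\tilde\cA(z)$ via \eqref{e:dzAexp} and setting $z=v$, the inverse tilt and the product inside the derivative formula cancel \emph{exactly}, so that
\[
\partial_z\ln\tilde\cA(z)\Big|_{z=v}=\bE[\Xi(v)]
\]
with the \emph{original} expectation on the right; no untilting is needed. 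One then applies Proposition~\ref{p:firstod} to $\tilde\cA$, extends to $z=v$ via Remark~\ref{r:cAud_second_order}, and reads off the answer: the extra $\frac{\varphi^+\cG}{\cB}\cdot\frac{\theta\sfb'(w)}{\sfb(v)-\sfb(w)}$ term in $\cE^{(2)}$ emerges directly from the expansion $\ln\tilde\cB=\ln\cB+\eps\theta\frac{\cG\varphi^+}{\cB}\frac{\sfb'}{\sfb(v)-\sfb}+\OO(\eps^2)$ (this is \eqref{e:changeBB}), and the $-\tfrac12\partial_w\ln\cB$ term arises, as you said, from the second-order piece of \eqref{e:dzAexp} processed through the first-order formula (\eqref{e:secondtt}--\eqref{e:secondtt_2}). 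One secondary point you underestimate: verifying the complex bound \eqref{e:totalabsmass} for $\tilde a_\bme$ is not immediate, because the normalizing denominator in \eqref{e:tae0} is a signed sum and could in principle be small; the paper handles this in Claim~\ref{c:checkC} by introducing an auxiliary \emph{positive} tilt $\hat a_\bme$ (tilting at both $v$ and $\bar v$) and applying Proposition~\ref{p:firstod0} to it.
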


\begin{proof}
We fix an arbitrarily  small $\eta>0$,  take $v$ with $\dist(v,[\bl,\br])>2\eta$, and consider a deformation of the transition probability $\bP(\bmx+\bme|\bmx)=a_\bme$ of \eqref{e:m1ccopy}:
\begin{align}\label{e:tae0}
\tilde a_\bme:= \frac{a_\bme \prod\limits_{i=1}^n  \left(\frac{\sfb(v)-\sfb(\varepsilon x_j) }{\sfb(v)-\sfb(\varepsilon x_j+\varepsilon \theta)} \right)^{e_i}}{\sum\limits_{\bme'\in\{0,1\}^n} a_{\bme'} \prod\limits_{i=1}^n  \left(\frac{\sfb(v)-\sfb(\varepsilon x_j )}{\sfb(v)-\sfb(\varepsilon x_j+\varepsilon\theta)}\right)^{e'_i}}.
\end{align}
Transition probability $\tilde a_\bme$ is again in the form \eqref{e:m1ccopy} but with modified function $\varphi^+(z)$ in \eqref{e:asymp}:
\begin{align}\label{e:defvarphi}
\tilde  \varphi^+(z)=\varphi^+(z) \cdot \frac{\sfb(v)-\sfb(z)}{\sfb(v)-\sfb(z+\varepsilon \theta)}.
\end{align}

\begin{claim}\label{c:checkC}
The transition probabilities $\tilde a_\bme$ satisfy Assumptions \ref{a:asymp} and \ref{a:stable} in the complex case \eqref{e:totalabsmass} with $\Lambda$ replaced by $\Lambda\cap \{z\in \mathbb C\mid \dist(z,[\bl,\br])<\eta\}$.
\end{claim}
We postpone proving Claim \ref{c:checkC} until after we finish the proof of Proposition \ref{p:second}. We let $\tilde\bE$ denote expectation for the transition probability $\tilde a_\bme$ and define the quantities $\tilde  \cA(z)$ and $\tilde  \cB(z)$ through
\begin{align}\begin{split}\label{e:defhB}
\tilde  \cA(z)
&=\sum_{\bme\in\{0,1\}^n}\tilde  a_\bme\prod_{i=1}^n\frac{\sfb(z)-\sfb(\varepsilon x_i+ \varepsilon\theta  e_i)}{\sfb(z)-\sfb(\varepsilon x_i)}=\tilde \bE\left[\prod_{i=1}^n\frac{\sfb(z)-\sfb(\varepsilon x_i+ \varepsilon\theta  e_i)}{\sfb(z)-\sfb(\varepsilon x_i)}\right],\\
\tilde  \cB(z)&=\cG(z)\tilde  \varphi^+(z)+ \varphi^-(z), \qquad  \qquad \cG(z)=\exp\left[\theta\int_{\bl}^{\br}\frac{\sfb'(z)\rho(s;\bmx)}{\sfb(z)-\sfb(s)}\rd s\right].
\end{split}\end{align}
We can rewrite $\ln \tilde \cB(z)$ in terms of $\ln \cB(z)$ from \eqref{e:B_function} with a small error:
\begin{align}\begin{split}\label{e:changeBB}
\ln \tilde \cB(z)
&=\ln \left(\cG(z) \varphi^+(z) \frac{\sfb(v)-\sfb(z)}{\sfb(v)-\sfb(z+\varepsilon \theta)}+\varphi^-(z)\right)\\
&=\ln \cB(z)
+\ln \left(1+\frac{\cG(z) \varphi^+(z)}{\cB(z)} \cdot  \frac{\sfb(z+\varepsilon \theta)-\sfb(z)}{\sfb(v)-\sfb(z+\varepsilon \theta)}\right)\\
&=\ln \cB(z)+\frac{\varepsilon \theta\cG(z) \varphi^+(z)}{\cB(z)}\cdot \frac{\sfb'(z)}{\sfb(v)-\sfb(z)}+\OO(\varepsilon^2).
\end{split}\end{align}
Applying Proposition \ref{p:firstod}, we have
\begin{align}\label{e:dtAz'}
\del_{z}\ln \tilde \cA(z)
=\frac{1}{2\pi \ri}\oint_{\cin}\frac{\ln \tilde \cB(w)\sfb'(w)\sfb'(z)}{(\sfb(w)-\sfb(z))^2}\rd w+
\frac{\varepsilon}{2\pi \ri}\oint_{\cin} \frac{ \tilde \cE^{(1)}(w) \sfb'(w)\sfb'(z)}{(\sfb(w)-\sfb(z))^2}\rd w
+\OO\left(\varepsilon^2 \right),
\end{align}
where the contour $\cin\subset \Lambda\cap \{z\in \mathbb C\mid \dist(z,[\bl,\br])<\eta\}$ encloses $[\bl, \br]$, but not $v$ or $z$. Here
\begin{align}\begin{split}\label{e:rcE1}
\tilde \cE^{(1)}(w)&=\frac{\tilde \varphi^+(w)\cG(w)}{\tilde \cB(w)}\left(\frac{\theta}{2\pi \ri}\oint_{\omega'_-}\frac{\ln \tilde \cB(u)\sfb'(w)\sfb'(u)}{(\sfb(u)-\sfb(w))^2}\rd u\right.\\
&\left.+\frac{\theta^2}{2}\int_\bl^\br \frac{\sfb''(z)(\sfb(z)-\sfb(s))-(\sfb'(z))^2-\sfb'(z)\sfb'(s)}{(\sfb(z)-\sfb(s))^2} \rho(s;\bmx)\rd s\right)+\OO(\varepsilon),
\end{split}\end{align}
where the contour $\omega'_-\subset  \Lambda\cap \{z\in \mathbb C\mid \dist(z,[\bl,\br])<\eta\}$ encloses  $[\bl, \br]$, but not $w$.
Using \eqref{e:changeBB}, we rewrite
\begin{align}\begin{split}\label{e:dtAzpcopy}
\del_{z}\ln \tilde \cA(z)
&=\frac{1}{2\pi \ri}\oint_{\cin}\frac{\ln \cB(w)\sfb'(w)\sfb'(z)}{(\sfb(w)-\sfb(z))^2}\rd w\\
&+
\frac{\varepsilon}{2\pi \ri}\oint_{\cin} \left(\cE^{(1)}(w)+\frac{\theta \cG(w)\varphi^+(w)}{\cB(w)}\frac{\sfb'(w)}{\sfb(v)-\sfb(w)}\right)\frac{ \sfb'(w)\sfb'(z)}{(\sfb(w)-\sfb(z))^2}\rd w
+\OO\left(\varepsilon^2 \right),
\end{split}\end{align}
where $\cE^{(1)}(w)$, as in \eqref{e:defnewc1}, is obtained from $\tilde \cE^{(1)}(w)$ by replacing $\tilde \varphi^+$ and $\tilde \cB$ with $\varphi^+$ and  $\cB$, respectively.

Note that we used Proposition \ref{p:firstod} under Claim \ref{c:checkC} to prove \eqref{e:dtAzpcopy}. Hence, $z$ in \eqref{e:dtAzpcopy} should belong to the set $\Lambda\cap \{z\in \mathbb C\mid \dist(z,[\bl,\br])<\eta\}$. However, we can extend to all $z$ in $\Lambda$ outside $\omega_-$ by using Remark \ref{r:cAud_second_order}; this is important for us, because we later would like to set $z=v$.

On the other hand, using \eqref{e:dzAexp} we have
\begin{align}\label{e:secondod}
\del_{z}\ln \tilde \cA(z)=\bE\left[\int_\bl^\br \frac{\theta}{\varepsilon}\frac{\sfb'(z)(\rho(s;\bmy)-\rho(s;\bmx))}{\sfb(z)-\sfb(x)}\rd s
-\frac{\theta}{2} \frac{ \sfb'(z)\sfb'(s)(\rho(s;\bmy)-\rho(s;\bmx))}{(\sfb(z)-\sfb(s))^2}\rd s +\OO(\varepsilon^2)\right],
\end{align}
where we emphasize that the right-hand side involves the expectation $\bE$ with respect to the original un-deformed measure.

By comparing the leading terms in \eqref{e:dtAzpcopy} and \eqref{e:secondod}, we obtain
\begin{align}\label{e:recall1}
\bE\left[\frac{\theta}{\varepsilon}\int_\bl^\br \frac{\sfb'(z)(\rho(s;\bmy)-\rho(s;\bmx))}{\sfb(z)-\sfb(s)}\rd s\right]
=\frac{1}{2\pi \ri}\oint_{\cin}\frac{\ln  \cB(w)\sfb'(w)\sfb'(z)}{(\sfb(w)-\sfb(z))^2}\rd w+\OO\left(\varepsilon\right),
\end{align}
where the contour $\omega_-$ encloses $[\bl,\br]$, but not $z$.
Introducing another contour $\omega_-'\subset\Lambda$, which contains $\omega_-$, but not $z$, and using \eqref{e:recall1}, we can rewrite the second term on the righthand side of \eqref{e:secondod} as a contour integral
\begin{align}\begin{split}
&\phantom{{}={}}\bE\biggl[\frac{\theta}{2}\int_\bl^\br \frac{ \sfb'(z)\sfb'(s)(\rho(s;\bmy)-\rho(s;\bmx))}{(\sfb(z)-\sfb(s))^2}\rd s \biggr]\\
&=\frac{1}{2\pi\ri}\oint_{\omega'_-} \frac{\sfb'(z)\sfb'(u)}{(\sfb(u)-\sfb(z))^2}\bE\left[\frac{\theta}{2}\int_\bl^\br \frac{\sfb'(u)(\rho(s;\bmy)-\rho(s;\bmx))}{\sfb(u)-\sfb(s)} \rd s \right] \rd u\\
&=\frac{\varepsilon}{4\pi \ri}\oint_{\omega'_-}\frac{\sfb'(z)\sfb'(u)}{(\sfb(u)-\sfb(z))^2}\left(\frac{1}{2\pi \ri}\oint_{\cin}\frac{\ln  \cB(w)\sfb'(w)\sfb'(u)}{(\sfb(w)-\sfb(u))^2} \rd w
 +\OO(\varepsilon)\right)\rd u. \label{e:secondtt}
 \end{split}\end{align}
We first integrate in $u$ by using an identity valid for any holomorphic function $f(u)$:
 $$
  \oint\limits_{\text{around } w} \frac{f(u) \sfb'(u)}{(\sfb(w)-\sfb(u))^2} \rd u=   \oint\limits_{\text{around } \sfb(w)} \frac{f(\sfb^{-1}(a)) }{(\sfb(w)-a)^2} \rd a= 2\pi \ii    \del_a \Bigl[f(\sfb^{-1}(a))\Bigr]_{a=\sfb(w)}=2\pi \ii    \frac{f'(w)}{\sfb'(w)}.
 $$
Thus, taking $f(u)=\sfb'(z) \sfb'(u)/ (\sfb(u)-\sfb(z))^2$ in the above identity, we simplify \eqref{e:secondtt} to
 \begin{align} \label{e:secondtt_2}
\frac{\varepsilon}{4\pi \ri}\oint_{\cin}\del_u\left[\frac{\sfb'(z) \sfb'(u)}{(\sfb(u)-\sfb(z))^2}\right]_{u=w}   \ln \cB(w) \rd w
=-\frac{\varepsilon}{4\pi \ri}\oint_{\cin}\frac{\sfb'(z)\sfb'(w)}{(\sfb(w)-\sfb(z))^2}     \frac{\del_w \cB(w)}{\cB(w)} \rd w +\OO(\varepsilon^2),
\end{align}
where we integrated by parts in the last identity. Plugging \eqref{e:secondtt}, \eqref{e:secondtt_2} into \eqref{e:secondod}, and rearranging, we get
\begin{align*}\begin{split}
\bE\left[\frac{\theta}{\varepsilon}\int \frac{\sfb'(z)(\rho(s;\bmy)-\rho(s;\bmx))\rd s}{\sfb(z)-\sfb(s)}\right]
&=\frac{1}{2\pi \ri}\oint_{\cin}\frac{\ln \cB(w)\sfb'(z)\sfb'(w)\rd w}{(\sfb(w)-\sfb(z))^2}\\
&+
\frac{\varepsilon}{2\pi \ri}\oint_{\cin}  \cE^{(2)}(w,z)\frac{\sfb'(w)\sfb'(z)\rd w}{(\sfb(w)-\sfb(z))^2}
+\OO\left(\varepsilon^2 \right),
\end{split}\end{align*}
where, with $\cE^{(1)}(w)$ as in \eqref{e:defnewc1}, we have:
$$
\cE^{(2)}(w,z):=\cE^{(1)}(w)+\frac{\theta \cG(w)\varphi^+(w)}{\cB(w)}\frac{\sfb'(w)}{\sfb(z)-\sfb(w)}-\frac{\del_{w} \cB(w)}{2\cB(w)}. \qedhere
$$
\end{proof}

\begin{proof}[Proof of Claim \ref{c:checkC}] In addition to $\tilde a_\bme$ we need another deformation $\hat a_\bme$ of the transition  probability:
\begin{align}\label{e:tae0_2}
\hat a_\bme:= \frac{a_\bme \prod\limits_{i=1}^n  \left(\frac{\sfb(v)-\sfb(\varepsilon x_j) }{\sfb(v)-\sfb(\varepsilon x_j+\varepsilon \theta)} \frac{\sfb(\bar v)-\sfb(\varepsilon x_j) }{\sfb(\bar v)-\sfb(\varepsilon x_j+\varepsilon \theta)}\right)^{e_i}}{\sum\limits_{\bme'\in\{0,1\}^n} a_{\bme'} \prod\limits_{i=1}^n  \left(\frac{\sfb(v)-\sfb(\varepsilon x_j )}{\sfb(v)-\sfb(\varepsilon x_j+\varepsilon\theta)}\frac{\sfb(\bar v)-\sfb(\varepsilon x_j) }{\sfb(\bar v)-\sfb(\varepsilon x_j+\varepsilon \theta)}\right)^{e'_i}}.
\end{align}
This transition probability is again of the form \eqref{e:m1ccopy} with modified function $\varphi^+(z)$ in \eqref{e:asymp}:
\begin{align}\label{e:defvarphi_2}
\hat \varphi^+(z)=\varphi^+(z)\cdot  \frac{\sfb(v)-\sfb(z)}{\sfb(v)-\sfb(z+\varepsilon \theta)}\cdot \frac{\sfb(\bar v)-\sfb(z)}{\sfb(\bar v)-\sfb(z+\varepsilon \theta)}.
\end{align}
Similarly to \eqref{e:defhB}, we define the quantity $\hat \cA(z)$ and $\hat \cB(z)$ for the measure $\hat a_\bme$ from \eqref{e:tae0} as
\begin{align}\begin{split}\label{e:defhB_2}
\hat  \cA(z)
&=\sum_{\bme\in\{0,1\}^n}\hat  a_\bme\prod_{i=1}^n\frac{\sfb(z)-\sfb(\varepsilon x_i+ \varepsilon\theta  e_i)}{\sfb(z)-\sfb(\varepsilon x_i)}=\hat \bE\left[\prod_{i=1}^n\frac{\sfb(z)-\sfb(\varepsilon x_i+ \varepsilon\theta  e_i)}{\sfb(z)-\sfb(\varepsilon x_i)}\right],\\
\hat  \cB(z)&=\cG(z)\hat  \varphi^+(z)+ \varphi^-(z), \qquad  \qquad \cG(z)=\exp\left[\theta\int_{\bl}^{\br}\frac{\sfb'(z)\rho(s;\bmx)}{\sfb(z)-\sfb(s)}\rd s\right].
\end{split}\end{align}
By our construction, $\hat a_\bme$ is a bona-fide real positive measure, and we are in the probabilistic case \eqref{e:probabilistic_case}. Our choice $\dist(v,[\bl,\br])>{2\eta}$, implies that Assumption \ref{a:asymp} holds for $\hat a_\bme$ with $\Lambda$ replaced by $\Lambda\cap\{z\in\mathbb C\mid \dist(z,[\bl,\br])<\eta\}$. Moreover, similarly to \eqref{e:changeBB}, we have
\begin{align*}
\ln \hat \cB(z)=\ln\cB(z)+\OO(\varepsilon).
\end{align*}
It follows that $\ln \hat\cB(z)$ is well defined and Assumption \ref{a:stable} holds for $\hat \cB(z)$ and small enough $\eps$.

We can check that $\tilde a_\bme$ satisfies Assumptions \ref{a:asymp} and \ref{a:stable} in exactly the same way as we just did for $\hat a_\bme$. The non-trivial step is to verify the bound \eqref{e:totalabsmass}: the difficulty is that the weights are complex and, therefore, we need to check that the denominator in \eqref{e:tae0} is bounded away from $0$. For that we use Proposition \ref{p:firstod0} for the transition probability $\hat a_\bme$, which gives that
\begin{align}\begin{split}\label{e:lnhatA}
\frac{1}{2\pi \ri}\oint_{\cin}\frac{\ln \hat\cB(w)\sfb'(w)\rd w}{\sfb(w)-\sfb(\bar v)}
+\OO(\varepsilon)&= \ln \hat \cA(\bar v)
=\ln \left[\sum_{\bme\in\{0,1\}^n}\hat a_\bme\prod_{i=1}^n\left(\frac{\sfb(\bar v)-\sfb(\varepsilon x_i+ \varepsilon\theta )}{\sfb(\bar v)-\sfb(\varepsilon x_i)}\right)^{e_i}\right]\\ &
=
\ln \left[\frac{\sum\limits_{\bme\in\{0,1\}^n} a_\bme \prod\limits_{i=1}^n  \left(\frac{\sfb(v)-\sfb(\varepsilon x_j) }{\sfb(v)-\sfb(\varepsilon x_j+\varepsilon \theta)} \right)^{e_i}}{\sum\limits_{\bme\in\{0,1\}^n} a_\bme \prod\limits_{i=1}^n  \left(\frac{\sfb(v)-\sfb(\varepsilon x_j )}{\sfb(v)-\sfb(\varepsilon x_j+\varepsilon\theta)}\frac{\sfb(\bar v)-\sfb(\varepsilon x_j) }{\sfb(\bar v)-\sfb(\varepsilon x_j+\varepsilon \theta)}\right)^{e_i}}\right].
\end{split}\end{align}
In particular, for small $\eps$ the expression on the right-hand side of \eqref{e:lnhatA} is a bounded number. For the denominator in the last term, since $ |\sfb(v)-\sfb(\varepsilon x_i+\varepsilon \theta)|\geq \eta/2C$, we have
\begin{align}\label{e:denobound}
\sum_{\bme\in\{0,1\}^n} a_\bme \left(1-\frac{\OO(\varepsilon)}{\eta}\right)^{n} \leq {\sum_{\bme\in\{0,1\}^n} a_\bme \prod_{i=1}^n  \left|\frac{\sfb(v)-\sfb(\varepsilon x_j )}{\sfb(v)-\sfb(\varepsilon x_j+\varepsilon\theta)}\right|^{2e_i}}\leq \sum_{\bme\in\{0,1\}^n} a_\bme \left(1+\frac{\OO(\varepsilon)}{\eta}\right)^{n}.
\end{align}
Hence, the denominator in the right-hand side of \eqref{e:lnhatA} is bounded away from $0$ and $\infty$ and, therefore, so is the numerator. The latter numerator matches the denominator in \eqref{e:tae0}.
\end{proof}

\subsection{Covariance Estimate}\label{s:Cov}

In this subsection we study the stochastic term $\Delta \cM(v)$ in the right-hand side of \eqref{e:dmg} in Theorem \ref{t:loopstudy}.

\begin{proposition} \label{p:cov}
Under Assumptions \ref{a:asymp}, \ref{a:stable} in the probabilistic case \eqref{e:probabilistic_case}, let us define
\begin{align}\label{e:dMt}
\Delta \cM(v):
=\frac{1}{\varepsilon}\int_\bl^\br \frac{\sfb'(v)(\rho(s;\bmx+\bme)-\bE[\rho(s;\bmx+\bme)])}{\sfb(v)-\sfb(s)} \rd s,
\end{align}
where the expectation is with respect to the transition probability \eqref{e:m1ccopy}. Then as $\eps\to 0$ the random field $\{\varepsilon^{-1/2}\Delta \cM(v)\}_{v\in \Lambda\setminus[\bl,\br]}$ is asymptotically Gaussian with zero mean and covariance given by
\begin{equation}
\label{e:assymp_covariance}
\lim_{\eps\to 0}\bE\left[\frac{\Delta \cM(v_1)}{\varepsilon^{1/2} }, \frac{ \Delta \cM(v_2)}{\varepsilon^{1/2}}\right]
=\frac{1}{2\pi \ri \theta }\oint_{\cin} \frac{\cG(w)\varphi^+(w)}{\cB(w)}\frac{\sfb'(w)\sfb'(v_1)}{(\sfb(w)-\sfb(v_1))^2} \frac{\sfb'(w)\sfb'(v_2)}{(\sfb(w)-\sfb(v_2))^2} \rd w,
\end{equation}
where the contour $\cin\subset \Lambda$ encloses $[\bl, \br]$ but not $v_1, v_2$.  If we assume that $v_1$ and $v_2$ belong to a compact subset of $\Lambda\setminus[\bl,\br]$, then the convergence is uniform in $v_1$ and $v_2$. The joint moments of $\{\varepsilon^{-1/2} \Delta \cM(v)\}_{v\in\Lambda\setminus [\bl,\br]}$ converge to the Gaussian joint moments (uniformly in $v$, if we assume that $v$ belongs to a compact subset of $\Lambda\setminus[\bl,\br]$).
\end{proposition}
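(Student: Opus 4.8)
\emph{Plan of proof.} The plan, following Step~3 of Section~\ref{s:heur}, is to extract the joint cumulants of the rescaled field $\{\varepsilon^{-1/2}\Delta\cM(v)\}_{v\in\Lambda\setminus[\bl,\br]}$ from an exponential tilting of the transition probability \eqref{e:m1ccopy}, reducing each step to the mean estimate of Proposition~\ref{p:second} applied to the tilted measure. The first observation is that $\varepsilon^{-1/2}\Delta\cM(v)$ may be replaced, up to a negligible error, by an exact linear statistic in $\bme$: expanding the intervals of $\rho(\,\cdot\,;\bmx+\bme)-\rho(\,\cdot\,;\bmx)$ as in the computation of \eqref{e:empirical_increment} one gets, with $G_v(z):=\tfrac{\sfb'(v)\,\sfb'(z)}{(\sfb(v)-\sfb(z))^2}$,
$$
\Delta\cM(v)=\varepsilon\sum_{i=1}^n\bigl(e_i-\bE[e_i]\bigr)\,G_v(\varepsilon x_i)+r(v),
$$
where $r(v)$ is a \emph{deterministic} error of size $\OO(\varepsilon)$ (a sum of $n=\OO(\varepsilon^{-1})$ terms of size $\OO(\varepsilon^2)$). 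Hence $\varepsilon^{-1/2}\Delta\cM(v)$ differs from $\varepsilon^{1/2}\sum_i(e_i-\bE e_i)G_v(\varepsilon x_i)$ by a deterministic $\OO(\sqrt\varepsilon)$, and it suffices to establish asymptotic Gaussianity of these linear statistics with the covariance \eqref{e:assymp_covariance}.

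Fix $v_1,\dots,v_k\in\Lambda\setminus[\bl,\br]$ and $\bm\tau=(\tau_1,\dots,\tau_k)$ in a small neighborhood of $0$, and introduce the tilted transition probability
$$
a^{\bm\tau}_\bme=\frac{1}{Z^{\bm\tau}}\,a_\bme\prod_{i=1}^n\exp\Bigl(\varepsilon^{1/2}\sum_{j=1}^k\tau_j\,G_{v_j}(\varepsilon x_i)\Bigr)^{e_i},
$$
which is again of the form \eqref{e:m1ccopy} with $\varphi^+$ replaced by $\varphi^+_{\bm\tau}(z)=\varphi^+(z)\exp\bigl(\varepsilon^{1/2}\sum_j\tau_j G_{v_j}(z)\bigr)$, that is, a multiplicative perturbation of $\varphi^+$ by a factor $1+\OO(\sqrt\varepsilon)$ uniformly on compacts of $\Lambda$ (after shrinking $\Lambda$ slightly to avoid the points $v_j$ and $\bar v_j$). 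Exactly as in the proof of Claim~\ref{c:checkC}, this perturbation preserves Assumptions~\ref{a:asymp} and~\ref{a:stable} for all small enough $\varepsilon$, with constants depending only on those of the original measure and on a bound for $|\bm\tau|$; and, for non-real $v_j$, one keeps the tilt within the probabilistic case \eqref{e:probabilistic_case} by pairing $v_j$ with $\bar v_j$, so that $G_{v_j}+G_{\bar v_j}$ is a genuine real single-valued weight (alternatively one works in the complex case \eqref{e:totalabsmass}, whose bound is again supplied by the Claim~\ref{c:checkC} device of multiplying by the conjugate tilt). Since $\Delta\cM(\bar v)=\overline{\Delta\cM(v)}$ by the reality of $\sfb,\varphi^\pm,\rho$, the resulting conjugate-symmetric CGFs determine all centered joint moments of $\{\Delta\cM(v)\}$ by polarization.

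Next I would differentiate the cumulant generating function $\Phi(\bm\tau):=\log\bE\bigl[\exp\bigl(\sum_j\tau_j\,\varepsilon^{1/2}\sum_i(e_i-\bE e_i)G_{v_j}(\varepsilon x_i)\bigr)\bigr]$. Writing $\bE_{\bm\tau}$ for expectation under $a^{\bm\tau}_\bme$, one has by construction $\partial_{\tau_l}\Phi(\bm\tau)=\varepsilon^{1/2}\sum_i G_{v_l}(\varepsilon x_i)\bigl(\bE_{\bm\tau}[e_i]-\bE[e_i]\bigr)$, and since $\varepsilon\sum_i e_i G_{v_l}(\varepsilon x_i)$ agrees, up to a deterministic $\OO(\varepsilon)$, with $\tfrac1\theta$ times the linear statistic whose expectation Proposition~\ref{p:second} computes, applying that proposition to $a^{\bm\tau}_\bme$ and to $a_\bme$ and subtracting (using that $\ln\cB_{\bm\tau}(w)-\ln\cB(w)=\varepsilon^{1/2}\tfrac{\cG(w)\varphi^+(w)}{\cB(w)}\sum_j\tau_j G_{v_j}(w)+\OO(\varepsilon)$, and that the $\varepsilon\,\cE^{(2)}$--terms change by $\OO(\varepsilon^{3/2})$ under the tilt) yields
$$
\partial_{\tau_l}\Phi(\bm\tau)=\frac{1}{2\pi\ri\,\theta}\oint_{\cin}\frac{\cG(w)\,\varphi^+(w)}{\cB(w)}\Bigl(\sum_{j=1}^k\tau_j\,G_{v_j}(w)\Bigr)\frac{\sfb'(w)\,\sfb'(v_l)}{(\sfb(w)-\sfb(v_l))^2}\,\rd w+\OO(\sqrt\varepsilon),
$$
uniformly for $v_l$ in a compact subset of $\Lambda\setminus[\bl,\br]$. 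Thus $\partial_{\tau_l}\Phi$ is affine-linear in $\bm\tau$ up to $\OO(\sqrt\varepsilon)$, which is exactly what is needed: at $\bm\tau=0$ it vanishes (first cumulant $0$); its $\tau_m$--derivative at $\bm\tau=0$ converges to $\tfrac1{2\pi\ri\theta}\oint_{\cin}\tfrac{\cG(w)\varphi^+(w)}{\cB(w)}\,G_{v_m}(w)\,\tfrac{\sfb'(w)\sfb'(v_l)}{(\sfb(w)-\sfb(v_l))^2}\,\rd w$, which is precisely \eqref{e:assymp_covariance} (recall $G_v(w)=\tfrac{\sfb'(v)\sfb'(w)}{(\sfb(w)-\sfb(v))^2}$); and every mixed derivative of order $\ge 3$ is $\OO(\sqrt\varepsilon)\to0$. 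By the cumulant criterion this gives convergence of $\{\varepsilon^{-1/2}\Delta\cM(v)\}$ to a centered Gaussian field with covariance \eqref{e:assymp_covariance} and convergence of all joint moments to the Gaussian ones, with the uniformity inherited from Proposition~\ref{p:second}.

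The main obstacle is the verification that the tilted measures $a^{\bm\tau}_\bme$ genuinely stay inside the framework of Assumptions~\ref{a:asymp}--\ref{a:stable} uniformly in $\varepsilon$: one must show that the deformed partition function $Z^{\bm\tau}$ remains bounded away from $0$ and $\infty$ (so that, in the complex case, \eqref{e:totalabsmass} holds with a uniform constant), that $\ln\cB_{\bm\tau}$ stays single-valued, and, for non-real $v_j$, that positivity is recovered by the conjugation device of Claim~\ref{c:checkC}. This is the same delicate point that already appears in Section~\ref{s:mean}; once it is settled, the remaining computations are a routine repetition of the arguments of Sections~\ref{s:first}--\ref{s:mean}.
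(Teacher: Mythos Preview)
Your approach is essentially the paper's: exponentially tilt $\varphi^+$ by $\exp\bigl(\varepsilon^{1/2}\sum_j\tau_j G_{v_j}\bigr)$, observe that the tilted measure still satisfies Assumptions~\ref{a:asymp}--\ref{a:stable} (via the conjugation device of Claim~\ref{c:checkC}), and feed it into Proposition~\ref{p:second}. Two small points are worth flagging. First, the remainder $r(v)$ is not deterministic: it equals $\varepsilon\sum_i(e_i-\bE e_i)\bigl(\chi(\varepsilon x_i,v)-G_v(\varepsilon x_i)\bigr)$ with $\chi$ as in \eqref{e:defchi}, which is random; what is true (and sufficient) is the deterministic bound $|r(v)|=\OO(\varepsilon)$. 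Second, the step ``every mixed derivative of order $\geq 3$ is $\OO(\sqrt\varepsilon)$'' does not follow from the uniform real-$\bm\tau$ estimate alone (think $\sqrt\varepsilon\sin(\tau/\sqrt\varepsilon)$); you need the affine approximation of $\partial_{\tau_l}\Phi(\bm\tau)$ on a \emph{complex} $\bm\tau$-neighborhood so that Cauchy estimates apply, which in turn forces you through the complex case \eqref{e:totalabsmass} for the tilted measure. The paper sidesteps this by writing $\Phi(\bm s)=\int_0^1\partial_t\Phi(t\bm s)\,\rd t$, plugging in the formula you derived for $\partial_{\tau_l}\Phi$ at each $t\bm s$, integrating to obtain the Gaussian Laplace transform directly, and then invoking an abstract ``Laplace transform convergence $\Rightarrow$ moment convergence'' lemma---this keeps the tilts real-positive throughout and avoids differentiating the error.
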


\begin{proof}
We study random variables
\begin{align*}
\left\{\frac{1}{\varepsilon^{3/2}}\int_\bl^\br \frac{\sfb'(v)(\rho(s;\bmx+\bme)-\rho(s;\bmx))}{\sfb(v)-\sfb(s)}\rd s\right\}_{v\in \Lambda\setminus[\bl,\br]},
\end{align*}
which differ from $ \varepsilon^{-1/2} \Delta \cM(v)$ by a deterministic shift. We let
\begin{align}\label{e:defchi}
\chi(z, v):=\frac{1}{\varepsilon^{2}\theta}\int_{z}^{z+\varepsilon\theta} \left(\frac{\sfb'(v)}{\sfb(v)-\sfb(s+\varepsilon)}-\frac{\sfb'(v)}{\sfb(v)-\sfb(s)}\right)\rd s
=\frac{\sfb'(v)\sfb'(z)}{(\sfb(v)-\sfb(z))^2}+\OO(\varepsilon).
\end{align}
We rewrite the random variables of interest as
\begin{align}\begin{split}\label{e:defchi2}
\frac{1}{\varepsilon^{3/2}}\int_\bl^\br\frac{\sfb'(v)(\rho(s;\bmx+\bme)-\rho(s;\bmx))}{\sfb(v)-\sfb(s)}\rd s
= \varepsilon^{1/2} \sum_{i=1}^n e_i \chi(\varepsilon x_i, v).
\end{split}\end{align}

In the following, we use Proposition \ref{p:firstod} to compute the Laplace transform (moment generating function) of $\varepsilon^{1/2} \sum_{i=1}^n  e_i \chi(\varepsilon x_i, v)$. For that we need a deformed version of the transition probability $\bP(\bmx+\bme|\bmx)$.  We fix any  complex numbers $\bms=(s_1, s_2,\dots, s_p)\in\bC^p$, and complex numbers $\bmv=(v_1, v_2, \dots, v_p)\in (\bC\setminus[\bl,\br])^p$
and define a deformed version of $\bP(\bmx+\bme|\bmx)=a_\bme$ from \eqref{e:m1ccopy}:
\begin{align}\label{e:defLtnew}
\bP^{\bms,\bmv}(\bmx+\bme|\bmx)\deq \frac{a_\bme  \exp\left(\varepsilon^{1/2} \sum\limits_{k=1}^p \sum\limits_{i=1}^n  s_k
e_i \chi(\varepsilon x_i;v_k)\right)}{\sum\limits_{\bme'\in\{0,1\}^n}a_{\bme'} \exp\left(\varepsilon^{1/2} \sum\limits_{k=1}^p \sum\limits_{i=1}^n  s_k
e'_i \chi(\varepsilon x_i;v_k)\right)}.
\end{align}
The resulting probability measure is still in the form of \eqref{e:m1ccopy}, but with $\varphi^+(z)$ changed to
\begin{align}\label{e:defvphitnew}
\varphi_{\bmv,\bms}^+(z)= \varphi^+(z) \exp\left(\varepsilon^{1/2} \sum_{k=1}^ps_k \chi(z; v_k)\right).
\end{align}
The expectation with respect to the deformed measure $\bP^{\bms, \bmv}$ can be rewritten as an expectation with respect to the original transition probability $\bP(\bmx+\bme|\bmx)$ in the following way: for any random variable $\xi:\{0,1\}^n\to \mathbb C$, we have
\begin{align}\label{e:rewrite}
\bE_{ \bP^{\bms, \bmv}}\left[ \xi\right]
=\frac{\bE\left[ \xi  \exp\left(\varepsilon^{1/2} \sum\limits_{k=1}^p \sum\limits_{i=1}^n   s_k e_i\chi(\varepsilon x_i; v_k)\right)\right]}{\bE\left[\exp\left( \varepsilon^{1/2} \sum\limits_{k=1}^p \sum\limits_{i=1}^n s_k  e_i\chi(\varepsilon x_i;v_k)\right)\right]}.
\end{align}
The last identity implies that the joint Laplace transform of $\varepsilon^{1/2}\sum_{i=1}^n e_i \chi(\varepsilon x_i;v)$ over $v=v_1,v_2,\dots,v_k$ can be computed as an integral:
\begin{align}\begin{split}\label{e:momentg}
\ln \bE\left[\exp\left(\varepsilon^{1/2} \sum_{k=1}^p  s_k\sum_{i=1}^n
\chi(\varepsilon x_i;v_k)\right)\right]
&=\int_0^1 \del_t \ln \bE\left[\exp\left(t \varepsilon^{1/2} \sum_{k=1}^p  s_k\sum_{i=1}^n
\chi(\varepsilon x_i;v_k)\right)\right]\rd t\\
&=\int_0^1\bE_{\bP^{\bmv, t\bms}} \left[\varepsilon^{1/2} \sum_{k=1}^p  s_k\sum_{i=1}^n
\chi(\varepsilon x_i;v_k)\right]\rd t,
\end{split}\end{align}
where $t\bms=(ts_1,\dots,t s_p)$. We would like to apply Proposition \ref{p:second} to the right-hand side of \eqref{e:momentg} and need to check the conditions of this theorem. In general, $\bP^{\bmv, t\bms}$ is a complex measure and it is unclear how to verify the condition \eqref{e:totalabsmass}. However, we claim that it is sufficient for us to restrict our attention to the case when $\bP^{\bmv, t\bms}$ is a bona fide (real positive) measure. Indeed, the law of $\sum_{i=1}^n
\chi(\varepsilon x_i;v)$ is uniquely determined by its real and imaginary parts. The joint Laplace transform of the former and the latter is accessed through the identity:
\begin{multline} \label{e:Re_Im}
\bE \exp\left(r_1 \Re \left(\sum_{i=1}^n
\chi(\varepsilon x_i;v)\right)+ r_2  \Im \left(\sum_{i=1}^n
\chi(\varepsilon x_i;v)\right)\right)\\ =  \bE \exp\left(\frac{r_1-\ri r_2}{2}\sum_{i=1}^n
\chi(\varepsilon x_i;v)+ \frac{r_1+\ri r_2}{2}  \sum_{i=1}^n
\chi(\varepsilon x_i;\bar v)\right).
\end{multline}
If we choose $r_1$ and $r_2$ to be real (by the uniqueness theorem for the Laplace transform, this is sufficient for identifying the joint law of $\Re \bigl(\sum_{i=1}^n
\chi(\varepsilon x_i;v)\bigr)$ and $\Im \bigl(\sum_{i=1}^n
\chi(\varepsilon x_i;v)\bigr)$), then the random variable in the left-hand side of \eqref{e:Re_Im} is a positive real number, and the measures $\bP^{\bmv, t\bms}$ appearing in the integral \eqref{e:momentg} are also real-positive. Using a similar identity for the joint Laplace transforms of real and imaginary parts of several $\sum_{i=1}^n
\chi(\varepsilon x_i;v_k)$, we conclude that it is sufficient for us to deal in \eqref{e:momentg} with real-positive measures $\bP^{\bmv, t\bms}$.

For positive measures $\bP^{\bmv, t\bms}$, checking Assumptions  \ref{a:asymp} and \ref{a:stable} is straightforward (cf.\  Claim \ref{c:checkC}) and Proposition \ref{p:second} applies. Let
\begin{align*}
\cB^{\bmv,t \bms}(z)=\cG(z)\varphi_{\bmv,t\bms}^+(z)+\varphi^-(z).
\end{align*}
We rewrite $\ln \cB^{\bmv, t\bms}(z)$ in terms of $\ln \cB(z)$ from \eqref{e:B_function} with a small error:
\begin{align}\begin{split}\label{e:bsvexp}
\ln \cB^{\bmv, t\bms}(w)
&=\ln \left(\cG(z)\varphi^+(w) \exp\left(t \varepsilon^{1/2}\sum_{k=1}^p  s_k\chi(w, v_k)\right)+\varphi^-(w)\right)\\
&=\ln \cB(w)+\ln \left(1+\frac{\cG(z)\varphi^+(w)\left( \exp\left(t \varepsilon^{1/2}\sum_{k=1}^p  s_k\chi(w, v_k)\right)-1\right)}{\cB(w)}\right)\\
&=\ln \cB(w)+\varepsilon^{1/2} \frac{t \cG(z)\varphi^+(w)}{\cB(w)}\sum_{k=1}^p  s_k\chi(w; v_k)+\OO\left(\varepsilon\right).
\end{split}\end{align}
Combining Proposition \ref{p:second} (where we use only the first term of the asymptotic expansion) with \eqref{e:bsvexp} and \eqref{e:defchi}, we get
\begin{align}\begin{split}\label{e:EPT}
&\bE_{\bP^{\bmv,t\bms}}\left[\frac{\theta}{\varepsilon}\int_\bl^\br \frac{\sfb'(z)(\rho(s;\bmx+\bme)-\rho(s;\bmx))}{\sfb(z)-\sfb(s)}\rd s\right]
=\frac{1}{2\pi \ri}\oint_{\cin}\frac{\ln \cB^{\bmv,t\bms}(w)\sfb'(w)\sfb'(z)}{(\sfb(w)-\sfb(z))^2}\rd w+
\OO\left(\varepsilon \right)\\
&=\frac{1}{2\pi \ri}\oint_{\cin}\frac{\ln \cB(w)\sfb'(w)\sfb'(z)}{(\sfb(w)-\sfb(z))^2}\rd w+\varepsilon^{1/2}\sum_{k=1}^p\frac{ ts_k}{2\pi \ri}\frac{\cG(z)\varphi^+(w)}{\cB(w)}  \frac{\sfb'(w)\sfb'(z)\chi(w; v_k)}{(\sfb(w)-\sfb(z))^2}\rd w+\OO(\varepsilon)\\
&=\frac{1}{2\pi \ri}\oint_{\cin}\frac{\ln \cB(w)\sfb'(w)\sfb'(z)}{(\sfb(w)-\sfb(z))^2}\rd w+\varepsilon^{1/2}\sum_{k=1}^p\frac{ ts_k}{2\pi \ri}\frac{\cG(z)\varphi^+(w)}{\cB(w)}  \frac{\sfb'(w)\sfb'(z)\sfb'(w)\sfb'(v_k)\rd w}{(\sfb(w)-\sfb(z))^2(\sfb(w)-\sfb(v_k))^2}+\OO(\varepsilon),
\end{split}\end{align}
where the contour $\omega_-\in \Lambda$ encloses $[\bl,\br]$, but not $z, v_1, v_2, \dots, v_p$. Multiplying by $\varepsilon^{1/2}$, the left-hand side of \eqref{e:EPT} becomes $\varepsilon^{1/2}\sum_{i=1}^n  e_i \chi(\varepsilon x_i, z)$, as seen from \eqref{e:defchi2}. Thus, specializing $z=v_1,v_2,\dots, v_p$ in \eqref{e:EPT} and summing, we get
\begin{align}\begin{split}\label{e:EPT2}
&\bE_{\bP^{\bmv,t\bms}}\left[\varepsilon^{1/2} \sum_{k=1}^p s_k\sum_{i=1}^n  e_i \chi(\varepsilon x_i, v_k)\right]
=\frac{1}{\theta \varepsilon^{1/2}}\sum_{k=1}^p \frac{s_k}{2\pi \ri}\oint_{\cin}\frac{\ln \cB(w)\sfb'(w)\sfb'(v_k)}{(\sfb(w)-\sfb(v_k))^2}\rd w\\
&+\frac{1}{\theta}\sum_{\ell=1}^p \sum_{k=1}^p\frac{ ts_k s_\ell}{2\pi \ri}\oint_{\cin}\frac{\cG(z)\varphi^+(w)}{\cB(w)}  \frac{\sfb'(w)\sfb'(v_k)\sfb'(w)\sfb'(v_\ell)}{(\sfb(w)-\sfb(v_k))^2(\sfb(w)-\sfb(v_\ell))^2}\rd w+\OO(\varepsilon^{1/2}),
\end{split}\end{align}
where the contour $\omega_-\subset \Lambda$ encloses $[\bl,\br]$, but not $ v_1, v_2, \dots, v_p$.
Plugging \eqref{e:EPT2} into \eqref{e:momentg}, and integrating, we get the joint Laplace transform of $\varepsilon^{1/2} \sum_{i=1}^n
\chi(\varepsilon x_i;v_k)$, $k=1,\dots,p$:
\begin{align}\begin{split}\label{e:momentg2}
&\ln \bE\left[\exp\left(\varepsilon^{1/2} \sum_{k=1}^p  s_k\sum_{i=1}^n
\chi(\varepsilon x_i;v_k)\right)\right]
=\frac{1}{\theta \varepsilon^{1/2}} \sum_{k=1}^p \frac{s_k}{2\pi \ri}\oint_{\cin}\frac{\ln \cB(w)\sfb'(w)\sfb'(v_k)\rd w}{(\sfb(w)-\sfb(v_k))^2}\\
&+\frac{1}{\theta}\sum_{\ell=1}^p \sum_{k=1}^p\frac{ s_k s_\ell}{4\pi \ri}\oint_{\cin}\frac{\cG(z)\varphi^+(w)}{\cB(w)}  \frac{\sfb'(w)\sfb'(v_k)\sfb'(w)\sfb'(v_\ell)}{(\sfb(w)-\sfb(v_k))^2(\sfb(w)-\sfb(v_\ell))^2}\rd w +\OO(\varepsilon^{1/2}),
\end{split}\end{align}
where the contour $\omega_-\subset \Lambda$ encloses $[\bl, \br]$, but not $ v_1, v_2, \dots, v_p$.

The first term in the right-hand side of \eqref{e:momentg2} corresponds to the expected value of $\sum_{i=1}^n
\chi(\varepsilon x_i;v_k)$ and disappears when we center these random variables. The second term matches the Laplace transform of the Gaussian vector with covariance given by \eqref{e:assymp_covariance}. Thus, it only remains to show that the convergence of the Laplace transforms in \eqref{e:momentg2} implies the convergence of the moments claimed in Proposition \ref{p:cov}. This is the content of the following abstract lemma, whose proof we omit.

\begin{lemma} Consider a family of real random variables $\{\xi^\eps_i\}_{i\in I}$ depending on a parameter $\eps>0$ and indexed by some set $I$. Suppose that there exists another family $\{\xi_i\}_{i\in I}$, such that for any $p=1,2,\dots$, indices $i_1,\dots,i_p\in I$, and real numbers $r_1,\dots,r_p$ we have
\begin{equation}
\label{e:Laplace_convergence}
 \lim_{\eps\to 0} \bE\left[ \exp\left(\sum_{k=1}^p r_k \xi^{\eps}_{i_k} \right)\right] =  \bE\left[ \exp\left(\sum_{k=1}^p r_k \xi_{i_k} \right)\right],
\end{equation}
where for each fixed $p$  and a compact set $\mathcal R \subset \mathbb R^p$, the convergence in \eqref{e:Laplace_convergence} is uniform over $r_1,\dots,r_p$ and over the choices of $i_1,\dots,i_p\in I$. In addition, suppose that the real random variables $\{\xi_i\}_{i\in I}$ satisfy a uniform bound: there exists a continuous function $f(r)$ such that
$$
 \bE \exp( r \xi_i) \leq  f(r),\qquad  \quad r\in\mathbb R, \quad i\in I.
$$
Then $\lim\limits_{\eps\to 0}\{\xi^\eps_i\}_{i\in I}=\{\xi_i\}_{i\in I}$ in the sense of moments uniformly in $i\in I$: for any $i_1,\dots,i_q\in I$, we have
$$
 \lim_{\eps\to 0} \bE\left[ \prod_{k=1}^q \xi_{i_k}^\eps\right]= \bE\left[ \prod_{k=1}^q \xi_{i_k}\right],
$$
where for each fixed $q$ the convergence is uniform over possible choices of $i_1,\dots,i_q\in I$.
\end{lemma}

This finishes the proof of Proposition \ref{p:cov} and, thus, also of Theorem \ref{t:loopstudy}.
\end{proof}

\section{Law of Large Numbers for random tilings}

\label{Section_tilings_LLN}

The goal of this section is to prove Theorems \ref{t:arctic} and \ref{t:limitshape}.

\subsection{Setup}

\label{Section_tilings_setup}

We recall the trapezoid domain $\sfP$ from Section \ref{s:heightf}, with right side $[0,\sfN]$, width $\sfT$, and left sides $[\sfa_i, \sfb_i]$ for $1\leq i\leq r$, where $-\sfT\leq \sfa_1<\sfb_1<\sfa_2<\sfb_2<\dots<\sfa_r<\sfb_r\leq \sfN$, and
 $\sum_{i=1}^r(\sfb_i-\sfa_i)=\sfN$; see Figure \ref{Fig_trapezoid}.

As in Section \ref{Section_weighted_tilings_loop}, we ignore \La\, lozenges in tilings and trace only the lozenges of the remaining two types \Lb\, and \Lc. Then these lozenges form $N$ {non-intersecting paths} traversing the trapezoid from the left boundary to the right boundary, as shown in the right panel of Figure \ref{Fig_trapezoid} and in Figure \ref{Fig_trapezoid_MC}. At any time $0\leq t\leq T$, the nonintersecting paths correspond to an $N$ point configuration $\bmx(t)=(x_1(t)>x_2(t)>\dots>x_N(t))\in \bW^N$. Theorem \ref{Theorem_transition_qk} yields that for the $(q,\kappa)$-distribution \eqref{e:qtiling} on lozenge tilings of a trapezoidal domain, $\{\bmx(t)\}_{0\leq t\leq T}$ form a Markov chain with transition probability
\begin{align}\label{e:qtransit_2}
\bP(\bmx(t+1)=\bmx+\bme| \bmx(t)=\bmx)
\propto \prod_{i<j} \frac{b_{t}(x_i+e_i)-b_{t}(x_j+e_j)}{b_t(x_i)-b_t(x_j)}\prod_{i=1}^n\phi_t^+(x_i)^{e_i} \phi_t^-(x_i)^{1-e_i},
\end{align}
where $\bme=(e_1, e_2,\cdots, e_N)\in\{0,1\}^N$,
\begin{align}
\label{e:bt_def}
b_t(x)=q^{-x}+\kappa^2 q^{x-t},
\end{align}
and the weights
\begin{align*}
\phi_t^+(x)=q^{T+N-1-t}(1-q^{x-N+1})(1-\kappa^2 q^{x-T+1}), \quad \phi_t^-(x)=-(1-q^{x+T-t})(1-\kappa^2 q^{x+N-t}).
\end{align*}
At this point, we can forget about the lozenge tilings: our task is to study the Markov chain $\bmx(t)$ with the above transition probabilities and with initial condition
\begin{equation}
\label{e:initial_condition}
 \bmx(0)= (b_r-1,b_r-2,\dots,a_r+1,a_r,\,\, \dots,                                              \,\, b_1-1, b_1-2,\dots, a_1+1, a_1).
\end{equation}
Note that we need to impose restrictions on the $\bmx(0)$ and parameters $q$ and $\kappa$ in order to guarantee the positivity of probabilities \eqref{e:qtransit_2}; as in Section \ref{s:heightf} there are three possible cases: imaginary, real, and trigonometric.

We introduce a small parameter $\varepsilon>0$, and set
\begin{align}\label{e:defpp}
N=\varepsilon^{-1}\sfN, \quad T=\varepsilon^{-1}\sfT, \quad q=\sfq^{\varepsilon},\quad a_i=\varepsilon^{-1}\sfa_i, \quad b_i=\varepsilon^{-1}\sfb_i, \quad 1\leq i\leq r.
\end{align}
As in \eqref{eq_rho_x_def}, we encode $\bmx(t)$ through its smoothed empirical density:
\begin{align}
\label{eq_rho_x_t_def}
\rho(s;\bmx(t))=\sum_{i=1}^N \bm1(\varepsilon x_i \leq s\leq \varepsilon x_i+\varepsilon).
\end{align}
Note that the parameters $N$, $a_i$, $b_i$ and time $t$ should be integers. Being somewhat sloppy, we are going to ignore this and use the formulas \eqref{e:defpp} and \eqref{eq_rho_x_t_def} for the values of $t$ and $\eps$, which might lead to non-integer results. Formally, one should add integer parts to the formulas in these situations, but, for the brevity of notations, we are not going to do this. We also define
\begin{align}\label{e:defbt}
\sfb_\sft(z)=\sfq^{-z}+\kappa^2 \sfq^{z-\sft},
\end{align}
which is the version of \eqref{e:bt_def} with rescaled variables.

\begin{theorem} \label{Theorem_LLN_abstract} Consider the Markov chain $\bmx(t)=(x_1(t)>x_2(t)>\dots>x_N(t))$, $t=0,1,\dots,T$, on ordered $N$-tuples of integers with transition probabilities \eqref{e:qtransit_2}, a (deterministic) initial condition $\bmx(0)$ such that  $N>x_1(0)>\dots>x_N(0)\geq T$.  Suppose that the data depends on a small parameter $\eps>0$ in such a way that $N=\varepsilon^{-1}\sfN$, $T=\varepsilon^{-1}\sfT$, $q=\sfq^{\varepsilon}$. Fix $\delta>0$ and additionally suppose that the parameters are such that $\sfb_\sft(z)$ is real and satisfies $|\partial_z \sfb_\sft(z)|>\delta$ for $z\in [\max(\eps x_N(0),\sft-\sfT),\min(\eps x_1(0)+\sft,\sfN)]$ and $0\leq \sft\leq \sfT$. Finally, suppose that the measures $\rho(x;\bmx(0))dx$ weakly converge to a measure $\rho_0$.

 Then for each $0\leq \sft \leq \sfT$ the smoothed empirical measures converge to a deterministic limit
\begin{equation}
\label{e:weak_convergence}
 \lim_{\eps\to 0} \rho(\, \cdot\, ; \bmx(\eps^{-1} t))= \rho_\sft(\, \cdot\, ), \qquad \text{ weakly, in probability,}
\end{equation}
which means that for each continuous function $f(s)$, we have
$$
 \lim_{\eps\to 0} \int_{\mathbb R} f(s) \rho(s; \bmx(\eps^{-1} \sft)) \rd s = \int_{\mathbb R} f(s) \rho_\sft(s) \rd s, \qquad \text{ in probability.}
$$
The density $\rho_\sft(s)$ is supported inside $[\sft-\sfT, \sfN]$ and can be determined by either of the equivalent descriptions of Propositions \ref{Proposition_LLN_Burgers}, \ref{Proposition_LLN_equation}, or \ref{Proposition_LLN_invariant}.
\end{theorem}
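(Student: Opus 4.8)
The plan is to derive Theorem \ref{Theorem_LLN_abstract} as a consequence of the one-step asymptotic analysis in Theorem \ref{t:loopstudy}, iterated over the $\sfT/\eps$ time steps, together with a Grönwall-type stability estimate to control the accumulation of errors. First I would verify that, at each integer time $t$, the transition probability \eqref{e:qtransit_2} has exactly the form \eqref{e:m1ccopy} with $\theta=1$, $b(x)=b_t(x)$, $\phi^\pm=\phi_t^\pm$, and that under the scaling \eqref{e:defpp} these ingredients satisfy Assumption \ref{a:asymp} with $\sfb(z)=\sfb_\sft(z)$ of \eqref{e:defbt} and explicit holomorphic $\varphi_\sft^\pm$ on a suitable complex neighborhood $\Lambda$ of the relevant real interval; the conformality of $\sfb_\sft$ and the lower bound on $|\partial_z\sfb_\sft|$ are exactly the hypotheses we have assumed, and the boundedness \eqref{e:derbba} follows by compactness since $q=\sfq^\eps\to 1$. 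The positivity \eqref{e:probabilistic_case} holds in each of the imaginary/real cases by the constraints carried along from Section \ref{s:heightf}.

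The second and more delicate step is to check Assumption \ref{a:stable} (the non-criticality condition) for the measure $\rho(\cdot;\bmx(t))$ at each time step. Here I would argue inductively: assuming that $\rho(\cdot;\bmx(\eps^{-1}\sft))$ is close (weakly, and in the sense of its modified Stieltjes transform \eqref{e:mt00}) to a deterministic density $\rho_\sft$ that lies in a good class, one computes $\cB_\sft(z)=\cG_\sft(z)\varphi_\sft^+(z)+\varphi_\sft^-(z)$ and verifies $c<|\cB_\sft(z)|<c^{-1}$ off $[\bl,\br]$ together with the winding-number condition \eqref{e:aB0}. For the $(q,\kappa)$ model the function $\cB_\sft$ is essentially algebraic — this is precisely what makes the limit shape explicit — so one can check non-criticality by hand; concretely this reduces to the statement that a certain algebraic equation has no spurious real roots, which is guaranteed by the same positivity constraints. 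The key point is that Assumption \ref{a:stable} is an \emph{open} condition: it holds for $\rho_\sft$, hence for any $\rho$ sufficiently close to it, so it propagates along the induction as long as the one-step errors are controlled.

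The heart of the argument is then the iteration. Theorem \ref{t:loopstudy} gives, conditionally on $\bmx(t)$, that the centered modified Stieltjes transform of $\rho(\cdot;\bmx(t+1))-\rho(\cdot;\bmx(t))$ equals $\eps$ times an explicit deterministic functional of $\rho(\cdot;\bmx(t))$, plus a mean-zero Gaussian fluctuation of size $O(\eps^{1/2})$, plus $O(\eps^2)$. Summing over $\eps^{-1}\sft$ steps, the deterministic parts assemble into a discrete approximation of a flow on densities whose continuum limit is the first-order PDE governing $\sfh(\sft,\sfx)$ — the complex Burgers equation \eqref{e:Burger} in the coordinates of Section \ref{s:heightf} — giving the ODE characterization of Propositions \ref{Proposition_LLN_Burgers}, \ref{Proposition_LLN_equation}, \ref{Proposition_LLN_invariant}. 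The stochastic parts are a sum of $\eps^{-1}$ conditionally-independent (given the past) increments each of conditional variance $O(\eps)$, so by a martingale/Burkholder estimate their total contribution to any fixed test-function pairing is $O_P(\eps^{1/2})\cdot\eps^{-1}\cdot\eps^{1/2}\cdot\eps^{1/2}=O_P(\eps^{1/2})\to 0$; more carefully one runs a Grönwall loop: let $\Delta(\sft)$ denote the distance between the random transform of $\rho(\cdot;\bmx(\eps^{-1}\sft))$ and its predicted deterministic value, show $\mathbb E[\Delta(\sft+\eps)^2\mid \mathcal F_\sft]\leq (1+C\eps)\Delta(\sft)^2+C\eps^2$ using Lipschitz dependence of the one-step functional on the current density (valid on the non-critical set), and conclude $\sup_{\sft\leq\sfT}\Delta(\sft)\to 0$ in probability. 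Weak convergence \eqref{e:weak_convergence} then follows by \eqref{e:linear_statistic_contour}, integrating the convergent transform against $g$ along a contour.

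I expect the main obstacle to be the bookkeeping needed to keep Assumption \ref{a:stable} valid uniformly along the whole trajectory: one must show that the deterministic limiting densities $\rho_\sft$ stay strictly inside the liquid/non-critical regime for all $0\leq\sft\leq\sfT$ (edges of the support move but never pinch), and that the open neighborhood on which non-criticality persists does not shrink to zero as $\eps\to 0$. For the $(q,\kappa)$ trapezoid this is where the explicit algebraic structure — the parameterization by $f_0(u)$, $U(u)$, $V(u)$ and the fact that the relevant roots have controlled imaginary parts (Propositions \ref{Proposition_number_of_roots}, \ref{Proposition_liquid}) — does the work, but translating "the curve is nice" into the quantitative constants $c(S)$ required by Theorem \ref{t:loopstudy}, uniformly in $\sft$, is the technically heavy part. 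A secondary subtlety is the boundary behavior near the short vertical sides of $\sfP$ where the support of $\rho_\sft$ meets the moving endpoints $\sft-\sfT$ and $\sfN$ (controlled by the zeros of $\phi_\sft^\pm$ noted after Theorem \ref{Theorem_transition_qk}); one restricts the contours $\cin$ to stay a fixed distance away, and handles the boundary strip by a separate soft argument (e.g. monotonicity/sandwiching of the height function), which suffices since we only claim weak convergence.
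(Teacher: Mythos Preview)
Your overall strategy is correct and close to the paper's, but you misidentify where the work lies. The key difference is in how Assumption \ref{a:stable} gets verified. You propose to check it inductively: first for the limiting density $\rho_\sft$, then extend by openness to nearby random densities via a bootstrap that must be kept alive along the whole trajectory. The paper instead proves (Proposition \ref{Proposition_off_criticality}) that Assumption \ref{a:stable} holds for \emph{every} admissible particle configuration $\bmx(t)$, with no reference to any limit. The mechanism is the symmetry $z\mapsto -z+\sft-2\log_\sfq\kappa$ of $\sfb_\sft$, $\tilde\varphi_\sft^\pm$, and hence of $\tilde f_\sft$ and $\tilde\cB_\sft$: the winding-number condition \eqref{e:aB0} is checked by doubling the contour via this symmetry and counting residues at $\sfq^z=\pm\sfq^{\sft/2}\kappa^{-1}$ and at infinity; the input that $\tilde f_\sft(z)=1$ has only these two trivial solutions uses nothing but $0\leq\rho(\cdot;\bmx(t))\leq 1$ and its support, never closeness to a limit. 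This dissolves the circularity you anticipate, so the ``main obstacle'' you flag simply does not arise.

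With non-criticality secured unconditionally, the convergence argument is also simpler than your Gr\"onwall loop: the paper uses tightness of the process $(\rho(\cdot;\bmx(\eps^{-1}\sft)))_{0\leq\sft\leq\sfT}$ in $C([0,\sfT],\mathscr M_1(\bR))$ (compact support plus the deterministic Lipschitz bound in $\sft$) to extract subsequential limits, then shows via the summed one-step expansion \eqref{e:martingale_equation} and Doob's inequality for the martingale part that any subsequential limit satisfies the integral equation \eqref{e:LLN_integral_equation}, whose $\sft$-derivative is the Burgers-type PDE of Proposition \ref{Proposition_LLN_Burgers}. Uniqueness of the latter identifies the limit. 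The Gr\"onwall-type iteration you describe does appear in the paper, but only later in the CLT (Lemma \ref{Lemma_G_at_eps_CLT}), where quantitative bounds on $\bE|G_t|^{2p}$ are genuinely needed; for the LLN it is unnecessary. Your boundary worry near $\sft-\sfT$ and $\sfN$ also evaporates once Assumption \ref{a:stable} holds uniformly for all $\bmx(t)$.
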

\begin{remark} \label{Remark_two_forms_IC}
 When the initial condition $\bmx(0)$ is given by \eqref{e:initial_condition} with parameters scaling as in \eqref{e:defpp}, $\rho_0$ has the density
 $$
  \rho_0(s)=\begin{cases} 1, & s\in \cup_{i=1}^r [\sfa_i,\sfb_i],\\ 0, & \text{otherwise}.\end{cases}
 $$
 We also have a product formula for the exponentiated (modified) Stieltjes transform:
 \begin{multline} \label{e:complex_slope_comp}\begin{split}
   \exp\left( \int_{-\sfT}^{\sfN} \frac{\sfb'_0(z)\rho_0(s)}{\sfb_0(z)-\sfb_0(s)}\rd s\right)=\exp\left(\sum_{i=1}^r \int_{\sfa_i}^{\sfb_i} \frac{(-\kappa^{-2}\sfq^{-z}+\sfq^{z})\sfq^s \ln \sfq}{(\sfq^s-\sfq^{z})(\kappa^{-2}\sfq^{-z}- \sfq^{s})}\rd s\right)
   \\=\exp\left(\sum_{i=1}^r \int_{\sfa_i}^{\sfb_i} \left(\frac{1}{\sfq^{z}-\sfq^s}+\frac{1}{\sfq^s-\kappa^{-2}\sfq^{-z}}\right)\rd \sfq^s\right)=\prod_{i=1}^r  \left( \frac{1-\sfq^{\sfa_i-z}}{1-\sfq^{\sfb_i-z}} \cdot \frac{1-\kappa^{2}\sfq^{z+\sfb_i}}{1-\kappa^{2}\sfq^{z+\sfa_i}} \right).
\end{split} \end{multline}
\end{remark}

\begin{proposition} \label{Proposition_LLN_Burgers}
The densities $\rho_\sft(s)$ in Theorem \ref{Theorem_LLN_abstract} are uniquely determined by  the complex slope function\footnote{If we take $z$ to be a real number in the support of $\rho_\sft$, then this is the same complex slope as in \eqref{e:ftx}. However, in  \eqref{e:double_complex_slope} we instead allow complex arguments $z$.}
\begin{equation}
\label{e:double_complex_slope}
 f_\sft(z):= \exp\left(-\int_{\sft-\sfT}^{\sfN} \frac{\sfb'_\sft(z)(1-\rho_\sft(s))}{\sfb_\sft(z)-\sfb_\sft(s)}\rd s\right),
\end{equation}
where $z$ can be any complex number outside the set $\sfb_\sft^{-1}(\sfb_\sft([\sft-\sfT,\sfN]))$. Further, for each such $z$, $f_\sft(z)$ solves a partial differential equation (a relative of the complex Burgers equation):
\begin{align}\label{e:Burgers_first_appearance}
\del_\sft \ln f_\sft(z)+\del_z \ln\bigl(1-f_\sft(z)\bigr)=\ln(\sfq)\frac{\kappa^2 \sfq^{z-\sft}+\sfq^{-z}}{\kappa^2\sfq^{z-\sft}-\sfq^{-z}}.
\end{align}
\end{proposition}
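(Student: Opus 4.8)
Proof proposal for Proposition \ref{Proposition_LLN_Burgers}.

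The plan is to extract both the description of $\rho_\sft$ via the complex slope $f_\sft(z)$ and the Burgers-type PDE \eqref{e:Burgers_first_appearance} directly from the one-step asymptotic expansion of Theorem \ref{t:loopstudy}, applied to the transition probabilities \eqref{e:qtransit_2} with $\sfb(z)=\sfb_t(z)=\sfq^{-z}+\kappa^2\sfq^{z-t}$ and $\varphi^\pm$ being the $\eps$-rescaled versions of $\phi_t^\pm$. First I would record the limiting ingredients: as $\eps\to 0$, the rescaled $\varphi^+(z)\to \sfq^{\sfT+\sfN-\sft}(1-\sfq^{z-\sfN})(1-\kappa^2\sfq^{z-\sfT})$ and $\varphi^-(z)\to -(1-\sfq^{z+\sfT-\sft})(1-\kappa^2\sfq^{z+\sfN-\sft})$, so that the function $\cB_\sft(z)=\cG_\sft(z)\varphi^+(z)+\varphi^-(z)$ of \eqref{e:B_function} is explicit in terms of $\cG_\sft(z)=\exp\bigl[\int \sfb_\sft'(z)\rho_\sft(s)/(\sfb_\sft(z)-\sfb_\sft(s))\,\rd s\bigr]$ (note $\theta=1$ here). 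The key observation is that $\cG_\sft(z)$ and $f_\sft(z)$ of \eqref{e:double_complex_slope} are related by a simple explicit factor: since $\int \sfb_\sft'(z)/(\sfb_\sft(z)-\sfb_\sft(s))\,\rd s$ over $[\sft-\sfT,\sfN]$ can be computed exactly (it is a telescoping/logarithmic integral, as in \eqref{e:complex_slope_comp}), one gets $f_\sft(z)=\cG_\sft(z)\cdot g_\sft(z)$ for an elementary function $g_\sft(z)$. Thus the statement that $\rho_\sft$ is uniquely determined by $f_\sft$ is equivalent to the (standard) fact that a compactly supported density on the real line is recovered from its Stieltjes transform by the Stieltjes inversion formula; this part is essentially bookkeeping.

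The substantive part is the PDE. I would apply Theorem \ref{t:loopstudy} to the single step from time $t=\eps^{-1}\sft$ to $t+1$: taking expectations and the leading order in \eqref{e:dmg}, and passing $\eps\to 0$ so that $\rho(\cdot;\bmx(t))\to\rho_\sft$ and $\rho(\cdot;\bmx(t+1))\to\rho_{\sft+\rd\sft}$ in the appropriate infinitesimal sense, one obtains an evolution equation
\[
\partial_\sft \int_{\sft-\sfT}^{\sfN}\frac{\sfb_\sft'(z)\rho_\sft(s)}{\sfb_\sft(z)-\sfb_\sft(s)}\,\rd s \;=\; \frac{1}{2\pi\ri}\oint_{\cin}\frac{\ln\cB_\sft(w)\,\sfb_\sft'(w)\sfb_\sft'(z)}{(\sfb_\sft(w)-\sfb_\sft(z))^2}\,\rd w,
\]
where one must be careful that $\sfb_\sft$ itself depends on $\sft$, producing an extra term from $\partial_\sft\sfb_\sft$. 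The right-hand contour integral I would evaluate by pushing the contour out towards $z$ and to $\infty$: since $\ln\cB_\sft(w)$ is single-valued on $\Lambda\setminus[\sft-\sfT,\sfN]$ by Assumption \ref{a:stable}, the contour integral collects a residue at $w=z$ (giving $\partial_z\ln\cB_\sft(z)$, up to the Jacobian factors) plus a contribution controlled by the behavior of $\cB_\sft$ at the zeros of $\varphi^\pm$ at the boundary of $\sfP$. The factors $(1-\sfq^{z-\sfN})$ and $(1-\kappa^2\sfq^{z-\sfT})$ in $\varphi^+$, and $(1-\sfq^{z+\sfT-\sft})$, $(1-\kappa^2\sfq^{z+\sfN-\sft})$ in $\varphi^-$, are precisely what make the resulting expression collapse, after rewriting everything through $f_\sft$ via $f_\sft=\cG_\sft\cdot g_\sft$, into \eqref{e:Burgers_first_appearance}; the inhomogeneous right-hand side $\ln(\sfq)\,(\kappa^2\sfq^{z-\sft}+\sfq^{-z})/(\kappa^2\sfq^{z-\sft}-\sfq^{-z})$ arises as $\partial_\sft \ln \sfb_\sft'(z) $ combined with the $\partial_\sft\sfb_\sft$ correction and the elementary logarithmic derivative of $g_\sft$.

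The main obstacle I anticipate is making the passage from the discrete one-step expansion of Theorem \ref{t:loopstudy} to the continuum PDE rigorous: Theorem \ref{t:loopstudy} conditions on a \emph{deterministic} configuration $\bmx$, whereas $\bmx(t)$ is random, so one needs an a priori concentration statement (weak convergence in probability of $\rho(\cdot;\bmx(t))$ for each fixed $\sft$) before one can iterate and differentiate in $\sft$. I expect this to be handled by a Grönwall-type argument: the $\OO(\eps)$ error terms in \eqref{e:dmg} are uniform over configurations satisfying Assumptions \ref{a:asymp}–\ref{a:stable}, the stochastic term $\Delta\cM$ is of size $\eps^{1/2}$ and hence negligible over $\eps^{-1}$ steps in $L^2$ after summation (martingale-type cancellation), and the limiting ODE for the Stieltjes transform, being a transport equation along characteristics, has a unique solution; one closes the loop by checking that the characteristic flow keeps $\sfb_\sft$ real and $|\partial_z\sfb_\sft|>\delta$ on the relevant interval, which is exactly the hypothesis retained in Theorem \ref{Theorem_LLN_abstract}. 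The uniqueness in Proposition \ref{Proposition_LLN_Burgers} then follows because \eqref{e:Burgers_first_appearance} together with the initial data $f_0$ (computed in \eqref{e:complex_slope_comp}) determines $f_\sft(z)$ for all $z$ outside $\sfb_\sft^{-1}(\sfb_\sft([\sft-\sfT,\sfN]))$, and $\rho_\sft$ is read off from $f_\sft$ by Stieltjes inversion.
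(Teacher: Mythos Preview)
Your overall strategy matches the paper's: apply Theorem~\ref{t:loopstudy} to the one-step transition, massage the leading contour integral into a closed expression in $f_\sft$, sum over time, kill the martingale part, and differentiate in $\sft$ to obtain~\eqref{e:Burgers_first_appearance}. The recovery of $\rho_\sft$ from $f_\sft$ via a Stieltjes-inversion computation (the paper does this as $\lim_{\eps\to 0+}\tfrac{1}{\pi}\arg f_\sft(x+\ii\eps)=1-\rho_\sft(x)$) is also the same.

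There are, however, two genuine technical gaps in your plan. First, your description of the contour-integral evaluation is not workable as written. You propose to ``push the contour out towards $z$ and to $\infty$'', but $\ln\cB_\sft(w)$ is only defined on the strip $\Lambda$ of~\eqref{e:x10} and does not extend to $\infty$ in the $w$-plane. The paper instead exploits the $BC$-type symmetry $w\mapsto -w+\sft-2\log_\sfq\kappa$ (under which $\sfb_\sft$ is invariant and $\tilde\cB_\sft$ transforms by an explicit factor, cf.~\eqref{e:x13}) to double the contour, then changes variables to $W=\sfq^w$, where one \emph{can} push to $0$ and $\infty$. The inhomogeneous right-hand side of~\eqref{e:Burgers_first_appearance} then arises not from $\partial_\sft\ln\sfb_\sft'(z)$ as you suggest, but from residues at the two critical points $W=\pm\sfq^{\sft/2}\kappa^{-1}$ of $\sfb_\sft$ (these are simple zeros of $\tilde\cB_\sft$ by the analysis in Proposition~\ref{Proposition_off_criticality}); see~\eqref{e:x32}. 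There is also a second, more delicate, cancellation: the extra $\partial_\sft\sfb_\sft$ term you correctly flag must be shown to exactly cancel the residual contour integral of $\partial_w\ln\cG_\sft(w)$ over the reflected contour, which the paper checks by a separate residue computation.

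Second, you overstate the role of a priori concentration. The paper does \emph{not} need it: Proposition~\ref{Proposition_off_criticality} verifies Assumptions~\ref{a:asymp} and~\ref{a:stable} uniformly for \emph{every} configuration $\bmx(t)$ arising with positive probability (this is a nontrivial four-step argument locating all zeros and poles of $\tilde\cB_\sft$ and checking the winding-number condition~\eqref{e:aB0}). With that in hand, one simply sums the conditional expansions, bounds the martingale by Doob's inequality, and extracts a subsequential limit by compactness in $C([0,\sfT],\mathscr M_1(\bR))$; the limit satisfies the integral form~\eqref{e:LLN_integral_equation}, whose $\sft$-derivative is~\eqref{e:Burgers_first_appearance}. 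No Gr\"onwall loop is needed. Your proposal omits the verification of Assumption~\ref{a:stable} entirely, and this is where most of the model-specific work lies.
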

\begin{remark}
 The density $1-\rho_\sft(s)$ has a transparent meaning in terms of the lozenge tilings: while $\rho_{\sft}(s)$ is the combined density of the lozenges of types \Lb\, and \Lc, $1-\rho_\sft(s)$ is the density of \La\, lozenges. By a computation similar to \eqref{e:complex_slope_comp} the complex slope function can be also rewritten as
\begin{equation}
\label{e:double_complex_slope_2}
 f_\sft(z)= \frac{ (1-\sfq^{\sfN-z})(1-\kappa^2 \sfq^{z-\sfT})}{(1-\sfq^{-z-\sfT+\sft})(1-\kappa^2 \sfq^{z+\sfN-\sft})} \exp\left( \int_{\sft-\sfT}^{\sfN} \frac{\sfb'_\sft(z)\rho_\sft(s)}{\sfb_\sft(z)-\sfb_\sft(s)}\rd s\right).
\end{equation}
\end{remark}

\begin{proposition} \label{Proposition_LLN_equation}
The complex slope functions \eqref{e:double_complex_slope} for the densities $\rho_\sft(s)$ in Theorem \ref{Theorem_LLN_abstract} can be computed as
 \begin{equation}
 \label{e:slope_through_u}
 f_\sft(z)
=\frac{(\sfq^{-u}+\kappa^2 \sfq^u)-(\sfq^{-z}+\kappa^2 \sfq^{z})}{(\sfq^{-u}+\kappa^2 \sfq^u)-(\sfq^{-z+\sft}+\kappa^2\sfq^{z-\sft})},
\end{equation}
where the function $u=u(\sft,z)$ solves the equation involving the complex slope of the initial condition $\rho_0$:
\begin{equation}
\label{e:slope_equation}
\sfq^{-z}+\kappa^2 \sfq^{z-\sft}=\frac{\sfq^{-u}-f_0(u)\kappa^2 \sfq^{u}  -\sfq^{-\sft}  (f_0(u)\sfq^{-u}-\kappa^2 \sfq^{u})}{1-f_0(u)}.
\end{equation}
\end{proposition}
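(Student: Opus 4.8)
\textbf{Proof proposal for Proposition \ref{Proposition_LLN_equation}.}

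The plan is to solve the partial differential equation \eqref{e:Burgers_first_appearance} for the complex slope $f_\sft(z)$ of Proposition \ref{Proposition_LLN_Burgers} by the method of characteristics, and then to recognize that the resulting implicit description is exactly \eqref{e:slope_through_u}--\eqref{e:slope_equation}. First I would rewrite \eqref{e:Burgers_first_appearance} using the substitution suggested by \eqref{e:defUV}, \eqref{eq_first_integrals}: introduce the two functions
\begin{align}\label{e:UV_candidates}
\mathcal U(\sft,z)=\sfq^\sft \frac{f_\sft(z)\sfq^{-z}-\kappa^2 \sfq^{z-\sft}}{1-f_\sft(z)},\qquad
\mathcal V(\sft,z)=\frac{\sfq^{-z}-f_\sft(z)\kappa^2 \sfq^{z-\sft}}{1-f_\sft(z)}.
\end{align}
A direct computation, using that $\sfq^{-z}+\kappa^2\sfq^{z-\sft}$ and the two combinations $\sfq^{-z}$, $\kappa^2\sfq^{z-\sft}$ are the natural coordinates in which the right-hand side of \eqref{e:Burgers_first_appearance} is a ratio of their difference and sum, should show that $\mathcal U$ and $\mathcal V$ are constant along the characteristic curves of \eqref{e:Burgers_first_appearance}; this is the content of the footnote after \eqref{eq_first_integrals} combined with the PDE itself, and it is the algebraic heart of the argument. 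The characteristic curves themselves are the curves $z=z(\sft)$ along which $\sfb_\sft(z)=\sfq^{-z}+\kappa^2\sfq^{z-\sft}$ stays constant (this is the ``complex characteristic flow'' alluded to in the introduction); along such a curve \eqref{e:Burgers_first_appearance} becomes an ODE whose first integrals are precisely $\mathcal U$ and $\mathcal V$.

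Next I would trace each characteristic back to time $\sft=0$. Parameterize the characteristic through $(\sft,z)$ by the complex number $u$ defined as its position at time $0$, i.e. $u$ is determined by $\sfq^{-u}+\kappa^2\sfq^{u}=\sfb_\sft(z)$ together with a choice of branch; this is exactly the equation one gets by setting $\sft=0$ in $\sfb_\sft(z)=\text{const}$ after noting $\sfb_0(u)=\sfq^{-u}+\kappa^2\sfq^{u}$. Constancy of $\mathcal U,\mathcal V$ along the characteristic gives $\mathcal U(\sft,z)=\mathcal U(0,u)=U(u)$ and $\mathcal V(\sft,z)=\mathcal V(0,u)=V(u)$, where $U,V$ are the functions \eqref{e:defUV} built from the complex slope $f_0$ of the initial density $\rho_0$ (here one uses Remark \ref{Remark_two_forms_IC}: $f_0$ computed from $\rho_0$ via \eqref{e:complex_slope_comp} agrees with \eqref{eq_algebraic_equation}). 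Eliminating $f_\sft(z)$ from the pair \eqref{e:UV_candidates} — a system of two linear equations in $\mathcal U,\mathcal V$ once $f_\sft(z)$ is treated as known — yields on one hand the formula \eqref{e:slope_through_u} expressing $f_\sft(z)$ through $\sfq^{-u}+\kappa^2\sfq^u$, and on the other hand the linear relation $\sfq^{-z}+\kappa^2\sfq^{z-\sft}=V(u)-\sfq^{-\sft}U(u)$; substituting $U(u)=\mathcal U(0,u)$ and $V(u)=\mathcal V(0,u)$ from \eqref{e:UV_candidates} with $\sft=0$ and $z=u$ turns this into \eqref{e:slope_equation}.

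The main obstacle is not the algebra but the well-posedness: I must argue that for every $\sft\in[0,\sfT]$ and every admissible complex $z$ the characteristic through $(\sft,z)$ does reach $\sft=0$ without leaving the region where $f_0$ (hence $U,V$) is defined, and that the branch of $u$ can be chosen consistently. The characteristic flow for $\sfb_\sft$ is singular — the two roots of $\sfq^{-z}+\kappa^2\sfq^{z-\sft}=c$ in $z$ can collide — so the curves $z^\sft$ must be controlled away from these branch points; this is precisely the ``singular behavior'' of the flow mentioned at the end of the introduction, and handling it carefully (choosing the contour for $u$, checking the flow stays inside $\Lambda$, and matching branches across $\sft$) is where the real work lies. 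A clean way to organize this is to first establish \eqref{e:slope_through_u}--\eqref{e:slope_equation} for $z$ in a small complex neighborhood of a real point outside the support of $\rho_\sft$, where the integral representation \eqref{e:double_complex_slope} converges and the flow is manifestly nonsingular by the hypothesis $|\partial_z\sfb_\sft(z)|>\delta$, and then extend by analytic continuation in $z$ and $\sft$, invoking uniqueness of the solution to \eqref{e:Burgers_first_appearance} with the given initial data from Proposition \ref{Proposition_LLN_Burgers}.
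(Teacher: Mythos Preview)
Your overall strategy --- solve the Burgers-type PDE \eqref{e:Burgers_first_appearance} by characteristics and identify $\mathcal U,\mathcal V$ of \eqref{e:UV_candidates} as first integrals --- matches the paper. But your description of the characteristic flow is wrong, and this makes the argument internally inconsistent.

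You assert that the characteristics are the curves $z(\sft)$ along which $\sfb_\sft(z)=\sfq^{-z}+\kappa^2\sfq^{z-\sft}$ stays constant, and you then parameterize by $u$ via $\sfq^{-u}+\kappa^2\sfq^{u}=\sfb_\sft(z)$. This is not the characteristic flow of \eqref{e:Burgers_first_appearance}. The correct system (paper's Step~2 of Theorem~\ref{Theorem_three_descriptions}) is
\[
\partial_\sft z^\sft=\frac{f_\sft(z^\sft)}{f_\sft(z^\sft)-1},\qquad
\partial_\sft f^\sft=\ln(\sfq)\,f^\sft\,\frac{\kappa^2\sfq^{z^\sft-\sft}+\sfq^{-z^\sft}}{\kappa^2\sfq^{z^\sft-\sft}-\sfq^{-z^\sft}},
\]
and one checks directly that $\mathcal U,\mathcal V$ are constant along \emph{these} curves. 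Along them $\sfb_\sft(z^\sft)=V(u)-\sfq^{-\sft}U(u)$, which varies with $\sft$; it is \emph{not} equal to $\sfb_0(u)=V(u)-U(u)$. Your own algebra confirms this: you derive $\sfb_\sft(z)=V(u)-\sfq^{-\sft}U(u)$ from constancy of $\mathcal U,\mathcal V$, which contradicts your earlier definition $\sfb_0(u)=\sfb_\sft(z)$ of $u$ unless $U(u)=0$ or $\sft=0$. So the two halves of your argument refer to different $u$'s. Once you replace the flow by the correct one above, the rest of your algebra (eliminating $f_\sft(z)$ to get \eqref{e:slope_through_u}, and reading off \eqref{e:slope_equation} from $\sfb_\sft(z)=V(u)-\sfq^{-\sft}U(u)$) goes through as you wrote.

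On well-posedness: the paper does not rely on analytic continuation from real $z$. Instead, it passes to the variable $w=\sfb_\sft(z)$ and notes that the flow becomes the explicit affine map $w^0\mapsto w^0-(\sfq^{-\sft}-1)\cS_0(w^0)$, with $\cS_0$ a Nevanlinna function (Step~4). Bijectivity of $u\mapsto z^{\sft,u}$ between $\{u\in\bD_0^\circ:\sft<\sft(u)\}$ and $\bD_\sft^\circ$ then follows from a result of Biane on such maps (Lemma~\ref{Lemma_bijectivity}); this is what pins down the correct branch of $u$ (Remark~\ref{remark_u_choice}) and handles the singular behavior you worry about.
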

\begin{remark} \label{remark_u_choice}The equation \eqref{e:slope_equation} might have several solutions and one should choose the correct branch of $u(\sft,z)$: the exact specification of this choice depends on $\sfq$ and $\kappa$ we work with. In Step 5 of Section \ref{Section_Limit_shape_proofs} we explain in details the specification under Assumption \ref{a:para}. In this case, if we deal with $z$ satisfying $0<\Im z<- \tfrac{\pi}{\ln \sfq}$, $\Re z \leq \sft/2-\log_\sfq \kappa$ (the values at all other $z$ can be reconstructed from these using symmetries of $f_\sft(z)$), then we can choose the unique $u$ such that
 $$
  0<\Im u<- \tfrac{\pi}{\ln \sfq},{ }  \Re u< -\log_\sfq \kappa, \text { and }\Im\left[\sfq^{-u}+\kappa^2 \sfq^{u}-(\sfq^{-\sft}-1) \frac{f_0(u)\sfq^{-u}-\kappa^2 \sfq^{u}}{1-f_0(u)}\right]<0.
 $$
\end{remark}

\begin{remark}
 Combination of \eqref{e:complex_slope_comp} and \eqref{e:double_complex_slope_2} gives $f_0(u)$ in \eqref{e:slope_equation} as an explicit product involving parameters \eqref{e:defpp} encoding the trapezoid. In this situation \eqref{e:slope_equation} turns into an algebraic equation in terms of the variable $\sfq^u$. This equation (after renaming $z$ into $x$) coincides with \eqref{e:solveu}.
\end{remark}

\begin{proposition}\label{Proposition_LLN_invariant}
Define a function $\cS_\sft(w)$, through the relation
\begin{align}\label{e:U_definition}
 \cS_\sft(\sfb_\sft(z))=\sfq^\sft \frac{f_\sft(z)\sfq^{-z}-\kappa^2 \sfq^{z-\sft}}{1-f_\sft(z)},\qquad  w=\sfb_\sft(z),
\end{align}
where $f_\sft(z)$ is as in \eqref{e:double_complex_slope}. Then:
\begin{enumerate}
 \item For each $0\leq \sft\leq \sfT$ there exist a compactly supported measure $\mu_\sft$ on $\mathbb R$ and a real constant $C_\sft$, such that
\begin{align}\label{e:U_measure}
\cS_\sft(w)=\frac{w}{\sfq^{-\sfT}-\sfq^{-\sft}}+C_\sft-\int_{\mathbb R}\frac{\rd \mu_\sft(s)}{w-s}.
\end{align}
\item The functions $\cS_\sft(w)$ for different values of $\sft$ are linked by the functional relation
\begin{equation}
\label{e:U_functional_statement}
\cS_\sft^{(-1)}(v)+\sfq^{\sft} v=\cS_0^{(-1)}(v)+v,
\end{equation}
where $\cS_\sft^{(-1)}(v)$ is the functional inverse of $\cS_\sft$, which satisfies $\cS_\sft^{(-1)}(v)=(\sfq^{-\sfT}-\sfq^{-\sft})v+\oo(v)$ as $v\to\infty$.
\end{enumerate}
\end{proposition}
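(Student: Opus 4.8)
\textbf{Proof proposal for Proposition \ref{Proposition_LLN_invariant}.}
The plan is to derive both statements from the description of the limit shape already obtained in Propositions \ref{Proposition_LLN_Burgers} and \ref{Proposition_LLN_equation}, so that the analytic input is the complex slope $f_\sft(z)$ and its parameterization \eqref{e:slope_through_u}--\eqref{e:slope_equation}. First I would understand $\cS_\sft$ as a function of $u$ rather than of $w=\sfb_\sft(z)$: substituting \eqref{e:slope_through_u} into \eqref{e:U_definition} and simplifying (this is exactly the computation in footnote to \eqref{eq_first_integrals}, but now run in the ``doubly complex'' variable $z$), one obtains that the right-hand side of \eqref{e:U_definition} equals $U(u)$ from \eqref{e:defUV}, with $f_0(u)$ as in \eqref{e:complex_slope_comp}--\eqref{e:double_complex_slope_2}. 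In parallel, $w=\sfb_\sft(z)=\sfq^{-z}+\kappa^2\sfq^{z-\sft}$ is, by \eqref{e:slope_equation}, a rational function of $u$ which one checks to be $V(u)/\!\bigl(\text{something}\bigr)$-- more precisely $\sfq^\sft w = \sfq^\sft V(u) - U(u)$ after clearing denominators. Thus $\cS_\sft$ is obtained by composing the rational map $u\mapsto U(u)$ with the inverse of the rational map $u\mapsto w(u)$, and the functional inverse $\cS_\sft^{(-1)}$ is literally $v=U(u)\mapsto w(u)$. From this, part (2) is essentially a tautology: along a common value of $u$ we have $\cS_\sft^{(-1)}(v)+\sfq^\sft v = w(u)+\sfq^\sft U(u)$, and using $\sfq^\sft w(u) = \sfq^\sft V(u)-U(u)$ this equals $V(u)$, which is independent of $\sft$; specializing $\sft=0$ gives $\cS_0^{(-1)}(v)+v=V(u)$ as well, proving \eqref{e:U_functional_statement}. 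The normalization $\cS_\sft^{(-1)}(v)=(\sfq^{-\sfT}-\sfq^{-\sft})v+\oo(v)$ follows by tracking the behavior as $u$ approaches the point where $f_0(u)=1$ (equivalently $w\to\infty$), using the explicit product \eqref{e:complex_slope_comp}.

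For part (1) the substance is the claim that $\cS_\sft(w)$ admits the Stieltjes-type representation \eqref{e:U_measure} with a genuine (nonnegative, compactly supported) measure $\mu_\sft$. I would argue this directly from \eqref{e:U_definition} and \eqref{e:double_complex_slope_2}. Writing $g_\sft(z):=\int_{\sft-\sfT}^{\sfN}\frac{\sfb'_\sft(z)\rho_\sft(s)}{\sfb_\sft(z)-\sfb_\sft(s)}\rd s$, one has, after the change of variables $s\mapsto \sfb_\sft(s)$, that $g_\sft(z)=\int \frac{\rd\nu_\sft(a)}{\sfb_\sft(z)-a}$ where $\nu_\sft$ is the pushforward of $\rho_\sft(s)\rd s$ under $\sfb_\sft$ (a positive measure, since $\rho_\sft\geq 0$ and $\sfb_\sft$ is monotone on the relevant interval by the hypothesis $|\partial_z\sfb_\sft|>\delta$). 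Plugging $f_\sft=(\text{rational prefactor})\cdot e^{g_\sft}$ from \eqref{e:double_complex_slope_2} into \eqref{e:U_definition}, expanding in $w=\sfb_\sft(z)$, and collecting terms, one identifies the linear-in-$w$ part with coefficient $1/(\sfq^{-\sfT}-\sfq^{-\sft})$, a constant part $C_\sft\in\mathbb R$, and a part decaying like $1/w$ which, after matching residues, is a Cauchy transform $-\int\frac{\rd\mu_\sft(s)}{w-s}$ of a signed measure; positivity and compact support of $\mu_\sft$ then come from positivity of $\nu_\sft$ and the density bounds $0\le\rho_\sft\le 1$ together with the explicit (rational, pole-free outside a compact set) form of the prefactor in \eqref{e:double_complex_slope_2}. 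Reality of all the coefficients follows from the reality hypothesis $\sfb_\sft(\bar z)=\overline{\sfb_\sft(z)}$ and $\rho_\sft$ being a real density, which forces $\cS_\sft(\bar w)=\overline{\cS_\sft(w)}$.

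The main obstacle I anticipate is precisely controlling this Stieltjes representation: one must show that the $1/w$-tail of $\cS_\sft$ is the Cauchy transform of a \emph{positive} measure, not merely a signed one, and that it is compactly supported. This is where the algebraic structure matters--$\cS_\sft$ is a branch of an algebraic function of $w$ (through $u$), and the measure $\mu_\sft$ is recovered as $\tfrac{1}{\pi}$ times the imaginary part of $\cS_\sft$ on the branch cut. I would therefore identify the branch cut of $w\mapsto\cS_\sft(w)$ with $\sfb_\sft(\supp\rho_\sft)$, show (using the liquid/frozen classification of Section \ref{s:heightf} and Remark \ref{Remark_4_solutions}) that on this cut $\Im\cS_\sft$ has a definite sign, and invoke the Stieltjes--Perron inversion formula to conclude $\mu_\sft\ge 0$; compact support is immediate once the cut is localized. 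The rest is bookkeeping of rational-function identities that I would relegate to a short computation, exactly as in the footnote to \eqref{eq_first_integrals}.
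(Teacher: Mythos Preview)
Your approach to part (2) is essentially the paper's: both identify $\cS_\sft(\sfb_\sft(z^\sft))$ with the first integral $U(u)$ along characteristics and read off \eqref{e:U_functional_statement} from the invariance $\cS^\sft=\cS^0$ combined with the explicit linear evolution $w^\sft=w^0-(\sfq^{-\sft}-1)\cS^0$. (Your displayed identity $w+\sfq^\sft U(u)=V(u)$ has a slip in the exponent of $\sfq$; the correct relation is $w+\sfq^{-\sft}U(u)=V(u)$, coming from $w=V(u)-\sfq^{-\sft}U(u)$, but this is cosmetic.)

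For part (1), however, there is a genuine gap. The passage from positivity of the pushforward $\nu_\sft$ to positivity of $\mu_\sft$ does not go through: $\cS_\sft$ is built from $g_\sft$ through an exponential and a M\"obius-type transform, and there is no reason a priori that this preserves the sign of the Cauchy-transform part. You recognize this and propose to recover $\mu_\sft$ via Stieltjes--Perron on the branch cut, checking that $\Im\cS_\sft$ has a definite sign there. But that check is exactly the substantive step, and it cannot be done on the cut alone: you need $\cS_\sft$ to be a Nevanlinna function, i.e.\ $\Im\cS_\sft(w)<0$ for all $\Im w<0$, before the representation theorem yields a \emph{positive} $\mu_\sft$. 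You never establish this.

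The paper's argument (Step 4 in the proof of Theorem \ref{Theorem_three_descriptions}) supplies precisely this missing ingredient. It pulls back to $\mathbf U_\sft(z)=\cS_\sft(\sfb_\sft(z))$ on the half-strip $\bD_\sft$, observes that $\Im\mathbf U_\sft$ is harmonic there, and checks by hand that $\Im\mathbf U_\sft\le 0$ on each piece of $\partial\bD_\sft$: on the right boundary $|f_\sft|=1$ forces $\mathbf U_\sft$ real; on the top boundary $f_\sft$ is real; on the bottom boundary $\Im f_\sft\ge 0$ (from $\arg f_\sft=\pi(1-\rho_\sft)$) and the sign of $\sfq^{-z}-\kappa^2\sfq^{z-\sft}$ give $\Im\mathbf U_\sft\le 0$; and as $\Re z\to-\infty$ one computes $f_\sft\to\sfq^{\sft-\sfT}$ and reads off both the sign and the linear coefficient $1/(\sfq^{-\sfT}-\sfq^{-\sft})$. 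The maximum principle then gives $\Im\mathbf U_\sft<0$ in $\bD_\sft^\circ$, hence the Nevanlinna property, and the representation \eqref{e:U_measure} with $\mu_\sft\ge 0$ follows from the Nevanlinna integral representation; compact support comes from holomorphy of $\cS_\sft$ for large real $w$. Your outline would become a proof once you insert this harmonic-function argument in place of the hand-wave about positivity of $\nu_\sft$.
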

\begin{remark}
 The function $\cS_\sft$ will be produced using the first integral \eqref{e:defu_1} for the characteristics of the PDE \eqref{e:Burgers_first_appearance}, but we could also use another first integral \eqref{e:defv_1} or, perhaps, the ratio of them. In each situation we would be getting a different version of \eqref{e:U_functional_statement} and a different definition of $\mu_\sft$. Finding the best form of the functional relation is an interesting open problem.
\end{remark}
\begin{remark}
 The relation \eqref{e:U_functional_statement} has similarities with \cite[Eq.\ (2.9) and Proposition 3.4]{Bu_G_quantized}, which describes the limit shape in the case of the uniform measure on tilings (which can be obtained by setting $q=\sfq=1$ in \eqref{eq_lozenge_weight}). The same paper also described how such formulas can be transformed to the formulas for the free projection in the free probability theory by the means of the Markov-Krein transform. It would be interesting to find an analogue of such transform in the present $(q,\kappa)$--setting.
\end{remark}

Several possible choices of $\sfq$ and $\kappa$ make $b_t(x)$ a monotone function of $x$ in Theorem \ref{Theorem_LLN_abstract}, which guarantees that \eqref{e:qtransit_2} is positive and corresponds to the condition $|\partial_z \sfb_\sft(z)|>\delta$ in the statement of the theorem. The choices correspond to real, imaginary, and trigonometric cases outlined in Section \ref{s:heightf}. In our proofs, we are only going to detail (a subcase of) the real case by imposing the following restriction:

\begin{assumption}\label{a:para}
We require $0<\sfq<1$ and we require $\kappa$ to be a positive real number such that for some fixed $\delta>0$ and all small enough $\eps>0$ we have
$$\kappa^2>\max_{0\leq \sft \leq \sfT}\left[\sfq^{-2\min(\eps x_1(0)+\sft/2,\sfN-\sft/2)}\right]+\delta
=\sfq^{-\sfN-\varepsilon x_1(0)}+\delta
.$$
\end{assumption}

The arguments for other possible choices of $\sfq$ and $\kappa$ in the imaginary and real cases are similar. The trigonometric case needs additional efforts and explanations. In order to keep our presentation concise, we excluded it from Theorem \ref{Theorem_LLN_abstract} by requiring $\sfb_\sft(x)$ to be a real function of real $x$ there.

\bigskip

The proofs of Theorem \ref{Theorem_LLN_abstract} and Propositions \ref{Proposition_LLN_Burgers}, \ref{Proposition_LLN_equation},  \ref{Proposition_LLN_invariant} are presented in the rest of this section: in Subsection \ref{Section_assumptions} we check that Theorem \ref{t:loopstudy} is applicable by verifying its assumptions; in Subsection \ref{Section_massaging} we transform the conclusion of Theorem \ref{t:loopstudy} into the form reminiscent to the PDE \eqref{e:Burgers_first_appearance}; in Subsection \ref{Section_Limit_shape_proofs} we solve this PDE and show that  Propositions \ref{Proposition_LLN_Burgers}, \ref{Proposition_LLN_equation}, and \ref{Proposition_LLN_invariant} are equivalent; in Subsection \ref{Section_proof_of_abstract_LLN} we finish the proof of Theorem \ref{Theorem_LLN_abstract}; and in Subsection \ref{Section_proof_of_LLN_intro} we deduce Theorems \ref{t:arctic} and \ref{t:limitshape} as its corollaries.

\subsection{Assumptions in Theorem \ref{t:loopstudy}}

\label{Section_assumptions}

Our proofs in this section rely on Theorem \ref{t:loopstudy} with $\theta=1$ applied to transition probabilities \eqref{e:qtransit_2}. Here we check that the required assumptions are satisfied and record the conclusion of the theorem. We start by introducing various notations.

We use three sets of functions:
\begin{align}
\notag&\phi_t^+(x)=q^{T+N-1-t}(1-q^{x-N+1})(1-\kappa^2 q^{x-T+1}), & &\phi_t^-(x)=-(1-q^{x+T-t})(1-\kappa^2 q^{x+N-t}),
\\
\notag &\varphi^+_\sft(z)=\phi_{\sft/\eps}^+(z/\eps), & & \varphi^-_\sft(z)=\phi_{\sft/\eps}^-(z/\eps),
\\
&\tilde \varphi^+_\sft(z)=\sfq^{\sfT+\sfN-\sft}(1-\sfq^{z-\sfN})(1-\kappa^2 \sfq^{z-\sfT}),&
&\tilde \varphi_\sft^-(z)=-(1-\sfq^{z+\sfT-\sft})(1-\kappa^2 \sfq^{z+\sfN-\sft}). \label{e:phi_functions_defs}
\end{align}
The relation between $\phi_t^\pm$ and $\varphi_\sft^\pm$ is the same as between $\phi^\pm$ and $\varphi^\pm$ in the setting of Theorem \ref{t:loopstudy}. On the other hand, $\tilde \varphi_\sft^\pm$ do not have shifts by $1$ appearing in $\phi_t^\pm$. Let us also define
$$
{\psi^{+}_\sft}(z):=\ln(\sfq) \left( -\sfq^{\sfT+\sfN-\sft}+\kappa^2 \sfq^{2z-\sft} \right). 
$$
Then we have
$$
 \varphi^+_\sft(z)=\tilde \varphi^+_\sft(z)+ \eps \psi^+_\sft(z)+ \OO(\eps^2),\qquad \qquad \varphi^-_\sft(z)=\tilde \varphi^-_\sft(z).
$$
Note that the function $\sfb_\sft(z)=\sfq^{-z}+\kappa^2 \sfq^{z-\sft}$ is symmetric under $z\mapsto -z+\sft-2\log_\sfq\kappa$:
\begin{align}\label{e:btsym}
\sfb_\sft(z)=\sfb_\sft(-z+\sft-2\log_\sfq\kappa),\quad \sfb'_\sft(z)=-\sfb'(-z+\sft-2\log_\sfq\kappa).
\end{align}
An advantage of the functions $\tilde \varphi^\pm_\sft(z)$ over $\varphi^\pm(z)$ is that they agree with this symmetry:
\begin{align}\label{e:phtsim}
\frac{\tilde \varphi_\sft^-(-z+\sft-2\log_\sfq \kappa)}{\tilde \varphi_\sft^+(z)}=\frac{\tilde \varphi_\sft^+(-z+\sft-2\log_\sfq \kappa)}{\tilde \varphi_\sft^-(z)}=-\frac{\sfq^{\sft-2z}}{\kappa^2}.
\end{align}

\bigskip

Next, we recall the smoothed empirical density $\rho(s;\bmx(t))$ of $\bmx(t)$ given by \eqref{eq_rho_x_t_def} and note its support:
\begin{equation}\label{e:defatbt}
\supp(\rho(s;\bmx(\sft/\eps)))\subset[\bl(\sft),\br(\sft)],\quad \bl(\sft):=\max(\eps x_N(0), \sft -\sfT),\quad \br(\sft):=\min(\sfN, \eps x_1(0)+\sft+\eps).
\end{equation}
We need several integral transforms of $\rho(s;\bmx(t))$ defined for complex $z$ outside $\sfb_\sft^{-1}(\sfb_\sft([\bl(\sft),\br(\sft)]))$:
\begin{equation}
\label{e:x28}
 \cG_\sft(z):=\exp\left(\int_{\bl(\sft)}^{\br(\sft)} \frac{\sfb_\sft'(z)\rho(s;\bmx(\sft/\eps))}{\sfb_\sft(z)-\sfb_\sft(s)}\rd s \right), \qquad \tilde f_\sft(z):=-\cG_{\sft}(z)\frac{\tilde \varphi_\sft^+(z)}{\tilde \varphi_\sft^-(z)},
\end{equation}
\begin{equation} \label{e:x4}
\cB_\sft(z):=\cG_\sft(z) \varphi_\sft^+(z)+\varphi_\sft^-(z), \qquad
\tilde\cB_\sft(z):=\cG_\sft(z)\tilde \varphi_\sft^+(z)+\tilde \varphi_\sft^-(z)=\tilde \varphi_\sft^-(z)(1-\tilde f_\sft(z)).
\end{equation}
Let us emphasize that since $\bmx(t)$ is random, so are all the above integral transforms. $\cB_\sft(z)$ is the function which we eventually need, because it enters Assumption \ref{a:stable}. However, it is simpler to analyze $\tilde f_\sft(z)$ and $\tilde \cB_\sft(z)$, because of the following symmetries under $z\mapsto -z+\sft-2\log_\sfq\kappa$ transformations: using \eqref{e:btsym} we obtain
$$
 \cG_\sft(-z+\sft-2\log_\sfq\kappa)= \frac{1}{ \cG_\sft(z)};
$$
further, using \eqref{e:phtsim} we get
$$
  \tilde f_\sft(-z+\sft-2\log_\sfq\kappa)=-\cG_{\sft}(-z+\sft-2\log_\sfq\kappa)\frac{\tilde \varphi_\sft^+(-z+\sft-2\log_\sfq\kappa)}{\tilde \varphi_\sft^-(-z+\sft-2\log_\sfq\kappa)}=-\frac{1}{\cG_{\sft}(z)}\frac{\tilde \varphi_\sft^-(z)}{\tilde \varphi_\sft^+(z)}=\frac{1}{ \tilde f_\sft(z)};
$$
and, finally, again using \eqref{e:phtsim} we get
\begin{align} \label{e:x13}
\notag \tilde \cB_\sft(-z+\sft-2\log_\sfq\kappa)&=\cG_\sft(-z+\sft-2\log_\sfq\kappa)\tilde \varphi_\sft^+(-z+\sft-2\log_\sfq\kappa)+\tilde \varphi_\sft^-(-z+\sft-2\log_\sfq\kappa)\\&=-\frac{\sfq^{\sft-2z}}{\kappa^2 \cG_\sft(z)}\left(\tilde \varphi_\sft^-(z)+ \cG_\sft(z)\tilde \varphi_\sft^+(z)  \right)= -\frac{\sfq^{\sft-2z}}{\kappa^2 \cG_\sft(z)} \tilde \cB_\sft(z).
\end{align}

\begin{proposition} \label{Proposition_off_criticality} In the framework of Theorem \ref{Theorem_LLN_abstract}, for each $t=0,1,2\dots,T-1$ transition probabilities \eqref{e:qtransit_2} satisfy the requirements of Theorem \ref{t:loopstudy}, which means that Assumptions \ref{a:asymp} and \ref{a:stable} hold almost surely for $\bmx=\bmx(t)$.
\end{proposition}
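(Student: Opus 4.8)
The goal is to verify Assumptions \ref{a:asymp} and \ref{a:stable} for the transition probabilities \eqref{e:qtransit_2} at each fixed time step $t$, with $\theta=1$, $b(x)=b_t(x)$, and weights $\phi^\pm_t$. The first assumption is essentially a bookkeeping exercise: under the scaling \eqref{e:defpp} we have $\varepsilon x_i\in[\bl(\sft),\br(\sft)]$ by \eqref{e:defatbt}, and the functions $\sfb_\sft(z)=\sfq^{-z}+\kappa^2\sfq^{z-\sft}$, $\tilde\varphi^\pm_\sft(z)$ from \eqref{e:phi_functions_defs} are the obvious continuum limits of $b_t(\varepsilon^{-1}z)$ and $\phi^\pm_t(\varepsilon^{-1}z)$ (up to the $\OO(\varepsilon)$ corrections already recorded after \eqref{e:phi_functions_defs}). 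The conformality and reality-on-reals of $\sfb_\sft$ is where Assumption \ref{a:para} enters: for $0<\sfq<1$ and $\kappa^2$ large enough, $\partial_z\sfb_\sft(z)=-\ln\sfq(\sfq^{-z}-\kappa^2\sfq^{z-\sft})$ is bounded away from zero on a complex neighborhood $\Lambda$ of $[\bl(\sft),\br(\sft)]$ --- precisely the content of the hypothesis $|\partial_z\sfb_\sft(z)|>\delta$ in Theorem \ref{Theorem_LLN_abstract} --- so $\sfb_\sft$ is injective there by shrinking $\Lambda$, and the uniform bounds \eqref{e:derbba} on $\sfb_\sft$, $[\partial_z\sfb_\sft]^{-1}$, $\varphi^\pm_\sft$ follow from continuity and compactness. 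The positivity/reality condition \eqref{e:probabilistic_case} on $\varphi^\pm_\sft$ on the interior of the support is exactly the positivity of the $(q,\kappa)$-weights guaranteed by the restrictions discussed in Section \ref{s:heightf}, here specialized to Assumption \ref{a:para}.

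The substantive work is Assumption \ref{a:stable}, i.e.\ the non-criticality of $\cB_\sft(z)=\cG_\sft(z)\varphi^+_\sft(z)+\varphi^-_\sft(z)$: we must show $c<|\cB_\sft(z)|<c^{-1}$ on compacts of $\Lambda\setminus[\bl(\sft),\br(\sft)]$ and that $\cB_\sft$ has winding number zero around every closed loop in $\Lambda\setminus[\bl(\sft),\br(\sft)]$. The key simplification, already set up in \eqref{e:x28}--\eqref{e:x13}, is to replace $\cB_\sft$ by $\tilde\cB_\sft(z)=\tilde\varphi^-_\sft(z)(1-\tilde f_\sft(z))$, which differs from $\cB_\sft$ only by an $\OO(\varepsilon)$ multiplicative perturbation and is compatible with the involution $z\mapsto -z+\sft-2\log_\sfq\kappa$ via \eqref{e:x13}. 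So it suffices to prove that $\tilde f_\sft(z)=-\cG_\sft(z)\tilde\varphi^+_\sft(z)/\tilde\varphi^-_\sft(z)$ stays bounded away from $1$ (and $0$ and $\infty$) on compacts. The plan is to observe that $\tilde f_\sft(z)$ is, up to the involutive symmetry, the \emph{complex slope function} of the configuration $\rho(s;\bmx(t))$ --- this is the content of formula \eqref{e:double_complex_slope_2} read at time $\sft$ --- and the complex slope of any interlacing particle configuration automatically avoids $\{0,1,\infty\}$ off the real support: $\cG_\sft(z)$ maps $\mathbb C^+$ to a region with controlled argument because $\rho(s;\bmx(t))\in\{0,1/\theta\}=\{0,1\}$ densities integrate to finite total mass, so $\int \sfb_\sft'(z)\rho/(\sfb_\sft(z)-\sfb_\sft(s))\,ds$ has imaginary part with a definite sign for $z\in\mathbb C^+$; combined with the explicit rational prefactor $\tilde\varphi^+_\sft/\tilde\varphi^-_\sft$, whose zeros and poles all lie at the boundary points $\sfN,\sft-\sfT$ etc.\ which are at the edge of (not inside) the relevant region, one gets $\tilde f_\sft(z)\in\mathbb C\setminus[0,\infty)$ for $z$ strictly off the support, hence bounded away from $1$.

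For the winding-number condition \eqref{e:aB0} I would argue as follows: $\tilde\cB_\sft(z)$ extends meromorphically to $\Lambda$ (it is $\tilde\varphi^-_\sft$ times $1-\tilde f_\sft$, and $\tilde f_\sft$ is a ratio of $\cG_\sft$ --- analytic off the support --- times an explicit rational function), its only possible zeros/poles in $\Lambda$ lie on $[\bl(\sft),\br(\sft)]$, and by the argument principle the winding number around a loop enclosing the support equals (number of zeros) minus (number of poles) inside; one then checks this integer is zero, for instance by deforming to $\varepsilon\to 0$ where $\cG_\sft$ has the product form \eqref{e:complex_slope_comp}--\eqref{e:double_complex_slope_2} making the count explicit, and by noting the count is an integer that varies continuously, hence is constant, in $t$ and in $\bmx(t)$; at $t=0$ with initial condition \eqref{e:initial_condition} the product formula \eqref{e:complex_slope_comp} exhibits $\tilde\cB_0$ with matching numbers of zeros and poles on the support. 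The boundedness $c<|\tilde\cB_\sft|<c^{-1}$ then transfers to $\cB_\sft$ since they agree up to $1+\OO(\varepsilon)$. \textbf{The main obstacle} is precisely this last step: controlling $\cB_\sft(z)$ \emph{uniformly} over all random realizations of $\bmx(t)$ and over the whole range $0\le t\le T$ of $\varepsilon^{-1}$ time steps --- the argument above shows non-vanishing for each fixed interlacing configuration, but one needs the constants $c$ to be genuinely universal (independent of $n,\varepsilon,\bmx$), which requires a quantitative lower bound on $\dist(\tilde f_\sft(z),1)$ in terms of only $\dist(z,[\bl(\sft),\br(\sft)])$ and the fixed parameters of Assumption \ref{a:para}; I expect this to follow from a Harnack-type estimate on the harmonic function $\arg\cG_\sft(z)$ together with the compactness of the parameter set, but making it airtight is the delicate part.
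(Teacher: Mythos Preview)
Your strategy aligns with the paper's---reduce to $\tilde\cB_\sft$, analyze $\tilde f_\sft$, transfer back via the $\OO(\varepsilon)$ perturbation---and your imaginary-part argument showing $\tilde f_\sft(z)\ne 1$ for non-real $z$ is essentially the paper's (first paragraph of its Step~1). But there is a genuine gap: your blanket claim that $\tilde f_\sft(z)\in\mathbb C\setminus[0,\infty)$ for \emph{all} $z$ off the support is false on the real axis. At $z_0=\sft/2-\log_\sfq\kappa$, the critical point of $\sfb_\sft$ (which under Assumption~\ref{a:para} lies strictly to the right of $[\bl(\sft),\br(\sft)]$), one has $\sfb'_\sft(z_0)=0$, hence $\cG_\sft(z_0)=1$; since also $\tilde\varphi^+_\sft(z_0)=-\tilde\varphi^-_\sft(z_0)$, this forces $\tilde f_\sft(z_0)=1$. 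The paper's Step~1 performs a careful real-variable monotonicity analysis (splitting into cases according to whether two auxiliary intervals overlap) to show that $\sfq^z=\pm\sfq^{\sft/2}\kappa^{-1}$ are the \emph{only} such solutions; the domain $\Lambda$ of \eqref{e:x10} is then chosen with right boundary at $\Re z=\sft/2-\log_\sfq\kappa-1/C$ precisely to exclude them.

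This same omission undermines your winding-number plan. A deformation argument requires $\tilde\cB_\sft$ to be nonvanishing on the loop throughout the deformation---exactly what you have not established---and ``continuity in $\bmx(t)$'' is awkward since $\bmx(t)$ lives on a lattice. The paper instead uses the symmetry \eqref{e:x13} to pass to the doubled contour $\omega\cup\omega'$, changes variables to $W=\sfq^z$, and evaluates $\oint \mathfrak B'_\sft/\mathfrak B_\sft\,\rd W$ as minus the sum of exterior residues: the two simple zeros at $W=\pm\sfq^{\sft/2}\kappa^{-1}$ contribute $-2$, the growth $\mathfrak B_\sft(W)\sim cW^2$ at infinity contributes $+2$, and these cancel. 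Crucially, the zero locations depend only on $(\sfq,\kappa,\sft)$ and not on $\bmx$, so once they are pinpointed your ``main obstacle'' of uniformity in $\bmx$ evaporates---no Harnack-type estimate is needed.
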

\begin{proof}
 We fix $t$ with $\sft=\eps t$ and set in Theorem \ref{t:loopstudy} $\theta=1$, $b=b_t$, $\sfb=\sfb_\sft$, $\phi^\pm=\phi_t^\pm$, $\varphi^\pm=\varphi_\sft^\pm$, $\bl=\bl(\sft)$, $\br=\bf(\sft)$, and $\cB(z)=\cB_\sft(z)$. For $\bmx$ we are going to choose all possible values of $\bmx(t)$ which arise with positive probabilities. For the set $\Lambda$ we choose a large positive constant $C$ and define
 \begin{equation}
 \label{e:x10}
  \Lambda=\left\{z\in\mathbb C \, \Big|\, -\tfrac{\pi}{\ln(\sfq)}<\Im(z)<\tfrac{\pi}{\ln(\sfq)}, \quad -C< \Re(z)< \tfrac{\sft}{2}-\log_\sfq(\kappa)-1/C\right\}.
 \end{equation}

 Checking Assumption \ref{a:asymp} is straightforward and we only focus on Assumption \ref{a:stable}. For that we first check the conditions of the latter assumption for the function $\tilde \cB_\sft(z)$ rather than $\cB_\sft(z)$. For the check we need to identify all the zeros and singularities of $\tilde \cB_\sft(z)$, which is split into several steps.

\smallskip

\noindent {\bf Step 1.} We would like to understand the solutions to $\tilde f_t(z)=1$ for $z$ such that $\sfq^z$ is outside the set $[\sfq^{\br},\sfq^\bl]\cup [\sfq^{\sft-\bl}\kappa^{-2}, \sfq^{\sft-\br} \kappa^{-2}]$. We claim that the solutions are at $z$ satisfying $\sfq^z=\pm \sfq^{\sft/2} \kappa^{-1}$ and $\tilde f_t'(z)\ne 0$ at these points. First, note that for such values of $\sfq^z$ we have $\sfb'_\sft(z)=0$,  $\cG_\sft(z)=1$, $\tilde \varphi^-_\sft(z)=-\tilde \varphi^+_\sft(z)$, and therefore $\tilde f_t(z)=1$. In order to show that there are no other solutions, we rewrite $\cG_\sft(z)$ and $\tilde f_\sft(z)$ using the same computation as in \eqref{e:complex_slope_comp} and $[\bl,\br]\subset[\sft-\sfT,\sfN]$:
\begin{align}
\label{e:x3}\cG_\sft(z)&=\exp\left(\int_{\bl}^{\br} \frac{\sfb_\sft'(z)\rho(s;\bmx)}{\sfb_\sft(z)-\sfb_\sft(s)}\rd s\right)
=\exp\left( \int_{\bl}^{\br} \left(\frac{\rd \sfq^s}{\sfq^z-\sfq^s}+\frac{\rd \sfq^s}{\sfq^s-\sfq^{-z+\sft-2\log_\sfq \kappa}}\right)\rho(s;\bmx)\right),\\ \label{e:x7}
 -\frac{\tilde \varphi_\sft^+(z)}{\tilde \varphi_\sft^-(z)}&=\frac{(1-\sfq^{-z+\sfN})(1-\kappa^2 \sfq^{z-\sfT})}{(1-\sfq^{-z-\sfT+\sft})(1-\kappa^2 \sfq^{z+\sfN-\sft})}=\exp\left(- \int_{\sft-\sfT}^{\sfN} \left(\frac{\rd \sfq^s}{\sfq^z-\sfq^s}+\frac{\rd \sfq^s}{\sfq^s-\sfq^{-z+\sft-2\log_\sfq \kappa}}\right)\right),\\
 \tilde f_t(z)&=-\frac{\tilde \varphi_\sft^+(z)}{\tilde \varphi_\sft^-(z)}\tilde \cG_\sft(z)=\exp\left(\int_{\sft-\sfT}^{\sfN} \left(\frac{\rd \sfq^s}{\sfq^s-\sfq^z}-\frac{\rd \sfq^s}{\sfq^s-\sfq^{-z+\sft-2\log_\sfq \kappa}}\right)(1-\rho(s;\bmx))\right). \label{e:x2}
\end{align}
Denote
$$
 G(Z)=\int_{\sfq^N}^{\sfq^{\sfT-\sft}} \frac{ 1-\rho(\log_\sfq(S);\bmx)}{Z-S} \rd S,
$$
and notice that under the change of variable $Z=\sfq^z$, we have
\begin{equation}\label{e:x6}
 \tilde f_t(z)=\exp\left(G\Bigl(Z\Bigr)-G\Bigl(\frac{\sfq^{t}\kappa^{-2}}{Z}\Bigr)\right) .
\end{equation}
For real $S$ and complex $Z=Z_1+\ii Z_2$, we have
$$
 \Im\left(\frac{1}{Z-S}\right)=-\frac{Z_2}{(S-Z_1)^2 + Z_2^2}.
$$
Hence, for $Z$ with positive imaginary part we have
$$
 0> \Im G(Z) > -\int_{-\infty}^{\infty} \frac{Z_2}{(S-Z_1)^2 + Z_2^2} \rd S=-\pi.
$$
Similarly, for $Z$ with negative imaginary part we have $0<\Im G(Z)<\pi$. Therefore, the expression under exponent in \eqref{e:x6} has imaginary part between $-2\pi$ and $0$ if $\Im(Z)>0$ and has imaginary part netween $0$ and $2\pi$ if $\Im(Z)<0$. We conclude that for $\tilde f_t(z)=1$ it is necessary for $Z$ to be real.

At this point the argument splits into two cases. Recall that we work under Assumption \ref{a:para} of large real $\kappa$. The first case is that $\kappa^2> \sfq^{\sft-2\sfN}$, which means that the intervals $[\sfq^{\sfT} \kappa^{-2}, \sfq^{\sft-\sfN} \kappa^{-2}]$ and $[\sfq^\sfN,\sfq^{\sft-\sfT}]$ are disjoint (the intervals are images of each other under $Z\leftrightarrow \tfrac{\sfq^{t}\kappa^{-2}}{Z}$). The second case is when these two intervals intersect (note, however, that we always have the ordering $\sfq^{\sfT} \kappa^{-2}<\sfq^\sfN$ and $\sfq^{\sft-\sfN} \kappa^{-2}<\sfq^{\sft-\sfT}$).

In the first case we note that directly from \eqref{e:x6} $\tilde f_t(z)$ is a decreasing function of $Z\in\mathbb R_{<0}$; hence, there is at most one solution to $\tilde f_t(z)=1$ for $Z\in\mathbb R_{<0}$ and this solution is at $Z=-\sfq^{\sft/2} \kappa^{-1}$. Simultaneously, \eqref{e:x6} implies that the logarithmic derivative (and hence, also the usual derivative) of $\tilde f_t$  is bounded away from $0$ at this $Z$. Furthermore, $\tilde f_t(Z)$ is a decreasing function of $Z\in [\sfq^{\sft-\sfN} \kappa^{-2},  \sfq^\sfN]$; hence, there is at most one solution to $\tilde f_t(z)=1$ on this segment and this solution is at  $Z=\sfq^{\sft/2} \kappa^{-1}$, with non-zero derivative of $\tilde f_t$. There are no other real $Z$ leading to solutions of $\tilde f_t(z)=1$: For $0<Z<\sfq^{\sfT} \kappa^{-2}$ both terms in the exponent in \eqref{e:x6} are negative; for $Z>\sfq^{\sft-\sfT}$ both terms in the exponent are positive; for $Z\in [\sfq^{\sfT} \kappa^{-2}, \sfq^{\sft-\sfN} \kappa^{-2}]\setminus [\sfq^{\sft-\bl}\kappa^{-2}, \sfq^{\sft-\br} \kappa^{-2}]$ and for $Z\in [\sfq^\sfN,\sfq^{\sft-\sfT}]\setminus [\sfq^\br,\sfq^\bl]$ the values of $\tilde f_t(z)$ are negative, because both $\cG_\sft(z)$ and $\tfrac{\tilde \varphi_\sft^+(z)}{\tilde \varphi_\sft^-(z)}$ are positive.

In the second case of intersecting  intervals $[\sfq^{\sfT} \kappa^{-2}, \sfq^{\sft-\sfN} \kappa^{-2}]$ and $[\sfq^\sfN,\sfq^{\sft-\sfT}]$,  the argument remains mostly the same, except for the new case $Z\in [\sfq^{\sfN},  \sfq^{\sft-\sfN} \kappa^{-2}]$ replacing the previous $Z\in [\sfq^{\sft-\sfN} \kappa^{-2},\sfq^{\sfN}]$. In this new case, we can no longer rely on \eqref{e:x6}, because we are at the singularities of the integrands and the meaning of the integrals are unclear; however we can still rely on \eqref{e:x3} because $Z$ is between the intervals $[\sfq^{\sft-\bl}\kappa^{-2}, \sfq^{\sft-\br} \kappa^{-2}]$ and $[\sfq^{\br},\sfq^\bl]$, we can also use the first formula of \eqref{e:x7}. We, thus, modify \eqref{e:x6} into
\begin{equation}\label{e:x8}
 \tilde f_t(z)=\frac{H(Z)}{H\Bigl(\frac{\sfq^{t}\kappa^{-2}}{Z}\Bigr)},
\end{equation}
where $Z=\sfq^z$, and $H(Z)$ is
\begin{equation}
\label{e:x9}
\frac{Z-\sfq^{-\sfN}}{\sfq^{-\sfT+\sft}-Z}\exp\left( \int_{\bl}^{\br} \frac{\rd \sfq^s}{Z-\sfq^s} \rho(s;\bmx)\right)=\exp\left(\ln\left[\frac{Z-\sfq^{-\sfN}}{\sfq^{\br}-Z}\right]-\int_{\sft-\sfT}^{\br} \frac{\rd \sfq^s}{Z-\sfq^s} (1-\rho(s;\bmx))\right).
\end{equation}
Notice that for $Z\in [\sfq^{\sfN},  \sfq^{\sft-\sfN} \kappa^{-2}]$
\begin{align*}
 \partial_Z \ln H(Z)&=\frac{1}{Z-\sfq^{-\sfN}}+ \frac{1}{\sfq^{\br}-Z}+\int_{\sft-\sfT}^{\br} \frac{\rd \sfq^s}{(Z-\sfq^s)^2} (1-\rho(s;\bmx))\\
 &\geq \frac{1}{Z-\sfq^{-\sfN}}+ \frac{1}{\sfq^{\br}-Z}+\int_{\sft-\sfT}^{\br} \frac{\rd \sfq^s}{(Z-\sfq^s)^2}
 \\&=\frac{1}{Z-\sfq^{-\sfN}}+  \frac{1}{\sfq^{\sft-\sfT}-Z}>0.
\end{align*}
We conclude that $H(Z)$ is increasing in $Z$ and so is $H\Bigl(\frac{\sfq^{t}\kappa^{-2}}{Z}\Bigr)^{-1}$. Therefore, $\tilde f_t(z)$ is increasing in $Z$, and the equation $\tilde f_t(z)=1$ is solved on $Z\in [\sfq^{\sfN},  \sfq^{\sft-\sfN} \kappa^{-2}]$ only by $Z=\sfq^{\sft/2} \kappa^{-1}$. The above bounds on the derivative of $\ln H(Z)$ imply that the derivative of $\tilde f_t$ at $Z=\sfq^{\sft/2} \kappa^{-1}$ does not vanish.

\medskip

\noindent {\bf Step 2.} We rewrite $\tilde \cB_\sft(z)= \mathfrak B_\sft(\sfq^z)$ and analyze the zeros and singularities of $\mathfrak B_\sft(Z)$. We claim that
\begin{itemize}
 \item All singularities of  $\mathfrak B_\sft(Z)$ are inside the set $[\sfq^{\br},\sfq^{\bl}]\cup [\sfq^{-\bl+\sft} \kappa^{-2}, \sfq^{-\br+\sft} \kappa^{-2}]$.
 \item The zeros of $\mathfrak B_{\sft}(Z)$ outside this set are at points $Z=\pm \sfq^{\sft/2} \kappa^{-1}$.
\end{itemize}
Observe that the singularities of $\mathfrak B_\sft(\sfq^z)$ are a subset of singularities of $\cG_\sft(z)$, and the latter are singularities of the integral in \eqref{e:x3}.

For the zeros, we note that according to the definition \eqref{e:x4}, $\mathfrak B_\sft(\sfq^z)$ vanishes whenever either $\tilde \varphi_\sft^+(z)=\tilde \varphi_\sft^-(z)=0$ or $\tilde f_\sft(z)=1$. In the first case, since zeros of $\tilde \varphi_\sft^+(z)$ are at $\sfq^z\in\{\sfq^\sfN, \sfq^\sfT\kappa^{-2}\}$ and zeros of $\tilde \varphi_\sft^-(z)$ are at $\sfq^z\in\{\sfq^{\sft-\sfT}, \sfq^{\sft-\sfN}\kappa^{-2}\}$, we should have either $\sfq^{z}=\sfq^\sfN= \sfq^{\sft-\sfN}\kappa^{-2}$ or $\sfq^z=\sfq^\sfT\kappa^{-2}=\sfq^{\sft-\sfT}$; in both situations the equality requires special values of $\kappa$ and implies $w=\sfq^z= \sfq^{\sft/2} \kappa^{-1}$. The second case $\tilde f_\sft(z)=1$ was analyzed in Step 1.

\medskip

{\bf Step 3.} We now check Assumption \ref{a:stable} for $\tilde \cB_\sft(z)$. In Step 2 we identified all zeros and singularities of $\tilde \cB_\sft(z)$. Comparing with \eqref{e:x10} we conclude that they are all outside $\Lambda\setminus [\bl,\br]$; we recall $\Lambda$ from \eqref{e:x10}. Hence, $\tilde \cB_\sft(z)$ is bounded away from $0$ and $\infty$ on compact subsets of  $\Lambda\setminus [\bl,\br]$, as claimed in the assumption (the constant $c$ can be chosen to be independent of $\bmx$). It remains to check the second condition, i.e.\ we want to verify
\begin{align}\label{e:x11}
\frac{1}{2\pi \ri}\oint_{\omega}\frac{\tilde \cB'_\sft(z)}{\tilde \cB_\sft(z)}\rd z\stackrel{?}{=}0,
\end{align}
where $\omega$ is an arbitrary simple contour inside $\Lambda\setminus [\bl,\br]$, enclosing the segment $[\bl,\br]$.
Let $\omega'$ be an image of $\omega$ under the map $z\mapsto -z+ \sft-2\log_\sfq \kappa$
and assume that both $\omega$ and $\omega'$ are positively oriented.
Then \eqref{e:x11} can be transformed using \eqref{e:x13} and its $z$--derivative into
\begin{equation}
\label{e:x12}
-\frac{1}{2\pi \ri}\oint_{\omega'}\frac{\tilde \cB'_\sft(-z+ \sft-2\log_\sfq \kappa)}{\tilde \cB_\sft(-z+ \sft-2\log_\sfq \kappa)}\rd z=\frac{1}{2\pi \ri}\oint_{\omega'}\frac{\tilde \cB'_\sft(z)}{\tilde \cB_\sft(z)}\rd z+ \frac{1}{2\pi \ri}\oint_{\omega'} \partial_z\ln \left(\frac{\sfq^{\sft-2z}}{\kappa^2 \cG_\sft(z)}\right) \rd z.
\end{equation}
We claim that the second integral in the right-hand side vanishes. Indeed, the definition of $\cG_\sft(z)$ readily implies that $ \ln \left(\frac{\sfq^{\sft-2z}}{\kappa^2 \cG_\sft(z)}\right)$ is a holomorphic function on the contour $\omega'$; hence, the contour integral of its derivative vanishes.
Combining \eqref{e:x11} with its equivalent form \eqref{e:x12}, we rewrite the desired identity as
\begin{align*}
\frac{1}{2\pi \ri}\oint_{\omega\cup \omega'}\frac{\tilde \cB'_\sft(z)}{\tilde \cB_\sft(z)}\rd z\stackrel{?}{=}0,
\end{align*}
As in Step 2, we change variables and rewrite $\tilde \cB(z)=\mathfrak B_\sft(Z)$, $Z=\sfq^z$. We have,
\begin{equation}
\label{e:x123}
\frac{1}{2\pi \ri}\oint_{\omega\cup \omega'}\frac{\del_z\tilde \cB_\sft(z)}{\tilde \cB_\sft(z)}\rd z=\frac{1}{2\pi \ri}\oint_{\omega\cup \omega'}\frac{\del_z\mathfrak B_\sft(\sfq^z)}{\mathfrak B_\sft(\sfq^z)}\rd z
=\frac{1}{2\pi \ri}\oint_{\sfq^{\omega}\cup \sfq^{\omega'}}\frac{\mathfrak B'_\sft(Z)}{\mathfrak B_\sft(Z)}\rd Z.
\end{equation}
We compute the last integral as (minus) the sum of the residues outside the integration contour. These residues are:
\begin{itemize}
 \item Simple zeros of $\mathfrak B_\sft(Z)$ at points $Z=\pm \sfq^{\sft/2} \kappa^{-1}$ as identified in Steps 1 and 2, lead to two poles in the integrand, each one coming with residue $-1$ for the integrand.
 \item At $Z=0$ the function $\mathfrak B_\sft(Z)$ is holomorphic and there is no residue for the integrand.
 \item As $Z\to \infty$ the function $\mathfrak B_\sft(Z)$ grows as $c Z^2$ for a certain non-zero constant $c$, giving the residue $2$ for the integrand.
\end{itemize}
Altogether, the residues cancel out and \eqref{e:x123} integrates to $0$.

\medskip

{\bf Step 4.} We come back to $\cB_\sft(z)$ as in \eqref{e:x4} and check that it satisfies Assumption \ref{a:stable}. We have
\begin{equation}
\label{e:x14}
 \cB_\sft(z)=\tilde \cB_\sft(z)+\eps\psi^+_\sft(z)\cG_\sft(z)+\OO(\eps^2).
\end{equation}
Hence, because $\tilde \cB_\sft(z)$ is bounded away from $0$ and $\infty$ on compact subsets of $\Lambda\setminus[\bl,\br]$, so is $\cB_\sft(z)$ for small enough $\eps$.

Next, we note that the value of the contour integral $\frac{1}{2\pi \ri}\oint_{\omega}\frac{\cB'_\sft(z)}{\cB_\sft(z)}\rd z$ is an integer, because this is the integral of the derivative of the multi-valued function $\frac{1}{2\pi\ii}\ln (\cB_\sft(z))$, whose values at a point differ by integers from each other (equivalently, the integral counts the winding number of the curve $\cB_\sft(\omega)$). On the other hand, \eqref{e:x14} and the result of Step 3 imply that
$$
  \frac{1}{2\pi \ri}\oint_{\omega}\frac{\cB'_\sft(z)}{\cB_\sft(z)}\rd z=\frac{1}{2\pi \ri}\oint_{\omega}\frac{\tilde \cB'_\sft(z)}{\tilde \cB_\sft(z)}\rd z+\OO(\eps)=\OO(\eps).
$$
Hence, for small $\eps$ the contour integral vanishes.
\end{proof}

\begin{remark} The proof of \eqref{e:x11} in Proposition \ref{Proposition_off_criticality} might look mysterious and one can be wondering about simpler reasons to expect this identity to be true. If we replace $\tilde \cB_\sft(z)$ by $B_\sft(z):=\tilde \varphi^-_\sft(z)(1-f_\sft(z))$, where we used the form of \eqref{e:x4} with complex slope of \eqref{e:double_complex_slope}, \eqref{e:double_complex_slope_2}, then the vanishing of the contour integral can be related to the version of the complex Burger's equation of \eqref{e:Burgers_first_appearance}. Indeed, this equation gives
\begin{align*}
&\phantom{{}={}}\partial_z \ln B_\sft(z)
=-\partial_\sft \ln f_\sft +\partial_z \ln \tilde \varphi_\sft^-+\ln \sfq \frac{\kappa^2 \sfq^{z-\sft}+\sfq^{-z}}{\kappa^2 \sfq^{z-\sft}-\sfq^{-z}}
\\&=-\partial_\sft \left[\int_{\bl}^{\br} \frac{\sfb'_\sft(z)\rho_\sft(s)}{\sfb_\sft(z)-\sfb_\sft(s)}\rd s \right]-\partial_\sft \ln \tilde\varphi^+_\sft +\partial_\sft \ln \tilde \varphi^-_\sft +\partial_z \ln \tilde\varphi_\sft^-+\ln \sfq \frac{\kappa^2 \sfq^{z-\sft}+\sfq^{-z}}{\kappa^2 \sfq^{z-\sft}-\sfq^{-z}}.
\end{align*}
The last term and $-\partial_\sft \ln \tilde \varphi^+_\sft$ are analytic in a neighborhood of $[\bl,\br]$, thus, their contour integrals vanish. An important cancellation is $\partial_\sft \ln \tilde\varphi^-_\sft +\partial_z \ln \tilde\varphi_\sft^-=0$, because $\tilde\varphi_\sft^-$ is a function of $z-\sft$.
The only thing left is the $\rd s$--integral; its contour integral gives the time derivative of the total mass of $\rho_t$, which is zero.
\end{remark}

Let us record the conclusion of Theorem \ref{t:loopstudy}. In our present setting \eqref{e:dmg} implies that for $\sft=t\eps$, $\Lambda_\sft$ given by $\Lambda$ of \eqref{e:x10}, and $z\in \Lambda_\sft\setminus [\bl(\sft),\br(\sft)]$ we have
\begin{multline}\label{e:dmg_tilings}
\frac{1}{\varepsilon}\int_{\bl(\sft)}^{\br(\sft)} \frac{\sfb'_\sft(z)(\rho(s;\bmx(t+1))-\rho(s;\bmx(t)))}{\sfb_\sft(z)-\sfb_\sft(s)}\rd s
\\=\frac{1}{2\pi \ri}\oint_{\cin}\frac{\ln \cB_\sft(w)\sfb'_\sft(w) \sfb'_\sft(z)}{(\sfb_\sft(w)-\sfb_\sft(z))^2}\rd w\,+\eps R_t(z)
+\Delta\cM_t(z)+\, \OO\left(\varepsilon^2 \right),
\end{multline}
where the contour $\cin\subset \Lambda_\sft$ encloses $[\bl(\sft), \br(\sft)]$, but not $z$; $R_t(z)$ is a bounded deterministic term; and conditionally on $\bmx(0), \bmx(1),\dots,\bmx(t)$, $\Delta\cM_t(z)$ are mean $0$  random variables such that $\{\varepsilon^{-1/2}\Delta \cM_t(z)\}_{z\in \Lambda\setminus[\bl,\br]}$ are asymptotically Gaussian with covariance  given by
\begin{align}\label{e:covT_tilings}
\lim_{\eps\to 0}\bE\left[\frac{ \Delta \cM_t(z_1)}{\varepsilon^{1/2}}, \frac{\Delta \cM_t(z_2)}{\varepsilon^{1/2} }\right]
&=\frac{1}{2\pi \ri}\oint_{\cin} \frac{\cG_\sft(w)\varphi^+_\sft(w)}{\cB_\sft(w)}\frac{\sfb'_\sft(w)\sfb'(z_1)}{(\sfb_\sft(w)-\sfb_\sft(z_1))^2} \frac{\sfb'_\sft(w)\sfb'_\sft(z_2)}{(\sfb_\sft(w)-\sfb_\sft(z_2))^2} \rd w,
\end{align}
with $z_1$ and $z_2$ outside the integration contour.

\subsection{Application of Theorem \ref{t:loopstudy}} \label{Section_massaging}

The aim of this subsection is to transform the result of Theorem \ref{t:loopstudy} for lozenge tilings, as recorded in \eqref{e:dmg_tilings}, \eqref{e:covT_tilings}, into the following statement:

\begin{proposition} \label{Proposition_tilings_increment} In the framework of Theorem \ref{Theorem_LLN_abstract} and using the notations of Sections \ref{Section_tilings_setup}, \ref{Section_assumptions},
  for $\sft=t\eps$ we have as $\eps\to 0$:
 \begin{multline}\label{e:dmg_tilings_2}
\frac{1}{\varepsilon}\left[\int_{\bl(\sft)}^{\br(\sft)} \frac{\sfb'_{\sft+\eps}(z) \rho(s;\bmx(t+1))}{\sfb_{\sft+\eps}(z)-\sfb_{\sft+\eps}(s)}\rd s -\int_{\bl(\sft)}^{\br(\sft)} \frac{\sfb'_\sft(z) \rho(s;\bmx(t))}{\sfb_\sft(z)-\sfb_\sft(s)}\rd s\right]
\\=-\del_z \ln\tilde\cB_{\sft}(z)+\ln(\sfq)\frac{\kappa^2 \sfq^{z-\sft}+\sfq^{-z}}{\kappa^2\sfq^{z-\sft}-\sfq^{-z}}-\del_\sft \ln \tilde \varphi_\sft^+(z)+\eps R_t(z)
+\Delta\cM_t(z) + O(\eps^2),
\end{multline}
where the contour $\cin\subset \Lambda_\sft$ encloses $[\bl(\sft), \br(\sft)]$, but not $z$; and  $R_t(z)$ is a term (we do not claim that it is the same as in \eqref{e:dmg_tilings}) which can be expressed as an integral of a uniformly bounded function against the measure $\rho(s,\bmx(t)) \rd s$. After conditioning on $\bmx(0), \bmx(1),\dots,\bmx(t)$, the terms $\Delta\cM_t(z)$ are mean $0$  random variables such that $\{\varepsilon^{-1/2}\Delta \cM_t(z)\}_{z\in \Lambda_\sft\setminus[\bl(\sft),\br(\sft)]}$ are asymptotically Gaussian with covariance  given by
\begin{multline}\label{e:covT_tilings_2}
\frac{\bE[\Delta \cM_t(z_1)\Delta \cM_t(z_2)|\bmx(t)]}{\varepsilon}\\ = \frac{1}{2\pi \ri}\oint_{\cin} \frac{\tilde f_\sft(w)}{\tilde f_\sft(w)-1}\frac{\sfb_\sft'(w)\sfb_\sft'(z_1)}{(\sfb_\sft(w)-\sfb_\sft(z_1))^2} \frac{\sfb_\sft'(w)\sfb_\sft'(z_2)}{(\sfb_\sft(w)-\sfb_\sft(z_2))^2} \rd w+\oo(1),
\end{multline}
with $z_1$ and $z_2$ outside the integration contour. The asymptotic Gaussianity is in the convergence of moments sense, as in Theorem \ref{t:loopstudy}. $O(\eps^2)$ is in the sense of moments, uniformly in $\sft$ and $z\in \Lambda_\sft$ bounded away from $[\bl(\sft),\br(\sft)]$.
\end{proposition}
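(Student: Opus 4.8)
The goal is to massage the output \eqref{e:dmg_tilings} of Theorem \ref{t:loopstudy} into the form \eqref{e:dmg_tilings_2}, in which the right-hand side is expressed through the explicit functions $\tilde\cB_\sft$, $\tilde\varphi^\pm_\sft$ and $\tilde f_\sft$ rather than through $\cB_\sft$ and a contour integral of $\ln\cB_\sft$. The main steps are: (i) replace the single difference of Stieltjes-type transforms at fixed time $\sft$ by the ``diagonal'' difference appearing in \eqref{e:dmg_tilings_2}, which also shifts $\sfb_\sft$ to $\sfb_{\sft+\eps}$; (ii) evaluate the contour integral $\tfrac{1}{2\pi\ri}\oint_{\cin}\tfrac{\ln\cB_\sft(w)\sfb'_\sft(w)\sfb'_\sft(z)}{(\sfb_\sft(w)-\sfb_\sft(z))^2}\rd w$ in closed form using the analytic structure of $\cB_\sft$ established in Proposition \ref{Proposition_off_criticality}; and (iii) rewrite the covariance \eqref{e:covT_tilings} in terms of $\tilde f_\sft$.

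For step (i), I would write
$$
\frac{1}{\eps}\left[\int \frac{\sfb'_{\sft+\eps}(z)\rho(s;\bmx(t+1))}{\sfb_{\sft+\eps}(z)-\sfb_{\sft+\eps}(s)}\rd s-\int \frac{\sfb'_\sft(z)\rho(s;\bmx(t))}{\sfb_\sft(z)-\sfb_\sft(s)}\rd s\right]
=\frac{1}{\eps}\int \frac{\sfb'_\sft(z)(\rho(s;\bmx(t+1))-\rho(s;\bmx(t)))}{\sfb_\sft(z)-\sfb_\sft(s)}\rd s+\partial_\sft\!\left[\int \frac{\sfb'_\sft(z)\rho(s;\bmx(t+1))}{\sfb_\sft(z)-\sfb_\sft(s)}\rd s\right]+O(\eps),
$$
where the $\partial_\sft$ term comes from Taylor-expanding $\sfb_{\sft+\eps}$ around $\sfb_\sft$; since $\rho(s;\bmx(t+1))=\rho(s;\bmx(t))+O(\eps)$ in the relevant pairings, this extra term equals $\partial_\sft\ln\cG_\sft(z)+O(\eps)$, which by the definitions in \eqref{e:x28} is $\partial_\sft\ln\tilde f_\sft(z)+\partial_\sft\ln(\tilde\varphi^-_\sft(z)/\tilde\varphi^+_\sft(z))\cdot(\cdot)$ — more precisely one uses $\cG_\sft=-\tilde f_\sft\tilde\varphi^-_\sft/\tilde\varphi^+_\sft$ and that $\tilde\varphi^-_\sft$ is a function of $z-\sft$ so $\partial_\sft\ln\tilde\varphi^-_\sft=-\partial_z\ln\tilde\varphi^-_\sft$. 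The combination of this with the main term, after using the complex Burgers-type identity, is what produces $-\partial_\sft\ln\tilde\varphi^+_\sft(z)$ and the explicit $\ln(\sfq)\tfrac{\kappa^2\sfq^{z-\sft}+\sfq^{-z}}{\kappa^2\sfq^{z-\sft}-\sfq^{-z}}$ factor in \eqref{e:dmg_tilings_2}; note $\partial_z\ln(\kappa^2\sfq^{z-\sft}-\sfq^{-z})^2/\cdots$ type manipulations give exactly that logarithmic derivative. For step (ii), the key point is that by Proposition \ref{Proposition_off_criticality} the function $\cB_\sft$ (equivalently $\tilde\cB_\sft$, since $\cB_\sft=\tilde\cB_\sft+O(\eps)$) has a well-defined single-valued logarithm on $\Lambda_\sft\setminus[\bl(\sft),\br(\sft)]$ and, after the change of variables $w\mapsto\sfb_\sft(w)$, $\ln\cB_\sft$ extends to a function holomorphic outside $\sfb_\sft([\bl(\sft),\br(\sft)])$ with controlled behaviour at infinity; hence the contour integral collapses, by deforming $\cin$ outward past $z$ and collecting the residue at $w=z$, to $\partial_z\ln\cB_\sft(z)$ up to a residue-at-infinity correction. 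Replacing $\cB_\sft$ by $\tilde\cB_\sft$ introduces the $\eps\psi^+_\sft\cG_\sft/\tilde\cB_\sft$ correction of \eqref{e:x14}, which is absorbed into $\eps R_t(z)$, and one checks that $R_t(z)$ is, as claimed, an integral of a uniformly bounded function against $\rho(s;\bmx(t))\rd s$ — this follows by tracking that every $\eps$-order term in Theorem \ref{t:loopstudy} (the $\cE^{(2)}$ term, the Taylor remainders) is such an integral, and that the analytic prefactors ($\cG_\sft$, $1/\cB_\sft$, $\sfb'_\sft$, etc.) are uniformly bounded on the relevant contours by Proposition \ref{Proposition_off_criticality}.

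For step (iii), I would simply substitute $\cG_\sft(w)\varphi^+_\sft(w)=\tilde\cB_\sft(w)\cdot\tfrac{\cG_\sft\varphi^+_\sft}{\tilde\cB_\sft}(w)+O(\eps)$ and use $\tilde\cB_\sft=\tilde\varphi^-_\sft(1-\tilde f_\sft)$ together with $\cG_\sft\tilde\varphi^+_\sft=-\tilde f_\sft\tilde\varphi^-_\sft$ from \eqref{e:x28}, \eqref{e:x4}, to get $\tfrac{\cG_\sft\varphi^+_\sft}{\cB_\sft}=\tfrac{-\tilde f_\sft}{1-\tilde f_\sft}+O(\eps)=\tfrac{\tilde f_\sft}{\tilde f_\sft-1}+O(\eps)$, which is precisely the integrand in \eqref{e:covT_tilings_2}; the asymptotic Gaussianity and convergence of moments are inherited verbatim from Theorem \ref{t:loopstudy}, applied conditionally on $\bmx(0),\dots,\bmx(t)$ using Proposition \ref{Proposition_off_criticality} to guarantee the assumptions hold for $\bmx=\bmx(t)$ for every realization.

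\textbf{Main obstacle.} The principal difficulty is step (ii): evaluating the contour integral of $\ln\cB_\sft$ in closed form requires knowing not just that $\ln\cB_\sft$ is single-valued (Assumption \ref{a:stable}) but that, in the variable $v=\sfb_\sft(w)$, it is holomorphic in the full exterior of $\sfb_\sft([\bl(\sft),\br(\sft)])$ including a neighborhood of $v=\infty$, with a prescribed asymptotic so that the residue at infinity is computable. This is exactly what the delicate analysis of zeros and singularities of $\tilde\cB_\sft$ in Steps 1–3 of the proof of Proposition \ref{Proposition_off_criticality} provides — the fact that the only zeros/poles outside $[\bl(\sft),\br(\sft)]$ sit at $\sfq^z=\pm\sfq^{\sft/2}\kappa^{-1}$, which lie outside $\Lambda_\sft$, and the growth $\mathfrak B_\sft(Z)\sim cZ^2$ at infinity. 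Threading the symmetry \eqref{e:x13} and the bookkeeping of residues at infinity correctly (so that spurious constants do not appear and the Burgers-type right-hand side emerges exactly) is where the care is needed; everything else is Taylor expansion and substitution of definitions.
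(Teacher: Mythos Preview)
Your three-step outline and step~(iii) are fine and match the paper. The real problem is in step~(ii), and it propagates back into step~(i).

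You claim that after the change of variables $v=\sfb_\sft(w)$, $\ln\cB_\sft$ extends to a holomorphic function on the full exterior of $\sfb_\sft([\bl(\sft),\br(\sft)])$, so that the outer contour integral reduces to a residue at infinity. This is false: the map $w\mapsto\sfb_\sft(w)=\sfq^{-w}+\kappa^2\sfq^{w-\sft}$ is two-to-one, and by \eqref{e:x13} the function $\tilde\cB_\sft$ is \emph{not} invariant under the involution $w\mapsto -w+\sft-2\log_\sfq\kappa$; hence $\tilde\cB_\sft$ is not a function of $v$ and you cannot simply push the contour to $v=\infty$. What the paper actually does is (a) integrate by parts and deform past $z$ to isolate $-\partial_z\ln\tilde\cB_\sft(z)$; (b) decompose the remaining kernel $\tfrac{\sfb'_\sft(z)}{\sfb_\sft(w)-\sfb_\sft(z)}$ into two partial fractions in $\sfq^w$, then use the symmetry \eqref{e:x13} to transport the second fraction onto a mirrored contour $\omega'$; this doubles the contour to $\cout\cup\omega'$ and simultaneously produces an extra $\partial_w\ln\cG_\sft(w)$ integral (see \eqref{e:x31}); (c) compute the first integral in \eqref{e:x31} by residues in $Z=\sfq^w$ at $Z=\pm\sfq^{\sft/2}\kappa^{-1}$ and $Z=\infty$, which yields exactly $\ln(\sfq)\tfrac{\kappa^2\sfq^{z-\sft}+\sfq^{-z}}{\kappa^2\sfq^{z-\sft}-\sfq^{-z}}-\partial_\sft\ln\tilde\varphi^+_\sft(z)$.

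The second integral in \eqref{e:x31}, the $\cG_\sft$-piece, does not vanish on its own: it is shown by direct computation to equal $\int\partial_\sft\bigl[\tfrac{\sfb'_\sft(z)}{\sfb_\sft(z)-\sfb_\sft(s)}\bigr]\rho(s;\bmx(t))\,\rd s$ and hence cancels \emph{exactly} against the time-derivative correction \eqref{e:x27} from step~(i). Your plan for step~(i) --- absorbing $\partial_\sft\ln\cG_\sft(z)$ via ``the complex Burgers-type identity'' --- cannot work, because \eqref{e:Burgers_first_appearance} is an equation for the limiting slope $f_\sft$, not for the pre-limit random $\tilde f_\sft$, and no such identity is available before passing to the limit. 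The mechanism that makes everything close up is the algebraic cancellation between the symmetry-generated $\cG_\sft$ integral and \eqref{e:x27}, not a PDE.
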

\begin{proof}
 The left-hand side of \eqref{e:dmg_tilings_2} differs from the left-hand side of \eqref{e:dmg_tilings} by the term
\begin{multline} \label{e:x27}
 \frac{1}{\varepsilon}\int_{\bl(\sft)}^{\br(\sft)} \left(\frac{\sfb_{\sft+\varepsilon}'(z)}{\sfb_{\sft+\varepsilon}(z)-\sfb_{\sft+\varepsilon}(s)}- \frac{\sfb_\sft'(z)}{\sfb_\sft(z)-\sfb_\sft(s)}\right) \rho(s;\bmx(t+1))\rd s
 \\=\int_{\bl(\sft)}^{\br(\sft)} \partial_t\left(\frac{\sfb_\sft'(z)}{\sfb_\sft(z)-\sfb_\sft(s)}\right) \rho(s;\bmx(t))\rd s+\OO(\eps).
\end{multline}
The asymptotic covariance for the term $\Delta\cM_t(z)$ in \eqref{e:dmg_tilings_2} and in \eqref{e:dmg_tilings} is the same: we only used the identity
$$
 \frac{\cG_\sft(w)\varphi^+_\sft(w)}{\cB_\sft(w)}=\frac{\tilde f_\sft(w)}{\tilde f_\sft(w)-1}+\OO(\eps)
$$
to rewrite the covariance \eqref{e:covT_tilings} as \eqref{e:covT_tilings_2}. This identity follows from $\cB_\sft(w)=\tilde \cB_\sft(w)+\OO(\eps)$,  $\varphi^-_\sft(w)=\tilde\varphi^-_\sft(w)$ and definitions \eqref{e:x28}, \eqref{e:x4}.

We further simplify the leading term in the right-hand side \eqref{e:dmg_tilings}, integrating by parts and then deforming the integration contour through $w=z$. Denoting through $\cout$ a contour including both $[\bl(\sft),\br(\sft)]$ and $z$, we have
\begin{multline}
\label{e:x33}
\frac{1}{2\pi \ri}\oint_{\cin}\frac{\ln \cB_\sft(w)\sfb'_\sft(w) \sfb'_\sft(z)}{(\sfb_\sft(w)-\sfb_\sft(z))^2}\rd w=-
\frac{1}{2\pi \ri}\oint_{\cin}\ln \cB_\sft(w) \partial_w \left[\frac{ \sfb'_\sft(z)}{\sfb_\sft(w)-\sfb_\sft(z)}\right]\rd w\\= \frac{1}{2\pi \ri}\oint_{\cin} \frac{ \partial_w \ln \cB_\sft(w)\sfb'_\sft(z)}{\sfb_\sft(w)-\sfb_\sft(z)}\rd w= \frac{1}{2\pi \ri}\oint_{\cout}\frac{ \partial_w \ln \cB_\sft(w) \sfb'_\sft(z)}{\sfb_\sft(w)-\sfb_\sft(z)}\rd w-   \partial_z \ln \cB_\sft(z).
\end{multline}
We replace  $\cB_\sft(w)=\tilde \cB_\sft(w)+\OO(\eps)$ throughout the last formula, accumulating $\OO(\eps)$ total error. The second term  $-\partial_z \ln \cB_\sft(z)$ then turns into the first term in the right-hand side of \eqref{e:dmg_tilings_2} and it remains to deal with
\begin{align}\begin{split}
\label{e:x30}
  &\phantom{{}={}}\frac{1}{2\pi \ri}\oint_{\cout}\left[ \partial_w \ln \tilde \cB_\sft(w)\right] \cdot \frac{ \sfb'_\sft(z)}{\sfb_\sft(w)-\sfb_\sft(z)}\rd w\\
  &=-\frac{1}{2\pi \ri}\oint_{\cout}\left[ \partial_w \ln \tilde \cB_\sft(w)\right] \cdot \left( \frac{\rd \sfq^w}{\sfq^z-\sfq^w}+\frac{\rd \sfq^w}{\sfq^w- \sfq^{-z+\sft-2\log_\sfq \kappa}}  \right).
\end{split}\end{align}
It is helpful to make a change of variables $w=-w'+\sft-2\log_\sfq \kappa$ in the second integral \eqref{e:x30}; denoting the image of $\cout$ contour by $\omega'$ (both are positively oriented) and using \eqref{e:x13}, we have
\begin{align*}
&\frac{1}{2\pi \ri}\oint_{\cout}\left[ \partial_w \ln \tilde \cB_\sft(w)\right] \frac{\rd \sfq^w}{\sfq^w- \sfq^{-z+\sft-2\log_\sfq \kappa}}\\
 &=\frac{1}{2\pi \ri}\oint_{\omega'}\left[ -\partial_{w'} \ln \tilde \cB_\sft(-w'+\sft-2\log_\sfq \kappa)\right] \cdot \frac{\rd \sfq^{-w'+\sft-2\log_\sfq \kappa}}{\sfq^{-w'+\sft-2\log_\sfq \kappa}- \sfq^{-z+\sft-2\log_\sfq \kappa}}\\
  &=\frac{1}{2\pi \ri}\oint_{\omega'}\left[-\partial_{w'} \ln\tilde \cB_\sft(w') +2\ln \sfq+\partial_{w'}\ln \cG_\sft(w') \right] \cdot \left(-\rd w' - \frac{\rd \sfq^{w'}}{\sfq^{z}- \sfq^{w'}}\right).
\end{align*}
The $\rd w'$ integral vanishes and so does the integral of $2\ln\sfq \frac{\rd \sfq^{w'}}{\sfq^{z}- \sfq^{w'}}$. Hence, renaming back $w'$ into $w$, we convert \eqref{e:x30} into
\begin{equation}
\label{e:x31}
 \frac{1}{2\pi \ri}\oint_{\cout\cup\omega'} \partial_w \ln \tilde \cB_\sft(w)  \frac{\rd \sfq^w}{\sfq^w-\sfq^z}- \frac{1}{2\pi \ri}\oint_{\omega'} \partial_{w}\ln \cG_\sft(w)  \frac{\rd \sfq^w}{\sfq^w-\sfq^z}.
\end{equation}
We compute two integrals in \eqref{e:x31} separately. For the first integral, we recall the notation $\tilde \cB_\sft(w)= \mathfrak B_\sft(\sfq^w)$ and make a change of variables $W=\sfq^w$, converting the integral into
$$
 \frac{\ln\sfq}{2\pi \ri}\oint_{\sfq^{\cout}\cup\sfq^{\omega'}}  W \left[\partial_W \ln \mathfrak B_\sft(W)\right]  \frac{\rd W}{W-\sfq^z}.
$$
We compute the last integral as minus the sum of the residues outside the integration contour. This is similar to Steps 2 and 3 in the proof of Proposition \ref{Proposition_off_criticality}. The residues are at $Z=\pm \sfq^{\sft/2}\kappa^{-1}$ and at $Z=\infty$. At each of the former points, $\partial_W \ln \mathfrak B_\sft(W)$ has a simple pole of residue $1$; hence, these points give contribution:
\begin{equation}
\label{e:x32}
-2\ln\sfq \left( \frac{\sfq^{\sft}\kappa^{-2}}{\sfq^{\sft}\kappa^{-2}-\sfq^{2z}}\right)=-2\ln \sfq-2\ln\sfq \left( \frac{\sfq^{2z}}{\sfq^{\sft}\kappa^{-2}-\sfq^{2z}}\right).
\end{equation}
At large $W$, $\mathfrak B_\sft(W)$ grows as $c W^2$ and therefore, the integrand behaves as $\frac{2}{W}$, resulting in the contribution $2\ln\sfq$. Summing with \eqref{e:x32} and also noticing that $\partial_\sft \ln \tilde \varphi^+_\sft(z)=-\ln\sfq$, we conclude that
$$
  \frac{1}{2\pi \ri}\oint_{\cout\cup\omega'} \partial_w \ln \tilde \cB_\sft(w)  \frac{\rd \sfq^w}{\sfq^w-\sfq^z}=-\ln\sfq \left( \frac{\sfq^{-z}+\kappa^{2}\sfq^{z-\sft}}{\sfq^{-z}-\kappa^{2}\sfq^{z-\sft}}\right)+ \partial_\sft \ln \tilde \varphi^+_\sft(z).
$$
Combining with $-\partial_z \ln \cB_\sft(z)$ from \eqref{e:x33} we obtain all three first terms in \eqref{e:dmg_tilings_2}. It remains to show that \eqref{e:x27} cancels with the second integral in  \eqref{e:x31}. We transform the latter using the formula for $\cG_\sft(w)$ in \eqref{e:x3}:
\begin{align*}
&\phantom{{}={}} \frac{1}{2\pi \ri}\oint_{\omega'} \partial_{w}\ln \cG_\sft(w)  \frac{\rd \sfq^w}{\sfq^w-\sfq^z}\\
&=
 \frac{1}{2\pi \ri}\oint_{\omega'} \partial_{w} \left[
\int_{\bl(\sft)}^{\br(\sft)} \left(\frac{\rd \sfq^s}{\sfq^w-\sfq^s}+\frac{\rd \sfq^s}{\sfq^s-\sfq^{-w+\sft-2\log_\sfq \kappa}}\right)\rho(s;\bmx(t))\right] \frac{\rd \sfq^w}{\sfq^w-\sfq^z}.
\end{align*}
In the terms of the last formula the first one integrates to $0$ because there are no $w$--singularities inside the $\omega'$ contour. For the second term we swap the order of integration and integrate over $w$ first. Changing the variables $W=\sfq^w$ and then evaluating the integral as (minus) sum of the residues outside the integration contour, we have
\begin{align*}
 & \frac{1}{2\pi \ri} \int_{\omega'} \partial_{w} \left[\frac{1}{\sfq^s-\sfq^{-w+\sft-2\log_\sfq \kappa}} \right] \frac{\rd \sfq^w}{\sfq^w-\sfq^z}=
- \frac{1}{2\pi \ri} \int_{\omega'} \frac{\ln \sfq \cdot \sfq^{-w+\sft-2\log_\sfq \kappa}}{(\sfq^s-\sfq^{-w+\sft-2\log_\sfq \kappa})^2}\cdot \frac{\rd \sfq^w}{\sfq^w-\sfq^z}\\&=
- \frac{1}{2\pi \ri} \int_{\sfq^{\omega'}} \frac{W \ln \sfq \cdot \sfq^{\sft-2\log_\sfq \kappa}}{(W \sfq^s-\sfq^{\sft-2\log_\sfq \kappa})^2}\cdot \frac{\rd W}{W-\sfq^z}=
\frac{\sfq^{z} \ln \sfq \cdot \sfq^{\sft-2\log_\sfq \kappa}}{(\sfq^z \sfq^s-\sfq^{\sft-2\log_\sfq \kappa})^2}=\frac{ \ln \sfq \cdot \sfq^{-z+\sft-2\log_\sfq \kappa}}{( \sfq^s-\sfq^{-z+\sft-2\log_\sfq \kappa})^2}.
\end{align*}
We conclude that the second integral in \eqref{e:x31} is
$$
 \int_{\bl(\sft)}^{\br(\sft)} \frac{ \ln \sfq \cdot \sfq^{-z+\sft-2\log_\sfq \kappa}}{( \sfq^s-\sfq^{-z+\sft-2\log_\sfq \kappa})^2} \rho(s;\bmx(t)) \rd \sfq^s.
$$
On the other hand, the integral in \eqref{e:x27} is readily transformed to the same expression:
$$
 \int_{\bl(\sft)}^{\br(\sft)} \partial_t\ \left(\frac{\rd \sfq^s}{\sfq^z-\sfq^s}+\frac{\rd \sfq^s}{\sfq^s-\sfq^{-z+\sft-2\log_\sfq \kappa}}\right) \rho(s;\bmx(t))=  \int_{\bl(\sft)}^{\br(\sft)}  \frac{\ln \sfq \cdot \sfq^{-z+\sft-2\log_\sfq \kappa}\cdot }{(\sfq^s-\sfq^{-z+\sft-2\log_\sfq \kappa})^2} \rho(s;\bmx(t))\rd \sfq^s. \qedhere
$$
\end{proof}

\subsection{Equivalent descriptions of the limit shape}

\label{Section_Limit_shape_proofs}

In this subsection we show that the descriptions of $\rho_\sft$ given in Propositions \ref{Proposition_LLN_Burgers}, \ref{Proposition_LLN_equation}, and \ref{Proposition_LLN_invariant} are equivalent.

\begin{theorem} \label{Theorem_three_descriptions}
   Take a family $\rho_\sft$, $0\leq \sft \leq \sfT$ of measures of total mass $\sfN$, supported on $[\sft-\sfT,\sfN]$, and of density $\rho_\sft(s)$ at most $1$. Suppose that $\rho_0$ is given, while $\rho_\sft$, $\sft>0$ are unknown. If these measures are descibed by either of Proposition \ref{Proposition_LLN_Burgers}, Proposition \ref{Proposition_LLN_equation}, or Proposition \ref{Proposition_LLN_invariant}, the resulting family $\rho_\sft$ is the same.
\end{theorem}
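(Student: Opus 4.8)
The plan is to prove Theorem \ref{Theorem_three_descriptions} by establishing the implications in a convenient cycle: first show that the characterization of Proposition \ref{Proposition_LLN_Burgers} (the PDE \eqref{e:Burgers_first_appearance} together with the definition \eqref{e:double_complex_slope} of the complex slope $f_\sft(z)$) has a unique solution and hence determines $\rho_\sft$; then show that the explicit formula of Proposition \ref{Proposition_LLN_equation} satisfies this PDE and the correct initial condition; and finally show that the functional relation of Proposition \ref{Proposition_LLN_invariant} is an equivalent repackaging of the same formula via the substitution $w=\sfb_\sft(z)$. Throughout, the key structural tool is the method of characteristics for the quasi-linear first-order PDE \eqref{e:Burgers_first_appearance}, exactly as hinted at the end of Section \ref{s:heightf}: one introduces the doubly complex slope, writes \eqref{e:Burgers_first_appearance} in the form $\partial_\sft \ln f + \partial_z \ln(1-f) = g_\sft(z)$ with $g_\sft(z)=\ln(\sfq)\frac{\kappa^2\sfq^{z-\sft}+\sfq^{-z}}{\kappa^2\sfq^{z-\sft}-\sfq^{-z}}$, and transports $f$ along curves $z=z^\sft$ determined by the characteristic ODE $\frac{d z^\sft}{d\sft}=\frac{1}{1-f_\sft(z^\sft)}$ (after clearing the logarithmic derivatives).

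\textbf{Key steps.} First I would record that all three propositions produce a family $\rho_\sft$ with the same $\rho_0$, so it suffices to prove that each description pins down $\rho_\sft$ for $\sft>0$ uniquely in terms of $\rho_0$ and that the three resulting functions $f_\sft(z)$ coincide; since $f_\sft(z)$ via \eqref{e:double_complex_slope} determines the Stieltjes-type transform $\int \sfb'_\sft(z)\rho_\sft(s)/(\sfb_\sft(z)-\sfb_\sft(s))\,ds$ and hence $\rho_\sft$ by Stieltjes inversion, equality of the slopes is enough. Second, I would solve \eqref{e:Burgers_first_appearance} by characteristics: along a characteristic curve the quantity $f_\sft(z^\sft)$ evolves by an ODE that integrates explicitly (the right-hand side $g_\sft$ is a total $\sft$-derivative of $\ln(\kappa^2\sfq^{z-\sft}-\sfq^{-z})$ up to the $z$-transport term), yielding two conserved first integrals of the flow. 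These first integrals, expressed in the $w=\sfb_\sft(z)$ variable, are precisely $U(u)$ and $V(u)$ of \eqref{e:defUV}, \eqref{eq_first_integrals}; matching them at $\sft=0$ against the explicit product formula \eqref{e:complex_slope_comp}--\eqref{e:double_complex_slope_2} for $f_0$ gives the algebraic equation \eqref{e:slope_equation} and the formula \eqref{e:slope_through_u}, which is exactly Proposition \ref{Proposition_LLN_equation}. Third, I would verify directly that \eqref{e:slope_through_u}--\eqref{e:slope_equation} solves \eqref{e:Burgers_first_appearance}: differentiate \eqref{e:slope_through_u} in $\sft$ and $z$ using \eqref{e:slope_equation} to compute $\partial_\sft u$ and $\partial_z u$, and check the PDE collapses to an identity; this is the routine-but-necessary computation already flagged in the footnote after \eqref{e:Burger}. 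Fourth, for Proposition \ref{Proposition_LLN_invariant} I would substitute $w=\sfb_\sft(z)$ in \eqref{e:U_definition}, recognize $\cS_\sft(w)$ as the first integral $U$ up to normalization, translate the conservation law along characteristics into the functional relation \eqref{e:U_functional_statement}, and read off the measure $\mu_\sft$ and constant $C_\sft$ from the structure \eqref{e:U_measure}, whose existence follows because $\cS_\sft$ is, by construction, a Stieltjes-type transform of $\rho_\sft$ plus a linear term. Finally I would address well-posedness: existence of the characteristic flow for short time is standard, and global existence up to $\sft=\sfT$ together with the branch selection in Remark \ref{remark_u_choice} must be argued using Assumption \ref{a:para} and the monotonicity/sign analysis carried out in the proof of Proposition \ref{Proposition_off_criticality}, which guarantees the characteristics do not cross and stay in the relevant domain.

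\textbf{Main obstacle.} The hard part will be the branch-selection and non-degeneracy issue: the algebraic equation \eqref{e:slope_equation} has several roots $u$, and one must single out the correct one consistently as $(\sft,z)$ varies, ensuring the characteristic flow is well-defined, injective, and does not hit the singular set $\sfb_\sft^{-1}(\sfb_\sft([\sft-\sfT,\sfN]))$ before time $\sfT$. This is where the extra symmetries \eqref{e:symmetries} of the equation and the singular behavior of the characteristic flow (mentioned at the end of Section \ref{s:heightf}) genuinely complicate matters compared with the classical random-matrix setting. My plan is to handle it by working in the half-strip $0<\Im z<-\pi/\ln\sfq$, $\Re z\le \sft/2-\log_\sfq\kappa$ as in Remark \ref{remark_u_choice}, using the imaginary-part bounds on $G(Z)$ from the proof of Proposition \ref{Proposition_off_criticality} to show the designated root stays in the corresponding region for $u$, and invoking the open-mapping/degree argument to propagate uniqueness; the remaining verification that the two candidate slopes are complex conjugates (Remark \ref{Remark_4_solutions}) then fixes $f_\sft$ unambiguously in $\bH^+$. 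The rest of the proof is bookkeeping: each of the three descriptions is a different coordinate on the same characteristic flow, and once the flow is constructed the equivalences are immediate.
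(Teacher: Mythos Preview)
Your overall plan matches the paper's proof closely: the paper also proceeds by the method of characteristics for \eqref{e:Burgers_first_appearance}, extracts the two first integrals $U,V$, derives \eqref{e:slope_through_u}--\eqref{e:slope_equation} from them, recasts the flow in the variable $w=\sfb_\sft(z)$ to obtain \eqref{e:U_functional_statement}, and then closes the loop by directly verifying that the formulas of Propositions \ref{Proposition_LLN_equation} and \ref{Proposition_LLN_invariant} solve the PDE.

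Two points where you underestimate the work. First, your characteristic ODE is slightly off: from $\tfrac{\partial_\sft f}{f}+\tfrac{\partial_z f}{f-1}=g$ the characteristic speed is $\tfrac{dz^\sft}{d\sft}=\tfrac{f}{f-1}$, not $\tfrac{1}{1-f}$; this is harmless but worth correcting before the first integrals are computed. Second, and more substantively, the representation \eqref{e:U_measure} for $\cS_\sft(w)$ does \emph{not} follow ``by construction'': $\cS_\sft$ is built from $f_\sft$, which is an exponential of a Stieltjes-type integral, so there is no a priori reason $\cS_\sft$ should itself be a Cauchy transform plus a linear term. The paper devotes a separate step to proving that $\cS_\sft$ is a Nevanlinna function ($\Im \cS_\sft(w)<0$ for $\Im w<0$) by examining the boundary behavior of $\mathbf U_\sft(z)=\sfq^\sft(f_\sft\sfq^{-z}-\kappa^2\sfq^{z-\sft})/(1-f_\sft)$ on all four sides of the half-strip $\bD_\sft$ and applying the maximum principle; only then does the Nevanlinna representation theorem yield \eqref{e:U_measure}.

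On the ``main obstacle'' you correctly identify --- bijectivity of the characteristic flow and branch selection --- the paper's argument is cleaner than a degree/open-mapping approach. Once the Nevanlinna property is known, the flow in the $w$-variable becomes the explicit affine-in-Stieltjes map $w^0\mapsto w^0-(\sfq^{-\sft}-1)\cS_0(w^0)$, which after the substitution $\cS_0(w)=\alpha w+C_0-\int(w-s)^{-1}d\mu_0(s)$ is exactly the type of map analyzed by Biane in free probability; the paper invokes \cite[Lemma~4]{Biane_convlution} to conclude it is a homeomorphism from $\{w:\Im(w-(\sfq^{-\sft}-1)\cS_0(w))\le 0\}$ onto the closed lower half-plane, which simultaneously gives global existence, injectivity, and the branch rule of Remark~\ref{remark_u_choice}. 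Your plan to use the sign analysis from Proposition~\ref{Proposition_off_criticality} would instead have to be redone for the limiting $f_\sft$ rather than the prelimit $\tilde f_\sft$, and would still need an extra argument for surjectivity.
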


\begin{proof} {\bf Step 1.} We start by explaining that the complex slope $f_\sft(z)$ defined through \eqref{e:double_complex_slope} uniquely determines the density $\rho_\sft(s)$. Indeed, for $x\in[\sft-\sfT,\sfN]$ and $\eps>0$ we write
\begin{align*}
&\phantom{{}={}}\arg f_\sft(x+\ii \eps)=
 -\Im\left(\int_{\sft-\sfT}^{\sfN} \frac{\sfb'_\sft(x+\ii \eps)(1-\rho_\sft(s))}{\sfb_\sft(x+\ii \eps)-\sfb_\sft(s)}\rd s\right)
 \\&= -\int_{\sft-\sfT}^{\sfN}  \Im\left( \frac{\sfb'_\sft(x+\ii \eps)}{[\sfb_\sft(x)-\sfb_\sft(s)]+[\sfb_\sft(x+\ii \eps)-\sfb_\sft(x)]}\right)(1-\rho_\sft(s))\rd s\\
 &= -\int_{\sft-\sfT}^{\sfN}  \Im\left( \frac{\sfb'_\sft(x+\ii \eps)[\sfb_\sft(x)-\sfb_\sft(s)]-\sfb'_\sft(x+\ii \eps)[\sfb_\sft(x+\ii \eps)-\sfb_\sft(x)]}{[\sfb_\sft(x)-\sfb_\sft(s)]^2-[\sfb_\sft(x+\ii \eps)-\sfb_\sft(x)]^2}\right)(1-\rho_\sft(s))\rd s.
\end{align*}
Let us send $\eps\to 0+$ in the last formula. Assuming $\sfb'_\sft(x)\ne 0$, we have (uniformly over $s\in\mathbb [\sft-\sfT,\sfN]$)
\begin{align*}
\Im\left( \frac{\sfb'_\sft(x+\ii \eps)[\sfb_\sft(x)-\sfb_\sft(s)]}{[\sfb_\sft(x)-\sfb_\sft(s)]^2-[\sfb_\sft(x+\ii \eps)-\sfb_\sft(x)]^2}\right)= \frac{\eps \sfb''_\sft(x)[\sfb_\sft(x)-\sfb_\sft(s)]+\OO(\varepsilon^2)}{[\sfb_\sft(x)-\sfb_\sft(s)]^2+\eps^2 \sfb'_\sft(x)^2+\OO(\varepsilon^3)},\\
 \Im\left( \frac{-\sfb'_\sft(x+\ii \eps)[\sfb_\sft(x+\ii \eps)-\sfb_\sft(x)]}{[\sfb_\sft(x)-\sfb_\sft(s)]^2-[\sfb_\sft(x+\ii \eps)-\sfb_\sft(x)]^2}\right)= \frac{-\eps(\sfb'_\sft(x))^2+\OO(\varepsilon^2)}{[\sfb_\sft(x)-\sfb_\sft(s)]^2+\eps^2\sfb'_\sft(x)^2+\OO(\varepsilon^3)}.
\end{align*}
Hence, using the fact
$$
 \int_{-\infty}^{\infty} \frac{\eps }{y^2+\eps^2} \rd y = \pi,\quad y=\frac{\sfb_\sft(x)-\sfb_\sft(s)}{\sfb_\sft'(x)},
$$
and that the integral is sharply concentrated around $0$ as $\eps\to 0$, we conclude that
\begin{equation*}
 \lim_{\eps\to 0+} \frac{1}{\pi} \arg f_\sft(x+\ii \eps)= 1-\rho_\sft(x).
\end{equation*}

\noindent {\bf Step 2.} Next, we take $f_\sft(z)$ satisfying the PDE of Proposition \ref{Proposition_LLN_Burgers}, which we rewrite as
\begin{align}\label{e:Burgers_repeat}
\frac{\del_\sft  f_\sft(z)}{f_\sft(z)}+\frac{\del_z f_\sft(z)}{f_\sft(z)-1}=\ln(\sfq)\frac{\kappa^2 \sfq^{z-\sft}+\sfq^{-z}}{\kappa^2\sfq^{z-\sft}-\sfq^{-z}}.
\end{align}
Our goal is to solve\footnote{We only search for solutions of the form \eqref{e:double_complex_slope} and do not investigate possibility of other solutions.}   this PDE arriving at the expression of Proposition \ref{Proposition_LLN_equation}. For that we are going to use the characteristics method.

The method guides to search for the three--dimensional hypersurface of triplets $(\sft, z, f_\sft(z))$ solving \eqref{e:Burgers_repeat} as a union of one-dimensional characteristic curves $\sft(\tau)$, $z(\tau)$, $f(\tau)$, with each curve solving the differential equation
$$
 \del_\tau \sft = \frac{1}{f},\qquad \del_\tau z = \frac{1}{f-1}, \qquad \del_\tau f= \ln(\sfq)\frac{\kappa^2 \sfq^{z-\sft}+\sfq^{-z}}{\kappa^2\sfq^{z-\sft}-\sfq^{-z}}.
$$
Because we only care about the hypersurface itself, we can replace $\tau$ by any other parameter. The key simplifying observation, leading to the relatively explicit expressions for $f_\sft(z)$ in our case, is that we are able to choose $\sft$ as the parameter. Hence, we are looking for a two-dimensional family of curves\footnote{Somewhat abusing the notations, the upper index $\sft$ in $z^{\sft}$ and $f^\sft$ indicates the time coordinate they depend on; on the other hand, $\sfq^\sft$ stays for the number $\sfq$ raised to the power $\sft$.} $(z^\sft, f^\sft)\in\mathbb C^{2}$, $\sft\geq 0$, which we are going to enumerate by their initial conditions $(z^0,f^0)=(u,f_0(u))$. We choose the $z$--component of the curve to be solving the differential equation
\begin{equation}\label{e:z_DE}
 \del_\sft z^\sft=\frac{f_\sft(z^\sft)}{f_\sft(z^\sft)-1},\quad \sft\geq 0,\qquad z^0=u.
\end{equation}
 The equation for the $f$--component can be read from the fact that $f^\sft$ comes from substituting $z=z_\sft$ into the function $f_\sft(z)$ solving \eqref{e:Burgers_repeat}:
 \begin{align}
  \notag\del_\sft f^\sft&= \del_\sft [f_\sft(z^\sft)]= \del_\sft f_\sft(z)\Big|_{z=z^\sft} + \del_z f_\sft(z)\Big|_{z=z^\sft} \del_\sft z^\sft= f_\sft(z^\sft) \left[\frac{\del_\sft f_\sft(z)}{f_\sft(z)}+\frac{\del_z f_\sft(z)}{f_\sft(z)-1} \right]_{z=z^\sft}
  \\  \label{e:f_DE} &=  \ln(\sfq) f^\sft \frac{\kappa^2 \sfq^{z^\sft-\sft}+\sfq^{-z^\sft}}{\kappa^2\sfq^{z^\sft-\sft}-\sfq^{-z^\sft}}, \quad \sft\geq 0,\qquad f^0=f_0(u).
 \end{align}
We further introduce a domain
\begin{align}\label{e:dfbD}
\bD_\sft=\left\{z\in \bC:\, 0\leq \Im z\leq - \tfrac{\pi}{\ln \sfq},\, \Re z \leq \sft/2-\log_\sfq \kappa \right\},
\end{align}
and notice that $\sfb_\sft$ is a bijection between this domain and closed lower half-plane, which is a conformal bijection between the interior of this domain denoted $\bD_\sft^\circ$ and open lower half-plane.
Thanks to the relations (which are all immediate from the definition):
\begin{equation}
\label{e:symmetries}
f_\sft(-z+\sft-2\log_\sfq \kappa)=\frac{1}{f_\sft(z)}, \qquad f_\sft(\bar z)=\overline{f_\sft(z)}, \qquad  f_\sft\left(z+\tfrac{2\pi\ii}{\ln \sfq}\right)=f_\sft(z),
\end{equation}
we can restrict our study of $f_\sft(z)$ to $\bD_\sft$: the values of $f_\sft(z)$ outside $\bD_\sft$ can be determined from its value inside $\bD_\sft$. Hence, we can summarize our task: for each $z\in \bD_\sft$, we would like to find an initial condition $u$ and corresponding solution to the following system of differential equations
\begin{equation}
\label{e:DE_combined}
 \begin{cases} \del_\sft z^\sft=\frac{f^\sft}{f^\sft-1}, &z^0=u,\\
                       \del_\sft f^\sft= \ln(\sfq) f^\sft \frac{\kappa^2 \sfq^{z^\sft-\sft}+\sfq^{-z^\sft}}{\kappa^2\sfq^{z^\sft-\sft}-\sfq^{-z^\sft}},\qquad\qquad &  f^0=f_0(u),
 \end{cases}
\end{equation}
 such that $z^\sft=z$; once they are found, we obtain $f_\sft(z)=f^\sft$.

 The system of differential equations \eqref{e:DE_combined} has two first integrals, which are expressions staying constant along the trajectories $(\sft,z^\sft,f^\sft)$, cf.\ \cite[Theorem 2.1]{borodin2010q}. As can be verified by straightforward differentiation, they are given by
\begin{align}\label{e:defu_1}
U(u)&=\sfq^\sft \frac{f^\sft\sfq^{-z^\sft}-\kappa^2 \sfq^{z^\sft-\sft}}{1-f^\sft}
=\frac{\sfq^\sft}{2} \left(-(\sfq^{-z^\sft}+\kappa^2 \sfq^{z^\sft-\sft}) +(\sfq^{-z^\sft}-\kappa^2 \sfq^{z^\sft-\sft})\frac{1+f^\sft}{1-f^\sft}\right),\\
V(u)&=\frac{\sfq^{-z^\sft}-f^\sft\kappa^2 \sfq^{z^\sft-\sft}}{1-f^\sft}=\frac{1}{2}\left((\sfq^{-z^\sft}+\kappa^2 \sfq^{z^\sft-\sft}) +(\sfq^{-z^\sft}-\kappa^2 \sfq^{z^\sft-\sft})\frac{1+f^\sft}{1-f^\sft}\right).\label{e:defv_1}
\end{align}
Because trajectories are parameterized by initial conditions $u=z^0$, the values of the first integrals in \eqref{e:defu_1}, \eqref{e:defv_1} depend on $u$. This dependence can be made explicit by plugging $\sft=0$ into \eqref{e:defu_1}, \eqref{e:defv_1}:
\begin{equation}\label{e:defuv_2}
U(u)=\frac{f_0(u)\sfq^{-u}-\kappa^2 \sfq^{u}}{1-f_0(u)},\qquad
V(u)=\frac{\sfq^{-u}-f_0(u)\kappa^2 \sfq^{u}}{1-f_0(u)}.
\end{equation}

Combining \eqref{e:defu_1} and \eqref{e:defv_1} and recalling $\sfb_\sft(z_\sft)=\sfq^{-z_\sft}+\kappa^2 \sfq^{z_\sft-\sft}$, we express $z^\sft$ in terms of $\sft$ and $u$:
\begin{align}\label{e:ztsolver_1}
\sfb_\sft(z_\sft)=V(u)-\frac{U(u)}{\sfq^\sft}.
\end{align}
If we treat \eqref{e:ztsolver_1} as an equation determining $u$ as a function of $\sft$ and $z=z^\sft$, then it matches \eqref{e:slope_equation}. Once we find $u$, it is also helpful to subtract \eqref{e:ztsolver_1} from its $\sft=0$ version resulting in
\begin{align}\label{e:ztsolver_2}
\frac{\sfb_\sft(z_\sft)-\sfb_0(u)}{1-\sfq^{-\sft}}=U(u).
\end{align}
Equating the expressions for $U(u)$ of \eqref{e:ztsolver_2} with \eqref{e:defu_1}, we can express $f^\sft$ through $z^\sft$, $u$, and $\sft$. Replacing $f_\sft(z)=f^\sft$ and $z=z^\sft$, we arrive at the desired \eqref{e:slope_through_u} of Proposition \ref{Proposition_LLN_equation}. Note that \eqref{e:ztsolver_1} might have several $u$ satisfying it (or none) and we have not yet explained how to choose the correct $u$ --- this will be done in Step 5.

\medskip

\noindent {\bf Step 3.} Further, we present an alternative way to package the characteristics method  for finding $f_\sft(z)$, which solves the PDE of Proposition \ref{Proposition_LLN_Burgers}. This will lead us to the expression of Proposition \ref{Proposition_LLN_invariant}.

Guided by \eqref{e:defu_1} and using symmetries \eqref{e:symmetries} we notice that the expression
$$
 \sfq^\sft \frac{f_\sft(z)\sfq^{-z}-\kappa^2 \sfq^{z-\sft}}{1-f_\sft(z)}
$$
is unchanged under $z\mapsto z+\frac{2\pi \ii}{\ln \sfq}$ and $z\mapsto -z +\sft -2\ln (\kappa)$. Therefore, there exists a function $\cS_\sft(w)$ of complex argument $w$, such that
\begin{equation}
\label{e:R_def}
 \cS_\sft(\sfb_\sft(z))= \sfq^\sft \frac{f_\sft(z)\sfq^{-z}-\kappa^2 \sfq^{z-\sft}}{1-f_\sft(z)},
\end{equation}
for all $z$ such that $\sfb_\sft(z)$ is outside $\sfb_\sft([\sft -\sfT, \sfN])$. The function $\cS_\sft(w)$ is holomorphic in $w$, except, perhaps, at $w\in \sfb_\sft([\sft -\sfT, \sfN])$. Notice that once we know $(\sft,z, \cS_\sft(\sfb_\sft(z)))$, we also know $(\sft,z,f_\sft(z))$ and vice-versa.

\smallskip

We now come back to \eqref{e:DE_combined} and introduce $w^\sft:=\sfb_\sft(z^\sft)$ and $\cS^\sft= \cS_\sft (w^\sft)$. Let us describe the evolution of the triplet $(\sft,w^\sft,\cS^\sft)$, $\sft\geq 0$. Notice that $U(u)$ in \eqref{e:defu_1} becomes $ \cS^\sft$. Therefore, the value of $\cS^{\sft}$  does not depend on $\sft$. On the other hand, \eqref{e:ztsolver_2} becomes
\begin{equation}
\label{e:x17}
 w^\sft=w^0+(1-\sfq^{-\sft}) U(u)=w^0-(\sfq^{-\sft}-1) \cS^0.
\end{equation}
Therefore, the equation $ \cS^\sft= \cS^0$ can be rewritten as
\begin{equation}
\label{e:invariant_equation_rewritten}
  \cS_\sft \left(w_0  - (\sfq^{-\sft}-1)\cS_0(w_0)\right)= \cS_0(w_0).
\end{equation}
If we now rename $\cS_0(w_0)=v$, then we get
\begin{equation*}
  \cS_0^{(-1)}(v)  - (\sfq^{-\sft}-1)v= \cS_\sft^{(-1)}(v),
\end{equation*}
which is precisely \eqref{e:U_functional_statement} of Proposition \ref{Proposition_LLN_invariant}.

\smallskip

{\bf Step 4.} We are now going to show that the function $\cS_\sft(w)$ defined in the previous step by \eqref{e:R_def} has the form \eqref{e:U_measure} for some measure $\mu_\sft$. For that we would like to check that $\cS_\sft(w)$ is a Nevanlinna function:
\begin{equation}\label{e:Nevanlinna}
\Im \cS_\sft(w) < 0\quad  \text{ whenever }\Im w<0.
\end{equation}
 Recall that $\sfb_\sft(z)$ is a conformal bijection between the interior $\bD_\sft^\circ$ of the domain of \eqref{e:dfbD} and lower half-plane. Hence, \eqref{e:Nevanlinna} is equivalent to
 \begin{equation} \label{e:Nevanlinna_restated}
  \Im \mathbf U_\sft(z)<0\quad \text{ whenever } z\in \bD_\sft^\circ,\qquad \text{ where } \mathbf U_\sft(z)= \sfq^\sft \frac{f_\sft(z)\sfq^{-z}-\kappa^2 \sfq^{z-\sft}}{1-f_\sft(z)}.
 \end{equation}
 It is helpful to also use an alternative representation of $\mathbf U_\sft(z)$:
 \begin{equation}
 \label{e:x15}
 \mathbf U_\sft(z)=\frac{\sfq^\sft}{2} \left(-(\sfq^{-z}+\kappa^2 \sfq^{z-\sft}) +(\sfq^{-z}-\kappa^2 \sfq^{z-\sft})\frac{1+f_\sft(z)}{1-f_\sft(z)}\right).
\end{equation}
For $z$ on the right boundary of $\bD_\sft$, i.e. $\Re z= \sft /2 -\log_\sfq \kappa$, we have  $|f_\sft(z)|=1$, because the integrand in \eqref{e:double_complex_slope} is purely imaginary.  Further, $\sfq^{-z}+\kappa^2 \sfq^{z-\sft}$ is real and $\sfq^{-z}-\kappa^2 \sfq^{z-\sft}$ is purely imaginary. Plugging into \eqref{e:x15} we conclude that  $\mathbf U_\sft(z)$ is real on the right boundary of $\bD_\sft$.

For $z$ on the bottom boundary of $\bD_\sft$ by the argument of Step 1, $\Im f_\sft(z)\geq 0$ and therefore also $\Im \left[\frac{1+f_\sft(z)}{1-f_\sft(z)}\right]\geq 0$. On the other hand, $\sfq^{-z}+\kappa^2 \sfq^{z-\sft}$ is real and $\sfq^{-z}-\kappa^2 \sfq^{z-\sft}$ is negative real. Plugging into \eqref{e:x15} we conclude that $\Im \mathbf U_\sft(z)\leq 0$ on the bottom boundary of $\bD_\sft$. Note that for the points on the bottom boundary where  $\rho_\sft(s)\in\{0,1\}$, the same argument shows that $\mathbf U_\sft(s)$ is real.

For $z$ on the top boundary of $\bD_\sft$, the integrand in \eqref{e:double_complex_slope} is real (and non-singular); therefore, $f_\sft(z)$ is real, and so are $\sfq^{-z}+\kappa^2 \sfq^{z-\sft}$ and $\sfq^{-z}-\kappa^2 \sfq^{z-\sft}$. Plugging into \eqref{e:x15} we conclude that $\Im \mathbf U_\sft(z)=0$ on the top boundary of $\bD_\sft$.

$\bD_\sft$ does not have a left boundary, but we should also study the case $\Re z\to -\infty$. We investigate the integrand in \eqref{e:double_complex_slope} using
\begin{align*}
 \frac{\sfb'_\sft(z)}{\sfb_\sft(z)-\sfb_\sft(s)}=\ln \sfq + \OO(\sfq^{-z}), \qquad \Re z\to -\infty,
\end{align*}
Since the total mass of the measure $\rho_\sft$ is $\sfN$, we get $f_\sft(z)=\sfq^{\sft-\sfT}+ \OO(\sfq^{-z})$ and therefore,
\begin{equation} \label{e:x16}
  \mathbf U_\sft(z)=\frac{\kappa^2\sfq^{z-\sft} }{\sfq^{-\sfT}-\sfq^{-\sft}} + \OO(1),   \qquad \Re z\to -\infty.
\end{equation}
Inside $\bD_\sft$ we have $0<\Im z< -\frac{\pi}{\ln \sfq}$, leading to the imaginary part of the last expression being negative.

The function $\Im \mathbf U_\sft(z)$ is harmonic in $\bD_\sft^\circ$. We have shown that it is non-positive on the boundary of this domain and as $\Re z\to -\infty$. Using the maximum principle, we conclude that \eqref{e:Nevanlinna_restated} and, thus, also \eqref{e:Nevanlinna} holds.

Using the representation theorem for Nevanlinna functions (we refer to Appendix A in \cite{behrndt2020boundary} for a detailed discussion),  \eqref{e:Nevanlinna}  implies that
\begin{equation}
\label{e:Nev_representation}
 \cS_\sft(w)= \alpha w + \beta - \int_{\mathbb R} \left(\frac{1}{w-s}+\frac{s}{1+s^2} \right) \mu_\sft(ds),
\end{equation}
for a certain measure $\mu_\sft$ on $\mathbb R$ and real constants $\alpha>0$ and $\beta$. Our definitions imply that $\mathbf U_\sft(z)$ does not have any singularities for large complex $z$; therefore, $\cS_\sft(w)$ is holomorphic for all large enough $w$ and the measure $\mu_{\sft} (ds)$ should be compactly supported. Hence, $\int_{\mathbb R} \frac{s}{1+s^2}\mu_\sft(ds)$ can be absorbed into $\beta$. On the other hand, \eqref{e:x16} implies that for large $w$
$$
 \cS_\sft(w)=\frac{w}{\sfq^{-\sfT}-\sfq^{-\sft}}+\oo(w).
$$
This fixes the constant $\alpha$ in \eqref{e:Nev_representation} and turns it into \eqref{e:U_measure}.

\medskip

\noindent {\bf Step 5.} In the previous steps we explained how using the values of $z^\sft$ or $w^\sft$ one can reconstruct the values of $f_\sft(z^\sft)$ and $\cS_\sft(w^\sft)$, respectively. In this step we analyze the values which $z^\sft$ and $w^\sft$ take.

We define the liquid region of the measure $\rho_\sft$ through:
\begin{equation}
\label{e:Liquid_def_2}
 \mathcal L_\sft=\{s\in[\sft-\sfT, \sfN]\mid 0<\rho_\sft(s) < 1\}.
\end{equation}
Observe that by the argument of Step 4, the only points of the boundary of $\bD_0$ where $\mathbf U_\sft(z)$ is non-real are those of $\mathcal L_\sft$.

Take a point $u\in\bD_0^\circ$ and consider the evolution $z^\sft=z^{\sft,u}$ solving \eqref{e:DE_combined}. Let $\sft(u)$ be the first time when $z^{\sft,u}$ hits the boundary of $\bD_\sft$. Note that $z^{\sft(u),u}\in \mathcal L_\sft$, because $\mathbf U_\sft(z^{\sft(u),u})=\mathbf U_0(u)$ is non-real.

\begin{lemma} \label{Lemma_bijectivity}
 For each $\sft\geq 0$, the map $u\mapsto z^{\sft,u}$ is a conformal bijection between $\{u\in \bD_0\mid \sft<\sft(u)\}$ and $\bD_\sft^\circ$, which continuously extends to a homeomorphism between the closures of these two domains.
\end{lemma}
\begin{proof} Instead of dealing with $u\mapsto z^{\sft,u}$ directly, we consider the map $\sfb_0(u)\mapsto \sfb_\sft(z^{\sft,u})$; in the notations of Step 4 this is the map $w_0\mapsto w^\sft$, which is given explicitly by \eqref{e:x17}:
\begin{equation}
\label{e:x18}
 w^0\mapsto w^0-(\sfq^{-\sft}-1) \cS^0.
\end{equation}
According to Step 4, $\cS^0=\cS_0(w^0)$ is a complex number with negative imaginary part. Hence, $w^\sft$, as a function of $\sft$, has increasing imaginary part and the time $\sft(\sfb_\sft^{-1}(w))$ is the first time when $w^\sft$ becomes real. Since $\sfb_\sft$ is a bijection between $\bD_\sft^\circ$ and (open) lower halfplane, the lemma becomes equivalent to the study of the map \eqref{e:x18} between the set $\{w\in \mathbb C\mid \Im(w-(\sfq^{-\sft}-1) \cS_0(w))\leq 0\}$ and the closed lower halfplane $\{w\in \mathbb C\mid \Im(w)\leq 0\}$. Using the representation \eqref{e:U_measure}, we rewrite the map as
\begin{equation}
\label{e:x50}
  w\mapsto  w\frac{\sfq^{-\sfT}-\sfq^{-\sft}}{\sfq^{-\sfT}-1}-  (\sfq^{-\sft}-1)C_0 + (\sfq^{-\sft}-1) \int_{\mathbb R}\frac{\rd \mu_0(s)}{w-s}.
\end{equation}
The maps of this type (up to rescalings and real shifts, which preserve the lower halfplane and therefore are irrelevant) has been studied in the free probability theory before and the desired statements on being homeomorphism and being comformal is \cite[Lemma 4]{Biane_convlution}. This finishes the proof of Lemma \ref{Lemma_bijectivity}.
\end{proof}
\begin{remark} \label{Remark_extension_to_the_boundary}
 The statement of Lemma \ref{Lemma_bijectivity} leads to the definition of the map $u\mapsto z^{\sft,u}$ on the boundary of $\{u\in \bD_0\mid \sft<\sft(u)\}$; equivalently, we can define the map $w_0\mapsto w^{\sft}$ on the boundary of the domain $\{w\in \mathbb C\mid \Im(w-(\sfq^{-\sft}-1) \cS_0(w))\leq 0\}$. By continuity, these maps are defined by exactly the same formulas: $z^{\sft,u}$ can be found solving \eqref{e:DE_combined}, while $w^{\sft}$ is given by \eqref{e:x17} or \eqref{e:x18}.

 We also observe that the boundary of $\{w\in \mathbb C\mid \Im(w-(\sfq^{-\sft}-1) \cS_0(w))\leq 0\}$ naturally splits into two parts: the part with strictly negative imaginary part of $w$ is being mapped by $w_0\mapsto w^{\sft}$ into the liquid region $\sfb_\sft(\mathcal L_\sft)$ and the part on the real axis is mapped to the complement $\mathbb R\setminus \sfb_\sft(\mathcal L_\sft)$.
\end{remark}

 The role of Lemma \ref{Lemma_bijectivity} is two-fold. First, it guarantees that the procedures of Steps 2 and 3 work for all relevant values of $z$ or $w$, because all of them are possible values for $z^\sft$ or $w^\sft$. Second, it explains which $u$ solving \eqref{e:slope_equation} we should choose: this must be the unique $u$ such that:
 $$
  u\in \bD_0^\circ,\qquad \text { and }\qquad  \Im\left[\sfb_0(u)- (\sfq^{-\sft}-1) \cS_0(\sfb_0(u))\right]<0,
 $$
 which is precisely the condition of Remark \ref{remark_u_choice}.

\medskip

{\bf Step 6.} At this point we have verified that if $\rho_\sft$ is described by the PDE of Proposition \ref{Proposition_LLN_Burgers}, then it can be found through the formulas of either Proposition \ref{Proposition_LLN_equation} or Proposition \ref{Proposition_LLN_invariant}. For the sake of completeness, we also would like to check that if $f_\sft(z)$ is found from the formulas of the latter two propositions, then it solves the PDE \eqref{e:Burgers_first_appearance}. We start from Proposition \ref{Proposition_LLN_equation}

Let $u=u(z,t)$ and $f_\sft(z)$ be obtained by the procedure of Proposition \ref{Proposition_LLN_equation}. We use functions $U(u)$ and $V(u)$ as defined by \eqref{e:defuv_2}. We claim that
\begin{equation}
\label{e:x22}
 U(u(z,\sft))=\sfq^\sft \frac{f_\sft(z)\sfq^{-z}-\kappa^2 \sfq^{z-\sft}}{1-f_\sft(z)}, \qquad V(u(z,\sft))=\frac{\sfq^{-z}-f_\sft(z)\kappa^2 \sfq^{z-\sft}}{1-f_\sft(z)}.
\end{equation}
To prove the claim we notice that \eqref{e:slope_equation} can be rewritten as
\begin{equation}
\label{e:x19}
\sfq^{-z}+\kappa^2 \sfq^{z-\sft}=V(u)-\sfq^{-\sft} U(u).
\end{equation}
On the other hand, expressing $f_0(u)$ from the definitions of $U(u)$ and $V(u)$ in \eqref{e:defuv_2} and equating the results, we get
\begin{equation}
\label{e:x20}
 \frac{U(u)+\kappa^2 \sfq^{u}}{\sfq^{-u}+U(u)}=\frac{\sfq^{-u}-V(u)}{\kappa^2 \sfq^{u}-V(u)}; \qquad \text{Equivalently: } \quad \sfq^{-u}+\kappa^2 \sfq^{u}=V(u)-U(u).
\end{equation}
$\kappa^2 \sfq^{u}+\sfq^{-u}$ arising in the last formula, can be reexpressed using \eqref{e:slope_through_u} as
\begin{equation}
\label{e:x21}
 \sfq^{-u}+\kappa^2 \sfq^u
=\frac{\sfq^{-z}+\kappa^2 \sfq^{z}-f_\sft(z)(\sfq^{-z+\sft}+\kappa^2\sfq^{z-\sft})}{1-f_\sft(z)} ,
\end{equation}
Combining \eqref{e:x19}, \eqref{e:x20}, and \eqref{e:x21} we get a system of two linear equations on $U(u)$, $V(u)$, which is solved by \eqref{e:x22}.

Next, we take \eqref{e:x22} and differentiate it in $\sft$ and $z$, getting four identities:
\begin{align*}
 U'(u(z,\sft)) \partial_z u(z,\sft)&=\frac{\partial_z f_\sft(z)(\sfq^{\sft-z}-\kappa^2 \sfq^{z})- f_\sft(z)\sfq^{\sft-z}(1-f_\sft(z))\ln\sfq -\kappa^2 \sfq^{z}(1-f_\sft(z))\ln\sfq}{(1-f_\sft(z))^2},
 \\
 U'(u(z,\sft)) \partial_\sft u(z,\sft) &=  \frac{\partial_\sft f_\sft(z)(\sfq^{\sft-z}-\kappa^2 \sfq^{z})+f_\sft(z)\sfq^{\sft-z}(1-f_\sft(z))\ln\sfq}{(1-f_\sft(z))^2},
 \\
 V'(u(z,\sft)) \partial_z u(z,\sft) &=\frac{\partial_z f_\sft(z)(\sfq^{-z}-\kappa^2 \sfq^{z-\sft})-\sfq^{-z}(1-f_\sft(z))\ln\sfq  -f_\sft(z)\kappa^2 \sfq^{z-\sft}(1-f_\sft(z))\ln\sfq}{(1-f_\sft(z))^2},
 \\
 V'(u(z,\sft)) \partial_\sft u(z,\sft)&=\frac{\partial_\sft f_\sft(z)(\sfq^{-z}-\kappa^2 \sfq^{z-\sft})+f_\sft(z)\kappa^2 \sfq^{z-\sft}(1-f_\sft(z))\ln\sfq }{(1-f_\sft(z))^2}.
\end{align*}
The product of the left-hand sides for the first and forth identities is the same as the product for the second and third identities. Therefore,
{
\begin{multline*}
 \bigl[\partial_z f_\sft(z)(\sfq^{\sft-z}-\kappa^2 \sfq^{z})- f_\sft(z)\sfq^{\sft-z}(1-f_\sft(z))\ln\sfq -\kappa^2 \sfq^{z}(1-f_\sft(z))\ln\sfq\bigr] \\
 \times \bigl[\partial_\sft f_\sft(z)(\sfq^{-z}-\kappa^2 \sfq^{z-\sft})+f_\sft(z)\kappa^2 \sfq^{z-\sft}(1-f_\sft(z))\ln\sfq\bigr]
 \\=\bigl[\partial_\sft f_\sft(z)(\sfq^{\sft-z}-\kappa^2 \sfq^{z})+f_\sft(z)\sfq^{\sft-z}(1-f_\sft(z))\ln\sfq\bigr]\\
 \times \bigl[\partial_z f_\sft(z)(\sfq^{-z}-\kappa^2 \sfq^{z-\sft})-\sfq^{-z}(1-f_\sft(z))\ln\sfq  -f_\sft(z)\kappa^2 \sfq^{z-\sft}(1-f_\sft(z))\ln\sfq\bigr],
\end{multline*}
}
which simplifies to
\begin{multline*}
 -[\partial_z f_\sft(z)]f_\sft(z)(1-f_\sft(z))(\sfq^{-z}-\kappa^2 \sfq^{z-\sft})(\sfq^{\sft-z}-\kappa^2 \sfq^{z})\ln\sfq
 \\+ [\partial_\sft f_\sft(z)](1-f_\sft(z))^2 (\sfq^{-z}-\kappa^2 \sfq^{z-\sft})(\sfq^{\sft-z}-\kappa^2\sfq^z) \ln\sfq\\=
 f_\sft(z) (1-f_\sft(z))^2(\kappa^4 \sfq^{2z-\sft}- \sfq^{\sft-2z})(\ln\sfq)^2.
\end{multline*}
Dividing by $f_\sft(z)(1-f_\sft(z))^2(\sfq^{-z}-\kappa^2 \sfq^{z-\sft})(\sfq^{\sft-z}-\kappa^2 \sfq^{z})\ln\sfq$, we arrive at \eqref{e:Burgers_first_appearance}.

\bigskip

{\bf Step 7.} Finally, we assume that $f_\sft(z)$ is obtained by the procedure of Proposition \ref{Proposition_LLN_invariant}. Our task is to show that it satisfies the PDE \eqref{e:Burgers_first_appearance}. Recasting \eqref{e:U_functional_statement} as \eqref{e:invariant_equation_rewritten}, the procedure can be summarized as follows. Given $(\sft,z)$, we first find $w=w(\sft,z)$ such that
\begin{equation}
\label{e:x23}
 w-(\sfq^{-\sft}-1) \cS_0(w)=\sfq^{-z}+\kappa^2 \sfq^{z-\sft}.
\end{equation}
Then we find $f_\sft(z)$ by solving
$$
 \cS_0(w)=\sfq^\sft \frac{f_\sft(z)\sfq^{-z}-\kappa^2 \sfq^{z-\sft}}{1-f_\sft(z)},
$$
which results in
\begin{equation}
\label{e:x26}
  f_\sft(z)=\frac{\cS_0(w)+\kappa^2 \sfq^{z}}{\cS_0(w)+\sfq^{\sft-z}}, \qquad 1-f_\sft(z)=\frac{\sfq^{\sft-z}-\kappa^2 \sfq^{z}}{\cS_0(w)+\sfq^{\sft-z}}.
\end{equation}
Differentiating \eqref{e:x23} in $z$ and $\sft$, we get:
\begin{align}
\notag [ \partial_z w] (1- (\sfq^{-\sft}-1)\cS'_0(w))&=-\sfq^{-z}\ln\sfq +\kappa^2 \sfq^{z-\sft}\ln\sfq,\\ [\partial_\sft w] (1- (\sfq^{-\sft}-1)\cS'_0(w))&=-\kappa^2 \sfq^{z-\sft} \ln \sfq- \sfq^{-\sft} \cS_0(w) \ln \sfq.\label{e:x25}
\end{align}
On the other hand, differentiating \eqref{e:x26}, we get
\begin{multline}
\label{e:x24}
 \partial_t \ln f_\sft(z)+\partial_z \ln (1-f_\sft(z))\\= \frac{\cS'_0(w) [\partial_t w]}{\cS_0(w)+\kappa^2 \sfq^{z}}-\frac{\cS'_0(w) [\partial_t w]+\sfq^{\sft-z}\ln\sfq}{\cS_0(w)+\sfq^{\sft-z}}+\frac{-\sfq^{\sft-z}\ln\sfq-\kappa^2 \sfq^z\ln\sfq}{\sfq^{\sft-z}-\kappa^2 \sfq^{z}}-\frac{\cS_0'(w)[\partial_z w]-\sfq^{\sft-z}\ln\sfq}{\cS_0(w)+\sfq^{\sft-z}}
 \\= \cS'_0(w)\left(\frac{ [\partial_t w]}{\cS_0(w)+\kappa^2 \sfq^{z}}-\frac{[\partial_t w]+[\partial_z w]}{\cS_0(w)+\sfq^{\sft-z}}\right)-\frac{\sfq^{-z}+\kappa^2 \sfq^{z-\sft}}{\sfq^{-z}-\kappa^2 \sfq^{z-\sft}}\ln\sfq.
\end{multline}
The last term in the right-hand side matches the right-hand side in \eqref{e:Burgers_first_appearance}. Hence, it remains to show that the first term in the right-hand side of \eqref{e:x24} vanishes. We check:
$$
  [\partial_t w](\cS_0(w)+\sfq^{\sft-z})\stackrel{?}{=} ([\partial_t w]+[\partial_z w])(\cS_0(w)+\kappa^2 \sfq^{z});
$$
$$
 \text{Equivalently:}\qquad  [\partial_t w](\sfq^{\sft-z}-\kappa^2 \sfq^{z})\stackrel{?}{=} [\partial_z w](\cS_0(w)+\kappa^2 \sfq^{z}).
$$
The last identity is checked by taking the ratio of two identities in \eqref{e:x25}.
\end{proof}

\subsection{Proof of Theorem \ref{Theorem_LLN_abstract}}
\label{Section_proof_of_abstract_LLN}

The measures $\rho(\cdot; \bmx(t))$ of densities $\rho(s;\bmx(t))$ as given by \eqref{eq_rho_x_t_def}, are originally defined for integer values of $t$ and we now extend the definition to all $t\in [0,T]$ by linear interpolation between integers.

Let $C([0,\sfT], \mathscr M_1(\bR)])$ denote the space of continuous functions from $[0,\sft]$ to the set of probability measures on $\mathbb R$ equipped with the weak topology of measures. Then $\rho(\cdot; \bmx(\eps^{-1} \sft))$ represents a random element of this space. In fact, there is a compact set $\mathcal K\subset C([0,\sfT], \mathscr M_1(\bR)])$, such that the distribution of $\rho(\cdot; \bmx(\eps^{-1} \sft))_{0\leq \sft \leq \sfT}$ is supported on $\mathcal K$ --- this is because measures $\rho(\cdot; \bmx(\eps^{-1} \sft))$ are supported inside $[\sft-\sfT, \sfN]$ and the dependence on $\sft$ is Lipschitz (each component of $\bmx(t)$ jumps at most by $1$ when $t$ grows by $1$), cf.\ \cite[Lemma 4.3.13]{AGZ}.

Since the space of probability measures on a compact set is compact, we conclude that the stochastic processes $\rho(\cdot; \bmx(\eps^{-1} \sft))_{0\leq \sft\leq \sfT}$ have subsequential limits in distribution as $\eps\to 0$. We let $(\rho_\sft)_{0\leq \sft\leq \sfT}$ be one of the limiting points. Our task is to show that $\rho_\sft$ are described by Propositions \ref{Proposition_LLN_Burgers}, \ref{Proposition_LLN_equation}, \ref{Proposition_LLN_invariant}. (Implying, in particular, that all the limiting points are the same and, therefore, coincide with $\eps\to 0$ limit.) Note that for each $0\leq \sft\leq \sfT$, $\rho_\sft$ is an absolutely continuous measures of density at most $1$, because so were the prelimit measures.

\bigskip

For any $\sft\in [0,\sfT]\cap \eps\bZ$, using Proposition \ref{Proposition_tilings_increment}, we have
\begin{multline}\label{e:martingale_equation}
\int_{\bl(\sft)}^{\br(\sft)} \frac{\sfb'_{\sft}(z) \rho(s;\bmx(\eps^{-1}\sft))}{\sfb_{\sft}(z)-\sfb_{\sft}(s)}\rd s= \int_{\bl(0)}^{\br(0)} \frac{\sfb'_{0}(z) \rho(s;\bmx(0))}{\sfb_{0}(z)-\sfb_{0}(s)}\rd s  \\+\varepsilon \widetilde \cM_\sft(z)+
\varepsilon\sum_{\tau \in[0,\sft)\cap \eps \bZ}\left[-\del_z \ln\tilde\cB_{\tau}(z)+\ln(\sfq)\frac{\kappa^2 \sfq^{z-\tau}+\sfq^{-z}}{\kappa^2\sfq^{z-\tau}-\sfq^{-z}}-\del_\tau \ln \tilde \varphi_\tau^+(z)\right]
+\OO\left(\varepsilon \right),
\end{multline}
where $\widetilde\cM_\sft(z)$ is a martingale given by
\begin{align}\label{e:deftMt}
\widetilde\cM_\sft(z)\deq \sum_{\tau \in [0, \sft)\cap \eps\bZ}  \Delta \cM_{\eps^{-1}\tau}(z).
\end{align}
We can estimate the $\widetilde\cM_\sft(z)$ term by Doob/Kolmogorov's inequality for martingales:
$$
 {\rm Prob} \left(\sup_{0\leq \sft\leq \sfT} |\eps \widetilde\cM_\sft(z)|>\lambda \right)\leq \frac{\eps^2}{\lambda^2}\sum_{\tau \in [0, \sfT)\cap \eps\bZ}  \bE [\Delta \cM_{\eps^{-1}\tau}(z)]^2, \qquad \lambda>0.
$$
Using \eqref{e:covT_tilings_2}, each term in the last sum is $\OO(\eps)$. There are $\OO(\eps^{-1})$ terms and therefore the probability decays as $\OO(\eps^2/\lambda^2)$. We conclude that the martingale part in \eqref{e:martingale_equation} goes to $0$ in probability as $\eps\to 0$. Hence, $\eps\to 0$ limit of \eqref{e:martingale_equation} gives an integral equation for $\rho_\sft$:
\begin{align}\begin{split}\label{e:LLN_integral_equation}
\int_{\bl(\sft)}^{\br(\sft)} \frac{\sfb'_{\sft}(z) \rho_\sft(s)}{\sfb_{\sft}(z)-\sfb_{\sft}(s)}\rd s&= \int_{\bl(0)}^{\br(0)} \frac{\sfb'_{0}(z) \rho_0(s)}{\sfb_{0}(z)-\sfb_{0}(s)}\rd s \\
& + \int_0^{\sft} \left[-\del_z \ln\widehat \cB_{\tau}(z)+\ln(\sfq)\frac{\kappa^2 \sfq^{z-\tau}+\sfq^{-z}}{\kappa^2\sfq^{z-\tau}-\sfq^{-z}}-\del_\tau \ln \tilde \varphi_\tau^+(z)\right] \rd \tau,
\end{split}\end{align}
where $\widehat \cB_{\tau}(z)$ is the limit of $\tilde \cB_{\tau}(z)$ given by $\widehat \cB_{\tau}(z)=\tilde \varphi_\tau^-(z)(1-f_\tau(z))$ using the notation \eqref{e:double_complex_slope}. We also recall that
$$
 f_\sft(z)=-\frac{\tilde\varphi^+_\sft(z)}{\tilde\varphi^-_\sft(z)} \exp\left(\int_{\bl(\sft)}^{\br(\sft)} \frac{\sfb'_{\sft}(z) \rho_\sft(s)}{\sfb_{\sft}(z)-\sfb_{\sft}(s)}\rd s\right).
$$
Hence, differentiating \eqref{e:LLN_integral_equation} in $\sft$, we get
\begin{align*}
 &\phantom{{}={}}\partial_\sft \ln f_\sft(z)- \partial_\sft \ln \tilde\varphi^+_\sft(z) +\partial_\sft \tilde\varphi^-_\sft(z)\\
 &=-\partial_z \ln \varphi_\sft^-(z)-\partial_z \ln (1-f_\sft(z))
 +\ln(\sfq)\frac{\kappa^2 \sfq^{z-\sft}+\sfq^{-z}}{\kappa^2\sfq^{z-\sft}-\sfq^{-z}}-\partial_\sft \ln \tilde \varphi_\sft^+(z).
\end{align*}
Cancelling $-\partial_\sft \ln \tilde \varphi_\sft^+(z)$ terms and noting that $\partial_\sft \tilde\varphi^-_\sft(z)=-\partial_z \ln \varphi_\sft^-(z)$ from the definition \eqref{e:phi_functions_defs}, we arrive at the PDE \eqref{e:Burgers_first_appearance} of Proposition \ref{Proposition_LLN_Burgers}. Theorem \ref{Theorem_three_descriptions} then yields that the measures $\rho_\sft$, $0\leq \sft\leq \sfT$, are given by the procedures of Propositions \ref{Proposition_LLN_equation} or \ref{Proposition_LLN_invariant}.

\subsection{Proof of Theorems \ref{t:arctic} and \ref{t:limitshape}}
\label{Section_proof_of_LLN_intro}

In Theorem \ref{Theorem_LLN_abstract} we identified the limit shape for lozenge tilings of the trapezoids and in this subsection we explain that the description of this theorem matches those of Theorems \ref{t:arctic} and \ref{t:limitshape} in the introduction.

We start by analyzing the equation \eqref{e:solveu}. Note that this is the same equation as \eqref{e:slope_equation}; however the variable $z$ is a complex number outside the support of $\rho_\sft$ in the latter, while a similar variable $x$ in the former is a real number, which might belong to the support of $\rho_\sft$. Let us recast the equation. We have $f_0(u)$ is given by
\begin{equation} \label{e:initial_condition_two_ways}
\frac{(\sfq^{\sfN}-\sfq^{u})(\kappa^2\sfq^{-\sfT}-\sfq^{-u})}{
(\kappa^2 \sfq^{\sfN}-\sfq^{-u})(\sfq^{-\sfT}-\sfq^{u})}\prod_{i=1}^r\frac{(\sfq^{\sfa_i}-\sfq^{u})(\kappa^2 \sfq^{\sfb_i}-\sfq^{-u})}{(\kappa^2 \sfq^{\sfa_i}-\sfq^{-u})
(\sfq^{\sfb_i}-\sfq^{u})}=\exp\left(-\int_{\sft-\sfT}^{\sfN} \frac{\sfb'_\sft(u)(1-\rho_0(s))}{\sfb_\sft(u)-\sfb_\sft(s)}\rd s\right),
\end{equation}
where we used Remark \ref{Remark_two_forms_IC} to match two forms of $f_0(u)$. Let us introduce the new variable $\uu=\sfq^u$, so that
$$
 f_0(u)=g_0(\sfq^u), \qquad g_0(\uu)=\sfq^{-\sfT}\frac{(\uu-\sfq^{\sfN})(\uu -\kappa^{-2}\sfq^{\sfT})}{
(\uu -\kappa^{-2} \sfq^{-\sfN})(\uu-\sfq^{-\sfT})}\prod_{i=1}^r\frac{(\uu - \sfq^{\sfa_i})(\uu - \kappa^{-2} \sfq^{-\sfb_i})}{(\uu -\kappa^{-2} \sfq^{-\sfa_i})
(\uu-\sfq^{\sfb_i})}.
$$
(We used $\sum_{i=1}^r (\sfb_i-\sfa_i)=\sfN$ in transformation of $f_0$ into $g_0$.)
Using \eqref{e:defUV}, the equation \eqref{e:solveu} has the form
\begin{align}\label{e:solveu_2}
\sfq^{-\sfx}+\kappa^2 \sfq^{\sfx-\sft}=\frac{(\sfq^{-\sft}-1)( \kappa^2 \uu-\uu^{-1})}{1-g_0(\uu)}+\kappa^2 \uu +\uu^{-1}\sfq^{-\sft}.
\end{align}

\begin{proposition} \label{Proposition_number_of_roots} Take $\sft\in [0,\sfT]$ and $x\in [\sft -\sfT, \sfN]$. For the equation \eqref{e:solveu_2} either:
\begin{itemize}
\item all its solutions $\uu$ have real $\uu^{-1}+\kappa^2 \uu$; or
\item there is a unique quadruple of complex solutions $(\uu,\bar \uu, \kappa^{-2} \uu^{-1},  \kappa^{-2} \bar \uu^{-1})$ with complex $\uu^{-1}+\kappa^2 \uu$ and all other solutions have real $\uu^{-1}+\kappa^2 \uu$.
\end{itemize}
\end{proposition}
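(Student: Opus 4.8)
\textbf{Proof proposal for Proposition \ref{Proposition_number_of_roots}.}

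The plan is to clear denominators in \eqref{e:solveu_2} and turn it into a polynomial equation in $\uu$, then to analyze the root structure by pairing up roots through the symmetry $\uu \mapsto \kappa^{-2}\uu^{-1}$ and by counting how many of them lie on the relevant real locus. First I would rewrite \eqref{e:solveu_2}: multiplying through by $(1-g_0(\uu))$ and by the common denominator of $g_0$, which is $(\uu-\kappa^{-2}\sfq^{-\sfN})(\uu-\sfq^{-\sfT})\prod_i (\uu-\kappa^{-2}\sfq^{-\sfa_i})(\uu-\sfq^{\sfb_i})$, one obtains a polynomial identity $P_\sft(\uu;\sfx)=0$ of degree $2r+3$ in $\uu$ (since $g_0$ is a ratio of two degree-$(r+1)$ polynomials, and the left side contributes one more factor of $\uu$ after clearing the $\uu^{-1}$). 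The key structural observation, which I would establish next, is that the whole equation \eqref{e:solveu_2} is invariant under the involution $\iota:\uu \mapsto \kappa^{-2}\uu^{-1}$. Indeed $\iota$ fixes $\uu^{-1}+\kappa^2\uu$ and $\kappa^2\uu - \uu^{-1}$ up to sign, and — crucially — $g_0$ satisfies a functional equation under $\iota$ coming from \eqref{e:initial_condition_two_ways}: the right-hand side of \eqref{e:initial_condition_two_ways} is exactly $\exp$ of an integral that is antisymmetric under $\sfb_\sft(u) \leftrightarrow \sfb_\sft(\,\cdot\,)$ in a way that forces $g_0(\kappa^{-2}\uu^{-1}) = 1/g_0(\uu)$ (this is the same symmetry already recorded as the first identity in \eqref{e:symmetries}, namely $f_\sft(-z+\sft-2\log_\sfq\kappa)=1/f_\sft(z)$, specialized to $\sft=0$). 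A short computation then shows $\frac{\kappa^2\uu-\uu^{-1}}{1-g_0(\uu)}$ is $\iota$-invariant, so the map $\uu \mapsto \kappa^{-2}\uu^{-1}$ permutes the solution set of \eqref{e:solveu_2}.

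Granting this, I would argue as follows. The solution set of \eqref{e:solveu_2} (equivalently, of $P_\sft(\uu;\sfx)=0$, after discarding spurious roots introduced by clearing denominators — these are the poles of $g_0$, which one checks are not genuine solutions except possibly at degenerate parameter values excluded by Assumption \ref{a:para}) is a finite set $\mathcal Z$ stable under $\iota$ and under complex conjugation $\uu \mapsto \bar\uu$ (the latter because all parameters $\sfq,\kappa,\sfa_i,\sfb_i,\sfx,\sft$ are real). A solution $\uu$ has $\uu^{-1}+\kappa^2\uu$ real iff it is fixed by $\iota\circ(\text{conjugation})$, i.e. iff $\kappa^{-2}\bar\uu^{-1}=\uu$; denote the set of such $\uu$ by $\mathcal Z_{\mathbb R}$ and its complement in $\mathcal Z$ by $\mathcal Z_{\mathbb C}$. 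Since $\iota$ and conjugation commute, $\mathcal Z_{\mathbb C}$ breaks into orbits of size $4$ of the Klein four-group $\langle \iota, \text{conj}\rangle$ — these are precisely the quadruples $(\uu,\bar\uu,\kappa^{-2}\uu^{-1},\kappa^{-2}\bar\uu^{-1})$ in the statement (the orbit cannot degenerate to size $1$ or $2$ on $\mathcal Z_{\mathbb C}$ by definition of $\mathcal Z_{\mathbb C}$). So the content of the Proposition is exactly that $\mathcal Z_{\mathbb C}$ is either empty or a single such quadruple. The remaining task is therefore a sharp count: I would show $|\mathcal Z_{\mathbb C}|\leq 4$. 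This is the main obstacle and I expect it to require real work rather than soft symmetry arguments.

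For the count, the natural route is to use the substitution $v=\uu^{-1}+\kappa^2\uu$ already forced upon us: writing $\uu^{-1} - \kappa^2\uu = \pm\sqrt{v^2-4\kappa^2}$ and substituting into \eqref{e:solveu_2} expresses everything in terms of $v$, turning \eqref{e:solveu_2} into (after rationalizing the square roots) a polynomial equation $Q_\sft(v;\sfx)=0$ of degree $r+2$ in $v$, with the two-to-one map $\uu\mapsto v$ accounting for the doubling $2r+4$ vs.\ $r+2$ (the extra $+1$ in degree of $P_\sft$ corresponds to the root $\uu=0$ or $\uu=\infty$, which I would track separately). A complex $\uu$ with non-real $v$ corresponds to a non-real root of $Q_\sft$; since $Q_\sft$ has real coefficients, non-real roots come in conjugate pairs, and each such pair of $v$-values lifts to a quadruple of $\uu$-values. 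Thus it suffices to show $Q_\sft(v;\sfx)$ has \emph{at most one} conjugate pair of non-real roots, i.e.\ at least $r$ real roots. Here I would invoke the geometric meaning established earlier: by Proposition \ref{Proposition_LLN_invariant} and the Nevanlinna/free-probability structure of $\cS_\sft$ in Step 4--5 of Section \ref{Section_Limit_shape_proofs} (in particular Lemma \ref{Lemma_bijectivity}), the map $\sfb_\sft(u)\mapsto \sfb_\sft(z^{\sft,u})$ of \eqref{e:x50} is the $\iota$-symmetrized Cauchy transform of a positive measure composed with affine maps, so its real critical structure is controlled; concretely, outside the liquid region $\sfb_\sft(z)$ lies in an interval where \eqref{e:solveu_2} has only real solutions because both sides are real-monotone, exactly as in the monotonicity analysis carried out in Step 1 of the proof of Proposition \ref{Proposition_off_criticality} (the functions $H(Z)$, $G(Z)$ there). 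Running that monotonicity argument on each of the $r+1$ "gaps" of the complement of $\supp\rho_0 = \cup[\sfa_i,\sfb_i]$ inside $[\sft-\sfT,\sfN]$ produces $r$ genuine real roots of $Q_\sft$ (one per bounded gap, roughly), leaving at most one conjugate pair, which is the desired bound. The delicate point — and the part I would be most careful about — is handling the boundary cases where $\sfx$ is an endpoint of a frozen interval or where two of the intervals $[\kappa^{-2}\sfq^{-\sfb_i},\kappa^{-2}\sfq^{-\sfa_i}]$ and $[\sfq^{\sfa_j},\sfq^{\sfb_j}]$ overlap, mirroring the two-case split (disjoint vs.\ intersecting intervals) already present in Step 1 of the proof of Proposition \ref{Proposition_off_criticality}; in the overlapping case one must pass from the integral representation \eqref{e:x6}-type formula to the rational $H$-formula \eqref{e:x9}-type formula to keep the monotonicity argument valid at the singular locus.
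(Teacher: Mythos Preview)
Your framework is correct and matches the paper's: establish the involution $\iota:\uu\mapsto\kappa^{-2}\uu^{-1}$, combine it with complex conjugation to get a Klein four-group action on the solution set, and reduce the proposition to bounding the number of orbits with non-real $\uu^{-1}+\kappa^2\uu$ by counting real roots. Two small corrections: the numerator and denominator of $g_0$ each have $2+2r$ linear factors, hence degree $2r+2$ (not $r+1$), so after clearing denominators and the $\uu^{-1}$ the polynomial has degree $2r+4$, not $2r+3$; and the expression $\dfrac{\kappa^2\uu-\uu^{-1}}{1-g_0(\uu)}$ is \emph{not} $\iota$-invariant by itself --- under $\iota$ it acquires an extra factor $g_0(\uu)$ --- only the full right-hand side of \eqref{e:solveu_2} is invariant, because the change in the first term is exactly cancelled by the change in $\kappa^2\uu+\uu^{-1}\sfq^{-\sft}$.

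Where you diverge from the paper is the root-counting step, and your version is noticeably less direct. The paper stays in the $\uu$-variable and carries out a self-contained elementary analysis of the rational function $g_0$ on the ray $(\kappa^{-1},\infty)$: it observes that $g_0(\uu)=1$ is a degree $2r+2$ equation with the explicit roots $\uu=\pm\kappa^{-1}$ together with one root $\sfq^{\sfc_i}\in(\sfq^{\sfb_i},\sfq^{\sfa_i})$ for each $i=1,\dots,r$ (and their $\iota$-images), then checks that on each interval $(\sfq^{\sfc_i},\sfq^{\sfc_{i-1}})$, $1<i\le r$, the function $\tfrac{1}{1-g_0(\uu)}$ runs from $+\infty$ to $-\infty$, so the intermediate value theorem produces a solution of \eqref{e:solveu_2} there. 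This gives $r-1$ real roots in $(\kappa^{-1},\infty)$ and, by $\iota$-symmetry, $r-1$ more in $(0,\kappa^{-1})$; since $2r+4-(2r-2)=6<8$, at most one quadruple with non-real $\uu^{-1}+\kappa^2\uu$ can occur. Your proposed route via the substitution $v=\uu^{-1}+\kappa^2\uu$ and the Nevanlinna structure of $\cS_0$ from Lemma \ref{Lemma_bijectivity} is not wrong in principle, but you leave the actual count unspecified: to make it work you would have to identify $\supp\mu_0$, count its connected components, and analyze $v-(\sfq^{-\sft}-1)\cS_0(v)$ on each real gap --- which, once unpacked, amounts to essentially the same zero/pole bookkeeping the paper performs directly on $g_0$. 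The paper's argument is shorter and avoids invoking the heavier machinery of Section \ref{Section_Limit_shape_proofs}.
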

\begin{proof} Recall that we still only detail the case of $0<\sfq<1$ and large real $\kappa$ as in Assumption \ref{a:para}.

 Multiplying \eqref{e:solveu_2} by $\uu(1-g_0(\uu))$ and then by all denominators in the definition of $g_0(\uu)$ we arrive at a polynomial equation of degree $2r+4$ with real coefficients, which has at most $2r+4$ complex solutions. Note that the equation is unchanged if we replace $\uu$ by $\kappa^{-2} \uu^{-1}$ ($g_0(\uu)$ turns into $\tfrac{1}{g_0(\uu)}$ under this transformation). Hence, all its complex solutions come in quadruples $(\uu,\bar \uu, \kappa^{-2} \uu^{-1},  \kappa^{-2} \bar \uu^{-1})$. If $\uu$ is non-real, then all these numbers are distinct unless $\uu=\kappa^{-2} \bar \uu^{-1}$. In the latter case $\uu^{-1}+\kappa^{-2} \uu$ is real. Hence, non-real $\uu^{-1}+\kappa^{-2}\uu$ leads to four distinct non-real solutions to \eqref{e:solveu_2}. Therefore, the statement of the proposition would follow, if we manage to find $2r-3$ distinct real solutions to \eqref{e:solveu_2}.

 Let us search for real solutions satisfying $\uu>\kappa^{-1}$. Due to $\uu\mapsto \kappa^{-2}\uu^{-1}$ symmetry of the equation it would be sufficient to find $r-1$ of such solutions. For that we study the behavior of the function $g_0(\uu)-1$ on the segment $[\kappa^{-1},+\infty)$.

Immediately from its definition, $g_0$ has $r$ zeros at points $\uu=\sfq^{\sfa_i}$ and $r$ poles at points $\uu=\sfq^{\sfb_i}$, $1\leq i \leq r$. There is another pole at $\uu=\sfq^{-\sfT}$. Finally, there are two cases: in case A, $\sfq^{\sfN}$ is another zero in $[\uu,+\infty)$; in case B, $\kappa^{-2}\sfq^{-\sfN}$ is another pole in $[\uu,+\infty)$. (Only one of these two points belongs to $[\kappa^{-1},+\infty)$; this point is inside $(\kappa^{-1}, \sfq^{\sfb_r})$.) Other zeros and poles of $g_0$ are outside $[\kappa^{-1},+\infty)$. In particular, we conclude that $g_0$ is positive on each segment $(\sfq^{\sfb_i}, \sfq^{\sfa_i})$.

 Note that $g_0(\uu)=1$ is a polynomial equation of degree $2r+2$. It has two explicit solutions at $\uu=\pm \kappa^{-1}$. In addition, on  each segment $[\sfq^{\sfb_i}, \sfq^{\sfa_i}]$ the continuous function $g_0(\uu)$ changes from $+\infty$ to $0$; hence, there are points $\sfc_i\in(\sfa_i,\sfb_i)$, $1\leq i \leq r$, such that $g_0(\sfq^{\sfc_i})=1$. Due to symmetry $g_0(\kappa^{-2} \uu^{-1})=(g_0(\uu))^{-1}$, we also have $g_0(\kappa^{-2} \sfq^{-\sfc_i})=1$. Therefore, we have found all $2r+2$ zeros of $g_0(\uu)-1$ and all these zeros are simple.

We can now analyze the function $\frac{1}{1-g_0(\uu)}$ on $[\kappa^{-1},+\infty)$: it has poles at $r+1$ points $\{\kappa^{-1}, \sfq^{\sfc_r}, \sfq^{\sfc_{r-1}},\dots, \sfq^{\sfc_1}\}$ and zeros at $r+1$ points $\{\sfq^{\sfb_r},\sfq^{\sfb_{r-1}},\dots, \sfq^{\sfb_1}, \sfq^{-\sfT}\}$; there is one more zero at $\kappa^{-2} \sfq^{-\sfN}$ in case $B$. This is schematically shown in Figure \ref{f:signs} together with signs of $\frac{1}{1-g_0(\uu)}$, which can be understood from zeros, poles, and observation $\lim_{\uu\to+\infty} g_0(\uu)=\sfq^{-\sfT}>1$. We conclude that on each segment $(\sfq^{\sfc_i}, \sfq^{\sfc_{i-1}})$, $1<i\leq r$, the function $\frac{1}{1-g_0(\uu)}$ changes from $+\infty$ to $-\infty$ and so does the right-hand side of \eqref{e:solveu_2}. Thus, there is a solution to \eqref{e:solveu_2} on each these $r-1$ segments and we found the desired $r-1$ real solutions on $(\kappa^{-1},+\infty)$.
\begin{figure}[t]
\center
\includegraphics[width=0.45\linewidth]{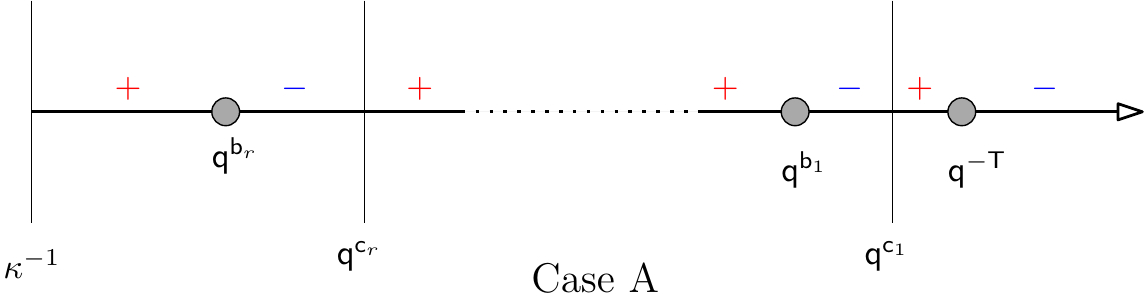}\hfill \includegraphics[width=0.45\linewidth]{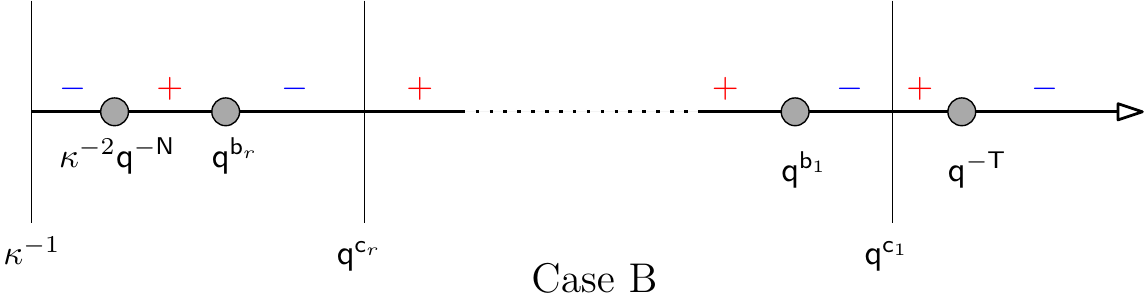}
\caption{Zeros (gray circles), poles (vertical lines), and signs for the function $\frac{1}{1-g_0(\uu)}$ for $u\in(\kappa^{-1},+\infty)$. \label{f:signs}}
\end{figure}
\end{proof}

\begin{proposition} \label{Proposition_liquid}A point $(\sft,x)$, $0\leq \sft \leq \sfT$, $\sft-\sfT\leq x\leq \sfN$, belongs to the liquid region $\sfL(\sfP)$ of \eqref{e:defLP} (in the notations of Theorem \ref{Theorem_LLN_abstract} it means that $0<\rho_\sft(x)<1$) if and only if the equation \eqref{e:solveu_2} has a solution such that $\uu^{-1}+\kappa^2 \uu$ is non-real.
\end{proposition}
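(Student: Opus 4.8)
\textbf{Proof proposal for Proposition \ref{Proposition_liquid}.}

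The plan is to connect the combinatorial/analytic definition of the liquid region via the density $\rho_\sft$ to the solvability condition on the algebraic equation \eqref{e:solveu_2}, using as a bridge the complex slope $f_\sft(z)$ from Proposition \ref{Proposition_LLN_Burgers} and its parametrization through $u$ from Proposition \ref{Proposition_LLN_equation}. First, I would recall from Step 1 of the proof of Theorem \ref{Theorem_three_descriptions} that for real $x\in[\sft-\sfT,\sfN]$ one has $1-\rho_\sft(x)=\lim_{\eps\to 0+}\tfrac{1}{\pi}\arg f_\sft(x+\ii\eps)$, so that $0<\rho_\sft(x)<1$ holds exactly when $f_\sft(x+\ii\eps)$ does \emph{not} approach the positive real axis as $\eps\to 0+$ --- equivalently, when the boundary value $f_\sft(x)$ (interpreted as this limit) is non-real. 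So the proposition reduces to showing: $f_\sft(x)$ is non-real $\iff$ \eqref{e:solveu_2} has a solution $\uu$ with $\uu^{-1}+\kappa^2\uu$ non-real. Here $f_\sft$ at a real point $x$ should be understood through the limit from the upper half-plane, consistently with \eqref{e:double_complex_slope} and \eqref{e:slope_through_u}.

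Next I would invoke the parametrization \eqref{e:slope_through_u}--\eqref{e:slope_equation}: away from the liquid region, $z=x$ is a point where $\sfb_\sft(x)$ lies outside $\sfb_\sft(\mathcal L_\sft)$, and by Step 5 of the proof of Theorem \ref{Theorem_three_descriptions} (especially Remark \ref{Remark_extension_to_the_boundary}) the correspondence $u\mapsto z^{\sft,u}$ extends continuously to the boundary, with the real-axis part of the boundary of $\{w:\Im(w-(\sfq^{-\sft}-1)\cS_0(w))\le 0\}$ mapping to $\mathbb R\setminus\sfb_\sft(\mathcal L_\sft)$ and the part with $\Im w<0$ mapping to $\sfb_\sft(\mathcal L_\sft)$. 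Translating: $x\in\mathcal L_\sft$ corresponds to the relevant $u$ (equivalently $\uu=\sfq^u$) satisfying \eqref{e:slope_equation} with $\Im\sfb_0(u)<0$, i.e. with $\uu^{-1}+\kappa^2\uu=\sfb_0(u)$ non-real; while $x\notin\mathcal L_\sft$ corresponds to a real solution of \eqref{e:slope_equation}. Then I would use Proposition \ref{Proposition_number_of_roots}: the equation \eqref{e:solveu_2} has \emph{at most one} quadruple $(\uu,\bar\uu,\kappa^{-2}\uu^{-1},\kappa^{-2}\bar\uu^{-1})$ of solutions with $\uu^{-1}+\kappa^2\uu$ non-real, all others being real. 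This dichotomy is exactly what lets me upgrade "the relevant $u$ is non-real'' to "\eqref{e:solveu_2} possesses a non-real-$(\uu^{-1}+\kappa^2\uu)$ solution'': the existence of such a solution forces (by uniqueness of the quadruple) that it is the geometrically relevant one, and conversely if all solutions have real $\uu^{-1}+\kappa^2\uu$ then the relevant $u$ is real and $x\notin\mathcal L_\sft$.

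The main obstacle I anticipate is the bookkeeping at the boundary of $\bD_\sft$ and the identification of which root of \eqref{e:solveu_2} is "the relevant one'', i.e. making rigorous the passage from the complex-$z$ statement of Proposition \ref{Proposition_LLN_equation} to the real-$x$ statement here. In particular, one must rule out the degenerate situations in which a non-real solution of \eqref{e:solveu_2} exists but does not correspond to a genuine point of $\mathcal L_\sft$ (e.g. it could correspond to a point outside $\sfP$, or to a spurious branch of $u$), and conversely check that at a genuine liquid point the limiting value $f_\sft(x)$ indeed matches \eqref{e:slope_through_u} with the non-real $\uu$. This is where Lemma \ref{Lemma_bijectivity} together with Remark \ref{remark_u_choice} does the heavy lifting: the bijectivity of $w_0\mapsto w^\sft$ between the stated domains guarantees that for each $(\sft,x)$ with $x\in\mathcal L_\sft$ there is \emph{exactly one} $u$ in $\bD_0^\circ$ with $\Im[\sfb_0(u)-(\sfq^{-\sft}-1)\cS_0(\sfb_0(u))]<0$, and this $u$ is a solution of \eqref{e:slope_equation}$=$\eqref{e:solveu_2} with $\sfb_0(u)=\uu^{-1}+\kappa^2\uu$ non-real; combined with the counting in Proposition \ref{Proposition_number_of_roots}, this pins down the correspondence in both directions and completes the proof. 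The remaining verifications --- that the polynomial form of \eqref{e:solveu_2} is the one analyzed in Proposition \ref{Proposition_number_of_roots}, and that real solutions genuinely correspond to $\rho_\sft(x)\in\{0,1\}$ via the sign analysis already carried out in Step 4 of the proof of Theorem \ref{Theorem_three_descriptions} --- are routine.
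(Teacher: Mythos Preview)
Your proposal is correct and follows essentially the same approach as the paper: both directions go through Step~1 of the proof of Theorem~\ref{Theorem_three_descriptions} (linking $\rho_\sft(x)\in(0,1)$ to non-real $f_\sft(x+\ii0)$), the parametrization \eqref{e:slope_through_u}--\eqref{e:slope_equation} from Proposition~\ref{Proposition_LLN_equation}, and the bijectivity of the characteristic flow from Lemma~\ref{Lemma_bijectivity} with Remark~\ref{Remark_extension_to_the_boundary}. The only minor difference is in the reverse direction: the paper runs the flow $w^\sft$ from the non-real $w^0$ and uses preservation of the first integral $\cS^\sft$ (Step~4 of Theorem~\ref{Theorem_three_descriptions}) to conclude directly that $f_\sft(x)$ is non-real, whereas you argue via the boundary decomposition of Remark~\ref{Remark_extension_to_the_boundary} together with the root-counting of Proposition~\ref{Proposition_number_of_roots}; these are two packagings of the same mechanism.
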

\begin{proof}
 Suppose that $0<\rho_\sft(x)<1$. Then, as we observed in Step 1 of the proof of Theorem \ref{Theorem_three_descriptions}, $\lim_{\eps\to 0+} f_\sft(x+\ii \eps)$ is non-real. Using Proposition \ref{Proposition_LLN_equation}, this implies through \eqref{e:slope_through_u} that $\sfq^{-u}+\kappa^2 \sfq^u$ is non-real. Since $\sfq^u$ solves \eqref{e:slope_equation}, which coincides at $z=x$ with \eqref{e:solveu_2} in the variable $\uu=\sfq^u$, we conclude that \eqref{e:solveu_2} has a solution with non-real $\uu^{-1}+\kappa^2 \uu$.

 In the opposite direction, suppose that \eqref{e:solveu_2} has a  solution with non-real $\uu^{-1}+\kappa^2 \uu$; among such solutions let $w^0$ be the one with negative imaginary part. As in Step 3 in the proof of Theorem \ref{Theorem_three_descriptions}, we start the dynamics $(\sft,w^\sft, \cS^\sft)_{\sft\geq 0}$ from this particular $w^0$. The dynamics is rewritten as the linear evolution in \eqref{e:x17} and comparing this evolution with \eqref{e:solveu_2} we conclude that $w^\sft$ hits the real axis at the time $\sft$ given in the statement of Proposition \ref{Proposition_liquid} and at point $w^\sft=\sfq^{-x}+\kappa^2 \sfq^{x-\sft}$. On the other hand, the value of $\cS^\sft$ is preserved in the time evolution and equals $\cS^0$, which is non-real according to Step 4 in the proof of Theorem \ref{Theorem_three_descriptions} (applied for $\sft=0$). Looking back to the definition of $\cS^\sft$ from \eqref{e:R_def}, we conclude that $f_\sft(x)$ should be non-real. Hence, as in Step 1 of the proof of Theorem \ref{Theorem_three_descriptions}, we should necessary have $0<\rho_\sft(x)<1$.
\end{proof}

\begin{proof}[Proof of Theorem \ref{t:limitshape}]
 We use Theorem \ref{Theorem_LLN_abstract} and note that the height function of Theorem \ref{t:limitshape} can be written as
 $$
  \eps h(\eps^{-1}\sft,\eps^{-1}\sfx)=\int_{-\infty}^{\sfx} \rho(s,\bmx(\eps^{-1}\sft)) \rd s.
 $$
 Using  \eqref{e:weak_convergence} and noting that $h(\eps^{-1}\sft,\eps^{-1}\sfx)$ is uniformly Lipschitz in $(\sft,\sfx)$, we conclude that the convergence of \eqref{e:hlimit2} holds and the limiting height function is described through
 \begin{equation}
 \label{e:x34}
  \sfh(\sft ,  \sfx )= \int_{-\infty}^\sfx \rho_\sft(s) \rd s.
 \end{equation}
 Proposition \ref{Proposition_liquid} implies the description of the liquid region $\sfL(\sfP)$, as claimed in Theorem \ref{t:limitshape}. We further use the description of $\rho_\sft$ of Proposition \ref{Proposition_LLN_equation}. Taking $(\sft,\sfx)$ in the liquid region and sending $z\to \sfx+\ii 0$ in \eqref{e:slope_through_u}, we arrive at the formula \eqref{e:ftx}. The only difference is that we approached the definition of the complex slope $f_\sft$ in a different way: in Proposition \ref{Proposition_LLN_equation} we used \eqref{e:double_complex_slope} in terms of the measure $\rho_\sft$, while in Theorem \ref{t:limitshape} we used definition \eqref{e:localdd} in terms of the local proportions of lozenges. Hence, we need to show that these two definitions coincide: for that we take $f_\sft$ as given by \eqref{e:double_complex_slope} and show it satisfies \eqref{e:localdd}. We computed $\tfrac{1}{\pi}\arg f_\sft(\sfx+\ii 0)=1-\rho_\sft(\sfx)$ in Step 1 of the proof of Theorem \ref{Theorem_three_descriptions}. Comparing \eqref{e:x34} with \eqref{eq_proportions_through_derivatives}, we conclude that $p_\La(\sft,\sfx)=1-\rho_\sft(x)$ and arrive at the first identity of \eqref{e:localdd}.

 For the second identity of \eqref{e:localdd} we need to use the PDE (version of the complex Burgers equation) of \eqref{e:Burgers_first_appearance}. Choose $M$ to be a very large positive number. We have
 \begin{align*}
 p_{\La}+p_{\Lc}&=1-\del_{\sfx} \sfh(\sft,\sfx)+\del_{\sfx} \sfh(\sfx,\sft)+\del_\sft  \sfh(\sft,\sfx)=1+\del_\sft  \sfh(\sft,\sfx)=1+\del_\sft \int_{-M}^\sfx \rho_\sft(s) \rd s\\
 &= \del_\sft\int_{-M}^{\sfx} \left[(\rho_\sft(s)-1) \mathbf 1_{s\geq \sft -\sfT}\right] \rd s=-\frac{1}{\pi} \del_\sft \lim_{\eps\to 0+}\int_{-M}^\sfx \arg f_\sft(s+\ii\eps )\rd s
 \\&=-\frac{1}{\pi}  \lim_{\eps\to 0+}\int_{-M}^\sfx \del_{\sft}\left[ \Im \ln f_\sft(s+\ii\eps )\right]\rd s
 =-\frac{1}{\pi}  \lim_{\eps\to 0+}\int_{-M}^\sfx  \Im\left[  \del_{\sft} \ln f_\sft(s+\ii\eps )\right]\rd s
 \\
 &=\frac{1}{\pi}  \lim_{\eps\to 0+}\int_{-M}^\sfx  \Im\left[  \del_{s} \ln (1-f_\sft(s+\ii\eps ))-\ln(\sfq)\frac{\kappa^2 \sfq^{s+\ii\eps -\sft}+\sfq^{-s-\ii\eps }}{\kappa^2\sfq^{s+\ii\eps -\sft}-\sfq^{-s-\ii\eps }}\right]\rd s
 \\
 &=\frac{1}{\pi}  \Im \left[\lim_{\eps\to 0+}\int_{-M}^\sfx   \del_{s} \ln (1-f_\sft(s+\ii\eps ))\rd s\right]=\frac{1}{\pi} \arg \left(  \frac{1-f_\sft(\sfx+\ii 0)}{1-f_\sft(-M+\ii 0 )}\right).
 \end{align*}
For $z=-M+\ii 0$ with large positive $M$, the integrand in \eqref{e:double_complex_slope} is close to $\ln\sfq \cdot (1-\rho_\sft(s))$. Hence, the exponent is positive and $f_\sft(-M+\ii 0)$ is a real number greater than $1$. We conclude that
$$
 p_{\La}+p_{\Lc}=\frac{1}{\pi} \arg \left(  \frac{f_\sft(\sfx+\ii 0)-1}{f_\sft(-M+\ii 0 )-1}\right)=\frac{1}{\pi}\arg  \bigl(f_\sft(\sfx+\ii 0 )-1\bigr),
$$
which matches the second identity of \eqref{e:localdd}.
\end{proof}

\begin{proof}[Proof of Theorem \ref{t:arctic}]
 We have shown in Propositions \ref{Proposition_number_of_roots}, \ref{Proposition_liquid} that for $(\sft,\sfx)$ in the liquid region, \eqref{e:solveu_2} has non-real complex conjugate solutions, while outside the liquid region all the solutions are real. Hence, on the arctic curve bounding the liquid region, the two complex solutions glue together and \eqref{e:solveu_2} has a double root, which is the same as \eqref{e:solveu} having a double root. Hence, the $u$--derivative of the right-hand side in \eqref{e:solveu} should vanish and we have a system of two equations:
 \begin{equation}
 \label{e:x35}
  \begin{cases}\sfq^{-\sfx}+\kappa^2 \sfq^{\sfx-\sft}=V(u)-\frac{U(u)}{\sfq^\sft}, \\
    V'(u)=\frac{U'(u)}{\sfq^\sft}.
   \end{cases}
 \end{equation}
We parameterize solutions to \eqref{e:x35} by $u$: given $u$, $\sfq^{\sft}$ is found from the second equation and then $\sfq^\sfx$ is found by solving the quadratic equation in $\sfq^{\sfx}$ of the first equation. This is precisely \eqref{eq_parameterization_intro}.

It remains to understand the possible values for the parameter $u$. It is convenient to argue in terms of $w=\sfq^{-u}+\kappa^2 \sfq^u$. As we know from Steps 3 and 5 in the proof of Theorem \ref{Theorem_three_descriptions}, if we take $\sft\in (0,\sfT)$ and $w$ in the lower half-plane, then $\sfx$ given by the first equation of \eqref{e:x35} is the point where the flow $w^\sft$ (started from $w^0=w$) first leaves the lower half-plane and $(\sft,\sfx)$ belong to the liquid region in this case; because everything is invariant under conjugations, the same argument applies to $w$ in the upper half-plane. We conclude that $w$ has to be real.

Let us now show that each real $w$ (except for those where some parts of \eqref{e:x35} are singular) leads to a point on the arctic curve in this way. For that we recall Lemma \ref{Lemma_bijectivity} and Remark \ref{Remark_extension_to_the_boundary}: at time $\sft$ the part of the boundary $\{w\in \mathbb C\mid \Im(w-(\sfq^{-\sft}-1) \cS_0(w))\leq 0\}$ with negative imaginary part forms a collection of curves $\gamma_\sft$ in the lower half-plane. These curves are in bijection with the section $\mathcal L_\sft$ of the liquid region at time $\sft$ and, therefore, their end-points (which are on the real axis) correspond to the points of the arctic boundary at times $\sft$. As time $\sft$ grows, the set $\{w\in \mathbb C\mid \Im(w-(\sfq^{-\sft}-1) \cS_0(w))\leq 0\}$ becomes larger and larger, until it swallows entire lower halfplane at time $\sfT$, as follows from the representation \eqref{e:U_measure} for $\cS_0(w)$. Hence, the end-points of the curves $\gamma_\sft$ eventually reach all points on the real as $\sft$ changes from $0$ to $\sfT$.
 \end{proof}

\section{Gaussian Free Field}
\label{Section_GFF}

In this section we prove Theorem \ref{t:GFF}.

\subsection{Stochastic evolution of fluctuations}

Our first task is to use the result of Theorem \ref{t:loopstudy}, as recorded in Section \ref{Section_massaging}, to produce a stochastic evolution equation for the Gaussian macroscopic fluctuations of random tilings. We use the notations of Section \ref{Section_assumptions}, in particular, $\sft=\eps t$. We introduce the field of centered fluctuations:
\begin{equation}
\label{e:centered_G_def}
G_t(z)=\int_{\bl(\sft)}^{\br(\sft)} \frac{\sfb'_\sft(z) \rho(s;\bmx(t))}{\sfb_\sft(z)-\sfb_\sft(s)}- \bE \left[\int_{\bl(\sft)}^{\br(\sft)} \frac{\sfb'_\sft(z) \rho(s;\bmx(t))}{\sfb_\sft(z)-\sfb_\sft(s)} \right],  z\in\mathbb C,  \sfb_\sft(z)\notin \sfb_\sft([\bl(\sft),\br(\sft)]).
\end{equation}
\begin{proposition} \label{Proposition_stochastic_evolution}
 In the framework of Theorem \ref{Theorem_LLN_abstract} and using the notations of Sections \ref{Section_tilings_setup}, \ref{Section_assumptions},
  for $\sft=t\eps$ we have as $\eps\to 0$:
 \begin{equation}\label{e:dmg_tilings_3}
\frac{1}{\varepsilon}\Bigl[G_{t+1}(z)-G_t(z)\Bigr]= \del_z\left[G_t(z) \frac{ \bE \tilde f_\sft(z)}{1- \bE \tilde f_\sft(z)}\right] +\Delta\cM_t(z)+\mathcal R,
\end{equation}
where $\Delta\cM_t(z)$ are mean $0$ (both conditionally on $\bmx(t)$, and unconditionally)  random variables such that $\{\varepsilon^{-1/2}\Delta \cM_t(z)\}_{z\in \Lambda_\sft\setminus[\bl(\sft),\br(\sft)]}$ are asymptotically Gaussian with covariance  given by
\begin{align}\label{e:covT_tilings_3}
\frac{\bE[\Delta \cM_t(z_1)\Delta \cM_{t'}(z_2)]}{\varepsilon}= \frac{\delta_{t=t'}}{2\pi \ri}\oint_{\cin} \frac{f_\sft(w)}{ f_\sft(w)-1}\frac{\sfb_\sft'(w)\sfb_\sft'(z_1)}{(\sfb_\sft(w)-\sfb_\sft(z_1))^2} \frac{\sfb_\sft'(w)\sfb_\sft'(z_2)}{(\sfb_\sft(w)-\sfb_\sft(z_2))^2} \rd w+\oo(1),
\end{align}
the contour $\cin\subset \Lambda_\sft$ encloses $[\bl(\sft), \br(\sft)]$, but not $z_1$ or $z_2$;  $f_\sft(w)$ is as in \eqref{e:double_complex_slope}, \eqref{e:double_complex_slope_2}. The asymptotic Gaussianity is in the convergence of moments sense, as in Theorem \ref{t:loopstudy}. The (random) remainder $\mathcal R$ satisfies an upper bound
\begin{equation}
\label{e:x40}
 |\mathcal R|=\oo(\eps)+\oo(1) \left|G_t(z)\right|+\oo(1) \bE \left|G_t(z)\right|,
\end{equation}
where all the implicit constant can be made deterministic and uniform in $\eps$, in $t=0,1,\dots,T-1$, and in $z$ belonging to compact subsets of $\mathbb C$, such that $\sfb_\sft(z)$ is away from $\sfb_\sft([\bl(\sft),\br(\sft)])$.
\end{proposition}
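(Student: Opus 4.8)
\textbf{Proof strategy for Proposition \ref{Proposition_stochastic_evolution}.} The plan is to start from the one-step increment formula of Proposition \ref{Proposition_tilings_increment}, which already contains all the hard analysis coming from the dynamical loop equation, and to linearize it around the limit shape to extract a closed stochastic recursion for the \emph{centered} field $G_t(z)$. First I would subtract from \eqref{e:dmg_tilings_2} its conditional expectation given $\bmx(t)$; by definition \eqref{e:centered_G_def}, the left-hand side of \eqref{e:dmg_tilings_2} produces, after this subtraction, exactly $\tfrac{1}{\eps}(G_{t+1}(z)-G_t(z))$ plus the difference of the deterministic terms coming from the time-dependence $\sfb_\sft \mapsto \sfb_{\sft+\eps}$, which is $O(\eps)$ and absorbed into $\mathcal R$. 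Simultaneously, all the purely deterministic terms on the right-hand side of \eqref{e:dmg_tilings_2} (the $\del_z\ln\tilde\cB_\sft$ term, the $\sfq$-logarithmic-derivative term, the $\del_\sft\ln\tilde\varphi^+_\sft$ term and $\eps R_t$) are functions of $\bmx(t)$ only through $\cG_\sft(z)$, hence through the random field $\int \tfrac{\sfb'_\sft(z)\rho(s;\bmx(t))}{\sfb_\sft(z)-\sfb_\sft(s)}\,\rd s$. After centering, the stochastic part of these terms is obtained by differentiating them with respect to that field; the key point is that $\tilde\cB_\sft(z) = \tilde\varphi^-_\sft(z)(1-\tilde f_\sft(z))$ with $\tilde f_\sft(z)=-\cG_\sft(z)\tfrac{\tilde\varphi^+_\sft(z)}{\tilde\varphi^-_\sft(z)}$, so $\del_z\ln\tilde\cB_\sft(z)$ contributes, upon linearization, a term of the form $\del_z\big[\tfrac{\tilde f_\sft(z)}{\tilde f_\sft(z)-1}\cdot(\text{fluctuation of }\ln\cG_\sft(z))\big]$, and the fluctuation of $\ln\cG_\sft(z)$ is precisely $G_t(z)$ up to a sign. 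This is how the drift term $\del_z\big[G_t(z)\tfrac{\bE\tilde f_\sft(z)}{1-\bE\tilde f_\sft(z)}\big]$ in \eqref{e:dmg_tilings_3} arises.

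Second, I would handle the replacement of the random $\tilde f_\sft(z)$ by its expectation $\bE\tilde f_\sft(z)$ inside the coefficient of the drift. Writing $\tilde f_\sft(z) = \bE\tilde f_\sft(z) + (\tilde f_\sft(z)-\bE\tilde f_\sft(z))$, the second summand is itself $O(G_t(z))$ in a suitable sense (it is, up to multiplication by a uniformly bounded analytic factor, the centered fluctuation $G_t$ again), so the error incurred is of the form $\oo(1)\cdot|G_t(z)|$ plus, after taking the further expectation that appears because the coefficient multiplies $G_t$, a term $\oo(1)\cdot\bE|G_t(z)|$ — exactly the shape permitted by \eqref{e:x40}. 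The higher-order correction term $\eps R_t(z)$ from \eqref{e:dmg_tilings_2}, after centering, is an integral of a uniformly bounded function against $\rho(s,\bmx(t))\rd s - \bE[\rho(s,\bmx(t))\rd s]$, hence again $O(\eps)\cdot\text{(something controlled by }G_t)$; combined with the pure $O(\eps^2)$ error this yields the $\oo(\eps)$ contribution to $\mathcal R$. The martingale increment $\Delta\cM_t(z)$ passes through unchanged: its conditional mean is $0$ by Proposition \ref{Proposition_tilings_increment}, hence its unconditional mean is $0$ too; the cross-time covariance $\bE[\Delta\cM_t(z_1)\Delta\cM_{t'}(z_2)]$ for $t\neq t'$ vanishes because, conditionally on $\bmx(\max(t,t'))$, the earlier increment is already determined while the later one has conditional mean zero — this is the standard orthogonality of martingale differences and produces the Kronecker $\delta_{t=t'}$ in \eqref{e:covT_tilings_3}. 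The equal-time covariance is \eqref{e:covT_tilings_2}, in which I would replace the random $\tilde f_\sft(w)$ by $f_\sft(w)$ (the limit-shape complex slope) at the cost of an $\oo(1)$ error, justified exactly as above since $\tilde f_\sft(w) = f_\sft(w) + \oo(1)$ uniformly on the contour once $\bmx(t)$ concentrates, which is guaranteed a posteriori by Theorem \ref{Theorem_LLN_abstract}.

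The main obstacle I anticipate is making the linearization rigorous in the convergence-of-moments sense rather than just formally: one must argue that, when \eqref{e:dmg_tilings_3} is eventually iterated in $t$ (which is done in the next subsection, not here), the $\oo(1)|G_t(z)|$ and $\oo(1)\bE|G_t(z)|$ error terms do not accumulate to destroy the Gaussian limit. At the level of Proposition \ref{Proposition_stochastic_evolution} itself, however, the statement is only the \emph{one-step} identity with an explicitly-structured remainder, so the obstacle reduces to a careful but essentially bookkeeping verification that every term dropped in the Taylor expansion of the deterministic part of \eqref{e:dmg_tilings_2} around the limit shape is of one of the three forms listed in \eqref{e:x40}, with constants uniform in $t$ and in $z$ over compact sets bounded away from $\sfb_\sft([\bl(\sft),\br(\sft)])$. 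The uniformity is inherited directly from the corresponding uniformity assertions in Theorem \ref{t:loopstudy} and Proposition \ref{Proposition_tilings_increment}, together with the uniform lower bound $|\sfb_\sft(z)-\sfb_\sft(s)|\geq c>0$ on such compact sets and the uniform bounds $c<|\tilde\cB_\sft(z)|<c^{-1}$, $c<|1-\tilde f_\sft(z)|<c^{-1}$ established in the proof of Proposition \ref{Proposition_off_criticality}. I would therefore organize the proof as: (i) center \eqref{e:dmg_tilings_2}; (ii) identify the leading stochastic contribution of each deterministic term via a first-order Taylor expansion in the field $\ln\cG_\sft(z)$, collecting the drift $\del_z[G_t(z)\tfrac{\bE\tilde f_\sft}{1-\bE\tilde f_\sft}]$; (iii) bound all remainders by the right-hand side of \eqref{e:x40}; (iv) transfer the martingale covariance from \eqref{e:covT_tilings_2} to \eqref{e:covT_tilings_3} using $\tilde f_\sft \to f_\sft$ and martingale-difference orthogonality for the cross terms.
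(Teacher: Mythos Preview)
Your approach is essentially the paper's: subtract the expectation of \eqref{e:dmg_tilings_2}, then linearize the $\del_z\ln\tilde\cB_\sft(z)$ term using $\tilde\cB_\sft=\tilde\varphi^-_\sft(1-\tilde f_\sft)$ and $\tilde f_\sft(z)=e^{G_t(z)}\cdot\bigl(-e^{m_t(z)}\tilde\varphi^+_\sft/\tilde\varphi^-_\sft\bigr)\approx(1+G_t(z))\,\bE\tilde f_\sft(z)$, and finally replace $\tilde f_\sft$ by $f_\sft$ in the covariance via Theorem~\ref{Theorem_LLN_abstract}.

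One slip to fix: you write ``subtract its conditional expectation given $\bmx(t)$,'' but that cannot be right. All the terms $-\del_z\ln\tilde\cB_\sft(z)$, $\eps R_t(z)$, etc.\ on the right of \eqref{e:dmg_tilings_2} are $\bmx(t)$--measurable, so subtracting the conditional expectation would annihilate them entirely and leave you only $\Delta\cM_t(z)+O(\eps^2)$ on the right, with no drift to linearize; on the left you would also \emph{not} get $\tfrac1\eps(G_{t+1}-G_t)$, since $G_t$ is defined via the \emph{unconditional} expectation. The paper (and your own step~(ii), which only makes sense this way) subtracts the full expectation $\bE$, yielding
\[
\frac{1}{\eps}\bigl[G_{t+1}(z)-G_t(z)\bigr]=\bigl(\bE\,\del_z\ln\tilde\cB_\sft(z)-\del_z\ln\tilde\cB_\sft(z)\bigr)+\eps\bigl(R_t(z)-\bE R_t(z)\bigr)+\Delta\cM_t(z)+O(\eps^2),
\]
and there is no extra ``time-dependence $\sfb_\sft\mapsto\sfb_{\sft+\eps}$'' correction---that shift is already built into the left-hand side of \eqref{e:dmg_tilings_2} and cancels upon centering. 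With this correction, the rest of your outline (ii)--(iv) matches the paper's argument.
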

\begin{proof} Subtracting from the result of Proposition \ref{Proposition_tilings_increment} its expectation, we get
 \begin{equation}\label{e:dmg_tilings_4}
\frac{1}{\varepsilon}\Bigl[G_{t+1}(z)-G_t(z)\Bigr]=\bigl(\bE \del_z \ln\tilde\cB_{\sft}(z)-\del_z \ln\tilde\cB_{\sft}(z)\bigr)+\eps(R_t(z)-\bE R_t(z))+\Delta\cM_t(z)+ \OO(\eps^2).
\end{equation}
Using Theorem \ref{Theorem_LLN_abstract}, we conclude that $\eps(R_t(z)-\bE R_t(z))$ can be absorbed into $o(\eps)$ term in \eqref{e:x40}. $O(\eps^2)$ in \eqref{e:dmg_tilings_4} is also absorbed in the same remainder. Further, writing $\tilde f_\sft(z)=\bE \tilde f_\sft(z) + (\tilde f_\sft(z)-\bE\tilde f_\sft(z))$ and  using \eqref{e:x4}, we have
\begin{multline}
\label{e:x36}
\del_z \ln\tilde\cB_{\sft}(z)= \del_z  \ln \tilde \varphi_t^-(z) - \frac{\del_z \tilde f_\sft(z)}{1-\tilde f_\sft(z)}\\=\del_z  \ln \tilde \varphi_t^-(z) - \frac{\del_z  \bE \tilde f_\sft(z)}{1- \bE \tilde f_\sft(z)}
- \frac{\left[\del_z \tilde f_\sft(z)-\del_z  \bE \tilde f_\sft(z)\right]}{1- \bE \tilde f_\sft(z)}-  \frac{\left[\tilde f_\sft(z)-\bE \tilde f_\sft(z)\right] \del_z  \bE \tilde f_\sft(z)}{(1- \bE \tilde f_\sft(z))^2}\\ + \left|\tilde f_\sft(z)-\bE \tilde f_\sft(z)\right| \OO\left( \left|\tilde f_\sft(z)-\bE \tilde f_\sft(z)\right|+\left|\partial_z\tilde f_\sft(z)-\partial_z \bE\tilde f_\sft(z)\right|\right).
\end{multline}
The implicit constant in the remainder admits a deterministic bound $C(z)$, which stays uniformly bounded as long as $\sfb_\sft(z)$ is bounded away from $\sfb_\sft([\bl(\sft),\br(\sft)])$.

Let us introduce a notation
$$
m_t(z)=\bE \left[\int_{\bl(\sft)}^{\br(\sft)} \frac{\sfb'_\sft(z) \rho(s;\bmx(t))}{\sfb_\sft(z)-\sfb_\sft(s)} \right].
$$
Then we have using \eqref{e:x28}:
\begin{equation}
\label{e:x37}
 \tilde f_\sft(z) =\exp(G_t(z))  \left[-\exp(m_t(z)) \frac{\tilde \varphi_\sft^+(z)}{\tilde \varphi_\sft^-(z)} \right]= (1+G_t(z)) \left[-\exp(m_t(z)) \frac{\tilde \varphi_\sft^+(z)}{\tilde \varphi_\sft^-(z)} \right] + \OO\bigl( |G_t(z)|^2\bigr).
\end{equation}
The remainder in the last formula admits a bound $C(z) |G_t(z)|^2$ with a deterministic function $C(z)$, which stays uniformly bounded as long as $\sfb_\sft(z)$ is bounded away from $\sfb_\sft([\bl(\sft),\br(\sft)])$. Thus, taking expectation of \eqref{e:x37},  we get
\begin{equation}
\label{e:x38}
 \bE \tilde f_\sft(z) =\left[-\exp(m_t(z)) \frac{\tilde \varphi_\sft^+(z)}{\tilde \varphi_\sft^-(z)} \right] + \OO\bigl( \bE |G_t(z)|^2\bigr).
\end{equation}
Hence, subtracting \eqref{e:x38} from \eqref{e:x37}, we get
\begin{equation}
\label{e:x39}
 \tilde f_\sft(z)-\bE \tilde f_\sft(z)= G_t(z) \bE \tilde f_\sft(z)
 + \OO\bigl( |G_t(z)|^2\bigr)+ \OO\bigl(\bE |G_t(z)|^2\bigr) + \OO\bigl(|G_t(z)| \bE |G_t(z)|^2\bigr).
\end{equation}
Because $|G_t(z)|$ is bounded and tends to $0$ as $\eps\to 0$ by Theorem \ref{Theorem_LLN_abstract}, we can replace all the remainders in the last formula by $\oo(1) \left|G_t(z)\right|+\oo(1) \bE \left|G_t(z)\right|$. Applying the same procedure to $\partial_z \tilde f_\sft(z)$ (leading to the answer, which is $z$--derivative of \eqref{e:x39}), we also conclude that
\begin{equation}
\label{e:x39_2}
 \partial_z \tilde f_\sft(z)-\partial_z \bE \tilde f_\sft(z)= \partial_z\left( G_t(z) \bE \tilde f_\sft(z) \right)
 + \oo(1) \left|G_t(z)\right|+\oo(1) \bE \left|G_t(z)\right|.
\end{equation}
The approximations we have just developed, imply (using Theorem \ref{Theorem_LLN_abstract}) that the remainder in \eqref{e:x36} can be absorbed into $\oo(1) \left|G_t(z)\right|$ term in \eqref{e:x40}. Thus, subtracting \eqref{e:x36} from its expectation and using \eqref{e:x39}, \eqref{e:x39_2}, we get
\begin{align*}
 \bE \del_z \ln\tilde\cB_{\sft}(z)-\del_z \ln\tilde\cB_{\sft}(z)=& \frac{\del_z[ G_t(z) \bE \tilde f_\sft(z)]}{1- \bE \tilde f_\sft(z)}+  \frac{ G_t(z) \bE \tilde f_\sft(z) \del_z  \bE \tilde f_\sft(z)}{(1- \bE \tilde f_\sft(z))^2}\\
 &+\oo(1) \left|G_t(z)\right|+\oo(1) \bE \left|G_t(z)\right|.
\end{align*}
Plugging the last expression into \eqref{e:dmg_tilings_4}, and noting that the conditional covariance of the martingale part \eqref{e:covT_tilings_2}, by Theorem \ref{Theorem_LLN_abstract}, approximates the unconditional covariance of \eqref{e:covT_tilings_3},  we reach \eqref{e:dmg_tilings_3}.
\end{proof}

\subsection{Fluctuations along the characteristic flow} \label{Section_fluctuations_characteristic}
The evolution equation \eqref{e:dmg_tilings_3} for fluctuations might seem complicated. However, it plays nicely with the characteristic flow \eqref{e:z_DE}, which we used in Section \ref{Section_Limit_shape_proofs}, because the ratio $\frac{ f_\sft(z)}{f_\sft(z)-1}$ appears in both.

We introduce a function of two variables $z(\sft,u)$ defined through the flow $z^\sft$ with initial condition $u$:
\begin{equation}
\label{e:x41}
 z(\sft,u):=z^\sft, \qquad \qquad  \begin{cases}\del_\sft z^\sft=\frac{f_\sft(z^\sft)}{f_\sft(z^\sft)-1},& \sft\geq 0,\\ z^0=u. \end{cases}.
\end{equation}
Here $u$ is complex, and $\sft\geq 0$ is real. The map $u\mapsto z(\sft,u)$ agrees with complex conjugations: $z(\sft,\bar u)=\overline{z(\sft,u)}$.
We  also recall that \eqref{e:ztsolver_1}, or \eqref{e:ztsolver_2}, or \eqref{e:x17} give more explicit forms for $z(\sft,u)$, not involving any differential equations.

We also need a discrete $\eps$--dependent version of the characteristic flow \eqref{e:x41}, which is defined on the grid $\sft=\eps t$, $t=0,1,2,\dots$, through
\begin{equation}
\label{e:x41_2}
 z^{\eps}(\sft,u):=z^{\eps; \sft}, \qquad \qquad  \begin{cases} z^{\eps; \sft+\eps}= z^{\eps; \sft}+\eps \frac{\bE \tilde f_\sft(z^{\eps;\sft})}{\bE \tilde f_\sft(z^{\eps; \sft})-1},& \sft\in \eps \mathbb Z_{\ge 0},\\ z^{\eps;0}=u. \end{cases}.
\end{equation}
Let us emphasize that $z^{\eps}(\sft,u)$ is a deterministic map. As $\eps\to 0$, it approximates $z(\sft,u)$, as we now explain.

\begin{definition}\label{d:admissible} We call a pair $(\hat \sft,u)\in  \mathbb R_{\geq 0}\times \mathbb C $ \emph{admissible}, if the differential equation \eqref{e:x41} has a solution for times $0\leq \sft \leq \hat \sft$ and at all these times $\sfb_\sft(z^\sft)\notin \sfb_\sft([\bl(\sft),\br(\sft)]$ and $f_\sft(z^\sft)$ is finite and not equal to $1$. In particular, the condition implies that $z(\hat \sft,u)$ is well-defined.
\end{definition}
Following the notations of Section \ref{Section_Limit_shape_proofs}, if we assume $u\in \bD_0^\circ$ and $0\leq \sft\leq \sft(u)$, where $\sft(u)$ is the first time when $z^{\sft}$ hits the boundary of $\bD_\sft$, then Lemma \ref{Lemma_bijectivity} shows that $(\sft,u)$ is admissible and $z(\sft,u)$ takes all values in $\bD_\sft^\circ$, as $u$ varies in $\bD_0^\circ$.

\begin{lemma}
\label{Lemma_discrete_flow} Take any admissible pair $(\hat \sft,\hat u)$. There exists $\varsigma>0$, such that
\begin{equation}
\label{eq_discrete_flow_conv}
  \lim_{\eps\to 0} z^\eps(\sft,u)= z(\sft,u), \quad \text{ and }\quad  \lim_{\eps\to 0} \partial_u z^\eps(\sft,u)= \partial_u z(\sft,u)
\end{equation}
for all $(\sft,u)$ with $\sft\ge 0$ and satisfying $|\sft-\hat \sft|<\varsigma$, $|u-\hat u|<\varsigma$. The limits are uniform over such pairs $(\sft,u)$.
\end{lemma}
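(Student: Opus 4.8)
The statement to prove, Lemma~\ref{Lemma_discrete_flow}, asserts that the deterministic discrete flow $z^\eps(\sft,u)$ of \eqref{e:x41_2} converges, together with its $u$-derivative, to the continuous characteristic flow $z(\sft,u)$ of \eqref{e:x41} near any admissible pair $(\hat\sft,\hat u)$. The plan is to treat \eqref{e:x41_2} as a first-order explicit Euler discretization of the ODE \eqref{e:x41}, with two extra complications: the vector field is only defined implicitly (through $f_\sft$ and the limit shape of Theorem~\ref{Theorem_LLN_abstract}), and the discrete scheme uses $\bE\tilde f_\sft$ rather than $f_\sft$ itself. The first step is to record the regularity of the vector field: on a neighborhood in $(\sft,z)$-space of the compact characteristic arc $\{(\sft,z(\sft,\hat u)) : 0\le\sft\le\hat\sft\}$ (which is bounded away from $\sfb_\sft([\bl(\sft),\br(\sft)])$ by admissibility and continuity), the function $V_\sft(z):=\frac{f_\sft(z)}{f_\sft(z)-1}$ is holomorphic in $z$, jointly continuous in $(\sft,z)$, and Lipschitz in $z$ uniformly in $\sft$. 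This follows from \eqref{e:double_complex_slope}/\eqref{e:double_complex_slope_2}, the fact that $f_\sft$ is bounded away from $1$ and $\infty$ on the arc, and the explicit dependence of $\sfb_\sft,\tilde\varphi_\sft^\pm$ on the parameters; one also needs that $\sft\mapsto\rho_\sft$ is, say, Lipschitz in the weak topology (available since each particle moves at most by one per step, hence the prelimit measures and their limit are $1$-Lipschitz in $\sft$), which makes $\sft\mapsto V_\sft(z)$ continuous.

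The second step is a quantitative comparison between the discrete vector field $\bE\tilde f_\sft$ and the continuous one $f_\sft$. Here I would invoke Theorem~\ref{Theorem_LLN_abstract}: it gives $\rho(\cdot;\bmx(\eps^{-1}\sft))\to\rho_\sft$ weakly in probability, uniformly in $\sft$; combined with the uniform boundedness of the relevant integrands away from the support, this upgrades to $\bE$-versions, so that $\bE\tilde f_\sft(z)=f_\sft(z)+\oo(1)$ and $\partial_z\bE\tilde f_\sft(z)=\partial_z f_\sft(z)+\oo(1)$ uniformly for $z$ in a complex neighborhood of the arc bounded away from the singular set (this is exactly the statement used in the proof of Proposition~\ref{Proposition_stochastic_evolution}, where $\frac{\cG_\sft\varphi^+_\sft}{\cB_\sft}=\frac{\tilde f_\sft}{\tilde f_\sft-1}+\OO(\eps)$ etc.). Consequently the discrete scheme \eqref{e:x41_2} is an Euler scheme for an $\eps$-perturbed vector field $V_\sft^\eps(z)=V_\sft(z)+\oo(1)$, with the $\oo(1)$ uniform on the relevant region.

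The third step is the standard Euler-convergence argument with a Grönwall estimate. Fix a small tube $\mathcal{N}$ around the arc inside the region of holomorphy/Lipschitzness; choose $\varsigma>0$ so that for $|u-\hat u|<\varsigma$ the continuous trajectory $z(\sft,u)$ exists for $0\le\sft\le\hat\sft+\varsigma$ and stays in $\mathcal{N}$ with a definite margin (this uses continuous dependence on initial data for \eqref{e:x41}, itself a consequence of the Lipschitz bound). Then, setting $e_t:=z^\eps(\eps t,u)-z(\eps t,u)$, one writes the telescoping estimate: the one-step error splits into (i) the local truncation error of Euler applied to $V$, which is $\OO(\eps^2)$ by the $C^1$-in-$\sft$ regularity of the trajectory, (ii) the vector-field perturbation error $\eps\cdot\oo(1)$ from replacing $V_\sft$ by $V_\sft^\eps$, and (iii) the propagated error $\eps\cdot L\cdot|e_t|$ from the Lipschitz constant $L$ of $V_\sft$ in $z$. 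Summing over $t\le\eps^{-1}(\hat\sft+\varsigma)$ and applying discrete Grönwall gives $\sup_t|e_t|\le C(\eps+\oo(1))e^{L(\hat\sft+\varsigma)}=\oo(1)$, uniformly in $u$ in the disk and in $\sft$ in the interval (one should first verify inductively that the discrete trajectory stays in $\mathcal{N}$, which the bound $|e_t|=\oo(1)$ bootstraps). This proves the first limit in \eqref{eq_discrete_flow_conv}.

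The fourth step handles $\partial_u z^\eps(\sft,u)$. Differentiating \eqref{e:x41_2} and \eqref{e:x41} in $u$ shows that $w_t^\eps:=\partial_u z^\eps(\eps t,u)$ and $w(\sft,u):=\partial_u z(\sft,u)$ satisfy the (discrete resp.\ continuous) linear variational equation $\del_\sft w=(\partial_z V_\sft)(z(\sft,u))\,w$, $w(0,u)=1$; by Step~1 the coefficient $\partial_z V_\sft$ is continuous and bounded on $\mathcal{N}$ (it is a holomorphic derivative of a bounded holomorphic function, controlled by Cauchy estimates), and by Step~2 its discrete counterpart $\partial_z V_\sft^\eps$ evaluated along $z^\eps(\sft,u)$ converges to it uniformly (using also Step~3 so that the evaluation points converge). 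Hence the same Euler/Grönwall argument, now for a linear ODE with converging coefficients and a converging inhomogeneity-free right-hand side, yields $w_t^\eps\to w(\sft,u)$ uniformly. \textbf{The main obstacle} I anticipate is not the Euler machinery itself but bookkeeping the uniformity: one must ensure that the $\oo(1)$ errors from Theorem~\ref{Theorem_LLN_abstract} and the geometric constants (margin of $\mathcal{N}$, Lipschitz constant $L$, time horizon $\hat\sft+\varsigma$) can all be chosen depending only on $(\hat\sft,\hat u)$ and not on the particular $(\sft,u)$ in the small neighborhood, and that the inductive "stays in $\mathcal{N}$" argument closes before one has the error bound — which it does, since $\mathcal{N}$ is open and the error is $\oo(1)$, so a finite-time continuity/bootstrap argument suffices.
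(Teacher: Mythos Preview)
Your proof of the first limit in \eqref{eq_discrete_flow_conv} is correct and essentially matches the paper's argument; the paper organizes it slightly differently by introducing an intermediate forward-Euler flow $\widehat z^{\eps;\sft}$ built from the \emph{true} vector field $V_\sft$ and splitting the comparison into (a) standard Euler convergence $\widehat z^{\eps;\sft}\to z^\sft$ and (b) a discrete Gr\"onwall bound for $|z^{\eps;\sft}-\widehat z^{\eps;\sft}|$ using the Lipschitz property of $V_\sft$ together with the uniform convergence $V_\sft^\eps\to V_\sft$ from Theorem~\ref{Theorem_LLN_abstract}. Your single three-term telescoping is an equivalent packaging.

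Where you diverge from the paper is Step~4. Your plan---differentiate the recursion and the ODE in $u$, obtain the linear variational equations, and rerun the Euler/Gr\"onwall machinery with converging coefficients---is valid but unnecessarily heavy. The paper exploits a structural fact you did not use: $u\mapsto z^\eps(\sft,u)$ is \emph{holomorphic}, since the one-step map in \eqref{e:x41_2} is a holomorphic function of $z^{\eps;\sft}$ and the composition of holomorphic maps is holomorphic. Hence by the Cauchy integral formula
\[
\partial_u z^\eps(\sft,u)=\frac{1}{2\pi\ri}\oint_{|u'-u|=\varsigma/2}\frac{z^\eps(\sft,u')}{(u'-u)^2}\,\rd u',
\]
and the second limit in \eqref{eq_discrete_flow_conv} follows immediately from the first by passing the limit under the integral (the convergence is uniform on the contour). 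This one-line argument replaces your entire Step~4 and automatically inherits all the uniformity you were worried about.
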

\begin{proof}
Note that from the definition, if $(\hat \sft, \hat u)$ is admissible pair, then so is $(\hat \sft+\varsigma_1, \hat u)$, as long as $\varsigma_1>0$ is small enough. Consider the trajectory of the flow $z^{\sft}$ \eqref{e:x41} started at times $0$ at $\hat u$ and up to time $\hat\sft+\varsigma_1$. We can choose small $\varsigma_2>0$, such that $f_\sft(z)$ is well-defined and bounded away from $1$ for all $\sft\in[0,\hat\sft+\varsigma_1]$ and $z$ in $\varsigma_2$--neighborhood of $z^{\sft}$.
By Theorem \ref{Theorem_LLN_abstract}, for such pairs $(\sft,z)$, uniformly, we have
\begin{align}\label{e:Efconverge}
 \lim_{\eps\to 0} \frac{\bE \tilde f_\sft(z)}{\bE \tilde f_\sft(z)-1}= \frac{f_\sft(z)}{f_\sft(z)-1}.
\end{align}

To compare the $\eps$--dependent characteristic flow $z^{\eps,\sft}$ with $z^\sft$, we introduce the following  forward Euler discretization of the ordinary differential equation \eqref{e:x41}
\begin{equation}
\label{e:wtzflow}
 \widehat z^{\eps}(\sft,u):=\widehat z^{\eps; \sft}, \qquad \qquad  \begin{cases} \widehat z^{\eps; \sft+\eps}= \widehat z^{\eps; \sft}+\eps \frac{ f_\sft(\widehat z^{\eps;\sft})}{ f_\sft(\widehat z^{\eps; \sft})-1},& \sft\in \eps \mathbb Z_{\ge 0},\\ \widehat z^{\eps;0}=u. \end{cases}.
\end{equation}
By standard theorems on the forward Euler method for approximating ordinary differential equations (e.g.\ \cite[Theorem 2.4]{atkinson2011numerical}) the discrete flow $\widehat z^{\eps,\sft}$ of \eqref{e:wtzflow} approximates as $\eps\to 0$ the continuous flow $z^{\sft}$ of \eqref{e:x41} with an error $\OO(\varepsilon)$.

It remains to compare two discrete flows: $\widehat z^{\eps,\sft}$ and $z^{\eps,\sft}$. Taking difference of \eqref{e:x41_2} and \eqref{e:wtzflow}, noticing that $\frac{f_\sft(z)}{f_\sft(z)-1}$ is Lipschitz
in $z$, and using \eqref{e:Efconverge}, we get for $C>0$:
\begin{multline*}
\left|\frac{z^{\varepsilon;\sft+1}-\widehat z^{\varepsilon;\sft+1}}{\varepsilon}\right|- \left|\frac{z^{\varepsilon;\sft}-\widehat z^{\varepsilon;\sft}}{\varepsilon}\right|\le 
\left|\frac{z^{\varepsilon;\sft+1}-\widehat z^{\varepsilon;\sft+1}}{\varepsilon}- \frac{z^{\varepsilon;\sft}-\widehat z^{\varepsilon;\sft}}{\varepsilon}\right|\\ =\left|\frac{ f_\sft(\widehat z^{\eps;\sft})}{ f_\sft(\widehat z^{\eps; \sft})-1}-\frac{\bE \tilde f_\sft(z^{\eps;\sft})}{\bE \tilde f_\sft(z^{\eps; \sft})-1}\right|\leq C|z^{\varepsilon;\sft}-\widehat z^{\varepsilon;\sft}|+\oo_\varepsilon(1),
\end{multline*}
where the term $\oo_\varepsilon(1)$ tends to $0$ as $\eps\to 0$.
Hence, applying the discrete Gr{\"on}wall's inequality (see, e.g., \cite{holte2009discrete}) to $|z^{\varepsilon;\sft}-\widehat z^{\varepsilon;\sft}|$, we conclude that $|z^{\varepsilon;\sft}-\widehat z^{\varepsilon;\sft}|$ is uniformly small and the discrete flow $ z^{\varepsilon;\sft}$ approximates as $\eps\to 0$ the discrete flow $\widehat z^{\varepsilon;\sft}$ for $\sft\in[0,\hat\sft+\varsigma_1]$.

Therefore, if we start the discrete flow $z^{\eps,\sft}$ of \eqref{e:x41} close to $\hat u$ at $\sft=0$, it will approximate as $\eps\to 0$ the continuous flow $ z^{\sft}$ of \eqref{e:x40} for any $|\sft-\hat \sft|\leq \varsigma$; uniformity in $u$ follows from uniform continuity of $\frac{\bE \tilde f_\sft(z)}{\bE \tilde f_\sft(z)-1}$ and $\frac{f_\sft(z)}{f_\sft(z)-1}$ in $\eps$ and in $z$ belonging to the above $\varsigma_2$ neighborhoods. This proves the first limit relation in \eqref{eq_discrete_flow_conv}.

For the second identity note that $z^{\eps}(\sft,u)$ is a holomorphic function of $u$. Hence, expressing the derivative as the Cauchy contour integral, the second limit in \eqref{eq_discrete_flow_conv} follows from the first one.
\end{proof}

Let us now explain why $z^{\eps}(\sft,u)$ is helpful in studying fluctuations.

\begin{corollary} \label{Corollary_CLT_increment} In the framework of Theorem \ref{Theorem_LLN_abstract} and using the notations of Sections \ref{Section_tilings_setup}, \ref{Section_assumptions}, and \ref{Section_Limit_shape_proofs}
  for $\sft=t\eps$, we have as $\eps\to 0$:
  \begin{multline}\label{e:dmg_tilings_increment}
\frac{G_{\eps^{-1}\sft+1}\left(z^\eps(\sft+\eps,u)\right)\partial_u z^\eps(\sft+\eps,u) -G_{\eps^{-1}\sft}\left(z^\eps(\sft,u)\right)\partial_u z^\eps(\sft,u)}{\varepsilon}\\ = \Delta\cM_t\bigl(z^\eps(\sft,u)\bigr)\partial_u z^\eps(\sft+\eps,u)+\mathcal R,
\end{multline}
where random mean $0$ asymptotically Gaussian function $\Delta\cM_t(z)$ is as in \eqref{e:dmg_tilings_3}, while $\mathcal R$ is different, but admits a similar upper bound:
\begin{equation}
\label{e:x40_2}
 |\mathcal R|=\oo(\eps)+\oo(1) \left|G_t(z^\eps(\sft,u))\right|+\oo(1) \bE \left|G_t(z^\eps(\sft,u))\right|.
\end{equation}
\end{corollary}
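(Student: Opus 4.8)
\textbf{Proof proposal for Corollary \ref{Corollary_CLT_increment}.}

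The plan is to take the evolution equation \eqref{e:dmg_tilings_3} of Proposition \ref{Proposition_stochastic_evolution}, evaluate it at the moving point $z=z^\eps(\sft,u)$, and multiply by the Jacobian factor $\partial_u z^\eps(\sft+\eps,u)$, so that the ``transport'' term $\del_z\bigl[G_t(z)\tfrac{\bE\tilde f_\sft(z)}{1-\bE\tilde f_\sft(z)}\bigr]$ is absorbed into the discrete total derivative along the characteristic. First I would write, for $\sft=\eps t$,
\begin{align*}
&\frac{G_{t+1}(z^\eps(\sft+\eps,u))-G_t(z^\eps(\sft,u))}{\eps}\\
&\quad= \frac{G_{t+1}(z^\eps(\sft+\eps,u))-G_{t+1}(z^\eps(\sft,u))}{\eps}+\frac{G_{t+1}(z^\eps(\sft,u))-G_t(z^\eps(\sft,u))}{\eps}.
\end{align*}
For the second summand I substitute \eqref{e:dmg_tilings_3} with $z=z^\eps(\sft,u)$. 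For the first summand I use that $z^\eps(\sft+\eps,u)-z^\eps(\sft,u)=\eps\,\tfrac{\bE\tilde f_\sft(z^\eps(\sft,u))}{\bE\tilde f_\sft(z^\eps(\sft,u))-1}+O(\eps^2)$ by the defining recursion \eqref{e:x41_2}, together with holomorphicity of $G_{t+1}$ in $z$ away from the band (so a Taylor expansion to first order is legitimate, with the $\del_z^2 G$ remainder controlled on compact subsets via Cauchy's integral formula and the a priori boundedness of $G$ from Theorem \ref{Theorem_LLN_abstract}). This produces $-\,\del_z\bigl[G_{t+1}(z)\tfrac{\bE\tilde f_\sft(z)}{1-\bE\tilde f_\sft(z)}\bigr]\big|_{z=z^\eps(\sft,u)}+O(\eps)+o(1)|G|$; the discrepancy between $G_{t+1}$ and $G_t$ inside this term is itself $O(\eps)\cdot(\text{bounded})$ by \eqref{e:dmg_tilings_3} again, so it cancels (to the claimed order of error) against the transport term coming from the second summand. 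What remains after the cancellation is exactly $\Delta\cM_t(z^\eps(\sft,u))$, plus a remainder of the stated form.

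Next I would handle the Jacobian. Write
\begin{align*}
&\frac{G_{t+1}(z^\eps(\sft+\eps,u))\,\partial_u z^\eps(\sft+\eps,u)-G_t(z^\eps(\sft,u))\,\partial_u z^\eps(\sft,u)}{\eps}\\
&\quad=\Bigl[\tfrac{G_{t+1}(z^\eps(\sft+\eps,u))-G_t(z^\eps(\sft,u))}{\eps}\Bigr]\partial_u z^\eps(\sft+\eps,u) +G_t(z^\eps(\sft,u))\,\frac{\partial_u z^\eps(\sft+\eps,u)-\partial_u z^\eps(\sft,u)}{\eps}.
\end{align*}
The first bracket is what was computed in the previous paragraph; multiplying it by $\partial_u z^\eps(\sft+\eps,u)$ turns $\Delta\cM_t(z^\eps(\sft,u))$ into $\Delta\cM_t(z^\eps(\sft,u))\,\partial_u z^\eps(\sft+\eps,u)$, as in \eqref{e:dmg_tilings_increment}, and leaves a remainder still of the form \eqref{e:x40_2} because $\partial_u z^\eps(\sft+\eps,u)$ is bounded (uniformly, for admissible $(\sft,u)$, by Lemma \ref{Lemma_discrete_flow}). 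For the second term, I would show that $G_t(z^\eps(\sft,u))\,\eps^{-1}\bigl(\partial_u z^\eps(\sft+\eps,u)-\partial_u z^\eps(\sft,u)\bigr)$ is of order $o(1)|G_t(z^\eps(\sft,u))|$: differentiating the recursion \eqref{e:x41_2} in $u$ gives $\eps^{-1}(\partial_u z^{\eps;\sft+\eps}-\partial_u z^{\eps;\sft})=\del_z\bigl(\tfrac{\bE\tilde f_\sft}{\bE\tilde f_\sft-1}\bigr)(z^{\eps;\sft})\,\partial_u z^{\eps;\sft}$, which is a bounded deterministic quantity (again by Theorem \ref{Theorem_LLN_abstract} and Lemma \ref{Lemma_discrete_flow}); multiplied by $G_t(z^\eps(\sft,u))=o(1)$ this falls inside the remainder. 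Collecting everything yields \eqref{e:dmg_tilings_increment} with the remainder bound \eqref{e:x40_2}. The covariance and conditional mean-zero properties of $\Delta\cM_t$ are inherited verbatim from Proposition \ref{Proposition_stochastic_evolution}, since we only ever evaluated $\Delta\cM_t$ at the deterministic point $z^\eps(\sft,u)$ and multiplied by the deterministic factor $\partial_u z^\eps(\sft+\eps,u)$.

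The main obstacle I anticipate is bookkeeping the several competing error terms so that the transport term genuinely cancels at the right order: one must verify that replacing $G_{t+1}$ by $G_t$ inside the $\del_z[\cdots]$ term, replacing $\bE\tilde f_{\sft}(z^\eps(\sft+\eps,u))$ by $\bE\tilde f_\sft(z^\eps(\sft,u))$, and Taylor-expanding $G_{t+1}$ in the increment $z^\eps(\sft+\eps,u)-z^\eps(\sft,u)$ all produce discrepancies that are either $O(\eps)$ (hence absorbed in $o(\eps)$) or of the form $o(1)|G_t|+o(1)\bE|G_t|$ (hence absorbed). This requires uniform control of $\del_z G_t$ and $\del_z^2 G_t$ on the relevant compact sets, which is available from the boundedness of $G_t$ (Theorem \ref{Theorem_LLN_abstract}) and the Cauchy integral formula, and uniform control of the various $\eps\to0$ limits from Lemma \ref{Lemma_discrete_flow}; assembling these carefully, rather than any deep new idea, is the crux.
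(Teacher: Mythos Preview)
Your overall strategy matches the paper's, but there is a genuine gap in how you track the $G_t$--sized terms.

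First, the Taylor expansion in your first paragraph does not produce $-\del_z\bigl[G_{t+1}(z)\tfrac{\bE\tilde f_\sft(z)}{1-\bE\tilde f_\sft(z)}\bigr]$. Writing $h(z)=\tfrac{\bE\tilde f_\sft(z)}{\bE\tilde f_\sft(z)-1}$, the first summand only gives $h(z)\,\partial_z G_{t+1}(z)$ plus a second--order remainder. After adding the transport term $\del_z[G_t(z)\,(-h(z))]=-h\,\partial_z G_t - G_t\,h'$ from \eqref{e:dmg_tilings_3}, the piece $h\,\partial_z G_t$ does cancel (up to $h(\partial_z G_{t+1}-\partial_z G_t)$, which is genuinely small), but the term $-G_t(z^\eps(\sft,u))\,h'(z^\eps(\sft,u))$ remains. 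Since $h'$ is bounded but not $o(1)$, this leftover is $O(1)\cdot|G_t|$, not $o(1)\cdot|G_t|$, and therefore cannot be placed in \eqref{e:x40_2}.

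The same issue recurs in your Jacobian paragraph: you correctly compute $\eps^{-1}(\partial_u z^{\eps;\sft+\eps}-\partial_u z^{\eps;\sft})=h'(z^{\eps;\sft})\,\partial_u z^{\eps;\sft}$, which is bounded but not $o(1)$, so its product with $G_t$ is again $O(1)|G_t|$, not $o(1)|G_t|$. The $o(1)$ prefactor in \eqref{e:x40_2} is not cosmetic---it is exactly what is used in Lemma~\ref{Lemma_G_at_eps_CLT} to make the accumulated remainders negligible.

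The resolution, and the point of the paper's argument, is that these two $O(1)|G_t|$ terms cancel each other rather than being absorbed individually. Multiplying the leftover $-G_t\,h'$ from the first step by $\partial_u z^\eps(\sft+\eps,u)$ and adding the Jacobian correction $+G_t\,h'\,\partial_u z^\eps(\sft,u)$ yields
\[
G_t\,h'\bigl(\partial_u z^\eps(\sft,u)-\partial_u z^\eps(\sft+\eps,u)\bigr)=O(\eps)\,|G_t|=o(1)\,|G_t|,
\]
which is absorbable. The paper packages this via the identity $\partial_u\bigl[\eps^{-1}(z^{\eps;\sft+\eps}-z^{\eps;\sft})\bigr]=\bigl(\partial_u z^\eps(\sft+\eps,u)+O(\eps)\bigr)\,h'(z^{\eps;\sft})$, used to pass from \eqref{e:dmg_tilings_5} to \eqref{e:dmg_tilings_6}; your argument needs the same pairing rather than discarding each term separately.
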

\begin{proof}
 Taylor-expanding $G_{t+1}(z)$ in $z$ and with the exact meaning of the remainder $\mathcal R$ changing from line to line, \eqref{e:dmg_tilings_3} implies that
 \begin{equation}\label{e:dmg_tilings_5}
\frac{1}{\varepsilon}\left[G_{t+1}\left(z+\eps \frac{ \bE \tilde f_\sft(z)}{\bE \tilde f_\sft(z)-1}\right)-G_t(z)\right]+ G_t(z)\del_z\left[ \frac{\bE \tilde f_\sft(z)}{\bE \tilde f_\sft(z)-1}\right] = \Delta\cM_t(z)+\mathcal R.
\end{equation}
Recalling \eqref{e:x41_2}, note that
\begin{align*}
 \partial_u \frac{ z^\eps(\sft+\eps,u)- z^\eps(\sft,u)}{\eps}&= \partial_u \left[ \frac{\bE \tilde f_\sft(z^\eps(\sft,u))}{\bE \tilde f_\sft(z^\eps(\sft,u))-1}\right]=\partial_u z^\eps(\sft,u) \, \del_z\left[ \frac{\bE \tilde f_\sft(z)}{\bE \tilde f_\sft(z)-1}\right]_{z=z^\eps(\sft,u)}
 \\ &=\bigl(\partial_u z^\eps(\sft+\eps,u) + O(\eps)\bigr) \, \del_z\left[ \frac{\bE \tilde f_\sft(z)}{\bE \tilde f_\sft(z)-1}\right]_{z=z^\eps(\sft,u)}.
\end{align*}
We plug $z=z^\eps(\sft,u)$ into \eqref{e:dmg_tilings_5}, multiply by $\partial_u z^\eps(\sft+\eps,u)$, and use the last identity to get (the remainder $\mathcal R$ is again different, yet it satisfies \eqref{e:x40_2}):
 \begin{multline}\label{e:dmg_tilings_6}
\frac{1}{\varepsilon}\Bigl[G_{t+1}\left(z^\eps(\sft+\eps,u)\right)-G_t(z^\eps(\sft,u))\Bigr] \partial_u z^\eps(\sft+\eps,u)\\+ G_t(z^\eps(\sft,u))\partial_u \frac{ z^\eps(\sft+\eps,u)- z^\eps(\sft,u)}{\eps}
= \Delta\cM_t(z^\eps(\sft,u))\partial_u z^\eps(\sft+\eps,u)+\mathcal R.
\end{multline}
Simplifying the left-hand side of \eqref{e:dmg_tilings_6}, we get \eqref{e:dmg_tilings_increment}.
\end{proof}

An advantage of \eqref{e:dmg_tilings_increment} over \eqref{e:dmg_tilings_3} is that while the latter is complicated, the former stochastic evolution equation is straightforward to solve by summing the right-hand side. Let us state the result of the summation.

\begin{theorem} \label{Theorem_fluctuations_characteristic} Fix $k=1,2,\dots$ and take $k$ admissible pairs $(\sft_i,u_i)$, $1\leq i \leq k$. In the setting of Theorem \ref{Theorem_LLN_abstract}, the random variables $\eps^{-1} G_{\eps^{-1}\sft_i}(z(\sft_i,u_i))$ converge in the sense of moments (uniformly over admissible pairs belonging to compact sets) to a $k$--dimensional Gaussian random vector with zero mean and covariance given by:
\begin{multline}
\label{e:cov_characteristic}
 \lim_{\eps\to 0} \eps^{-2} \bE \bigl[ G_{\eps^{-1}\sft_i}(z( \sft_i, u_i)) G_{\eps^{-1}\sft_j}(z(\sft_j,u_j)) \bigr]\\= \frac{1}{\partial_{u_i} z(\sft_i,u_i) \partial_{u_j} z(\sft_j,u_j)}\partial_{u_i} \partial_{u_j} \ln \left[ \frac{\sfb_{0}(u_i)-\sfb_{0}(u_j) }{\sfb_{\tau}\bigl(z(u_i,\tau)\bigr)-\sfb_{\tau}\bigl(z(u_j,\tau)\bigr) } \right], \qquad \tau=\min(\sft_i,\sft_j).
\end{multline}
\end{theorem}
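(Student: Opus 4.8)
\textbf{Proof plan for Theorem \ref{Theorem_fluctuations_characteristic}.}
The plan is to iterate the one-step stochastic evolution equation of Corollary \ref{Corollary_CLT_increment} along the discrete characteristic flow $z^\eps(\sft,u)$ and then pass to the limit $\eps\to0$. First I would fix an admissible pair $(\hat\sft,\hat u)$ and use Lemma \ref{Lemma_discrete_flow} to know that $z^\eps(\sft,u)$ and $\partial_u z^\eps(\sft,u)$ converge uniformly to $z(\sft,u)$ and $\partial_u z(\sft,u)$ for $(\sft,u)$ near $(\hat\sft,\hat u)$, and in particular that $\sfb_\sft(z^\eps(\sft,u))$ stays uniformly bounded away from $\sfb_\sft([\bl(\sft),\br(\sft)])$ along the whole trajectory for small $\eps$ (this is where admissibility is used). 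Summing \eqref{e:dmg_tilings_increment} over $t=0,1,\dots,\eps^{-1}\sft-1$, and noting that at $\sft=0$ the field $G_0$ is deterministic ($\bmx(0)$ is the fixed initial condition, so $G_0\equiv 0$), gives the telescoping identity
\begin{equation*}
\frac{G_{\eps^{-1}\sft}\bigl(z^\eps(\sft,u)\bigr)\,\partial_u z^\eps(\sft,u)}{\varepsilon}
=\sum_{\tau\in[0,\sft)\cap\eps\bZ}\Delta\cM_{\eps^{-1}\tau}\bigl(z^\eps(\tau,u)\bigr)\,\partial_u z^\eps(\tau+\eps,u)+\mathcal R_{\mathrm{tot}},
\end{equation*}
where $\mathcal R_{\mathrm{tot}}$ is the accumulated remainder. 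To control $\mathcal R_{\mathrm{tot}}$ I would use the bound \eqref{e:x40_2}: the $\oo(\eps)$ pieces contribute $\oo(1)$ after summing $\OO(\eps^{-1})$ of them, while the $\oo(1)|G_t|$ and $\oo(1)\bE|G_t|$ pieces are handled by a discrete Gr\"onwall argument, exactly as in the proof of Lemma \ref{Lemma_discrete_flow} — one first bounds $\bE|\eps^{-1}G_{\eps^{-1}\sft}(z^\eps(\sft,u))|$ uniformly by feeding the martingale-part variance estimate \eqref{e:covT_tilings_3} (which gives each increment's second moment as $\OO(\eps)$, hence the partial sums have variance $\OO(1)$) into the recursion, and then concludes that the remainder is $\oo(1)$.

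Next I would identify the limiting covariance. The martingale sum is a sum of conditionally-orthogonal mean-zero increments (distinct $\tau$ give uncorrelated $\Delta\cM$ by the $\delta_{t=t'}$ in \eqref{e:covT_tilings_3}), so for two admissible pairs $(\sft_i,u_i)$, $(\sft_j,u_j)$ with $\tau_0=\min(\sft_i,\sft_j)$ the cross-covariance of the two sums is, to leading order,
\begin{equation*}
\sum_{\tau\in[0,\tau_0)\cap\eps\bZ}\eps\cdot\frac{1}{2\pi\ri}\oint_{\cin}\frac{f_\tau(w)}{f_\tau(w)-1}\,\frac{\sfb_\tau'(w)\,\sfb_\tau'\!\bigl(z(\tau,u_i)\bigr)}{\bigl(\sfb_\tau(w)-\sfb_\tau(z(\tau,u_i))\bigr)^2}\,\frac{\sfb_\tau'(w)\,\sfb_\tau'\!\bigl(z(\tau,u_j)\bigr)}{\bigl(\sfb_\tau(w)-\sfb_\tau(z(\tau,u_j))\bigr)^2}\,\rd w
\end{equation*}
times $\partial_{u_i}z(\sft_i,u_i)^{-1}\partial_{u_j}z(\sft_j,u_j)^{-1}$, and the Riemann sum converges to the corresponding integral $\int_0^{\tau_0}(\cdots)\,\rd\tau$. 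The key computation is then to recognize this time-integral of a contour integral as a total $\tau$-derivative. Concretely, using the characteristic ODE \eqref{e:z_DE} together with the Burgers-type equation \eqref{e:Burgers_first_appearance}, one checks that
\begin{equation*}
\partial_\tau \ln\bigl(\sfb_\tau(z(\tau,u_i))-\sfb_\tau(z(\tau,u_j))\bigr)
\end{equation*}
can be written so that its mixed $\partial_{u_i}\partial_{u_j}$-derivative equals (minus) the integrand above; this is the same type of manipulation used to prove $\partial_w\ln\tilde\cB_\sft$ has vanishing contour integral in Proposition \ref{Proposition_off_criticality}, evaluating the $w$-integral as a sum of residues (at $w=z(\tau,u_i)$, at $w=z(\tau,u_j)$, at the two points $\sfq^w=\pm\sfq^{\tau/2}\kappa^{-1}$, and at $w=\infty$) and matching. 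Integrating in $\tau$ from $0$ to $\tau_0$ then telescopes, and since $G_0\equiv0$ the lower endpoint contributes the initial-condition term $\partial_{u_i}\partial_{u_j}\ln(\sfb_0(u_i)-\sfb_0(u_j))$, yielding \eqref{e:cov_characteristic}. Gaussianity and convergence of all joint moments is inherited from the asymptotic Gaussianity of the $\Delta\cM_t$ (in the moment sense of Theorem \ref{t:loopstudy}): the higher moments of the telescoped sum factor into pair contractions by the conditional-orthogonality of increments plus the uniform variance bounds, exactly as in the standard martingale CLT computation.

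The main obstacle I anticipate is the residue bookkeeping in the identification of the covariance as a mixed logarithmic derivative — one has to be careful that, along the characteristic flow, the contour $\cin$ can be chosen to enclose $[\bl(\tau),\br(\tau)]$ but neither $z(\tau,u_i)$ nor $z(\tau,u_j)$ for all $\tau\in[0,\tau_0]$ simultaneously (possible by admissibility and a compactness argument), and that the $\partial_{u_i}$-derivative can be commuted past the contour integral. A secondary technical point is the Gr\"onwall control of $\mathcal R_{\mathrm{tot}}$: one must run the a-priori bound on $\bE|\eps^{-1}G_{\eps^{-1}\sft}(z^\eps(\sft,u))|$ and the final identification in a single induction, since the remainder at step $t$ involves $G_t$ itself. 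Both are routine in spirit but require care; everything else is a direct combination of Corollary \ref{Corollary_CLT_increment}, Lemma \ref{Lemma_discrete_flow}, and Theorem \ref{Theorem_LLN_abstract}.
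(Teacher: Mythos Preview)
Your plan is correct and matches the paper's proof essentially step for step: the paper also telescopes Corollary \ref{Corollary_CLT_increment} into \eqref{e:x42}, proves asymptotic Gaussianity of the martingale sum via the martingale CLT (Lemma \ref{Lemma_use_mclt}), upgrades to moment convergence via Burkholder--Rosenthal (Lemma \ref{Lemma_use_mclt_moments}), and kills the accumulated remainder by a discrete Gr\"onwall argument on the $2p$-th moments of $\eps^{-1}G$ (Lemma \ref{Lemma_G_at_eps_CLT}). Two small execution points worth flagging: for the residue bookkeeping the paper first symmetrizes the contour integral under $w\mapsto -w+\sft-2\log_\sfq\kappa$ before passing to $W=\sfq^w$, which doubles the $z(\tau,u_i),z(\tau,u_j)$ poles (each acquires a mirror at $\kappa^{-2}\sfq^{\sft}Z^{-1}$) and makes the $F_\sft(W)=1$ points harmless because the numerator $\kappa^2\sfq^{-\sft}W-W^{-1}$ vanishes there; and at the very end the paper passes from $z^\eps(\sft,u)$ to $z(\sft,u)$ by inverting $z^\eps(\sft,\cdot)$ and invoking Lemma \ref{Lemma_discrete_flow}.
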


In the rest of Section \ref{Section_fluctuations_characteristic} we prove Theorem \ref{Theorem_fluctuations_characteristic} through a series of lemmas. Assuming $\eps^{-1}\sft$ to be a positive integer, summing the result of Corollary \ref{Corollary_CLT_increment}, noting that $G_0(z)=0$,
and inserting the dependence of the remainders $\mathcal R$ on $m$ into the notations. we get
\begin{equation}\label{e:x42}
 \frac{1}{\eps} G_{\eps^{-1}\sft}\left(z^\eps(\sft,u)\right)\partial_u z^\eps(\sft,u) = \sum_{m=0}^{\eps^{-1}\sft}\Delta\cM_m\bigl(z^\eps(\eps m,u)\bigr)\partial_u z^\eps(\eps m+\eps,u)+ \sum_{m=0}^{\eps^{-1}\sfs} \mathcal R_m.
\end{equation}

Our first step is to analyze the martingale sum in \eqref{e:x42}.

\begin{lemma} \label{Lemma_use_mclt} In the setting of Theorem \ref{Theorem_fluctuations_characteristic}, the random variables
 \begin{equation}\sum_{m=0}^{\eps^{-1}\sft}\Delta\cM_m\bigl(z^\eps(\eps m,u)\bigr)\partial_u z^\eps(\eps m+\eps,u)\Bigr|_{(\sft,\sfu)=(\sft_i,\sfu_i)}, \qquad 1 \le i \le k, \label{eq_x5}
 \end{equation}
 converge in distribution to $k$--dimensional Gaussian random vector with zero mean and covariance given by \eqref{e:cov_characteristic}. If we metrize the distributional convergence, then the convergence is uniform as in Theorem \ref{Theorem_fluctuations_characteristic}.
\end{lemma}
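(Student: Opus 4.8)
\textbf{Proof proposal for Lemma \ref{Lemma_use_mclt}.}

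The plan is to apply a martingale central limit theorem to the discrete-time martingale obtained by summing the increments $\Delta \cM_m$. First I would fix the admissible pairs $(\sft_i, u_i)$, $1 \le i \le k$, and set $\sfT_* = \max_i \sft_i$. For $m = 0, 1, \dots, \eps^{-1}\sfT_*$, define the filtration $\cF_m = \sigma(\bmx(0), \dots, \bmx(m))$ and consider, for each $i$, the partial sums $S^{(i)}_M := \sum_{m=0}^{M} \Delta\cM_m\bigl(z^\eps(\eps m, u_i)\bigr)\partial_u z^\eps(\eps m + \eps, u_i)$ run up to $M = \eps^{-1}\sft_i$ (and frozen afterward). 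Since $\Delta\cM_m(z)$ has mean zero conditionally on $\cF_m$ by Proposition \ref{Proposition_stochastic_evolution}, and $z^\eps(\eps m, u_i)$ and $\partial_u z^\eps(\eps m+\eps, u_i)$ are \emph{deterministic}, the summands are martingale differences with respect to $\cF_m$. The next step is to verify the two standard hypotheses of the martingale CLT (e.g.\ the version in \cite{AGZ}, or Hall--Heyde): (i) the conditional Lindeberg/negligibility of individual increments, which follows because $\bE[|\Delta\cM_m(z)|^p \mid \cF_m] = O(\eps^{p/2})$ uniformly (a consequence of the convergence of moments in Proposition \ref{Proposition_stochastic_evolution}, so each increment is $O_{L^p}(\eps^{1/2})$ while there are $O(\eps^{-1})$ of them); and (ii) convergence in probability of the predictable quadratic variation to a deterministic limit.

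For (ii), the bracket process is $\sum_{m=0}^{\eps^{-1}\tau_{ij}} \bE\bigl[\Delta\cM_m(z^\eps(\eps m, u_i))\,\Delta\cM_m(z^\eps(\eps m, u_j)) \mid \cF_m\bigr]\, \partial_u z^\eps(\eps m+\eps, u_i)\,\partial_u z^\eps(\eps m+\eps, u_j)$ with $\tau_{ij} = \min(\sft_i, \sft_j)$ (for $m$ beyond this the two frozen sums no longer accumulate joint variance). By \eqref{e:covT_tilings_3} of Proposition \ref{Proposition_stochastic_evolution}, the conditional covariance of $\Delta\cM_m$ equals $\eps$ times a contour integral in which the integrand depends on $\bmx(m)$ only through $f_\sft$ and $\sfb_\sft$; by Theorem \ref{Theorem_LLN_abstract} (law of large numbers for $\rho(\cdot;\bmx(m))$) this integrand converges in probability to its deterministic limit built from the limit shape, uniformly. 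Hence the sum is a Riemann sum: $\eps \sum_m (\cdots) \to \int_0^{\tau_{ij}} (\cdots)\, d\tau$, converging in probability to
\[
 \int_0^{\tau_{ij}} \frac{1}{2\pi \ri}\oint_{\cin} \frac{f_\tau(w)}{f_\tau(w)-1}\frac{\sfb_\tau'(w)\sfb_\tau'(z(\tau,u_i))}{(\sfb_\tau(w)-\sfb_\tau(z(\tau,u_i)))^2} \frac{\sfb_\tau'(w)\sfb_\tau'(z(\tau,u_j))}{(\sfb_\tau(w)-\sfb_\tau(z(\tau,u_j)))^2}\, dw \; \cdot\; \partial_u z(\tau,u_i)\,\partial_u z(\tau,u_j)\, d\tau,
\]
where I have used Lemma \ref{Lemma_discrete_flow} to replace $z^\eps(\eps m, u_i)$ and $\partial_u z^\eps$ by their continuous-flow limits $z(\tau,u_i)$ and $\partial_u z(\tau,u_i)$ (uniformly, by admissibility and compactness). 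The remaining, purely computational, step is to show that this time-integral equals the right-hand side of \eqref{e:cov_characteristic}. This is an exact integration: one differentiates the explicit first-integral relations of Section \ref{Section_Limit_shape_proofs} (e.g.\ \eqref{e:ztsolver_1}, \eqref{e:ztsolver_2}, and the evolution \eqref{e:z_DE}) in $\sft$ to check that the $\sft$-derivative of the proposed covariance kernel $\partial_{u_i}\partial_{u_j} \ln\bigl[(\sfb_0(u_i)-\sfb_0(u_j))/(\sfb_\tau(z(\tau,u_i))-\sfb_\tau(z(\tau,u_j)))\bigr]$ matches the contour integral above; this is the same mechanism by which the characteristic flow was used to simplify the limit-shape formulas, and the contour integral collapses to residues at $w = z(\tau,u_i)$ and $w = z(\tau,u_j)$ plus the symmetry points, yielding a total $\sft$-derivative.

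Finally, the CLT gives joint convergence in distribution of the vector \eqref{eq_x5}; upgrading to convergence of moments uses the uniform $L^p$ bounds from (i) (which give uniform integrability of all polynomial functionals), exactly as in the lemma invoked at the end of the proof of Proposition \ref{p:cov}. Uniformity over admissible pairs in compact sets follows because all the estimates above—the $O(\eps^{p/2})$ increment bounds, the Riemann-sum error, and the flow approximation of Lemma \ref{Lemma_discrete_flow}—are uniform there. The main obstacle I anticipate is not the probabilistic CLT machinery but the deterministic identification of the limiting bracket with \eqref{e:cov_characteristic}: one must carefully track the branch of $u \mapsto z(\tau,u)$ (cf.\ Remark \ref{remark_u_choice} and Lemma \ref{Lemma_bijectivity}), verify that the contour $\cin$ can be deformed so that the residue computation is legitimate for all $\tau \in [0,\tau_{ij}]$ along the flow, and confirm that the "boundary term" at $\tau = 0$ produces exactly $\partial_{u_i}\partial_{u_j}\ln(\sfb_0(u_i)-\sfb_0(u_j))$ while the upper endpoint produces the $z(\tau, \cdot)$ term—i.e.\ that no spurious contributions arise from the symmetry point $z \mapsto -z + \sft - 2\log_\sfq\kappa$ or from the poles of $f_\tau/(f_\tau-1)$ crossing the contour.
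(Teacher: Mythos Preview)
Your proposal is correct and follows essentially the same approach as the paper: martingale CLT, Lindeberg-type negligibility from the $O(\eps^{p/2})$ moment bounds, and convergence of the predictable bracket to a Riemann integral via Lemma~\ref{Lemma_discrete_flow} and the law of large numbers. The paper carries out exactly the residue computation you flag as the ``main obstacle'': it symmetrizes the contour via $w\mapsto -w+\sft-2\log_\sfq\kappa$ (so that the integral can be evaluated as residues \emph{outside} $\cin\cup\omega'_-$), changes variables to $W=\sfq^w$, checks that the residues at $W=\pm\sfq^{\sft/2}\kappa^{-1}$ and at $W=0,\infty$ are constants killed by $\partial_{u_i}\partial_{u_j}$, and then recognizes the surviving residues at $Z_i,Z_j$ and their symmetric images as $\partial_\sft\ln[\sfb_\sft(z(\sft,u_i))-\sfb_\sft(z(\sft,u_j))]$---precisely the total derivative you anticipate. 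One minor organizational point: the upgrade from distributional convergence to convergence of moments is not part of this lemma in the paper; it is isolated as the separate Lemma~\ref{Lemma_use_mclt_moments}, proved via the Burkholder--Rosenthal inequality rather than the Laplace-transform argument you allude to.
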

\begin{proof}
 By Corollary \ref{Corollary_CLT_increment}, each term in \eqref{eq_x5} is a martingale difference. Hence, by the martingale central limit theorem \cite{hall2014martingale}, this sum is asymptotically Gaussian in the sense of convergence in distribution. The asymptotic covariance of the $i$th and $j$th sum is given by
\begin{multline} \label{e:x44}
   \lim_{\eps\to 0} \frac{1}{\partial_{u_i} z^\eps(\sft_i,u_i) \partial_{u_j} z^\eps(\sft_j,u_j)}   \sum_{m=0}^{\eps^{-1}\min(\sft_i,\sft_j)}\Biggl( \partial_{u_i} z^\eps(\eps m+\eps,u_i) \partial_{u_j} z^\eps(\eps m+\eps,u_j) \\ \times \bE \Bigl[\Delta\cM_m\bigl(z^\eps(\eps m,u_i)\bigr)\Delta\cM_m\bigl(z^\eps(\eps m,u_j)\bigr) \Bigr]\Biggr).
\end{multline}
Using \eqref{e:covT_tilings_3} and Lemma \ref{Lemma_discrete_flow}, the sum over $m$ approximates as $\eps\to 0$ the integral:
\begin{align*}
  \int_0^{\min(\sft_i,\sft_j)} &\frac{\partial_{u_i} z(\sft,u_i) \partial_{u_j} z(\sft,u_j)}{2\pi \ri} \\
  &\times
  \left[\oint_{\cin} \frac{f_\sft(w)}{ f_\sft(w)-1}\frac{\sfb_\sft'(w)\sfb_\sft'(z(\sft,u_i) )}{(\sfb_\sft(w)-\sfb_\sft(z(\sft,u_i) ))^2} \frac{\sfb_\sft'(w)\sfb_\sft'(z(\sft,u_j) )}{(\sfb_\sft(w)-\sfb_\sft(z(\sft,u_j)))^2} \rd w\right] \rd \sft,
\end{align*}
where the contour $\cin$ (which might depend on $\sft$) encloses $[\bl(\sft),\br(\sft)]$, but not $z(\sft,u_i)$ or $z(\sft,u_j)$. We further notice that the last integral can be rewritten as
\begin{equation} \label{e:x43}
  \int_0^{\min(\sft_i,\sft_j)} \partial_{u_1} \partial_{u_2} \left[ \frac{1}{2\pi \ri}\oint_{\cin} \frac{f_\sft(w)}{ f_\sft(w)-1}\cdot \frac{\sfb_\sft'(w) }{\sfb_\sft(w)-\sfb_\sft(z(\sft,u_i) )}\cdot \frac{\sfb_\sft'(w)}{\sfb_\sft(w)-\sfb_\sft(z(\sft,u_j))} \rd w\right] \rd \sft.
\end{equation}
Let us analyze the $\rd w$ contour integral. For that we let $\omega'_-$ denote the image of the contour $\cin$ under the map $w\mapsto -w +\sft - 2 \log_\sfq \kappa$. Recalling the symmetries
\begin{align*}\label{e:sym_2}
\sfb_\sft(w)=\sfb_\sft(-w+\sft-2\log_\sfq\kappa), \quad \sfb'_\sft(w)=-\sfb'_\sft(-w+\sft-2\log_\sfq\kappa),
\end{align*}
and $f_\sft(w)=1/ f_\sft(-w+\sft-2\log_\sfq\kappa)$, we conclude that
\begin{multline}
 \frac{1}{2\pi \ri} \oint_{\cin} \frac{f_\sft(w)}{ f_\sft(w)-1}\cdot \frac{\sfb_\sft'(w) }{\sfb_\sft(w)-\sfb_\sft(z(\sft,u_i) )}\cdot \frac{\sfb_\sft'(w)}{\sfb_\sft(w)-\sfb_\sft(z(\sft,u_j))} \rd w\\=  \frac{1}{2\pi \ri} \oint_{\omega'_-} \frac{f_\sft(w)}{ f_\sft(w)-1}\cdot \frac{\sfb_\sft'(w) }{\sfb_\sft(w)-\sfb_\sft(z(\sft,u_i) )}\cdot \frac{\sfb_\sft'(w)}{\sfb_\sft(w)-\sfb_\sft(z(\sft,u_j))} \rd w
\\= \frac{1}{4\pi \ri}\oint_{\cin \cup \omega'_-} \frac{f_\sft(w)}{ f_\sft(w)-1}\cdot \frac{\sfb_\sft'(w) }{\sfb_\sft(w)-\sfb_\sft(z(\sft,u_i) )}\cdot \frac{\sfb_\sft'(w)}{\sfb_\sft(w)-\sfb_\sft(z(\sft,u_j))} \rd w.
\end{multline}
In the last integral we change variables by introducing $W=\sfq^w$, $Z_i=\sfq^{z(\sft,u_i)}$, $Z_j=\sfq^{z(\sft,u_j)}$, and $F_\sft(W)=f_\sft(w)$. We get
\begin{align*}
 \frac{1}{4\pi\ii}\oint_{\sfq^{\cin} \cup \sfq^{\omega'_-}} \frac{F_\sft(W)}{ F_\sft(W)-1} \frac{\kappa^2 \sfq^{-\sft}W -W^{-1} }{\kappa^2 \sfq^{-\sft}W +W^{-1} -\kappa^2 \sfq^{-\sft}Z_i -Z_i^{-1} }\\
 \times \frac{\kappa^2 \sfq^{-\sft}W -W^{-1} }{\kappa^2 \sfq^{-\sft}W +W^{-1} -\kappa^2 \sfq^{-\sft}Z_j -Z_j^{-1} } \cdot \frac{\rd W}{W}.
\end{align*}
We compute this contour integral as (minus) the sum of the residues outside the integration contour:
\begin{itemize}
\item As $W\to \infty$, $F_\sft(W)\to \sfq^{\sft-\sfT}$, as we computed right before \eqref{e:x16}. Hence, the integrand behaves as
$$
\frac{\sfq^{\sft-\sfT}}{\sfq^{\sft-\sfT}-1} \cdot \frac{1}{W} + \OO\left(\frac{1}{W^2}\right),
$$
and the residue is a constant which does not depend on $u_i$ or $u_j$. Therefore, it vanishes in differentiations in \eqref{e:x43}.

\item  By the symmetries \eqref{e:sym_2} we also have a constant residue at $W\to 0$, which vanishes in differentiations.

\item As we have seen in Step 1 of the proof of Proposition \ref{Proposition_off_criticality}, outside our integration contour $F_\sft(W)=1$ is possible only at $W=\pm \sfq^{\sft/2} \kappa^{-1}$. At these points the numerators $\kappa^2 \sfq^{-\sft}W -W^{-1}$ vanish and there are no residues.
\item At $W=Z_i$, the residue is
$$
 \frac{1}{2} \cdot \frac{ F_\sft(Z_i)}{1-F_\sft(Z_i)}\cdot  \frac{\kappa^2 \sfq^{-\sft} Z_i - Z_i^{-1}}{\kappa^2 \sfq^{-\sft}Z_i +Z_i^{-1} -\kappa^2 \sfq^{-\sft}Z_j -Z_j^{-1}}.
$$
\item At $W=\kappa^{-2} Z_i^{-1} \sfq^{\sft}$ the residue is the same.
\item At $W=Z_j$ and at $W=\kappa^{-2} Z_j^{-1} \sfq^{\sft}$ the residues are
$$
\frac{1}{2} \cdot \frac{ F_\sft(Z_j)}{1-F_\sft(Z_j)}\cdot  \frac{\kappa^2 \sfq^{-\sft} Z_j - Z_j^{-1}}{\kappa^2 \sfq^{-\sft}Z_j +Z_j^{-1} -\kappa^2 \sfq^{-\sft}Z_i -Z_i^{-1}}.
$$
\end{itemize}
Summing the four non-trivial residues, and setting $\tau=\min(\sft_i,\sft_j)$, \eqref{e:x43} evaluates to
\begin{multline}
  -\partial_{u_1} \partial_{u_2}  \int_0^{\tau} \frac{ \frac{f_\sft(z(\sft,u_i))}{ f_\sft(z(\sft,u_i))-1} \sfb'_\sft(z(\sft,u_i))-\frac{f_\sft(z(\sft,u_j))}{ f_\sft(z(\sft,u_j))-1} \sfb'_\sft(z(\sft,u_j)) }{\sfb_\sft(z(\sft,u_i))-\sfb_\sft(z(\sft,u_j))}
 \rd \sft\\=  -\partial_{u_1} \partial_{u_2}  \int_0^{\tau} \partial_\sft \ln\bigl[\sfb_\sft(z(\sft,u_i))-\sfb_\sft(z(\sft,u_j))\bigr]
 \rd \sft= \ln\left[ \frac{\sfb_\sft(z(0,u_i))-\sfb_\sft(z(0,u_j))}{\sfb_\sft(z(\tau,u_i))-\sfb_\sft(z(\tau,u_j))}\right].
\end{multline}
Recalling that this was the computation of the asymptotics of the second line in \eqref{e:x44}, we arrive at the covariance of \eqref{e:cov_characteristic}.
\end{proof}

Next, we upgrade from convergence in distribution to convergence of moments.

\begin{lemma} \label{Lemma_use_mclt_moments}The convergence in Lemma \ref{Lemma_use_mclt} is also in the sense of moments.
\end{lemma}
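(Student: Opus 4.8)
\textbf{Proof proposal for Lemma \ref{Lemma_use_mclt_moments}.} The plan is to combine the already-established convergence of joint moments of the one-step martingale increments (which comes from Proposition \ref{Proposition_stochastic_evolution}, in particular the convergence-of-moments clause in Theorem \ref{t:loopstudy}) with a uniform integrability argument powered by the a priori bounds of Theorem \ref{Theorem_LLN_abstract}. Concretely, write $S^{\eps}_i = \sum_{m=0}^{\eps^{-1}\sft_i}\Delta\cM_m\bigl(z^\eps(\eps m,u_i)\bigr)\partial_u z^\eps(\eps m+\eps,u_i)$ for the martingale sum in \eqref{eq_x5}. I want to show $\bE\bigl[\prod_{\ell=1}^{q} S^\eps_{i_\ell}\bigr]$ converges to the corresponding Gaussian (Wick) moment. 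Since Lemma \ref{Lemma_use_mclt} already gives convergence in distribution of the vector $(S^\eps_1,\dots,S^\eps_k)$ to a Gaussian vector $(S_1,\dots,S_k)$, by the standard fact that convergence in distribution plus uniform integrability of all products $\prod_\ell S^\eps_{i_\ell}$ upgrades to convergence of moments, it suffices to establish a uniform moment bound: for each $q$ there is $C_q<\infty$ (independent of $\eps$ and of the admissible pairs in a compact set) with $\bE\bigl|S^\eps_i\bigr|^{q}\le C_q$.

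For this uniform bound I would use the Burkholder--Davis--Gundy inequality for the discrete-time martingale $m\mapsto \sum_{m'\le m}\Delta\cM_{m'}(\cdot)\partial_u z^\eps(\cdot)$. BDG reduces $\bE|S^\eps_i|^q$ to control of $\bE\bigl[\bigl(\sum_m (\Delta\cM_m)^2 |\partial_u z^\eps|^2\bigr)^{q/2}\bigr]$ together with the $L^q$-norms of the individual increments $\Delta\cM_m$. The increments themselves are uniformly small: $\Delta\cM_m(z) = O(\eps^{1/2})$ in the sense of moments by \eqref{e:dmg_tilings_3}--\eqref{e:covT_tilings_3}, i.e. $\bE|\eps^{-1/2}\Delta\cM_m(z)|^q$ is bounded uniformly (this is exactly the "higher order joint moments converge to Gaussian joint moments, uniformly" clause of Theorem \ref{t:loopstudy}, which a fortiori gives uniform boundedness of those moments). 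There are $O(\eps^{-1})$ terms in the sum, each contributing a conditional second moment of order $\eps$, so the quadratic variation $\sum_m (\Delta\cM_m)^2|\partial_u z^\eps|^2$ is $O(1)$ in expectation, and more generally its $q/2$-th moment is $O(1)$; the factors $\partial_u z^\eps(\eps m+\eps,u_i)$ are bounded uniformly by Lemma \ref{Lemma_discrete_flow} (they converge to $\partial_u z(\sft,u)$, hence stay in a compact set for admissible pairs ranging over a compact set, for all small $\eps$). Feeding these into BDG yields $\bE|S^\eps_i|^q \le C_q$, and hence uniform integrability of all the products $\prod_\ell S^\eps_{i_\ell}$ via Hölder.

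I expect the main technical point to be making the quadratic-variation estimate genuinely uniform, since $\Delta\cM_m$ is only controlled \emph{asymptotically} (in the sense of moments, with an $\oo(1)$ error in \eqref{e:covT_tilings_3}) rather than by a clean almost-sure bound, and the error terms $\mathcal R$ in \eqref{e:dmg_tilings_3}/\eqref{e:x40_2} must be shown not to spoil the bound. This is handled by noting that those remainders are $\oo(\eps) + \oo(1)|G_t| + \oo(1)\bE|G_t|$ and that Theorem \ref{Theorem_LLN_abstract} already furnishes $|G_t(z^\eps(\sft,u))| \to 0$ (with a uniform deterministic bound); a Grönwall-type bootstrap on $\bE|S^\eps_i|^q$ — identical in spirit to the one in the proof of Lemma \ref{Lemma_discrete_flow} — closes the estimate. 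Once the uniform $L^q$ bounds are in place, convergence of moments is automatic, and the uniformity over compact sets of admissible pairs follows because every ingredient (the one-step moment convergence in Theorem \ref{t:loopstudy}, the flow approximation in Lemma \ref{Lemma_discrete_flow}, the law of large numbers in Theorem \ref{Theorem_LLN_abstract}) is itself uniform on such sets. Combining this with \eqref{e:x42} — where the remainder sum $\sum_m \mathcal R_m$ is shown, by the same Grönwall argument, to tend to $0$ in $L^q$ — transfers the convergence of moments from the $S^\eps_i$ to $\eps^{-1} G_{\eps^{-1}\sft_i}(z^\eps(\sft_i,u_i))\partial_u z^\eps(\sft_i,u_i)$, and finally to $\eps^{-1} G_{\eps^{-1}\sft_i}(z(\sft_i,u_i))$ after dividing by $\partial_u z^\eps(\sft_i,u_i)\to \partial_u z(\sft_i,u_i)$, which is bounded away from $0$ and $\infty$ for admissible pairs in a compact set. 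This proves Theorem \ref{Theorem_fluctuations_characteristic}.
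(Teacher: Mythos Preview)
Your approach is essentially the same as the paper's: both argue that convergence in distribution plus uniform integrability (obtained via uniform $L^{2p}$ bounds on the martingale sums) yields convergence of moments, and both get those bounds from a martingale inequality combined with the uniform moment bounds on the one-step increments $\eps^{-1/2}\Delta\cM_m$ supplied by Theorem~\ref{t:loopstudy}. The paper uses the Burkholder--Rosenthal inequality rather than BDG (the decomposition into the predictable quadratic variation term and the sum of individual $2p$-th moments is exactly Burkholder--Rosenthal), but your sketch effectively arrives at the same two ingredients; note, however, that the step ``its $q/2$-th moment is $O(1)$'' is not a direct consequence of the expected quadratic variation being $O(1)$ --- you need the individual $\bE|\Delta\cM_m|^{2p}=O(\eps^p)$ bounds and a H\"older/power-mean argument, which is precisely what Burkholder--Rosenthal packages cleanly. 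Also, the discussion of the remainders $\mathcal R$, the Gr\"onwall bootstrap, and the transfer from $z^\eps(\sft,u)$ to $z(\sft,u)$ belongs to the subsequent Lemma~\ref{Lemma_G_at_eps_CLT} and the proof of Theorem~\ref{Theorem_fluctuations_characteristic}, not to the present lemma; the paper separates these out.
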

\begin{proof}
Convergence in distribution together with uniform integrability implies the convergence of the moments. Hence, we need to show that the sums in \eqref{eq_x5} have moments bounded uniformly in $\eps$. Thus, we would like to estimate the $2p$--th moment of the sum in \eqref{eq_x5} for each $p=1,2,\dots$. The Burkholder--Rosenthal inequality \cite[Equation (2.9)]{osekowski2012note} (see also \cite{burkholder1973distribution, hitczenko1990best}) gives that for each $p \ge 1$, there exists a constant $\fC_p>0$, such that:
\begin{align}
\label{e:Ros}
		\bE&\left| \sum_{m=0}^{\eps^{-1}\sft}\Delta\cM_m\bigl(z^\eps(\eps m,u)\bigr)\partial_u z^\eps(\eps m+\eps,u)\right|^{2p}\\
		&\leq \fC_p \bE \Bigg[ \bigg(\sum_{m=0}^{\eps^{-1}\sft}\bE \big[ | \Delta\cM_m\bigl(z^\eps(\eps m,u)\bigr)\partial_u z^\eps(\eps m+\eps,u)|^2 \big| \bmx(m) \big] \bigg)^p \Bigg]\notag \\
		&+\fC_p\sum_{m=0}^{\eps^{-1}\sft}\bE \big[ |\Delta\cM_m\bigl(z^\eps(\eps m,u)\bigr)\partial_u z^\eps(\eps m+\eps,u)|^{2p} \big].\notag
\end{align}
For the $m$-th term in the first sum on the righthand side of \eqref{e:Ros}, denoting $\sfs=\varepsilon m$, thanks to Proposition \ref{Proposition_tilings_increment} (by taking $z_1=z^\eps(\sfs,u), z_2=\overline{z}(\sfs,u)$ in \eqref{e:covT_tilings_2}), we have
\begin{multline}\label{e:EDM}
\left|\bE \big[ | \Delta\cM_m\bigl(z^\eps(\eps m,u)\bigr)\partial_u z^\eps(\eps m+\eps,u)|^2 \big| \bmx(m) \big]\right|
\leq \varepsilon |\partial_u z^\eps(\eps m+\eps,u)|^2
\\ \times \left|
\frac{1}{2\pi \ri}\oint_{\cin} \frac{\tilde f_\sfs(w)}{\tilde f_\sfs(w)-1}\frac{\sfb_\sfs'(w)\sfb_\sfs'(z^\eps(\sfs,u))}{(\sfb_\sfs(w)-\sfb_\sfs(z^\eps(\sfs,u)))^2} \frac{\sfb_\sfs'(w)\sfb_\sfs'(\overline{z}(\sfs,u))}{(\sfb_\sfs(w)-\sfb_\sfs(\overline{z}(\sfs,u)))^2} \rd w+\oo(1)
\right|,
\end{multline}
where $z^\eps(\sfs,u)$ is outside the integration contour $\cin$. Note that $o(1)$ in the last formula, which originates from $o(1)$ in \eqref{e:covT}, is uniform in the sense of Remark \ref{Remark_uniformity}: it can be upper-bounded by a deterministic constant tending to $0$ as $\eps\to 0$.

 By our assumption $(\sft,u)$ is an admissible pair (recall from Definition \ref{d:admissible}). Hence, it follows from Lemmas \ref{Lemma_bijectivity} and \ref{Lemma_discrete_flow}, that $|\partial_u z^\eps(\varepsilon m+\varepsilon,u)|$ is uniformly bounded for $0\leq m\leq \varepsilon^{-1}\sft$. Moreover, $\sfb_\sfs(z^\eps(\sfs,u))$ is bounded away from $\sfb_\sfs([\bl(\sfs),\br(\sfs)]$ and we can take the contour such that $\sfb_\sfs(w)$ is bounded away from $\sfb_\sfs([\bl(\sfs),\br(\sfs)]$, thus, the last two factors in the integrands
\begin{align}
\left|\frac{\sfb_\sfs'(w)\sfb_\sfs'(z^\eps(\sfs,u))}{(\sfb_\sfs(w)-\sfb_\sfs(z^\eps(\sfs,u)))^2}\right|,
\left| \frac{\sfb_\sfs'(w)\sfb_\sfs'(\overline{z}(\sfs,u))}{(\sfb_\sfs(w)-\sfb_\sfs(\overline{z}(\sfs,u)))^2}\right|
\end{align}
are uniformly bounded. In \textbf{Step 1} of Proposition \ref{Proposition_off_criticality}, we have studied the behavior of $\widetilde f_\sfs(w)$. It is uniformly bounded and bounded away from $1$, when $\sfb_{\sfs}(w)$ is bounded away from $\sfb_\sfs([\bl(\sfs),\br(\sfs)]$. Thus on $\cin$, $|\tilde f_\sfs(w)/{\tilde f_\sfs(w)-1}|$ is also uniformly bounded. Therefore, there exists a deterministic constant $C$ (independent of $\varepsilon, m$) such that \eqref{e:EDM} is bounded by
\begin{align}\label{e:EDM2}
\left|\bE \big[ | \Delta\cM_m\bigl(z^\eps(\eps m,u)\bigr)\partial_u z^\eps(\eps m+\eps,u)|^2 \big| \bmx(m) \big]\right|\leq C\varepsilon.
\end{align}
Summing the above expression for $0\leq m\leq \varepsilon^{-1}\sft$, we produce a deterministic bound on the random variable under the first expectation on the righthand side of \eqref{e:Ros}:
	\begin{align}\label{e:firstb}
\bigg(\sum_{m=0}^{\eps^{-1}\sft}\bE \big[ | \Delta\cM_m\bigl(z^\eps(\eps m,u)\bigr)\partial_u z^\eps(\eps m+\eps,u)|^2 \, \big|\, \bmx(m) \big] \bigg)^p
			\le C^p (\sft +\varepsilon)^p.
	\end{align}

Switching to the second term in the righthand side of \eqref{e:Ros}, thanks to Proposition \ref{Proposition_stochastic_evolution}, $\varepsilon^{-1/2}\Delta\cM_m\big((z^\eps(\eps m,u)\big)$ is asymptotically Gaussian in the convergence of moments sense, and we have
	\begin{align*}
		\frac{1}{\varepsilon^{p}} & \bE \big[  |\Delta\cM_m(z^\eps(\eps m,u))\big|^{2p} \, \big|\, \bmx(m) \big]\\
		& = \bE \big[  |\varepsilon^{-1/2}\Delta\cM_m(z^\eps(\eps m,u))\big|^{2p}\, \big|\,  \bmx(m) \big]\\
		&\leq 2^{2p}\bE \left[ \Re[\varepsilon^{-1/2}\Delta\cM_m\big]^{2p}+\Im[\varepsilon^{-1/2}\Delta\cM_m \bigr]^{2p}\,\big|\, \bmx(m) \right]\\
		&\leq 2^{2p}(2p-1)!! \left(\bE \big[ \Re[\varepsilon^{-1/2}\Delta\cM_m\big]^{2}\big| \bmx(m) \big]\right)^p+\left(\bE \big[\Im[\varepsilon^{-1/2}\Delta\cM_m \big]^{2}\big| \bmx(m) \big]\right)^p+\oo(1)\\
		&\leq 2^{2p+1}(2p-1)!! \left(\bE \big[ |\varepsilon^{-1/2}\Delta\cM_m(z^\eps(\varepsilon m, u))|^2\,\big|\, \bmx(m) \big]\right)^p+\oo(1),
	\end{align*}
where in the second inequality we used that $ \Re[\varepsilon^{-1/2}\Delta\cM_m]$ and $ \Im[\varepsilon^{-1/2}\Delta\cM_m]$ are asymptotically Gaussian. Using the above estimate together with \eqref{e:EDM2} and a uniform bound on $|\partial_u z^\eps(\varepsilon m+\varepsilon,u)|$, we estimate
\begin{multline*}
\bE \big[ |\Delta\cM_m\bigl(z^\eps(\eps m,u)\bigr)\partial_u z^\eps(\eps m+\eps,u)|^{2p} \big]\\= |\partial_u z^\eps(\eps m+\eps,u)|^{2p}\bE \left[ \bE \big[ |\Delta\cM_m\bigl(z^\eps(\eps m,u)\bigr)|^{2p}\, \big|\, \bmx(m) \big]\right]\le  C' \eps^p,
\end{multline*}
for a deterministic constant $C'>0$. Summing this estimate for $0\leq m\leq \varepsilon^{-1}\sft$, and recalling that $p\ge 1$, we conclude that the second term on the righthand side of \eqref{e:Ros} is uniformly bounded:
\begin{align}\label{e:secondb}
&\phantom{{}={}}\sum_{m=0}^{\eps^{-1}\sft}\bE \big[ |\Delta\cM_m\bigl(z^\eps(\eps m,u)\bigr)\partial_u z^\eps(\eps m+\eps,u)|^{2p} \big]\leq C''.
\end{align}
It follows that
\begin{align}
\label{e:Ros1}
		\bE&\left| \sum_{m=0}^{\eps^{-1}\sft}\Delta\cM_m\bigl(z^\eps(\eps m,u)\bigr)\partial_u z^\eps(\eps m+\eps,u)\right|^{2p}\leq C.
\end{align}
		
Plugging \eqref{e:firstb} and \eqref{e:secondb} into \eqref{e:Ros}, we conclude that the sums in \eqref{eq_x5} have moments bounded uniformly in $\eps$.
\end{proof}

We further show that the remainders $\mathcal R_m$ in \eqref{e:x42} do not contribute to the asymptotics.

\begin{lemma} \label{Lemma_G_at_eps_CLT}
 The random variables $\frac{1}{\eps} G_{\eps^{-1}\sft}\left(z^\eps(\sft,u)\right)\partial_u z^\eps(\sft,u)$ have the same $\eps\to 0$ asymptotics (both in distribution and in the sense of moments) as the sums \eqref{eq_x5} in Lemmas \ref{Lemma_use_mclt} and \ref{Lemma_use_mclt_moments}. The $\eps\to 0$ convergence is uniform as in Theorem \ref{Theorem_fluctuations_characteristic}.
\end{lemma}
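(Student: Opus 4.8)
\textbf{Proof proposal for Lemma \ref{Lemma_G_at_eps_CLT}.}
The plan is to start from the exact identity \eqref{e:x42}, which expresses $\frac{1}{\eps} G_{\eps^{-1}\sft}\left(z^\eps(\sft,u)\right)\partial_u z^\eps(\sft,u)$ as the martingale sum \eqref{eq_x5} plus the error sum $\sum_{m=0}^{\eps^{-1}\sft} \mathcal R_m$, where each $\mathcal R_m$ obeys the bound \eqref{e:x40_2}. By Lemmas \ref{Lemma_use_mclt} and \ref{Lemma_use_mclt_moments}, the martingale sum already converges (in distribution and in the sense of moments, uniformly over admissible pairs in compact sets) to the claimed Gaussian vector. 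So the only thing to prove is that $\sum_{m=0}^{\eps^{-1}\sft}\mathcal R_m\to 0$, in probability and in $L^p$ for every $p$, uniformly over admissible pairs in compact sets. Once this is done, Slutsky's theorem upgrades the distributional convergence, and the $L^p$-control upgrades the moment convergence.

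The main work is therefore a Gr\"onwall-type argument. Write $W_t:=\frac{1}{\eps}G_{\eps^{-1}\sft}(z^\eps(\sft,u))\,\partial_u z^\eps(\sft,u)$ with $\sft=\eps t$, and let $S_t$ denote the partial martingale sum $\sum_{m=0}^{t-1}\Delta\cM_m(z^\eps(\eps m,u))\,\partial_u z^\eps(\eps m+\eps,u)$. From \eqref{e:x42} applied at each intermediate time, $W_t = S_t + \sum_{m=0}^{t-1}\mathcal R_m$. Using \eqref{e:x40_2}, $|\mathcal R_m| = \oo(\eps) + \oo(1)\bigl(|G_{m}(z^\eps(\eps m,u))| + \bE|G_m(z^\eps(\eps m,u))|\bigr)$, and since $\partial_u z^\eps$ is uniformly bounded and bounded away from $0$ along the admissible trajectory (Lemmas \ref{Lemma_bijectivity} and \ref{Lemma_discrete_flow}), we have $|G_m(z^\eps(\eps m,u))| \le C\eps\,|W_m|$. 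Hence $|\mathcal R_m| \le \oo(\eps) + \oo(\eps)|W_m| + \oo(\eps)\bE|W_m|$, where the $\oo$-constants are deterministic and uniform. Taking absolute values and then first moments, set $a_t := \bE|W_t|$ and $\xi_t := |S_t|$; then
\begin{equation*}
 a_t \le \bE\xi_t + \oo(1) + \oo(\eps)\sum_{m=0}^{t-1} a_m,
\end{equation*}
where $\oo(1) = \eps^{-1}\cdot\eps^{-1}\sft\cdot \oo(\eps)\cdot\eps\to 0$ collects the $\oo(\eps)$ pieces summed over $O(\eps^{-1})$ steps. By Lemma \ref{Lemma_use_mclt_moments}, $\bE\xi_t \le \bE\xi_{\eps^{-1}\sft}$ is uniformly bounded (indeed it converges). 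Applying the discrete Gr\"onwall inequality then gives $a_t \le C\,e^{\oo(\eps)\cdot t} \le C' = O(1)$ uniformly, so $W_t$ has uniformly bounded first moment; the same argument with the Burkholder--Rosenthal inequality (exactly as in the proof of Lemma \ref{Lemma_use_mclt_moments}, since the martingale structure of $S_t$ is intact and the $\mathcal R_m$ are now controlled by $\oo(\eps)$ times already-bounded quantities) gives uniformly bounded $2p$-th moments of $W_t$ for every $p$. With these a priori bounds in hand, re-examine $\sum_{m=0}^{\eps^{-1}\sft}\mathcal R_m$: it is bounded in $L^p$ by $\oo(\eps)\cdot\eps^{-1}\sft\cdot(1 + \sup_m (\bE|W_m|^p)^{1/p} + \sup_m\bE|W_m|) = \oo(1)$. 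Therefore $\sum_m\mathcal R_m\to 0$ in $L^p$, hence in probability, uniformly over admissible pairs in compact sets.

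The step I expect to be the main obstacle is keeping the Gr\"onwall bootstrap self-consistent: the bound \eqref{e:x40_2} on $\mathcal R_m$ already involves $\bE|G_m|$ (equivalently $\bE|W_m|$), so one cannot estimate $\mathcal R_m$ before one knows $W_m$ is controlled, and $W_m$ is controlled only via the identity \eqref{e:x42} which contains $\mathcal R_m$. The resolution is to run Gr\"onwall on the scalar sequence $a_t=\bE|W_t|$ first (where the martingale term contributes only its bounded expectation $\bE|S_t|$ and the feedback is linear with an $\oo(\eps)$ coefficient), obtain $a_t=O(1)$, then feed this back to control the error sum and to run the higher-moment version. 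One must also check that the $\oo(1)$ constants produced by Theorem \ref{Theorem_LLN_abstract} (used inside Proposition \ref{Proposition_stochastic_evolution} and Corollary \ref{Corollary_CLT_increment} to produce \eqref{e:x40_2}) are genuinely uniform in $t=0,1,\dots,T-1$ and in $u$ ranging over a compact set of admissible pairs — this is exactly the uniformity asserted in those statements and in Lemma \ref{Lemma_discrete_flow}, so it is available, but it should be invoked explicitly. The remaining bookkeeping — tracking that $\eps^{-1}\sft = O(\eps^{-1})$ steps each contributing $\oo(\eps)$ multiply to $\oo(1)$ — is routine.
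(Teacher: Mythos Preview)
Your approach is essentially the same as the paper's: use \eqref{e:x42}, run a discrete Gr\"onwall argument to get uniform moment bounds on $W_t$, then feed these back into \eqref{e:x40_2} (now using the $\oo(1)$ rather than $O(1)$ form of the constants) to show $\sum_m\mathcal R_m\to 0$ in every $L^{2p}$. The one place where the paper is more careful is the higher-moment step: your phrase ``$\mathcal R_m$ are now controlled by $\oo(\eps)$ times already-bounded quantities'' is slightly circular, since bounding $\bE|\sum_m\mathcal R_m|^{2p}$ requires $\bE|W_m|^{2p}$, not just $\bE|W_m|$; the paper avoids this by running the Gr\"onwall recursion directly on $b_t:=\bE|W_t|^{2p}$ (their inequality \eqref{e:sum0}), using Burkholder--Rosenthal only for the martingale part $S_t$. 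This is a minor fix --- rerun your Gr\"onwall with $a_t$ replaced by $\bE|W_t|^{2p}$ --- and once made your argument and the paper's coincide.
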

\begin{proof}

Summing the result of Corollary \ref{Corollary_CLT_increment}, we get for any $1\leq \eps^{-1}\sfs\leq \eps^{-1}\sft$, we have
\begin{equation}\label{e:duz}
 \frac{1}{\eps} G_{\eps^{-1}\sfs}\left(z^\eps(\sfs,u)\right)\partial_u z^\eps(\sfs,u) = \sum_{m=0}^{\eps^{-1}\sfs}\Delta\cM_m\bigl(z^\eps(\eps m,u)\bigr)\partial_u z^\eps(\eps m+\eps,u)+ \sum_{m=0}^{\eps^{-1}\sfs} \mathcal R_m',
\end{equation}
where, as a consequence of \eqref{e:x40_2}, the remainders satisfy for a constant $C_1>0$:
\begin{align}\label{e:mR}
|\mathcal R_{m+1}'|\leq C_1\bigl(\varepsilon+|G_{ m}(z^\eps(\eps m,u))|+\bE|G_{ m}(z^\eps(\eps m,u))|\bigr).
\end{align}
A corollary of \eqref{e:mR} and Hoelder's inequality is
\begin{align}\label{e:mR_p}
\bE|\mathcal R_{m+1}'|^{2p}\leq C_2\bigl(\varepsilon^{2p}+\bE|G_{ m}(z^\eps(\eps m,u))|^{2p}\bigr), \qquad p=1,2,\dots.
\end{align}
Next, dividing by $\partial_u z^\eps(\sfs,u)$ and rising both sides of \eqref{e:duz} to $2p$-th moments, we get
\begin{multline}\label{e:sum00}
\bE\left[|G_{\eps^{-1}\sfs}/\varepsilon|^{2p}\right]
\leq  \frac{2^{2p}}{|\partial_u z^\eps(\sfs,u)|^{2p}}\Biggl(\bE \left|\sum_{m=0}^{\eps^{-1}\sfs}\Delta\cM_m\bigl(z^\eps(\eps m,u)\bigr)\partial_u z^\eps(\eps m+\eps,u)\right|^{2p}\\ + \bE \left(\sum_{m=0}^{\eps^{-1}\sfs} |\mathcal R_m'|\right)^{2p}\Biggr)
\leq C_3\left(1+ (\eps^{-1}\sfs+1)^{2p-1} \sum_{m=0}^{\eps^{-1}\sfs} \bE |\mathcal R_m'|^{2p} \right)
\\ \leq C_4\left(1+ \varepsilon \sum_{m=0}^{\eps^{-1}\sfs}\bE |\mathcal R_m'/\varepsilon|^{2p} \right),
\end{multline}
where in the second line we used \eqref{e:Ros1} to bound the first term. By plugging \eqref{e:mR_p} into \eqref{e:sum00}, we get the following estimate,
\begin{align}\label{e:sum0}
\bE\left[|G_{\eps^{-1}\sfs}/\varepsilon|^{2p}\right]
\leq C_5\left(1+\varepsilon\sum_{m=0}^{\eps^{-1}\sfs-1}\bE\left[ |G_{m}/\eps|^{2p}\right]\right).
\end{align}
We use \eqref{e:sum0} to prove by induction that
\begin{align}\label{e:sumG}
\sum_{m=0}^{\eps^{-1}\sfs-1}\bE\left[ |G_{m}/\eps^{-1}|^p\right]\leq \frac{1}{\varepsilon}\left((1+\eps C_5)^{\eps^{-1}\sfs}-1\right),
\end{align}
where we emphasize that the constants $C_5$ in \eqref{e:sum0} and in \eqref{e:sumG} are the same.
The statement \eqref{e:sumG} holds for $\sfs=0$, because the sum is empty and the right-hand side of the inequality also vanishes. Next we prove it for $\sfs+\eps$ assuming that it holds for $\sfs$. Using the induction hypothesis, \eqref{e:sum0} gives that
\begin{align}\begin{split}\label{e:Gsum1}
\bE\left[|G_{\eps^{-1}\sfs}/\varepsilon|^{2p}\right]\leq C_5\left( 1+\left((1+\eps C_5)^{\eps^{-1}\sfs}-1\right)
 \right)
 = C_5 (1+\eps C_5)^{\eps^{-1}\sfs}.
\end{split}\end{align}
Combining \eqref{e:sumG} and \eqref{e:Gsum1}, we get
\begin{align*}
\sum_{m=0}^{\eps^{-1}\sfs}\bE\left[ |G_{m}/\eps^{-1}|^p\right]
&\leq
\frac{1}{\varepsilon}\left((1+\eps C_5)^{\eps^{-1}\sfs}-1\right)
+ C_5 (1+\eps C_5)^{\eps^{-1}\sfs}\\
&=
\frac{1}{\varepsilon}\left((1+\eps C_5)^{\eps^{-1}(\sfs+1)}-1\right).
\end{align*}
This finishes the induction. In particular, as a consequence of \eqref{e:Gsum1}, we have a uniform bound: \begin{align}\begin{split}\label{e:Gmoment}
\bE\left[|G_{\eps^{-1}\sfs}/\varepsilon|^{2p}\right]
 \leq C_6, \qquad \text{for any }0\leq \sfs\leq \sft.
\end{split}\end{align}

\smallskip

Once we know the uniform bound \eqref{e:Gmoment} we can obtain much better estimates on $\mathcal R_m$. Indeed, we recall that, in fact, according to \eqref{e:x40_2}, the constants in \eqref{e:mR} and \eqref{e:mR_p} tend to $0$ as $\eps\to 0$. In other words, \eqref{e:mR_p} can be replaced by a much stronger bound
\begin{align}\label{e:mR_p_strong}
\bE|\mathcal R_{m+1}'|^{2p}\leq o(1) \cdot \bigl(\varepsilon^{2p}+\bE|G_{ m}(z^\eps(\eps m,u))|^{2p}\bigr), \qquad p=1,2,\dots, \qquad \eps\to 0.
\end{align}
Together with \eqref{e:Gmoment}, this implies
\begin{equation}
 \bE|\mathcal R_{m}'|^{2p}\leq o(\eps^{2p}),
\end{equation}
with implicit constant in $o(1)$ being uniform over $m$. Therefore, for any $p=1,2,\dots$,
\begin{equation}
\bE \left|\sum_{m=0}^{\eps^{-1}\sft} \mathcal R_m'\right|^{2p}\le (\eps^{-1} \sft+1)^{2p} \sup_{0\le m \le \eps^{-1}\sft}  \bE|\mathcal R_{m}'|^{2p}=o(1).
\end{equation}
Hence, the second sum in \eqref{e:x42} does not contribute to $\eps\to 0$ asymptotics. On the other hand, the asymptotics of the first sum has already been analyzed in Lemmas \ref{Lemma_use_mclt} and \ref{Lemma_use_mclt_moments}.
\end{proof}

We can now finish the proof of Theorem \ref{Theorem_fluctuations_characteristic}.

\begin{proof}[Proof of Theorem \ref{Theorem_fluctuations_characteristic}]
We will only prove the $k=1$ statement, as the general case is the same.

Take an admissible pair $(\sft,u)$ and let $\mathfrak z=z(\sft,u)$. Define $u^\eps=[z^\eps(\sft,\cdot)]^{-1}(\mathfrak z)$, where the functional inverse is applied in the second argument of the function $z^{\eps}(\sft,\cdot)$. Since $u\mapsto z(\sft,u)$ is a conformal bijection by Lemma \ref{Lemma_bijectivity} (in particular, its $u$--derivative is non-zero), using Lemma \ref{Lemma_discrete_flow} we conclude that $u^{\eps}$ is well-defined for all small-enough $\eps$ and $\lim_{\eps\to 0} u^\eps=u$.

Lemmas \ref{Lemma_use_mclt}, \ref{Lemma_use_mclt_moments}, \ref{Lemma_G_at_eps_CLT} together with Lemma \ref{Lemma_discrete_flow} imply that $\frac{1}{\eps} G_{\eps^{-1}\sft}\left(z^\eps(\sft,u^\eps)\right)\partial_u z^\eps(\sft,u^\eps)$ is asymptotically Gaussian (in the sense of convergence of the moments), with the variance given by \eqref{e:cov_characteristic}, in which you replace $z(\sft,u)$ by $\mathfrak z$. Because $z^\eps(\sft,u^\eps)=\mathfrak z=z(\sft,u)$ and  $\lim_{\eps\to 0}\partial_u z^\eps(\sft,u^\eps)=\partial_u z(\sft,u)$  by Lemma \ref{Lemma_discrete_flow}, the convergence of $\frac{1}{\eps} G_{\eps^{-1}\sft}\left(z^\eps(\sft,u^\eps)\right)\partial_u z^\eps(\sft,u^\eps)$ is equivalent to the desired convergence of $\frac{1}{\eps} G_{\eps^{-1}\sft}\left(z(\sft,u)\right)\partial_u z(\sft,u)$ and the limits are the same.
\end{proof}

\subsection{Matching the covariance structure with the Gaussian Free Field}
In this section we finish the proof of Theorem \ref{t:GFF}. Theorem \ref{Theorem_fluctuations_characteristic} implies that the fluctuations of the height function of tilings are asymptotically Gaussian and it remains to match the covariance structure with that of the Gaussian Free Field. We only present the details under Assumption \ref{a:para}: $0<\sfq<1$ and large real $\kappa$.

Our first step is to recast the result of Theorem \ref{Theorem_fluctuations_characteristic} in terms of the height functions. In parallel with \eqref{e:x41}, we introduce a function $w(\sft,w_0):=\sfb_\sft(z(\sft,\sfb_0^{-1} (w_0))$, which maps $w_0$ to $w^\sft$ and is given more explicitly by \eqref{e:x17}.

\begin{corollary} \label{Corollary_height_covariance}
 In the setting of Theorem \ref{t:GFF}, fix $k\in\mathbb Z_{>0}$ times $0< \sft_1\leq \sft_2 \leq \dots \leq \sft_k < \sfT$ and $k$ real-valued functions $f_1(x), \dots, f_k(x)$, such that $f_i(x)=\partial_x [F_i(\sfb_\sft(x))]$ and $F_i(x)$ is analytic in a complex neighborhood of the real line $\mathbb R$. Then the random vector
 \begin{equation}
 \label{e:x47}
  \left(\sqrt{\pi}\int_{\bl(\sft_i)}^{\br(\sft_i)} f_i(\sfx) \bigl(h(\varepsilon^{-1}\sft_i,\varepsilon^{-1}\sfx)-\bE[h(\varepsilon^{-1}\sft_i,\varepsilon^{-1}\sfx)]\bigr)\rd \sfx\right)_{i=1}^k,
 \end{equation}
 converges in the sense of moments to a centered Gaussian vector with covariance of $i$--th and $j$--th components given by
 \begin{equation}
 \label{e:x49}
  -\frac{1}{4\pi}\oint_{\cC_i}\oint_{\cC_j}\del_{w_i}\del_{w_j}\bigl[\log \left(w_i-w_j\right)\bigr]\, F_i(w(\sft_i,w_i))F_j(w(\sft_j,w_j))\, \rd w_i\rd w_j,
 \end{equation}
 where the positively oriented contour $\cC_i$ in the complex plane is chosen so that its $w(\sft_i,\cdot)$--image in the $w$--plane is a simple curve enclosing $\sfb_\sft([\bl(\sft_i),\br(\sft_i)])$ and staying in the neighborhood where $F_i$ is analytic; similarly for $\cC_j$. If we assume that $t_i\leq t_j$, then $\cC_i$ should be inside $\cC_j$.
\end{corollary}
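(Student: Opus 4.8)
\textbf{Proof proposal for Corollary \ref{Corollary_height_covariance}.}

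The plan is to reduce the statement about linear statistics of the height function to the statement about the Stieltjes-type transforms $G_t(z)$ that we already controlled in Theorem \ref{Theorem_fluctuations_characteristic}, and then to perform the change of variables sending everything to the initial time $\sft=0$. First I would observe that for a test function of the stated form $f_i(\sfx)=\partial_\sfx[F_i(\sfb_{\sft_i}(\sfx))]$, integrating by parts in $\sfx$ and using $\eps h(\eps^{-1}\sft,\eps^{-1}\sfx)=\int_{-\infty}^\sfx \rho(s;\bmx(\eps^{-1}\sft))\,\rd s$ converts $\int f_i(\sfx)\,\eps h(\eps^{-1}\sft_i,\eps^{-1}\sfx)\,\rd\sfx$ into $-\int F_i(\sfb_{\sft_i}(s))\,\rho(s;\bmx(\eps^{-1}\sft_i))\,\rd s$. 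Now apply \eqref{e:linear_statistic_contour}: since $F_i(\sfb_{\sft_i}(z))$ is analytic in a neighborhood of $[\bl(\sft_i),\br(\sft_i)]$ by hypothesis, we may write this as a contour integral of $F_i(\sfb_{\sft_i}(w))$ against $\int_{\bl}^{\br}\frac{\sfb_{\sft_i}'(w)\rho(s;\bmx(\eps^{-1}\sft_i))}{\sfb_{\sft_i}(w)-\sfb_{\sft_i}(s)}\,\rd s$ over a small contour $\omega_i$ around $[\bl(\sft_i),\br(\sft_i)]$. Subtracting the mean, the quantity in \eqref{e:x47} becomes (up to the $\sqrt\pi$ prefactor, and modulo the $\eps^{-1}$ normalization, which is exactly the $1/\eps$ in the definition of $G$'s convergence) a fixed linear combination of contour integrals $\frac{1}{2\pi\ri}\oint_{\omega_i} F_i(\sfb_{\sft_i}(w))\,G_{\eps^{-1}\sft_i}(w)\,\rd w$.

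Next I would deform $\omega_i$ in the $w$-plane to the image $w(\sft_i,\cdot)$ of the contour $\cC_i$, which is legitimate because $G_{\eps^{-1}\sft_i}(w)$ is analytic off $\sfb_{\sft_i}([\bl(\sft_i),\br(\sft_i)])$ and $F_i$ is analytic in the prescribed neighborhood; then I would pull back along the characteristic flow, changing variables $w=\sfb_{\sft_i}(z(\sft_i,u_i))=w(\sft_i,\sfb_0(u_i))$, so that $G_{\eps^{-1}\sft_i}(w)\,\rd w$ becomes $G_{\eps^{-1}\sft_i}(z(\sft_i,u_i))\,\partial_{u_i}\sfb_{\sft_i}(z(\sft_i,u_i))\,\rd u_i$. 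The crucial point is that $\partial_{u_i}\sfb_{\sft_i}(z(\sft_i,u_i))=\sfb_{\sft_i}'(z(\sft_i,u_i))\,\partial_{u_i}z(\sft_i,u_i)$, so up to the deterministic, non-vanishing factor $\sfb_{\sft_i}'$ the integrand is precisely $\frac{1}{\eps}G_{\eps^{-1}\sft_i}(z(\sft_i,u_i))\,\partial_{u_i}z(\sft_i,u_i)$, which is exactly the random variable that Theorem \ref{Theorem_fluctuations_characteristic} tells us is asymptotically Gaussian in the sense of moments, uniformly over admissible pairs in compact sets. Since the maps $u\mapsto z(\sft,u)$ are conformal bijections by Lemma \ref{Lemma_bijectivity}, for $z$ on $\cC_i$ close enough to $\sfb_{\sft_i}([\bl,\br])$ the preimages $u$ stay admissible and range over a contour in $\bD_0$; I would choose the contours $\cC_i$ that way from the start (nesting $\cC_i\subset\cC_j$ when $\sft_i\leq\sft_j$ so that the covariance formula \eqref{e:cov_characteristic}, evaluated at $\tau=\min(\sft_i,\sft_j)=\sft_i$, applies with the inner variable inside the outer).

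It then remains to plug the covariance \eqref{e:cov_characteristic} into the double contour integral and simplify. Writing $\mathfrak z_i=z(\sft_i,u_i)$ etc., the covariance of the $i$-th and $j$-th components of \eqref{e:x47} is $\pi$ times
\begin{multline*}
 \frac{1}{(2\pi\ri)^2}\oint_{\cC_i}\oint_{\cC_j} F_i(\sfb_{\sft_i}(\mathfrak z_i))F_j(\sfb_{\sft_j}(\mathfrak z_j))\,\sfb_{\sft_i}'(\mathfrak z_i)\sfb_{\sft_j}'(\mathfrak z_j)\,\partial_{u_i}z(\sft_i,u_i)\,\partial_{u_j}z(\sft_j,u_j)\\
 \times\frac{\partial_{u_i}\partial_{u_j}\ln\!\big[\tfrac{\sfb_0(u_i)-\sfb_0(u_j)}{\sfb_{\sft_i}(\mathfrak z_i)-\sfb_{\sft_j}(\mathfrak z_j)}\big]}{\partial_{u_i}z(\sft_i,u_i)\,\partial_{u_j}z(\sft_j,u_j)}\;\rd u_i\,\rd u_j.
\end{multline*}
Here the factors $\partial_{u_i}z$ cancel, and — this is the step I expect to be the main technical point — I would change the outer integration variable to $w_i=\sfb_0(u_i)=w(\sft_i,\cdot)^{-1}(\sfb_{\sft_i}(\mathfrak z_i))$, i.e. run the above change of variables backwards, noting $w(\sft_i,w_i)=\sfb_{\sft_i}(\mathfrak z_i)$ by definition; the $\ln\big[\sfb_{\sft_i}(\mathfrak z_i)-\sfb_{\sft_j}(\mathfrak z_j)\big]$ term is killed because $\partial_{u_i}\partial_{u_j}\ln[w(\sft_i,w_i)-w(\sft_j,w_j)]$, written in $w_i,w_j$ and integrated against analytic $F$'s, contributes a pure double derivative whose cross term has no pole inside the nested contours once one uses $\partial_{u_i}\partial_{u_j}=(\partial_{u_i}w_i)(\partial_{u_j}w_j)\partial_{w_i}\partial_{w_j}+\dots$, and the $\dots$ single-derivative pieces integrate to zero against closed contours. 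What survives is exactly $-\tfrac{1}{4\pi}\oint_{\cC_i}\oint_{\cC_j}\partial_{w_i}\partial_{w_j}\ln(w_i-w_j)\,F_i(w(\sft_i,w_i))F_j(w(\sft_j,w_j))\,\rd w_i\,\rd w_j$, which is \eqref{e:x49}. Finally I would record, as is standard (cf. \cite[Lecture 12]{VG2020}), that \eqref{e:x49} is the covariance of pairings of the $\bar\Omega$-pullback of the GFF in $\bH^+$ with the corresponding test-measures, using that $\bar\Omega$ and $w(\sft,\cdot)$ differ only by the universal map appearing in \eqref{eq_GFF_cov}; this is the content linking Corollary \ref{Corollary_height_covariance} back to the statement of Theorem \ref{t:GFF}. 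The main obstacle throughout is bookkeeping of the contours and orientations so that all deformations are valid and the unwanted residues genuinely cancel; the probabilistic input is entirely supplied by Theorem \ref{Theorem_fluctuations_characteristic}.
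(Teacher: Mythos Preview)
Your overall strategy---integrate by parts to pass from $h$ to $\rho$, rewrite as a contour integral of $G_t$, pull back along the characteristic flow, and invoke Theorem~\ref{Theorem_fluctuations_characteristic}---is exactly the paper's. But two steps are not yet justified.

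\textbf{Contour construction.} You assert that a small closed contour $\omega_i$ around $[\bl(\sft_i),\br(\sft_i)]$ can be pulled back through the flow to a closed contour of admissible pairs. This is the delicate point: the bijection of Lemma~\ref{Lemma_bijectivity} only targets the interior $\bD_\sft^\circ$, and a closed loop enclosing $[\bl,\br]$ in the $z$-plane cannot lie entirely in $\bD_\sft^\circ\cup\overline{\bD_\sft^\circ}$ (this union misses the real axis and the right/top borders). The paper deals with this by choosing a specific contour $\Upsilon_i$ (Figure~\ref{f:contours_CLT}) with a ``dashed'' portion running along $\partial(\bD_\sft\cup\bar\bD_\sft)$; the integral over that portion vanishes by the symmetries $\sfb_\sft(z+2\pi\ri/\ln\sfq)=\sfb_\sft(z)$ and $\sfb_\sft(\sft/2-\log_\sfq\kappa+\ri r)=\sfb_\sft(\sft/2-\log_\sfq\kappa-\ri r)$ applied to the integrand $F_i(\sfb_\sft(z))G_t(z)$. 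Only the remaining ``solid'' piece $\Upsilon_i^s$ lies in a region where Lemma~\ref{Lemma_bijectivity} (with Remark~\ref{Remark_extension_to_the_boundary}) provides a preimage $\Upsilon_i^0$, and its $\sfb_\sft$-image happens to be a closed loop. This step---not the covariance bookkeeping---is the reason for the restrictive form $f_i=\partial_x[F_i(\sfb_\sft(x))]$, as the Remark following the Corollary notes.

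\textbf{Vanishing of the second logarithm.} There is a slip in your displayed covariance: by \eqref{e:cov_characteristic} the denominator of the log is $\sfb_\tau(z(\tau,u_i))-\sfb_\tau(z(\tau,u_j))$ with \emph{both} points flowed to $\tau=\min(\sft_i,\sft_j)=\sft_i$, not $\sfb_{\sft_i}(\mathfrak z_i)-\sfb_{\sft_j}(\mathfrak z_j)$; with your expression the term would not vanish. Also, since $w_i=\sfb_0(u_i)$ depends only on $u_i$, the chain rule gives $\partial_{u_i}\partial_{u_j}=\sfb_0'(u_i)\sfb_0'(u_j)\,\partial_{w_i}\partial_{w_j}$ exactly; there are no ``$\ldots$'' single-derivative pieces. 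The actual argument (for which the nesting hypothesis is used) is: with $\tau=\sft_i$, change the $u_i$-variable to $v=\sfb_\tau(z(\tau,u_i))=w(\sft_i,w_i)$, so that $F_i(\sfb_{\sft_i}(z(\sft_i,u_i)))=F_i(v)$ and the inner integral becomes $\oint_{w(\sft_i,\cC_i)}F_i(v)\,(v-\sfb_\tau(z(\tau,u_j)))^{-2}\,\rd v$. The pole $v=\sfb_\tau(z(\tau,u_j))=w(\sft_i,w_j)$ lies \emph{outside} $w(\sft_i,\cC_i)$ because $w_j\in\cC_j$ is outside $\cC_i$ and the homeomorphism $w_0\mapsto w(\sft_i,w_0)$ preserves nesting; hence the integral is zero.
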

\begin{remark}
 It is plausible that a version of Corollary \ref{Corollary_height_covariance} holds for general analytic $f_i(x)$, i.e.\ the more restrictive form $f_i(x)=\sfb'_\sft(x) F_i'(\sfb_\sft(x))$ is not necessary. A technical difficulty in proving such an extension is the singular behavior of the flow $z^\sft$, as in \eqref{e:x41}, at the corners of the domain $\bD_\sft$, i.e.\ at the points $\sft/2-\log_\sfq(\kappa)$ and $\sft/2-\log_\sfq(\kappa)-\ii \frac{\pi}{\ln \sfq}$, where $f_\sft(z)$ equals $1$. We do not have to deal with the flow going through these points (which is an obstacle for applying Theorem \ref{Theorem_fluctuations_characteristic}), if we use the more restrictive choice of $f_i(x)$, as in Corollary \ref{Corollary_height_covariance}.
\end{remark}

\begin{proof} We recall the relation between the height function and the empirical density of the particles used in the definition \eqref{e:centered_G_def} of $G_t(z)$:
$$
  \eps h(\eps^{-1}\sft,\eps^{-1}\sfx)=\int_{-\infty}^{\sfx} \rho(s,\bmx(\eps^{-1}\sft)) \rd s.
$$
Hence, integrating by parts,
using
\eqref{e:linear_statistic_contour} and \eqref{e:centered_G_def}  we have
\begin{multline}
\int_{\bl(\sft_i)}^{\br(\sft_i)} f_i(\sfx) \bigl(h(\varepsilon^{-1}\sft_i,\varepsilon^{-1}\sfx)-\bE[h(\varepsilon^{-1}\sft_i,\varepsilon^{-1}\sfx)]\bigr)\rd \sfx
\\=-\frac{1}{\eps}
\int_{\bl(\sft_i)}^{\br(\sft_i)} F_i(\sfb_{\sft_i}(s)) \bigl(\rho(s,\bmx(\eps^{-1}\sft))-\bE[ \rho(s,\bmx(\eps^{-1}\sft))]\bigr)\rd s
=\frac{\eps^{-1}}{2\pi\ii} \oint_{\Upsilon_i} F_i(\sfb_{\sft_i}(z)) G_{\eps^{-1}\sft_i}(z) \rd z,
\label{e:x45}
\end{multline}
where $\Upsilon_i$ is the contour shown in Figure \ref{f:contours_CLT}. The main feature of this contour is that it encloses the $[\bl(\sft_i),\br(\sft_i)]$ interval and no other singularities of $G_{\eps^{-1}\sft_i}(z)$; in addition its $\sfb_{\sft_i}(\cdot)$--image stays close to the real line, guaranteeing that $F_i(\sfb_{\sft_i}(z))$ is holomorphic everywhere inside the contour. The contour depends on a large negative real parameter $M_i$ and consists of two parts: the first part (shown in solid in Figure \ref{f:contours_CLT}) starts at $M_i-\ii \pi/\ln \sfq$ on the top border of $\bD_\sft$, stays inside $\bD_\sft$ and closely follows its boundary until reaching $M_i$, and then proceeds in the same way from $M_i$ to $M_i+\ii \pi/\ln \sfq$ staying inside $\overline \bD_\sft$. The second part (shown dashed in Figure \ref{f:contours_CLT}) returns from $M_i+\ii \pi/\ln \sfq$ to $M_i-\ii \pi/\ln \sfq$ by following the three straight lines of the boundary of $\bD_\sft\cup \overline \bD_\sft$.

\begin{figure}[t]
\center
\includegraphics[width=0.8\linewidth]{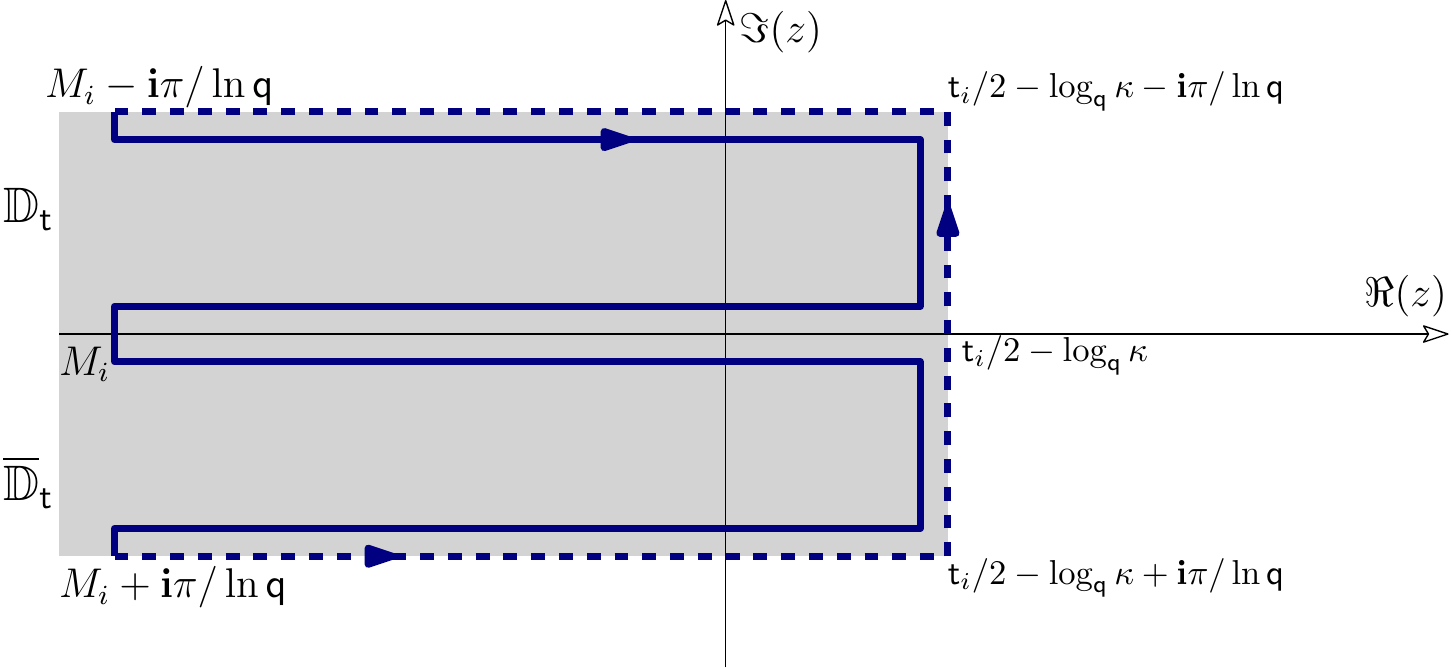}

\bigskip
\bigskip

\includegraphics[width=0.8\linewidth]{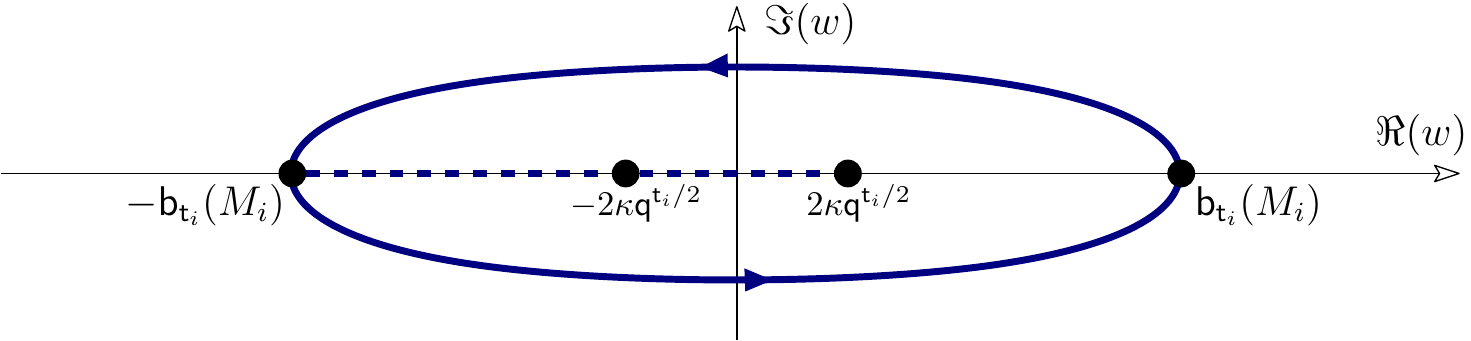}
\caption{Top panel: contour $\Upsilon_i$; the integral over the dashed part vanishes. Bottom panel: contour $\sfb_{\sft_i}(\Upsilon_i)$ has two parts; the solid part is $w(\sft_i,\cdot)$--image of $\cC_i$.}
\label{f:contours_CLT}
\end{figure}

Observe that the part of the integral \eqref{e:x45} over the dashed contour  in Figure \ref{f:contours_CLT} vanishes: indeed the integral over its part in the upper halfplane is minus the integral over its part in the lower half-plane, as follows from the identities:
$$
 F_i(\sfb_{\sft_i}(z+2 \ii \pi/\ln\sfq  )) G_{\eps^{-1}\sft_i}(z+2 \ii \pi/\ln\sfq )=F_i(\sfb_{\sft_i}(z)) G_{\eps^{-1}\sft_i}(z)
$$
and for $r\in \mathbb R$,
\begin{align*}
 F_i(\sfb_{\sft_i}(\sft_i/2-\log_\sfq \kappa + \ii r )) G_{\eps^{-1}\sft_i}(\sft_i/2-\log_\sfq \kappa + \ii r )\\
 =-F_i(\sfb_{\sft_i}(\sft_i/2-\log_\sfq \kappa - \ii r )) G_{\eps^{-1}\sft_i}(\sft_i/2-\log_\sfq \kappa - \ii r ).
\end{align*}
Hence, it is sufficient to only integrate in \eqref{e:x45} over the solid part of the $\Upsilon_i$--contour, which we denote $\Upsilon_i^{s}$; the $\sfb_{\sft_i}$--image of $\Upsilon_i^s$ is a simple closed loop, as in the bottom panel of Figure \ref{f:contours_CLT}.

Lemma \ref{Lemma_bijectivity}, Remark \ref{Remark_extension_to_the_boundary}, and the identity $z(\sft,\bar u)=\overline{ z(\sft,u)}$
 guarantee that we can find a contour $\Upsilon^0_i$ in the $\bD_0\cup \bar \bD_0$ inside the $u$--plane, such that $\Upsilon_i^c$ is its image under $u\mapsto z(\sft_i,w_0)$. We can then change variables $z=z(\sft_i,u_i)$ in \eqref{e:x45} converting it into
\begin{equation}
\label{e:x46}
 \frac{\eps^{-1}}{2\pi\ii} \oint_{\Upsilon_i^0} F_i\bigl(\sfb_{\sft_i}(z(\sft_i,u_i))\bigr) G_{\eps^{-1}\sft_i}(z(\sft_i,u_i)) \partial_{u_i} z(\sft_i,u_i) \rd u_i.
\end{equation}
The joint moments of the integrals \eqref{e:x46} are readily reduced to the integrals of the joint moments of random variables $G_{\eps^{-1}\sft_i}(z(\sft_i,u_i)) \partial_{u_i} z(\sft_i,u_i)$, which were computed in Theorem \ref{Theorem_fluctuations_characteristic}. Hence, applying this theorem (all the points $(\sft_i, u_i)$, $u_i\in \Upsilon^0_i$, are admissible as long as $M_i$ is large enough), we conclude that the integrals \eqref{e:x46} are asymptotically Gaussian (jointly over $i=1,\dots,k$) with covariance given by
\begin{multline}\label{e:x48}
   -\frac{\eps^{-2}}{4\pi^2} \bE\Biggl[ \oint_{\Upsilon_i^0} F_i\bigl(\sfb_{\sft_i}(z(\sft_i,u_i))\bigr) G_{\eps^{-1}\sft_i}(z(\sft_i,u_i)) \partial_{u_i} z(\sft_i,u_i) \rd u_i \\ \times \oint_{\Upsilon_j^0} F_j\bigl(\sfb_{\sft_j}(z(\sft_j,u_j))\bigr) G_{\eps^{-1}\sft_j}(z(\sft_j,u_j)) \partial_{u_i} z(\sft_j,u_j) \rd u_j\Biggr]\to\\ -\frac{1}{4\pi^2} \oint_{\Upsilon_i^0}  \oint_{\Upsilon_j^0}  \partial_{u_i} \partial_{u_j} \ln \left[ \frac{\sfb_{0}(u_i)-\sfb_{0}(u_j) }{\sfb_{\tau}\bigl(z(\tau,u_i)\bigr)-\sfb_{\tau}\bigl(z(\tau,u_j)\bigr) } \right]  F_i\bigl(\sfb_{\sft_i}(z(\sft_i,u_i))\bigr) F_j\bigl(\sfb_{\sft_j}( z(\sft_j,u_j))\bigr)  \rd u_j \rd u_i,
\end{multline}
where $\tau=\min(\sft_i,\sft_j)$. We split the logarithm of the ratio in \eqref{e:x48} into the sum of two logarithms. The term involving $\sfb_0$ matches \eqref{e:x49}, if we change the variables $w_i=\sfb_0(u_i)$, denote $\cC_i=\sfb_0(\Upsilon_i^0)$ and take into account $\sqrt{\pi}$ prefactor in \eqref{e:x47}. It remains to show that the term involving $\sfb_\tau$ vanishes.

 Up to now we have not used the nesting condition for $\cC_i$ and $\cC_j$ contours, and it becomes important at this point. We now assume $t_i\leq t_j$, so that $\tau=t_i$ and we also assume that $\cC_i$ is inside $\cC_j$, which implies $M_i> M_j$. The part of \eqref{e:x48} involving $\sfb_\tau$ is transformed into
\begin{align}\begin{split}
\label{e:x51}
 \frac{1}{4\pi^2} \oint_{\Upsilon_i^0}  \oint_{\Upsilon_j^0}  &\frac{\partial_{u_i} [z(u_i,\tau)]\sfb'_{\tau}\bigl(z(\tau,u_i)\bigr) \cdot \partial_{u_j} [z(\tau,u_j)]\, \sfb'_{\tau}\bigl(z(\tau,u_j)\bigr) }{\left[\sfb_{\tau}\bigl(z(\tau,u_i)\bigr)-\sfb_{\tau}\bigl(z(\tau,u_j)\bigr)\right]^2 }\\
& \times F_i\bigl(\sfb_{\sft_i}(z(\tau,u_i))\bigr) F_j\bigl(\sfb_{\sft_j}(z(\sft_j,u_j))\bigr)  \rd u_j \rd u_i.
\end{split}\end{align}
We isolate in the last integral the $u_i$--dependent factors and change the variable $v=\sfb_\tau(z(\tau,u_i))$ in this part, getting
\begin{equation}
\label{e:x52}
\oint_{w(\sft,\cC_i)} \frac{ F_i(v)  }{\left[v-\sfb_{\tau}\bigl(z(\tau,u_j)\bigr)\right]^2 } \rd v,
\end{equation}
where the integration contour ${w(\sft,\cC_i)}$ is precisely the solid contour in the bottom panel of Figure \ref{f:contours_CLT}. Because $\sfb_{\sft_j}(u_j)$ belong to $\cC_j$, it is outside $\cC_i$; hence, $\sfb_{\tau}\bigl(z(\tau,u_j))$ is outside the contour $w(\sft,\cC_i)$ (the map $w_0\mapsto w(\sft, w_0)$ preserves nesting, because \eqref{e:x18} moves all points up). Therefore, the integrand in \eqref{e:x52} has no singularities inside the integration contour and the integrals in \eqref{e:x52} and in \eqref{e:x51} vanish.
\end{proof}

Next, we deal with the map $\Omega$ and the associated pullback of the Gaussian Free Field.

We recall that $\bH^\pm$ are the (open) upper and lower halfplanes. We also recall that the liquid region $\sfL(\sfP)$ consists of those $(\sft,\sfx)$, for which the equation \eqref{e:solveu} has non-real roots $u$, see Propositions \ref{Proposition_number_of_roots} and \ref{Proposition_liquid}. There are $4$ such roots, leading to two values of $\sfq^{-u}+\kappa^2 \sfq^u$. We let $u=u(\sft,\sfx)$ be the root in $\bD_0^\circ$ and set $\Omega(\sft,\sfx)=\sfq^{-u}+\kappa^2 \sfq^u$ corresponding to this root. Then $\Omega$ is in the lower half-plane $\bH^-$ and its conjugate $\bar \Omega$ is in $\bH^+$. Let us repeat and prove Lemma \ref{Lemma_bijection_intro}:

\begin{lemma}
\label{Lemma_bijection_main}
 The map $(\sft,\sfx)\mapsto \Omega(\sft,\sfx)$ is a bijection between the liquid region $\sfL(\sfP)$ and $\bH^-$.
\end{lemma}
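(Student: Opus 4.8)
\textbf{Proof proposal for Lemma \ref{Lemma_bijection_main}.}

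The plan is to relate the map $(\sft,\sfx)\mapsto \Omega(\sft,\sfx)$ to the complex characteristic flow studied in Section \ref{Section_Limit_shape_proofs} and to exploit the conformal bijectivity already established in Lemma \ref{Lemma_bijectivity}. First I would observe that, by the definition of $\Omega$, for a point $(\sft,\sfx)\in\sfL(\sfP)$ the quantity $\Omega(\sft,\sfx)=\sfq^{-u}+\kappa^2\sfq^u$ is nothing but $\sfb_0(u)=w^0$ where $u=u(\sft,\sfx)\in\bD_0^\circ$ is the non-real root of \eqref{e:solveu_2} selected in $\bD_0^\circ$ with the sign condition of Remark \ref{remark_u_choice}, i.e.\ $\Im[\sfb_0(u)-(\sfq^{-\sft}-1)\cS_0(\sfb_0(u))]<0$. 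Thus $\Omega$ maps $\sfL(\sfP)$ into $\bH^-$ because $w^0=\sfb_0(u)$ has negative imaginary part for $u\in\bD_0^\circ$ (recall $\sfb_0$ is a conformal bijection from $\bD_0^\circ$ to $\bH^-$). This handles well-definedness and the target set.

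Next I would prove surjectivity and injectivity simultaneously by identifying the fibers of $\Omega$ with the trajectories of the flow $w^\sft$ of \eqref{e:x17}. Given any $w_0\in\bH^-$, set $u=\sfb_0^{-1}(w_0)\in\bD_0^\circ$ and run the linear evolution $w^\sft = w_0-(\sfq^{-\sft}-1)\cS_0(w_0)$; since $\cS_0(w_0)$ has negative imaginary part (Step 4 of the proof of Theorem \ref{Theorem_three_descriptions}), $\Im w^\sft$ strictly increases, so there is a unique time $\sft=\sft(w_0)\in(0,\sfT)$ at which $w^\sft$ first becomes real, and at that moment $w^{\sft}=\sfq^{-\sfx}+\kappa^2\sfq^{\sfx-\sft}$ determines $\sfx$ (via the branch landing inside $\sfb_\sft^{-1}(\mathbb R)$, using monotonicity of $\sfb_\sft$ on $\bD_\sft$). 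By Proposition \ref{Proposition_liquid} the resulting pair $(\sft,\sfx)$ lies in $\sfL(\sfP)$, and by construction $\Omega(\sft,\sfx)=w_0$, giving surjectivity. For injectivity, suppose $\Omega(\sft_1,\sfx_1)=\Omega(\sft_2,\sfx_2)=w_0$; then both $(\sft_1,\sfx_1)$ and $(\sft_2,\sfx_2)$ lie on the single trajectory $\{(\sft,w^\sft)\}_{\sft\ge0}$ emanating from $w_0$, and each is the unique point where this trajectory meets the real axis, so $(\sft_1,\sfx_1)=(\sft_2,\sfx_2)$. The key technical input here is Lemma \ref{Lemma_bijectivity} (equivalently the free-probability input \cite[Lemma 4]{Biane_convlution} applied to the map \eqref{e:x50}), which guarantees that for each $\sft$ the boundary portion with strictly negative imaginary part maps bijectively onto $\sfb_\sft(\mathcal L_\sft)$ and, by continuity, that the endpoint map onto the real axis sweeps out all of the arctic boundary as $\sft$ ranges over $(0,\sfT)$; this is exactly what ensures that \emph{every} $w_0\in\bH^-$ exits through a genuine liquid-region point and that distinct liquid points cannot share a trajectory.

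The main obstacle I anticipate is bookkeeping the branch choices: \eqref{e:solveu} and \eqref{e:solveu_2} have several roots (Proposition \ref{Proposition_number_of_roots}), and one must be careful that the root selected by ``$u\in\bD_0^\circ$ and $\Im[\sfb_0(u)-(\sfq^{-\sft}-1)\cS_0(\sfb_0(u))]<0$'' is precisely the one produced by running the flow backward from the exit point, and that the two choices of $\sfq^{-u}+\kappa^2\sfq^u$ coming from the quadruple $(\uu,\bar\uu,\kappa^{-2}\uu^{-1},\kappa^{-2}\bar\uu^{-1})$ differ exactly by complex conjugation, so that exactly one lands in $\bH^-$. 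Once the dictionary $\Omega(\sft,\sfx)\leftrightarrow w^0$ and the flow \eqref{e:x17} are aligned with the sign conventions of Remark \ref{remark_u_choice} and Step 5 of the proof of Theorem \ref{Theorem_three_descriptions}, the bijectivity statement reduces entirely to Lemma \ref{Lemma_bijectivity}, and the remaining content is the elementary observation that a strictly-monotone-in-imaginary-part linear flow hits the real axis exactly once.
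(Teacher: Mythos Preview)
Your proposal is correct and follows essentially the same approach as the paper: both arguments identify $\Omega(\sft,\sfx)$ with the starting point $w^0$ of the linear flow \eqref{e:x17}, use the Nevanlinna property of $\cS_0$ to get strict monotonicity of $\Im w^\sft$, and read off $(\sft,\sfx)$ as the unique exit time and exit point on the real axis. The only detail you leave slightly implicit is why the exit time lies strictly below $\sfT$; the paper dispatches this by noting from \eqref{e:x50} that $\Im w^{\sfT}>0$.
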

\begin{proof} We take $\omega\in \bH^-$ and show that there exists $(\hat \sft,\sfx)\in \sfL(\sfP)$, such that $\omega=\Omega(\hat \sft,\sfx)$. As in Section \ref{Section_Limit_shape_proofs}, we consider the dynamics $w^\sft$ of \eqref{e:x17} started at $w^0=\omega$: equivalently, this is $w^\sft=\sfb_\sft(z^\sft)$ with $z^\sft$ solving \eqref{e:DE_combined}. We recall that by \eqref{e:Nevanlinna}, the imaginary part of $w^\sft$ is a continuous monotonously increasing function of $\sft$. By our choice of $\omega$,  $\Im(w^0)<0$. On the other hand, by \eqref{e:x50}, $\Im(w^\sfT)>0$. Hence, there is a unique choice of $0<\hat \sft<\sfT$, for which $w^{\hat \sft}$ is real. The equation $\sfq^{-\sfx}+\kappa^2 \sfq^{\sfx-\hat \sft}=w^{\hat \sft}$, as an equation on $\sfx$, has two real roots and we choose the smaller one, which matches $z^{\tilde \sft}$. Then $(\hat \sft,\sfx)\in\sfL(\sfP)$, as we explained right above Lemma \ref{Lemma_bijectivity} and in the proof of Proposition \ref{Proposition_liquid}. On the other hand, if we express $w^{\hat \sft}$ through $u$, using \eqref{e:x17} and $w^0=\sfq^{-u}+\kappa^2 \sfq^u$, then $\sfq^{-\sfx}+\kappa^2 \sfq^{\sfx-\hat \sft}=w^{\hat \sft}(u)$ becomes \eqref{e:solveu} as an equation for $u$. Therefore, $\Omega(\hat \sft,\sfx)=\sfq^{-u}+\kappa^2 \sfq^u=w^0=\omega$, as desired.

We also need to show that $(\hat \sft,\sfx)$ satisfying $\omega=\Omega(\hat \sft,\sfx)$ is unique. Indeed, given $(\hat \sft, \sfx)$, we construct $\omega$ using \eqref{e:solveu}. But then comparison of \eqref{e:x17} with \eqref{e:solveu} (they are the same) shows that the flow $w^\sft$, $\sft\geq 0$, started at $w^0=\omega$ hits the real axis precisely at $w^{\hat \sft}=\sfq^{-\sfx}+\kappa^2 \sfq^{\sfx-\hat \sft}$; the hitting time uniquely determines $\hat \sft$ and the hitting point uniquely determines $\sfx$.

Alternatively, the same uniqueness can be proven by looking directly at \eqref{e:solveu} and noticing that once we fix $u$, the value of $\sft$ is uniquely reconstructed by taking the imaginary part of \eqref{e:solveu} and noticing that the left-hand side is real. Once we know both $u$ and $\sft$, the value of $\sfx$ is also readily reconstructed from \eqref{e:solveu}.
\end{proof}

\begin{definition} \label{Definition_gamma_curve}
 We let $(\sfx(\omega),\sft(\omega))$ be the inverse map to $\Omega(\sft,\sfx)$.
 For each $0<\hat \sft<\sfT$ we let
 $$\gamma_{\hat \sft}=\{ \Omega(\hat \sft,\sfx)\mid (\hat \sft,\sfx)\in\sfL(\sfP)\}=\{\omega\in \bH^-\mid \sft(\omega)=\hat \sft\}.$$
\end{definition}
Because the map $\Omega$ is defined in terms of algebraic equations, $\gamma_\sft$ is a union of finitely many simple curves. These curves can not be closed, as that would contradict $\Omega$ being bijective; they also can not end at non-real points, since by implicit function theorem, we are always able to slightly extend each curve around each of its non-real points. We conclude that $\gamma_\sft$ is a union of finitely many curves, which start and end on the real axis.

\bigskip

As the final ingredient, we define the Gaussian Free Field.

\begin{definition} \label{Definition_GFF} The Gaussian Free Field in $\bH^-$ (or in $\bH^+$) with Dirichlet boundary conditions is a generalized centered Gaussian field $\GFF(z)$, $z\in \bH^-$ (or $z\in\bH^+$) with covariance
\begin{equation}
 \mathrm{Cov} \bigl(\GFF(z), \GFF(w)\bigr)=K(z,w)= - \frac{1}{2\pi}\ln\left|\frac{z-w}{z-\bar w} \right|.
\end{equation}
Formally, it is defined in terms of its pairings with test-measures: for a signed (real) measure $\mu$ on $\bH^-$ (or $\bH^+$), such that
$$
 \iint K(z,w) \mu(\rd z) \mu(\rd w) <\infty,
$$
we define a mean $0$ Gaussian random variable $\langle \GFF, \mu\rangle$, so that for several such $\mu$'s the joint distribution of pairings is a Gaussian vector with covariance
\begin{equation}
\label{e:GFF_covariance}
 \mathrm{Cov}  \bigl(\langle \GFF,\mu \rangle , \langle \GFF, \tilde \mu\rangle \bigr)=
 \iint K(z,w) \mu(\rd z) \tilde \mu(\rd w).
\end{equation}
\end{definition}
We refer to \cite{Sheffield_GFF,Werner_Powell_GFF}, and \cite[Lecture 11]{VG2020} for the more detailed definition and discussion of the GFF. We remark that GFF can be identified with a random element of a certain Sobolev space (this is in parallel with the Brownian motion, which is first defined in terms of its covariance, and the recast as a random continuous function).

Let us also specialize the definition of the Gaussian Free Field to two types of signed measures $\mu$. First, if $\mu$ is absolutely continuous and has (signed) density $\mu(x+\ii y)$ (and similarly for $\tilde \mu$), then \eqref{e:GFF_covariance} becomes
$$
 \int \int \int \int K(x+\ii y, \tilde x+ \ii \tilde y) \mu(x+\ii y) \tilde \mu(\tilde x+ \ii \tilde y)\, \rd x\, \rd y\, \rd \tilde x\, \rd \tilde y.
$$
Second, if $\mu$ is supported on a curve $\gamma$ and has density $\mu(z)$ along this curve (and similarly for $\tilde \mu$), then \eqref{e:GFF_covariance} becomes
$$
 \int_\gamma \int_{\tilde \gamma} K(z,w) \mu(z) \tilde\mu(\tilde z)\, |\rd z| |\rd w|.
$$

\begin{proposition} \label{Proposition_covariance_transformation}
 The asymptotic covariance \eqref{e:x49} in Corollary \ref{Corollary_height_covariance} can be rewritten using Definitions \ref{Definition_gamma_curve} and \ref{Definition_GFF} as
 \begin{multline}
 \label{e:x55}
  \int_{\sfx_i\in \mathcal L_{\sft_i} } \int_{\sfx_j\in \mathcal L_{\sft_j}} K\bigl(\Omega(\sft_i,\sfx_i),\Omega(\sft_j,\sfx_j)\bigr)  f_j(\sfx_j)  f_i(\sfx_i)  \rd  \sfx_j \rd \sfx_i=\\  \int_{\gamma_{\sft_i}} \int_{\gamma_{\sft_j}} K(w_i,w_j) \cdot \bigl[f_j(\sfx(w_j)) \sfx'(w_j) \rd w_j\bigr] \cdot \bigl[f_i(\sfx(w_i))  \sfx'(w_i) \rd w_i\bigr],
 \end{multline}
 where   $\mathcal L_{\sft}$ in the second formula is the liquid region at time $\sft$, as in \eqref{e:Liquid_def_2}; $\sfx'(w_i) \rd w_i$ in the second formula should be understood as the directional derivative of $\sfx(w_i)$ along the $\gamma_{\sft_i}$ curve multiplied by $|\rd w_i|$; the equivalence of two forms in \eqref{e:x55} is by change of variables.
\end{proposition}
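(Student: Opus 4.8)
The plan is to establish Proposition \ref{Proposition_covariance_transformation} by matching \eqref{e:x49}, the contour-integral form of the limiting covariance produced by Corollary \ref{Corollary_height_covariance}, with the Gaussian Free Field covariance \eqref{e:x55} written against the appropriate test-measures. The key observation is that both sides are built from the same kernel $-\tfrac1{4\pi}\partial_{w_i}\partial_{w_j}\log(w_i-w_j)$ integrated against the same functions $F_i,F_j$, and the only work is a change of variables and an identification of the resulting expression with $K$. Concretely, I would first start from \eqref{e:x49}, integrate by parts in both $w_i$ and $w_j$ to move the two derivatives off the logarithm and onto the functions $F_i(w(\sft_i,w_i))$ and $F_j(w(\sft_j,w_j))$: since $F_i(w(\sft_i,w_i))$ is holomorphic in the region bounded by $\cC_i$ after composing with the conformal map $w(\sft_i,\cdot)$ (this holomorphicity is exactly what Corollary \ref{Corollary_height_covariance} is set up to guarantee, via the analyticity of $F_i$ near the real line and the bijectivity from Lemma \ref{Lemma_bijectivity}), the boundary terms in the integration by parts vanish. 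This converts \eqref{e:x49} into
\begin{equation*}
 -\frac{1}{4\pi}\oint_{\cC_i}\oint_{\cC_j} \log(w_i-w_j)\, \partial_{w_i}\bigl[F_i(w(\sft_i,w_i))\bigr]\,\partial_{w_j}\bigl[F_j(w(\sft_j,w_j))\bigr]\, \rd w_i \rd w_j.
\end{equation*}

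Next I would change variables $v_i = w(\sft_i,w_i)$ and $v_j = w(\sft_j,w_j)$. Under this substitution $\partial_{w_i}[F_i(w(\sft_i,w_i))]\,\rd w_i = F_i'(v_i)\,\rd v_i$, and the contours $\cC_i$, $\cC_j$ map to simple loops $w(\sft_i,\cC_i)$, $w(\sft_j,\cC_j)$ around $\sfb_{\sft_i}([\bl(\sft_i),\br(\sft_i)])$ and $\sfb_{\sft_j}([\bl(\sft_j),\br(\sft_j)])$ respectively. The logarithm $\log(w_i-w_j)$ becomes $\log\bigl(w(\sft_i,\cdot)^{-1}(v_i)-w(\sft_j,\cdot)^{-1}(v_j)\bigr)$; but then I would undo one integration by parts (in each variable) differently — the point is that after collapsing the $v$-contours onto the branch cuts $\sfb_{\sft}([\bl(\sft),\br(\sft)])$, we pick up the jump of the logarithm across the cut, which is where the $\Omega$-map and the factor $\sfx(w)$ appear. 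More precisely: because $w(\sft,\cdot)$ is, by \eqref{e:x17}/\eqref{e:x18} and the Nevanlinna property from Step 4 of the proof of Theorem \ref{Theorem_three_descriptions}, a conformal map sending the complement of the liquid-region arc $\sfb_\sft(\mathcal L_\sft)$-side of the lower half-plane onto the exterior of $\gamma_\sft$, the contour integral over $\cC_i$ equals an integral over $\gamma_{\sft_i}$ (parameterized by $\sfx_i\in\mathcal L_{\sft_i}$ through $\Omega(\sft_i,\sfx_i)$) plus a piece along the real axis that produces no contribution for the same reason the second integral vanished at the end of the proof of Corollary \ref{Corollary_height_covariance} (the relevant integrand is holomorphic where one needs it to be). The kernel that survives across the arc is precisely $\log\bigl|(\Omega(\sft_i,\sfx_i)-\Omega(\sft_j,\sfx_j))/(\Omega(\sft_i,\sfx_i)-\overline{\Omega(\sft_j,\sfx_j)})\bigr|$ up to the normalizing constant — i.e.\ $-2\pi K\bigl(\Omega(\sft_i,\sfx_i),\Omega(\sft_j,\sfx_j)\bigr)$ — and the Jacobian of the passage from the $w_i$-integral to the $\sfx_i$-integral supplies exactly the $\sfx'(w_i)\,\rd w_i$ factors in the right-hand side of \eqref{e:x55}.

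The computation separates into the $i=j$ (equal-time) case and the $i<j$ (nested contours, $\sft_i<\sft_j$) case. In the equal-time case the two contours coincide, so one must be careful about which side of the arc each logarithm-jump is taken on, and the antisymmetric part of $\log(w_i-w_j)$ combines with the reflection term $\log(w_i-\bar w_j)$ — this is the step that reproduces the $z\mapsto\bar z$ reflection inside $K$. In the unequal-time case, the nesting $\cC_i\subset\cC_j$ (equivalently $M_i>M_j$) is essential: it ensures that when we collapse the inner contour $\cC_i$, the point $w_j$ on the outer contour stays outside $w(\sft_i,\cC_i)$, so the reflected point $\overline{\Omega(\sft_j,\sfx_j)}$ sits outside the collapsed contour and the reflection term again contributes through the boundary jump rather than through a spurious residue — precisely the mechanism already exploited at the end of Corollary \ref{Corollary_height_covariance}. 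I expect the \textbf{main obstacle} to be the bookkeeping of these branch-cut jumps and orientations: tracking exactly how $\log(w_i-w_j)$ and $\log(w_i-\bar w_j)$ behave as $\cC_i,\cC_j$ are pushed onto $\gamma_{\sft_i},\gamma_{\sft_j}$ and the real axis, and verifying that the real-axis portions cancel. The geometry is controlled by Lemma \ref{Lemma_bijectivity}, Remark \ref{Remark_extension_to_the_boundary}, Lemma \ref{Lemma_bijection_main}, and Definition \ref{Definition_gamma_curve}, which together pin down that $\gamma_\sft$ is a finite union of arcs with endpoints on $\mathbb R$ and that $w(\sft,\cdot)$ is a homeomorphism up to the boundary; once these are invoked, the identification of the surviving kernel with $K$ and of the density factor with $f(\sfx(w))\sfx'(w)$ is a direct (if slightly tedious) change-of-variables calculation, and the equivalence of the two forms in \eqref{e:x55} follows by running the substitution $w=\Omega(\sft,\sfx)$ in reverse.
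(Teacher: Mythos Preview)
Your first step already fails: integrating by parts twice on the closed nested contours $\cC_i\subset\cC_j$ to produce $\log(w_i-w_j)$ is not legitimate, because $\log(w_i-w_j)$ is multi-valued as $w_j$ traverses $\cC_j$ (it winds once around $w_i\in\cC_i$). The boundary term you claim vanishes is in fact a nonzero jump of $2\pi\ii$ times $F_j$. Everything after that is built on this ill-defined integral, and your subsequent description (change of variables $v_i=w(\sft_i,w_i)$, then ``undo one integration by parts differently'', then collapse onto branch cuts) is too vague to repair it; in particular the change to $v_i=w(\sft_i,w_i)$ sends $\gamma_{\sft_i}$ to a subset of the \emph{real} axis, not back to $\gamma_{\sft_i}$, so the geometry you sketch in the third paragraph does not match what this substitution actually does.

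The paper's argument avoids the multi-valued logarithm entirely by reordering the operations. It keeps the kernel in the single-valued form $\partial_{w_i}\partial_{w_j}\log(w_i-w_j)=(w_i-w_j)^{-2}$ and \emph{first} deforms $\cC_i\to\gamma_{\sft_i}\cup\bar\gamma_{\sft_i}$ and $\cC_j\to\gamma_{\sft_j}\cup\bar\gamma_{\sft_j}$ (nesting preserved, no residues crossed because $w(\sft,\cdot)$ is holomorphic outside $\gamma_\sft\cup\bar\gamma_\sft$). The crucial observation you are missing is that on these deformed contours $w(\sft_i,w_i)$ is \emph{real} (by the very definition of $\gamma_{\sft_i}$ as the locus where $w^{\sft_i}$ hits $\bR$), hence $F_i(w(\sft_i,w_i))$ is real and invariant under $w_i\mapsto\bar w_i$. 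One then splits each contour into its upper and lower half-plane parts and recombines the four pieces of $(w_i-w_j)^{-2}$ into
\[
\frac{\rd w_i\,\rd w_j}{(w_i-w_j)^2}-\frac{\rd\bar w_i\,\rd w_j}{(\bar w_i-w_j)^2}-\frac{\rd w_i\,\rd\bar w_j}{(w_i-\bar w_j)^2}+\frac{\rd\bar w_i\,\rd\bar w_j}{(\bar w_i-\bar w_j)^2},
\]
integrated only over $\gamma_{\sft_i}\times\gamma_{\sft_j}$. Only now does one change variables $\sfx_i=\sfx(w_i)$ and integrate by parts in $\sfx_i,\sfx_j$ over the \emph{open arcs} $\mathcal L_{\sft_i},\mathcal L_{\sft_j}$; the boundary terms vanish because $w_i$ is real at the endpoints of each component of $\gamma_{\sft_i}$, which makes the four logarithms combine to zero there. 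The resulting $\log|(w_i-w_j)/(\bar w_i-w_j)|$ is then exactly $-2\pi K(\Omega(\sft_i,\sfx_i),\Omega(\sft_j,\sfx_j))$.
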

\begin{proof} We assume $\sft_i\leq \sft_j$ throughout the proof and recall that \eqref{e:x49} reads
 \begin{equation}
 \label{e:x53}
  -\frac{1}{4\pi}\oint_{\cC_i}\oint_{\cC_j}\del_{w_i}\del_{w_j}\bigl[\log \left(w_i-w_j\right)\bigr]\, F_i(w(\sft_i,w_i))F_j(w(\sft_j,w_j))\, \rd w_i\rd w_j,
 \end{equation}
 We  deform the contour $\cC_i$ into $\gamma_{\sft_i} \cup \bar \gamma_{\sft_i}$ and deform the contour $\cC_j$ into $\gamma_{\sft_j}\cup \bar \gamma_{\sft_j}$ in \eqref{e:x53}; the involved contours are schematically shown in Figure \ref{Fig_gamma_curve}. Let us explain that this deformation is possible and does not change the value on the integral. Because as $\sft$ grows the points $w^{\sft}$ move up (recall \eqref{e:x17}) until they reach the real axis and the curves $\gamma_{\sft_i}$, $\gamma_{\sft_j}$ are disjoint by Lemma \ref{Lemma_bijection_main}, $\gamma_{\sft_j}$ is situated below $\gamma_{\sft_j}$. In other words, the union of the simple closed contours $\gamma_{\sft_i} \cup \bar \gamma_{\sft_i}$ is inside the union of the simple contours of $\gamma_{\sft_j} \cup \bar \gamma_{\sft_j}$, matching the same configuration for $\cC_i$ and $\cC_j$. Hence, the factor $\del_{w_i}\del_{w_j} \log(w_i-w_j)=\frac{1}{(w_i-w_j)^2}$ remains non-singular in the deformation and does not create any residues.

\begin{figure}[t]
\begin{center}
 \includegraphics[width=0.99\linewidth]{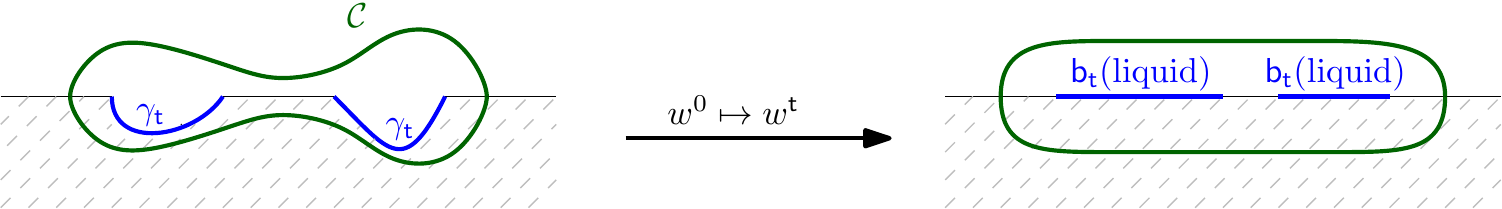}
 \caption{The action of $w^0\mapsto w^{\sft}$ map on the contours of integration in the situation when the liquid region has two segments.}
 \label{Fig_gamma_curve}
 \end{center}
 \end{figure}

For the remaining two factors $F_i(\cdot)$ and $F_j(\cdot)$, we use the fact that $w(\sft,w_0)$ is a holomorphic function of $w_0$ outside $\gamma_\sft \cup \bar \gamma_{\sft}$. This fact follows from Lemma \ref{Lemma_bijectivity}: the Lemma gives holomorphicity in the lower half-plane; conjugating, we also get holomorphicity in the upper half-plane, and applying the reflection principle we get holomorphicity on the real axis outside $\gamma_\sft \cup \bar \gamma_{\sft}$, cf.\ Figure \ref{Fig_gamma_curve}.

We conclude that no poles are encountered in the deformation and transform \eqref{e:x53} into
 \begin{equation}
 \label{e:x530}
  -\frac{1}{4\pi}\oint_{\gamma_{\sft_i} \cup \bar \gamma_{\sft_i}}\oint_{\gamma_{\sft_j} \cup \bar \gamma_{\sft_j}}\del_{w_i}\del_{w_j}\bigl[\ln (w_i-w_j))\bigr]\, F_i(w(\sft_i,w_i))F_j(w(\sft_j,w_j))\, \rd w_i\rd w_j.
 \end{equation}
 Note that $w(\sft_i,w_i)=w(\sft_i,\bar w_i)$ is real on the integration contour and so is $F_i(w(\sft_i,w_i))$ (because $f_i$ and $F_i$ were required to be real on the real line). Similarly, $F_j(w(\sft_j,w_j))$ is real. Hence, $F_i$ and $F_j$ factors do not change when we conjugate the variables. Therefore, differentiating the logarithm explicitly, splitting \eqref{e:x530} into four integrals, corresponding to upper and lower half-plane parts of $w_i$ and $w_j$ contours and recombining, we get an alternative expression:
 \begin{equation}
 \label{e:x54}
  -\frac{1}{4\pi}\int_{\gamma_{\sft_i} }\int_{\gamma_{\sft_j}}\left(\frac{\rd w_i\rd w_j}{ (w_i-w_j)^2}-\frac{\rd \bar w_i\rd w_j}{ (\bar w_i-w_j)^2}-\frac{\rd w_i\rd \bar w_j}{ (w_i-\bar w_j)^2}+\frac{\rd \bar w_i\rd \bar w_j}{ (\bar w_i-\bar w_j)^2} \right) F_i\bigl(\sfb_{\sft_i}(\sfx(w_i))\bigr) F_j\bigl(\sfb_{\sft_i}(\sfx(w_j))\bigr),
 \end{equation}
where we also identified $\sfb_{\sft_i}(\sfx(w_i))=w(\sft_i,w_i)$ and $\sfb_{\sft_j}(\sfx(w_j))=w(\sft_j,w_j)$. We change the variables $\sfx_i=\sfx(w_i)$, $\sfx_j=\sfx(w_j)$, notice that $w_i=\Omega(\sft_i,\sfx_i)$ and $w_j=\Omega(\sft_j,\sfx_j)$, and then integrate by parts in $\sfx_i$ and in $\sfx_j$ to get
 \begin{equation}
 \label{e:x56}
  -\frac{1}{4\pi}\int_{\mathcal L_{\sft_i} }\int_{\mathcal L_{\sft_j}} \ln\left[ \frac{\Omega(\sft_i,\sfx_i)-\Omega(\sft_j,\sfx_j)}{\bar \Omega(\sft_i,\sfx_i)-\Omega(\sft_j,\sfx_j)} \cdot \frac{\bar \Omega(\sft_i,\sfx_i)-\bar \Omega(\sft_j,\sfx_j)}{\Omega(\sft_i,\sfx_i)-\bar \Omega(\sft_j,\sfx_j)} \right] f_i(\sfx_i)f_j(\sfx_j) \rd \sfx_j \rd \sfx_i.
 \end{equation}
 Note that there are no additional terms in the integration by parts, because $w_i$ is real at the end-points of connected components of $\gamma_i$ and $w_j$ is real at the end-points of connected components of $\gamma_j$. \eqref{e:x56} matches \eqref{e:x55}.
\end{proof}

We have now collected all ingredients of Theorem \ref{t:GFF} and can state its precise form.

\begin{theorem} \label{theorem_GFF_main}
  Consider the limit regime \eqref{eq_parameter_scaling} and assume that we are in the real or imaginary case. Fix $k\in\mathbb Z_{>0}$ times $0< \sft_1\leq \sft_2 \leq \dots \leq \sft_k < \sfT$ and $k$ real functions $f_1(x), \dots, f_k(x)$, such that $f_i(x)=\partial_x \bigl[F(\sfb_{\sft_i}(x))\bigr]$ for a real-analytic function $F$. Then the random vector
 \begin{equation*}
  \left(\sqrt{\pi}\int_{\bl(\sft_i)}^{\br(\sft_i)} f_i(\sfx) \bigl(h(\varepsilon^{-1}\sft_i,\varepsilon^{-1}\sfx)-\bE[h(\varepsilon^{-1}\sft_i,\varepsilon^{-1}\sfx)]\bigr)\rd \sfx\right)_{i=1}^k,
 \end{equation*}
 converges in the sense of moments to a centered Gaussian vector with covariance of $i$--th and $j$--th components given by
 \begin{equation}
 \label{e:x560}
  \int_{\sfx_i\in \mathcal L_{\sft_i} } \int_{\sfx_j\in \mathcal L_{\sft_j}} \left[-\frac{1}{2\pi} \ln\left|\frac{\Omega(\sft_i,\sfx_i)-\Omega(\sft_j,\sfx_j)}{\bar \Omega(\sft_i,\sfx_i)-\Omega(\sft_j,\sfx_j)}\right|\right]  f_j(\sfx_j)  f_i(\sfx_i)  \rd  \sfx_j \rd \sfx_i,
 \end{equation}
 where   $\mathcal L_{\sft}=\{\sfx\mid (\sft,\sfx)\in \sfL(\sfP)\}$ is the section of the liquid region at time $\sft$, as in \eqref{e:Liquid_def_2}, and $\Omega$ is the bijection of Lemmas \ref{Lemma_bijection_intro} and \ref{Lemma_bijection_main}.
\end{theorem}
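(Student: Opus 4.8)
The plan is to assemble the components already established in this section; no new analytic input is needed. First I would invoke Corollary \ref{Corollary_height_covariance}. The test functions $f_i$ in Theorem \ref{theorem_GFF_main} are of the required form $f_i(x)=\partial_x\bigl[F(\sfb_{\sft_i}(x))\bigr]$ with $F$ real-analytic in a complex neighborhood of $\mathbb R$, so the hypotheses of that corollary hold, and the random vector of Theorem \ref{theorem_GFF_main} converges in the sense of moments to a centered Gaussian vector whose $(i,j)$ covariance entry is the double contour integral \eqref{e:x49}. Since a centered Gaussian vector is determined by its covariance, it remains only to identify \eqref{e:x49} with the expression \eqref{e:x560}, i.e.\ with the covariance of the $\bar\Omega$--pullback of the Gaussian Free Field on $\bH^+$ paired against the measures $f_i\,\rd\sfx$.

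Next I would apply Proposition \ref{Proposition_covariance_transformation}, which rewrites \eqref{e:x49} exactly as the right-hand side of \eqref{e:x55}; tracing through the proof of that proposition (in particular \eqref{e:x56}), the contour integrals collapse to the double integral over the sections $\mathcal L_{\sft_i}\times\mathcal L_{\sft_j}$ of the liquid region appearing in \eqref{e:x560}, with kernel $-\tfrac{1}{2\pi}\ln\bigl|\tfrac{\Omega(\sft_i,\sfx_i)-\Omega(\sft_j,\sfx_j)}{\bar\Omega(\sft_i,\sfx_i)-\Omega(\sft_j,\sfx_j)}\bigr|$. By Definition \ref{Definition_GFF} this kernel is precisely $K(\Omega(\sft_i,\sfx_i),\Omega(\sft_j,\sfx_j))$, and because $K(z,w)=K(\bar z,\bar w)$ depends only on $|z-w|$ and $|z-\bar w|$, replacing $\Omega$ (the bijection onto $\bH^-$ of Lemma \ref{Lemma_bijection_main}) by its conjugate $\bar\Omega$ (the bijection onto $\bH^+$ of Lemma \ref{Lemma_bijection_intro}) leaves the value unchanged. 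Thus \eqref{e:x560} equals $\mathrm{Cov}\bigl(\langle\GFF,\mu_i\rangle,\langle\GFF,\mu_j\rangle\bigr)$ for the pushforward measures $\mu_i=(\bar\Omega)_*(f_i\,\rd\sfx)$ on $\bH^+$, which by definition is the covariance of the pairings of the $\bar\Omega$--pullback of $\GFF$ with the $f_i\,\rd\sfx$.

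To close the argument I would check the minor bookkeeping: the $\sqrt\pi$ prefactor in \eqref{e:x47} is already carried through Proposition \ref{Proposition_covariance_transformation}, so the normalizations agree; and the measures $\mu_i$ satisfy the finite-energy condition $\iint K\,\rd\mu_i\,\rd\mu_i<\infty$ required to make sense of the pairing with the GFF, since each section $\mathcal L_{\sft_i}$ is bounded, each $f_i$ is bounded there, and $K$ has only an integrable logarithmic singularity. Combining with the moment convergence from Corollary \ref{Corollary_height_covariance} yields the claimed convergence in the sense of joint moments, which is the precise content of Theorem \ref{t:GFF}.

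The main obstacle lies not in this assembly but upstream, and specifically in the restriction on the admissible test functions to the class $f_i(x)=\partial_x\bigl[F(\sfb_{\sft_i}(x))\bigr]$. This form is forced in Corollary \ref{Corollary_height_covariance} in order to keep the characteristic flow $z^\sft$ of \eqref{e:x41} away from the corners of the domain $\bD_\sft$, where $f_\sft$ equals $1$ and the flow degenerates; without it, applying Theorem \ref{Theorem_fluctuations_characteristic} would require controlling the flow through those singular corner points. I would flag an extension to arbitrary analytic test functions as a technical open point rather than attempt it here, noting that the restricted class is already rich enough to characterize the limiting field as the $\bar\Omega$--pullback of the Gaussian Free Field.
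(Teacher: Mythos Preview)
Your proposal is correct and follows exactly the paper's own approach: the proof of Theorem \ref{theorem_GFF_main} in the paper is literally the one-line combination of Corollary \ref{Corollary_height_covariance} with Proposition \ref{Proposition_covariance_transformation}. Your additional remarks on the $\sqrt{\pi}$ normalization, the finite-energy condition, and the restriction on test functions are accurate elaborations that the paper either handles implicitly or addresses in the remarks surrounding the theorem.
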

\begin{remark}
 Since $-\frac{1}{2\pi} \ln\left|\frac{\Omega(\sft_i,\sfx_i)-\Omega(\sft_j,\sfx_j)}{\bar \Omega(\sft_i,\sfx_i)-\Omega(\sft_j,\sfx_j)}\right|$ is the composition of the covariance of GFF in $\bH^+$ given by \eqref{eq_GFF_cov} or \eqref{e:GFF_covariance} with the map $\bar \Omega (\sft,\sfx)$, \eqref{e:x560} is the covariance of the pairings with $\Omega$--pullback of the Gaussian Free Field, as claimed in Theorem \ref{t:GFF}. By integrating the result of Theorem \ref{theorem_GFF_main} in $\sft$--direction, we can also extend the convergence statement to pairings of GFF with real test-functions $f(\sft,\sfx)$ of the form $f(\sft,\sfx)=\partial_x\bigl[ F(\sft, \sfb_\sft(\sfx))\bigr]$, where $F$ is analytic in $\sfx$--variable and continuous in $\sft$--variable.
\end{remark}
\begin{remark}
 The functions of the form $f(x)=\partial_x \bigl[F(\sfb_{\sft_i}(x))\bigr]$ with real-analytic $F(\cdot)$ are dense in the space of all continuous function on $[\bl(\sft),\br(\sft)]$ equipped with the uniform convergence topology; hence, these test--functions are sufficient to uniquely determine the limiting Gaussian field.
\end{remark}

\begin{proof}[Proof of Theorem \ref{theorem_GFF_main}] This is combination of Corollary \ref{Corollary_height_covariance} with Proposition \ref{Proposition_covariance_transformation}.
\end{proof}

\begin{appendix}
\section{Proofs of claims in Section \ref{s:examples}}
\label{Section_Appendix_examples}
\begin{proof}[Proof of Claim \ref{c:Jackdescending}]
We recall from  \cite{Stanley_Jack},
\cite[Chapter VI: (7.14’),  (6.11), and Section 10]{macdonald1998symmetric}, and \cite[(2.3)]{OO_Jack} that
\begin{align}\begin{split}\label{e:Jackexp}
&J_{\bmla/\bmmu}(1;\theta)=
\prod_{1\leq i\leq j\leq n}\frac{f(y_i-y_j)f(x_i-x_{j+1}-\theta)}{f(y_i-x_{j+1}-\theta)f(x_i-y_j)},\qquad f(x)=\frac{\Gamma(x+1)}{\Gamma(x+\theta)};\\
&J_\bmmu(\bm1;\theta)=\prod_{1\leq i<j\leq n}\left[\frac{\Gamma(y_i-y_j+\theta)}{\Gamma(y_i-y_{j})} \cdot \frac{\Gamma(\theta(j-i))}{\Gamma(\theta(j-i+1))}\right].
\end{split}\end{align}
Plugging \eqref{e:Jackexp} into \eqref{e:transi4.5} and ignoring the factors which do not depend on $\bmy$, we get
\begin{align}\label{e:transi5.5copy}
\bP(\bmy|\bmx)\propto\prod_{i<j}(y_i-y_j) \prod_{1\leq i\leq j\leq n}\frac{\Gamma(y_i-x_{j+1})\Gamma(x_i-y_j+\theta)}{\Gamma(y_i-x_{j+1}-\theta+1)\Gamma(x_i-y_j+1)}.
\end{align}
This finishes the proof of \eqref{e:transi5.5}. In the following we show that \eqref{e:transi5.5copy} is a special case of \eqref{e:m2}.
We recall the general form of the transition probability \eqref{e:m2}:
\begin{equation}\label{e:m2copy1}
\bP(\bmy|\bmx)
=\frac{1}{Z(\bmx)}  \prod_{1\leq i<j\leq n}\bigl(b(y_i)-b(y_j)\bigr)\, \prod_{i=1}^n \left[
w(y_i) \prod\limits_{\begin{smallmatrix}\ell \in \bL(\bmx) \\ \ell>y_i\end{smallmatrix}} \frac{1}{b(\ell)-b(y_i)}\prod\limits_{\begin{smallmatrix}\ell\in \bL(\bmx)\\ \ell<y_i\end{smallmatrix}} \frac{1}{b(y_i)-b(\ell)}\right].
\end{equation}
Let us take  $b(z)=z$ in \eqref{e:m2copy1}. The second factor becomes $\prod_{i<j}(y_i-y_j)$.
For the product over $\ell>y_i$ in the third factor in \eqref{e:m2copy1} we have
\begin{align}\begin{split}\label{e:fp100}
&\phantom{{}={}}\prod_{i=1}^n \prod\limits_{\begin{smallmatrix}\ell \in \bL(\bmx) \\ \ell>y_i\end{smallmatrix}} (\ell-y_i)
=\prod_{i=1}^n \left[ \prod_{\ell=y_i+1}^{x_i}(\ell-y_i)\prod_{j=1}^{i-1}\prod_{\ell=x_{j+1}+\theta}^{x_j}(\ell-y_i)\right]\\
&=\prod_{i=1}^n\left[ \Gamma(x_i-y_i+1)\prod_{j=1}^{i-1}\frac{\Gamma(x_j-y_i+1)}{\Gamma(x_{j+1}-y_i+\theta)}\right]
=\prod_{j=1}^n\Gamma(x_1-y_j+\theta)\prod_{1\leq i\leq j\leq n}\frac{\Gamma(x_i-y_j+1)}{\Gamma(x_{i}-y_j+\theta)},
\end{split}\end{align}
where we exchanged the roles of $i$ and $j$ in the last equality.
Similarly, for the $\ell<y_i$ factor we have
\begin{align}\begin{split}\label{e:fp200}
&\phantom{{}={}}\prod_{i=1}^n \prod\limits_{\begin{smallmatrix}\ell \in \bL(\bmx) \\ \ell<y_i\end{smallmatrix}}(y_i-\ell)
=\prod_{i=1}^n \left[\prod_{\ell=x_{i+1}+\theta}^{y_i-1} (y_i-\ell)\prod_{j=n}^{i+1}\prod_{\ell=x_{j+1}+\theta}^{x_j} (y_i-\ell)\right]\\
&=\prod_{i=1}^n\left[\Gamma(y_i-x_{i+1}-\theta+1)\prod_{j=i+1}^{n}\frac{\Gamma(y_i-x_{j+1}-\theta+1)}{\Gamma(y_i-x_{j})}\right]\\
&=\prod_{i=1}^n \Gamma(y_i-x_{n+1})\prod_{1\leq i\leq j\leq n}\frac{\Gamma(y_i-x_{j+1}-\theta+1)}{\Gamma(y_i-x_{j+1})}.
\end{split}\end{align}
Plugging  \eqref{e:fp100} and \eqref{e:fp200} into \eqref{e:m2copy1}, we get $\bP(\bmy|\bmx)$ is proportional to
\begin{align*}
\prod_{1\leq i<j\leq n}(y_i-y_j)\prod_{1\leq i\leq j\leq n}\frac{\Gamma(x_i-y_j+\theta)\Gamma(y_i-x_{j+1})}{\Gamma(x_{i}-y_j+1)\Gamma(y_i-x_{j+1}-\theta+1)}
\prod_{i=1}^n \frac{w(y_i)}{\Gamma(x_1-y_i+\theta)\Gamma(y_i-x_{n+1})},
\end{align*}
which simplifies to \eqref{e:transi5.5copy} if
$w(z)={\Gamma(x_1-z+\theta)}{\Gamma(z-x_{n+1})}.$
\end{proof}

\begin{proof}[Proof of Claim \ref{c:mdensity2}]
The evaluation of the Macdonald polynomial under the principle specialization $\rho=(1,t,\dots, t^{n-1})$ is explicit, see \cite[Chapter VI, (6.11)]{macdonald1998symmetric}:
\begin{align}\label{e:MDP}
P_\bmla(1,t,\dots, t^{n-1};q,t)=t^{\sum_{i=1}^{n}(i-1)\la_i}\prod_{i<j\leq n}\frac{(q^{\la_i-\la_j}t^{j-i};q)_\infty}{(q^{\la_i-\la_j}t^{j-i+1};q)_\infty}\frac{(t^{j-i+1};q)_\infty}{(t^{j-i};q)_\infty},
\end{align}
for $\bmla=\lambda_1\geq \lambda_2\dots\geq\lambda_n\geq 0$; $P_\bmla(\rho;q,t)=0$  if $\lambda_{n+1}>0$. We further use the $w_{u,v}$ automorphism of the algebra of symmetric functions $\Lambda$ defined on the power sums by:
$$
 w_{u,v}(p_k)=(-1)^{k-1} \frac{1-u^k}{1-v^k} p_k.
$$
As shown in \cite[Chapter VI, Section 7]{macdonald1998symmetric},
$$
 w_{t,q}\bigl(P_{\bmla'/\bmmu'}(\cdot; t,q)\bigr)= Q_{\bmla/\bmmu}(\cdot ; q,t),
$$
where $\bmla'$ and $\bmmu'$ are transposed Young diagrams $\bmla$ and $\bmmu$, respectively. Hence, with $\rho'$ being the specialization $\beta_1=a$, as in \eqref{eq_principal_ascending}, we have
$$
 Q_{\bmla/\bmmu}(\beta_1=a;q,t)= P_{\bmla'/\bmmu'}(\alpha_1=a; t,q).
$$
The latter specialization of the skew $P$--function in one variable is \cite[Chapter VI, (7.14')]{macdonald1998symmetric}; in the notations there,  it is $a^{|\bmla|-|\bmmu|}\psi_{\bmla'/\bmmu'}(t,q)=a^{|\bmla|-|\bmmu|}\psi'_{\bmla/\bmmu}(q,t)$ with an additional requirement that $\bmmu'\prec\bmla'$. The latter is the same as requiring
\begin{align}\label{e:vs}
\la_i=\mu_i+e_i, \quad e_i\in\{0,1\},\quad 1\leq i\leq n.
\end{align}
The final formula is
\begin{align*}
Q_{\bmla/\bmmu}(\beta_1=a;q,t)
&=a^{|\bmla|-|\bmmu|} \prod_{i< j\leq n}
\bm1(e_i=0, e_j=1)\frac{(1-q^{\mu_i-\mu_j}t^{j-i-1})(1-q^{\mu_i-\mu_j-1}t^{j-i+1})}{(1-q^{\mu_i-\mu_j}t^{j-i})(1-q^{\mu_i-\mu_j-1}t^{j-i})}.
\end{align*}
Let us also rewrite \eqref{e:MDP} in terms of $\bmmu$ and $\bme$ using \eqref{e:vs}:
\begin{align*}\begin{split}
&\phantom{{}={}}P_\bmla(1,t,\dots, t^{n-1};q,t)
=t^{\sum_{i=1}^{N}(i-1)\mu_i}t^{\sum_{i=1}^{N}(i-1)e_i}\prod_{i<j\leq n}\frac{(q^{\mu_i-\mu_j+e_i-e_j}t^{j-i};q)_\infty}{(q^{\mu_i-\mu_j+e_i-e_j}t^{j-i+1};q)_\infty}\frac{(t^{j-i+1};q)_\infty}{(t^{j-i};q)_\infty}\\
&=t^{\sum_{i=1}^{n}(i-1)\mu_i}\prod_{i<j\leq n}\Biggl[\frac{(q^{\mu_i-\mu_j}t^{j-i};q)_\infty}{(q^{\mu_i-\mu_j}t^{j-i+1};q)_\infty}\frac{(t^{j-i+1};q)_\infty}{(t^{j-i};q)_\infty}t^{\sum_{i=1}^{N}(i-1)e_i}\\
&\left(\bm1(e_i=0, e_j=1)\frac{1-q^{\mu_i-\mu_j-1}t^{j-i}}{1-q^{\mu_i-\mu_j-1}t^{j-i+1}}
+\bm1(e_i=1, e_j=0)\frac{1-q^{\mu_i-\mu_j}t^{j-i+1}}{1-q^{\mu_i-\mu_j}t^{j-i}}+\bm1(e_i=e_j)\right)\Biggr].
\end{split}\end{align*}
Combining the last two formulas together we convert \eqref{eq_ascending_transition} into
\begin{align}\begin{split}\label{e:MPT2}
&\bP(\bmla\mid \bmmu)
\propto a^{|\bmla|-|\bmmu|}t^{\sum_{i=1}^{N}(i-1)e_i}\times\\
&\prod_{i<j}\left[\bm1(e_i=0, e_j=1)\frac{1-q^{\mu_i-\mu_j}t^{j-i-1}}{1-q^{\mu_i-\mu_j}t^{j-i}}
+\bm1(e_i=1, e_j=0)\frac{1-q^{\mu_i-\mu_j}t^{j-i+1}}{1-q^{\mu_i-\mu_j}t^{j-i}}+\bm1(e_i=e_j)\right],
\end{split}\end{align}
with additional requirement \eqref{e:vs}.
Setting $t=q^{\theta}$ and encoding the Young diagrams $ \bmmu$ and $\bmla$ as particle systems $\bmx\in  \bW_\theta^{n}$ and $\bmx+\bme\in  \bW_\theta^{n}$, we rewrite the transition probability \eqref{e:MPT2}
\begin{align*}
\bP(\bmx+\bme\mid \bmx)\propto a^{\sum_{i=1}^n e_i}t^{\sum_{i=1}^{N}(i-1)e_i}
\prod_{i<j}\frac{1-q^{x_i-x_j+\theta(e_i-e_j)}}{1-q^{x_i-x_j}}\propto  a^{\sum_{i=1}^n e_i}
\prod_{i<j}\frac{q^{x_i+\theta e_i}-q^{x_j+\theta e_j}}{q^{x_i}-q^{x_j}},
\end{align*}
which is a special case of \eqref{e:m1}, with $b(z)=q^z$, $\phi^-(z)=1$, and $\phi^+(z)=a$.
\end{proof}

\begin{proof}[Proof of Claim \ref{c:Mdescending}]
Using \cite[Chapter VI, (7.14')]{macdonald1998symmetric} and notations \eqref{e:bracketf}, we have
\begin{align}\label{e:Pmula2}
P_{\bmla/\bmmu}(t^{n};q,t)
=t^{n(|\bmla|-|\bmmu|)} f(1)^{n}\prod_{i<j\leq n}f(q^{\mu_i-\mu_j}t^{j-i})\prod_{i\leq j\leq n}
\frac{f(q^{\la_i-\la_{j+1}}t^{j-i})}{f(q^{\mu_i-\la_{j+1}}t^{j-i})f(q^{\la_i-\mu_j}t^{j-i})}.
\end{align}
Plugging \eqref{e:Pmula2} and \eqref{e:MDP} and setting $t=q^{\theta}$, we rewrite the transition probability in Definition \ref{Definition_Macdonald_descending} as
\begin{align}\label{e:Mdescend2copy}
\bP(\bmy|\bmx)
\propto q^{-\theta \sum_{i=1}^{n}(n-i+1)y_i} \prod_{i<j\leq n}
(1-q^{y_i-y_j})
\prod_{i\leq j\leq n}\frac{\Gamma_q(y_i-x_{j+1})}{\Gamma_q(y_i-x_{j+1}+1-\theta)}
\frac{\Gamma_q(x_i-y_j+\theta)}{\Gamma_q(x_i-y_j+1)},
\end{align}
which matches \eqref{e:Mdescend}.
In the following we show that \eqref{e:Mdescend2copy} is a special case of \eqref{e:m2}, with $b(z)=q^{-z}$. We recall the transition probability \eqref{e:m2}
\begin{equation}\label{e:m2copy_2}
\bP(\bmy|\bmx)
=\frac{1}{Z(\bmx)}  \prod_{1\leq i<j\leq n}\bigl(b(y_i)-b(y_j)\bigr)\, \prod_{i=1}^n \left[
w(y_i) \prod\limits_{\begin{smallmatrix}\ell \in \bL(\bmx) \\ \ell>y_i\end{smallmatrix}} \frac{1}{b(\ell)-b(y_i)}\prod\limits_{\begin{smallmatrix}\ell\in \bL(\bmx)\\ \ell<y_i\end{smallmatrix}} \frac{1}{b(y_i)-b(\ell)}\right].
\end{equation}
We take  $b(z)=q^{-z}$ in \eqref{e:m2copy_2}. The second factor becomes,
\begin{align}\label{e:fp0}
\prod_{i<j}(b(y_i)-b(y_j))
=\prod_{i<j}(q^{-y_i}-q^{-y_j})
=\prod_{i<j}(1-q^{y_i-y_j})q^{-y_i}
=\prod_{i<j}(1-q^{y_i-y_j})\prod_i q^{-(n-i)y_i}.
\end{align}
For the last product in \eqref{e:m2copy_2}, we have
\begin{multline}\label{e:fp}
 \prod\limits_{\begin{smallmatrix}\ell \in \bL(\bmx) \\ \ell>y_i\end{smallmatrix}} \bigl(b(\ell)-b(y_i)\bigr) \prod\limits_{\begin{smallmatrix}\ell \in \bL(\bmx) \\ \ell<y_i\end{smallmatrix}} \bigl(b(y_i)-b(\ell)\bigr)\\
\propto
\left( \prod\limits_{\begin{smallmatrix}\ell \in \bL(\bmx) \\ \ell>y_i\end{smallmatrix}} \frac{1-q^{\ell-y_i}}{1-q}\right)\left( \prod\limits_{\begin{smallmatrix}\ell \in \bL(\bmx) \\ \ell<y_i\end{smallmatrix}} \frac{1-q^{y_i-\ell}}{1-q}\right)\left( \prod\limits_{\begin{smallmatrix}\ell \in \bL(\bmx) \\ \ell>y_i\end{smallmatrix}}q^{-\ell} \prod\limits_{\begin{smallmatrix}\ell \in \bL(\bmx) \\ \ell<y_i\end{smallmatrix}}q^{-y_i}\right).
\end{multline}
We transform the first factor in the righthand side of \eqref{e:fp} as
\begin{multline}\label{e:fp1}
\prod_{i=1}^n \left(\prod_{\ell=y_i+1}^{x_i}\frac{1-q^{\ell-y_i}}{1-q}\cdot \prod_{j=1}^{i-1}\left[\prod_{\ell=x_{j+1}+\theta}^{x_j}\frac{1-q^{\ell-y_i}}{1-q}\right]\right)\\
=\prod_{i=1}^n\left(\Gamma_q(x_i-y_i+1)\prod_{j=1}^{i-1}\frac{\Gamma_q(x_j-y_i+1)}{\Gamma_q(x_{j+1}+\theta-y_i)}\right)
=\prod_{i=1}^n \Gamma_q(x_1-y_i+\theta)\prod_{1\leq i\leq j\leq n}\frac{\Gamma_q(x_i-y_j+1)}{\Gamma_q(x_{i}-y_j+\theta)},
\end{multline}
where in the last identity we swapped $i$ and $j$. We also transform the second factor in \eqref{e:fp}:
\begin{multline}\label{e:fp2}
\prod_{i=1}^n \left(\prod_{\ell=x_{i+1}+\theta}^{y_i-1} \frac{1-q^{y_i-\ell}}{1-q}\prod_{j=n}^{i+1}\left[\prod_{\ell=x_{j+1}+\theta}^{x_j} \frac{(1-q^{y_i-\ell})}{1-q}\right]\right)\\
=\prod_{i=1}^n\left( \Gamma_q(y_i-x_{i+1}-\theta+1)\prod_{j=i+1}^{n}\frac{\Gamma_q(y_i-x_{j+1}-\theta+1)}{\Gamma_q(y_i-x_{j})}\right)\\
=\prod_{i=1}^n \Gamma_q(y_i-x_{n+1}) \prod_{1\leq i\leq j\leq n}\frac{\Gamma_q(y_i-x_{j+1}-\theta+1)}{\Gamma_q(y_i-x_{j+1})}.
\end{multline}
Finally, we transform the  last factor in the righthand side of \eqref{e:fp}:
\begin{multline}\label{e:fp3}
\prod_{i=1}^n\left( \prod\limits_{\begin{smallmatrix}\ell \in \bL(\bmx) \\ \ell>y_i\end{smallmatrix}}q^{-\ell} \prod\limits_{\begin{smallmatrix}\ell \in \bL(\bmx) \\ \ell<y_i\end{smallmatrix}}q^{-y_i}\right)
=q^{-\sum\limits_{i=1}^n  \left(\sum\limits_{\ell=y_i+1}^{x_i }\ell+\sum\limits_{j=1}^{i-1} \sum\limits_{\ell=x_{j+1}+\theta}^{x_j} \ell\right)-\sum\limits_{i=1}^n\left( \sum\limits_{\ell=x_{i+1}+\theta}^{y_i-1}y_i+\sum\limits_{j=i+1}^n \sum\limits_{\ell=x_{j+1}+\theta}^{x_j}y_i\right)}\\
\propto q^{\sum\limits_{i=1}^n (y_i^2+y_i)/2 - \sum\limits_{i=1}^n y_i(y_i-x_{n+1}-(n-i+1)\theta +n-i)}
=q^{-\sum\limits_{i=1}^n \left(y_i^2/2 +y_i(-x_{n+1}-(n-i+1)\theta +n-i-1/2)\right)}.
\end{multline}
Combining \eqref{e:fp0}, \eqref{e:fp}, \eqref{e:fp1} \eqref{e:fp2} and \eqref{e:fp3}, we rewrite \eqref{e:m2copy_2} as
$$
 \prod_{i<j}(1-q^{y_i-y_j})\prod_{i\leq j}\frac{\Gamma_q(x_i-y_j+\theta)}{\Gamma_q(x_{i}-y_j+1)}\frac{\Gamma_q(y_i-x_{j+1})}{\Gamma_q(y_i-x_{j+1}-\theta+1)}
\prod_{i=1}^n \frac{q^{-\theta (n-i+1) y_i}q^{y_i^2/2-y_i (x_{n+1}+1/2)}w(y_i)}{\Gamma_q(x_1-y_i+\theta)\Gamma_q(y_i-x_{n+1})},
$$
which matches \eqref{e:Mdescend} with $w(z)$ as in \eqref{eq_weight_Macdonald}.
\end{proof}

\begin{proof}[Proof of Claim \ref{c:MKdescending}]
We recall the parameters $s^2=q^v$, and the coefficients \eqref{e:phii}
\begin{align}\begin{split}\label{e:phiicopy}
\psi_{\bmla\setminus\bmmu}^{(i)}(u; q,t,s)
&=(u/t)^{|\bmla|-|\bmmu|}t^{n(\bmmu)-n(\bmla)}
\frac{C_\bmla^0(s^2qt/u;q,t)C_\bmmu^0(u/t;q,t)}{C_\bmla^0(u;q,t)C_\bmmu^0(s^2qt/u;q,t)}\\
&\prod_{(i,j)\in \bmla\atop \lambda_j'=\mu_j'}\frac{1-q^{\la_i+j-1}t^{-\la_j'-i+3}s^2}{1-q^{\mu_i-j+1}t^{\mu_j'-i}}
\prod_{(i,j)\in \bmla\atop \lambda_j'\neq\mu_j'}\frac{1-q^{\la_i-j}t^{\la_j'-i+1}}{1-q^{\mu_i+j}t^{-\mu_j'-i+1}s^2}\\
&\prod_{(i,j)\in \bmmu\atop \lambda_j'=\mu_j'}\frac{1-q^{\la_i-j+1}t^{\la_j'-i}}{1-q^{\mu_i+j-1}t^{2-\mu_j'-i}s^2}
\prod_{(i,j)\in \bmmu\atop \lambda_j'\neq\mu_j'}\frac{1-q^{\la_i+j}t^{2-\la_j'-i}s^2}{1-q^{\mu_i-j}t^{\mu_j'-i+1}},
\end{split}\\
\notag C_\bmmu^0(x;q,t)&=\prod_{(i,j)\in \bmmu}(1-q^{j-1}t^{1-i}x)=\prod_{i\geq 1}(t^{1-i}x;q)_{\mu_i}
\end{align}
We take $u=t^{n+1}$ and $s=t^{n}\sqrt{t_0t_1t_2t_3/qt}$, and recall $t^{n}\sqrt{t_0t_1t_2t_3/qt}=q^{v/2}$. We have
\begin{align}\label{e:ut1}
(u/t)^{|\bmla|-|\bmmu|}t^{n(\bmmu)-n(\bmla)}
=t^{n(|\bmla|-|\bmmu|)+n(\bmmu)-n(\bmla)}=q^{-\theta (\sum_i (n-(i-1))y_i-\sum_i (n-(i-1))x_i)}.
\end{align}
and
\begin{align}\begin{split}\label{e:ct1}
&\phantom{{}={}}\frac{C_\bmmu^0(u/t;q,t)}{C_\bmmu^0(s^2qt/u;q,t)}
=\frac{C_\bmmu^0(t^n;q,t)}{C_\bmmu^0(q^{v+1}t^{-n};q,t)}
=\frac{C_\bmmu^0(q^{n\theta};q,t)}{C_\bmmu^0(q^{v+1-n\theta};q,t)}\\
&=\prod_{i=1}^n\frac{\Gamma_q(\mu_i+(n-i+1)\theta)}{\Gamma_q((n-i+1)\theta)}\frac{\Gamma_q(v+1-(n+i-1)\theta)}{\Gamma_q(v+1-(n+i-1)\theta+\mu_i)}\\
&= \prod_{i=1}^n\frac{\Gamma_q(y_i+n\theta)\Gamma_q(v+1-(n+i-1)\theta)}{\Gamma_q((n-i+1)\theta)\Gamma_q(y_i+v+1-n\theta)}.
\end{split}\end{align}
In the following, we simplify the products in $\psi_{\bmla/\bmmu}^{(i)}(u;q,t,s)$. For the first product term in \eqref{e:phiicopy}
\begin{align}\begin{split}\label{e:pt1}
\prod_{(i,j)\in \bmla\atop \lambda_j'=\mu_j'}\frac{1-q^{\la_i+j-1}t^{-\la_j'-i+3}s^2}{1-q^{\mu_i-j+1}t^{\mu_j'-i}}
&=\prod_k\prod_{(i,j)\in \bmla\atop \lambda_j'=\mu_j'=k}\frac{1-q^{\la_i+j-1}t^{-k-i+3}s^2}{1-q^{\mu_i-j+1}t^{k-i}}\\
&=\prod_k\prod_{1\leq i\leq k\atop \la_{k+1}+1\leq j\leq \mu_k}\frac{1-q^{\la_i+j-1}t^{-k-i+3}s^2}{1-q^{\mu_i-j+1}t^{k-i}}\\
&=\prod_{i\leq k}\frac{\Gamma_q(x_i+y_k+v+\theta)\Gamma_q(y_i-y_k+1)}{\Gamma_q(x_i+x_{k+1}+v+2\theta)\Gamma_q(y_i-x_{k+1}-\theta+1)}.
\end{split}\end{align}
For the second product term in \eqref{e:phiicopy}
\begin{align}\begin{split}\label{e:pt2}
\prod_{(i,j)\in \bmla\atop \lambda_j'\neq\mu_j'}\frac{1-q^{\la_i-j}t^{\la_j'-i+1}}{1-q^{\mu_i+j}t^{-\mu_j'-i+1}s^2}
&=\prod_k \prod_{(i,j)\in \bmla\atop \lambda_j'=k, \mu_j'=k-1}\frac{1-q^{\la_i-j}t^{k-i+1}}{1-q^{\mu_i+j}t^{-k-i+2}s^2}\\
&=\prod_k \prod_{1\leq i\leq k\atop \mu_{k}+1\leq j\leq \la_{k}}\frac{1-q^{\la_i-j}t^{k-i+1}}{1-q^{\mu_i+j}t^{-k-i+2}s^2}\\
&=\prod_{i\leq k}\frac{\Gamma_{q}(x_i-y_k+\theta)\Gamma_q(y_i+y_k+1+v)}{\Gamma_q(x_i-x_k+\theta)\Gamma_q(y_i+x_k+1+v)}.
\end{split}\end{align}
For the third product term in \eqref{e:phiicopy}, we have
\begin{align}\begin{split}\label{e:pt3}
\prod_{(i,j)\in \bmmu\atop \lambda_j'=\mu_j'}\frac{1-q^{\la_i-j+1}t^{\la_j'-i}}{1-q^{\mu_i+j-1}t^{2-\mu_j'-i}s^2}
&=\prod_k \prod_{(i,j)\in \bmmu\atop \lambda_j'=\mu_j'=k}\frac{1-q^{\la_i-j+1}t^{k-i}}{1-q^{\mu_i+j-1}t^{2-k-i}s^2}\\
&=\prod_k \prod_{1\leq i\leq k,\atop \la_{k+1}+1\leq j\leq \mu_k}\frac{1-q^{\la_i-j+1}t^{k-i}}{1-q^{\mu_i+j-1}t^{2-k-i}s^2}\\
&=\prod_{i\leq k} \frac{\Gamma_q(x_i-x_{k+1}-\theta+1)\Gamma_{q}(y_i+x_{k+1}+v+\theta)}{\Gamma_q(x_i-y_k+1)\Gamma_q(y_i+y_k+v)}.
\end{split}\end{align}
Finally for the last product term in \eqref{e:phiicopy}, we have
\begin{align}\begin{split}\label{e:pt4}
\prod_{(i,j)\in \bmmu\atop \lambda_j'\neq\mu_j'}\frac{1-q^{\la_i+j}t^{2-\la_j'-i}s^2}{1-q^{\mu_i-j}t^{\mu_j'-i+1}}
&=\prod_k\prod_{(i,j)\in \bmmu\atop \lambda_j'=k+1,\mu_j'=k}\frac{1-q^{\la_i+j}t^{1-k-i}s^2}{1-q^{\mu_i-j}t^{k-i+1}}\\
&=\prod_k\prod_{1\leq i\leq k\atop\mu_{k+1}+1\leq j\leq \lambda_{k+1}}\frac{1-q^{\la_i+j}t^{1-k-i}s^2}{1-q^{\mu_i-j}t^{k-i+1}}\\
&=\prod_{1\leq i\leq k}\frac{\Gamma_q(y_i-x_{k+1})\Gamma_q(x_i+x_{k+1}+1+v)}{\Gamma_q(y_i-y_{k+1})\Gamma_q(x_i+y_{k+1}+1+v)}.
\end{split}\end{align}
Putting \eqref{e:ut1}, \eqref{e:ct1}, \eqref{e:pt1}, \eqref{e:pt2}, \eqref{e:pt3}, and \eqref{e:pt4} all together, we get the following expression for the transition probability \eqref{e:MKdensity}:
\begin{align}\begin{split}\label{e:KMdensityhicopy}
&\bP(\bmy|\bmx)=
\prod_{i=1}^n \left[q^{-\theta (\sum_i (n-(i-1))y_i-\sum_i (n-(i-1))x_i)}\frac{\Gamma_q(y_i+n\theta)\Gamma_q(v+1-(n+i-1)\theta)}{\Gamma_q((n-i+1)\theta)\Gamma_q(y_i+v+1-n\theta)}\right]\\
&\prod_{i\leq k\leq n}\frac{\Gamma_q(x_i+y_k+v+\theta)\Gamma_q(y_i-y_k+1)}{\Gamma_q(x_i+x_{k+1}+2\theta+v)\Gamma_q(y_i-x_{k+1}-\theta+1)}
\prod_{i\leq k\leq n+1}\frac{\Gamma_{q}(x_i-y_k+\theta)\Gamma_q(y_i+y_k+1+v)}{\Gamma_q(x_i-x_k+\theta)\Gamma_q(y_i+x_k+1+v)}\\
&\prod_{i\leq k\leq n}\frac{\Gamma_q(x_i-x_{k+1}-\theta+1)\Gamma_{q}(y_i+x_{k+1}+v+\theta)}{\Gamma_q(x_i-y_k+1)\Gamma_q(y_i+y_k+v)}
\prod_{i\leq k\leq n}\frac{\Gamma_q(y_i-x_{k+1})\Gamma_q(x_i+x_{k+1}+1+v)}{\Gamma_q(y_i-y_{k+1})\Gamma_q(x_i+y_{k+1}+1+v)}\\
&\propto\prod_{1\leq i<k\leq n}(1-q^{y_i-y_k})\prod_{1\leq i\leq k\leq n}\frac{\Gamma_q(y_i-x_{k+1})\Gamma_{q}(x_i-y_k+\theta)}{\Gamma_q(x_i-y_k+1)\Gamma_q(y_i-x_{k+1}-\theta+1)}\\
&\times \prod_{1\leq i\leq k\leq n}(1-q^{y_i+y_k+v})\left[\prod_{i=1}^n \prod_{k=1}^{ n+1}\frac{\Gamma_q(y_i+x_k+v+\theta)}{\Gamma_q(y_i+x_k+1+v)}\right]\prod_{i=1}^n q^{-\theta \sum_{i=1}^n (n-(i-1))y_i}.
\end{split}\end{align}
This finishes the proof of \eqref{e:KMdensityhi}. In the following we check that \eqref{e:KMdensityhi} is a  special case of \eqref{e:m2}, with $b(z)=q^{-z}+q^{z+v}$. We notice that the difference factorizes:
\begin{align*}
 b(z_1)-b(z_2)=q^{-z_1}+ q^{z_1+v}-q^{-z_2}-q^{z_2+v}
 =(q^{-z_1}-q^{-z_2})(1-q^{z_1+z_2+v}).
\end{align*}
We recall the transition probability \eqref{e:m2}
\begin{equation}\label{e:m2copy_3}
\bP(\bmy|\bmx)
=\frac{1}{Z(\bmx)}  \prod_{1\leq i<j\leq n}\bigl(b(y_i)-b(y_j)\bigr)\, \prod_{i=1}^n \left[
w(y_i) \prod\limits_{\begin{smallmatrix}\ell \in \bL(\bmx) \\ \ell>y_i\end{smallmatrix}} \frac{1}{b(\ell)-b(y_i)}\prod\limits_{\begin{smallmatrix}\ell\in \bL(\bmx)\\ \ell<y_i\end{smallmatrix}} \frac{1}{b(y_i)-b(\ell)}\right].
\end{equation}
With  $b(z)=q^{-z}+q^{z+v}$, the first product becomes
\begin{align}\label{e:1fp0}
\prod_{i<j}(b(y_i)-b(y_j))
=\prod_{i<j}(q^{-y_i}-q^{-y_j})(1-q^{y_i+y_j+v}).
\end{align}
For the denominators in \eqref{e:m2copy_3} we have
\begin{align}\begin{split}\label{e:1fp}
&\phantom{{}={}} \prod\limits_{\begin{smallmatrix}\ell\in \bL(\bmx)\\ \ell>y_i\end{smallmatrix}} \bigl(b(\ell)-b(y_i)\bigr) \prod\limits_{\begin{smallmatrix}\ell\in \bL(\bmx)\\ \ell<y_i\end{smallmatrix}}\bigl(b(y_i)-b(\ell)\bigr)\\
&=\prod\limits_{\begin{smallmatrix}\ell\in \bL(\bmx)\\ \ell>y_i\end{smallmatrix}}  (q^{-\ell}-q^{-y_i})(1-q^{y_i+\ell+v})\prod\limits_{\begin{smallmatrix}\ell\in \bL(\bmx)\\ \ell<y_i\end{smallmatrix}} (q^{-y_i}-q^{-\ell})(1-q^{y_i+\ell+v})\\
&\propto
\prod\limits_{\begin{smallmatrix}\ell\in \bL(\bmx)\\ \ell>y_i\end{smallmatrix}}  (q^{-\ell}-q^{-y_i})\prod\limits_{\begin{smallmatrix}\ell\in \bL(\bmx)\\ \ell<y_i\end{smallmatrix}} (q^{-y_i}-q^{-\ell})
\prod_{\ell\in \bL\setminus\{y_i\}}(1-q^{\ell+y_i+v}).
\end{split}\end{align}
The first two products on the righthand side of \eqref{e:1fp} are computed in \eqref{e:fp}, \eqref{e:fp1}, \eqref{e:fp2} and \eqref{e:fp3}.
For the last product term on the righthand side of \eqref{e:1fp}, we have
\begin{align}\begin{split}\label{e:2fp}
\prod_{i=1}^n \, \prod_{\ell\in \bL\setminus\{y_i\}}(1-q^{\ell+y_i+v})
&=\prod_{i=1}^n \frac{\prod_{\ell\in \bL}(1-q^{\ell+y_i+v})}{1-q^{2y_i+v}}
=\prod_{i=1}^n \frac{\prod_{j=1}^n \prod_{\ell=x_{j+1}+\theta}^{x_j}(1-q^{\ell+y_i+v})}{ 1-q^{2y_i+v}}\\
&=\prod_{i=1}^n \left[\frac{1}{1-q^{2y_i+v}}\prod_{j=1}^n \frac{\Gamma_q(y_i+x_j+v+1)}{\Gamma_q(y_i+x_{j+1}+v+\theta)}\right]\\
&=\prod_{i=1}^n \frac{1}{1-q^{2y_i+v}} \cdot \prod_{i=1}^n \prod_{j=1}^{n+1} \frac{\Gamma_q(y_i+x_j+v+1)}{\Gamma_q(y_i+x_{j}+v+\theta)}
\cdot \prod_{i=1}^n \frac{\Gamma_q(y_i+x_1+v+\theta)}{\Gamma_q(y_i+x_{n+1}+v+1)}.
\end{split}\end{align}
Combining  \eqref{e:2fp} with transformations of \eqref{e:1fp} from the previous proof, we rewrite \eqref{e:m2copy_3} as
\begin{align*}
&\prod_{1\leq i<j \leq n}(1-q^{y_i-y_j})\prod_{1\leq i\leq j\leq n}(1-q^{y_i+y_j+v})\prod_{1\leq i\leq j\leq n}\frac{\Gamma_q(x_i-y_j+\theta)}{\Gamma_q(x_{i}-y_j+1)}\frac{\Gamma_q(y_i-x_{j+1})}{\Gamma_q(y_i-x_{j+1}-\theta+1)}
\\
&\times \prod_{i=1}^{n} \prod_{j=1}^{n+1} \frac{\Gamma_q(y_i+x_{j}+v+\theta)}{\Gamma_q(y_i+x_j+v+1)} \prod_{i=1}^n \left[\frac{\Gamma_q(y_i+x_{n+1}+v+1)}{\Gamma_q(y_i+x_1+v+\theta)}\frac{q^{-\sum_i(n-i+1)\theta y_i}q^{y_i^2/2-y_i (x_{n+1}+1/2)}w(y_i)}{\Gamma_q(x_1-y_i+\theta)\Gamma_q(y_i-x_{n+1})}\right],
\end{align*}
which matches \eqref{e:KMdensityhicopy} if
\begin{align*}
w(z)=q^{-z^2/2+z (x_{n+1}+1/2)}{\Gamma_q(x_1-z+\theta)}{\Gamma_q(z-x_{n+1})}\frac{\Gamma_q(z+x_1+v+\theta)}{\Gamma_q(z+x_{n+1}+v+1)}.
\end{align*}
This finishes the proof of Claim \ref{c:MKdescending}.
\end{proof}

\begin{proof}[Proof of Theorem \ref{Theorem_qRacah_partition}]
Let us do some simplifications in $\psi^{(i)}_{\bm\lambda/\bm\mu}$ of Theorem \ref{Theorem_Koornwinder_branching} in the $q=t$ case. We additionally set  $t^{2n} t_0 t_1 t_2 t_3=p$.
 Note that if $\lambda'_j=\mu'_j$, then $(i,j)\in{\bmla}$ is equivalent to $(i,j)\in{\bmmu}$. On the other hand, if $(i,j)\in {\bmla}/{\bmmu}$, then necessarily $\lambda'_j\ne \mu'_j$. Hence,
 {\footnotesize
\begin{multline*}
 \prod_{\substack{(i,j)\in {\bmla}\\\lambda'_j=\mu'_j}}
\frac{1-q^{\lambda_i+j} q^{-\lambda'_j-i} p }
     {1-q^{\mu_i-j+1} q^{\mu'_j-i}}
\prod_{\substack{(i,j)\in {\bmla}\\\lambda'_j\ne \mu'_j}}
\frac{1-q^{\lambda_i-j} q^{\lambda'_j-i+1}}
     {1-q^{\mu_i+j-1} q^{-\mu'_j-i} p }
\prod_{\substack{(i,j)\in {\bmmu}\\\lambda'_j=\mu'_j}}
\frac{1-q^{\lambda_i-j+1}q^{\lambda'_j-i}}
     {1-q^{\mu_i+j-1}q^{-\mu'_j-i} p}
\prod_{\substack{(i,j)\in {\bmmu}\\\lambda'_j\ne \mu'_j}}
\frac{1-q^{\lambda_i+j} q^{-\lambda'_j-i} p}
     {1-q^{\mu_i-j} q^{\mu'_j-i+1}}
\\=\prod_{(i,j)\in{\bmla}}(1-q^{\lambda_i-j} q^{\lambda'_j-i+1})(1-q^{\lambda_i+j} q^{-\lambda'_j-i} p) \prod_{(i,j)\in{\bmmu}} \frac{1}{(1-q^{\mu_i-j+1} q^{\mu'_j-i})(1-q^{\mu_i+j-1} q^{-\mu'_j-i} p )}
\\
\times
\prod_{(i,j)\in {\bmla}/{\bmmu}}
\frac{1}
     {1-q^{\mu_i+j-1} q^{-\mu'_j-i} p }
\prod_{(i,j)\in {\bmla}/{\bmmu}}
\frac{1}
     {1-q^{\lambda_i+j} q^{-\lambda'_j-i} p}.
\end{multline*}}%
We now simplify the last line. Note that since $\bm\mu\prec\bm\lambda$ and $(i,j)\in\bm\lambda/\bm\mu$, we have $\lambda'_j=i$ and $\mu'_j=i-1$. Hence, we get
\begin{align*}
\prod_{i} \left[\prod_{j=\mu_i+1}^{\lambda_i}
\frac{1}
     {1-q^{\mu_i+j} q^{-2i} p }\right]\cdot\left[
 \prod_{j=\mu_i+1}^{\lambda_i}
\frac{1}
     {1-q^{\lambda_i+j} q^{-2i} p} \right]
  &=  \prod_{i} \left[
 \prod_{j=1}^{2(\lambda_i-\mu_i)}
\frac{1}
     {1-q^{2\mu_i+j} q^{-2i} p} \right]
  \\&=  \prod_i \frac{(q^{2(\lambda_i-i)+1}p;q)_\infty}{(q^{2(\mu_i-i)+1} p;q)_\infty} .
\end{align*}

Let us now rewrite the branching rule for Koornwinder polynomials at $q=t$:
\begin{lemma} \label{Lemma_qt_branching}
 Consider Koornwinder polynomials with $t=q$, $t_0=r\cdot t^{-n-1}$. Define the quasi-branching coefficient through:
\begin{multline*}
 K^{(n+1)}_{\bm \lambda}(x_1,\dots,x_n, r\cdot q^{-n-1};\, q,q;\, r\cdot q^{-n-1} ,t_1,t_2,t_3) \\ =\sum_{\bm\mu\prec \bm\lambda} \tilde \psi_{\bmla/\bmmu} \cdot K^{(n)}_\bmmu(x_1,\dots,x_n;\, q,q;\, r\cdot q^{-n} ,t_1,t_2,t_3).
\end{multline*}
Then we have:
 $$
   \tilde \psi_{\bmla/\bmmu}= \frac{g_{n+1}(\bmla)}{g_n(\bmmu)} \prod_{i=1}^{n+1}\left[ q^{-\lambda_i}\frac{1-q^{2(\lambda_i-i)+n}r t_1 t_2 t_3}{1-q^{-2i+n}r t_1 t_2 t_3} \right].
 $$
 where $  g_n({\bmmu})$ is
 \begin{equation}
 \label{eq_g_def}
 (r)^{-|{\bmmu}|} C^0_{\bmmu}(rt_1 t_2 t_3/q^2)  C^0_{\bmmu}(rt_1/q) C^0_{\bmmu}(rt_2/q)C^0_{\bmmu}(rt_3/q)\cdot  q^{(n+1)|{\bmmu}|} \prod_{i\geq 1}  \frac{(q^{2(\mu_i-i)+n}r t_1 t_2 t_3;q)_\infty}{(q^{-2i+n}r t_1 t_2 t_3;q)_\infty}.
 \end{equation}
\end{lemma}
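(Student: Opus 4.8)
The plan is to read $\tilde\psi_{\bmla/\bmmu}$ off directly from the quasi-branching rule \eqref{e:branchrule} of Theorem \ref{Theorem_Koornwinder_branching}, clearing the two principal-specialization normalizations that appear there. Rearranging \eqref{e:branchrule} so that it expands $K^{(n+1)}_\bmla$ in the \emph{un-normalized} $K^{(n)}_\bmmu$ gives
\[
 \tilde\psi_{\bmla/\bmmu}=\frac{K^{(n+1)}_{\bmla}(q^{n}t_0,\dots,t_0;\,q,t;\,t_0,t_1,t_2,t_3)}{K^{(n)}_\bmmu(q^{n}t_0,\dots,qt_0;\,q,t;\,qt_0,t_1,t_2,t_3)}\;\psi^{(i)}_{\bmla\setminus\bmmu}\bigl(t^{n+1};\,q,t,\,t^{n}\sqrt{t_0t_1t_2t_3/qt}\bigr),
\]
into which I substitute $t=q$ and $t_0=rq^{-n-1}$, and then simplify the three factors separately and regroup.

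For the factor $\psi^{(i)}_{\bmla\setminus\bmmu}$, the four products over the boxes of $\bmla$ and of $\bmmu$ have already been collapsed in the computation preceding the lemma into
\[
 \prod_{(i,j)\in\bmla}(1-q^{\lambda_i-j}q^{\lambda'_j-i+1})(1-q^{\lambda_i+j}q^{-\lambda'_j-i}p)\prod_{(i,j)\in\bmmu}\frac{1}{(1-q^{\mu_i-j+1}q^{\mu'_j-i})(1-q^{\mu_i+j-1}q^{-\mu'_j-i}p)}\prod_{i\ge1}\frac{(q^{2(\lambda_i-i)+1}p;q)_\infty}{(q^{2(\mu_i-i)+1}p;q)_\infty},
\]
with $p=t^{2n}t_0t_1t_2t_3=q^{n-1}rt_1t_2t_3$; it remains to insert the prefactor $(u/t)^{|\bmla|-|\bmmu|}t^{n(\bmmu)-n(\bmla)}$ and the $C^0$-ratio $\tfrac{C^0_\bmla(s^2qt/u)C^0_\bmmu(u/t)}{C^0_\bmla(u)C^0_\bmmu(s^2qt/u)}$, evaluated at $u=t^{n+1}=q^{n+1}$ and $s^2=q^{-2}p$. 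Here $u/t=q^{n}$, $s^2qt/u=rt_1t_2t_3/q^2$, and $C^0_\nu(x;q,q)=\prod_{(i,j)\in\nu}(1-q^{j-i}x)$, so every piece is an explicit product over the boxes of $\bmla$ or of $\bmmu$.

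For the two principal specializations I will substitute their closed product forms, namely the principal-specialization evaluation of Koornwinder polynomials from \cite{rains2005bc} specialized to $q=t$: this writes $K^{(m)}_\nu(\hat t_0q^{m-1},\dots,\hat t_0;\,q,q;\,\hat t_0,t_1,t_2,t_3)$ as a $\nu$-independent constant times a product over the boxes of $\nu$ of $q$-shifted $\hat t_0t_k$ factors times a telescoping $q$-Pochhammer factor of the same shape as the last product above. Applying this with $m=n+1$, $\hat t_0=rq^{-n-1}$ in the numerator and $m=n$, $\hat t_0=rq^{-n}$ in the denominator, I then collect every factor that depends only on $\bmla$ — the box products over $\bmla$, the $C^0_\bmla$-values, the $\bmla$-part of the specialization product, and the telescoping Pochhammers, using $q^{2(\mu_i-i)+1}p=q^{2(\mu_i-i)+n}rt_1t_2t_3$ to line up the Pochhammers — and recognize exactly $g_{n+1}(\bmla)$ of \eqref{eq_g_def}, and likewise $1/g_n(\bmmu)$ from the $\bmmu$-dependent factors. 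The residual pieces — the power $q^{-\lambda_i}$ coming from $(u/t)^{|\bmla|-|\bmmu|}$ together with $q^{n(\bmmu)-n(\bmla)}$, plus the single unmatched Pochhammer ratio coming from $C^0_\bmla(s^2qt/u)$ versus $C^0_\bmla(u)$ — assemble into $\prod_{i=1}^{n+1}q^{-\lambda_i}\tfrac{1-q^{2(\lambda_i-i)+n}rt_1t_2t_3}{1-q^{-2i+n}rt_1t_2t_3}$, which is the asserted formula.

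The main obstacle is entirely the bookkeeping: matching, term by term, the pieces of the $q=t$ Koornwinder principal-specialization formula against the pieces of $\psi^{(i)}$ so that everything factors through $\bmla$ and through $\bmmu$ separately up to the explicit residual product. The delicate points are tracking the various argument shifts ($s^2qt/u$, $u$, $u/t$, and the $\pm p$ shifts inside $\psi^{(i)}$), and pinning down the precise $q=t$ form of the principal specialization from \cite{rains2005bc} — which is exactly where the four $C^0$-factors in the definition of $g_n$ (for $t_1,t_2,t_3$ and for $t_1t_2t_3/q$) originate. No genuinely new idea is needed beyond these simplifications.
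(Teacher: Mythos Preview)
Your approach is correct and is essentially the paper's proof: both start from the quasi-branching rule of Theorem~\ref{Theorem_Koornwinder_branching}, write $\tilde\psi_{\bmla/\bmmu}$ as $\psi^{(i)}_{\bmla\setminus\bmmu}$ times the ratio of the two principal-specialization values, plug in the $q=t$ simplification of the box products computed just before the lemma together with the Rains principal-specialization formula, and then regroup into $g_{n+1}(\bmla)/g_n(\bmmu)$ times the residual product. One minor correction to your bookkeeping sketch: the residual factors $q^{-\lambda_i}$ and $\tfrac{1-q^{2(\lambda_i-i)+n}rt_1t_2t_3}{1-q^{-2i+n}rt_1t_2t_3}$ actually arise from the mismatch between the $q^{(n+1)|\bmla|}$ you obtain and the $q^{(n+2)|\bmla|}$ in the definition of $g_{n+1}$, and from the one-step shift between $(q^{2(\lambda_i-i)+n}rt_1t_2t_3;q)_\infty$ and the Pochhammer in $g_{n+1}$ --- not from a $C^0_\bmla(s^2qt/u)$ versus $C^0_\bmla(u)$ discrepancy as you wrote.
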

\begin{proof}
 We denote $q^{2n} t_0 t_1 t_2 t_3=p$. Comparing with Theorem \ref{Theorem_Koornwinder_branching}, we have
 $$
    \tilde \psi_{\bmla/\bmmu}= \psi^{(i)}_{\bmla/\bmmu}\left(q^{n+1};\,q,q, q^{-1}\sqrt{p}\right) \cdot \frac{k^0_{\bmla}(q,q,q^{n+1};\, r\cdot q^{-n-1}:t_1, t_2,t_3)}{k^0_{\bmmu}(q,q,q^{n};\, r\cdot q^{-n}:t_1, t_2,t_3)}.
 $$
 Using the above simplifications for $\psi^{(i)}_{\bmla/\bmmu}$, the last expression is
 {\footnotesize
 \begin{multline*}
   \frac{q^{n|{\bmla}|-n({\bmla})}}{q^{n|{\bmmu}|-n({\bmmu})}} \frac{C^0_{\bmla}(p q^{-1-n}) C^0_{\bmmu}(q^n)}{C^0_{\bmmu}(p q^{-1-n}) C^0_{\bmla}(q^{n+1})}  \frac{\prod\limits_{(i,j)\in{\bmla}}(1-q^{\lambda_i-j} q^{\lambda'_j-i+1})(1-q^{\lambda_i+j} q^{-\lambda'_j-i} p)}{\prod\limits_{(i,j)\in{\bmmu}} (1-q^{\mu_i-j+1} q^{\mu'_j-i})(1-q^{\mu_i+j-1} q^{-\mu'_j-i} p )} \prod_i \frac{(q^{2(\lambda_i-i)+1}p;q)_\infty}{(q^{2(\mu_i-i)+1} p;q)_\infty}
   \\ \times \frac{(r/q)^{-|{\bmla}|} q^{n({\bmla})}} {(r/q)^{-|{\bmmu}|} q^{n({\bmmu})}} \frac{C^0_{\bmla}(q^{n+1})C^0_{\bmla}(rt_1/q) C^0_{\bmla}(rt_2/q)C^0_{\bmla}(rt_3/q)}{C^0_{\bmmu}(q^{n})C^0_{\bmmu}(rt_1/q) C^0_{\bmmu}(rt_2/q)C^0_{\bmmu}(rt_3/q)}  \frac{\prod\limits_{(i,j)\in{\bmmu}} (1-q^{\mu_i-j+1} q^{\mu'_j-i})(1-q^{\mu_i+j-1} q^{-\mu'_j-i} p )}{\prod\limits_{(i,j)\in{\bmla}}(1-q^{\lambda_i-j} q^{\lambda'_j-i+1})(1-q^{\lambda_i+j} q^{-\lambda'_j-i} p)}
   \\
   =    \frac{q^{(n+1)|{\bmla}|}}{q^{(n+1)|{\bmmu}|}} \frac{C^0_{\bmla}(p q^{-1-n}) }{C^0_{\bmmu}(p q^{-1-n}) }
    \frac{(r)^{-|{\bmla}|} } {(r)^{-|{\bmmu}|} } \frac{C^0_{\bmla}(rt_1/q) C^0_{\bmla}(rt_2/q)C^0_{\bmla}(rt_3/q)}{C^0_{\bmmu}(rt_1/q) C^0_{\bmmu}(rt_2/q)C^0_{\bmmu}(rt_3/q)} \prod_i \frac{(q^{2(\lambda_i-i)+1}p;q)_\infty}{(q^{2(\mu_i-i)+1} p;q)_\infty},
 \end{multline*}
 }
 with all $C^0$ being evaluated at $(\cdot;\, q,q)$. It remains to note that
 $$
  p=q^{n-1} r t_1 t_2 t_3. \qedhere
 $$
\end{proof}

Here is a restatement of Lemma \ref{Lemma_qt_branching}: using the notation  \eqref{eq_g_def} we define a symmetric function $\tilde K_{\bmla}(x_1,x_2,\dots,x_n)$ through
 \begin{equation*}
  \tilde K_{\bmmu}(x_1,x_2,\dots,x_n):= \frac{K^{(n)}_{\bmmu}(x_1,\dots,x_n;\, q,q;\, r\cdot q^{-n} ,t_1,t_2,t_3)}{ g_n({\bmmu})}.
 \end{equation*}
 (It still depends on $q,r, t_1,t_2,t_3$, but we are hiding this dependence.) Then we have
 \begin{equation*}
   \tilde K_{\bmla}(x_1,x_2,\dots,x_n, r q^{-n-1})=\sum_{{\bmmu}\prec{\bmla}} \left[\prod_{i=1}^{n+1}\left[ q^{-\lambda_i}\frac{1-q^{2(\lambda_i-i)+n}r t_1 t_2 t_3}{1-q^{-2i+n}r t_1 t_2 t_3} \right]  \right]   \tilde K_{\bmmu}(x_1,x_2,\dots,x_n).
 \end{equation*}

Let us now match to the notations of Theorem \ref{Theorem_qRacah_partition}. For that we transform the weight arising in branching of $n$ (rather than $n+1$) variable polynomial
\begin{equation}
\label{eq_transformation}
 q^{-\lambda_i}\frac{1-q^{2(\lambda_i-i)+n-1}r t_1 t_2 t_3}{1-q^{-2i+n-1}r t_1 t_2 t_3} = \frac{ \sqrt{r t_1 t_2 t_3}\, q^{\lambda_i+\frac{n-1}{2}-i} - \frac{1}{\sqrt{r t_1 t_2 t_3}\, q^{\lambda_i+(n-1)/2-i}}}{ \sqrt{r t_1 t_2 t_3}\, q^{(n-1)/2-i} - \frac{1}{\sqrt{r t_1 t_2 t_3}\, q^{(n-1)/2-i}}}.
\end{equation}
Denoting $x_i=\lambda_i+\frac{n+1}{2}-i$ and $\sigma=q^{-1} \sqrt{r t_1 t_2 t_3}$ and removing the $\lambda_i$--independent normalization, we get the weight
$$
 \sigma q^{x_i} -\frac{1}{\sigma  q^{x_i}}.
$$
We are now ready to finish the proof of Theorem \ref{Theorem_qRacah_partition}. We compute
 \begin{equation}
 \label{eq_x0}
 \sum_{\emptyset={\bmla}^{(0)}\prec{\bmla}^{(1)}\prec\dots\prec {\bmla}^{(n)}={\bmla}} \prod_{k=1}^n \prod_{i=1}^k \left( \sigma q^{\lambda^{(k)}_i+\frac{k+1}{2}-i}-\frac{1}{\sigma  q^{\lambda^{(k)}_i+\frac{k+1}{2}-i}}\right),
 \end{equation}
 which differs from the left-hand side of \eqref{eq_qRacah_sum} by allowing $k=n$.
 Using Lemma \ref{Lemma_qt_branching} recurrently for $n-1$, $n-2$,\dots,  adopting  the notation $\sigma =q^{-1}\sqrt{r t_1 t_2 t_3}$, and transforming the weights as in \eqref{eq_transformation}, the sum \eqref{eq_x0} evaluates to
 \begin{equation}
 \label{eq_x4}
  \left[\prod_{k=1}^n \prod_{i=1}^k \left( \sigma  q^{\frac{k+1}{2}-i}-\frac{1}{\sigma q^{\frac{k+1}2-i}}\right)\right] \frac{K^{(n)}_{\bmmu}(r q^{-1} , r q^{-2},\dots, r q^{-n};\, q,q;\, r\cdot q^{-n} ,t_1,t_2,t_3)}{g_n({\bmla})}.
 \end{equation}
 For the principal specialization of Koornwinder polynomials we use another formula from \cite{rains2005bc}:
 $$
 K^{(n+1)}_{\bmla}(t_0,t_0 t,\dots,t_0 t^{n};\, q,t;\, t_0,t_1,t_2,t_3) =(t_0 t^{n})^{-|{\bmla}|} t^{n({\bmla})} \frac{C^0_{\bmla}(t^{n+1}, t^{n} t_0 t_1, t^{n} t_0 t_2, t^{n} t_0 t_3;\, q,t)}{C_{\bmla}^-(t;\, q,t) C_{\bmla}^+(t^{2n} t_0 t_1 t_2 t_3/q;\, q,t)},
 $$
 where $C^0_{\bmla}$ is as in Theorem \ref{Theorem_Koornwinder_branching} and
 {\allowdisplaybreaks
\begin{align*}
C^+_{\bmla}(x;q,t)&:=\prod_{(i,j)\in {\bmla}} (1-q^{\lambda_i+j-1}
t^{2-\lambda'_j-i} x)
=
\prod_{1\leq i\leq l}
\frac{(q^{\lambda_i} t^{2-l-i} x;q)}
{(q^{2\lambda_i} t^{2-2i} x;q)}
\prod_{1\leq i<j\leq l}
\frac{(q^{\lambda_i+\lambda_j} t^{3-i-j} x;q)}
{(q^{\lambda_i+\lambda_j} t^{2-i-j} x;q)},\\
C^-_{\bmla}(x;q,t)&:=\prod_{(i,j)\in {\bmla}} (1-q^{\lambda_i-j}
t^{\lambda'_j-i} x)
=
\prod_{1\leq i\leq l}
\frac{(x;q)}
{(q^{\lambda_i} t^{l-i} x;q)}
\prod_{1\leq i<j\leq l}
\frac{(q^{\lambda_i-\lambda_j} t^{j-i} x;q)}
{(q^{\lambda_i-\lambda_j} t^{j-i-1} x;q)}.
\end{align*}
}
 Hence, \eqref{eq_x4} becomes
  \begin{multline*}
    \left[\prod_{k=1}^n \prod_{i=1}^k \left( \sigma  q^{\frac{k+1}{2}-i}-\frac{1}{\sigma q^{\frac{k+1}2-i}}\right)\right] (r/q)^{-|{\bmla}|} q^{n({\bmla})} \frac{C^0_{\bmla}(q^{n})C^0_{\bmla}(r t_1/q)C^0_{\bmla}(r t_2/q)C^0_{\bmla}(r t_3/q)}{C_{\bmla}^-(q;\, q,q) C_{\bmla}^+( \sigma  q^{n-1};\, q,q)}
\\ \times  (r)^{|{\bmla}|} \frac{1}{C^0_{\bmla}( \sigma^2)  C^0_{\bmla}(rt_1/q) C^0_{\bmla}(rt_2/q)C^0_{\bmla}(rt_3/q)\cdot  q^{(n+1)|{\bmla}|}} \prod_{i\geq 1}  \frac{(q^{-2i+n+2} \sigma^2;q)_\infty}{(q^{2(\lambda_i-i)+n+2} \sigma^2;q)_\infty}
\\ =   \left[\prod_{k=1}^n \prod_{i=1}^k \left( \sigma  q^{\frac{k+1}{2}-i}-\frac{1}{\sigma q^{\frac{k+1}2-i}}\right)\right] q^{-n|{\bmla}|} q^{n({\bmla})} \frac{C^0_{\bmla}(q^{n})}{C_{\bmla}^-(q;\, q,q) C_{\bmla}^+( \sigma^2 q^{n-1};\, q,q)}
\\ \times  \frac{1}{C^0_{\bmla}( \sigma^2)} \cdot  \prod_{i\geq 1}  \frac{(q^{-2i+n+1} \sigma^2;q)_\infty}{(q^{2(\lambda_i-i)+n+1} \sigma^2;q)_\infty}
\\=  \left[\prod_{k=1}^n \prod_{i=1}^k \left( \sigma  q^{\frac{k+1}{2}-i}-\frac{1}{\sigma q^{\frac{k+1}2-i}}\right)\right]  q^{-n|{\bmla}|+n({\bmla})}
\prod_{1\leq i<j\leq n}
\bigl(1-q^{\lambda_i-\lambda_j} q^{j-i}\bigr) \bigl(1- \sigma^2 q^{\lambda_i+\lambda_j+n-i-j+1}\bigr)
\\ \times
 \prod_{i=1}^n \left[ \frac{(q^{\lambda_i+n-i+1};q)_\infty}{(q;q)_\infty} \frac{(\sigma^2 q^{-2i+n+2} ;q)_\infty}{( \sigma^2 q^{2(\lambda_i-i)+n+2} ;q)_\infty}
\frac{( \sigma^2 q^{2(\lambda_i-i)+n+1} ;q)_\infty}{( \sigma^2 q^{\lambda_i-i+1} ;q)_\infty}  \cdot \frac{(q^{1-i} q^n;q)_{\lambda_i}}{ (\sigma^2 q^{1-i}  ;q)_{\lambda_i}} \right].
 \end{multline*}
It remains to transform the last formula.
First, let us simplify the factor in $\prod_{i=1}^n$:
\begin{multline}
\label{eq_x10}
 \frac{(q^{\lambda_i+n-i+1};q)_\infty}{(q;q)_\infty} \frac{(\sigma^2 q^{-2i+n+2} ;q)_\infty}{(\sigma^2 q^{2(\lambda_i-i)+n+2} ;q)_\infty}
\frac{( \sigma^2 q^{2(\lambda_i-i)+n+1} ;q)_\infty}{( \sigma^2 q^{\lambda_i-i+1} ;q)_\infty}  \cdot \frac{(q^{1-i} q^n;q)_{\lambda_i}}{ (\sigma^2 q^{1-i}  ;q)_{\lambda_i}}\\=
\frac{(q^{n-i+1};q)_\infty}{(q;q)_\infty} \frac{(\sigma^2 q^{-2i+n+2};q)_\infty}{( \sigma^2  q^{1-i} ;q)_\infty}
\bigl(1- \sigma^2 q^{2(\lambda_i-i)+n+1} \bigr)=
\frac{1- \sigma^2 q^{2(\lambda_i-i)+n+1}}{(q;q)_{n-i}\, ( \sigma^2  q^{1-i} ;q)_{n-i+1}}
\end{multline}
Next, let us modify the $\prod_{k=1}^n \prod_{i=1}^k$ factor:
\begin{multline}
\label{eq_x2}
  \left[\prod_{k=1}^n \prod_{i=1}^k \left( \sigma  q^{\frac{k+1}{2}-i}-\frac{1}{\sigma q^{\frac{k+1}2-i}}\right)\right]= (-\sigma)^{-n(n+1)/2} \prod_{1\leq i \leq k \leq n}\left(1-\sigma^2 q^{k+1-2i}\right)\\= (-\sigma)^{-n(n+1)/2}  \prod_{i=1}^n (\sigma^2 q^{1-i};q)_{n-i+1}.
\end{multline}
We observe a nice cancellation between $q$-Pochammer symbols involving $\sigma$ in the last two formulas. Finally, we rewrite the remaining factors:
\begin{multline}
\label{eq_x3}
 q^{-n|{\bmla}|+n({\bmla})}
\prod_{1\leq i<j\leq n}
\bigl(1-q^{\lambda_i-\lambda_j} q^{j-i}\bigr) \bigl(1- \sigma^2 q^{\lambda_i+\lambda_j+n-i-j+1}\bigr)\\=
 q^{\sum_{i=1}^n (i-1-n)\lambda_i}
\prod_{1\leq i<j\leq n}
\bigl(1-q^{\lambda_i-\lambda_j+j-i}- \sigma^2 q^{\lambda_i+\lambda_j+n-i-j+1}+ \sigma^2 q^{2 \lambda_i+n-2i+1}  \bigr)
\\=
 q^{\sum_{i=1}^n (i-1-n)\lambda_i}
\prod_{1\leq i<j\leq n}q^{\lambda_i-i+\frac{n+1}{2}}
\left(q^{i-\lambda_i-\frac{n+1}{2}}+ \sigma^2 q^{\lambda_i-i+\frac{n+1}{2}}  -q^{j-\lambda_j-\frac{n+1}{2}}- \sigma^2 q^{\lambda_j-j+\frac{n+1}{2}} \right)
\\=
 \left[\prod_{i=1}^n q^{(\frac{n+1}{2}-i)(n-i)}\right] q^{-|{\bmla}|}
\prod_{1\leq i<j\leq n}
\left(q^{i-\lambda_i-\frac{n+1}{2}}+ \sigma^2 q^{\lambda_i-i+\frac{n+1}{2}}  -q^{j-\lambda_j-\frac{n+1}{2}}- \sigma^2 q^{\lambda_j-j+\frac{n+1}{2}} \right)
\end{multline}
It remains to multiply \eqref{eq_x10}, \eqref{eq_x2}, \eqref{eq_x3} and divide by $ \prod\limits_{i=1}^k \left( \sigma q^{\lambda_i+\frac{k+1}{2}-i}-\frac{1}{\sigma  q^{\lambda_i+\frac{k+1}{2}-i}}\right)$ to get the result.
\end{proof}

\end{appendix}

\begin{acks}[Acknowledgments]
The authors thank A.~Aggarwal, E.~Dimitrov, and N.~Nekrasov for helpful discussions. We are grateful to the Galileo Galilei Institute for Theoretical Physics, where some of these discussions took place.We thank anonymous referees for proofreading the manuscript and useful suggestions.
\end{acks}

\begin{funding}
The work of V.G.\ was partially supported by NSF Grants DMS-1664619, DMS-1949820, DMS-2152588,  and by the Office of the Vice Chancellor for Research and Graduate Education at the University of Wisconsin--Madison with funding from the Wisconsin Alumni Research Foundation.
The research of J.H. is supported by the Simons Foundation as a Junior Fellow at
the Simons Society of Fellows, and NSF Grants DMS-2054835 and DMS-2331096.
\end{funding}

\bibliographystyle{imsart-number} 
\bibliography{References.bib}       

\begin{thebibliography}{105}

\bibitem{adhikari2020dyson}
\begin{barticle}[author]
\bauthor{\bsnm{Adhikari},~\bfnm{Arka}\binits{A.}} \AND
  \bauthor{\bsnm{Huang},~\bfnm{Jiaoyang}\binits{J.}}
(\byear{2020}).
\btitle{Dyson Brownian motion for general $\beta$ and potential at the edge}.
\bjournal{Probability Theory and Related Fields}
\bpages{1--58}.
\end{barticle}
\endbibitem

\bibitem{ahn2020global}
\begin{binproceedings}[author]
\bauthor{\bsnm{Ahn},~\bfnm{Andrew}\binits{A.}}
(\byear{2020}).
\btitle{Global universality of {Macdonald} plane partitions}.
In \bbooktitle{Annales de l'Institut Henri Poincar{\'e}, Probabilit{\'e}s et
  Statistiques}
\bvolume{56}
\bpages{1641--1705}.
\bpublisher{Institut Henri Poincar{\'e}}.
\end{binproceedings}
\endbibitem

\bibitem{ambjorn1990properties}
\begin{barticle}[author]
\bauthor{\bsnm{Ambj{\o}rn},~\bfnm{Jan}\binits{J.}} \AND
  \bauthor{\bsnm{Makeenko},~\bfnm{Yu~M}\binits{Y.~M.}}
(\byear{1990}).
\btitle{Properties of loop equations for the Hermitian matrix model and for
  two-dimensional quantum gravity}.
\bjournal{Modern Physics Letters A}
\bvolume{5}
\bpages{1753--1763}.
\end{barticle}
\endbibitem

\bibitem{AGZ}
\begin{bbook}[author]
\bauthor{\bsnm{Anderson},~\bfnm{Greg~W.}\binits{G.~W.}},
  \bauthor{\bsnm{Guionnet},~\bfnm{Alice}\binits{A.}} \AND
  \bauthor{\bsnm{Zeitouni},~\bfnm{Ofer}\binits{O.}}
(\byear{2010}).
\btitle{An introduction to random matrices}.
\bseries{Cambridge Studies in Advanced Mathematics}
\bvolume{118}.
\bpublisher{Cambridge University Press, Cambridge}.
\bmrnumber{2760897}
\end{bbook}
\endbibitem

\bibitem{atkinson2011numerical}
\begin{bbook}[author]
\bauthor{\bsnm{Atkinson},~\bfnm{Kendall}\binits{K.}},
  \bauthor{\bsnm{Han},~\bfnm{Weimin}\binits{W.}} \AND
  \bauthor{\bsnm{Stewart},~\bfnm{David~E}\binits{D.~E.}}
(\byear{2011}).
\btitle{Numerical solution of ordinary differential equations}.
\bpublisher{John Wiley \& Sons}.
\end{bbook}
\endbibitem

\bibitem{behrndt2020boundary}
\begin{bbook}[author]
\bauthor{\bsnm{Behrndt},~\bfnm{Jussi}\binits{J.}},
  \bauthor{\bsnm{Hassi},~\bfnm{Seppo}\binits{S.}} \AND
  \bauthor{\bsnm{De~Snoo},~\bfnm{Henk}\binits{H.}}
(\byear{2020}).
\btitle{Boundary value problems, Weyl functions, and differential operators}.
\bpublisher{Springer Nature}.
\end{bbook}
\endbibitem

\bibitem{MR3351052}
\begin{barticle}[author]
\bauthor{\bsnm{Bekerman},~\bfnm{F.}\binits{F.}},
  \bauthor{\bsnm{Figalli},~\bfnm{A.}\binits{A.}} \AND
  \bauthor{\bsnm{Guionnet},~\bfnm{A.}\binits{A.}}
(\byear{2015}).
\btitle{Transport maps for {$\beta$}-matrix models and universality}.
\bjournal{Comm. Math. Phys.}
\bvolume{338}
\bpages{589--619}.
\bdoi{10.1007/s00220-015-2384-y}
\bmrnumber{3351052}
\end{barticle}
\endbibitem

\bibitem{MR2418256}
\begin{barticle}[author]
\bauthor{\bsnm{Bender},~\bfnm{Martin}\binits{M.}}
(\byear{2008}).
\btitle{Global fluctuations in general {$\beta$} {D}yson's {B}rownian motion}.
\bjournal{Stochastic Process. Appl.}
\bvolume{118}
\bpages{1022--1042}.
\bdoi{10.1016/j.spa.2007.07.010}
\bmrnumber{2418256}
\end{barticle}
\endbibitem

\bibitem{Be_Powell_GFF}
\begin{barticle}[author]
\bauthor{\bsnm{Berestycki},~\bfnm{Nathana\"{e}l}\binits{N.}} \AND
  \bauthor{\bsnm{Powell},~\bfnm{Ellen}\binits{E.}}
(\byear{2021}).
\btitle{Lecture notes on the {Gaussian} Free Field}.
\bjournal{Preprint}.
\end{barticle}
\endbibitem

\bibitem{Be}
\begin{barticle}[author]
\bauthor{\bsnm{Betea},~\bfnm{Dan}\binits{D.}}
(\byear{2012}).
\btitle{Elliptic Combinatorics and {Markov} Processes}.
\bjournal{Dissertation (Ph.D.), California Institute of Technology}.
\end{barticle}
\endbibitem

\bibitem{Biane_convlution}
\begin{barticle}[author]
\bauthor{\bsnm{Biane},~\bfnm{Philippe}\binits{P.}}
(\byear{1997}).
\btitle{On the free convolution with a semi-circular distribution}.
\bjournal{Indiana Univ. Math. J.}
\bvolume{46}
\bpages{705--718}.
\bdoi{10.1512/iumj.1997.46.1467}
\bmrnumber{1488333}
\end{barticle}
\endbibitem

\bibitem{borodin2014clt}
\begin{barticle}[author]
\bauthor{\bsnm{Borodin},~\bfnm{Alexei}\binits{A.}}
(\byear{2014}).
\btitle{{CLT} for spectra of submatrices of {Wigner} random matrices}.
\bjournal{Moscow Mathematical Journal}
\bvolume{14}
\bpages{29-38}.
\end{barticle}
\endbibitem

\bibitem{borodin2014clt2}
\begin{barticle}[author]
\bauthor{\bsnm{Borodin},~\bfnm{Alexei}\binits{A.}}
(\byear{2014}).
\btitle{{CLT} for spectra of submatrices of {Wigner} random matrices {II.}
  Stochastic evolution}.
\bjournal{Random Matrices, MSRI Publications}
\bvolume{65}
\bpages{57-69}.
\end{barticle}
\endbibitem

\bibitem{borodin2014macdonald}
\begin{barticle}[author]
\bauthor{\bsnm{Borodin},~\bfnm{Alexei}\binits{A.}} \AND
  \bauthor{\bsnm{Corwin},~\bfnm{Ivan}\binits{I.}}
(\byear{2014}).
\btitle{Macdonald processes}.
\bjournal{Probability Theory and Related Fields}
\bvolume{158}
\bpages{225--400}.
\end{barticle}
\endbibitem

\bibitem{borodin2014anisotropic}
\begin{barticle}[author]
\bauthor{\bsnm{Borodin},~\bfnm{Alexei}\binits{A.}} \AND
  \bauthor{\bsnm{Ferrari},~\bfnm{Patrik~L}\binits{P.~L.}}
(\byear{2014}).
\btitle{Anisotropic growth of random surfaces in {2+1} dimensions}.
\bjournal{Communications in Mathematical Physics}
\bvolume{325}
\bpages{603--684}.
\end{barticle}
\endbibitem

\bibitem{borodin2015general}
\begin{barticle}[author]
\bauthor{\bsnm{Borodin},~\bfnm{Alexei}\binits{A.}} \AND
  \bauthor{\bsnm{Gorin},~\bfnm{Vadim}\binits{V.}}
(\byear{2015}).
\btitle{General $\beta$-Jacobi Corners Process and the {Gaussian} Free Field}.
\bjournal{Communications on Pure and Applied Mathematics}
\bvolume{68}
\bpages{1774--1844}.
\end{barticle}
\endbibitem

\bibitem{borodin2016lectures}
\begin{barticle}[author]
\bauthor{\bsnm{Borodin},~\bfnm{Alexei}\binits{A.}} \AND
  \bauthor{\bsnm{Gorin},~\bfnm{Vadim}\binits{V.}}
(\byear{2016}).
\btitle{Lectures on integrable probability}.
\bjournal{Probability and statistical physics in St. Petersburg}
\bvolume{91}.
\end{barticle}
\endbibitem

\bibitem{MR3668648}
\begin{barticle}[author]
\bauthor{\bsnm{Borodin},~\bfnm{Alexei}\binits{A.}},
  \bauthor{\bsnm{Gorin},~\bfnm{Vadim}\binits{V.}} \AND
  \bauthor{\bsnm{Guionnet},~\bfnm{Alice}\binits{A.}}
(\byear{2017}).
\btitle{Gaussian asymptotics of discrete $\beta$-ensembles}.
\bjournal{Publ. Math. Inst. Hautes \'Etudes Sci.}
\bvolume{125}
\bpages{1--78}.
\bdoi{10.1007/s10240-016-0085-5}
\bmrnumber{3668648}
\end{barticle}
\endbibitem

\bibitem{borodin2010q}
\begin{barticle}[author]
\bauthor{\bsnm{Borodin},~\bfnm{Alexei}\binits{A.}},
  \bauthor{\bsnm{Gorin},~\bfnm{Vadim}\binits{V.}} \AND
  \bauthor{\bsnm{Rains},~\bfnm{Eric~M}\binits{E.~M.}}
(\byear{2010}).
\btitle{q-Distributions on boxed plane partitions}.
\bjournal{Selecta Mathematica}
\bvolume{16}
\bpages{731--789}.
\end{barticle}
\endbibitem

\bibitem{Borodin_Kuan_O}
\begin{barticle}[author]
\bauthor{\bsnm{Borodin},~\bfnm{Alexei}\binits{A.}} \AND
  \bauthor{\bsnm{Kuan},~\bfnm{Jeffrey}\binits{J.}}
(\byear{2010}).
\btitle{Random surface growth with a wall and Plancherel measures for
  O(infinity)}.
\bjournal{Communications on Pure and Applied Mathematics}
\bvolume{63}
\bpages{831--894}.
\end{barticle}
\endbibitem

\bibitem{MR3178541}
\begin{barticle}[author]
\bauthor{\bsnm{Borodin},~\bfnm{Alexei}\binits{A.}} \AND
  \bauthor{\bsnm{Petrov},~\bfnm{Leonid}\binits{L.}}
(\byear{2014}).
\btitle{Integrable probability: from representation theory to {M}acdonald
  processes}.
\bjournal{Probab. Surv.}
\bvolume{11}
\bpages{1--58}.
\bdoi{10.1214/13-PS225}
\bmrnumber{3178541}
\end{barticle}
\endbibitem

\bibitem{Borot:2013lpa}
\begin{barticle}[author]
\bauthor{\bsnm{Borot},~\bfnm{Ga\"etan}\binits{G.}},
  \bauthor{\bsnm{Eynard},~\bfnm{Bertrand}\binits{B.}} \AND
  \bauthor{\bsnm{Orantin},~\bfnm{Nicolas}\binits{N.}}
(\byear{2015}).
\btitle{{Abstract loop equations, topological recursion and new applications}}.
\bjournal{Commun. Num. Theor. Phys.}
\bvolume{09}
\bpages{51--187}.
\bdoi{10.4310/CNTP.2015.v9.n1.a2}
\end{barticle}
\endbibitem

\bibitem{MR3010191}
\begin{barticle}[author]
\bauthor{\bsnm{Borot},~\bfnm{Ga\"etan}\binits{G.}} \AND
  \bauthor{\bsnm{Guionnet},~\bfnm{Alice}\binits{A.}}
(\byear{2013}).
\btitle{Asymptotic expansion of {$\beta$} matrix models in the one-cut regime}.
\bjournal{Comm. Math. Phys.}
\bvolume{317}
\bpages{447--483}.
\bdoi{10.1007/s00220-012-1619-4}
\bmrnumber{3010191}
\end{barticle}
\endbibitem

\bibitem{borot-guionnet2}
\begin{barticle}[author]
\bauthor{\bsnm{Borot},~\bfnm{Ga\"etan}\binits{G.}} \AND
  \bauthor{\bsnm{Guionnet},~\bfnm{Alice.}\binits{A.}}
(\byear{2013}).
\btitle{Asymptotic {E}xpansion of {$\beta$} {M}atrix {M}odels in the multi-cut
  {R}egime}.
\bjournal{preprint, arXiv:1303.1045}.
\end{barticle}
\endbibitem

\bibitem{bourgade2021optimal}
\begin{barticle}[author]
\bauthor{\bsnm{Bourgade},~\bfnm{Paul}\binits{P.}},
  \bauthor{\bsnm{Mody},~\bfnm{Krishnan}\binits{K.}} \AND
  \bauthor{\bsnm{Pain},~\bfnm{Michel}\binits{M.}}
(\byear{2021}).
\btitle{Optimal local law and central limit theorem for $\beta$-ensembles}.
\bjournal{Preprint, arXiv:2103.06841}.
\end{barticle}
\endbibitem

\bibitem{MR3556288}
\begin{barticle}[author]
\bauthor{\bsnm{Breuer},~\bfnm{Jonathan}\binits{J.}} \AND
  \bauthor{\bsnm{Duits},~\bfnm{Maurice}\binits{M.}}
(\byear{2017}).
\btitle{Central limit theorems for biorthogonal ensembles and asymptotics of
  recurrence coefficients}.
\bjournal{J. Amer. Math. Soc.}
\bvolume{30}
\bpages{27--66}.
\bdoi{10.1090/jams/854}
\bmrnumber{3556288}
\end{barticle}
\endbibitem

\bibitem{brezin1978planar}
\begin{barticle}[author]
\bauthor{\bsnm{Br{\'e}zin},~\bfnm{Edouard}\binits{E.}},
  \bauthor{\bsnm{Itzykson},~\bfnm{Claude}\binits{C.}},
  \bauthor{\bsnm{Parisi},~\bfnm{Giorgio}\binits{G.}} \AND
  \bauthor{\bsnm{Zuber},~\bfnm{Jean-Bernard}\binits{J.-B.}}
(\byear{1978}).
\btitle{Planar diagrams}.
\bjournal{Communications in Mathematical Physics}
\bvolume{59}
\bpages{35--51}.
\end{barticle}
\endbibitem

\bibitem{Bu_G_quantized}
\begin{barticle}[author]
\bauthor{\bsnm{Bufetov},~\bfnm{Alexey}\binits{A.}} \AND
  \bauthor{\bsnm{Gorin},~\bfnm{Vadim}\binits{V.}}
(\byear{2015}).
\btitle{Representations of classical {L}ie groups and quantized free
  convolution}.
\bjournal{Geom. Funct. Anal.}
\bvolume{25}
\bpages{763--814}.
\bdoi{10.1007/s00039-015-0323-x}
\bmrnumber{3361772}
\end{barticle}
\endbibitem

\bibitem{bufetov2018fluctuations}
\begin{barticle}[author]
\bauthor{\bsnm{Bufetov},~\bfnm{Alexey}\binits{A.}} \AND
  \bauthor{\bsnm{Gorin},~\bfnm{Vadim}\binits{V.}}
(\byear{2018}).
\btitle{Fluctuations of particle systems determined by Schur generating
  functions}.
\bjournal{Advances in Mathematics}
\bvolume{338}
\bpages{702--781}.
\end{barticle}
\endbibitem

\bibitem{bufetov2019fourier}
\begin{barticle}[author]
\bauthor{\bsnm{Bufetov},~\bfnm{Alexey}\binits{A.}} \AND
  \bauthor{\bsnm{Gorin},~\bfnm{Vadim}\binits{V.}}
(\byear{2019}).
\btitle{Fourier transform on high-dimensional unitary groups with applications
  to random tilings}.
\bjournal{Duke Mathematical Journal}
\bvolume{168}
\bpages{2559--2649}.
\end{barticle}
\endbibitem

\bibitem{bufetov2018asymptotics}
\begin{binproceedings}[author]
\bauthor{\bsnm{Bufetov},~\bfnm{Alexey}\binits{A.}} \AND
  \bauthor{\bsnm{Knizel},~\bfnm{Alisa}\binits{A.}}
(\byear{2018}).
\btitle{Asymptotics of random domino tilings of rectangular Aztec diamonds}.
In \bbooktitle{Annales de l'Institut Henri Poincar{\'e}, Probabilit{\'e}s et
  Statistiques}
\bvolume{54}
\bpages{1250--1290}.
\bpublisher{Institut Henri Poincar{\'e}}.
\end{binproceedings}
\endbibitem

\bibitem{burkholder1973distribution}
\begin{barticle}[author]
\bauthor{\bsnm{Burkholder},~\bfnm{Donald~L}\binits{D.~L.}}
(\byear{1973}).
\btitle{Distribution function inequalities for martingales}.
\bjournal{the Annals of Probability}
\bvolume{1}
\bpages{19--42}.
\end{barticle}
\endbibitem

\bibitem{cabanal2001fluctuations}
\begin{binproceedings}[author]
\bauthor{\bsnm{Cabanal-Duvillard},~\bfnm{Thierry}\binits{T.}}
(\byear{2001}).
\btitle{Fluctuations de la loi empirique de grandes matrices al{\'e}atoires}.
In \bbooktitle{Annales de l'IHP Probabilit{\'e}s et statistiques}
\bvolume{37}
\bpages{373--402}.
\end{binproceedings}
\endbibitem

\bibitem{cerf2001low}
\begin{barticle}[author]
\bauthor{\bsnm{Cerf},~\bfnm{Rapha{\"e}l}\binits{R.}} \AND
  \bauthor{\bsnm{Kenyon},~\bfnm{Richard}\binits{R.}}
(\byear{2001}).
\btitle{The low-temperature expansion of the Wulff crystal in the 3D Ising
  model}.
\bjournal{Communications in Mathematical Physics}
\bvolume{222}
\bpages{147--179}.
\end{barticle}
\endbibitem

\bibitem{chekhov2006matrix}
\begin{barticle}[author]
\bauthor{\bsnm{Chekhov},~\bfnm{Leonid}\binits{L.}} \AND
  \bauthor{\bsnm{Eynard},~\bfnm{Bertrand}\binits{B.}}
(\byear{2006}).
\btitle{Matrix eigenvalue model: Feynman graph technique for all genera}.
\bjournal{Journal of High Energy Physics}
\bvolume{2006}
\bpages{026}.
\end{barticle}
\endbibitem

\bibitem{chekhov2011topological}
\begin{barticle}[author]
\bauthor{\bsnm{Chekhov},~\bfnm{Leonid~Olegovich}\binits{L.~O.}},
  \bauthor{\bsnm{Eynard},~\bfnm{Bertrand}\binits{B.}} \AND
  \bauthor{\bsnm{Marchal},~\bfnm{Olivier}\binits{O.}}
(\byear{2011}).
\btitle{Topological expansion of the $\beta$-ensemble model and quantum
  algebraic geometry in the sectorwise approach}.
\bjournal{Theoretical and Mathematical Physics}
\bvolume{166}
\bpages{141--185}.
\end{barticle}
\endbibitem

\bibitem{MR1815214}
\begin{barticle}[author]
\bauthor{\bsnm{Cohn},~\bfnm{Henry}\binits{H.}},
  \bauthor{\bsnm{Kenyon},~\bfnm{Richard}\binits{R.}} \AND
  \bauthor{\bsnm{Propp},~\bfnm{James}\binits{J.}}
(\byear{2001}).
\btitle{A variational principle for domino tilings}.
\bjournal{J. Amer. Math. Soc.}
\bvolume{14}
\bpages{297--346}.
\bdoi{10.1090/S0894-0347-00-00355-6}
\bmrnumber{1815214}
\end{barticle}
\endbibitem

\bibitem{MR1641839}
\begin{barticle}[author]
\bauthor{\bsnm{Cohn},~\bfnm{Henry}\binits{H.}},
  \bauthor{\bsnm{Larsen},~\bfnm{Michael}\binits{M.}} \AND
  \bauthor{\bsnm{Propp},~\bfnm{James}\binits{J.}}
(\byear{1998}).
\btitle{The shape of a typical boxed plane partition}.
\bjournal{New York J. Math.}
\bvolume{4}
\bpages{137--165}.
\bmrnumber{1641839}
\end{barticle}
\endbibitem

\bibitem{dimitrov2019log}
\begin{barticle}[author]
\bauthor{\bsnm{Dimitrov},~\bfnm{Evgeni}\binits{E.}} \AND
  \bauthor{\bsnm{Knizel},~\bfnm{Alisa}\binits{A.}}
(\byear{2019}).
\btitle{Log-gases on quadratic lattices via discrete loop equations and q-boxed
  plane partitions}.
\bjournal{Journal of Functional Analysis}
\bvolume{276}
\bpages{3067--3169}.
\end{barticle}
\endbibitem

\bibitem{dimitrov2019asymptotics}
\begin{barticle}[author]
\bauthor{\bsnm{Dimitrov},~\bfnm{Evgeni}\binits{E.}} \AND
  \bauthor{\bsnm{Knizel},~\bfnm{Alisa}\binits{A.}}
(\byear{2019}).
\btitle{Asymptotics of discrete $\beta$-corners processes via two-level
  discrete loop equations}.
\bjournal{Preprint, arXiv:1905.02338}.
\end{barticle}
\endbibitem

\bibitem{dimitrov_knizel2021multi}
\begin{barticle}[author]
\bauthor{\bsnm{Dimitrov},~\bfnm{Evgeni}\binits{E.}} \AND
  \bauthor{\bsnm{Knizel},~\bfnm{Alisa}\binits{A.}}
(\byear{2021}).
\btitle{Multi-level loop equations for $\beta$-corners processes}.
\bjournal{Preprint, arXiv:2108.07710}.
\end{barticle}
\endbibitem

\bibitem{dolkega2016gaussian}
\begin{barticle}[author]
\bauthor{\bsnm{Dolega},~\bfnm{Maciej}\binits{M.}} \AND
  \bauthor{\bsnm{F{\'e}ray},~\bfnm{Valentin}\binits{V.}}
(\byear{2016}).
\btitle{Gaussian fluctuations of Young diagrams and structure constants of Jack
  characters}.
\bjournal{Duke Mathematical Journal}
\bvolume{165}
\bpages{1193--1282}.
\end{barticle}
\endbibitem

\bibitem{duits2018global}
\begin{barticle}[author]
\bauthor{\bsnm{Duits},~\bfnm{Maurice}\binits{M.}}
(\byear{2018}).
\btitle{On global fluctuations for non-colliding processes}.
\bjournal{The Annals of Probability}
\bvolume{46}
\bpages{1279--1350}.
\end{barticle}
\endbibitem

\bibitem{DuitsLiu}
\begin{barticle}[author]
\bauthor{\bsnm{Duits},~\bfnm{Maurice}\binits{M.}} \AND
  \bauthor{\bsnm{Liu},~\bfnm{W.}\binits{W.}}
\btitle{Limit shape and global fluctuations for $q$-orthogonal polynomial
  ensembles}.
\bjournal{In preparation}.
\end{barticle}
\endbibitem

\bibitem{DuseJohanssonMetcalfe}
\begin{barticle}[author]
\bauthor{\bsnm{Duse},~\bfnm{Eric}\binits{E.}},
  \bauthor{\bsnm{Johansson},~\bfnm{Kurt}\binits{K.}} \AND
  \bauthor{\bsnm{Metcalfe},~\bfnm{Anthony}\binits{A.}}
(\byear{2016}).
\btitle{The Cusp-Airy Process}.
\bjournal{Electronic Journal of Probability}
\bvolume{21}
\bpages{paper 57}.
\end{barticle}
\endbibitem

\bibitem{DuseMetcalfeI}
\begin{barticle}[author]
\bauthor{\bsnm{Duse},~\bfnm{Eric}\binits{E.}} \AND
  \bauthor{\bsnm{Metcalfe},~\bfnm{Anthony}\binits{A.}}
(\byear{2015}).
\btitle{Asymptotic geometry of discrete interlaced patterns: {Part I}}.
\bjournal{International Journal of Mathematics}
\bvolume{26}
\bpages{1550093}.
\end{barticle}
\endbibitem

\bibitem{DuseMetcalfeII}
\begin{barticle}[author]
\bauthor{\bsnm{Duse},~\bfnm{Eric}\binits{E.}} \AND
  \bauthor{\bsnm{Metcalfe},~\bfnm{Anthony}\binits{A.}}
(\byear{2020}).
\btitle{Asymptotic geometry of discrete interlaced patterns: {Part II}}.
\bjournal{Annales de l'Institut Fourier}
\bvolume{70}
\bpages{375-436}.
\end{barticle}
\endbibitem

\bibitem{MR0148397}
\begin{barticle}[author]
\bauthor{\bsnm{Dyson},~\bfnm{Freeman~J.}\binits{F.~J.}}
(\byear{1962}).
\btitle{A {B}rownian-motion model for the eigenvalues of a random matrix}.
\bjournal{J. Mathematical Phys.}
\bvolume{3}
\bpages{1191--1198}.
\bmrnumber{0148397}
\end{barticle}
\endbibitem

\bibitem{MR2810797}
\begin{barticle}[author]
\bauthor{\bsnm{Erd{\H{o}}s},~\bfnm{L{\'a}szl{\'o}}\binits{L.}},
  \bauthor{\bsnm{Schlein},~\bfnm{Benjamin}\binits{B.}} \AND
  \bauthor{\bsnm{Yau},~\bfnm{Horng-Tzer}\binits{H.-T.}}
(\byear{2011}).
\btitle{Universality of random matrices and local relaxation flow}.
\bjournal{Invent. Math.}
\bvolume{185}
\bpages{75--119}.
\bdoi{10.1007/s00222-010-0302-7}
\bmrnumber{2810797}
\end{barticle}
\endbibitem

\bibitem{MR2919197}
\begin{barticle}[author]
\bauthor{\bsnm{Erd{\H{o}}s},~\bfnm{L{\'a}szl{\'o}}\binits{L.}},
  \bauthor{\bsnm{Schlein},~\bfnm{Benjamin}\binits{B.}},
  \bauthor{\bsnm{Yau},~\bfnm{Horng-Tzer}\binits{H.-T.}} \AND
  \bauthor{\bsnm{Yin},~\bfnm{Jun}\binits{J.}}
(\byear{2012}).
\btitle{The local relaxation flow approach to universality of the local
  statistics for random matrices}.
\bjournal{Ann. Inst. Henri Poincar\'e Probab. Stat.}
\bvolume{48}
\bpages{1--46}.
\bdoi{10.1214/10-AIHP388}
\bmrnumber{2919197}
\end{barticle}
\endbibitem

\bibitem{MR3372074}
\begin{barticle}[author]
\bauthor{\bsnm{Erd{\H{o}}s},~\bfnm{L{\'a}szl{\'o}}\binits{L.}} \AND
  \bauthor{\bsnm{Yau},~\bfnm{Horng-Tzer}\binits{H.-T.}}
(\byear{2015}).
\btitle{Gap universality of generalized {W}igner and {$\beta$}-ensembles}.
\bjournal{J. Eur. Math. Soc. (JEMS)}
\bvolume{17}
\bpages{1927--2036}.
\bdoi{10.4171/JEMS/548}
\bmrnumber{3372074}
\end{barticle}
\endbibitem

\bibitem{eynard2005topological}
\begin{barticle}[author]
\bauthor{\bsnm{Eynard},~\bfnm{Bertrand}\binits{B.}}
(\byear{2005}).
\btitle{Topological expansion for the 1-hermitian matrix model correlation
  functions}.
\bjournal{Journal of High Energy Physics}
\bvolume{2004}
\bpages{031}.
\end{barticle}
\endbibitem

\bibitem{eynard2016counting}
\begin{barticle}[author]
\bauthor{\bsnm{Eynard},~\bfnm{Bertrand}\binits{B.}}
(\byear{2016}).
\btitle{Counting surfaces}.
\bjournal{Progress in Mathematical Physics}
\bvolume{70}
\bpages{414}.
\end{barticle}
\endbibitem

\bibitem{Eynard:2007kz}
\begin{barticle}[author]
\bauthor{\bsnm{Eynard},~\bfnm{Bertrand}\binits{B.}} \AND
  \bauthor{\bsnm{Orantin},~\bfnm{Nicolas}\binits{N.}}
(\byear{2007}).
\btitle{{Invariants of algebraic curves and topological expansion}}.
\bjournal{Commun. Num. Theor. Phys.}
\bvolume{1}
\bpages{347--452}.
\bdoi{10.4310/CNTP.2007.v1.n2.a4}
\end{barticle}
\endbibitem

\bibitem{Forrester_Rains}
\begin{barticle}[author]
\bauthor{\bsnm{Forrester},~\bfnm{Peter~J.}\binits{P.~J.}} \AND
  \bauthor{\bsnm{Rains},~\bfnm{Eric~M.}\binits{E.~M.}}
(\byear{2005}).
\btitle{Interpretations of some parameter dependent generalizations of
  classical matrix ensembles}.
\bjournal{Probability Theory and Related Fields}
\bvolume{131}
\bpages{1--61}.
\end{barticle}
\endbibitem

\bibitem{Gelfand_Naimark}
\begin{barticle}[author]
\bauthor{\bsnm{Gelfand},~\bfnm{I.~M.}\binits{I.~M.}} \AND
  \bauthor{\bsnm{Naimark},~\bfnm{M.~A.}\binits{M.~A.}}
(\byear{1950 (German transl.:Academie-Verlag, Berlin, 1957.)}).
\btitle{Unitary representations of the classical groups}.
\bjournal{Trudy Mat. Inst. Steklov, Leningrad, Moscow (In Russian)}.
\end{barticle}
\endbibitem

\bibitem{VG2020}
\begin{bbook}[author]
\bauthor{\bsnm{Gorin},~\bfnm{Vadim}\binits{V.}}
(\byear{2021}).
\btitle{Lectures on random lozenge tilings}.
\bpublisher{Cambridge University Press}.
\end{bbook}
\endbibitem

\bibitem{GorinPanova}
\begin{barticle}[author]
\bauthor{\bsnm{Gorin},~\bfnm{Vadim}\binits{V.}} \AND
  \bauthor{\bsnm{Panova},~\bfnm{Greta}\binits{G.}}
(\byear{2015}).
\btitle{Asymptotics of symmetric polynomials with applications to statistical
  mechanics and representation theory}.
\bjournal{Annals of Probability}
\bvolume{43}
\bpages{3052--3132}.
\end{barticle}
\endbibitem

\bibitem{gorin2019universality}
\begin{barticle}[author]
\bauthor{\bsnm{Gorin},~\bfnm{Vadim}\binits{V.}} \AND
  \bauthor{\bsnm{Petrov},~\bfnm{Leonid}\binits{L.}}
(\byear{2019}).
\btitle{Universality of local statistics for noncolliding random walks}.
\bjournal{The Annals of Probability}
\bvolume{47}
\bpages{2686--2753}.
\end{barticle}
\endbibitem

\bibitem{MR3418747}
\begin{barticle}[author]
\bauthor{\bsnm{Gorin},~\bfnm{Vadim}\binits{V.}} \AND
  \bauthor{\bsnm{Shkolnikov},~\bfnm{Mykhaylo}\binits{M.}}
(\byear{2015}).
\btitle{Multilevel {D}yson {B}rownian motions via {J}ack polynomials}.
\bjournal{Probab. Theory Related Fields}
\bvolume{163}
\bpages{413--463}.
\bdoi{10.1007/s00440-014-0596-2}
\bmrnumber{3418747}
\end{barticle}
\endbibitem

\bibitem{GS_TASEP}
\begin{barticle}[author]
\bauthor{\bsnm{Gorin},~\bfnm{Vadim}\binits{V.}} \AND
  \bauthor{\bsnm{Shkolnikov},~\bfnm{Mykhaylo}\binits{M.}}
(\byear{2015}).
\btitle{Limits of Multilevel TASEP and similar processes}.
\bjournal{Limits of Multilevel TASEP and similar processes}
\bvolume{51}
\bpages{18-27}.
\end{barticle}
\endbibitem

\bibitem{guionnet2002large}
\begin{binproceedings}[author]
\bauthor{\bsnm{Guionnet},~\bfnm{Alice}\binits{A.}}
(\byear{2002}).
\btitle{Large deviations upper bounds and central limit theorems for
  non-commutative functionals of Gaussian large random matrices}.
In \bbooktitle{Annales de l'Institut Henri Poincare (B) Probability and
  Statistics}
\bvolume{38}
\bpages{341--384}.
\bpublisher{Elsevier}.
\end{binproceedings}
\endbibitem

\bibitem{guionnet2019asymptotics}
\begin{bbook}[author]
\bauthor{\bsnm{Guionnet},~\bfnm{Alice}\binits{A.}}
(\byear{2019}).
\btitle{Asymptotics of random matrices and related models: the uses of
  Dyson-Schwinger equations}
\bvolume{130}.
\bpublisher{American Mathematical Soc.}
\end{bbook}
\endbibitem

\bibitem{hall2014martingale}
\begin{bbook}[author]
\bauthor{\bsnm{Hall},~\bfnm{Peter}\binits{P.}} \AND
  \bauthor{\bsnm{Heyde},~\bfnm{Christopher~C}\binits{C.~C.}}
(\byear{2014}).
\btitle{Martingale limit theory and its application}.
\bpublisher{Academic press}.
\end{bbook}
\endbibitem

\bibitem{hitczenko1990best}
\begin{barticle}[author]
\bauthor{\bsnm{Hitczenko},~\bfnm{Pawel}\binits{P.}}
(\byear{1990}).
\btitle{Best constants in martingale version of Rosenthal's inequality}.
\bjournal{The Annals of Probability}
\bvolume{18}
\bpages{1656--1668}.
\end{barticle}
\endbibitem

\bibitem{holte2009discrete}
\begin{binproceedings}[author]
\bauthor{\bsnm{Holte},~\bfnm{John~M}\binits{J.~M.}}
(\byear{2009}).
\btitle{Discrete Gronwall lemma and applications}.
In \bbooktitle{MAA-NCS meeting at the University of North Dakota}
\bvolume{24}
\bpages{1--7}.
\end{binproceedings}
\endbibitem

\bibitem{huang2017beta}
\begin{barticle}[author]
\bauthor{\bsnm{Huang},~\bfnm{Jiaoyang}\binits{J.}}
(\byear{2017}).
\btitle{$\beta $-Nonintersecting Poisson Random Walks: Law of Large Numbers and
  Central Limit Theorems}.
\bjournal{International Mathematics Research Notices}.
\end{barticle}
\endbibitem

\bibitem{huang2020height}
\begin{barticle}[author]
\bauthor{\bsnm{Huang},~\bfnm{Jiaoyang}\binits{J.}}
(\byear{2020}).
\btitle{Height Fluctuations of Random Lozenge Tilings Through Nonintersecting
  Random Walks}.
\bjournal{Preprint, arXiv:2011.01751}.
\end{barticle}
\endbibitem

\bibitem{huang2020edge}
\begin{barticle}[author]
\bauthor{\bsnm{Huang},~\bfnm{Jiaoyang}\binits{J.}}
(\byear{2020}).
\btitle{Edge Universality for Nonintersecting Brownian Bridges}.
\bjournal{Preprint, arXiv:2011.01752}.
\end{barticle}
\endbibitem

\bibitem{huang2021law}
\begin{barticle}[author]
\bauthor{\bsnm{Huang},~\bfnm{Jiaoyang}\binits{J.}}
(\byear{2021}).
\btitle{Law of large numbers and central limit theorems through Jack generating
  functions}.
\bjournal{Advances in Mathematics}
\bvolume{380}
\bpages{107545}.
\end{barticle}
\endbibitem

\bibitem{MR4009708}
\begin{barticle}[author]
\bauthor{\bsnm{Huang},~\bfnm{Jiaoyang}\binits{J.}} \AND
  \bauthor{\bsnm{Landon},~\bfnm{Benjamin}\binits{B.}}
(\byear{2019}).
\btitle{Rigidity and a mesoscopic central limit theorem for {D}yson {B}rownian
  motion for general {$\beta$} and potentials}.
\bjournal{Probab. Theory Related Fields}
\bvolume{175}
\bpages{209--253}.
\bdoi{10.1007/s00440-018-0889-y}
\bmrnumber{4009708}
\end{barticle}
\endbibitem

\bibitem{MR1819483}
\begin{barticle}[author]
\bauthor{\bsnm{Israelsson},~\bfnm{Stefan}\binits{S.}}
(\byear{2001}).
\btitle{Asymptotic fluctuations of a particle system with singular
  interaction}.
\bjournal{Stochastic Process. Appl.}
\bvolume{93}
\bpages{25--56}.
\bdoi{10.1016/S0304-4149(00)00100-9}
\bmrnumber{1819483}
\end{barticle}
\endbibitem

\bibitem{MR1487983}
\begin{barticle}[author]
\bauthor{\bsnm{Johansson},~\bfnm{Kurt}\binits{K.}}
(\byear{1998}).
\btitle{On fluctuations of eigenvalues of random {H}ermitian matrices}.
\bjournal{Duke Math. J.}
\bvolume{91}
\bpages{151--204}.
\bdoi{10.1215/S0012-7094-98-09108-6}
\bmrnumber{1487983}
\end{barticle}
\endbibitem

\bibitem{MR1737991}
\begin{barticle}[author]
\bauthor{\bsnm{Johansson},~\bfnm{Kurt}\binits{K.}}
(\byear{2000}).
\btitle{Shape fluctuations and random matrices}.
\bjournal{Comm. Math. Phys.}
\bvolume{209}
\bpages{437--476}.
\bdoi{10.1007/s002200050027}
\bmrnumber{1737991}
\end{barticle}
\endbibitem

\bibitem{MR1900323}
\begin{barticle}[author]
\bauthor{\bsnm{Johansson},~\bfnm{Kurt}\binits{K.}}
(\byear{2002}).
\btitle{Non-intersecting paths, random tilings and random matrices}.
\bjournal{Probab. Theory Related Fields}
\bvolume{123}
\bpages{225--280}.
\bdoi{10.1007/s004400100187}
\bmrnumber{1900323}
\end{barticle}
\endbibitem

\bibitem{KO_Burgers}
\begin{barticle}[author]
\bauthor{\bsnm{Kenyon},~\bfnm{Richard}\binits{R.}} \AND
  \bauthor{\bsnm{Okounkov},~\bfnm{Andrei}\binits{A.}}
(\byear{2007}).
\btitle{Limit shapes and the complex {B}urgers equation}.
\bjournal{Acta Math.}
\bvolume{199}
\bpages{263--302}.
\bdoi{10.1007/s11511-007-0021-0}
\bmrnumber{2358053}
\end{barticle}
\endbibitem

\bibitem{kerov2003asymptotic}
\begin{bbook}[author]
\bauthor{\bsnm{Kerov},~\bfnm{Sergei~Vasil?evich}\binits{S.~V.}} \AND
  \bauthor{\bsnm{Tsilevich},~\bfnm{NS}\binits{N.}}
(\byear{2003}).
\btitle{Asymptotic representation theory of the symmetric group and its
  applications in analysis}
\bvolume{219}.
\bpublisher{American Mathematical Society Providence, RI}.
\end{bbook}
\endbibitem

\bibitem{konig2002non}
\begin{barticle}[author]
\bauthor{\bsnm{K{\"o}nig},~\bfnm{Wolfgang}\binits{W.}},
  \bauthor{\bsnm{O'Connell},~\bfnm{Neil}\binits{N.}} \AND
  \bauthor{\bsnm{Roch},~\bfnm{S{\'e}bastien}\binits{S.}}
(\byear{2002}).
\btitle{Non-colliding random walks, tandem queues, and discrete orthogonal
  polynomial ensembles}.
\bjournal{Electronic Journal of Probability}
\bvolume{7}.
\end{barticle}
\endbibitem

\bibitem{KrSh}
\begin{barticle}[author]
\bauthor{\bsnm{Kriecherbauer},~\bfnm{T.}\binits{T.}} \AND
  \bauthor{\bsnm{Shcherbina},~\bfnm{M.}\binits{M.}}
(\byear{2010}).
\btitle{Fluctuations of eigenvalues of matrix models and their applications}.
\bjournal{preprint, arXiv:1003.6121}.
\end{barticle}
\endbibitem

\bibitem{MR3914908}
\begin{barticle}[author]
\bauthor{\bsnm{Landon},~\bfnm{Benjamin}\binits{B.}},
  \bauthor{\bsnm{Sosoe},~\bfnm{Philippe}\binits{P.}} \AND
  \bauthor{\bsnm{Yau},~\bfnm{Horng-Tzer}\binits{H.-T.}}
(\byear{2019}).
\btitle{Fixed energy universality of {D}yson {B}rownian motion}.
\bjournal{Adv. Math.}
\bvolume{346}
\bpages{1137--1332}.
\bdoi{10.1016/j.aim.2019.02.010}
\bmrnumber{3914908}
\end{barticle}
\endbibitem

\bibitem{MR3687212}
\begin{barticle}[author]
\bauthor{\bsnm{Landon},~\bfnm{Benjamin}\binits{B.}} \AND
  \bauthor{\bsnm{Yau},~\bfnm{Horng-Tzer}\binits{H.-T.}}
(\byear{2017}).
\btitle{Convergence of local statistics of {D}yson {B}rownian motion}.
\bjournal{Comm. Math. Phys.}
\bvolume{355}
\bpages{949--1000}.
\bdoi{10.1007/s00220-017-2955-1}
\bmrnumber{3687212}
\end{barticle}
\endbibitem

\bibitem{macdonald1998symmetric}
\begin{bbook}[author]
\bauthor{\bsnm{Macdonald},~\bfnm{Ian~Grant}\binits{I.~G.}}
(\byear{1998}).
\btitle{Symmetric functions and Hall polynomials}.
\bpublisher{Oxford university press}.
\end{bbook}
\endbibitem

\bibitem{matveev2019macdonald}
\begin{barticle}[author]
\bauthor{\bsnm{Matveev},~\bfnm{Konstantin}\binits{K.}}
(\byear{2019}).
\btitle{Macdonald-positive specializations of the algebra of symmetric
  functions: Proof of the Kerov conjecture}.
\bjournal{Annals of Mathematics}
\bvolume{189}
\bpages{277--316}.
\end{barticle}
\endbibitem

\bibitem{migdal1983}
\begin{barticle}[author]
\bauthor{\bsnm{Migdal},~\bfnm{Alexander}\binits{A.}}
(\byear{1983}).
\btitle{Loop Equations and $1/N$ Expansion}.
\bjournal{Physical Reports}
\bvolume{102}
\bpages{199-290}.
\end{barticle}
\endbibitem

\bibitem{moll2015random}
\begin{barticle}[author]
\bauthor{\bsnm{Moll},~\bfnm{Alexander}\binits{A.}}
(\byear{2015}).
\btitle{Random partitions and the quantum Benjamin-Ono hierarchy}.
\bjournal{Preprint, arXiv:1508.03063}.
\end{barticle}
\endbibitem

\bibitem{Nekrasov}
\begin{barticle}[author]
\bauthor{\bsnm{Nekrasov},~\bfnm{Nikita}\binits{N.}}
(\byear{2016}).
\btitle{{BPS/CFT correspondence: non-perturbative Dyson-Schwinger equations and
  qq-characters}}.
\bjournal{JHEP}
\bvolume{03}
\bpages{181}.
\bdoi{10.1007/JHEP03(2016)181}
\end{barticle}
\endbibitem

\bibitem{Nekrasov4}
\begin{barticle}[author]
\bauthor{\bsnm{Nekrasov},~\bfnm{Nikita}\binits{N.}}
(\byear{2017}).
\btitle{{BPS/CFT correspondence IV: sigma models and defects in gauge theory}}.
\bjournal{Preprint, arXiv:1711.11011}.
\end{barticle}
\endbibitem

\bibitem{Nekrasov5}
\begin{barticle}[author]
\bauthor{\bsnm{Nekrasov},~\bfnm{Nikita}\binits{N.}}
(\byear{2017}).
\btitle{{BPS/CFT correspondence V: BPZ and KZ equations from qq-characters}}.
\bjournal{Preprint, arXiv:1711.11582}.
\end{barticle}
\endbibitem

\bibitem{Nek_Pes}
\begin{barticle}[author]
\bauthor{\bsnm{Nekrasov},~\bfnm{N.}\binits{N.}} \AND
  \bauthor{\bsnm{Pestun},~\bfnm{V}\binits{V.}}
(\byear{2011}).
\btitle{Seiberg-Witten geometry of four dimensional $N=2$ quiver gauge
  theories.}
\bjournal{arXiv:1211.2240}.
\end{barticle}
\endbibitem

\bibitem{Nek_PS}
\begin{barticle}[author]
\bauthor{\bsnm{Nekrasov},~\bfnm{N.}\binits{N.}},
  \bauthor{\bsnm{Pestun},~\bfnm{V}\binits{V.}} \AND
  \bauthor{\bsnm{Shatashvili},~\bfnm{S.}\binits{S.}}
(\byear{2013}).
\btitle{Quantum geometry and quiver gauge theories}.
\bjournal{High Energy Physics - Theory}
\bpages{1-83}.
\end{barticle}
\endbibitem

\bibitem{Neretin}
\begin{barticle}[author]
\bauthor{\bsnm{Neretin},~\bfnm{Yu.~A.}\binits{Y.~A.}}
(\byear{2003}).
\btitle{Rayleigh triangles and non-matrix interpolation of matrix beta
  integrals}.
\bjournal{Sbornik: Mathematics}
\bvolume{194}
\bpages{515--540}.
\end{barticle}
\endbibitem

\bibitem{OConnell2003}
\begin{barticle}[author]
\bauthor{\bsnm{O'Connel},~\bfnm{Neil}\binits{N.}}
(\byear{2003}).
\btitle{A path-transformation for random walks and the Robinson-Schensted
  correspondence}.
\bjournal{Transactions of the American Mathematical Society}
\bvolume{355}
\bpages{3669-3697}.
\end{barticle}
\endbibitem

\bibitem{OO_Jack}
\begin{barticle}[author]
\bauthor{\bsnm{Okounkov},~\bfnm{Andrei}\binits{A.}} \AND
  \bauthor{\bsnm{Olshanski},~\bfnm{Grigori}\binits{G.}}
(\byear{1998}).
\btitle{Asymptotics of {Jack} polynomials as the number of variables tends to
  infinity}.
\bjournal{International Mathematics Research Notices}
\bvolume{13}
\bpages{641-682}.
\end{barticle}
\endbibitem

\bibitem{MR1969205}
\begin{barticle}[author]
\bauthor{\bsnm{Okounkov},~\bfnm{Andrei}\binits{A.}} \AND
  \bauthor{\bsnm{Reshetikhin},~\bfnm{Nikolai}\binits{N.}}
(\byear{2003}).
\btitle{Correlation function of {S}chur process with application to local
  geometry of a random 3-dimensional {Y}oung diagram}.
\bjournal{J. Amer. Math. Soc.}
\bvolume{16}
\bpages{581--603}.
\bdoi{10.1090/S0894-0347-03-00425-9}
\bmrnumber{1969205}
\end{barticle}
\endbibitem

\bibitem{osekowski2012note}
\begin{barticle}[author]
\bauthor{\bsnm{Osekowski},~\bfnm{ADAM}\binits{A.}}
(\byear{2012}).
\btitle{A note on Burkholder-Rosenthal inequality}.
\bjournal{Bull. Pol. Acad. Sci. Math}
\bvolume{60}
\bpages{177--185}.
\end{barticle}
\endbibitem

\bibitem{MR3278913}
\begin{barticle}[author]
\bauthor{\bsnm{Petrov},~\bfnm{Leonid}\binits{L.}}
(\byear{2014}).
\btitle{Asymptotics of random lozenge tilings via {G}elfand-{T}setlin schemes}.
\bjournal{Probab. Theory Related Fields}
\bvolume{160}
\bpages{429--487}.
\bdoi{10.1007/s00440-013-0532-x}
\bmrnumber{3278913}
\end{barticle}
\endbibitem

\bibitem{petrov2015asymptotics}
\begin{barticle}[author]
\bauthor{\bsnm{Petrov},~\bfnm{Leonid}\binits{L.}}
(\byear{2015}).
\btitle{Asymptotics of uniformly random lozenge tilings of polygons. {Gaussian
  free field}}.
\bjournal{The Annals of Probability}
\bvolume{43}
\bpages{1--43}.
\end{barticle}
\endbibitem

\bibitem{rains2005bc}
\begin{barticle}[author]
\bauthor{\bsnm{Rains},~\bfnm{Eric~M}\binits{E.~M.}}
(\byear{2005}).
\btitle{{$BC_n$}-symmetric polynomials}.
\bjournal{Transformation groups}
\bvolume{10}
\bpages{63--132}.
\end{barticle}
\endbibitem

\bibitem{Sheffield_GFF}
\begin{barticle}[author]
\bauthor{\bsnm{Sheffield},~\bfnm{Scott}\binits{S.}}
(\byear{2007}).
\btitle{Gaussian free fields for mathematicians}.
\bjournal{Probab. Theory Related Fields}
\bvolume{139}
\bpages{521--541}.
\bdoi{10.1007/s00440-006-0050-1}
\bmrnumber{2322706}
\end{barticle}
\endbibitem

\bibitem{Stanley_Jack}
\begin{barticle}[author]
\bauthor{\bsnm{Stanley},~\bfnm{Richard}\binits{R.}}
(\byear{1989}).
\btitle{Some combinatorial properties of {Jack} symmetric functions}.
\bjournal{Advances in Mathematics}
\bvolume{77}
\bpages{76-115}.
\end{barticle}
\endbibitem

\bibitem{t1974magnetic}
\begin{barticle}[author]
\bauthor{\bparticle{t} \bsnm{Hooft},~\bfnm{Gerardus}\binits{G.}}
(\byear{1974}).
\btitle{Magnetic monopoles in unified theories}.
\bjournal{Nucl. Phys. B}
\bvolume{79}
\bpages{276--284}.
\end{barticle}
\endbibitem

\bibitem{tutte1963census}
\begin{barticle}[author]
\bauthor{\bsnm{Tutte},~\bfnm{William~Thomas}\binits{W.~T.}}
(\byear{1963}).
\btitle{A census of planar maps}.
\bjournal{Canadian Journal of Mathematics}
\bvolume{15}
\bpages{249--271}.
\end{barticle}
\endbibitem

\bibitem{unterberger2018global}
\begin{barticle}[author]
\bauthor{\bsnm{Unterberger},~\bfnm{J{\'e}r{\'e}mie}\binits{J.}}
(\byear{2018}).
\btitle{Global fluctuations for 1D log-gas dynamics}.
\bjournal{Stochastic Processes and their Applications}
\bvolume{128}
\bpages{4104--4153}.
\end{barticle}
\endbibitem

\bibitem{vershik1996statistical}
\begin{barticle}[author]
\bauthor{\bsnm{Vershik},~\bfnm{Anatolii~Moiseevich}\binits{A.~M.}}
(\byear{1996}).
\btitle{Statistical mechanics of combinatorial partitions, and their limit
  shapes}.
\bjournal{Functional Analysis and Its Applications}
\bvolume{30}
\bpages{90--105}.
\end{barticle}
\endbibitem

\bibitem{Werner_Powell_GFF}
\begin{barticle}[author]
\bauthor{\bsnm{Werner},~\bfnm{Wendelin}\binits{W.}} \AND
  \bauthor{\bsnm{Powell},~\bfnm{Ellen}\binits{E.}}
(\byear{2020}).
\btitle{Lecture notes on the {Gaussian} Free Field}.
\bjournal{Preprint, arXiv:2004.04720}.
\end{barticle}
\endbibitem

\end{thebibliography}

%
%
%

\end{document}